\def\Cal{\mathcal}
\def\N{\mathbb N} 
\def\Z{\mathbb Z} 
\def\C{\mathbb C} 
\def\T{\mathbb T}
\def\CCM{{\Cal C\Cal M}_{M}}
\def\CCMtil{{\Cal C\Cal M}_{\wt M}}
\def\V{\Cal V}
\def\CC{\Cal {C}} 
\def\CCL{{\Cal C\Cal M}_{L}}
\def\CCAB{{\Cal C\Cal M}_{AB}}
\def\Ext{\text{Ext\,}}
\def\ExtD{\text{Ext}_{\text{dist}} }
\def\R{\mathbb R}
\def\A{\Cal A}
\def\O{\Cal O}
\def\cl{\text{cl}}
\def\L{L}
\def\M{M}
\def\AB{AB}
\def\FC{\Cal F\Cal C}
\def\E{\Cal E}
\def\nsft{{\em nsft}}
\def\sft{{\em sft}}
\def\B{\Cal  B} 
\def\bpi{\boldsymbol{\pi}}
\def\0{\mathbf 0}    
\def\interior{ \text{ int}}
\def\gcd{{\em gcd}}
\def\ee{\underline e}
\def\dd{\underline d}
\def\aa{\underline a}
\def\bfw{\mathbf w}
\def\u{\mathbf u}
\def\v{\mathbf v}
\def\w{\mathbf w}
\def\z{\mathbf z} 
\def\x{\mathbf x} 
\def\1{\mathbf 1}    
\def\a{\mathbf a}   
\def\b{\mathbf b}    
\def\e{ e} 
\def\f{ f} 
\def\s{\mathbf s}
\def\then{\implies}
\def\r{{\mathbb R}}
\def\wt{\widetilde}
\def\wh{\widehat}
\def\T{\mathbb T} 
\def\C{\mathbb C}
\def\ep{\varepsilon}
\def\NS{\Cal N \Cal S}
\def\NP{\Cal N \Cal P}
\def\CN{\Cal N}
\def\P{\Cal P}
\theoremstyle{plain}
\newtheorem{theo}{Theorem}[section]
\newtheorem{prop}[theo]{Proposition}
\newtheorem{lem}[theo]{Lemma}
\newtheorem{cor}[theo]{Corollary}
\theoremstyle{definition}
\newtheorem{defi}[theo]{Definition}
\theoremstyle{remark}
\newtheorem{rem}[theo]{Remark}
\newtheorem{exam}[theo]{Example}
\begin{document}
\title[Invariant measures for adic  transformations]{  Finite and infinite  invariant measures for adic transformations}
 \date{January 24, 2026}

\author{Albert M. Fisher} 

\address{Albert M. Fisher,
Dept Mat IME-USP,
Caixa Postal 66281,
CEP 05315-970
S\~ao Paulo, Brazil}
\urladdr{http://ime.usp.br/$\sim$afisher}
\email{afisher@ime.usp.br}

\author {Marina Talet}
\address{Marina Talet
Aix-Marseille University, CNRS, I2M, 
Marseille, 
France}

\email{marina.talet@univ-amu.fr}

\thanks{
A. Fisher and  M. Talet partially 
supported by FAPESP, Franco-Brazil cooperation:  CNPq-CNRS}

\large

\begin{abstract} 
We classify the invariant Borel
 measures for 
 adic transformations, subject to the conditions that 
the alphabets have bounded size and that the measure is 
 finite  on the path space of some
sub-Bratteli diagram.

 To carry this out,   we develop a nonstationary version of 
 the  Frobenius normal form for a reducible matrix, present an  appropriate nonstationary 
notion of   distinguished eigenvector,  and  prove a nonstationary 
Frobenius--Victory theorem.  This parallels the approach
to the
stationary case  developed by 
 Bezuglyi, Kwiatkowski, Medynets and Solomyak in
 \cite{BezuglyiKwiatkowskiMedynetsSolomyak10} 
 where  they classify
 the locally finite invariant measures.
  In 
later work,
they also
address  the nonstationary case.
We extend their work in two ways. Firstly, in both the stationary and
nonstationary settings, we
allow for measures 
which  are  locally infinite, motivating this extension with 
examples. Secondly we give a 
complete classification, presenting
a  necessary
and sufficient condition for  a  measure which is finite on some
subdiagram to be infinite for the
original diagram. As part of our program, we introduce a
related object
called an  {\em adic tower} and a
construction called the {\em canonical cover} of the
subdiagram: if the measure is finite on the subdiagram
but locally infinite on the
original space it will be locally finite on the cover space, though
the path spaces for the original diagram and the cover 
 are measure-theoretically isomorphic.
We further extend our classification
results to adic towers.  Our examples include two new models for the
Integer Cantor Set transformation of \cite{Fisher92}, one locally
finite and one locally infinite, which lead to 
a new class of examples
related to fractal sets of integers:  {\em nested circle rotations}, where one
rotation is embedded in another.
The resulting tower measure inside the original rotation
can  have finite or infinite total mass
as
 determined by our general criterion. 
\end{abstract}

\keywords
{adic transformation,  unique ergodicity, nonstationary subshift
 of finite type
}
\maketitle
\tableofcontents

\section{Introduction}

\subsection{The stationary adic setting}

\smallskip

Adic transformations are of interest in dynamics for a variety of
reasons. They can be used to model such classical examples as
irrational circle rotations, interval exchange transformations,
substitution dynamical systems, and  cutting-and-stacking
constructions.
The formalism involved is elegant and is well adapted to the
analysis of the topological and the measure properties of the
dynamics. Because  such a variety of originally distinct
systems can be treated from this common point of view,
one can move 
ideas and methods from one context to another.

The simplest setting is that of a {\em stationary} adic
transformation, where the state space is the set of paths for a one-sided
subshift of finite type; in the most basic  case this is defined by a
primitive matrix (meaning 
that some power has all entries 
positive).  One then has a pair of dynamical systems,
the left shift map and the  {\em transverse
} dynamics of Vershik's adic transformation defined by placing an (anti)-lexicographic
order on the path space. In this setting, the adic transformation is
both {\em minimal } (every orbit is dense) and  {\em uniquely ergodic}
(there exists a unique invariant probability measure), and moreover it
is
{\em
  deterministic} (in the sense of having zero topological entropy).
This contrasts markedly with the shift map which is {\em hyperbolic} and has
many nontrivial invariant subsets (for example, exponentially  many periodic points)
and many  
invariant measures. There is however one special measure for the shift, the {\em
  Parry measure};
this is the (unique) measure of maximal entropy and has a simple
formula
(due to both Shannon and Parry) using the right and left Perron-Frobenius
eigenvectors of the matrix.

A variant of this {\em Parry formula}, using only the right eigenvector, produces
the unique measure for the adic transformation.

For the simplest example of such a  pair of maps (see Example
\ref{ex:odometer}), we have   the one-sided
Bernoulli shift space $\Sigma^+= \Pi_0^\infty \{0,1\}$, carrying the
product topology which makes it a compact metric space.  The left
shift $\sigma$ is a two-to-one continuous map of $\Sigma^+$; it has as its  measure of maximal entropy the
 $(\frac{1}{2},\frac{1}{2})$  Bernoulli measure $\mu$, representing tosses of
 a fair
 coin. The second map on  $\Sigma^+$ is the
 Kakutani-von Neumann {\em odometer transformation} $T$, which  acts  as a
 homeomorphism, giving  
 the original  example of an adic
transformation; the unique invariant probability measure $\nu$ for $T$
exceptionally for this case is the same: $\nu=\mu$. 
This  pair of maps
satisfies an interesting {\em
  commutation relation} $\sigma\circ T^2 =T\circ \sigma$, 
reminiscent of what happens for the pair (geodesic flow,  stable horocycle
flow)   $=(g_t, h_t)$,  acting on a fiber bundle over a surface:  the unit tangent bundle $F(M)$ of a compact hyperbolic
Riemann surface $M$ of
constant negative curvature. 

This analogy goes quite far:
the geodesic flow, likewise,  has a unique measure of
maximal entropy, while the  stable horocycle flow 
is minimal, uniquely ergodic and of zero entropy.
This flow  $h_t$ preserves the stable foliation of the  geodesic
flow, explaining Vershik's terminology of {\em transverse dynamics}:
the horocycle flow cuts across the geodesic flow orbits. The pair now satisfies the continuous time {commutation relation}
$g_t\circ h_{e^t s}= h_s\circ g_t$,
giving us the  twin diagrams:
$$
\begin{CD}
F(M)     @>h_{e^t s} >>  F(M)\\
@VVg_tV        @VVg_tV\\
F(M)    @>h_s>>  F(M)
\end{CD}
\hspace{2cm}
\begin{CD}
\Sigma^+    @> T^2>>  \Sigma^+ \\
@VV\sigma V        @VV\sigma V\\
\Sigma^+    @>T>>  \Sigma^+ 
\end{CD}
$$

From this diagram, the horocycle flow is
isomorphic to a sped-up version of itself, agreeing with the fact that
its entropy is $0$. Now for the second diagram, since $\sigma$ is
$2$-to-$1$, this is not an isomorphism but a  homomorphism, thus
$T^2$ is homomorphic to $T$, but if we restrict to a $1$-cylinder set
then the first return (or {\em induced}) map on this subset is $T^2$,
and  the shift does give an isomorphism. This is the
{\em renormalization} of $T$ to the induced map 
and the fact that this is an isometry reflects an exact
self-similarity of the geometry and dynamics of the original set with
the subset (much as for the Feigenbaum fixed point). This renormalization is moreover
{\em dynamically realized}  by the dynamics of the shift map.
By analogy one can say that the horocycle flow is  also a renormalization
fixed point as it is 
 renormalized to itself  by the 
geodesic flow, see \cite{Fisher03-1}.

This is the case of the odometer.
In fact, all stationary adic transformations $T$ with their shift map
$\sigma$ can be naturally embedded in a pair of 
flows which satisfy the commutation relations of $(g_t, h_t)$.
The flow space is a
{\em solenoidal } space, meaning that the space is locally of the form
(Cantor set)x($\R^2$). Compare \cite{Sullivan87}.
For now we mention
three further motivating examples from this broader context 
before we
get to the specifics of the current paper.

Let $A$ be an Anosov automorphism of the (two)-torus. Its stable flow (the unit speed
flow along its stable foliation) has a
circle crossection, on which the return map
is an irrational rotation by a quadratic
irrational. The suspension of the
toral map defines a three-manifold which
is acted on by  a
{\em vertical flow}, which takes the role of  the geodesic flow, and
by two {\em horizontal flows}, the stable and unstable flows $h_t^s, h_t^u$,
which preserve the levels of constant
height and are
uniquely ergodic and minimal on each level. The  circle rotation
angles for these two transverse flows are {\em dual} in
that their continued fraction expansions are written in reverse
order, forming one periodic sequence of positive integers when joined
back-to-back. See \cite{ArnouxFisher00} and \cite{ArnouxFisher05}.

Generalizing from genus one (the torus) to higher genus Riemann
surfaces, the analogous picture holds, with the hyperbolic toral automorphism
now replaced by a pseudoAnosov map of the surface, and the quadratic circle rotations
 exchanged for two dual 
 interval exchange transformations of periodic type. Also here
 one can combinatorially build the solenoidal space, recovering the
 three-manifold by a further gluing.

For a second example, analogously
a substitution dynamical system is
transverse to its substitution map. For the third example,  a
stationary cutting-and-stacking map is
transverse to the cutting-and-stacking operation.
In all these cases, further explored in later work, the
transverse  systems
can be modelled by stationary
adic transformations (more precisely, as a factor if the 
substitution is {\em nonrecognizable}
\cite{Mosse92}), transverse to a shift map so the commutation relation
is satisfied for the pair, and the transverse maps
share with circle
rotations the attributes of being
minimal,
uniquely ergodic, and of zero entropy, as contrasted to the 
dynamics of hyperbolicity, with positive entropy, many periodic points
and many invariant measures.

\medskip

\subsection{The nonstationary adic setting} 

\smallskip

When transferred to the adic setting, all of these examples  can be
realized as stationary
adic transformations. The measure theory of these maps 
changes in a fascinating way when the picture is opened up to include
nonstationary adic transformations, that is, when the single matrix
for the \sft \, of the stationary case is replaced by a matrix {\em
  sequence}, and the finite graph of the \sft\, by the locally finite,
infinite graph of the
resulting Bratteli diagram. For the case of interval exchanges
this corresponds to nonperiodic combinatorics, equivalently a
nonperiodic path in the Rauzy graph.
An exchange of two intervals is homeomorphic to a rotation of the
circle, 
and nonperiodic combinatorics means this rotation angle has a
nonperiodic continued fraction expansion, 
equivalently is a nonquadratic irrational number. Now quadratic irrationals
are what one sees for the return maps for 
the stable flows of an Anosov map of the torus $\R^2/\Z^2=
\R/\Z\times \R/\Z$, to one of the circles $\R/\Z$.
 The move to include
 general  irrational angles is described in \cite{ArnouxFisher00} and
\cite{ArnouxFisher05}: the 
 single Anosov  map should now be replaced by a
{\em nonstationary} dynamical system,  defined by a sequence of toral
automorphisms, satisfying a condition of  asymptotic hyperbolicity.

For irrational circle rotations one still has not only minimality but
unique ergodicity; however  for
more general interval exchanges 
this  question  becomes much more subtle.
As discovered by Keane
\cite{Keane77}, for  four or more intervals there exist maps which are
minimal 
but 
not uniquely ergodic, thus giving
a nontrivial simplex of invariant probability measures. See also
Keynes-Newton \cite{KeynesNewton76} and Veech \cite{veech1969strict}.

Such ``Keane counterexamples'' examples are, however,  {\em
  exceptional} since, as shown by Veech \cite{Veech82} and
Masur \cite{Masur82}, almost every interval exchange transformation (this means 
for a.e.~vector in the probability simplex of subinterval lengths)  is
uniquely ergodic. These exceptional maps involve some fascinating
geometry. The interval exchange sits as a crossection to a foliation
of a  Riemann surface; the
proofs involve the Teichm\"uller flow of this surface.
See \cite{yoccoz2010interval} and  \cite{Viana08}, \cite{Viana2016}.

Indeed, the  sequence of matrices defining the Bratteli diagram for
the
adic transformation of the interval exchange 
corresponds to
a random walk path converging to a
point in  the boundary
at infinity 
of the mapping class group of the surface, and  the counterexamples
correspond exactly to points where the Thurston boundary is
different from the Bers boundary \cite{Kerkhoff85} and \cite{masur1982two}.

The study of cutting-and-stacking maps has included nonstationary
construction procedures since the outset; these have been used to
study general questions in  ergodic theory, including the 
construction of interesting counterexamples. See e.g.~\cite{bruin2022topological}.
Further examples of  nonstationary adic transformations come from
nonstationary  
substitution dynamical systems ({\em nsds}) or {\em S-adic systems}.

In the present paper we want to isolate and understand these
 phenomena in the
general context of adic transformations, continuing our work in
\cite{ArnouxFisher00}, 
\cite{Fisher09a} and
\cite{FerencziFisherTalet09}.
The special emphasis here will be on
nonprimitive, nonstationary diagrams (by which we mean equivalently that the matrix sequence
has these properties) and on measures which are
locally infinite but still tractable.

\medskip

\subsection{Overview of the paper}
The step we take here toward this understanding is to present
 a complete classification of the  invariant
 Borel measures
 for Vershik's  adic transformations defined on the one-sided path space of a
Bratteli
diagram of {\em finite rank}, i.e.~we
have  bounded
alphabet size. In  \cite{Fisher09a} we addressed the  nonstationary
primitive case;
now we allow for 
nonprimitive diagrams,  where fascinating new
phenomena can occur.

 We assume in this paper  that the 
the measures are  finite on some
subdiagram. Here a {\em subdiagram} of a given Bratteli diagram is
defined by erasing some edges or vertices, see \S \ref{ss:towers}.
We emphasize that the measures
may for example be
infinite on every nonempty open set for the original diagram.
We further classify the invariant Borel measures  for related
objects called {\em adic towers}, Def.~\ref{d:adictower}.

A key formalism that we need throughout is the notion of
{\em generalized matrix}.  This allows us to conveniently define {\em
  submatrices}  and describe the relation to {subdiagrams}.

 The idea of a generalized  matrix (Def.~ \ref{d:generalizedmatrices})
 is simple but far-reaching: we use unordered sets to index
 the matrix entries. Thus,
 given alphabets $\A,\B$, an $(\A\times\B)$\,--\,
 generalized matrix $M$ with entries in a ring $R$
 is a function from $\A\times\B$ to $R$.
Matrix addition and multiplication are  defined in
the usual way. If the ring is ordered, 
then the matrices  are partially ordered as
follows: if $\A\subseteq \wh \A$, $\B\subseteq \wh \B$  and for all $(a,b)\in  \A\times  \B$ we have
$M_{ab}\leq
\wh M_{ab}$, then we say $M\leq \wh M$. More
generally the entries can be rectangular matrices when the 
multiplication of entries makes sense. 

One then sees that Bratteli diagrams $\mathfrak B,\wh {\mathfrak B}$ defined by
sequences of 
generalized matrices $M, \wh M$ are {\em nested diagrams} (or {\em
  subdiagrams}), written $\mathfrak B\leq \wh {\mathfrak B}$, iff these are
submatrix sequences, thus $M\leq \wh
M$ meaning this holds for all times. See
Def.~\ref{d:subdiagram}.

This concept is of special importance in our paper for multiple reasons: it unifies the definitions of vertex
and edge subdiagrams; it facilitates definitions of canonical
cover, of distinguished eigenvector sequence, of reduced matrix
sequences, of adic towers, and of the block structure of
matrices and hence the Frobenius Decomposition Theorem.

A key notion in ergodic theory and dynamics is that of the
{\em induced} or {\em Poincar\'e map}, the transformation given by
the first return of a measure--preserving transformation to a
subset. When the
subset has
finite positive measure and the total measure is finite, one has the
following hierarchy of results: the Poincar\'e Recurrence
Theorem  states that the return time of a point to the subset is
a.s.~finite, while  Kakutani's
theorem  tells us  that in the invertible, 
ergodic case the tower constructed over
the induced map on the subset with height equal to this return-time function minus
one is
isomorphic to the original map. This leads to an easy ``tower'' proof of Kac's Theorem
which says that if the total mass is one, the expected return time is finite,
equalling 
the
reciprocal of the measure of the subset. Lastly
 the
Birkhoff Ergodic Theorem complements
this with its (much stronger) statement that
``time average equals space average'', so the
frequency of returns of a.e.~point to the subset exists and equals the subset measure.

Now usually  one would never  consider induced maps on a
measure zero subset, since in general none of these theorems
will be true. 
However for adic transformations, as we shall see, there exist naturally
defined measure zero
subsets for which this is interesting indeed, as {\em every} point is
recurrent. In short, one produces from this a recurrent, possibly infinite
measure, ergodic transformation.
To carry this out, we place on the measure-zero subset a
probability
measure which is invariant for the induced dynamics. Following
Kakutani's idea we then build the tower over this base map. 
 The tower height is the return time, and its expected value may,
depending on the situation, be either finite or
infinite, giving   the tower respectively a finite or an infinite
invariant measure and hence (since the tower is an invariant subset)
an interesting new invariant measure for 
the
original map.
Moreover in   some special cases with this measure infinite, a statement
of the form ``time average equals space average'' can still hold, 
first normalizing by the Hausdorff dimension and then applying a log
average: this is  an
{\em order-two ergodic theorem}  \cite{Fisher92}; see also   \cite{MedynetsSolomyak14}.

Let us suppose   the tower base is the path space of  a subdiagram.
As a consequence of the tools developed in this paper, we present  a necessary and sufficient
condition to decide 
whether the resulting  tower measure is finite or infinite. A
concrete
example  is given by  {\em nested circle
  rotations}, see Example \ref{exam:nestedrot}. Now as mentioned, an irrational  circle rotation, $R_\theta: x\mapsto
x+\theta$ on $\T=\R/\Z$  can be realized as the exchange of two
intervals. Precisely, writing the ratio of shortest to longest of
these as
the continued fraction 
$$\alpha= [n]=[n_0\dots n_k \dots] \equiv \cfrac{1}{n_0+
 \cfrac{1}{n_{1}+\dotsb 
}}\in (0,1),$$ the rotation is $R_{\theta}$ where $\theta=
\alpha/(1+\alpha)$
if the shortest is on the left and 
$1/(1+\alpha)$ if on the right.

Suppose we are given two
such continued fractions $[n]$ and $[\wh n]$ with $n_i\leq \wh n_i$
for all $i$.  We call  $(\T, R_{ \theta})$, $(\T, R_{\wh \theta})$ {\em nested
  rotations} for the following reason: there exists  a subset
  $C_{\theta}\subseteq \T$ such that every point of
  $C_{ \theta}$
  is recurrent for $R_{\wh \theta}$, with the induced (i.e.~first
  return) map $T$ on this
  subset measure-isomorphic 
  to the rotation $(\T, R_\theta)$.
  Writing $T_\theta$ for
  the $R_{\wh \theta}\,$-orbit of $C_{\theta}$ inside of $\T$, then
  $(T_\theta, R_{\wh \theta})$
  is
  the 
  tower over the induced map $(C_{\theta},T)$.
Now if  
$n_i< \wh n_i$  infinitely often, then $C_{\theta}  $ is a
Cantor subset of the circle $\T$.
 As we show below this tower transformation
   is {\em (finite or infinite) uniquely ergodic} in the following sense:  it has a unique invariant
   measure,
   up to multiplication by a constant, 
which is positive finite on some open set.
   The  critical point here is that this holds for the intrinsic
   tower
   topology, that is, for the {\em relative} topology on $T_\theta $ as a subset of $\T$. By contrast,
   this is {\em false} for the ambient circle $\T$;
   indeed,   as a consequence of the general theory
   developed here, the tower measure is either  finite, or infinite, on every nonempty
   open subset of this  circle; setting $\lambda_i=
   ([n_in_{i+1}\dots])^{-1}$ and $\lambda_0^n=
   \lambda_0\lambda_1\cdots \lambda_n$, the tower has  finite total mass iff
   $\liminf({\wh \lambda_0^n})/({\lambda_0^n})<\infty.$
   See Example \ref{exam:nestedrot}  below for the measure theory and
later work in preparation
   regarding the geometry of this example.

We note that infinite-measure unique ergodicity of this sort has been previously
 shown to hold for some related examples: the stable horocycle flow of a
Riemann surface of second type \cite{Kenny83}, \cite{Burger90},
and the Integer Cantor Set
example of \cite{Fisher92}, built there as a 
 substitution dynamical system, and described in the next section, and
 for which we present here  two topologically different though
 measure-isomorphic models, showing in a simpler setting 
 the same phenomenon which occurs for the nested rotations.

 \medskip
 
\subsection{The general framework of the paper}
Given a finite  {\em alphabet } $\A$ and an $(\A\times \A)$
    matrix $M$ with                             nonnegative integer
    entries, we define a graph with vertices $\A$ and $M_{ab}$
    directed edges from $a\in \A$ to $b\in\A$. The collection of edges
    is a nonordered
    set $\E$ called the {\em
      edge alphabet}.
    The {\em two-sided} or {\em bilateral (edge) subshift of finite type} (\sft)
    associated to $M$ is the collection of all biinfinite 
    {\em allowed edge paths} $\Sigma_M=\e= (\dots
    e_{-1}.e_0e_1\dots)$ in this diagram (called the
    {\em graph of the \sft}), that is, such that $e_i\in \E$
    and the edges follow each other in the diagram.
    These are directed edges, with edge $e_k\in \E$ an arrow from a
   symbol $a\in \A$ to $b\in \A$, written
    $e_k^-=a$ and $e_k^+=b$; thus  $e_{k+1}$ {follows}
    $e_k$ iff $e_k^+=e_{k+1}^-$.
    Equivalently such a graph defines a
  matrix and hence an \sft. The corresponding {\em one-sided} or
  {\em unilateral} shift space is
  $\Sigma_M^+= (.e_0e_1\dots)$. These are acted on by the {\em (left) shift
    map} $\sigma:  (\dots
    e_{-1}.e_0e_1\dots)\mapsto (\dots
    e_{-1}e_0.e_1\dots)$, respectively $\sigma:  (.e_0e_1\dots)\mapsto (.e_1e_1e_2\dots)$.
The alphabets are given the discrete topology, the product spaces (the
{\em full shifts}) 
$\Pi_{-\infty}^\infty\E$, $\Pi_0^\infty\E$ the associated product
topologies
and the subshifts the relative topologies. These are generated by the
{\em cylinder sets} such as $[.gh]= \{e\in \Sigma_M^+: e_0=g,
e_1=h\}$ which are clopen sets.
    
  The notion of Bratteli diagram extends
  this to the nonstationary setting. In the usual one-sided version (see
  e.g.~ \cite{ArnouxFisher05} for {\em biinfinite}, {\em bilateral}
  or {\em two-sided diagrams}) we have 
 an infinite, locally finite graph, specified 
by a  sequence  $(\A_i)_{i\geq 0}$ of  finite nonempty sets called (vertex) 
{\em alphabets}, a 
sequence of $(\A_i\times \A_{i+1})$ nonnegative integer matrices $M=
(M_i)_{i\geq 0}$
and {\em edge alphabet} sequence  $\E =(\E_i)_{i\geq 0}$, such that there are
$(M_i)_{ab}$ directed edges of $\E_i $ connecting the {\em symbol} or
{\em letter} 
$a\in\A_i$ to the symbol $b\in\A_{i+1}$. We write $\Sigma_{M}^{0,+}$ for the
collection of    infinite edge paths $\e= (.e_0e_1\dots)$ in the
diagram, and define  $\Sigma_{M}^{k,+}$ to be  {\em the paths
starting at time} $k\geq 0$, $\e=
(.e_ke_{k+1}\dots)$.  The disjoint union 
$
\Sigma_{M}^+\equiv \coprod_{k=0}^\infty \Sigma_{M}^{k,+}$ is the 
{\em nonstationary shift of finite type} (\nsft) defined
from the Bratteli diagram; by definition of the disjoint union
topology, each component is a clopen
set. This 
is acted upon by the left shift map  $\sigma$, which sends the edge path
$\e= (.e_0 e_{1}\dots)$ to 
 $\sigma(\e)= (.e_{1} e_{2}\dots)$ and so on, mapping
 the $k^{\text{th}}${\em component} $\Sigma_{\M}^{k,+}$ to the $(k+1)^{\text{st}}$:

 \begin{equation}
   \label{eq:nsft}
   \begin{CD}
 \Sigma_{\M}^{0,+}  @>\sigma>>    \Sigma_{\M}^{1,+}    @>\sigma>>  
\Sigma_{\M}^{2,+} 
 @>\sigma>> \Sigma_{\M}^{3,+}   
\; \cdots\\
\end{CD}
\end{equation}

The {\em stable set} or {\em stable equivalence class} (in the smooth
setting, the {\em stable manifold})
of a point $\e\in \Sigma_{M}^+$ with respect to $\sigma$,
that is, all points in $\Sigma_{M}^+ $ which are forward asymptotic to $\e$,  is 
$W^s(\e)= \{\tilde{\e}\in \Sigma_{M}^{+}:\, \exists j\geq k\text{ with } 
e_i= \tilde e_i \text{ for all } 
i\geq j\}$. We note that if $\e\in \Sigma_{M}^{k,+}$, then  $W^s(\e)\subseteq \Sigma_{M}^{k,+}$.

An {\em order} $\O$ on 
 the Bratteli diagram is a total order on the collection of all edges
 which enter a given symbol at some level $i\geq 0$. The resulting
anti-lexicographic order on 
 the set of edge paths
totally orders each stable set
$W^s(\e)$ for $\e$ in the \nsft. We write $\NS$ for 
the collection of paths with no successor  in the order, and $\NP$
those with no predecessor; these
are finite sets in each component. The   {\em Vershik map} 
$T_\O$ sends each path to its successor, mapping, for each component $k$, 
$\Sigma_{M}^{k,+}\setminus
\NS$ bijectively to $ \Sigma_{M}^{k,+}\setminus\NP$. 
This successor map  can be visualized geometrically:  draw a stable
equivalence class as an upside-down  tree, with
trunk extending infinitely upwards as time in the Bratteli diagram
goes to $+\infty$; 
the order defines a planar embedding of the tree, with
the Vershik map sending one branch to the next from left to right; see
 Figs.~\ref{F:ChaconTreeB} and \ref{F:ICSCurtainTreeNew}. Writing $ \Cal N$ for the countable
set of forward and backward iterates of $\NS$  and $\NP$ (where defined), then  the restriction of  $T_\O$ to the
invariant set $ \Sigma_{M}^{k,+}\setminus\Cal N$ gives a bijective
map. This defines the associated 
 {\em adic transformation}; its domain of definition is, for a given
 component,   the collection of edge
paths for those
 stable trees which branch out infinitely on both sides.

 In the case of a nonstationary shift space, we emphasize that only the
 shift map is a {\em nonstationary dynamical system} or {\em mapping family}
as defined in ~\cite{ArnouxFisher05}, that is,  
  a sequence of
  maps along a sequence of spaces: the components $ \Sigma_{M}^{k,+}$.
  Each component $ \Sigma_{M}^{k,+}$ is
 acted on by the transverse stationary dynamics of the adic
 transformation $T_\O$
 restricted to that  component.

 In the simplest situation, 
the Bratteli diagram is {\em stationary} 
(all the alphabets and matrices are identical, $M_i= M$ for all $i$) and {\em primitive}:
there exists $n$ such that
all entries of $M^n$ are greater than $0$.

In this case the locally finite, infinite graph of the 
  Bratteli diagram countably covers
the (finite) graph of the  \sft \; $\Sigma_M^+$,  defined from
$M$ as an edge shift space, and each component $\Sigma_{M}^{k,+}$
equals $\Sigma_M^+$. 
Applying the Perron-Frobenius theorem, the matrix $M$ has a  largest eigenvalue  $
\lambda>1$ associated to the unique nonnegative
right and left eigenvectors $\w$ and $\v^t, $ normalized so that $||\w||\equiv \sum
|w_i|=1$ and $\v\cdot \w=1$. The shift map 
has a unique measure of maximal entropy $\mu$, with the entropy of
$(\Sigma^+_M, \mu,\sigma)$
equal to
$\log \lambda$.

This measure has a particularly nice algebraic formula, 
given (as shown independently by Shannon in Information Theory as well
as  Parry in Ergodic Theory)  by
the right and left Perron-Frobenius
eigenvectors, see  \S \ref{ss:ParryMeasures}:
$$\mu[x_0\dots x_{n}]= 
\lambda^{-n}v_{x_0}w_{x_n}$$

By contrast any 
adic transformation $T_\O$ defined by choosing  some order $\Cal O$ on the same path space $\Sigma_M^+$ has zero
topological entropy, and  is both minimal and uniquely ergodic, with
unique
invariant probability measure.
This measure $\nu$, known as the {\em Parry eigenmeasure} or in
Vershik's terminology the {\em central
  measure}, also has a nice formula, given by
modifying Parry's formula to use only the right eigenvector, see  \cite{Fisher09a}:
$$\nu[x_0\dots x_{n}]= 
\lambda^{-n}w_{x_n}$$

The uniqueness of the central
measure in this stationary primitive case   is a consequence of Lemma
2.4 of
\cite{BowenMarcus77}. 
In 1977 Vershik's adic transformations had not yet been
defined, and what Bowen and Marcus proved was uniqueness of a
probability measure  invariant for the tail equivalence
relation, that given by the partition into stable sets; this is equivalent to 
 invariance with respect to the action of a
countable group of 
homeomorphisms, the group $\FC$ of
finite coordinate changes and, with the assumption of primitivity, is
equivalent to invariance for any adic transformation acting on the
space. (The generators of $\FC$ are the involutions
$\gamma:[. e_ke_{k+1}\dots
e_n]\mapsto [.f_{k}f_{k+1}\dots f_n] $ where $e_n^+= f_n^+$).  See \S \ref{p:conserg} and \cite{Fisher09a}.

To study  the nonstationary case, we first
generalize 
 the  standard definition,   saying as in \cite{Fisher09a} that  the
 {sequence} $M= (M_i)_{i\geq 0}$
is {\em primitive} if and
only if for every $k$ there exists an
$n>k$ such that $M_k^n\equiv M_kM_{k+1}\dots M_n$ is strictly
positive. (It follows  that this
holds for {any} larger $n$ as well.)
As for a single matrix, this condition guarantees minimality for any adic
transformation defined on the path space $\Sigma_{M}^{0,+}$ of the
Bratteli diagram, and also for the
 tail equivalence relation and, equivalently,
 for the action of 
 $\FC$. The formula for Parry measure extends in a natural
 way to the nonstationary situation: one now has (at least one)
 right and left
 nonnegative 
 {\em eigenvector 
   sequence} and
 a corresponding sequence of Parry measures, developed in \cite{Fisher09a}. This leads,
 moreover, to 
 a necessary and sufficient condition for unique
ergodicity of the adic transformation: that the associated nonstationary dynamical system, the nonhomogeneous Markov chain of
Parry measures, for {\em any} of these Parry measure sequences, be  mixing. The (stationary) mixing of the Parry
measure was a key ingredient in the proof of Bowen
and Marcus, and that proof carries through to the nonstationary setting, given an appropriate 
definition of nonstationary mixing; see Theorems 1.2, 1.3 of \cite{Fisher09a}. 
This has an equivalent algebraic expression,  that there exist
a single normalized nonnegative right eigenvector sequence of eigenvalue
one: the  {\em Perron-Frobenius property}, and an equivalent geometric
version, that positive cones pulled in from near infinity nest down to
a unique direction. 
In contrast to the case of a single matrix, this condition is
no longer implied by primitivity; see
\cite{FerencziFisherTalet09} for some examples where one has minimality but
not unique ergodicity. 

Regarding the left eigenvector sequence, in contrast to the case of a
single matrix, even if the right
sequence is unique there are always many such sequences, as one  can start
with any  nonzero, nonnegative vector at time $0$. See 
\cite{Fisher09a}.

This all  concerns  the primitive case, but
once one removes the simplifying primitivity
assumption,  the way  is opened to a  rich world of new
phenomena, focused on in
\cite{BezuglyiKwiatkowskiMedynetsSolomyak10},
\cite{BezuglyiKwiatkowskiMedynetsSolomyak13} and in the present study.

We illustrate this with some already interesting stationary examples.
For the first, the Chacon adic transformation, the 
associated matrix is $M=\left[ \begin{matrix}
1 & 1\\
0 & 3
\end{matrix}  \right].$ The (stationary) Bratteli diagram has for vertices 
  the alphabet  $\A=\{0,1\}$; the order on the diagram which defines
  the adic transformation is conveniently specified
  by a substitution, $\rho(0)= 0, \rho(1)= 1101$; algebraically this
  defines an  automorphism of the free semigroup generated by $\A$, and the 
the matrix $M$ is the {\em abelianization} of this
substitution and   automorphism. We recall the construction of the related substitution
  dynamical system.
Beginning with a  {\em fixed point}
associated to $\rho$, $ \lim \rho^n(.1)=(.1101\, 1101\, 0\, 1101
...)\equiv (.a_0 a_1\dots)$, first extend this in an arbitrary fashion to
a biinfinite string
 $a=(\dots a_{-1}.a_0 a_1\dots)$;  then take the
$\omega$\,--\,limit set of $a$  within the compact space 
$\Pi_{-\infty}^{+\infty}$ acted on by the left shift $S$, that
is, 
$\Omega_{\rho,a}\equiv \cap_{n\geq 0}(\cl \{
S^k (a) \}_{k\geq n})$ where $\cl$ denotes the closure.    As shown by Ferenczi~\cite{Ferenczi95},
\cite{Ferenczi02}, this system 
has a single invariant probability
measure.

\medskip

\begin{figure}
$$ 
\xymatrix
{0 & 
0\ar[l] |e& 
0\ar[l] |e& 
0 \ar[l] |e \cdots
 \\ 
1 &
1 \ar[lu]|c\ar@<1ex>[l]|a \ar@<-1ex>[l]|d \ar[l]|b&
1 \ar[lu]|c\ar@<1ex>[l]|a \ar@<-1ex>[l]|d \ar[l]|b&
1  \ar[lu]|c\ar@<1ex>[l]|a \ar@<-1ex>[l]|d \ar[l]|b \cdots
}
$$
\caption{Bratteli diagram for the  Chacon adic transformation: the
  substitution $\rho(0)= 0$, $\rho(1)= 1101$; in this figure, arrows
  depict the direction of the substitution map, while  the
   edge path arrows, indicating the direction of time,  point in the
   opposite sense.
   The substitution corresponds to the edge
  order $a<b<c<d$; edge $e$ is ordered trivially as it is the only
  incoming edge to vertex $0$.
  }
\label{F:ChaconOne}
\end{figure}
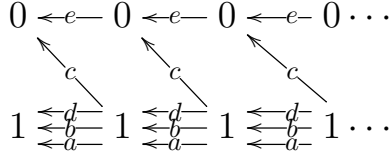

\begin{figure}
\centering
\includegraphics[width=2in]{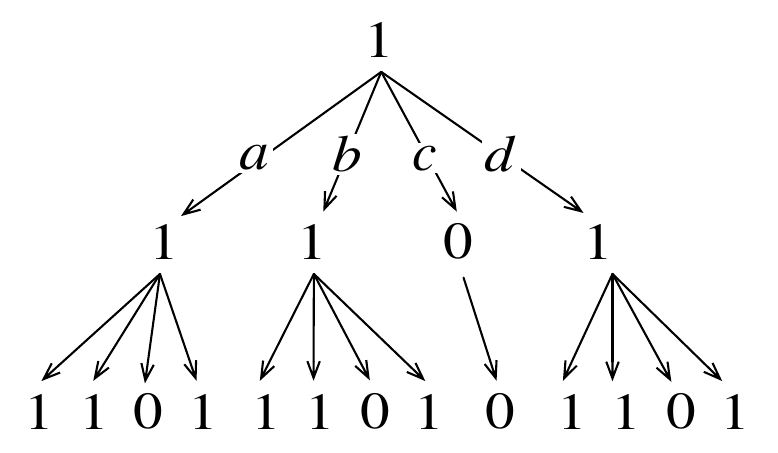}
\hspace{1in}
\caption{Stable tree picture showing simultaneously  points of the Chacon  adic transformation and substitution dynamical
  system, indicating the Livshits factor map:  the substitution maps downwards, vertex and edge
  paths are
  vertical,   while a symbol string  from the
  substitution dynamical system at with time $0$ is horizontal along the
  lowest row.
  }
\label{F:ChaconTreeB}
\end{figure}

A second proof is given in~\cite{FerencziFisherTalet09}; as we showed there, the adic
transformation defined by the substitution is, despite the
nonprimitivity,  
both minimal and uniquely
ergodic. Now
as 
observed by Livshits \cite{Livshits88}, for the primitive case with
an aperiodic string $a$, the adic transformation and the system $(\Omega_{\rho,a},S)$
are naturally isomorphic; this is the {\em recognizable} case
mentioned above. The general statement is that one has a semiconjugacy from the adic
transformation onto the (possibly nonstationary) substitution
dynamical system.
From this canonical {\em Livshits factor map} it follows that unique ergodicity
 holds for the latter, giving a second proof of Ferenczi's result. 

This argument made use of the order on the Bratteli diagram.
A third proof, based on powerful methods developed in
~\cite{BezuglyiKwiatkowskiMedynetsSolomyak10}, is  independent of the
 order: as a consequence of Theorem 2.9 in that paper, see
Theorem \ref{t:basic_thm} below, for the action of
$\FC$ on the adic path space $\Sigma_{M}^{0,+}$,  there are two
ergodic invariant probability measures, one 
 nonatomic and the second (noting that for $\underline
 e\equiv(.eee\dots)$, $W^s(\ee)= \{\ee\})$ is a point mass
on $\ee$, the single fixed point for $\FC$. See Example \ref{ex:Chacon}. 

 But for the adic transformation this
point is removed from the space as it belongs to $\CN$.
To address this last behavior, which can only occur in the nonprimitive
case, we say as in~\cite{FerencziFisherTalet09}
that a group action on a Polish space is {\em minimal} iff every orbit
is dense (note that we do not require the space to be compact) and is 
{\em essentially}
minimal iff it is minimal after removing a countable set (recalling
here that the remaining space will still be Polish, indeed by a
theorem of Alexandrov, any $G_\delta$ subset of a Polish space is Polish) 
and is {\em essentially} uniquely ergodic iff there is a unique nonatomic
invariant probability measure. One can then show (Proposition \ref{p:conserg})
that essential
minimality  
corresponds for an adic transformation and for the action of $\FC$, and
similarly for 
essential unique ergodicity.

Concluding, we have that for the Chacon example  the action of
$\FC$ is  essentially minimal and essentially uniquely
ergodic, while this implies minimality and unique
ergodicity for 
the 
adic transformation $T_\O$ on the noncompact space $ \Sigma_{M}^{+}\setminus\Cal N$, for
any order  $\O$, and hence for the substitution dynamical system.
See Example \ref{ex:Chacon}.

Interestingly, 
the situation changes radically if we slightly alter the 
matrix entries. If, for instance, we replace the matrix by 
$M=\left[ \begin{matrix}
2 & 1\\
0 & 3
\end{matrix}\right],$ then,  again by the methods  of
\cite{BezuglyiKwiatkowskiMedynetsSolomyak10}, 
 there will now be {\em two} nonatomic ergodic invariant
probability measures for $\FC$,  while if we switch the diagonal
entries to have
$M=\left[ \begin{matrix}
3 & 1\\
0 & 2
\end{matrix}  \right]$ then there are still up to rescaling  exactly {two}
locally finite ergodic invariant
measures, except now one of 
them has {\em infinite}
total mass.

A  substitution dynamical system where an infinite measure occurs
naturally 
is the 
Integer Cantor Set map of~\cite{Fisher92}, defined from the
substitution 
  $\rho(1)= 101$, $\rho(0)= 000$
and fixed point $a=\lim \rho^n (0.1)= \dots 000.101\, 000\, 101\, 000000000\, 101\,
000\, 101...$.  See Fig.~\ref{F:ICSCurtainTreeNew}. Now $(\Omega_{\rho,a}, \sigma)$  has a single normalized nonatomic measure
which is positive finite on some open set;  we call this property
{\em infinite measure unique ergodicity}. That the measure is
infinite is related to the fact that the gap lengths in $a$ are so
long as to give an infinite expected return time to the symbol $1$.

\begin{figure}
\centering
\includegraphics[width=5in]{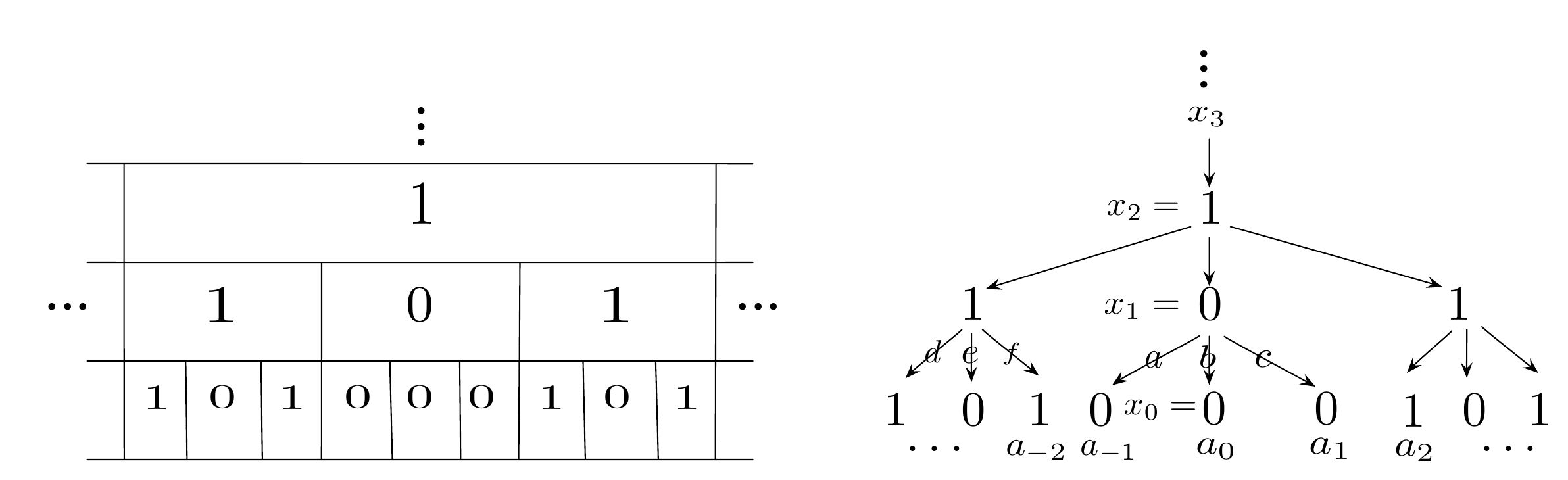}
\caption{A stable equivalence class of the  Integer Cantor Set 
  transformation, depicted in the curtain and stable tree models;
  $(x_i) _{i\in\N}$ is a vertex path 
 for the Bratteli diagram while $(a_i) _{i\in\Z}$ is 
  a
 point in the substitution dynamical system.
 }\label{F:ICSCurtainTreeNew}
\end{figure}

\

The abelianization of this  substitution has the matrix 
$M=\left[ \begin{matrix}
3 & 1\\
0 & 2
\end{matrix}  \right]$ just mentioned, and so one has a
new proof of Theorem 2 of~\cite{Fisher92}, the infinite measure unique
ergodicity:
 after factoring from
 the adic transformation $\Sigma_M^{0,+}$ to the substitution
 dynamical system $\Omega_{\rho,a} $,  the
invariant probability measure for the adic transformation becomes a
 point mass on another substitution fixed point, the string $(\dots 000.000\dots)$ and so is atomic and
 is ruled out of consideration, as this point does not belong to the
 $\omega$\,--\,limit set of $a$.

To study related examples in the nonstationary setting, 
it is useful to introduce a further notion, that of
{\em adic towers}, see Def.~\ref{d:adictower}.
Given two ordered Bratteli diagrams $\mathfrak B\leq \wh {\mathfrak B}$, one nested inside the other by the
removal of edges and/or vertices, then equivalently as noted above (see Def.~\ref{d:subdiagram}) the {generalized matrices}
satisfy $M\leq \wh M$, and 
 we consider the tower over the path space for $M$ inside of the larger
diagram. We are interested in  invariant Borel measures on $\Sigma_M^{0,+}$
which are positive finite on some open set of that path space,  and in the invariant
extensions of these to $\Sigma_{\wh M}^{0,+}$, which
may now 
be 
  infinite on each nonempty open subset of this larger path space
 and yet still be
amenable to study.

One class of examples comes from the nested circle rotations described
above. 
For the most basic example, let us consider a quite different approach to the
Integer Cantor Set map.
Beginning with the $3$\,--\,adic odometer, represented as an edge space by
the stationary $(1\times 1)$ matrix sequence $\wh M=[3]$, consider the 
subdiagram given by $M= [2]$. This last is uniquely ergodic, with its
unique invariant probability measure simply the ``coin-tossing'' 
Bernoulli infinite product measure on the edge path space, which is the
$2$\,--\,adic odometer. 
The tower over this, inside of the $3$\,--\,adic odometer,  is exactly the tower for the Integer Cantor Set
transformation illustrated in
Fig.~4 of~\cite{Fisher92}. It has infinite total mass, and is
infinite-measure uniquely ergodic, as defined above, with respect to
the tower topology,
whereas inside the  $3$\,--\,adic odometer space, the measure   is
infinite on all
nonempty 
open sets, see Lemma \ref{l:finite_on_all}.

We introduce the {\em canonical cover} of 
an adic tower, a path space into which the tower is embedded in a more tractable
way,
though the two towers themselves are measure theoretically  and topologically
isomorphic.
For the present example of $[2]\leq [3]$, the
canonical cover construction yields the matrix 
$\wt M=\left[ \begin{matrix}
3 & 1\\
0 & 2
\end{matrix}  \right]$ just mentioned, and the  tower now embeds in the larger adic transformation as an invariant
open subset. See Figs.~\ref{F:ICSNew} and \ref{F:ICSConstructingCover}.

\begin{figure}
\centering
\includegraphics[width=5.5in]{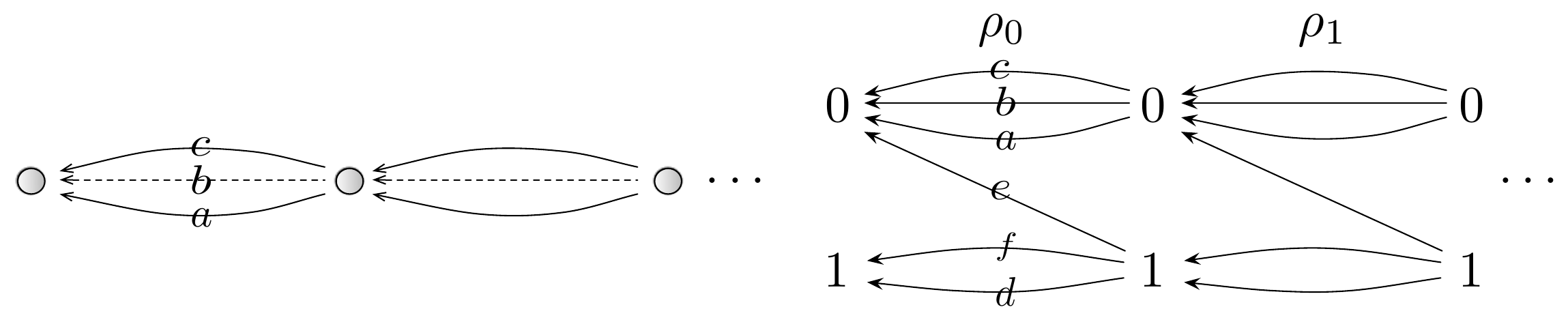}
\hspace{1in}
\caption{The Integer Cantor Set as a tower inside the
3-adic odometer, and  its canonical cover with substitution sequence $\rho_i\equiv\rho$.} 
\label{F:ICSNew}
\end{figure}

In the general case, given generalized matrix 
sequences $M\leq \wh M$ (and without loss of generality with equal
alphabets $\A=\wh \A$), then the canonical cover has matrix
$\wt M=
\left[ \begin{matrix}
\wh M& C \\
0 & M
\end{matrix}  \right] 
$
with $C$ satisfying 
$C_i=\wh M_i- M_i$ for each $i\geq 0$ (Theorem \ref{t:towerring}).
And this is what we have just described for the Integer Cantor Set example,
where $[3]-[2]= [1]$!

\medskip

\noindent
\subsection{Outline of \cite{BezuglyiKwiatkowskiMedynetsSolomyak10}}\label{sss:outline}
We now summarize and outline the remarkable  paper
\cite{BezuglyiKwiatkowskiMedynetsSolomyak10} which provided
key insights which we build on here. Then we describe the present paper
using an analogous outline, so the reader may more easily follow and
contrast both papers. See Remark \ref{r:choices} regarding
differences in notation.

The key result of 
\cite{BezuglyiKwiatkowskiMedynetsSolomyak10} is
a classification of the tail-invariant (equivalently
$\FC$\,--\,invariant) measures which are
 positive finite on
some open set,  for the stationary nonprimitive case, under two technical
assumptions which we do not require, as noted below. Our main objective in this
paper is to extend this in two ways: to 
the nonstationary setting,  part of which has been carried out in the
later paper~\cite{BezuglyiKwiatkowskiMedynetsSolomyak13}, see \S
\ref{ss:BKMS13}, and
to measures which
are finite in their restriction to the path space of some sub-Bratteli
diagram but which may be infinite on each nonempty open set.
The idea of canonical cover will play a key role in this last extension.
To describe our approach, we first 
review and comment on the theory developed in
\cite{BezuglyiKwiatkowskiMedynetsSolomyak10}, which can be
summarized in five steps,
and then sketch our  own versions (and extensions) of those steps and
the results this leads to.

\

\noindent
$(1)$ Given a nonnegative (and not
necessarily primitive) integer matrix $M$, the central measures (the
$\FC$\,--\,invariant probability
measures  
on the edge path space) correspond to 
(strictly) positive nonnegative (right) eigenvector sequences  of eigenvalue
one, normalized so the first vector is a probability vector. The
collection $\V_{\M}^\Delta$ of such sequences is a compact convex set,
see Lemma \ref{l:closedcone} below, whose
extreme points give the ergodic measures. An extreme point corresponds to
a nonnegative right eigenvector  $\w$, with eigenvalue $\lambda$; the associated 
eigenvector sequence with eigenvalue one is simply ${\bf w}_n= \lambda^{-n} \w$.

The correspondence between a central measure $\nu$ and
positive eigenvector sequence $\w$  is direct; considering a  
vertex shift this is
via the formula  $\nu([.x_0\dots x_n]) = ({\bf w}_n)_s$ where
$x_n=s$; the case of  edge shifts is similar. See    Theorems 2.9 and 3.8
of~\cite{BezuglyiKwiatkowskiMedynetsSolomyak10}, and 
 Theorem \ref{t:basic_thm}
below.

We emphasize that although the {\em ergodic} measures correspond to
eigenvector sequences of the particular form ${\bf w}_n= \lambda^{-n}
\w$, and hence to eigenvectors, to represent  the {\em non}ergodic  central
measures we need to consider more general nonnegative eigenvector
 {sequences}, as explained  in \S \ref{s:stationarycase}. This 
foreshadows  the general nonstationary case, for which see 
Theorem \ref{t:basic_thm}.

In \cite{BezuglyiKwiatkowskiMedynetsSolomyak10} there are two 
technical assumptions which our approach does not need. The first is that
the tail equivalence relation
be {\em aperiodic}
(meaning that each equivalence class is infinite).  Aperiodicity guarantees that one doesn't have point masses
for the $\FC$-invariant Borel measures. However point masses can occur
in interesting examples, such as the Chacon and Integer Cantor Set adics.
The second is  that
the diagonal blocks from the Frobenius form of the matrix be
primitive, ruling out the more general irreducible case, regarding which see
Theorem
\ref{t:stationaryirreducible}.

\

\noindent
$(2)$ What is commonly called the  {\em Perron-Frobenius Theorem}
(though this part is actually due to Perron):
that  a primitive nonnegative real
matrix has a
unique normalized nonnegative eigenvector, and this  has a positive
eigenvalue, which is maximal in
modulus. Combined with $(1)$ this gives another proof of 
unique ergodicity for the stationary primitive case considered in~\cite{BowenMarcus77}.

\

\noindent
$(3)$ What we shall call the {\em  Frobenius decomposition theorem},
see \S 3 of \cite{BezuglyiKwiatkowskiMedynetsSolomyak10} and Theorem \ref{t:FrobeniusTheorem} below:
a nonnegative  $(d\times d)$ matrix $M$ can 
be conjugated with
a permutation matrix so as to be put in {\em Frobenius normal form},
an upper triangular block form where the 
diagonal blocks are
 irreducible or 
{\em identically zero} (i.e.~zero for all entries).

For a proof, 
given nonnegative real $M$, we draw a graph with vertices the elements
or {\em symbols}
of the alphabet $\A$ (also called {\em states} of the system) and
with a directed edge from state $a$ to state $b$ 
iff $M_{ab}>0$. We then define $f_M:\A\to \A$ with  $f_M(a)=b$. That is
to say, the graph  defines a discrete dynamical 
system $f_M$ on $\A$. The dynamics generates
a partial order: we say that $a\leq b$ iff $b$ is in the $f_M$- orbit
of $a$,
equivalently there
exists some $m\geq 0$ such that $M^m_{ab}>0$, equivalently  there is a path of directed edges from $a$ to
$b$. In this case we say that 
$a$ {\em communicates
to} $b$.  As usual for partial orders we define $a<b$ iff $a\leq
b$ and $a\neq b$. Since $M^0=I$, $a\leq a$: each state communicates to itself.
See \S \ref{s:Frob}.
The {\em initial states} are the {\em sources} of this dynamical
system, i.e.~ $a$ such that there is no $b<a$,
and the {\em final} states are the {\em sinks}: $b$ such that there is
no $a>b$. 

A collection of states all of whose elements communicate to each
other is called  a {\em communicating class}. These are
the
  {\em basins} of this discrete dynamical system, see \S 4.4 of 
~\cite{LindMarcus95} and  \S \ref{s:stationarycase}. The basins
partition $\A$ and so define an
equivalence relation, with each basin an  
equivalence class of mutually
communicating states. The basins 
inherit the partial
order.

We reorder the alphabet with integers so as to be coherent with this partial
order. The result is that
after a permutation of the alphabet to reflect this grouping into
basins, the
matrix has been conjugated to  upper triangular block form, 
with irreducible or zero blocks on the diagonal. Lastly,  after
 the taking of a power 
to remove periodicity, one is left with  primitive or
zero blocks on the diagonal. Note that each primitive block
corresponds to a basin of communicating states; we say one of these
blocks communicates to another iff that  holds for the basins.
See Theorem \ref{t:FrobeniusTheorem}.
Our conventions, giving upper triangular matrices,  agree with
~\cite{FerencziFisherTalet09} and \cite{Fisher09a},
\cite{LindMarcus95}.

\

\noindent
$(4)$ The 
{\em Frobenius--Victory theorem}~\cite{Victory85}, see also Theorem 6
p.~77 of~\cite{Gantmacher59}, Theorem 4.12 of \cite{Schneider1986}, Theorem 2.1 of
\cite{TamSchneider00} and Theorem 2.1 of \cite{TamSchneider1994core}:
given a nonnegative real matrix in 
(block upper triangular) Frobenius normal form (assuming for
simplicity we have taken a power so the irreducible blocks are
replaced by primitive blocks) we have:

\begin{defi}\label{d:distinguishedStationary}(Distinguished eigenvector, stationary upper diagonal case)
  A nonnegative eigenvector for a primitive subblock (this is unique by the
Perron-Frobenius Theorem) is termed {\em distinguished}  if and only if its
eigenvalue is greater than that for the other subblocks which communicate
to it. 
\end{defi}

The Frobenius--Victory theorem then states two things: the
unique 
nonnegative eigenvector for the primitive subblock $B$
leads to a nonnegative eigenvector for the original matrix $M$ if and only if it is
distinguished; and in that case, the nonnegative eigenvector for $M$ can be reconstructed
from its projection on the subspace of the block via an algorithm. 
In this explicit way, the nonnegative eigenvectors of a nonnegative
 matrix $M$
are in bijective correspondence with the distinguished Perron-Frobenius eigenvectors of the primitive
blocks for the Frobenius normal form.

\

\noindent
$(5)$ 
 The conclusion is given in what we shall call the  {\em
   BKMS-Theorem},
combining point $(1)$ above with the Frobenius--Victory Theorem and with
Lemma 4.2  of~\cite{BezuglyiKwiatkowskiMedynetsSolomyak10}:
conservative 
ergodic Borel
measures which are
positive finite on some open subset of the path space correspond
to Perron-Frobenius eigenvectors of the primitive
blocks, and these have finite or infinite total mass according to
whether or not the eigenvector is distinguished.

This gives the classification noted above, of invariant measures positive finite
on some open subset.

\

Let us review the above
examples in this light.
All three matrices come to us already  in upper
triangular form. For the Chacon adic transformation
$M=\left[ \begin{matrix}
1 & 1\\
0 & 3
\end{matrix}  \right]$,
the nonnegative eigenvector $[1]$ corresponding to the diagonal block $[3]$ is
distinguished, since $3>1$, giving the nonnegative eigenvector 
$\left[ \begin{matrix}
1  \\
2
\end{matrix}  \right]$ for $M$, which defines  one  ergodic invariant 
measure. The second ergodic measure comes from the only other 
nonnegative eigenvector, associated to the upper left block $[1]$; 
this is $\left[ \begin{matrix}
1  \\
0
\end{matrix}  \right]$, and gives the point mass 
on the fixed point for $\FC$ referred to above. 
For $M=\left[ \begin{matrix}
2 & 1\\
0 & 3
\end{matrix}  \right]$, the block $[2]$ yields a finite invariant
measure (the dyadic odometer embedded in the path space) and the block $[3]$ is still
distinguished, so we have two ergodic  finite invariant Borel measures,
corresponding to the two nonnegative eigenvectors $\left[ \begin{matrix}
1  \\
0
\end{matrix}  \right]$, $\left[ \begin{matrix}
1  \\
1
\end{matrix}  \right]$.
For the Integer Cantor Set example,  the eigenvector $[1]$ for the block
$[2]$
of  $M=\left[ \begin{matrix}
3 & 1\\
0 & 2
\end{matrix}  \right]$ 
is not distinguished, since $2\leq 3$. Hence the associated 
measure for the adic transformation is infinite, 
and except for the $3$\,--\,adic measure associated to the block $[3]$ (and
excluded from the substitution dynamical system as explained above),
this is moreover up to multiplication by a constant the only ergodic
invariant Borel measure 
positive finite on some 
open subset, giving a new proof of  the infinite measure unique
ergodicity noted above. (We
calculate that the eigenvector for the eigenvalue $2$ is a multiple of
$(-1,1)$, so there is no nonnegative such eigenvector, agreeing with the
Frobenius-Victory Theorem.)

\subsection{Outline of  this paper}
The aim of this paper is to 
extend the 
dichotomy between finite and infinite measures given in Bezuglyi et al
~\cite{BezuglyiKwiatkowskiMedynetsSolomyak10} to the nonstationary
case, to measures finite on some subdiagram (while possibly infinite on
every nonempty open set), and to adic towers.

We begin, in \S \ref{s:invariant measures}, with an
analysis of the precise relationship of measures which are invariant
in various senses: for an adic transformation, for the tail
 $\sigma$\,--\,algebra, and for $\FC$, the group of finite coordinate changes. 
We also develop some basic tools: the notions of partial
transformations and of generalized matrices, and
the relationship between edge and vertex spaces.

After an initial  treatment of finite invariant Borel measures, described in
$(\wt 1)$ to follow, we move in \S \ref{ss:towers} and \S
\ref{ss:canonicalcover} to develop the ideas of adic
towers and canonical covers. This is what will allow for the reduction
of the general case of measures positive finite on some subdiagram to that of
measures positive finite on some open set.

To treat this more basic situation, we first
extend parts $(1)-(5) $ above to $(\wt 1)$\,--\,$(\wt 5)$ in the nonstationary case, adding
a sixth part $(\wt 6)$ and also an Appendix $(\wt 7)$.

\

\noindent
$(\wt 1)$ For matrix
sequences,   Theorem 2.9 of
~\cite{BezuglyiKwiatkowskiMedynetsSolomyak10} remains valid,
despite the rest of that paper
being focused on the stationary case, so we can use that part  here. See $(i)$ of Theorem \ref{t:basic_thm} for our own proof, included for completeness.

The general version is natural in ~\cite{BezuglyiKwiatkowskiMedynetsSolomyak10} for two reasons. First, as
mentioned in  $(1)$ above, nonnegative eigenvector sequences are necessary for the
representation of nonergodic measures even in the stationary case. And
secondly, both the
statement and the proof for this part  are no
more difficult for a sequence than for a single matrix.

Now the ergodic measures correspond to the extreme points of the set
$\V_{\L}^\Delta$ of normalized positive eigenvector
sequences in both cases, see 
 $(ii)$ of Theorem \ref{t:basic_thm}, 
but for  the stationary  case these are determined by actual nonnegative eigenvectors of the
matrix, which simplifies  the count of the extreme points.
See
\S \ref{ss:Count}.

\

\noindent
$(\wt 2)$ For the nonstationary primitive case, the  Perron-Frobenius Theorem is replaced by the 
study of nested cones and convex collections  of nonnegative eigenvector sequences
~\cite{Fisher09a},
as primitivity no longer implies uniqueness~\cite{FerencziFisherTalet09}.

\

\noindent
$(\wt 3)$ We prove {\em a nonstationary Frobenius decomposition theorem}.
For this, see \S \ref{s:Frob}, we
 decompose the
path space of a nonstationary Bratteli diagram with bounded alphabet
size into {\em primitive streams} together with {\em pool elements}. As a consequence,  after a {\em
  gathering} of the matrix sequence (a taking of partial products,
equivalently a {\em telescoping} of the diagram), and after the
(nonstationary) reordering of the alphabets indicated by the streams, the matrices are now in 
upper triangular block form with primitive or zero blocks (these last
corresponding to the pool elements)  on the
diagonal.

\

\noindent
$(\wt 4)$ In \S \ref{s:Vict} we prove our nonstationary version of the  {\em
  Frobenius--Victory theorem}.
The first task is to extend the standard stationary definition of
Def.~\ref{d:distinguishedStationary}, formulating an appropriate notion of
{\em distinguished eigenvector sequence} of
eigenvalue one for the primitive subblocks.  The guiding idea is to
find a
necessary and sufficient condition for the existence of  nonnegative eigenvector
sequences for the original matrix sequence, with these defined in an 
algorithmic way, which is valid for not only the nonstationary upper
triangular case
but far beyond that, to general submatrices. This is carried out in Def.~\ref{d:dist}.

\

\noindent
$(\wt 5)$ In \S \ref{s:Measures} we give a nonstationary version of
the dichotomy
of Bezuglyi et al in ~\cite{BezuglyiKwiatkowskiMedynetsSolomyak10}:
 distinguished and  non-distinguished extreme points exactly correspond to finite and infinite
ergodic measures.

\

\noindent
$(\wt 6)$ We introduce the idea of
canonical cover for an of adic tower. This allows us to move
beyond the classification given in
~\cite{BezuglyiKwiatkowskiMedynetsSolomyak10} (even in the stationary case) to measures finite
on some subdiagram.
This connects the two cases of  distinguished
eigenvector
 sequences (in the nonstationary case): that for a matrix sequence in  upper triangular 
block form, and that for submatrices of any type.
See   Theorem
\ref{t:inv-meas-subdiag}.

We remark that adic towers and canonical covers are naturally related
to two more familiar ideas: to
 the upper triangular block matrices
which are basic to the nonstationary Frobenius decomposition, and to 
the notion of {\em spacers} in cutting-and-stacking constructions.
In particular, 
given 
a matrix sequence
$$\wt M= \left[ \begin{matrix}
S& C \\
0 & M
\end{matrix}  \right] $$ and considering the adic tower over the
subdiagram for  $\Sigma_{M}^{0,+}\subseteq
\Sigma_{\wt M}^{0,+}$, then  the subblock $S$ corresponds to a generalized notion of
spacers. We explain this more fully in later work.

\

\noindent
$(\wt 7)$ In the Appendix \S \ref{s:stationarycase}, we compare our results and the tools
developed  for the nonstationary
case, specifically the Frobenius Decomposition and  Frobenius-Victory
Theorems and definition of distinguished sequences,   to the classical stationary and periodic cases. This shows
how the nonstationary formulation and viewpoint is not only far more
general, but in fact simplifies our understanding even of the
stationary case in some key respects.

\subsection{Brief summary of main results}\label{ss:Summary}
{\em Basic notions}.
{A {\em reduced} matrix sequence 
has no all-zero columns or rows.
This can always be achieved by removing some letters from the
alphabets
(Remark \ref{r:reduced}). Via the idea of {generalized
  matrices}, see Overview above,
the reduced sequence is conveniently related to the original by $M\leq
\wh M$. See Definitions \ref{d:reduced}
and \ref{d:subdiagram}.
$\FC$ denotes the group of {\em finite coordinate
changes}; a measure on the path space of a Bratteli diagram is
$\FC$-invariant iff it is invariant for the tail equivalence
relation; for nonatomic measures, this is equivalent to invariance for
any adic transformation on the space, see Prop. \ref{p:conserg}.

\

\noindent
{\em (Nonstationary Frobenius decomposition theorem, Theorem \ref{t:FrobDecomp})}

\noindent
A one-sided 
nonnegative integer matrix sequence of bounded size  can be placed
in a canonical upper triangular block form,
after a reordering of the
alphabets. This  is  eventually unique up to  further
permutations.
After a 
gathering of the sequence we can achieve this with square
matrices. The proofs use the geometrical idea of {\em streams} and
the notion of generalized matrices.

\

\noindent
{\em (Importance of eigenvector sequences of eigenvalue one, 
Theorem 2.9
of~\cite{BezuglyiKwiatkowskiMedynetsSolomyak10} and  Theorem
\ref{t:basic_thm})}

\noindent
Given a nonnegative integer $(l_i\times l_{i+1})$ reduced  matrix sequence
$M=(M_i)_{i\geq 0}$, there is a bijective correspondence between the
 ergodic $\FC-$ invariant probability measures and the extreme rays of
 the
 compact convex cone of  such eigenvector sequences.

\
 
\noindent {\em
 (General definition of distinguished eigenvalue sequences (for
 generalized submatrices, hence for general subdiagrams),
 Def.~\ref{d:dist}).}

 \noindent
Given nonnegative real matrix sequences 
$N\leq \wh N$, then an  eigenvector sequence $\w$  of eigenvalue one for
$N$ is $\wh N/N$- {\em distinguished} iff
$\lim_{m\to\infty} \wh N^m (\w)$ exists and is never
$\0$.

\

\noindent {\em
(Nonstationary Frobenius-Victory theorems)}

\noindent
{\bf I} {\em (F-V theorem for upper triangular form, Theorem
\ref{t:FrobVictGeneral})}
Given a nonnegative real matrix sequence $N$  in 
  Frobenius normal form, the extreme rays of
 the
 compact convex cone of   eigenvector sequences of eigenvalue one 
 for $N$ are in bijective correspondence with the
distinguished  eigenvector sequences of eigenvalue one for the
diagonal subblocks.

\

\noindent
{\bf II} {\em  (F-V theorem for subdiagrams, \ref{t:BratFrobVict})}
Given a nonnegative matrix sequence $\wh N$,
each  nonnegative,  never
zero fixed point $\wh \w$ 
for $\wh N$ 
determines, and
is determined by, a $\wh N/N$- distinguished
sequence $\w$ for some primitive generalized
submatrix sequence $N\leq \wh N$.

\

\noindent {\em
(Classification of 
finite and infinite invariant measures)}

\noindent
{\bf I}  (Measures positive on an open
subset, matrix case, Theorem
\ref{t:inv-meas})
Given a nonnegative integer matrix sequence $M$ of finite rank  in 
  Frobenius normal form, the {\em central measures} (the $\FC$-invariant 
  probability measures which are positive on some open subset)  are in bijective correspondence with the
  normalized distinguished  eigenvector sequences of eigenvalue one for the
diagonal subblocks. The {\em infinite} such measures are in 
  bijective correspondence with the
  {\em non}distinguished  such  eigenvector sequences.

 \noindent
{\bf IIa}  (Measures positive on an open
subset, diagram case, Theorem   \ref{t:inv-meas-subdiag})
Given a Bratteli diagram $\wh{\mathfrak B}$ of finite rank with matrix sequence
$\wh M$ and 
ergodic  invariant measure $\nu$, 
suppose that $\nu $ is positive finite on some open subset. Then there exists an
eventually unique 
maximal primitive subdiagram,
with
matrix sequence $M\leq \wh M$,  such that $\nu$ is finite for
$M$. Let $\w$ be the $M$-eigenvector sequence for this restricted
measure. Then $\nu $ is finite for $\wh M$ iff $\w $ is $\wh M/M$- distinguished.

 \noindent
{\bf IIb}  (Measures finite on a subdiagram, or a sub-subdiagram, Theorem   \ref{t:inv-meas-subdiag})
Given $\wh{\mathfrak B}$, $\wh M$ and $\nu$ as above,
suppose that $\nu $ is finite
when restricted to some subdiagram $ \mathfrak B'$ with
matrix sequence $M'\leq \wh M$.   Then by {\bf IIa} there exists a $M'$-eigenvector sequence  $\w'$  for this restricted
measure, and an
eventually unique 
maximal primitive sub-subdiagram
with
matrix sequence $M\leq M'$ and finite positive measure.
Let $\w$ be the $M$-eigenvector sequence for this 
measure. Then $\nu$ is finite for $\wh M$ iff $\w'$ is $\wh M/M'$-
distinguished,
iff $\w$ is $\wh M/M$-
distinguished. The measures infinite for $M'$ can be analyzed in a
similar way.

\

These last proofs make use of the notions of {\em adic tower} and
{\em canonical cover}, see \S \ref {ss:towers} and \S \ref{ss:canonicalcover}. An application is given
to subdiagrams defined by {\em nested circle rotations} and
corresponding to interesting fractal sets, see Example \ref{exam:nestedrot}.

\subsection{Bird's-eye view} Having absorbed
the ideas we develop in this paper, let us now go back
and look at what has been learned.

There are two complementary points of view: given a Bratteli diagram
$\wh {\mathfrak B}$ 
of finite rank, we start with an invariant
measure $\nu$, finite or infinite, and analyze the possibilities. Secondly,  begin
with an interesting subdiagram $\mathfrak B'\leq \wh {\mathfrak B}$, and see what measures can be built which are
finite on some open subset of the path space for this subdiagram,  and
finite, or infinite, on the tower generated by that via the subdiagram.

Now, the diagram is associated to a nonnegative integer matrix
sequence $\wh M$, so  by the Frobenius decomposition theorem we can
permute the alphabets to 
represent this
 in upper triangular form. After a gathering of the sequence (a telescoping of
the diagram) this has been put in  fixed-size form. For any $\FC$-invariant ergodic measure there is a unique primitive 
subblock on the diagonal such that the measure is the tower measure built over
that. The subdiagram $\mathfrak B'$ defined by this subblock  is the unique maximal
primitive subdiagram.

The invariant measure restricted to this
subdiagram 
is either finite or infinite.
We consider first the
finite case, which happens iff the measure is finite on some open
subset of the path space for $\wh {\mathfrak B}$. These measures
correspond to extreme points in the collection of eigenvector
sequences as in \cite{Fisher09a}.

Next, the tower measure (i.e.~the original
measure on the full diagram) also can be either infinite or finite,
being finite exactly when the sequence is $\wh M/M'$-
 distinguished.

 This brings us to measures which are infinite on the subdiagram. Suppose there is a further
 subdiagram with  $M\leq M'$ such that the measure there is
 finite. Then there exists a further maximal primitive reduced $\wt
 M\leq M$  which is analyzed as above.
The full statement is presented in Theorem   \ref{t:inv-meas-subdiag}.

 Conversely, given $\wh {\mathfrak B}$ we can consider some
 subdiagram $\mathfrak B'$,  which is interesting from a geometrical
 or dynamical point of view. We then 
 find the finite invariant measures as above and
 build the tower measure over that, which may be finite or infinite, according to
 whether or not the eigenvector sequence is distinguished. For
 examples of interesting subdiagrams,
 inside of an interval
 exchange transformation, although the original map has a
 finite invariant measure, there can be fractal subsets invariant for
 the induced map, which generate
 invariant towers carrying a measure which can be either finite or
 infinite, again corresponding to whether or not the eigenvector
 sequence is distinguished.
 The example we study explicitly here is for the simplest case: two
 intervals (hence a circle rotation), sketched above.

 These constructions can give interesting examples for
 infinite measure ergodic theory as in \cite{Fisher92} and \cite{MedynetsSolomyak14}.
 We have developed the machinery in this paper with sufficient
 generality and rigor to facilitate future thorough studies of such
 phenomena.
  The notion of canonical cover plays a role here as it
 allows such examples to be re-interpreted as coming from
 nonstationary substitution dynamical systems (aka S-adic systems)
with spacers,  related to fractal geometry.

\subsection{Comparison with
 later papers in the literature}\label{ss:BKMS13}

We have sketched the connections of the present work specifically to 
\cite{Fisher09a},
  \cite{FerencziFisherTalet09}, and especially
  \cite{BezuglyiKwiatkowskiMedynetsSolomyak10}, which was fundamental
  for our approach here. Here we trace connections to some later papers.

  In \cite{BezuglyiKwiatkowskiMedynetsSolomyak13} and later papers, parts of
  \cite{BezuglyiKwiatkowskiMedynetsSolomyak10} are extended to the
  nonstationary setting. We first saw the preprint of \cite{BezuglyiKwiatkowskiMedynetsSolomyak13} after the project
  for the present paper was  largely worked out, so though the work here and in that
  paper was carried out independently, and so is quite different in
  many respects, there is some overlap in the goals
  and hence naturally in some methods and results. We mention that
  those authors first saw our preprints \cite{Fisher09a},
  \cite{FerencziFisherTalet09}
  after \cite{BezuglyiKwiatkowskiMedynetsSolomyak10}  was essentially completed.

In the present paper we give  a
self-contained approach, providing all definitions and proofs for
completeness and also so as to 
avoid the confusion which can arise from mixing notations (see
for example  Remark \ref{r:choices}).

One overlap is in  Therorem 2.6 of
 \cite{BezuglyiKwiatkowskiMedynetsSolomyak13}
 where, making use of Prop 4.6 of
  \cite{bezuglyi2009aperiodic}, 
they prove the Frobenius Decomposition Theorem (Theorem
\ref{t:FrobDecomp} below).

In our treatment of this theorem,
we begin as sketched in point $(3)$ above (see Theorem \ref{t:FrobeniusTheorem})  by giving a
proof for the stationary
case, as this motivates our proof
for the general case, which is likewise geometric and dynamical in
flavor.

In the
course of the 
proof 
we
introduce several notions (streams, pool elements, block structure,
generalized matrices including
virtual matrices and alphabets) which play important roles throughout the
paper.

\begin{rem}\label{r:choices}

  We warn that different conventions are used when  
  defining the matrices in \cite{BezuglyiKwiatkowskiMedynetsSolomyak10},
  \cite{BezuglyiKwiatkowskiMedynetsSolomyak13}  and their subsequent papers, and in the present
  paper.

  The first choice is, given a Bratteli diagram, whether to
  use what one can call a row-  or column- vector convention  for defining the {\em
    incidence matrices} of the diagram. By the row convention we mean
  that if $a$ goes to $b$ in the graph, then entry $ab$ is $1$, or a
  larger integer if there are multiple edges, as explained in point
  $(3)$ above. That is, one looks along
  the row $a$ to see the ``output'' $b$. For the column convention one
  does the opposite. The row convention is 
more usual in Ergodic Theory and
Markov Chain theory.  This is convenient when dealing with Bratteli
diagrams, especially when written from left to right to indicate time
as is done here, since to telescope the diagram, one simply multiplies the
matrices in order. Furthermore, when defining a Markov measure, the integer
matrices $M_k$ are replaced by probability (i.e. ~ {\em
  row-stochastic}: row sum one) matrices $P_k$, and 
measures of cylinder sets are given by the matrix product times an
initial probability row vector, 
cf.~\cite{Billingsley65}, \cite{BrinStuck02}, \cite{AdlerWeiss70},
\cite{RudolphBook}, \cite{LindMarcus95} and see \cite{Fisher09a} for
the nonstationary
case of a 
{\em nonhomogeneous
  Markov chain}.

In \cite{BezuglyiKwiatkowskiMedynetsSolomyak10}  as in
\cite{BezuglyiKwiatkowskiMedynetsSolomyak13},
the incidence matrices are taken to be the transpose of ours, thus
$F_k= M_k^t$. However they then switch to the transposes, denoted
$F_k^T$, which is convenient for considering intersections of cones of
column vectors,
then agreeing with what we do in \cite{Fisher09a}.

The next choice is how to order the alphabet when considering the
Frobenius Decomposition. This is not discussed in
\cite{BezuglyiKwiatkowskiMedynetsSolomyak13}, but in 
\cite{BezuglyiKwiatkowskiMedynetsSolomyak10} they initially make the
same choice as ours (and the opposite of what they had previously
done) which would give upper triangular matrices with no transpose,
but then they make the  opposite
choice to ours as explained in point $(3)$ above, for the partial
order; this results in lower triangular matrices but now written in
the order of time. Then in \S 3.3 of
\cite{BezuglyiKwiatkowskiMedynetsSolomyak10} the transpose is taken.
In \cite{BezuglyiKwiatkowskiMedynetsSolomyak13}, lower triangular
matrices again appear in the Frobenius Decomposition.
Since all this might be confusing,  both for us and for the reader, we thought
it better to give 
a 
fresh and unified 
treatment.
\end{rem}

The idea of extension of a measure from a vertex subdiagram in \S 6 of \cite{BezuglyiKwiatkowskiMedynetsSolomyak10} is much like that
developed here. In \cite{BezuglyiKwiatkowskiMedynetsSolomyak13} edge
subdiagrams are also allowed, but  our notion of subdiagram is more general,  as due to
our use of generalized matrices (as noted above, and see Def.~\ref{d:generalizedmatrices}),
 we can  handle both edge and vertex subdiagrams in a unified way 
 via submatrices  $M_i \leq \wh M_i $.  The fact that the index sets are unordered alphabets
  allows us to include edge alphabets and so to discuss
  state-splitting (Fig.~\ref{F:StateSplit})
  in a convenient way. These definitions are central in all our proofs
  and examples.

In \cite{BezuglyiKwiatkowskiMedynetsSolomyak13}, the question is
addressed as to when the measure on the tower over the subdiagram is
finite or infinite. Several sufficient conditions for finiteness are
proven and examples are given. In the present paper, unlike \cite{BezuglyiKwiatkowskiMedynetsSolomyak13},
we give a necessary and sufficient condition.

We mention that 
for the definition of subdiagrams in \cite{adamska2017subdiagrams},
  vertex as well edges can be erased, however it is not allowed to
  erase {\em all } edges between two vertices. We do not have that
  restriction.  In the present  paper since vertex and edge subdiagrams
are treated in a unified way, the presentation can be
simpler. This is due to the formalism of
generalized matrices. This approach also simplifies the treatment of measures.

Some definitions in \cite{BezuglyiKwiatkowskiMedynetsSolomyak13}  are
more restrictive
than here: a {\em finite measure} is
Borel non-atomic (we do not require non-atomic) and an {\em infinite $\sigma$-finite measure} cannot
be infinite or zero on every clopen set; it must take a finite
positive value on some such set.
In \cite{karpel2012good}, \cite{karpel2012infinite} a measure which is infinite on every nonempty
compact open set is termed {\em defective}; these are excluded from those
studies.

But for us, these unusual measures can provide some of 
the most interesting examples! See \S \ref{ss:examples} for a variety
of such cases which, as noted above, are infinite on {every} open subset of the path space.

We note that there is some overlap between our Theorem \ref{t:inv-meas} and Theorem
3.3 of \cite{BezuglyiKwiatkowskiMedynetsSolomyak13}, which because
of the above restriction is less general than our theorem.

A minor point is that in
other papers
e.g.~\cite{BezuglyiKwiatkowskiMedynetsSolomyak13}, 
the first vertex alphabet is taken to be a single point; for reasons
we explain elsewhere, 
we prefer our  choice of any finite first vertex set. Another minor
point is that we choose
to draw our diagrams horizontally rather than in the vertical form
traditional
in the study of $C^*$-algebras, since our primary interest is Ergodic
Theory, and so the subscript of $M_k$ 
indicates time as in a
Markov chain.

We remark briefly on further work done since
\cite{BezuglyiKwiatkowskiMedynetsSolomyak13}.
 For us the vertex alphabets are bounded, the
  {\em finite rank} condition, though not the edge alphabets (to allow
  for gatherings of the matrix sequence,  i.e.~telescoping of the
  diagram). 
However,  in \cite{adamska2017subdiagrams}  vertex and edge alphabets are assumed finite, but {\em
    not}
  necessarily bounded.

  See e.g.~\cite{bezuglyi2024horizontally}
  and references
there, where the setting is extended beyond finite rank to 
unbounded or countably infinite vertex alphabets; these are
referred to as  {\em generalized} Bratteli digrams.
Basic examples are random walks on the integers, and  the {\em Pascal adic} transformation, see e.g.~
\cite{mela2005dynamical},
\cite{mela2006class}; other examples already occur in the foundational paper
\cite{bratteli1972inductive}.

Regarding the closely related
$S$-adic systems, see
the literature.

\subsection{History and acknowledgements}\label{ss:History}

The origins of this paper go back to several distinct instances where
we had discussed the Integer Cantor Set example of 
~\cite{Fisher92}  with  Jon Aaronson, Pierre Arnoux and  Henk Bruin.

During a conversation with Aaronson in 1989, he remarked that there is
a third  possible approach to the Integer Cantor Set map (the first two,
presented in \cite{Fisher92} being
as a substitution dynamical system, and as a tower over the dyadic
odometer):  the dyadic
odometer is imbedded in a triadic odometer as
a measure zero subset,  see Fig.~\ref{F:ICSNew}; the Integer Cantor Set
map can be viewed as the tower over that base.
 Aaronson recognized this because he had  treated related examples in~\cite{Aaronson1979a}; these
 are as far as we know the first examples of what we are calling here an adic
tower.

After we gave a seminar on the Integer Cantor Set transformation and
 the scenery
flow for Cantor sets, see \cite{Fisher04-2},  Arnoux
came up to talk; he had
been engaged in a deep study of Veech's work on interval exchange transformations and
the Teichm\"uller flow. This was exciting as the same question had occured to both of us:
could the ideas be combined? That is, could one make sense of zooming
as with the  scenery
flow toward small
scales of an interval exchange, and how would this be related to the
Teichm\"uller flow? 
As a case study we took the simplest example of two intervals, which
led to 
~\cite{ArnouxFisher00}. In that paper and the follow-up
~\cite{ArnouxFisher05} we introduced substitution sequences, 
nonstationary dynamical systems, spaces of nested tilings,
 Parry measure sequences, biinfinite Bratteli diagrams and nonstationary subshifts of finite
type.

Lastly, Bruin's question after a talk we gave in 2007 
was the following. We had formulated and proved  the infinite measure unique ergodicity
 of the Integer Cantor Set map, but are there any
other interesting infinite invariant measures? Recalling Aaronson's
remark, we could give an answer, and replied
with Example \ref{exam:bruin}. This 
seemed so interesting as to call for a
deeper investigation.

A second motivation came from the gradual realization while
writing~\cite{Fisher09a} and \cite{FerencziFisherTalet09} that 
nonprimitive adic transformations, for which we only had two examples in mind (the
Integer Cantor Set and the Chacon adic) provided a fascinating and
important general
challenge.  As first steps we proved Theorem \ref{t:basic_thm}, and  began
a study of generalized spacers. At that point we received a preprint from Solomoyak of
~\cite{BezuglyiKwiatkowskiMedynetsSolomyak10} on the stationary
nonprimitive case. Not
only had they anticipated our
result, in  Theorem 2.9  of that paper (since that part includes
the nonstationary case), but they had in the rest of the paper brought into the subject just the right
tools for us to attack the general nonstationary situation, see
$(1)$\,--\,$(5)$ above.

As we began to build on all these ideas and methods, a
second preprint~\cite{BezuglyiKwiatkowskiMedynetsSolomyak13} appeared,
addressing part of the nonstationary case.
Despite some overlap with our two papers- in particular the
nonstationary Frobenius decomposition was independently arrived at by
those authors-we have for completeness and clarity maintained in this paper our own full
proofs, since all of the
completely new
material presented here, such as our definition of distinguished
eigenvector sequences,  our
nonstationary Frobenius--Victory and BKMS theorems, the notion of
canonical cover, and the inclusion of locally infinite measures, depends on our development of this more basic
material, in notation, approach and philosophy. Since however
the basic structure of the present paper was in place before we received
\cite{BezuglyiKwiatkowskiMedynetsSolomyak13},
unlike~\cite{BezuglyiKwiatkowskiMedynetsSolomyak10}
it had little influence on this paper.

We mention that the  ``curtain model''
picture in Fig.~\ref{F:ICSCurtainTreeNew} recalls  what Dennis Sullivan called in lectures and conversations ``Carleson boxes'', connected
with his study of the small-scale structure of doubling maps, and that
Sonin's work on decomposition of nonhomogeneous Markov chains helped
us find an approach to the nonstationary Frobenius theorem; in fact our idea of ``streams'' was partly inspired by Sonin's {\em jets} from
the measure-theoretic context of
\cite{SoninArbitrary92}.

We remark on a common misuse of terminology:  (for a single matrix) what is usually called the
Perron-Frobenius Theorem (the primitive case) is due to Perron; the extension to the
irreducible case, the upper (or lower) triangular normal form (what we are
calling the Frobenius decomposition theorem) and what we are,
following \cite{TamSchneider00}, calling
the Frobenius--Victory theorem are  due to Frobenius, and are all
in Gantmacher's book cited above in $(4)$ of the {\em Outline } of
~\cite{BezuglyiKwiatkowskiMedynetsSolomyak10} and below in \S
\ref{ss:stationarycase}, \ref{ss:Intro2} and \ref{ss:Overviewrob-Vict}.
As far as we
know, the terminology ``distinguished''  eigenvector, which helped to
clarify the importance of this theorem,  originated with Victory. See
the Appendix for a  comparison with the subtleties of the classical (stationary) case.

\section{Invariant Borel measures on Bratteli diagrams and
  towers}\label{s:invariant measures}
In this section, building on the approach and notation developed in~\cite{ArnouxFisher05},~\cite{Fisher09a},
\cite{FerencziFisherTalet09}, we recall basics on Bratteli diagrams and Vershik's
adic transformations. We begin a study of  the ergodic theory of
these maps, demonstrating the basic
result on finite invariant (Borel) measures, Theorem \ref{t:basic_thm}.
After that we develop the fundamentals of sub-Bratteli diagrams and
adic towers and the canonical cover, extending the measure theory to
this more general situation.

For this, we start with the relationship between Bratteli diagrams and 
sequences of matrices, generalized to permit unordered index sets. 
Let $\A= (\A_k)_{k\geq 0}$ be  a sequence of   nonempty finite sets,
called {\em (vertex) alphabets}. This will be the set of vertices of 
{\em level } $k$ in the 
{ Bratteli diagram}.    We  draw
 our diagrams from 
left to right, following 
the
usual ergodic theory or probability theory convention, as in \cite{ArnouxFisher05}; thus levels
correspond to times, see Fig.~\ref{F:StateSplit}. (In the original $C^*$ algebra context
\cite{bratteli1972inductive}, diagrams are  usually drawn
vertically, from top to bottom; the initial alphabet is often taken to be
a singleton, so the diagram begins with a single vertex.)

\subsection{Edge and vertex diagrams, subdiagrams, orders}

We denote by $\E_k$ the
collection of edges connecting  the vertices 
$\A_k$ at level $k$ with those at level $(k+1)$.
This defines the sequence
 $\E= (\E_k)_{k\geq 0}$ 
of {\em edge alphabets}. Thus, for each $k\geq 0$ 
we are given  a function $e\mapsto
(e^-, e^+)$ from $\E_k$ to
$\A_k\times \A_{k+1}$; we draw the edge  $e_k\in \E_k$, oriented to the
right, with   {\em initial symbol} $e_k^-\in
\A_k$ at the tail of the arrow,  and {\em final symbol} $e_k^+\in
\A_{k+1}$ at its head.

\begin{rem}\label{r:emptyfunction}
  For the next definition  we recall how functions are defined in Set
  Theory \cite{Halmos74}. Given sets 
      $A$ and $B$,  a {\em
       relation} $R$ from $X$ to $Y$ is any subset $R\subseteq  X\times
     Y$. That is, the relation is identified with its {\em graph}. 
      A
      {\em function} $f$ from $A$ to $B$ is a 
      relation such that each $a\in A$ is related to some $b\in
      B$, but only one. This is written as
       $f(a)=b$. If at least one of $A$ or $B$  is empty, then
       $A\times B=\emptyset$. Therefore if  $A=\emptyset$ or
       $B=\emptyset$, then there is exactly one relation, hence only
       one function, from $A$ to $B$, the empty set (or empty
       function) $\emptyset$. 
   In what follows, we shall need to allow for  {\em virtual} (empty) alphabets and
   matrices. See Remark \ref{r:virtual}.
\end{rem}

To represent the diagram by matrices, the following abstraction will be useful:
\begin{defi}\label{d:generalizedmatrices}
 Given alphabets $\A,\B$, an $(\A\times\B)$\,--\,
 {\em generalized  matrix} $M$ with entries in a ring $R$
is a function from $\A\times\B$ to $R$; the value at $(a,b)\in
\A\times \B$ is   denoted $M_{ab}$, and the {\em size} of the matrix
is  the {\em index set} $\A\times\B$. The sum of two matrices $M,N$  of the
same size, and the product of $M$ by a scalar in $R$,   are defined as for
functions: $(M+N)_{ab}= M_{ab}+N_{ab}; (r M)_{ab}= r
M_{ab}$. The
{\em product } of an $\A\times \B$ matrix $M$ with a  $\B\times \CC$
matrix $N$ is a matrix $MN$ of size $\A\times\CC$, with
$$(MN)_{ac}= \sum_{b\in\B}M_{ab}N_{bc}.$$

Given an $\A\times \B$ matrix $M$, its {\em transpose} $M^t$ is
  defined to be the $\B\times \A$ matrix $M^t$, with $ba$-entry $M_{ab}$.

As noted above, we allow here  empty alphabets, calling an alphabet
{\em virtual } in this case. Since
given two possibly virtual
alphabets $\A, \B$,
since a  generalized matrix $M$ is  a function 
from $ \A\times\B$ to a ring $R$, then  in the case that one of these is
empty, we have $\A\times \B= \emptyset $, and from Remark
\ref{r:emptyfunction} there is only one such function,
$\emptyset= M\subseteq ( \A\times\B)\times R= \emptyset\times R=\emptyset$. We call this a {\em
  virtual} matrix.

If the
ring is ordered, we partially order the collection of generalized
matrices,  as follows.
Given alphabets $\A, \B$ and $ \wh \A, \wh \B$, let
$M$ and $\wh M$   be generalized nonnegative integer matrices,
respectively of sizes $(\A\times\B)$ and 
$(\wh \A\times \wh\B)$.  We say that 
$M\leq \wh M$ iff $\A\subseteq \wh \A$, $\B\subseteq \wh \B$  and for all $(a,b)\in  \A\times  \B$ we have
$M_{ab}\leq
\wh M_{ab}$.

For where this partial order gets used below see e.g.~ Definitions \ref{d:reduced}
and \ref{d:subdiagram}.
\end{defi}

We note that:
\begin{lem}\label{l:generalizedmatrices}
  For generalized matrices,
  addition and multiplication and transpose satisfy the usual
  properties.

  If the ring $R$ is a field then the collection
$\Cal M_{\A,\B}$ of 
$(\A\times\B)$ matrices is a vector space.
\end{lem}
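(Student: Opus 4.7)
The plan is to verify that each standard matrix identity carries over verbatim, with the only subtlety being that index sets are now unordered alphabets (possibly empty) rather than the usual $\{1,\dots,n\}$. Since a generalized matrix is defined as a function on $\A\times\B$ and sums/products are defined pointwise or via finite sums indexed by the alphabets, the verifications reduce to manipulating finite sums over unordered sets, which are well-defined thanks to associativity and commutativity of addition in $R$.

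First, I would verify the additive structure. Since addition is defined entrywise and $R$ is an abelian group, $(\Cal M_{\A,\B},+)$ inherits the abelian group structure pointwise: associativity, commutativity, the zero matrix (the constant function $0$ on $\A\times\B$), and additive inverses all follow from the pointwise definition. The partial order in Definition~\ref{d:generalizedmatrices} respects addition in the obvious way.

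Next, I would verify the multiplicative identities. Given $M:\A\times\B\to R$, $N:\B\times\CC\to R$, $P:\CC\times\D\to R$, associativity $(MN)P = M(NP)$ follows by computing both sides as iterated finite sums indexed by $\B$ and $\CC$ and applying associativity and commutativity in $R$ to interchange the order of summation; this works exactly because the alphabets are finite. The distributive laws $M(N+N')=MN+MN'$ and $(M+M')N = MN+M'N$ again follow from distributivity in $R$ entry by entry. For transposes, $(M^t)_{ba}=M_{ab}$ immediately gives $(M^t)^t=M$ and $(M+N)^t=M^t+N^t$; the identity $(MN)^t = N^t M^t$ reduces to the scalar computation $\sum_{b\in\B} N_{bc}M_{ab} = \sum_{b\in\B} M_{ab}N_{bc}$, valid since $R$ need not be commutative only if we keep the order of factors as written (which $N^tM^t$ does correctly).

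The main (and only) genuine obstacle is the treatment of virtual alphabets, as flagged in Remarks~\ref{r:emptyfunction} and the discussion of virtual matrices. If $\A$ or $\B$ is empty, then $\Cal M_{\A,\B}$ consists of a single (empty) function, which must serve as both the zero matrix and the only element; all algebraic identities involving it reduce to tautologies because every equation of functions on $\emptyset$ holds vacuously. For products, if the middle alphabet $\B$ is empty, every sum $\sum_{b\in\B}M_{ab}N_{bc}$ is an empty sum, hence equals $0$, so $MN$ is the zero matrix of size $\A\times\CC$; this is consistent with associativity, since iterated products involving an empty intermediate alphabet collapse to zero on both sides. Once this bookkeeping is settled, every identity holds uniformly.

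Finally, for the second statement, if $R$ is a field, then scalar multiplication $(rM)_{ab}=rM_{ab}$ is defined pointwise, and the vector space axioms ($r(M+N)=rM+rN$, $(r+s)M = rM+sM$, $(rs)M = r(sM)$, $1\cdot M = M$) are immediate from the corresponding field axioms applied entry by entry. Thus $\Cal M_{\A,\B}$ is a vector space over $R$, of dimension $|\A|\cdot|\B|$ (with dimension $0$ in the virtual case).
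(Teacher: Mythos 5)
Your proposal is correct and follows essentially the same route as the paper: entrywise/finite-sum verification of the standard identities for nonvirtual alphabets, with the only real content being the virtual case, where $\Cal M_{\A,\B}$ reduces to the single empty function serving as the zero element (a vector space of dimension $0$). The paper simply declares the nonvirtual case ``clear'' and checks the empty-alphabet case, so your write-up is a more detailed version of the same argument.
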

\begin{proof}
  This is clear when nonvirtual matrices, so we check that for virtual
  matrices,
  thus when $\A$ or $\B$ is empty. Then $\Cal M_{\A,\B}$ has one
  element, $\emptyset$, which is the zero element so this is a vector
  space, of dimension $0$.
\end{proof}

\begin{exam}
  Given $\a$  a fixed alphabet
 and a matrix $M$  all of whose entries are $(\a\times \a)$ matrices
 with entries in say $\R$, 
 then 
 $M$ has entries in the  noncommutative ring $R= \Cal
  M_{\a\times \a}$. So this agrees with the above definition of generalized
  matrix.
  
\end{exam}

\begin{defi}\label{d:blockstructure}
   We extend the definition of generalized matrix
   to include rectangular matrices as entries, as follows.
  For alphabets $\A,  \B$ and an $\A\times \B$ matrix $M$, suppose that
  each symbol, $\a\in \A$, $\b\in \B$ is itself an alphabet, and  that
  each matrix entry $M_{\a\b}$ is an $(\a\times\b)$ matrix with
  entries in some ring.

  Now write $\wh \A=\cup_ {\a\in \A}\a $ and $\wh \B=\cup_ {\b\in \B}\b$
  and
  let $\wh M$ denote the $\wh \A\times \wh \B$ matrix
  with blocks $M_{\a\b}$. In this case we say that $M$ is a {\em block
    matrix}
  for $\wh M$.

  For a concrete example see  Definition \ref{d:fixedsizeFrob} below.

\end{defi}

To show this makes sense, we have:
\begin{lem}\label{l:blockmatrix}
  Matrix multiplication agrees with the block structure.
  That is, given alphabets $\A,\B,\Cal C$,
  then for $ M$ an $ \A\times  \B$ matrix and  $ N$ a
  $ \B\times {\Cal C}$ matrix, and supposing that the symbols in
  each alphabet are themselves alphabets, then
  the $\wh \A\times \wh \B$ matrix  $\wh M$  defined above is
  naturally identified with $M$, and similarly for the
  $\wh \B\times \wh{\Cal C}$ matrix  $\wh N$, and we have that 
  $$\wh M \wh N= \wh{MN}.$$
\end{lem}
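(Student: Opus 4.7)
The plan is to check that, once the index sets are identified correctly, the equality $\wh M\wh N=\wh{MN}$ is just a regrouping of a double sum. Before doing the computation I would make two small bookkeeping remarks. First, for the block construction to make sense as a single generalized matrix over $\wh\A\times\wh\B$, the symbols of $\A$, viewed as subalphabets, should form a (disjoint) partition of $\wh\A$, and similarly for $\wh\B$ and $\wh{\Cal C}$; this is the tacit assumption in Def.~\ref{d:blockstructure}, and under it there is, for each $x\in\wh\A$, a unique $\a(x)\in\A$ with $x\in\a(x)$, and likewise $\b(y)$ for $y\in\wh\B$ and $\c(z)$ for $z\in\wh{\Cal C}$. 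The identification of $M$ with $\wh M$ is then literally $\wh M_{xy}=(M_{\a(x)\b(y)})_{xy}$, and similarly for $N$ and $\wh N$.

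Next I would verify that the product $MN$ makes sense in the block world: for each $\a\in\A$, $\c\in\Cal C$, the entry $(MN)_{\a\c}=\sum_{\b\in\B} M_{\a\b}N_{\b\c}$ is a sum of products of $\a\times\b$ and $\b\times\c$ matrices, each of which is an $\a\times\c$ matrix, so this is a well-defined $\a\times\c$ block. Hence $\wh{MN}$ is an $\wh\A\times\wh{\Cal C}$ generalized matrix.

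The main computation is then the following rearrangement, for fixed $x\in\wh\A$ and $z\in\wh{\Cal C}$ with $\a=\a(x)$, $\c=\c(z)$:
\begin{align*}
(\wh M\wh N)_{xz}
&=\sum_{y\in\wh\B}\wh M_{xy}\wh N_{yz}
=\sum_{\b\in\B}\sum_{y\in\b}(M_{\a\b})_{xy}(N_{\b\c})_{yz}\\
&=\sum_{\b\in\B}(M_{\a\b}N_{\b\c})_{xz}
=\Big(\sum_{\b\in\B}M_{\a\b}N_{\b\c}\Big)_{xz}
=((MN)_{\a\c})_{xz}=\wh{MN}_{xz}.
\end{align*}
Here the inner equality uses the usual definition of the product of two ordinary (rectangular) matrices $M_{\a\b}$ and $N_{\b\c}$ whose common index set is $\b$.

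The only real obstacle is the case of virtual (empty) alphabets and blocks, which must be treated so that the splitting of the sum over $\wh\B$ into $\sum_{\b\in\B}\sum_{y\in\b}$ is unambiguous. By Remark~\ref{r:emptyfunction} and Lemma~\ref{l:generalizedmatrices}, any $\a\times\b$ block with $\a$ or $\b$ empty is the unique zero/empty matrix, and the corresponding inner sum $\sum_{y\in\b}$ is empty, so such blocks contribute nothing on either side. With that caveat the computation above is valid in full generality, proving the lemma.
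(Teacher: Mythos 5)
Your proposal is correct and follows essentially the same route as the paper: the paper's proof simply asserts that one checks the block multiplication formula $(MN)_{ac}=\sum_{b\in\B}M_{ab}N_{bc}$ gives the $ac$-block of $\wh M\wh N$, which is exactly the double-sum regrouping you carry out explicitly. Your extra remarks on disjointness of the subalphabets and on virtual blocks are harmless elaborations of what the paper leaves tacit.
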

\begin{proof}

Each 
  entry  $M_{ab}$ itself an $a\times b$-matrix, and similarly,
 each entry $N_{bc}$ is a $b\times
  c$-matrix, so 
  we check that the multiplication formula
   $(MN)_{ac}= \sum_{b\in\B}M_{ab}N_{bc}$  gives the $ac$- block
   submatrix of the matrix $\wh{MN}$, and equals the $ac$- block of the product
   of $\wh M$ and $\wh N$.
     
\end{proof}

\begin{rem}

  We emphasize that for any finite unordered alphabets (indeed this
  could be extended to {\em any}  index sets, as long as addition of the
  elements is defined)
  the usual rules for matrix addition, multiplication and
  transpose make perfect sense.

  \begin{rem}\label{r:virtual}
  In what follows, all
alphabets and hence matrices {\em will be assumed 
nonvirtual}  (i.e.~ nonempty) unless  expressly
stated otherwise.
Virtual alphabets and matrices will become important
 in \S \ref{s:Frob}  when we discuss the upper triangular block
form.
   
  \end{rem}

For the special case of  nonnegative integer entries, then from a generalized matrix $M$ we
define an {\em edge alphabet } $\E(M)$ to be a set $\E$ together with a
function $e\mapsto (e^-, e^+)$ from $\E$ to $ \A\times \B$, such that
the number of edges with tail (i.e.~ initial symbol) $e^-=a$ and head
(final symbol) $e^+=b$ is $M_{ab}$.
This illustrates one of the reasons we need to allow for unordered
alphabets: 
even in the case where alphabets are ordered, in general there is
 no natural order for the edge alphabets.
\end{rem}

\

\begin{defi}(from generalized matrices to Bratteli subdiagrams)

Given a Bratteli diagram, from the pair $(\E, \A)$ we define an $(\A_k\times \A_{k+1})$ sequence of 
 nonnegative generalized integer matrices
$M= (M_k)_{k\geq 0}$, with the $ab^{\text{th}}$ matrix entry of $M_k$
equal to the number of edges from 
symbol $a$ in $\A_k$ to $b$ in $\A_{k+1}$.

Conversely, given an  $(\A_k\times \A_{k+1})$ sequence of 
 nonnegative generalized integer matrices
$M= (M_k)_{k\geq 0}$ we know the alphabet  sequence $\A$; we define
the   edge 
set sequence 
  $\E$   such that $\E_k$ has $\wh l_k=\sum_{a,b\in \A_k,\A_{k+1}} (M_k)_{ab}$ elements, with
  corresponding heads and tails.
We write  
$\mathfrak B_{ \A, \E, M}$, or simply 
$\mathfrak B_{\A, \E}$ or $\mathfrak B_ M$, for the
Bratteli diagram determined by $(\A, \E)$ and equivalently by the
generalized integer matrix sequence $M$.

We extend this to ring-valued matrices as follows: to an  $(\A_k\times \A_{k+1})$ sequence of 
 ring-valued matrices
$N= (N_k)_{k\geq 0}$, we associate  a $0-1$ matrix sequence $L$ where
the nonzero elements of $N_i$ and $L_i$ correspond. The Bratteli
diagram associated to $N$ is then that for $L$. We label an edge $e_k$
with $a=e_k^-$, $b=e_k^+$ by the $ab$- entry of $N_i$, thinking of
this as a ``weight'' on that edge, as e.g.~ for (row--stochastic) probability matrices. Matrix
multiplication then has the geometrical interpretation of multiplying
followed by  summing
these weights along the edges. In practice, we shall need this notion
for nonnegative real entries.

\end{defi}

Setting $l_k= \#\A_k\geq 1$,  given a choice of orders
  on $\A_k$ we may then take 
$\A_k= \{1,\dots, l_k\}$, and this choice
determines an $(l_k\times l_{k+1})$ matrix sequence in the usual
sense. 
A different sequence  of orders
corresponds to conjugation by a sequence of permutation
matrices.

An {\em (allowed) edge path } in the diagram  $\mathfrak B_M$,
also known as an edge {\em string}, is $\e= (.e_0 e_1\dots)\in
\Pi_0^\infty\E_i$ such that for all $k\geq 0$, $e_k^+= e_{k+1}^-$. 
We denote by  $\Sigma_{M}^{0,+}$
 the collection of all edge paths.

The combinatorial space 
$\Sigma_{\M}^{0,+}$ with the relative topology inherited from the product topology on 
$\Pi_0^\infty\E_i$ (where each $\E_i$ has the discrete topology)
is called a {\em Markov
  compactum}~\cite{Vershik81},~\cite{LivshitsVershik92}. It is
understood that this comes
equipped with
its Borel  $\sigma$\,--\,algebra.

We also use vertex path spaces, defined
as follows.
Given
a sequence $(L_k)_{k\geq 0}$ of   $(\A_k\times \A_{k+1})$ 
 nonnegative generalized matrices with entries in $\{0,1\}$,  
by an {\em (allowed) vertex path } we mean 
$x=(. x_0 x_1\dots )$ with $x_k\in\A_k$  such that the $(x_k x_{k+1})^{\text{th}}$
entry of $L_k$ equals $1$.
An allowed edge path 
$\e=(.e_0 e_1\dots)$ in the Bratteli diagram of $L$ determines the allowed vertex path 
$x=(.x_0 x_1\dots) = (.e_0^- e_1^-\dots) $. For such $0-1$ matrices  there
are no multiple edges, so this correspondence is one-to-one, with
 the vertex and edge path
spaces  canonically identified (technically speaking this
correspondence is given by a bijective {\em two-block code},
$x_kx_{k+1}= e_k^- e_k^+\mapsto e_k$) .
In what follows  we 
use $L$ when  referring to a vertex representation, with
$\Sigma_{\L}^{0,+}$  denoting the collection of vertex paths, and
$\Sigma_{\M}^{0,+}$ for the edge path space even if the entries of  $M$
happen to be $0$ and $1$. See
~\cite{LindMarcus95},
\cite{Kitchens98} for edge and vertex shift spaces in the stationary
case. (From now on unless mentioned explicitly we shall use edge spaces exclusively).

 Recalling  Def. 2.6 of~\cite{ArnouxFisher05}, we have the following notion: 
 \begin{defi}\label{d:gathering}   
 Given $N=(N_k)_{k\geq 0}$ an
$(\A_k\times \A_{k+1})$ matrix
sequence,
then for $k\leq n$, we write $N_k^n$
   for the product $N_kN_{k+1}\dots N_n$, which is $(\A_k\times
 \A_{n+1} )$,  so $N_k^{k}= N_k$. Thus always for $k\leq m\leq n$, $N_k^m N_{m+1}^n=
 N_k^n$. 
  A {\em gathering} of
a sequence of generalized ring-valued matrices is a new sequence give by taking partial
products along a subsequence. Thus, given
 $0=n_0< n_1<\dots$,  we call the $(\A_{n_i}\times
 \A_{n_{k+1}}) $ sequence $\wt N_k=
 N_{n_k}^{n_{k+1}-1}\equiv N_{n_k}N_{n_k+1}\cdots N_{n_{k+1}-1}$ the
  {\em  gathering along the times} $(n_k)_{k\geq 0}$. If $\wt N$ is a
  gathering of $N$ then we say that $N$ is a {\em dispersal} of $\wh
  N$.

For the case of nonnegative integer entries, the edge alphabets for the gathered sequence are naturally labelled 
by the finite edge paths of the original diagram, so $\wt \E_0= \{\wt e_0= (e_0e_1\dots
e_{n_1-1}) \}, \wt \E_1=\{\wt e_1=(e_{n_1}\dots e_{n_2-1})\}$ and so
on; we call $(\wt \E_k)_{k\geq 0}$
the  {\em gathered edge alphabet} sequence.
The gathering of this
matrix sequence 
defines  what is often called a {\em telescoped} Bratteli diagram while 
a dispersal gives a {\em microscoping}
of the diagram. (One can 
think of collapsing a pirate's spyglass, when telescoping; we like to
use the terms gathering and dispersal to avoid  confusion as to which
is which!)
\end{defi}

Given a multiple-edged diagram, and hence equivalently a nonnegative
integer matrix sequence $M$, we can produce a single-edged diagram
in the two canonical ways illustrated in Fig.~\ref{F:StateSplit}; even
if the  alphabets are ordered,
 the formalism of
generalized matrices will  now be necessary, since as noted above the edge alphabets have no
reason to be ordered. 
 Assuming the edge alphabet $\E_k$ is nonempty, so that $\# \E_k\neq 0$, 
we define
$0-1$ generalized matrices $A_k, B_k$ as follows:
$A_k$ is $(\A_k\times \E_k)$ with
the $ae_k-$ entry of $A_k$  equal to $1$ iff for $e_k\in \E_k$ we have 
$e_k^-=a$, while $B_k$ is $(\E_k\times \A_{k+1})$ with  the $e_kb-$ entry equal to $1$ iff 
$e_k^+= b$; we then have  $M_k= A_kB_k$. This procedure, called {\em
  state-splitting} or {\em
  symbol splitting}, 
 factors 
 each matrix as a product of 
 $0-1$ generalized matrices;  defining $L_k= B_k A_{k+1}$, we have a second sequence of
$0-1$ generalized matrices, now $(\E_k\times \E_{k+1})$, which 
give the allowed transitions from edges in $\E_k$ to those in
$\E_{k+1}$, where the edge alphabets are vertex sets of a new graph. 

So the sequence $(A_0B_0A_1B_1\dots)$ is a dispersal of both $M$  and of $L$.

\begin{figure}
\centering
\includegraphics[width=4.5in]{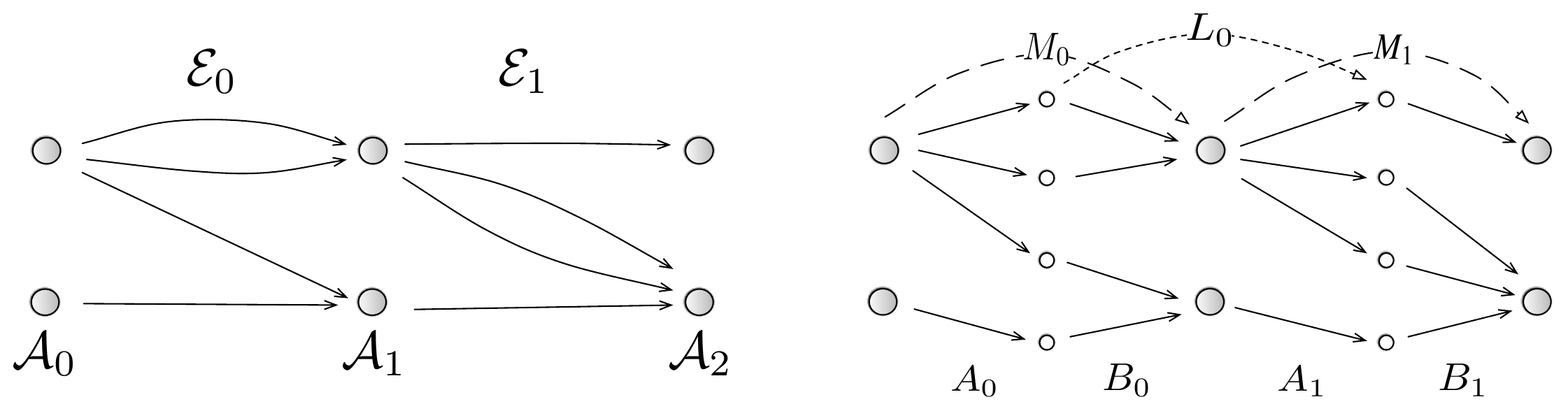}
\hspace{1in}
\caption{Symbol splitting: $A_k B_k = M_k$  and $ B_k
  A_{k+1}=L_k$} 
\label{F:StateSplit}
\end{figure}

To introduce a shift map, 
we write $\sigma M= (M_1, M_2,\dots)$ for the shifted matrix sequence  and 
 define $\Sigma_{\M}^{k,+}= \Sigma_{\sigma^k M}^{0,+}$ for all $k\geq 0$. 
This is the $k^{\text{th}}$ {\em component} of the disjoint union 
$\Sigma_{\M}^+\equiv 
\coprod_{k=0}^\infty \Sigma_{M}^{k,+}$, the 
(one-sided) {\em nonstationary shift of finite type} (\nsft) defined from the 
matrix sequence, see the Introduction.
We give each component the product topology 
inherited from $\Pi_k^{+\infty}\E_i$, so $ \Sigma_{\M}^{k,+}$ is a
compact topological space; each component is 
declared to be open in $\Sigma_{\M}^+$ (and so is clopen, since the
union of the remaining components is open). 
We define the {\em  word metric}  
$d(\cdot, \cdot)$ on $\Sigma_{\M}^+$ as in~\cite{ArnouxFisher05},
 which gives this same topology, as follows.
Beginning with 
 the $0^{\text{th}}$ component,
for $0\leq j\leq k$, we write
$w(j,k)$ for the number of allowed edge paths from
$j$ to $k$ 
(so by definition $w(j,j)=l_j$). Then, 
given $\e\neq \f$ in    $\Sigma_{\M}^{0,+}$,
we  define $d(x,y)=1 $ if $e_0\neq f_0$; otherwise, 
\begin{equation}\label{eq:distance}
d(\e,\f)= \{(w(0,m))^{-1}\}
\end{equation}
where $m$ is the largest nonnegative integer such that
$e_i=f_i$ for $0\leq \ i\leq m$. We  extend this metric to the
$k^{\text{th}}$ component by the same formula applied to the shifted
matrix sequence. Lastly, points in
different components are  declared to be distance $1$ apart.

The left shift map  $\sigma$  sends the edge path
$\e= (.e_k e_{k+1}\dots)\in \Sigma_{\M}^{k,+}$ to 
 $\sigma(\e)= (.e_{k+1} e_{k+2}\dots)\in \Sigma_{\M}^{k+1,+}$ (the
 ``decimal point'' is placed to
the left of the zeroth coordinate in each component).
This defines the nonstationary shift dynamics of the {\em mapping
  family}
 $(\Sigma_{\M}^+, d(\cdot,\cdot),
\sigma)$,
a sequence of maps along the sequence of components of the \nsft, as
in ~\cite{ArnouxFisher05}; see
the diagram  $\eqref{eq:nsft}$.

 We
need this notation: 
given an allowed edge string $\e\in \Sigma_{\M}^{0,+}$ and $m\geq 0$, we write
\begin{equation}
  \label{eq:cylinder}
  [.e_0\dots e_{m}]=
\{\tilde \e\in \Sigma_{\M}^{0,+}:\, \wt e_0=e_0, \dots, \wt
e_{m}=e_{m}\};
\end{equation}
we call a nonempty 
subset of $\Sigma_{\M}^{0,+}$ of this 
form a  {\em thin (edge) cylinder set}. 
For a vertex path space $\Sigma_{\L}^{0,+}$, with $(L_i)_{i\geq 0}$ an 
 $(\A_k\times \A_{k+1})$ sequence of 
generalized $0-1$ matrices, 
a {\em thin (vertex) cylinder set} is
$[.x_0\dots x_{m}]=
\{\tilde x\in \Sigma_{\L}^{0,+}:\, \wt x_0=x_0, \dots, \wt
x_{m}=x_{m}\}\subseteq \Sigma_{\L}^{0,+}$.
Cylinders for other components are defined similarly.

We define general cylinder sets to be nonempty sets with restrictions in other slots, for
example $[.* e_1* e_3]\subseteq \Sigma_{\M}^{0,+}$ equals  $ \{( .f_0f_1f_2f_3\dots):\; f_1=e_1, f_3=e_3\}$;
the $*$ indicates an arbitrary allowed entry. Note that the cylinder subset $[.*\dots * e_k\dots e_{m}]\subseteq  \Sigma_{\M}^{0,+}$   can be uniquely
decomposed as a union of thin
cylinders of the form
$[.f_0\dots f_{k-1}e_k\dots e_{m}],$ where
these are the allowed left-continuations.
The same holds for vertex
paths. Of course the reason for the name ``cylinder set'' is that
these are products of subsets of the ``axes''  $\E_0, \E_1, \dots$
like an actual cylinder in $\R^3$.

\begin{defi}\label{d:reduced}
  We say
 an $(\A_i\times \A_{i+1})$  generalized real  matrix sequence $(N_i)_{i\geq 0} $ is {\em column--reduced} 
iff these matrices have no all-zero 
columns,  {\em row--reduced} if it has no all-zero rows.

We note that the property of a row or column being 
all-zero, and hence the notion of a reduced matrix
sequence, makes sense for generalized matrices.

\end{defi}

We have immediately:
\begin{lem}\label{l:reduced}
\item{(i)}  For nonnegative integer matrices, being row--reduced is equivalent to that any admissible finite edge path
$e_k\dots e_{m}$  for $k>0$ can be
continued infinitely to the right;  being column--reduced is equivalent to that any
allowed string can be
continued to the left.
\item{(ii)}  
If the matrix sequence is both
row--and-column--reduced then 
any allowed string  $e_k\dots e_{m}$ defines a (nonempty) general
cylinder 
 set
$[.*\dots * e_k\dots e_{m}]\subseteq  \Sigma_{\M}^{0,+}$.
\end{lem}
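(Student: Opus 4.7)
The plan is to establish both parts by elementary one-step edge-extension arguments driven by the nonzero row-sum or column-sum conditions from Definition \ref{d:reduced}, and then assemble (ii) as a direct combination of the two directions of (i).

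For (i) in the row-reduced case, I would first prove the forward implication by one-step induction. Given an admissible path $e_k \dots e_m$, its right-hand vertex is $a = e_m^+ \in \A_{m+1}$; since $M_{m+1}$ has no all-zero row, the row indexed by $a$ contains some positive entry $(M_{m+1})_{a,b}$, which guarantees the existence of an edge $e_{m+1} \in \E_{m+1}$ with $e_{m+1}^- = a$. Appending it preserves admissibility, and iterating produces the desired infinite rightward extension. For the converse I would argue contrapositively: if row $a$ of some $M_j$ is identically zero, then no admissible path whose terminal vertex is $a$ at level $j$ can be continued; and such a path exists whenever $a$ has at least one incoming edge in $\E_{j-1}$, which one may assume without loss of generality, since dead vertices with no incoming edges are irrelevant and may be discarded. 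The column-reduced statement follows by the symmetric argument with leftward extension, where the assumption $k > 0$ enters naturally so that there is actually room to the left in which to extend.

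For (ii), I would take an admissible string $e_k \dots e_m$ in a doubly reduced matrix sequence and first use column-reducedness to build a backward extension. If $k > 0$, the vertex $e_k^- \in \A_k$ indexes a nonzero column of $M_{k-1}$, so there is an edge $e_{k-1} \in \E_{k-1}$ with $e_{k-1}^+ = e_k^-$; iterating produces admissible edges $e_0 e_1 \dots e_{k-1}$, yielding a thin cylinder $[.e_0 \dots e_m] \subseteq [.* \dots * e_k \dots e_m]$. If $k = 0$ no backward step is required. Then applying the row-reduced half of (i) to this thin cylinder produces an infinite rightward extension, so the thin cylinder, and hence the general cylinder containing it, is nonempty.

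The main obstacle I anticipate is the converse direction of (i): exhibiting a concrete finite admissible path that witnesses failure of extensibility when a row or column vanishes. This requires knowing that the offending vertex is actually reached or emitted from by some existing finite path, which is where one invokes the complementary (column or row) condition, or observes that isolated unreachable vertices contribute nothing to the path space and may be deleted. Beyond this bookkeeping, both parts reduce to iterated one-edge-at-a-time application of the defining nonzero condition, so the argument is short once the right hypothesis on the alphabet has been pinned down.
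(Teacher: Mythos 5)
The paper itself offers no argument for this lemma (it is stated as immediate, with the \qed attached to the statement), so the only question is whether your write-up is correct on its own terms. Your forward implications and your part (ii) are: the one-step extension arguments (nonzero row of $M_{m+1}$ at the terminal vertex $e_m^+$ gives a rightward edge, nonzero column of $M_{k-1}$ at $e_k^-$ gives a leftward edge, iterate, and combine to fill a thin cylinder inside the general cylinder) are exactly the intended "immediate" proof, and they are all the paper ever uses later (e.g.\ in the proof of Theorem \ref{t:basic_thm} and in Proposition \ref{p:towermap}).

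The genuine gap is in your converse half of (i). The move "one may assume without loss of generality that $a$ has an incoming edge, since dead vertices may be discarded" is not available: discarding symbols replaces the given matrix sequence by a different one, and that operation can turn a non-reduced sequence into a reduced one, which is precisely the issue at stake. Concretely, take the stationary sequence $M_i=\left[\begin{matrix}1&0\\0&0\end{matrix}\right]$ on the constant alphabet $\{a,b\}$: the row (and column) indexed by $b$ is identically zero, so the sequence is not row-reduced, yet the only admissible edge paths run along the single $a\to a$ edge and extend infinitely in both directions. So the right-extension property holds while row-reducedness fails, and the biconditional as you (and, literally, the lemma) state it is false; no proof can close this, and your WLOG silently proves a statement about a different diagram. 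The same phenomenon persists even in the presence of the complementary reducedness at level $0$: with $\A_0=\{a,b\}$, $M_0$ the column $\left[\begin{matrix}1\\0\end{matrix}\right]$ and $M_i=[1]$ for $i\geq 1$, the sequence is column-reduced but not row-reduced, yet no edge path terminates at a vertex of $\A_0$, so nothing detects the zero row of $M_0$.

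The honest options are either to prove only the forward implications "reduced $\Rightarrow$ extendability" (which suffices for every later use in the paper), or to state the converse under an extra hypothesis — e.g.\ that every symbol is the endpoint of at least one edge, or the complementary reducedness together with a caveat about the initial level — under which your witness-path construction (an incoming edge into the offending vertex, supplied by the nonzero column of the preceding matrix) goes through without any deletion of vertices. A minor further point: you attach the proviso $k>0$ to the leftward clause, whereas the paper attaches it to the rightward one; this does not affect the mathematics, but be aware your reading differs from the printed statement.
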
\ \ \qed

\begin{rem}\label{r:reduced}
For an example, consider the stationary vertex shift with 
$L=\left[ \begin{matrix}
1& 1 \\
0 & 0
\end{matrix}  \right] $ and with alphabet $\{a,b\}$; the word $ab$ is
allowed but $[.ab]$ as defined by equation \eqref{eq:cylinder} is empty, hence
not a cylinder.
On the other hand, for the matrix $\left[ \begin{matrix}
1& 0 \\
1 & 0
\end{matrix}  \right] $ the word $ba$ is allowed and $[.ba]$ is
nonempty, hence a cylinder, but the shifted set 
$[.*ba]$ is empty as there exists no $c$ such that $c b a $ is an allowed string. 
Viewed as edge shifts, let for the first example $e$ denote the edge
with $e^-=a, e^+=b$; then $e_0= e$ is an allowed finite string but
$[.e_0]=\emptyset$.
For the second, with $e$ denoting the edge
with $e^-=b, e^+=a$; then $e_0= e$ is an allowed finite string and
$[.e]$ is nonempty, but $[.*e]=\emptyset$.

Requiring that cylinder sets be nonempty avoids technical issues,
 for instance in the definition of central measures.
Fortunately one can always produce an essentially equivalent matrix
sequence that is reduced. Precisely,
 there exists
 a canonical 
reduced sequence 
$\breve N\leq N$ in the sense of
Definitions \ref{d:generalizedmatrices} and \ref{d:subdiagram},
with possibly smaller alphabets. 
For $M$ with nonnegative integer entries,
this canonical reduced sequence $\breve M$ has two special properties:

\item{(i)} it has the same allowed (infinite) edge paths as the original non-reduced sequence, 
i.e.~$\Sigma_{\breve M}^{0,+}= \Sigma_{\M}^{0,+}  $, and

\item{(ii)} as noted above, every allowed finite symbol string defines a (nonempty) cylinder
set.

See also Proposition 2.2.10 of
\cite{LindMarcus95},
where a reduced graph for a (stationary)  \sft \, is termed {\em essential}.

Our proof here follows that of 
Lemma 2.2 of \cite{Fisher09a}. In that proof we work with $0-1$
matrices, and assume the the shift space is
nonempty, whence the matrix sequence and also the reduced sequence 
have no virtual matrices.

Here we treat the general case, allowing for virtual  (i.e.~empty) matrices as well as virtual vertex and edge alphabets.
We
note that if a matrix $M_k$ is all-zero, then the edge alphabet $\E_k$
is empty.

As we shall see, 
the reduced matrix sequence still can be defined but will have some
virtual alphabets and matrices. For example, if a matrix is all-zero, then in the reduced matrix sequence we will replace
this by a virtual matrix.

\end{rem}

\begin{lem}
  Given an alphabet sequence $\A= (\A_k)_{k\geq 0}$ and an
$(\A_i\times \A_{i+1})$  generalized nonnegative real matrix
 sequence $M=(\M_i)_{i\geq 0} $, and corresponding edge alphabet
 sequence $\E= (\E_k)_{k\geq 0}$, 
then there exists a unique sequence $\wt M\leq M$ with alphabets $\wt
\A\subseteq \A$, $\wt\E\subseteq \E$ such that:
\item{(i)} $\wt M$ is reduced;
  \item{(ii)} if the matrices have nonnegative integer entries, then $\Sigma^+_{\wt M}= \Sigma^+_{M}$, i.e.~ the
    corresponding nested Bratteli diagrams $\wt{\mathfrak B}\subseteq \mathfrak B$ have
    exactly the same infinite edge paths.

\end{lem}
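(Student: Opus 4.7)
The plan is to construct $\wt M$ explicitly, by restricting $M$ to the vertices and edges that actually participate in an infinite edge path starting at level~$0$, and then to show this canonical choice is the only sub-matrix sequence meeting both (i) and~(ii). For each $k\geq 0$, let $F_k\subseteq\A_k$ consist of those symbols $a$ for which there exists some infinite allowed edge path $(.\,e_k e_{k+1}\dots)\in\Sigma_M^{k,+}$ with $e_k^-=a$ (or, in the general nonnegative real setting, a sequence $a=a_k,a_{k+1},\dots$ with $(M_j)_{a_j a_{j+1}}>0$ for all $j\geq k$). Set $\wt \A_0:=F_0$ and recursively
\[
\wt \A_{k+1}\;:=\;\{\,e^+:\; e\in\E_k,\; e^-\in\wt \A_k,\; e^+\in F_{k+1}\,\},
\]
define $\wt \E_k:=\{e\in\E_k : e^-\in\wt \A_k,\; e^+\in\wt \A_{k+1}\}$, and let $\wt M_k$ be the restriction of $M_k$ to $\wt \A_k\times\wt \A_{k+1}$. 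When $F_0=\emptyset$ the recursion forces every $\wt \A_k$ empty and every $\wt M_k$ virtual, so both (i) and (ii) hold trivially.

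Property~(i) follows by construction: any $a\in\wt \A_k\subseteq F_k$ carries an outgoing edge $e$ whose head $e^+$ lies in $F_{k+1}$ and therefore in $\wt \A_{k+1}$, producing a positive entry in row~$a$ of $\wt M_k$, while every $b\in\wt \A_{k+1}$ is by definition the head of some $e\in\wt \E_k$, producing a positive entry in column~$b$. For~(ii) in the integer case, $\Sigma_{\wt M}^{0,+}\subseteq\Sigma_M^{0,+}$ is immediate from $\wt M\leq M$; for the reverse, given $(.\,e_0 e_1\dots)\in\Sigma_M^{0,+}$, induction on~$k$ shows $e_k^-\in\wt \A_k$, with base case $e_0^-\in F_0=\wt \A_0$ and the inductive step using that $e_k^-\in\wt \A_k$ together with $e_k^+\in F_{k+1}$ forces $e_k^+\in\wt \A_{k+1}$, so that each $e_k$ lies in $\wt \E_k$.

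For uniqueness, suppose $\wt M'\leq M$ also satisfies (i) and~(ii). Reducedness of $\wt M'$ gives every $a\in\wt \A'_k$ an outgoing edge in $\wt \E'_k$, hence iteratively an infinite forward path in $\wt M'$, and for $k\geq 1$ an incoming edge from $\wt \A'_{k-1}$, hence iteratively a finite path from $\wt \A'_0\subseteq F_0$ down to $a$; thus every $e\in\wt \E'_k$ sits on an infinite path from level~$0$ in $M$, placing $e\in\wt \E_k$. Conversely, each $e\in\wt \E_k$ lies on a concrete infinite path from level~$0$ (concatenate an initial path from $\wt \A_0$ to $e^-$, the edge $e$, and a forward continuation from $e^+$), which by~(ii) for $\wt M'$ must already lie in $\Sigma_{\wt M'}^{0,+}$, forcing $e\in\wt \E'_k$. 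This pins down $\wt \A'_k=\wt \A_k$, $\wt \E'_k=\wt \E_k$, and agreement of matrix entries, which in the integer case are simply counts of common surviving edges; the nonnegative real case is handled identically with ``positive-entry chain'' replacing ``edge path''. The only delicate point is the bookkeeping of virtual alphabets: when the inductive definition empties some $\wt \A_k$, the formalism of generalized matrices must accommodate virtual $\wt M_k$, and this is precisely the case in which reducedness holds vacuously rather than via positive entries.
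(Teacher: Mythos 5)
Your proof is correct, but it takes a different route from the paper's. The paper proves existence by an elimination algorithm: it passes to the $0$-$1$ skeleton $L$ of $M$, applies operators $\Cal R_n$ (removing all-zero rows, working backwards along finite truncations $L_0,\dots,L_n$ and then letting $n\to\infty$, using monotonicity on the function space $\{\0,\1\}^X$) followed by operators $\Cal C_k$ (removing all-zero columns, working forwards), keeps careful track of the partially virtual alphabets and matrices that can appear along the way, copies the surviving entries of $M$, and only at the end observes that the edge paths of $L$ and $\wt L$ (hence of $M$ and $\wt M$) coincide. You instead describe the end result of that algorithm in one stroke: keep exactly the vertices and edges lying on some infinite edge path issuing from level $0$, i.e.\ those reachable from level $0$ which also admit an infinite forward continuation, and verify (i) and (ii) directly. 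This avoids the finite-truncation/limit bookkeeping entirely, and (as you essentially note) the only virtual situation your recursion can produce is the totally empty one occurring when $\Sigma^+_M=\emptyset$, whereas the paper's procedure must discuss intermediate virtual matrices. You also supply an explicit two-sided uniqueness argument (any reduced $\wt M'\leq M$ with the same path space both contains and is contained in your $\wt M$), a point the paper's proof leaves implicit in the canonicity of its construction; this is a genuine addition. One caveat, which affects your write-up exactly as it affects the lemma's statement and the paper's proof: for genuinely real (non-integer) entries condition (ii) is vacuous, and a reduced subsequence of $M$ is then far from unique (delete one symbol per level from a stationary all-ones $2\times 2$ sequence, say); uniqueness must be read, as your closing sentence about ``positive-entry chains'' suggests, with the skeleton's path-space condition replacing (ii), which is also what the paper's canonical construction delivers.
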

\begin{proof}
We follow the proof 
of  Lemma 2.2 in \cite{Fisher09a},
making the needed changes. We define a $0-1$ matrix sequence $L=(\L_i)_{i\geq 0} $,
of the same dimensions to have a $0$ iff $M$ does. We shall produce a
reduced version $\wt L$ of $L$ and then define $\wt M_k$ to have the
same size as $\wt L_k$ except with the entry copied from $M_k$. This
matrix sequence 
is then also reduced. Moreover, it is clear that (for the integer
entry case) since the edge paths
of $L$ and $\wt L$ correspond, so do the edge paths of $M$ and $\wt
M$. 

To give the proof for $L$ then, the only new ingredients added to the
case studied in \cite{Fisher09a} are 
generalized matrices, including the
possibility of virtual alphabets or matrices.

First
we note that the property of a row or column being 
all-zero, and hence the notion of a reduced matrix
sequence, makes sense for generalized matrices. Now for the proof, we consider the
finite sequence $L_0,L_1,\dots , L_n$ and produce a row-reduced version.
We list all the vertex alphabets and matrix elements of this finite sequence, giving a
finite set $X$, and form the product space $\Cal F=\{\0,\1\}^X$, which is the
collection of all functions from $X$ to $\{\0,\1\}$. So the matrix
sequences correspond bijectively to this function space $\Cal
F$. Changing a  symbol from $\1$ to 
$\0$ corresponds to removing a given letter or matrix.
The collection $\Cal F$ is partially ordered coordinate-wise: one
function is $\leq$ a second iff that holds for each coordinate.
We
define a decreasing  operator $\Cal R_n$ on $\Cal F$,  which removes all zero rows from
$L_n$, giving $\wt L_n$, also removing the corresponding letters from $\A_n$, giving
$\wt A_n$. We also remove the corresponding columns from $L_{n-1}
$ giving $\wt L_{n-1}$ . This results in 
 the vertex alphabet sequence $\A_0, \A_1,\dots, \A_{n-1},
\wt\A_n$ and matrix sequence $\L_0, \L_1,\dots, \L_{n-2},\wt \L_{n-1},
\wt\L_n$, which are of compatible sizes. If $L_n$ is all zero, then $\wt L_n$ is
virtual as all rows have been removed, and then so is $\wt A_n$.
Continuing to the next step, we define an operator $\Cal R_{n-1}$
which removes all zero rows from $\wt L_{n-1}$ and corresponding letters
from $\A_{n-1}$ and columns from $L_{n-2}$. 
In this way we get operators $\Cal R_n, \Cal R_{n-1}, \dots \Cal R_0.$ This
last operator removes all zero rows from $L_{0}$ and corresponding
letters from $\A_0$.

Relabeling the matrix sequence as $L_0,L_1,\dots , L_n$, we note that
the matrices $L_0,L_1,\dots , L_n, L_{n+1}$ and alphabets $\A_0, \A_1,\dots, \A_{n},\A_{n+1}$ 
are still compatible, hence so are 
$L_0,L_1,\dots , L_m$ with $m>n$  with corresponding alphabets; we
apply the above procedure to this sequence and note that this is
consistent with the previous steps since  the results are nonincreasing for the order on $\Cal
F$.
This yields  an infinite row-reduced sequence,  relabelled as
$(L_i)_{i\geq 0}$, with corresponding alphabets.

Next we remove zero columns. We define an operator $\Cal C_0$ on $\Cal
F$ which removes the all-zero columns from $L_0$, and the
corresponding letters from $\A_1$. Next,  $\Cal C_1$ removes the all-zero columns from $L_1$, and the
corresponding letters from $\A_2$, and also corresponding rows from
$L_2$. Applying these in the order $\Cal C_0$, $\Cal C_1,\dots$ results
in an alphabet sequence $\wt \A$ for matrix sequence   $\wt  L$ which is now both row-and column-reduced.

Lastly, if the reduced sequence $\wt  L$ has a virtual alphabet
$\wt\A_k$
then both $\wt L_{k-1}$ and $\wt L_k$ are virtual matrices.
We note that if $\wt L_k$ is virtual, that does {\em not} force $\wt \A_k$
or
$\wt \A_{k+1}$ to be virtual, since the empty function is a subset of
any set (in particular of $\wt \A_k \times \wt \A_{k+1}$) (think of
the Bratteli diagram, with no edges in one spot).
If 
$\wt  L$ has no virtual matrices, then the edge shift spaces for
$L$  and $\wt  L$ are identical, and hence also for $M$  and $\wt  M$,
and these are nonempty as there exist infinite allowed edge paths. If
$\wt  M$ has a virtual matrix $\wt M_k$, then the edge space is empty
as is  the edge space for $M$.  Thus in all cases the edge space for
$M$ and the reduced sequence are the same.

\end{proof}

\begin{rem}
  We remark that the same proof works for biinfinite matrix sequences,
  like those encountered in our forthcoming work.
 \end{rem}

Now we return to focus on nonnegative integer  matrix sequences  and their Bratteli diagrams.
The  {\em stable set} of an edge path $\e\in \Sigma_{\M}^{k,+}  $ is
the collection  of points in the \nsft \,  which are forward asymptotic to
$\e$, 
$W^s(\e)= \{ \wt \e= (.\wt e_k \wt e_{k+1}\dots):\, \exists m\geq   0 \text{ with }
\wt e_i= e_i \;\forall i\geq m\}$. 
These are the equivalence classes for the {\em stable} or {\em tail}
 {\em equivalence relation} on  the \nsft \, $\Sigma_{M}^+$. 
Transversal to the   shift dynamics is the action  of $\FC_k$, the group of finite
coordinate changes on the component
$\Sigma_{\M}^{k,+}  $,  generated by the involutions $\gamma$ which
interchange two thin cylinder sets: $\gamma([. e_ke_{k+1}\dots
e_n])=[.f_{k}f_{k+1}\dots f_n] $ where $e_n^+= f_n^+$, the tails being
left unchanged by $\gamma$. Note that the $\FC_k$\,--\,orbit of $\e$ is its
stable set $W^s(\e)$.

This brings us to
Vershik's construction of a  single map which has the same
orbits as this group, after the removal of a countable set.
We shall use the following notation:
for  $b\in \A_{k+1}$  we set $\E_k^+(b)= \{ e_k\in \E_k:\, e_k^+= b\}$.
Thus the collection of sets $\{\E_k^+(b):\, b\in \A_{k+1}\}$ partitions
$\E_k$ according to the final symbol.

We  define 
  a {\em stable order} $\O=\O^s$ on $\E$ to be  a sequence
of partial orders $\O_k$ on 
$\E_k$, which restrict to a
linear
(i.e.~total) order $\O_k(b)$ on
each $\E_k^+(b)$. Thus,
edges of $\E_k$ are comparable  for $\O_k$ if and only if they have the same
final symbol.
By an {\em ordered Bratteli diagram} $\mathfrak B_{\A, \E,\O}$ 
one means the diagram $\mathfrak B_{\A, \E}$  together with a stable order $\O$.

This linear order on the sets of edge alphabets for a given time
and final symbol  then passes to a
partial order on the collection
of edge paths.
Thus
for $\e, \wt \e$ in the same stable equivalence class, 
 supposing 
$n$
  is the
 least  
index such that $\wt e_i= e_i \; \forall i\geq  n$,
 then for $k= n-1$, $ e_k^+=\wt e_k^+\equiv b$; we declare that
$\e< \wt \e$ iff
$e_k<\wt e_k$ in the order $\O_k$ on $\E_k^+(b)$. This  {\em
  anti-lexicographic order }defines a partial
order on each component, which is a linear
order when restricted to any stable equivalence class $W^s(\e)$. One can picture this order
geometrically,
in the {\em stable tree model}; see Fig. \ref{F:ChaconTreeB}.

The map $T_\O^{(k)}$ is  defined 
to send a string
 $\e$ in the $k^{\text{th}}$ component  to its successor in this
 order. 
This does not quite define a transformation in the ordinary sense, that is, 
a map $T:X\to X$ where the domain and range spaces agree. One has
however the following  suitable notion, see e.g.~Def. 2.2 of~\cite{FeldmanMoore77a}:

\begin{defi}\label{d:partialtf}
  Given a set $X$, a {\em partial transformation} of $X$ is a
  bijection $T: A\to B$ for some  $A, B\subseteq X$, with {\em inverse}
  $T^{-1}:B\to A$. The $T$\,--\,{\em orbit} of a point $x$ is
$\{T^n(x):\, n\in \Z \text{ such that  this is defined, where
  $T^0(x)=x$ for all $x$}\}.$

Where defined, we call $T(x)$  the {\em successor} of $x$,  $T^{-1} (x)$ its
{\em predecessor}; the set of points with no
successor is  $\NS= X\setminus A$, those with no predecessor is  $\NP= X\setminus B$.
Setting $\CN\equiv  (\cup_{i\leq
  0} T^i \NS)\cup (\cup_{i\geq 
  0} T^i \NP)$, then $T$ is a bijection on $X\setminus \CN$.

That is, given a partial transformation $T:A\to B$, we can extract a
bijective transformation 
in a canonical way, simply by restricting attention to the  full  two-sided
orbits; this is
$(X\setminus \CN, T)$.
\end{defi}

So given a stable order $\O$, and focusing on the $0^{\text{th}}$ component $\Sigma_{\M}^{0,+}$, we
define 
$\NS$ and $\NP$ as above, 
 and will see shortly that these sets are at most
 countable (and are finite for bounded alphabet size). We call  the partial transformation $T_\O= T_\O^{(0)}:\, 
\Sigma_{\M}^{0,+}\setminus \NS\to \Sigma_{\M}^{0,+}\setminus \NP$  the
{\em Vershik map}; this Borel measurable map has the same orbits (the stable equivalence classes) as  the group of
 homeomorphisms $\FC= \FC_0$ on the compact space $\Sigma_{\M}^{0,+}$.

For the particular case of the Vershik map, we call the bijective map 
$T_\O$ on the noncompact  space $\Sigma_{\M}^{0,+}\setminus \CN$ 
the {\em adic transformation}
defined by the order $\O$.

\begin{exam}\label{ex:odometer}
 Instead of producing a transformation by restriction, one might try to
extend the partial
transformation to the whole space. The easiest example is  the most
classical example of adic transformation, the {\em Kakutani-von Neumann
  dyadic odometer}  (or  {\em adding machine}).

Recall that given a $0-1$ matrix sequence $L$ and alphabet sequence $\A$,  the
vertex shift
space is
$\Sigma_L^{+,0}= \{\x=(.x_0x_1\dots)\}: x_k\in \A_k\}$. To define a  {\em vertex
  order} on this space we begin with   a linear order $\Cal O_k$ on each alphabet
$\A_k$. For
vertex paths 
 $\x, \wt \x$ in the same stable equivalence class, 
 supposing 
$n$
  is the
 least  
index such that $\wt x_i= x_i \; \forall i\geq  n+1$,
 then we define 
$\x< \wt \x$ iff
$x_n<\wt x_n$ in the order $\O_n$ on $\A_n$.  This induces an
edge order which only depends on the initial symbols of the edges
entering $b\in \A_{n+1}$.  See
\cite{Fisher09a}. 

We recall the vertex and
edge shift models for the odometer: 
taking alphabet $\A=\{0,1\}$ and $L=\left[ \begin{matrix}
1 & 1\\
1 & 1
\end{matrix}  \right]$,  then 
the vertex
shift space $\Sigma^+_L$ is $\Pi_0^\infty \A$ with the vertex
  order $0<1$, so the vertex map is
$T: (.000\dots)\mapsto  (.100\dots) \mapsto  (.0100\dots) \mapsto
(.1100\dots)
\mapsto  (.00100\dots) $ and so on.
For the edge shift model, the vertex alphabet is a single point and
the edge alphabet is $\E=\{0,1\}$ with  matrix $M=[2]$. 
In both cases,
$\NS= \{.111\dots\}$, $\NP= \{.000\dots\}$.
Intuitively, thinking of $\Sigma^+_L$  as a car odometer written
backwards, when the odometer gets all the way to $.111\dots$ the next
step is to ``turn over'' to $.000\dots$. 
But this is just the unique continuous
extension of the map to all of $\Sigma^+_L$.

In general however, finding an extension
which is nice, e.g.~in the
sense of being bijective or continuous, may  not be possible. Thus,
given some function $f:\NS\to \NP$, we define $T_{\O, f} (\e)$ to be 
$T_\O(\e)$ for $\e\notin \NS$, and to be $f(\e)$ on $\NS$, calling
this the $f-${\em extension} of $T_\O$.  However  there
are examples
where $\# \NS\neq\# \NP$ so no bijective extension  exists.  This fact
depends only on the matrix sequence, and there are other examples where depending on the
choice of order $\O$,
a continuous extension does or does not exist. 
See
Examples 3,6
in~\cite{Fisher09a}, and   the example after
Proposition 5 in \cite{Medynets2006}.
 
\end{exam}

\begin{exam}\label{ex:Chacon} As we showed in Remark 5.1 of
  \cite{FerencziFisherTalet09}, for the Chacon adic there exists no
  continuous extension of the Vershik map on the noncompact space
  $ \Sigma_{M}^{0,+}\setminus\Cal N$
  to   $\Sigma_{M}^{0,+}$.
Briefly, in our current notation: as in Figs.~ \ref{F:ChaconOne}, \ref{F:ChaconTreeB},
we label the edges in the Bratteli diagram by $a,b,c,d,e$. These are ordered
 $a<b<c<d$; $e$ is ordered trivially.
Then $T: (.dd\dots db*)\mapsto (.ee\dots ec*)$ so, writing $\ee=(.ee\dots)$ and so on, the continuous
extension should be $T  (\dd)=\ee$; however
$T: (.dd\dots da*)\mapsto (.aa\dots ab*)$  so we should have
instead $T (\dd)=\aa$, and there is no 
continuous extension.

We   note that $\ee$ has no predecessor and no
   successor.
Now $\dd$ also has no successor and $\aa$ no
predecessor.
So $\NS=\{\dd,\ee\}$ and  $\NP=\{\aa,\ee\}$. Thus $\CN$ is the
countably infinite set $\{\ee, T^n(\aa),T^{-n}(\dd)\}_{n\in \N}.$

   \end{exam}

We now show, as promised, that the set $\CN$ is countable.
The elements of
$\NP, \NS$ are the
minimal and maximal elements with respect to  the partial order defined by $\O$ on this path space.

\begin{prop}\label{p:numberminimal}
  Let $(M_i)_{i\geq 0}$ be a  sequence of  $(\A_i\times \A_{i+1})$ 
nonnegative integer matrices, with $l_i= \#\A_i$, 
and let $\O$ be an order on the associated Bratteli diagram. 
\item{(i)} Assume first that $M$ is primitive. Then for each $\e\in \Sigma_{M}^{0,+} $, 
 $W^s(\e)$ is  finite iff $\liminf l_i=1$, otherwise  is countably infinite.
  \item{(ii)}
The number of minimal and maximal elements, $\# \NP$, $\#\NS$, are bounded above by $\liminf l_i$. 
  \item{(iii)}
    There is always at least one  minimal and one maximal element:
$\#\NP\geq 1$, $\#\NS\geq 1$. There exist examples with $l_i=N$ for all
$i$ but with $\#\NP=\#\NS=1$. 
\end{prop}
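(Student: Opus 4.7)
The plan is to encode elements of $\NS$ and $\NP$ combinatorially through the max-edge and min-edge maps $\mu_k^+,\mu_k^-\colon \A_{k+1}\to \A_k$, where $\mu_k^+(b)$ is the initial vertex of the unique $\O_k$-maximum edge in $\E_k^+(b)$, and similarly for the minimum. An edge path $\e$ lies in $\NS$ precisely when each $e_k$ is the maximum of $\E_k^+(e_k^+)$; setting $x_{k+1}=e_k^+$, the edge $e_k$ and its tail $e_k^-=x_k$ are both determined by $x_{k+1}$ via $x_k=\mu_k^+(x_{k+1})$. Thus $\NS$ is in canonical bijection with the set of ``max-compatible'' sequences $(x_k)_{k\geq 0}\in\prod_k\A_k$ satisfying $x_k=\mu_k^+(x_{k+1})$ for all $k\geq 0$, and analogously $\NP$ corresponds to min-compatible sequences via $\mu_k^-$.

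For part (ii), the key point is that $\mu_k^+$ is a function: if two max-compatible sequences $(x_k),(x_k')$ coincide at any single index $n+1$, backward iteration forces $x_k=x_k'$ for every $k\leq n+1$. Contrapositively, once two elements of $\NS$ differ at some level $n_0$, they differ at every level $n\geq n_0$. Hence for any finite $S\subseteq \NS$ of cardinality $K$, choosing $N$ larger than the first-difference level for all $\binom{K}{2}$ pairs, the evaluation map $\e\mapsto x_n$ embeds $S$ injectively into $\A_n$ for every $n\geq N$, so $l_n\geq K$. This gives $K\leq \liminf_i l_i$; letting $K$ range over all finite subsets yields $\#\NS\leq \liminf l_i$, and the symmetric argument with $\mu_k^-$ treats $\#\NP$.

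For part (iii), existence of at least one element of $\NS$ follows from K\"onig's lemma. I consider the tree $\CT$ whose level-$n$ nodes are the finite tuples $(x_0,\dots,x_n)$ satisfying $x_k=\mu_k^+(x_{k+1})$ for $0\leq k<n$, with parent map given by truncation. Because backward iteration from any $x_n\in\A_n$ uniquely determines the whole prefix, level $n$ has exactly $l_n\geq 1$ nodes; so $\CT$ is infinite and finitely branching, admitting an infinite branch, which yields an element of $\NS$. The argument for $\NP$ is symmetric. For the saturating example, I take the stationary diagram with $\A_k=\{1,\dots,N\}$ and $M_k$ the $N\times N$ all-ones matrix, with each $\E_k^+(b)$ ordered by the initial vertex value in $\{1,\dots,N\}$. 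Then $\mu_k^+(b)=N$ and $\mu_k^-(b)=1$ for every $b$, producing the unique max-compatible sequence $(N,N,\dots)$ and the unique min-compatible sequence $(1,1,\dots)$, so $\#\NS=\#\NP=1$.

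For part (i), I would decompose $W^s(\e)=\bigsqcup_{m\geq 0} W^s_m(\e)$ with $W^s_m(\e)=\{\tilde\e:\tilde e_i=e_i \text{ for all }i\geq m\}$; the cardinality $|W^s_m(\e)|$ equals the number of admissible length-$m$ prefixes terminating at $e_m^-$, namely $\sum_{a\in\A_0}(M_0^{m-1})_{a,e_m^-}$. Each $W^s_m(\e)$ is finite, so $W^s(\e)$ is always at most countable, and the dichotomy reduces to whether this monotone sequence of prefix counts is bounded or unbounded. When $\liminf l_i\geq 2$, I would show that primitivity, combined with the positivity of the column sums at the levels where $l_i\geq 2$, forces $|W^s_m(\e)|\to\infty$, so $W^s(\e)$ is countably infinite. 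The reverse implication, that $\liminf l_i=1$ forces $W^s(\e)$ to be finite, is the step I expect to be the main obstacle: it requires first passing to the reduced matrix sequence $\breve M\leq M$ of Remark~\ref{r:reduced} to eliminate spurious branching, and then exploiting the single-vertex levels (at which the entire prefix structure funnels through one state) together with primitivity to show the prefix counts stabilize rather than grow.
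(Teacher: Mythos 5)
Your parts (ii) and (iii) are in reasonable shape, but part (i) is not actually proved: both implications are left as intentions (``I would show\dots'', ``the step I expect to be the main obstacle''), so the substantive half of the proposition is missing. For the direction $\liminf l_i\ge 2\Rightarrow W^s(\e)$ infinite, the paper does not count prefixes at all; it branches and reconnects: primitivity gives $n$ with $l_n\ge 2$ and $M_0^{n-1}>0$, so from $e_0^-$ one reaches a vertex $c\neq e_n^+$ at level $n$, and a second application of primitivity ($M_n^{m-1}>0$) reconnects $c$ to $e_m^+$, producing a new element of $W^s(\e)$; iterating gives at least $2^k$ elements. Your column-sum plan can be made to work along the same lines, but it is not carried out. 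More seriously, the converse you flag as the obstacle is not merely hard: as literally stated it fails. Take the primitive $0$--$1$ diagram whose alphabets alternate between sizes $1$ and $2$ with all matrix entries equal to $1$ (or simply the single-vertex sequence $M_i=[2]$): here $\liminf l_i=1$, yet every stable class is countably infinite, because a level with a single vertex can still receive several edges. So ``pass to the reduced sequence and funnel through the single-vertex levels'' cannot close this step; the quantity that controls finiteness of $W^s(\e)$ is the number of times the path branches to the left (several edges sharing a head), not $l_i$. Be aware that the paper's own one-line justification of this direction (``if $\liminf l_i=1$ then $\e$ branches to the left only finitely many times'') has exactly this blind spot, so do not model a fix on it; if you pursue (i), prove the dichotomy in terms of left-branchings and then relate that to the hypotheses honestly.

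For (ii) and (iii) your route is genuinely different from the paper's and is attractive, but it silently assumes the sequence is column--reduced, which the proposition does not. Your claim that $\e\in\NS$ exactly when every $e_k$ is the maximum of $\E_k^+(e_k^+)$, and the totality of $\mu_k^\pm$, both fail otherwise: if $\A_0=\{a\}$, $\A_1=\{b,c\}$, $\A_i$ a singleton for $i\ge 2$, with a single edge $a\to b$, no edge into $c$, and one edge from each of $b,c$ into level $2$, ordered so that the edge from $c$ is larger, then the unique infinite path lies in $\NS$ although its level-one edge is not maximal, and your tree of max-compatible prefixes is eventually empty, so both your bound in (ii) and your K\"onig-lemma existence in (iii) break down on this (admittedly non-reduced) diagram. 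The repair is cheap but must be stated: pass first to the canonical reduced sequence $\breve M\le M$ of Remark \ref{r:reduced}, which has the same infinite paths, the same stable classes and the same induced order on them, hence the same sets $\NS,\NP$, and satisfies $\liminf \breve l_i\le\liminf l_i$; in the reduced diagram your bijection, your backward-determinism bound, and your K\"onig argument are all correct, and your all-ones example with $\#\NS=\#\NP=1$ is fine. By way of comparison, the paper avoids the reducedness issue entirely by arguing directly that two distinct minimal paths must have eventually disjoint vertex paths (using the total order on finite paths with a common terminal vertex) and by obtaining existence as a compactness limit of finite minimal strings; your $\mu_k^\pm$ encoding is cleaner once reducedness is in hand, but the reduction step is part of the proof, not an afterthought.
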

\begin{proof}
For $(i)$, given an edge path $\e$ let us say it {\em branches (to the
left)
at time $(k+1)$} iff there exists an edge path $f$ with $f_k\neq e_k$ and
$e_k^+= f_k^+$. If $\liminf l_i=1$ then $\e$
branches to the left only finitely many times so  there is
a largest such time, hence  $\Sigma_{M}^{0,+} $ is finite (and so $W^s(\e)$ is finite).

Next suppose $l_i>1$
infinitely often. Given an edge path $\e$, we shall show $W^s(\e)$ 
 is countably infinite.
This consists of those  paths $\f$ such that $\f$
branches to the left from $\e$ at some maximal time $k$ and from then on
agrees with $\e$. 

There exists $n\geq 0$  such that $l_n\geq
2$. Since $M$ is primitive, there exists such an $n>0$ such that also 
$M_0^{n-1} >0$.  Hence $a= e_0^-$ is connected to $b= e_n^+$ by the
path $e$, and there is some other path $f$ with $a= f_0^-$ and $c=
f_n^+$
with $c\neq b$. Since $M$ is primitive, there exists $m>n$ such that
$M_n^{m-1} >0$. Hence there is a finite allowed edge path connecting
$c\in \A_n$ to $d=e_m^+$. Hence we can define $\wt f$ to equal $f$ from
$0$ to $n$ and then continue in this way to agree with $e$ at time
$m$ and for all larger times. Thus $f\neq e$ and $f$ is in $W^s(\e)$. We repeat the
argument from $m$ to a larger time; doing this $k$ times  we get at
least $2^k$
paths in $W^s(\e)$, which is therefore infinite.

  For $(ii)$, now considering the order, suppose $\e,\f$ are minimal elements. If for some $n, e_n^+= f_n^+$, then
  certainly $e_k= f_k$ for all $0\leq k\leq n$. Therefore, if $\e\neq
  \f$, there exists some least $m\geq 0$ such that $\e_m\neq
  f_m$. Thus 
   $\e_m^-=  f_m^-$ and for each $k\geq m$, $e_n^+\neq
   f_n^+$. That is,  if $\e\neq\f$ are both minimal elements, there
   exists some least $m\geq 0$ such that before that time the paths
   are identical and after it, have always different vertex paths. So
   given any finite collection $F$ of minimal elements, there is a
   $n\geq 0$ such that from that time on, all have different vertex
   paths, so $\#F\leq l_m $ for all $m\geq n$, whence $\#F\leq \liminf
   l_k $ and so $\# \NP\leq \liminf
   l_k $, and similarly for $\NS$,  proving $(ii)$.

$(iii)$ Let $l_i=m=\liminf l_i$ and let $j<i $ with  $l_j=m$. Starting at vertex $a_m\in \A_m$, there exists a
unique least edge $e_m$ with $e_m^+=b_m$. Calling $a_{m-1} \equiv
e_m^-$, similarly there exists a
unique least edge $e_{m-1}$ with $e_{m-1}^+=a_{m-1}$. We do this for
each $b_m\in \A_m$,  producing edge paths from some $b_j\in \A_j$ to
$b_m\in \A_m$.
We note that existence of a unique least edge only holds to the left, not to the
right. For an example, the diagram could begin
with a single vertex with $k$ edges, and then continue with $k$
constant edge paths, each being both minimal and maximal. One can also
have an edge path $e_0,\dots , e_m $ which
is least at each vertex $e_k^+$ but cannot be further extended to the
right while maintaining minimality.

To produce an infinite minimal path, we instead make  use of the
compactness of $\Sigma_{M}^{0,+} $. Thus, for $m_k$
increasing, let
$\wh e_k$ be a finite minimal edge string from time $0$ to time
$m_k$. By compactness here exists a convergent subsequence $\wh e_k\to \wh e$.
This is an infnite minimal path, because out to an arbitrarily large
index, it agrees with one of the paths $\wh e_k$.
This proves that there is at least one such path.

That we have at best an upper bound in $(ii)$ is shown by the  example of
the $N$-adic odometer, with
 constant alphabet of size $N$ and with $\#\NP=\#\NS=1$.

\end{proof}

\subsection{Group actions, partial transformations, orbit equivalence relation}

We next examine how the basic ergodic theory for these  
are related: the
Vershik map, its associated adic transformation,  the
action of the 
countable group $\FC$,  and the stable equivalence
relation.  See~\cite{FeldmanMoore77a},
\cite{KechrisMiller04} and~\cite{Aaronson97} for additional
background. 

One can summarize by saying that 
 the idea of countable equivalence relation subsumes all: 
 the notions of
invariant set and measure, wandering set, ergodic, and conservative
for a  map
 or a group action
correspond to that for the orbit equivalence relation.
This fact leads
us to a natural definition of all these notions for a partial transformation such as the
Vershik map.

A subtle point is that this is not quite the case for measurability, which
needs to be defined
separately for maps. A (Borel) measurable map does generate a measurable 
equivalence
relation (Corollary \ref{c:measgraph}), however 
A. Kechris \cite{Kechris2015}  has shown us an example of an
invertible map  of a Polish space which is non-Borel-measurable yet whose orbit equivalence
relation is Borel, in fact such that there exists a measurable map with the same orbits.

As we note below, conservativity  has a natural
  definition for countable equivalence relations, and
thereby for group actions;
see Proposition
\ref{p:conservative_equivreln} below.

\begin{defi}\label{d:wandering}
  Let $X$ be a Polish space (a topological space with an
  equivalent metric  which makes it a complete separable metric space)
  with Borel $\sigma$\,--\,algebra  $\B$ and Borel measure $\nu$; throughout, ``measurable'' will
  mean Borel measurable. We say $T:X\to X$ is a {\em
    measurable map} or {\em transformation} iff $T^{-1}(\B)\subseteq \B$. It is a {\em
     measurable 
    invertible transformation} of $X$ iff $T$ is invertible and both $T$ and its
  inverse are measurable. The  measure $\nu$ is {\em
    preserved} by a  measurable map $T$ on $X$
iff for each measurable set $A$, $\nu(T^{-1}(A))= \nu (A)$.
A measurable subset $A\subseteq X$ is {\em invariant}
 iff $
T^{-1}(A)=A$. 
It  is {\em wandering} iff $\{T^{-i}(A)\}_{i\in\N}$  (here $\N=\{0,1,2,\dots\}$) are
disjoint sets, and is   {\em two-sided wandering} if the map is
invertible and this holds for
$\Z$ in place of $\N$.

Let $G$ be a countably infinite  group which acts 
measurably on  $X$. That is, we are given a map $\Phi :
G\times X\to X $ such
that, writing  $g(x)=\Phi(g,x) $, then  $g: X\to X$ is an invertible 
measurable map on $X$, 
satisfying $(g_1 g_2)(x)= g_1(g_2(x))$. The action is
{\em measure-preserving} if that holds for each element of $G$ simultaneously, and
a set  is
{\em invariant} iff it is invariant  for each  element.
A set $A\subseteq X$ is {\em wandering} for the action iff $\{ \gamma (A)\}_{ \gamma\in G}$ is an  infinite
disjoint family of sets, equivalently iff $\gamma (A)\cap A=
\emptyset$ for every $\gamma\neq e$.

A {\em Borel relation} $R$ is a Borel subset of $X\times
X$. Writing $x\sim_R y$ ($x$ is {\em related to} $y$) iff $(x,y)\in
R$,  this is a {\em Borel
  equivalence relation} iff  $x\sim_R y$  is reflexive, symmetric, and transitive. It  is 
{\em countable} ({\em countably infinite})
iff 
that is true for
each equivalence class. The  {\em
  saturation} $R(A)$ of a  set $A\subseteq X$ is
the union of equivalence classes which meet it.  
 The set $A$  is
{\em invariant} for $R$ iff $R(A)= A$, and
is {\em wandering} iff $R$ is countably infinite for points in $A$ and
$A$ meets an 
equivalence class in at most one
point. A {\em bijective equivalence} is a bimeasurable partial transformation
 $f$ of $X$ such that $x\sim_R f(x)$.  $R$ is {\em
  measure-preserving}
 iff two measurable sets 
which are bijectively equivalent have the same measure.

A measurable set $A\subseteq X$ is {\em trivial} iff  either
$\nu(A)=0$ or $\nu(A^c)=0$.
A measurable transformation,  group action or   equivalence relation is {\em ergodic} iff
 any invariant set $A$ is trivial; it is {\em conservative}
iff any wandering
set has measure zero.
A transformation is {\em recurrent} iff for any set $A$ of positive
  measure, for a.e.~$x\in A$ there exists  $n>  0$ such
  that $T^n (x)\in A$.

The {\em orbit equivalence relation} of an invertible transformation,
a partial
transformation (Def.~
\ref{d:partialtf}), or a group action is the relation whose equivalence classes consist of
orbits. 

We say a partial transformation of $X$ is measurable iff its domain
and range are Borel subsets  and  it is a Borel map there.
For  a measurable partial transformation, we define the  notions of
measure-preserving, ergodic and conservative via
its orbit equivalence relation. This definition is consistent with
that for maps 
by part
$(i)$ of the Proposition to follow.
\end{defi}

To understand how  these various points of view are related, we note
first that by
Theorem 14.12 of~\cite{Kechris1995}:
\begin{theo}
 Let $X,Y$ be Polish spaces, and $f: X\to Y$. Then $f$ is
  Borel measurable iff the graph of $f$ is a Borel subset of $X\times X$. 
\ \qed\end{theo}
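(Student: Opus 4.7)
The statement has two directions, and they are asymmetric in difficulty. Note the theorem should read $X \times Y$ rather than $X \times X$, since the graph of $f$ sits in $X \times Y$.

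For the easy direction ($f$ Borel measurable implies graph is Borel), my plan is to exhibit the graph as a Borel preimage of a manifestly Borel set. Since $Y$ is Polish, hence metrizable, the diagonal $\Delta_Y = \{(y,y') \in Y \times Y : y = y'\}$ is closed in $Y \times Y$ and therefore Borel. Consider the map $F: X \times Y \to Y \times Y$ defined by $F(x,y) = (f(x), y)$. Because $f$ is Borel measurable and the identity on $Y$ is certainly Borel, $F$ is Borel measurable (coordinatewise). The graph of $f$ is exactly $F^{-1}(\Delta_Y)$, hence Borel. Equivalently, one can observe that $(x,y) \mapsto d_Y(f(x), y)$ is Borel and the graph is the preimage of $\{0\}$.

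For the reverse direction (graph Borel implies $f$ Borel measurable), the main obstacle is that projections of Borel sets are generally only analytic, not Borel; this is where the proof genuinely needs descriptive set theory. The plan is as follows. Let $B \subseteq Y$ be an arbitrary Borel set. Write
\[
f^{-1}(B) = \pi_X\bigl(\mathrm{graph}(f) \cap (X \times B)\bigr),
\]
where $\pi_X: X \times Y \to X$ is the coordinate projection. The set $\mathrm{graph}(f) \cap (X \times B)$ is Borel in the Polish space $X \times Y$. Crucially, because $f$ is a \emph{function}, the restriction $\pi_X|_{\mathrm{graph}(f)}$ is injective (each fiber meets the graph in at most one point). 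The key fact I would invoke is the Lusin--Souslin theorem: an injective Borel measurable image of a Borel set, taken between Polish spaces, is Borel (see e.g.\ Theorem 15.1 of \cite{Kechris1995}). Applying this to the continuous injection $\pi_X$ restricted to $\mathrm{graph}(f) \cap (X \times B)$ yields that $f^{-1}(B)$ is Borel in $X$. Since $B$ was arbitrary Borel, $f$ is Borel measurable.

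The hard part is the invocation of Lusin--Souslin, which is a nontrivial separation/reduction result in descriptive set theory; without the injectivity of $\pi_X$ on the graph, one would only conclude that $f^{-1}(B)$ is analytic. Everything else (the measurability of $F$, the closedness of the diagonal in a metrizable space, the elementary identity expressing $f^{-1}(B)$ as a projection) is routine. The two directions together establish the equivalence, and completeness/separability of $X$ and $Y$ enter only through the Polish-space hypothesis needed to apply Lusin--Souslin.
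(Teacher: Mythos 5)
Your proof is correct, and you are right that the statement's ``$X\times X$'' is a typo for $X\times Y$. The paper gives no proof of its own here --- it simply quotes the result as Theorem 14.12 of \cite{Kechris1995} --- and your argument (closed diagonal pulled back for the easy direction, Lusin--Souslin applied to the injective projection of the graph for the converse) is exactly the standard proof found in that reference, so there is nothing to compare beyond that.
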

\begin{cor}\label{c:measgraph}
 If $f:X\to X$ is  a Borel measurable function and is invertible, then so is $f^{-1}$. Thus $f$
  defines a Borel $\Z$\,--\,action, and
  moreover 
   its orbit equivalence relation is Borel.
\end{cor}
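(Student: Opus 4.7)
The plan is to deduce everything from the graph characterization in the immediately preceding theorem (Theorem 14.12 of \cite{Kechris1995}), which says that a map between Polish spaces is Borel iff its graph is a Borel subset of the product.

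First I would show that $f^{-1}$ is Borel measurable. Let $G = \{(x, f(x)) : x \in X\} \subseteq X \times X$ denote the graph of $f$. By the cited theorem, $G$ is Borel. Consider the flip map $\tau: X \times X \to X \times X$, $(x,y) \mapsto (y,x)$. Since $\tau$ is a self-homeomorphism of $X \times X$, it sends Borel sets to Borel sets. The image $\tau(G) = \{(f(x), x) : x \in X\}$ is precisely the graph of $f^{-1}$ (here I use that $f$ is a bijection, so for each $y \in X$ there is a unique $x$ with $y = f(x)$). Hence the graph of $f^{-1}$ is Borel, and applying the theorem in the opposite direction yields that $f^{-1}$ is Borel measurable.

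Next, to see that $f$ generates a Borel $\Z$-action, I would check that each iterate $f^n$ for $n \in \Z$ is Borel. This is immediate by induction on $|n|$: the composition of two Borel maps between Polish spaces is Borel (the preimage of a Borel set under a Borel map is Borel), so starting from $f$ and $f^{-1}$ (both Borel by the step above), every $f^n$ is Borel. Then the action map $\Phi: \Z \times X \to X$, $\Phi(n,x) = f^n(x)$, is measurable in $x$ for each fixed $n$ and is defined on a countable group, so it is a Borel group action in the sense of Def.~\ref{d:wandering}.

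Finally, the orbit equivalence relation $R_f \subseteq X \times X$ of $f$ is, by definition,
\[
R_f \;=\; \bigl\{ (x,y) \in X \times X : y = f^n(x) \text{ for some } n \in \Z \bigr\} \;=\; \bigcup_{n \in \Z} \operatorname{Graph}(f^n).
\]
By the preceding paragraph each $f^n$ is Borel, so by the cited theorem each $\operatorname{Graph}(f^n)$ is a Borel subset of $X \times X$. A countable union of Borel sets is Borel, so $R_f$ is Borel, which is the last assertion.

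The only potentially delicate step is the automatic Borelness of $f^{-1}$, since a merely set-theoretic inverse of a Borel bijection is not obviously measurable without invoking some descriptive set theory (e.g.~Lusin--Souslin). The elegant point here is that the graph characterization collapses this difficulty into the trivial observation that the flip $\tau$ is a homeomorphism; after that, the rest is bookkeeping with countable unions and compositions.
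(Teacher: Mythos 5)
Your proof is correct and follows essentially the same route as the paper: the orbit equivalence relation is the countable union of the graphs of the iterates $f^n$, each Borel by the graph characterization of Theorem 14.12. You also spell out, via the flip homeomorphism of $X\times X$, the Borelness of $f^{-1}$ which the paper leaves implicit; this is exactly the intended argument and needs no appeal to Lusin--Souslin.
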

\begin{proof}
 The orbit equivalence relation
 is a countable union of Borel subsets of $X\times X$, as it is the union of the
 graphs of all iterates $f^n$ for $n\in\Z$.
\end{proof}

\begin{prop}\label{p:conservative_equivreln}
\item{(i)} Let $X$ be a Polish space, and 
  consider either 
  $T$ an invertible Borel transformation or  $G $ a countable group
  with Borel
  action on $X$, with $R$ the  corresponding orbit equivalence relation.

The  properties of being measure-preserving, ergodic  or conservative,
and the notions of a  set being invariant or
wandering, 
hold for $T$ or the $G$\,--\,action
 iff this   holds for the corresponding orbit equivalence relation $R$.
\item{(ii)} A (possibly noninvertible) measure-preserving
  transformation is
  conservative iff it is recurrent.
\item{(iii)} When the measure of $X$ is finite,  a measure-preserving 
 transformation, group action or equivalence relation is conservative.
\end{prop}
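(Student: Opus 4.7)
The plan is to prove the three parts of Proposition \ref{p:conservative_equivreln} by first observing that the orbit equivalence relation faithfully encodes invariance and wandering at the set-theoretic level, then applying Poincar\'e-type arguments for the dynamical parts (ii) and (iii).

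For part (i), I would proceed property by property. A set $A\subseteq X$ is invariant for $T$ (resp.\ for $G$) iff it is a union of $T$-orbits (resp.\ $G$-orbits), iff it is $R$-saturated, because the equivalence classes of $R$ are by definition precisely these orbits. Wandering translates in the same way: if $A$ is $T$-wandering, the sets $T^{-i}A$ are disjoint, which is equivalent to saying that no two distinct points of $A$ lie in a common orbit (otherwise $y=T^i x$ with $i\neq 0$ would force $x\in A\cap T^{-i}A$); together with orbits being countably infinite for points in $A$, this is exactly the wandering condition for $R$. The analogous equivalence for a group action follows directly from the condition $\gamma(A)\cap A=\emptyset$ for $\gamma\neq e$. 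Once these two combinatorial equivalences are in hand, ergodicity (absence of nontrivial invariant measurable sets) and conservativity (absence of wandering sets of positive measure) transfer verbatim.

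The only nontrivial step in (i) is the equivalence of measure-preservation. Given a bijective equivalence $f:A\to B$ of $R$, I would decompose $A$ into countably many Borel pieces on which $f$ coincides with a fixed element of the generating data. For $T$, set
\[
A_n=\bigl\{x\in A:f(x)=T^n x\bigr\}\setminus\bigcup_{|k|<|n|}\bigl\{x\in A:f(x)=T^k x\bigr\},
\]
and for $G=\{\gamma_n\}$, replace $T^n$ by $\gamma_n$. Each $A_n$ is Borel because $f$ and $T^n$ (respectively $\gamma_n$) are Borel maps whose graphs are Borel by Corollary \ref{c:measgraph}, and equality of Borel maps is Borel. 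On $A_n$ the bijective equivalence agrees with a single measure-preserving bijection, so $\nu(f(A_n))=\nu(A_n)$, and summing gives $\nu(B)=\nu(A)$. Conversely, if $R$ is measure-preserving then each $T^n$ (or $\gamma\in G$), viewed as a bijective equivalence of $X$ with itself, preserves measure.

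For part (ii), the classical Poincar\'e argument applies. If $T$ is recurrent and $A$ is wandering with $\nu(A)>0$, then wandering forces $T^{-n}A\cap A=\emptyset$ for every $n\geq 1$, which contradicts recurrence on $A$. Conversely, if $T$ is conservative and $\nu(A)>0$, set $B=\{x\in A:T^n x\notin A\text{ for all }n\geq 1\}\subseteq A$. Then $B\cap T^{-m}B=\emptyset$ for $m\geq 1$ (a point of $B$ has no forward iterate in $A$, hence none in $B$), and therefore $T^{-i}B\cap T^{-j}B=T^{-i}\bigl(B\cap T^{-(j-i)}B\bigr)=\emptyset$ for $i<j$; so $B$ is wandering and conservativity gives $\nu(B)=0$, proving recurrence. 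Part (iii) is then immediate: if $A$ is wandering with $\nu(A)>0$, a measure-preserving action produces countably infinitely many disjoint translates (iterates, group translates, or bijective-equivalence copies) each of measure $\nu(A)$, whose total mass is infinite inside $X$; with $\nu(X)<\infty$ this forces $\nu(A)=0$. For the equivalence-relation form one may invoke Feldman--Moore to represent $R$ as the orbit relation of a countable group of Borel automorphisms and then apply the group-action argument.

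The main obstacle I anticipate is the Borel bookkeeping in the decomposition $A=\bigsqcup_n A_n$ used to transfer measure-preservation from $T$ (or $G$) to $R$. This is essentially a measurable-selection issue, and it is dispatched by Corollary \ref{c:measgraph}, which guarantees that the graphs of $T^n$ and of the bijective equivalence $f$ are Borel, so the sets $\{x:f(x)=T^n x\}$ are Borel and the countable disjointification is harmless.
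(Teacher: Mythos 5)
Your proof is correct in substance, and parts (ii) and (iii) run along essentially the same Poincar\'e-type lines as the paper's. The genuine difference is in part (i), on the equivalence of measure-preservation: the paper treats the direction ``$R$ measure-preserving $\Rightarrow$ $T$ (or $G$) measure-preserving'' as immediate and simply cites $(c)\then(d)$ of Proposition 2.1 of \cite{KechrisMiller04} for the converse, whereas you prove that converse directly, decomposing a bijective equivalence $f:A\to B$ into countably many Borel pieces on which $f$ agrees with a fixed $T^n$ (or $\gamma_n$), using Corollary \ref{c:measgraph} to see the pieces are Borel. That is essentially the content of the cited result, so your route is more self-contained at the cost of reproducing a standard argument. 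Two further small divergences: in (iii) you handle the equivalence-relation case by invoking Feldman--Moore to realize $R$ as a countable group orbit relation, while the paper argues directly with bijectively equivalent copies of a wandering set and then transfers to maps and actions via (i); and you treat the noninvertible map in (iii) directly through the disjoint preimages $T^{-i}A$, whereas the paper routes it through recurrence via (ii). Both choices are legitimate.

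One point to tidy up, though it is not a real gap: your disjointification $A_n=\{x\in A: f(x)=T^nx\}\setminus\bigcup_{|k|<|n|}\{x\in A: f(x)=T^kx\}$ does not separate the tie $k=-n$, so as written the $A_n$ need not be pairwise disjoint and the final summation could overcount. Fix this by disjointifying along a fixed enumeration of $\Z$ (or of $G$), i.e.\ subtract the sets indexed by all \emph{earlier} elements in the enumeration rather than those of strictly smaller absolute value. You should also make explicit, in the wandering-set comparison, the one-line observation that a $T$-wandering set contains no periodic points (if $T^nx=x$ with $x\in A$ then $A\cap T^{-n}A\neq\emptyset$), so orbits of its points are indeed countably infinite, matching the paper's definition of a wandering set for $R$; the paper's proof handles the analogous point by showing one-sided wandering implies two-sided wandering.
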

\begin{proof}
 \item{(i)} 
 Knowing  the orbit equivalence relation is 
  measure-preserving a fortiori  implies it
  for the corresponding transformation or group action. For the converse, see $(c)\then (d)$ of  Proposition 2.1 of
~\cite{KechrisMiller04}.

If $T^{-1}A=A$, then since $T$ is
  invertible, $A= T(T^{-1}A)=T(A)$, whence
  $A$ is invariant for the $\Z$\,--\,action and so for the orbit equivalence relation.
That  a set $A$   is two-sided wandering for $T$ is equivalent to $A$ being wandering for $R$. We claim that  if $A$ is wandering
 then it is two-sided wandering. We are to show that for $n<m$, then
 $T^n (A)\cap T^m (A)=\emptyset$. But  $T^{-n}(T^n (A)\cap T^m (A))=
 A\cap T^{m-n} (A)=\emptyset$
so that is true. It follows that the transformation is ergodic, or conservative, iff that
holds for its orbit
 equivalence relation. 
 
For a $G$\,--\,action, it is clear that a set is invariant, or wandering, 
iff that holds for the corresponding orbit
 equivalence relation.
Hence the notions of ergodicity and conservativity correspond  there as well.

\

\noindent
\item{(ii)}Assuming a map $T$ is
conservative, let $A$ be a subset of positive measure, and 
 define $B\subseteq A$ to be the set of all 
$x\in A$ which never return, i.e.~there does not exist $y\neq x$ in
$A$ and  $n>0$ such that
$T^n(x)= y$. 
Then in particular this holds for $x$ in $B$ itself. Hence $B\cap
T^{-n} B=\emptyset$ and so $T^{-k}B\cap T^{-(n+k)}(B)=\emptyset$ whence
$B$ is wandering, and so
by  conservativity $B$ must have
measure zero. Thus  $T$ is recurrent.
Conversely, assuming $T$ is recurrent, suppose that $A$ is a wandering set;
then $A$ must have  measure zero, as otherwise 
there would be 
 a contradiction.

\

\noindent
\item{(iii)} Given an
equivalence relation,
then 
if there is a wandering set of positive measure, by
 measure preservation  the total measure of the space would be
infinite, giving  a contradiction. By part $(i)$ this
implies the claim for group actions and invertible transformations.
For a noninvertible map, we use part $(ii)$ and then apply the same reasoning.
\end{proof}

\begin{rem}\label{r:ergodicmeasures} Combining  $(iii)$  with $(ii)$ gives the Poincar\'e recurrence
theorem for maps, while 
$(iii)$ can be thought of as  the natural version of this  for
actions of countably
infinite groups and  countably
infinite equivalence relations, and so also for partial
transformations.
\end{rem}

\subsection{Central measures, eigenvector sequences and cones}

\medskip

For $M$ as in Proposition \ref{p:numberminimal} we are interested, first of all, in the probability measures
on $\Sigma_{\M}^{0,+}$ which are invariant for 
the action of the group $\FC$, and so equivalently, by $(i)$ of Proposition \ref{p:conservative_equivreln}, invariant for the stable
equivalence relation and the Vershik partial transformation. A 
simple  equivalent condition (studied by Bowen and Marcus in the
stationary case, see  Lemma
2.4 of
\cite{BowenMarcus77}) is given in $(i)$ of the next
proposition.

\begin{defi}\label{d:ICCM}
  We denote by $\CCM$ the collection of
  $\FC$-invariant Borel probability measures on $\Sigma_{\M}^{0,+}$.
 These are 
the {\em
  central  measures}. 
We are also interested in two infinite analogues of these: the $\sigma$\,--\,finite infinite conservative measures which are
positive finite on some nonempty open subset, and those which are
positive infinite on every nonempty open subset. We call these
respectively the 
  {\em infinite central measures} and 
{\em locally infinite central measures}.
  \end{defi}

  \begin{rem}
    Vershik in~\cite{Vershik81} introduced the term {\em central
      measure} for an invariant Borel probability measure for an
adic transformation on the
(noncompact) 
space $\Sigma_{\M}^{0,+}\setminus \Cal N$.
Given an $\FC$\,--\,invariant Borel measure, to have a central measure we 
may need to 
remove a finite number of point masses (on fixed points) and normalize to have a
probability measure,
see $(ii)$ below. 
  \end{rem}

  \begin{rem}\label{r:infiniteon open}
   Regarding the locally infinite measures, mentioned in the
   Introduction, see Lemma \ref{l:finite_on_all},
Theorem \ref{t:covermeasure}, Remark
\ref{r:innerregular}. Illustrative examples
include the Integer Cantor Set in its ternary model,  Example \ref{rem:Bruin}
and certain
nested circle rotations, see Example \ref{exam:nestedrot}.
  \end{rem}

\begin{lem}\label{l:orbitequivrel}
  Given an ordered Bratteli diagram with edge path space
  $\Sigma_{\M}^{0,+}$,
 the orbit equivalence relation
  for the partial transformation $T_\O$ and for $\FC$ are the same: the
  equivalence classes are the stable sets $W^s(\e)$. For the adic
  transformation on $\Sigma_{\M}^{0,+}\setminus \CN$, this is true
  if we restrict the action of $\FC$ to that subset.
\end{lem}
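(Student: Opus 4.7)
The plan is to establish that both the $T_\O$-orbits and the $\FC$-orbits coincide with the stable sets $W^s(\e)$, and then to handle the restriction to $\Sigma_{\M}^{0,+}\setminus \CN$ separately.

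For the $\FC$-orbit, the inclusion $\FC\cdot\e \subseteq W^s(\e)$ is immediate from the definition of the generators of $\FC$: each involution swaps thin cylinders $[.e_0\dots e_{n-1}]$ and $[.f_0\dots f_{n-1}]$ with $e_{n-1}^+ = f_{n-1}^+$, so the tail from time $n$ onward is left unchanged and the swap stays inside one stable class. For the reverse inclusion, given $\wt\e \in W^s(\e)$, I would choose $n$ minimal with $e_i = \wt e_i$ for all $i \geq n$; then $e_{n-1}^+ = \wt e_{n-1}^+$ and the single generator that swaps $[.e_0\dots e_{n-1}]$ with $[.\wt e_0\dots \wt e_{n-1}]$ sends $\e$ to $\wt\e$.

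For the $T_\O$-orbit, I would first note that the Vershik map sends $\e$ to its successor in the anti-lexicographic order, which by construction agrees with $\e$ past some finite coordinate; hence $T_\O^{\pm n}(\e)\in W^s(\e)$ wherever defined. The reverse inclusion rests on the key observation that the anti-lexicographic order restricts to a \emph{total}, \emph{discrete} order on $W^s(\e)$ in which between any two elements there are only finitely many others: indeed, given any tail starting at time $n+1$, the set of paths in $W^s(\e)$ sharing that tail is in bijection with the allowed initial segments $(e_0', \dots, e_n')$ ending at the prescribed vertex, and this set is finite under the bounded alphabet hypothesis. Consequently $(W^s(\e), <)$ order-embeds as an interval in $\Z$, so iterating $T_\O$ forward and backward from $\e$ must exhaust $W^s(\e)$.

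For the second assertion, I would first verify that $\CN$ is $\FC$-invariant so that the restricted action makes sense. The first step above identifies $\NS$ (resp.~$\NP$) with the maxima (resp.~minima) of those stable classes that possess them; hence $\CN$ is precisely the union of stable classes of order type $\{0,\dots,m\}$, $\omega$, or $\omega^*$, while $\Sigma_{\M}^{0,+}\setminus\CN$ is the union of stable classes of order type $\Z$. Both sets are unions of entire stable classes, hence $\FC$-invariant. On the complement, $T_\O$ becomes a genuine bijection whose orbits are exactly the stable classes, matching the restricted $\FC$-orbits by the first part. The only step demanding genuine care is the discreteness of the order on each stable class, which rests on the finiteness of admissible continuations with a prescribed tail; everything else is direct unpacking of definitions.
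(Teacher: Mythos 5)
Your proof is correct and follows essentially the same route as the paper's: both identify a single $T_\O$-step with the action of a cylinder-swapping generator of $\FC$, and conversely use the fact that a stable class carries a total order with only finitely many elements between any two of its points, so that the image under a generator is reached by finitely many iterates of the successor map (the paper leaves this discreteness step implicit in the phrase ``whence $T_\O^m(\e)=\f$ for some $m\in\Z$'', and likewise leaves the $\FC$-invariance of $\CN$ to the reader, which you spell out). One small correction: the finiteness of admissible initial segments with a prescribed terminal vertex needs only that each $\A_i$ (hence each $\E_i$) is finite, which is part of the standing definition of a Bratteli diagram here, not the bounded-rank hypothesis, which the lemma does not assume.
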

\begin{proof}
  If for $\e\in \Sigma_{\M}^{0,+}$ we have $T_\O(\e)= \f$, then there
  exists $k>0$ such that $e_k^+= f_k^+$ and $e_n=f_n$ for all $n>k$. There is
  $\gamma\in\FC$ with $\gamma([. e_ke_{k+1}\dots
e_n])=[.f_{k}f_{k+1}\dots f_n] $ which fixes the tails, whence $\gamma(\e)=\f$.
Conversely, if $\gamma\in \FC$ satisfies $\gamma([. e_ke_{k+1}\dots
e_n])=[.f_{k}f_{k+1}\dots f_n] $ where $e_k^+= f_k^+$ while fixing
the tails, then for $f= (.f_0\f_1\dots f_k f_{k+1}\dots)$ and $\e=
(.e_0\e_1\dots e_k f_{k+1}\dots)$, 
 we have $\f\in W^s(\e)$, whence $T_\O^m(\e)= \f$ for some
$m\in\Z$. Since any $\eta\in \FC$ can be written as a finite product of
generators,  the same holds for $\eta$. The second statement follows.
\end{proof}

  \begin{prop}\label{p:conserg}
Let $(\A_i)_{i\geq 0}$ be an alphabet   sequence with $l_i= \#\A_i>0$. 

 \item{(i)}Given $L= (L_i)_{i\geq 0}$ an 
 $(\A_i\times \A_{i+1})$ $0-1$ matrix sequence, then    a measure
 $\nu$  on the  vertex space $\Sigma_{\L}^{0,+}$ is a {central measure}
iff it is a probability measure such that  for each thin cylinder set
we have
\begin{equation}
  \label{eq:7}
  \nu([.\wt x_0\dots \wt x_n]) =\nu([.x_0\dots x_n])
\text{ whenever } \wt x_n=  x_n=s\in \A_n.
\end{equation}

\noindent
For a nonnegative integer  matrix sequence $M$, $\nu$ on 
 the edge path space  $\Sigma_{\M}^{0,+}$ is a central measure iff it is a probability
 measure such that

\begin{equation}
  \label{eq:8}
 \nu([.\wt e_0\dots \wt e_{n-1}]) =\nu([.e_0\dots e_{n-1}])
\text{ whenever } \wt e_{n-1}^+= e_{n-1}^+=s
. 
\end{equation}

\item{(ii)} Given some order $\O$ on the Bratteli diagram, a (finite
  or infinite)  invariant Borel
  measure for the adic transformation $T_\O$ is 
invariant for any extension $T_{\O,f}$ and  for the action of $\FC$. A
nonatomic $T_{\O,f}$ or $\FC$\,--\, invariant Borel measure is invariant for
$T_\O$. 
\item{(iii)}
If a nonatomic invariant Borel measure is ergodic, or conservative, for one of $T_\O$, $\FC$ or
$T_{\O,f}$ then that holds for the other two.
\end{prop}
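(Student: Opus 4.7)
For part (i), the strategy is to recognize that equations \eqref{eq:7} and \eqref{eq:8} encode $\FC$-invariance tested on the generators. In the forward direction, if $\nu$ is $\FC$-invariant and two thin cylinders share a common final vertex (resp.~common final edge-endpoint $e_{n-1}^+$) at level $n$, then the generating involution of $\FC$ which swaps their initial segments while fixing everything from level $n$ onward sends one cylinder to the other, so invariance forces equal measures. Conversely, given \eqref{eq:7} or \eqref{eq:8}, the same generators preserve the measure of every thin cylinder (they simply permute within a fixed final-symbol class), and since thin cylinders generate the Borel $\sigma$-algebra and form an algebra, a standard monotone-class or Carath\'eodory extension promotes this to full $\FC$-invariance. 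The heart of the argument is the explicit description of the generators of $\FC$ already used in Lemma \ref{l:orbitequivrel}.

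For part (ii), the plan is to combine Lemma \ref{l:orbitequivrel} with Proposition \ref{p:conservative_equivreln}(i). The former identifies the orbit equivalence relations of $T_\O$ and $\FC$ (and a fortiori of $T_{\O,f}$) on $\Sigma_{\M}^{0,+}\setminus \CN$; the latter converts invariance for a transformation or group action into invariance for its orbit equivalence relation. By Proposition \ref{p:numberminimal} the sets $\NS$ and $\NP$ are finite, so $\CN$ is at most countable. For a nonatomic Borel measure, $\nu(\CN)=0$, and the three systems then act identically modulo a null set, so the three notions of invariance coincide. For the first assertion (no nonatomicity assumed), a $T_\O$-invariant measure is already prescribed on $\Sigma_{\M}^{0,+}\setminus \CN$, and on the finite residual set $\NS\cup\NP$ the extension $T_{\O,f}$ is determined by the finite map $f$; one verifies directly that the measure of each point orbit in $\CN$ transforms correctly under $T_{\O,f}$ and under every generator of $\FC$, using that the stable class of an element of $\CN$ is still a single $\FC$-orbit.

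For part (iii), once invariance is established across the three systems via part (ii), the conclusion follows from the same toolkit. Nonatomicity gives $\nu(\CN)=0$, so $T_\O$, $T_{\O,f}$, and $\FC$ have the same orbit equivalence relation modulo a null set. Both ergodicity and conservativity are intrinsic to the orbit equivalence relation by Proposition \ref{p:conservative_equivreln}(i): an invariant (resp.~wandering) set for one of the three systems is invariant (resp.~wandering) for the associated orbit relation and hence for the other two, so the properties transfer.

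The main technical obstacle I anticipate is the handling of the countable exceptional set $\CN$ in part (ii) without invoking nonatomicity, in particular making sure that a $T_\O$-invariant measure extends unambiguously across points of $\NS$ and $\NP$. The key leverage is finiteness of $\NS$ and $\NP$ from Proposition \ref{p:numberminimal}(ii): any potential atomic mass at these finitely many boundary points can be transported explicitly along the generators of $\FC$ and along any choice of $f$, so consistency is verified by a finite check rather than a measure-theoretic argument.
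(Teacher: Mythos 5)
Your proposal is correct, but the core of part (ii) is argued by a genuinely different route than the paper. The paper's proof is combinatorial and self-contained: it shows that every generator $\gamma$ of $\FC$ that swaps two thin cylinders with common final vertex is literally the restriction of a power $T_\O^{\pm m}$ to one of those cylinders, which immediately transfers $T_\O$-invariance to $\FC$-invariance (applied to Borel subsets of cylinders, since the cylinders themselves may all have infinite measure); for the converse it decomposes an arbitrary thin cylinder into countably many subcylinders on which $T_\O$ acts as a single generator, plus a residual subset of $\NS$, and nonatomicity kills that residual set. You instead funnel everything through the orbit equivalence relation: invariance for $T_\O$ or for $\FC$ is converted into invariance of the common stable relation via Proposition \ref{p:conservative_equivreln}(i) (hence ultimately the cited Kechris--Miller result), and $T_\O$, being a Borel partial bijection whose graph lies in that relation, then automatically preserves any relation-invariant measure; this also makes (iii) immediate, as in the paper. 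Your route is shorter, but it leans on the external equivalence-relation machinery, whereas the paper's explicit realization of generators as powers of the Vershik map is elementary and is also the source of its warning that one must work with Borel subsets rather than approximating by cylinders. Part (i) is the same argument in both (invariance tested on the generators, then a uniqueness-of-extension step for the probability measure).

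One small correction: the ``main obstacle'' you anticipate --- transporting atomic mass sitting at the finitely many points of $\NS\cup\NP$ in the first assertion of (ii) --- does not arise. A $T_\O$-invariant measure lives on $\Sigma_{\M}^{0,+}\setminus\CN$ and is extended by zero on the countable, stably saturated set $\CN$; it is precisely because $\NS$ is then null that the extension is invariant for \emph{every} choice of $f$. For a measure carrying genuine mass on $\NS$ no such consistency could be arranged for an arbitrary $f$, so the ``finite check'' you propose should be replaced by the observation that there is nothing to check.
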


\begin{proof}We give the proofs for an edge shift.
For $(i)$,  considering the generators of $\FC$,  $\gamma$
interchanges the two thin cylinder sets $[. e_0e_1\dots
e_n]$ and $[.f_0f_1\dots f_n] $ where $e_n^+= f_n^+$,  and this is clear.

 For $(ii)$, if $\nu$ on
 $\Sigma_{\M}^{0,+}\setminus \CN$  is $T_\O$\,--\,
invariant, then  the extension of $\nu$ given by assigning $\CN$
measure zero is invariant for the extended map $T_{\O,f}$ on
$\Sigma_{\M}^{0,+}$. 
 Next, consider a generator
$\gamma$ of $\FC$ which interchanges  $[. e_0e_1\dots
e_n]$ and $[.f_0f_1\dots f_n] $ with $e_n^+= f_n^+$. In the order
defined by $\O$ one of these strings is least, say $(.e_0e_1\dots
e_n)$, so there exists $m\geq 1$ with $T_\O^{-m} ([.f_0f_1\dots f_n])= [. e_0e_1\dots
e_n]$. Moreover, the restriction of $T_\O^{-m} $ to $[.f_0f_1\dots
f_n] $ is $\gamma$. Thus the measure of any Borel subset  $E\subseteq
[.f_0f_1\dots f_n]$ is preserved by $T_\O^{-m} $ hence by $\gamma$. 
(Note: it is important to allow for subsets here, rather than just
approximating a Borel set by cylinders, as the cylinders
themselves may all have infinite measure. ) Thus $\nu$ is  $\FC-$ invariant.

For the converse, let  $\nu$ on
 $\Sigma_{\M}^{0,+}$ be nonatomic. Then $\CN$ has
 $\nu-$measure zero since by Prop.~\ref{p:numberminimal}  that set is
 countable thus any nonzero measure on it would be atomic. Hence if
 $\nu $ is  $T_{\O,f}$\,--\, invariant then that holds for 
 $T_\O$.  Lastly, suppose that $\nu$ is nonatomic and
 $\FC$\,--\,invariant.  Then given a cylinder set $[. e_0e_1\dots
e_n]$,
 if $e_n$ has a successor $f_n$ for $\O$  in the collection of edges
which enter the vertex $e_n^+$, then, say, $T_\O^{-1}  ([.f_0f_1\dots f_n] )=[. e_0e_1\dots
e_n]$. But there is some $\gamma$ which
interchanges these two cylinder sets,  $\gamma([. f_0f_1\dots
f_n])=[.e_0e_1\dots e_n] $, and moreover $\gamma$ equals the
restriction of $T_\O^{-1}$ to that cylinder. Hence as before, the
measure of a Borel subset is preserved for $T_\O^{-1} $.

We claim that a general thin cylinder set is a countable union of 
such cylinders, plus points in $\NS$. 
To prove this consider the tree of possible extensions of the string $(.e_0e_1\dots
e_n)$, stopping a branch of this tree at a finite stage if there is a
successor for the last edge added.  The branches which continue
infinitely are exactly the points in $[. e_0e_1\dots
e_n]\cap \NS$, proving the claim. Since this has measure zero, we are done.

Part $(iii)$ follows from Lemma \ref{l:orbitequivrel} together with $(i)$ of
Proposition \ref{p:conservative_equivreln}.
\end{proof}

\

We next show that central measures correspond to
sequences of nonnegative eigenvectors of eigenvalue one, which form a
finite-dimensional simplex, for which the ergodic central
measures are the extreme points.
First we need:
\begin{lem}\label{lem:ergodicmeasures}Given a countable group or semigroup $G$ acting continuously
  on  a compact
  metric space $X$, the ergodic invariant Borel probability measures
are exactly the extreme points of the convex compact set of invariant
probability 
Borel measures on $X$.
  \end{lem}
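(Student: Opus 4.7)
The plan is to verify the two standard implications (ergodic $\Leftrightarrow$ extreme) for a countable group/semigroup action, after a brief observation that the set $\Cal{M}_G(X)$ of invariant Borel probability measures is indeed convex and compact. Convexity is immediate from linearity of the pushforward, and compactness follows because $\Cal{M}_G(X)$ is a weak-$*$ closed subset of the compact set $\Cal{M}(X)$: for each $g\in G$ and each $f\in C(X)$, continuity of $g:X\to X$ makes $f\circ g\in C(X)$, so $\mu\mapsto \int f\circ g\,d\mu-\int f\,d\mu$ is weak-$*$ continuous, and $\Cal{M}_G(X)$ is the intersection over $g,f$ of the zero sets. So the statement about extreme points of a compact convex set is well-posed.

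For extreme $\Rightarrow$ ergodic, I would argue by contrapositive. Suppose $\mu\in \Cal{M}_G(X)$ is not ergodic; then by Proposition \ref{p:conservative_equivreln}(i) the orbit equivalence relation has a Borel invariant set $A$ with $0<\mu(A)<1$ (invariance for the relation being equivalent to invariance for every $g\in G$ simultaneously, which is what is needed for the next step). Set $t=\mu(A)$, and define
\[
\mu_1(B)=\tfrac{1}{t}\mu(A\cap B),\qquad \mu_2(B)=\tfrac{1}{1-t}\mu(A^c\cap B).
\]
Since $g^{-1}(A)=A$ and $g^{-1}(A^c)=A^c$ for every $g\in G$, both $\mu_1$ and $\mu_2$ are invariant probability measures; they are distinct because $\mu_1(A)=1\neq 0=\mu_2(A)$; and $\mu=t\mu_1+(1-t)\mu_2$, so $\mu$ is not extreme.

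For ergodic $\Rightarrow$ extreme, suppose $\mu$ is ergodic and $\mu=t\mu_1+(1-t)\mu_2$ with $t\in(0,1)$ and $\mu_i\in\Cal{M}_G(X)$. Then $\mu_1\ll \mu$, so Radon--Nikodym gives a nonnegative $f\in L^1(\mu)$ with $d\mu_1=f\,d\mu$. For each $g\in G$ and Borel $B$, invariance of $\mu_1$ and $\mu$ gives $\int_B f\,d\mu=\mu_1(B)=\mu_1(g^{-1}B)=\int_B f\circ g\,d\mu$, so $f\circ g=f$ $\mu$-a.e. Because $G$ is countable, the single null set can be chosen uniformly in $g$, and $f$ is $\mu$-a.e. invariant for every $g\in G$. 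Hence every level set $\{f\le c\}$ is an invariant set modulo $\mu$-null, and ergodicity forces $f$ to be $\mu$-a.e. constant, with the constant equal to $1$ since $\mu_1$ is a probability measure. Thus $\mu_1=\mu$ and similarly $\mu_2=\mu$, proving $\mu$ is extreme.

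The only delicate point is the passage from ``invariant modulo null sets'' to ``every $g\in G$ preserves $A$ on the nose'' in the first implication; this is why I would invoke Proposition \ref{p:conservative_equivreln}(i) to work directly with genuinely invariant sets (which exist because for an invariant set up to null sets one can replace it with its $G$-saturation, redefined on a $\mu$-null set, using countability of $G$). Countability of $G$ is essential here and in the Radon--Nikodym step; without it, the union over $g$ of null sets where $f\circ g\neq f$ could fail to be null. The semigroup case requires no change: only the identities $\int f\circ g\,d\mu=\int f\,d\mu$ and $g^{-1}(A)=A$ are used, both of which are built into our definition of invariance.
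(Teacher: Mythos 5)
Your proof is self-contained where the paper's is not: the paper only invokes Banach--Alaoglu for compactness and cites Proposition 3.4 of Furstenberg for the identification of extreme points with ergodic measures, remarking that the argument carries over to countably generated semigroups. Your compactness argument and the direction (extreme $\Rightarrow$ ergodic) are fine; for the latter you do not even need Proposition \ref{p:conservative_equivreln}, since non-ergodicity as defined in this paper hands you a strictly invariant set $A$ with $0<\mu(A)<1$ directly, and your computation $\mu(A\cap g^{-1}B)=\mu(g^{-1}(A\cap B))=\mu(A\cap B)$ uses only $g^{-1}A=A$, so it is valid for semigroups.

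The gap is in (ergodic $\Rightarrow$ extreme), at the chain $\int_B f\,d\mu=\mu_1(B)=\mu_1(g^{-1}B)=\int_B f\circ g\,d\mu$. What is true is $\mu_1(g^{-1}B)=\int_{g^{-1}B}f\,d\mu=\int(\1_B\circ g)\,f\,d\mu$; turning this into an integral of a composition of $f$ over $B$ is a change of variables, which needs $g$ to be an invertible measure-preserving map. For a group action this works (and then yields $\int_B f\circ g^{-1}d\mu$, not $\int_B f\circ g\,d\mu$, which still gives $f\circ g=f$ a.e.), but for a semigroup of non-invertible maps --- a case the lemma explicitly covers and which you assert ``requires no change'' --- the identity fails: invariance of $\mu$ only gives $\int_{g^{-1}B}(f\circ g)\,d\mu=\int_B f\,d\mu=\int_{g^{-1}B}f\,d\mu$, i.e.\ $f\circ g$ is the conditional expectation of $f$ on the $\sigma$-algebra of $g$-preimages, not $f$ itself. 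The standard repair works for non-invertible $g$: take $B=\{f<c\}$; from $\mu_1(g^{-1}B)=\mu_1(B)$ and $\mu(g^{-1}B)=\mu(B)$ one gets $\int_{g^{-1}B\setminus B}f\,d\mu=\int_{B\setminus g^{-1}B}f\,d\mu$ and $\mu(g^{-1}B\setminus B)=\mu(B\setminus g^{-1}B)$, while $f\geq c$ on the first set and $f<c$ on the second, forcing $\mu(g^{-1}B\,\triangle\,B)=0$; countability of $G$ then makes each level set invariant mod null for all $g$ simultaneously, and ergodicity gives $f$ constant a.e. Note also that passing from ``invariant mod null'' to the strictly invariant sets required by the paper's definition of ergodicity needs a word in the semigroup case (e.g.\ replacing $B$ by a limsup of preimages), since the group-saturation trick you sketch uses invertibility. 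With that step replaced, your argument is correct.
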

\begin{proof}
 Since  $X $ is  a compact metric space, the collection $\Cal M_G$
 of invariant  Borel probability measures  is
compact convex with the weak* topology,  by the Banach-Alaoglu
theorem. Given a continuous transformation $T$ on $X$, the
extreme points  are identified as being  the ergodic measures,
 see  e.g.~
Proposition 3.4 of~\cite{Furstenberg81}. This proof goes through
without change for the action of a countably generated semigroup.
\end{proof}

The special
importance of 
the ergodic measures comes from Choquet's
strengthening \cite{Phelps2001Choquet} of the Krein-Milman Theorem
(pp.~66-70,~\cite{Rudin73}): the Krein-Milman theorem tells us that
each invariant probability measure is represented as the barycenter 
of a probability measure on the closure of the extreme points, while from
Choquet's theorem this measure is supported on the extreme points
themselves, and moreover is unique. So in this sense each invariant
measure has an 
{\em ergodic decomposition}, as an (integral) convex combination of
the ergodic measures.  

That is the general abstract framework. However in the present setting
things are much simpler:   we show directly that the  $\FC$-invariant
Borel probability 
measures on  
$\Sigma_{\M}^{0,+}$
form a finite-dimensional simplex. That implies the
Banach-Alaoglu theorem for this case, and also directly gives the
ergodic decomposition, with 
the ergodic probability measures corresponding to the
extreme points. This gives us  (see   Proposition \ref{p:extremepts}) a first upper
bound, $\lim\inf\# \A_n$, for their
number.  A second, better bound 
follows later from this same basic idea together with the Frobenius
decomposition, see Corollary \ref{c:secondcount}, and this approach
also leads to a bound on the number of ergodic infinite central measures.

The simplex will be described using 
 intersections of nested cones of column vectors.

We recall that
a {\em cone} $C$ in a real vector space $V$ is a nonempty subset such that
$\alpha C\subseteq C$ for each $\alpha\geq 0$; it is a {\em convex}
cone iff $C+C\subseteq C$, iff $C$ is a convex set, and is a {\em
  positive} cone iff $C\cap -C= \{\0\}$. The collection $\r^{d+}$ of nonnegative 
vectors  is a positive convex cone, termed the  {\em standard positive
  cone} of $V=\R^d$. Note that $\{\0\}$ is a cone, the {\em trivial
  cone}. If the alphabet $\A$ is
empty, then  $d=0$ and $\r^{\A}=V= \{\0\}$ (as for Lemma \ref{l:generalizedmatrices}). 
For any cone $C$, the point 
$\0$ is its {\em vertex} and if $C$ is convex, is  its only extreme point.
Denoting by $C_i$ the standard positive cones of  column  
vectors in $V_i= \R^{\A_i}$, 
a nonnegative real $(\A_i\times \A_{i+1})$ matrix sequence
$N=(N_i)_{i\geq 0}$ maps each  into the next as follows:

\begin{equation}
\begin{CD}\label{CD:cones}
     C_0    @<N_0<<  C_1 @<N_1<< C_2 
 @<N_2<<   C_3
\cdots\\
\end{CD}
\end{equation}

\

\noindent
A global point of view is useful here:

\begin{defi}\label{d:eigenvectorsequence}
  Given
    alphabets $\A_i$, and  $(N_i)_{i\geq 0}$ an
$(\A_i\times \A_{i+1})$ sequence of
real matrices, we make the following definitions. We write 
$V_\A\equiv  \Pi_{i\geq 0}
V_i$, where $V_i= \R^{\A_i}$,  with the product topology, for this topological vector space. 
Its  zero element is
  $\0= (\0_0, \0_1,\dots)$, where  
$\0_n\in \r^{\A_n}$. 
We define $N:V_\A\to V_\A$,
sending
$(\v_0, \v_1,\v_2,\dots)$ to $(N_0\v_1,
N_1\v_2,\dots)$. This is a continuous linear map. Note that  $N= \Pi_0^\infty N_i$.

By an {\em eigenvector sequence} $\w=(\w_0, \w_1,\dots)$ with
 {\em  eigenvalues}  $\lambda=
(\lambda_i)_{i\geq 0}$ for $\lambda_i\in
  \C$ we mean that

\noindent
\item{(i)} $\w_i\in V_i\setminus \0_i$ and

\noindent
\item{(ii)}
$\lambda_i{\bf
  w}_i=  N_i{\bf w}_{i+1},   \text{ for all  $i$.}$

We call  condition $(i)$ {\em never zero}. We note that if the
eigenvalue sequence happens to be constant, $\lambda_i=\lambda$ for
all $i$, then an eigenvector
sequence is an eigenvector for the map $N$ of
the vector space
$ V_\A$, as it is never zero hence nonzero, but the never zero
condition is stronger, as $\bfw$ is nonzero
for each time projection $\bfw_i\in V_i$. Given an eigenvector
sequence, then in the special case where  $\lambda_i\neq 0$ for
all $i$,  we can normalize, setting $\bfw'_0=\bfw_0$,
$\bfw'_1=\bfw_1/\lambda_0$,
$\bfw'_2=\bfw_2/(\lambda_0\lambda_1)$,  $\dots,$
$\bfw'_{k}=\bfw_k/\lambda_0^{k-1}$ for $\lambda_0^{k-1} =\lambda_0\cdots
\lambda_{k-1}$ to get an
eigenvector sequence $\bfw'$ with eigenvalue one.

\

We define the {\em positive cone of  $ V_\A$} to be  $C_\A\equiv  \Pi_{i\geq 0}
C_i$.

\noindent
Assume that the $N_i$ are nonnegative, as in \eqref{CD:cones};
equivalently,
$$N: C_\A\to C_\A.$$

\noindent
We denote the collection of fixed points for this map by $\V_{N}^\0.$ That
is, 
$$\V_{N}^\0\equiv \{ \w= ( {\bf w}_0 {\bf w}_1\dots)
\text{ such that }{\bf w}_i\in C_i\,  \text{ and } {\bf
  w}_i=  N_i{\bf w}_{i+1}  \text{ for all } i\}.$$

\noindent
$\V_{N}\subseteq \V_{N}^\0$ denotes
$\w$ such that ${\bf w}_i\in C_i\setminus \{\0_i\}$ for all $i$;
these are the { positive
(right) eigenvector
  sequences with eigenvalue one}. 

We write $\wt \0= \V_{N}^\0\setminus \V_{N}$.
If $\w\in \wt \0$ is not the {\em  identically zero} sequence 
$\0 $ (which is   the vertex of the cone $ \V_{N}^\0$), we
call it a
{\em partially zero} sequence.

On the space $\r^d$, we use the $L^1$\,--\,norm
$||{\bf u}||=\sum_{i=1}^d |u_i|;$
with this choice of norm
the map $ {\bf u} \mapsto {{\bf u}}/{||{\bf u}||}
$
projects  the  positive cone $C= \r^{d+}$ of column vectors  minus its vertex ${\bf 0}$, 
onto the $d$\,--\,simplex $\Delta$.

We write $\Delta_i\subseteq C_i$ for this norm-$1$
simplex,
and $\V_{N}^\Delta \equiv\{ \w \in \V_{N}:\, 
\w_0\in \Delta_0\}$.

Normalizing the sequence
$\w$ so its first
element is in $\Delta_0\subseteq R^{\A_0} $, we define the projection
 $\w\mapsto \w/ ||\w_0||$ whenever $\w_0\neq \0$ from $\V_{N}$, the collection
 of all positive right eigenvector sequences with
 eigenvalue one, 
to $\V_{N}^\Delta \equiv\{ \w \in \V_{N}:\, 
\w_0\in \Delta_0\}$.

\end{defi}

\begin{lem}\label{l:closedcone}
  Given
    alphabets
  $\A=(\A_i)_{i\geq 0}$ 
  and  $(N_i)_{i\geq 0}$ a sequence of 
$(\A_i\times \A_{i+1})$ 
nonnegative
real matrices, then:
  \item{(i)}
  $C_\A$ is a closed convex cone in $V_\A
  $. So   is $\V_N^\0$.
  $\V_{N}^\Delta \equiv\{ \w \in \V_{N}:\, 
\w_0\in \Delta_0\}$ is a compact  convex subset of $V_\A
$.

\item{(ii)} If $N$ is column--reduced, then
 $\wt \0=\{\0\}$, and so $\V_N^\0= \V_N\cup\{\0\}$.
\item{(iii)}  If $N$ is 
 primitive and row-reduced, then for $\v\in \V_N^\0$, if $\v_k\neq\0_k$ for some $k$,
 $\v_i\neq\0_i$ for all $i$.
\end{lem}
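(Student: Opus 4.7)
My plan for part $(i)$ is to use the product-topology structure of $V_\A = \Pi_i V_i$. Each $C_i$ is a closed convex cone in the finite-dimensional space $V_i$, so $C_\A = \Pi_i C_i$ is a closed convex cone in $V_\A$. The defining relations $\w_i = N_i\w_{i+1}$ are continuous and linear, so $\V_N^\0$ is the intersection of $C_\A$ with a countable family of closed linear subspaces, yielding a closed convex subcone. For $\V_N^\Delta$, intersecting further with the closed affine cross-section $\{\w_0 \in \Delta_0\}$ preserves closedness and convexity; for compactness I would bound each coordinate using $\w_0 = N_0^{i-1}\w_i$, which upon taking $L^1$ norms yields $1 = \|\w_0\|_1 = \sum_b c_b^{(i)}(\w_i)_b$, where $c_b^{(i)}$ is the column sum of column $b$ of $N_0^{i-1}$. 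Under the reducedness in force for the paper (guaranteed by the canonical reduction of Remark~\ref{r:reduced}), these column sums are bounded below (by $1$ in the integer setting), so $\|\w_i\|_1$ is bounded for each $i$, and Tychonoff's theorem gives compactness in the product topology.

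For part $(ii)$, I take $\w \in \V_N^\0$ with some $\w_j \neq \0_j$ and argue $\w_i \neq \0_i$ for every $i$. The forward direction is immediate: if $\w_{j+1} = \0_{j+1}$, then $\w_j = N_j\w_{j+1} = \0_j$, a contradiction; induction handles all $i \geq j$. For the backward direction, if $\w_{j-1} = \0_{j-1}$, then $N_{j-1}\w_j = \0$, and expanding coordinatewise gives $\sum_b (N_{j-1})_{ab}(\w_j)_b = 0$ for each $a$. Nonnegativity forces $(N_{j-1})_{ab} = 0$ for every $a$ and every $b$ with $(\w_j)_b > 0$; since $\w_j \neq \0_j$ such a $b$ exists, so column $b$ of $N_{j-1}$ is identically zero, contradicting column-reducedness. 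The identity $\V_N^\0 = \V_N \cup \{\0\}$ follows.

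For part $(iii)$ the crux is the following key sublemma, which I would prove first: if $N$ is primitive and row-reduced then for every $K \geq 0$ there is some $m \geq K$ with $N_0^m > 0$. Indeed, by primitivity pick $n > K$ with $N_K^n > 0$; since $N_{K-1}$ is row-reduced, each row has a positive entry, so for every $a,c$ one has $(N_{K-1}N_K^n)_{ac} \geq (N_{K-1})_{ab}(N_K^n)_{bc} > 0$ for a suitable $b=b(a)$, giving $N_{K-1}^n > 0$; iterating down to index $0$ yields $N_0^n > 0$ with $n \geq K$. With this, let $\v \in \V_N^\0$ satisfy $\v_k \neq \0_k$. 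Forward propagation (as in $(ii)$) gives $\v_i \neq \0_i$ for all $i \geq k$. For the backward direction, suppose some $\v_j = \0_j$ with $j < k$; iterating $\v_{i-1} = N_{i-1}\v_i$ downward forces $\v_0 = \0_0$. But by the sublemma I can choose $m \geq k$ with $N_0^m > 0$; then $\v_{m+1} \neq \0_{m+1}$ by forward propagation, and strict positivity of $N_0^m$ applied to the nonnegative nonzero vector $\v_{m+1}$ makes $\v_0 = N_0^m\v_{m+1}$ strictly positive, contradicting $\v_0 = \0_0$.

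The step I expect to be most delicate is the compactness claim in $(i)$: without some form of reducedness, $\V_N^\Delta$ can genuinely fail to be compact, since a zero column in some $N_k$ creates an unbounded direction for the later coordinates of an eigenvector sequence. I would therefore explicitly invoke the canonical reduction (or the standing reducedness assumption of the setup) to ensure that $N_0^{i-1}$ has no zero columns for each $i$, making the coordinate bounds $\|\w_i\|_1 \leq 1/\min_b c_b^{(i)}$ finite, so that Tychonoff can conclude.
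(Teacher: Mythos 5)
Your parts (ii) and (iii) are correct, and (ii) is the paper's argument almost verbatim (a zero at time $j-1$ forces a zero column of $N_{j-1}$). For (iii) the paper reaches the same conclusion by an equivalent route: it uses primitivity to make $\v_k$ strictly positive and then row-reducedness to push strict positivity of the \emph{vectors} down one index at a time, whereas your sublemma uses row-reducedness to push strict positivity of the \emph{matrices} down, getting $N_0^m>0$ for arbitrarily large $m$ and contradicting $\v_0=\0_0$; both work, and yours proves exactly the nonvanishing asked for while the paper's gives strict positivity en route.

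The genuine divergence is the compactness claim in (i), and there your instincts are sharper than the text. The paper's proof takes a sequence $\v^{(k)}\in\V_N^\Delta$ that is \emph{assumed} to converge to some $\v\in C_\A$, checks $N\v=\v$, and uses the fact that a zero at any later time would force $\v_0=\0_0$ (the same backward propagation as in your (ii), needing no reducedness) to conclude $\v$ is never zero; this establishes that $\V_N^\Delta$ is closed in the product topology, but it supplies no bound on the later coordinates, which is what compactness in $V_\A$ actually requires. Your argument supplies exactly that missing boundedness: $1=\|\w_0\|_1=\sum_b c_b^{(i)}(\w_i)_b$ with $c_b^{(i)}$ the column sums of $N_0^{i-1}$, so each $\w_i$ lies in a compact box once these sums are positive, and Tychonoff plus closedness finishes. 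You are also right that some hypothesis is needed for this: taking $\A_i=\{1,2\}$ for all $i$, $N_0$ diagonal with entries $1,0$, and $N_i$ the identity for $i\geq1$, the set $\V_N^\Delta$ consists of the sequences with $\w_0=(1,0)$ and $\w_i=(1,s)$ for all $i\geq1$, with $s\geq0$ arbitrary -- closed but unbounded -- so the compactness assertion of (i) as literally stated cannot be proved without a reducedness-type assumption; in the paper the lemma is in any case applied to reduced sequences, as in Theorem \ref{t:basic_thm}, where your argument covers what is needed. Two small points to write out in your version: column-reducedness of each $N_k$ does imply the partial products $N_0^{i-1}$ have no zero columns (a nonnegative matrix with no zero columns annihilates no nonzero nonnegative vector), and over the reals you only need the column sums $c_b^{(i)}$ to be positive for each fixed $i$, not bounded below by $1$.
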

 \begin{proof}
   \item{(i)}
That these are convex cones is clear, and $C_\A$
  is a  closed subset of $V_\A$ (with respect to  the product
  topology).

   We prove that $\V_{N}^\0$ is closed.
Let $\v^{(k)}\in \V_{N}^\0$, 
and suppose that $\v^{(k)}\to \v \in C_\A$. 
We are given that each  $\v^{(k)}$ is a fixed point for $N$. Thus by continuity of this map
we have 
$N(\v)= N(\lim_{k\to\infty} \v^{(k)})= \lim_{k\to\infty} N(\v^{(k)})= \lim_{k\to\infty} \v^{(k)}=
\v$. 
Since $N\v=\v$, we have that $\v\in \V_{N}^\0$, as claimed.

To show that $\V_{N}^\Delta $ is  compact, let $\v^{(k)}\in
\V_{N}^\Delta$; and suppose that $\v^{(k)}\to \v \in C_\A$. Then as
above $N\v=\v$. Since $(\v^{(k)})_0\in \Delta_0$, which is compact,
so is $\v_0$;
hence each element of the sequence $\v$ is nonzero, and thus $\v\in
\V_{N}^\Delta $
as well.

\item{(ii)} Let  $\v\geq 0$ and $N\v=\v$. Suppose $\v_k= \0_k$; we claim
that then this is true for all $k$. Certainly  $\v_i= \0_i$
for all $i<k$. Now suppose
$\v_{k+1}\neq  \0_{k+1}$; that is, there exists
$s\in \A_{k+1}$ with $(\v_{k+1})_s>0$. Then since $\v_{k}=\0_k$, the $s-$ column
of $N_k$ is all zero, contradicting that $N$ is column--reduced.

\item{(iii)} Let  $\v\geq 0$ with $N\v=\v$; suppose $N$ is primitive and
$\v_k\neq  \0_k$. We know there exists $m_0>k$ such that for any
$m\geq m_0$, all entries of 
$N_k^m$ (recalling Def.~\ref{d:gathering}) are greater than zero. Certainly $\v_{m+1}\neq  \0_{m+1}$, since
otherwise we would have $\v_k= \0_k$. Now using the fact  that $N_k^m$ is strictly
positive, we have that in fact $\v_k=
N_k^m \v_{m+1}$ is strictly positive. Since $N_j$ is row-reduced for
$j= k-1$, the same holds for $\v_j$ and inductively
for all $j<k$. We have shown that if some $\v_k\neq  \0_k$ then in
fact it is strictly positive and  moreover that is true for all lesser
indices as well. But no larger index $\v_l$ can be $\0_l$ since that
would imply $\v_k=  \0_k$, a contradiction. This shows $\v_l\neq
\0_l$
 for all $l\in\N$ whence all are strictly positive.
  \end{proof}

It will be useful, here and below in  \S \ref{s:Vict}, to have available two different notations for
iteration. 

\begin{defi}\label{d:partialproduct}
  As in Def.~\ref{d:gathering},  we denote the product of the matrices from $i$ to $n$
  by $N_i^n= N_iN_{i+1}\cdots N_n$. Thus  on column vectors as in \eqref{CD:cones}, $N_i^n: V_{n+1}\to V_i$.
This is 
iteration from time $i$ to time $n$ along the
sequence of maps.
  
Secondly, 
$N^m$ denotes the $m^{\text{th}}$ iteration of the  total map, that
is, of the linear operator $N$
on $V_\A$.

These are related as follows: 
defining  
 $(N^m)_i=N^{m+i-1}_i$ for all $i\geq 0$ and $m\geq1$,
then $(N^m)_i: V_{i+m}\to V_i$ and 
$N^m$ is the product of maps
$N^m=\Pi_{i=0}^\infty (N^m)_i$.
  \end{defi}

For  $0\leq k\leq n$ we define:
$$C_N^{(k,n)}=
N_k^n C_{n+1}.$$
Note that
$$C_{k}\supseteq N_kC_{k+1}\supseteq N_kN_{k+2}C_{k+2} \supseteq
\cdots$$
that is, 
$$C_{k}\supseteq C_N^{(k,k)}\supseteq C_N^{(k,k+1)}\supseteq \cdots$$ 
We set $ C_N^{(k,+\infty)}\equiv \cap_{n= 0}^{+\infty} C_N^{(k,
  k+n)}$ and note that for each $k$,
  $N_k C_N^{(k+1,+\infty)}=C_N^{(k,+\infty)}$.

Each of $C_N^{(k,n )}$, $C_N^{(k,+\infty)}$, $\V_{N}^\0$
is a closed convex cone. Since $\V_{N}^\Delta $ is 
compact convex,   $\V_{N}\setminus \0$ is compact modulo 
projective equivalence. 
We let  $\Ext
  C_N^{(k,n)}$, $\Ext
  C_N^{(k,+\infty)}$, $\Ext \V_{N}$ denote
the union of the extreme rays.
By the Krein-Milman Theorem (for finite dimension), see after 
Lemma \ref{lem:ergodicmeasures}, every point in $\V_{N}$ is a convex
combination of points in $\Ext \V_{N}$.

\begin{lem}\label{l:inductive eigenvectors}
  Given
    alphabets
  $\A=(\A_i)_{i\geq 0}$ 
  and  $(N_i)_{i\geq 0}$ a sequence of 
$(\A_i\times \A_{i+1})$ 
nonnegative
real matrices, then:
\item{(i)} 
All sequences $\w\in\V_{N}^\Delta$ can be built inductively as follows:

  $(1)$
Choose ${\bf w}_0$ in  $\Delta_0\cap C_N^{(0,+\infty)}.$

$(2)$
Choose ${\bf w}_{i+1}\in C_N^{(i+1,+\infty)}$ to be a preimage of ${ \bf w}_{i}$.

\item{(ii)}
$ C_N^{(0,+\infty)}\equiv
\cap_{n=0}^{+\infty} N_0^n C_{n+1}\neq \{\0\}$ iff
$\V_{N}^\Delta\neq \emptyset.$
\item{(iii)} $\w\in \Ext \V_{N}^\Delta$ iff $\w\in \V_{N}^\Delta$ and
  for each $k$,  $\w_k\in \Ext  C_N^{(k,+\infty)}$.
\item{(iv)}
$N$ is a primitive sequence $\iff$ $\V_{N}^\Delta\neq\emptyset$ and $\forall \w\in \V_{N}^\Delta,\w_i>0 $ for all
$i$.
\item{(v)} For $M=N$ as above but with integer entries,
$\Sigma_{M}^{0,+}\neq \emptyset $ iff 
$ C_M^{(0,+\infty)}\neq \{\0\}.$
\end{lem}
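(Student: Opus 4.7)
The plan is to prove the five parts together by combining the compactness of $\V_{N}^\Delta$ established in Lemma \ref{l:closedcone}(i) with the nested structure of the cones $C_N^{(k,m)}$ in finite-dimensional spaces, handling (v) first as a warm-up compactness argument, then (i) and (ii) together, then (iii), and finally citing \cite{Fisher09a} for (iv).

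First I would dispatch part (v): the path space $\Sigma_{M}^{0,+}$ is nonempty iff $M$ admits arbitrarily long admissible paths starting at time $0$, iff the partial product $N_0^n$ is a nonzero matrix for each $n$, iff $C_M^{(0,n)} \neq \{\0\}$ for each $n$; a standard finite-dimensional compactness argument applied to the nested compact simplex sections $\Delta_0 \cap C_M^{(0,n)}$ then gives $C_M^{(0,+\infty)} \cap \Delta_0 \neq \emptyset$, i.e.~$C_M^{(0,+\infty)} \neq \{\0\}$.

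The core of the proof is parts (i) and (ii), which I would establish jointly. The forward direction of (i) is immediate from the iteration $\w_i = N_i^n \w_{n+1} \in N_i^n C_{n+1} = C_N^{(i,n)}$ valid for every $n \geq i$, giving $\w_i \in C_N^{(i,+\infty)}$. The inductive construction in (i) together with the nontrivial direction of (ii) both reduce to the preimage claim $N_i\bigl(C_N^{(i+1,+\infty)}\bigr) = C_N^{(i,+\infty)}$. The inclusion $\subseteq$ is immediate from $N_i C_N^{(i+1,m)} = C_N^{(i,m)}$; for $\supseteq$, given $\w_i \in C_N^{(i,+\infty)}$ I would for each $m$ exhibit a preimage $\w_{i+1}^{(m)} \in C_N^{(i+1,m)}$ with $N_i \w_{i+1}^{(m)} = \w_i$, using $\w_i \in C_N^{(i,m)} = N_i C_N^{(i+1,m)}$, and then extract a subsequential limit in $C_N^{(i+1,+\infty)}$ via a finite-dimensional compactness argument on the nested closed convex sets $K_m = N_i^{-1}(\w_i) \cap C_N^{(i+1,m)}$. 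Given the preimage claim, (ii) is immediate by starting from any $\w_0 \in \Delta_0 \cap C_N^{(0,+\infty)}$ and applying the preimage step inductively, and part (i) follows.

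For (iii), the direction $(\Leftarrow)$ is a short linear-algebra calculation: any decomposition $\w = \alpha \u + (1-\alpha)\v$ in $\V_{N}^\Delta$ forces $\u_k = \lambda_k \w_k$ and $\v_k = \mu_k \w_k$ by extremality of the ray of $\w_k$ in $C_N^{(k,+\infty)}$; the eigenvalue equation $\u_k = N_k \u_{k+1}$ propagates $\lambda_k = \lambda_{k+1}$ across levels (using $\w_k \neq 0$); the normalization $\u_0 \in \Delta_0$ pins $\lambda_0 = 1$, giving $\u = \w = \v$. The direction $(\Rightarrow)$ proceeds by contrapositive: from a nontrivial decomposition $\w_k = \alpha \u_k + (1-\alpha)\v_k$ at some level $k$, extend backwards via $\u_i := N_i^{k-1}\u_k$, and extend forwards inductively by solving at each step a constrained linear feasibility problem (adjustment by elements of $\ker N_i$) to produce compatible preimage extensions $\u, \v \in \V_{N}^\Delta$ satisfying $\w = \alpha \u + (1-\alpha)\v$ throughout. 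Part (iv) is the nonstationary Perron--Frobenius theorem, for which I would cite the nested-cone analysis developed in \cite{Fisher09a}.

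\textbf{The main obstacle} will be the preimage claim underlying (i)--(ii): when $\ker N_i \cap C_{i+1} \neq \{\0\}$, the preimage sets $N_i^{-1}(\w_i) \cap C_{i+1}$ are unbounded, so a naive compactness extraction fails. The resolution is a careful recession-cone analysis of the decreasing family $K_m = N_i^{-1}(\w_i) \cap C_N^{(i+1,m)}$ in the finite-dimensional space $V_{i+1}$: the recession cones $\ker(N_i) \cap C_N^{(i+1,m)}$ themselves form a decreasing sequence of closed convex cones whose eventual stabilization (in finite dimension) permits the nonemptiness of $\bigcap_m K_m$ to be established; alternatively, one reduces via Remark \ref{r:reduced} to a column-reduced sequence where the pathology is controlled. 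The forward-extension step in $(\Rightarrow)$ of (iii) raises a very similar technical difficulty and is handled by the same circle of ideas.
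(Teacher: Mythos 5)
Your skeleton tracks the paper's quite closely: part (v), the forward half of (i), and the $(\Leftarrow)$ half of (iii) are handled essentially as in the text, and you have correctly isolated the crux of (i)--(ii), namely the surjectivity $N_i\bigl(C_N^{(i+1,+\infty)}\bigr)=C_N^{(i,+\infty)}$, which the paper merely ``notes'' just before the lemma and whose compactness content appears only later, for the normalized simplices, in the CLAIM inside the proof of Lemma \ref{l:ConeNumberofextremepts}. The genuine gap is in your proposed resolution of the obstacle you yourself flag. Stabilization of the recession cones of $K_m=N_i^{-1}(\w_i)\cap C_N^{(i+1,m)}$ does not yield $\bigcap_m K_m\neq\emptyset$: nested closed convex sets with a common nontrivial recession cone can drift off to infinity, and in fact without some extra hypothesis the surjectivity --- and with it part (ii) as literally stated --- fails. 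Take $\A_0$ a singleton, $\A_i=\{1,2\}$ for $i\geq1$, $N_0=\left[\begin{matrix}0&1\end{matrix}\right]$ and $N_i=\left[\begin{matrix}1&1\\ 0&1\end{matrix}\right]$ for $i\geq1$. Then $N_0^n=\left[\begin{matrix}0&1\end{matrix}\right]$ for all $n$, so $C_N^{(0,+\infty)}=[0,\infty)\neq\{\0\}$; but any $\w\in\V_N^\0$ satisfies $(\w_{i+1})_2=(\w_i)_2\equiv b$ and $(\w_{i+1})_1=(\w_i)_1-b$ for $i\geq1$, forcing $b=0$ and hence $\w_0=N_0\w_1=0$, so $\V_N^\Delta=\emptyset$. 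Here, for $\w_0=1$, the sets $K_m=\{(x,1)^t:\,x\geq m\}$ are nonempty, nested, with constant recession cone equal to the ray through $(1,0)^t$, yet $\bigcap_m K_m=\emptyset$ and $N_0C_N^{(1,+\infty)}=\{\0\}\neq C_N^{(0,+\infty)}$. Your fallback via Remark \ref{r:reduced} also does not repair this directly, because passing to the reduced sequence changes $\V_N$: a fixed point of $\breve N$ extended by zeros need not be a fixed point of $N$.

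The correct and very short repair, in the settings where the lemma is actually used, is a zero-pattern observation: for a nonnegative matrix, $v\geq\0$, $v\neq\0$ and $N_iv=\0$ force every column of $N_i$ over the support of $v$ to vanish; hence if the sequence is column--reduced then $\ker N_i\cap C_{i+1}=\{\0\}$, so each $K_m$ is a nonempty closed convex set with trivial recession cone, hence compact, and your subsequential-limit extraction closes (i) and (ii). Under primitivity, which is all that (iv) requires, the same argument applied to $N_i^m$ gives $\ker N_i\cap C_N^{(i+1,m)}=\{\0\}$ for $m$ large, so the $K_m$ are eventually compact and the conclusion again follows; so you should either work under a column--reduced hypothesis (consistent with Lemma \ref{l:closedcone} and with Theorem \ref{t:basic_thm}, where the sequence is reduced) or under primitivity, rather than attempt the unrestricted claim. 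Two smaller points: in (iii)$(\Leftarrow)$ your scalar-propagation is essentially the paper's argument, but ruling out that $\u_k,\v_k$ are proportional uses the normalization at time $0$; and in (iii)$(\Rightarrow)$ a level-by-level ``feasibility adjustment'' can get stuck, since a preimage chosen at level $j$ need not admit any admissible continuation, so the decomposition must be produced across all levels simultaneously (the finite-horizon splitting sets are bounded because $\0\leq\u_j\leq\w_j$, so an inverse-limit compactness argument is available once their nonemptiness is established, which is where the real work lies). The paper offers only a one-sentence assertion for that direction, so your sketch is no weaker than the text, but as written it is not yet a proof.
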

\begin{proof}
 \item{$(i)$} Without loss of generality we can assume $\V_{N}^\Delta$ is nonempty,
  since otherwise this is trivially true. (That is the case if the
  alphabets are nonempty).

  Now there exist preimages since this is the definition of $
  C_N^{(0,+\infty)}$, so part $(i)$ is clear.

  \item{$(i)$} Part $(ii)$ follows from this.

\item{(iii)}:
For  $\w\in\V_{N}^\Delta$ then if this is not extreme there exist distinct sequences  $\w^1,\w^2\in
\V_{N}^\Delta$ and $p,q>0$,
$p+q=1$ such that $\w= p\w^1+ q\w^2$. But then there exists some time
$k$ such that they are different: $\w^1_k\neq \w^2_k$, and still $\w_k= p\w^1_k+
q\w^2_k$ so $\w_k^1, \w_k^2$ are not extreme in $
C_N^{(k,+\infty)}$.  Conversely if $\w$ is extreme in $\V_{N}^\Delta$ then this
cannot happen for any $k$.

\item{(iv)}:
  As above we can assume $\V_{N}^\0$ is nonempty.
If $N$ is primitive, then choosing $i\geq 0$, there exists $n>i$ such
that $N_i^n>0$, so 
$N_i^n (C_{n+1}\setminus \{\0\})\subseteq \interior \, C_i $, 
and hence for all $\w\in \V_{N}$, 
$\w_i$ is nonzero. Therefore $\V_{N}$, and hence $\V_{N}^\Delta$, are
nonempty,
and for all $ \w\in \V_{N}^\Delta$, each $\w_i>0 $.
For the converse, if $N$ is not primitive, 
then there exists $i\geq 0$ such that for each $n>i$
some column of $N_i^n$
contains a zero. Let $\w_{n+1}^n$  be the corresponding
standard basis vector, so $N_n \w_{n+1}^n$ is that matrix column and
so has a zero entry. Defining   $N_j \w_{j+1}^n =
\w_{j}^n$, giving a finite nonnegative sequence
$\w_i^n, \w_{i+1}^n, \dots, \w_{n+1}^n$ such that each
$\w_{i}^n$ has at least one zero entry.  
Then by a compactness argument there exists $\w\in\V_N$ such that $\w_i$ is not nonzero.

\item{(v)}:
By 
compactness, $\Sigma_{\M}^{0,+} \neq \emptyset $ iff  
there exists an infinite sequence
of nested decreasing (nonempty) thin cylinder sets. This holds iff for all $n$
there exists an allowed edge path of length $n$. 
Now the number of allowed finite edge paths beginning with a
  symbol $i\in \A_0$ and ending with $j\in \A_{n+1}$  is equal to 
${\mathbf e }_i^tM_0^n {\mathbf e }_j$ where ${\mathbf e}_i$ is the standard basis vector
and ${\mathbf e}_i^t$ its transpose, so  
the collection of allowed edge paths of length $n$ is nonempty iff
$M_0^n$
has some non-zero entry, iff $M_0^nC_{n+1}$ contains some
nonzero vector. 
And $C_M^{(0,+\infty)}\neq \{\0\}$ iff for all $n$, $C_M^{(0,n)}\setminus\{\0\}\neq \emptyset
$; again we use compactness, of 
the intersection of the cone $C_M^{(0,n)} $ with the closed unit
sphere.

\end{proof}

 In particular, if  the matrices are invertible then
the sequence 
in part $(i)$ of the lemma is 
determined by choice of its first element ${\bf w}_0$.
Regarding part $(iv)$ see  Lemma 4.2 of ~\cite{Fisher09a}.

\begin{defi}\label{d:Radon}
We recall: a Borel measure is {\em inner regular} iff the measure of a set is the
sup of the measures of its compact subsets. It is  {\em outer regular} iff
the measure of a set is the
inf of the measures of the open sets containing it, and is {\em
  regular } if it is both outer and inner  regular.

We shall say a measure is {\em locally finite} if there exists a neigborhood
of each point with finite measure, and {\em positive} locally finite
if it is in addition strictly positive on each nonempty open subset.

 We recall: a {\em Radon} measure is a Borel
measure which is both inner regular and locally finite. 
\end{defi}

\begin{rem}\label{r:innerregular}
  Inner regularity will hold for all the measures considered in this paper, and outer
regularity for the finite measures, as noted in
the proof below. 
Indeed, a countable sum of inner regular Borel measures is inner
regular, whence the tower measures constructed later on are inner
regular, though they may be locally infinite and hence not Radon.
\end{rem}

The next theorem gives the basic information about finite $\FC$-invariant
Borel measures on an edge  space $\Sigma_{M}^{0,+}  $, that is the elements of
$\CCM$, for the nonprimitive, nonstationary case. (For simplicity of
the statement and proofs we begin
a vertex space $\Sigma_{L}^{0,+}  $). Part $(i)$ is Theorem 2.9 of
~\cite{BezuglyiKwiatkowskiMedynetsSolomyak10}.

\begin{theo}\label{t:basic_thm}
  Given nonempty alphabets
  $\A=(\A_i)_{i\geq 0}$,
let 
$L= (L_i)_{i\geq 0}$  be an $(\A_i\times \A_{i+1})$ 
 $0-1$ reduced matrix sequence. 
\item{(i)} There is an  affine
  homeomorphism
 $\Phi:\, \CCL\to \V_{\L}^\Delta$ defined
by
$\Phi(\nu)= \w$ with
\begin{equation}
  \label{eq:basictheorem}
  ({\bf w}_n)_s= \nu([.x_0\dots x_n]) \text{ where } s=x_n,\;\;
\end{equation} 
where $[.x_0\dots x_n]$ is a thin cylinder set.
\item{(ii)} $\Phi$ takes  the ergodic $\FC-$ invariant probability measures on
  $\Sigma_{L}^{0,+}  $, which is 
$\Ext\CCL$, bijectively to $
  \Ext\V_{L}^\Delta$.  
\item{(iii)} $L$ is primitive $\iff $ each $ \nu\in\CCL$ is 
  strictly positive and finite
on each
cylinder set.

All of the above holds for edge spaces defined from a nonnegative integer
matrix sequence $M$, with the map
$\Phi$ from $\CCM$ to
$\V_{\M}^\Delta$ defined 
by
$\Phi(\nu)= \w$ with
\begin{equation}
  \label{eq:centralmeasure}
  ({\bf w}_n)_s= \nu([.e_0\dots e_{n-1}]) \text{ where } s=e_{n-1}^+.
\end{equation}
These measures are regular and hence Radon.
\end{theo}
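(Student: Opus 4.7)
The plan is to construct $\Phi$ and its inverse explicitly, using the fact that the defining formula in \eqref{eq:basictheorem} is well-posed precisely because of $\FC$-invariance: by Proposition \ref{p:conserg}$(i)$, condition \eqref{eq:7} says any two thin cylinders sharing the same terminal vertex $s$ have equal $\nu$-measure, so $(\w_n)_s$ depends only on $n$ and $s$. First I would verify that $\Phi(\nu)\in \V_\L^\Delta$. The eigenvector equation $\w_n = L_n \w_{n+1}$ follows from countable additivity applied to the partition
\[
[.x_0\dots x_n] = \bigsqcup_{x_{n+1}:\, L_n(x_n,x_{n+1})=1} [.x_0\dots x_n x_{n+1}],
\]
the normalization $\w_0\in\Delta_0$ from $\nu$ being a probability measure, and the ``never zero'' condition $\w_n\neq \0_n$ from $L$ being reduced: since $\|\w_0\|=1$ and each $\w_i = L_i^{n-1}\w_n$, if some $\w_n$ vanished then so would $\w_0$, a contradiction.

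For the inverse direction, given $\w\in \V_\L^\Delta$ I define a finitely additive set function on the algebra generated by thin cylinders via $\nu([.x_0\dots x_n]) = (\w_n)_{x_n}$. Consistency under refinement is exactly the relation $L_n\w_{n+1}=\w_n$. The main technical step, and the one place I expect any real work, is extending this to a Borel measure: here I would invoke compactness of $\Sigma_\L^{0,+}$ together with the fact that thin cylinders are clopen, so countable subadditivity on covers reduces to finite subadditivity, and Carath\'eodory's extension theorem applies. The resulting measure is automatically $\FC$-invariant since every generator of $\FC$ swaps a pair of cylinders with equal $\nu$-values, and has total mass $\sum_{s\in \A_0}(\w_0)_s=\|\w_0\|=1$. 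Affinity and continuity of $\Phi$ are immediate from \eqref{eq:basictheorem}, so $\Phi$ is an affine homeomorphism.

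Part $(ii)$ then follows because an affine homeomorphism between compact convex sets sends extreme points bijectively to extreme points, and Lemma \ref{lem:ergodicmeasures} identifies $\Ext\CCL$ with the ergodic central measures. Part $(iii)$ is an immediate translation: strict positivity of $\nu$ on every cylinder is equivalent, via \eqref{eq:basictheorem}, to $\w_n>0$ coordinatewise for every $n$ and every $\w\in \V_\L^\Delta$, which by part $(iv)$ of Lemma \ref{l:inductive eigenvectors} is equivalent to primitivity of $L$.

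The edge-space version can be handled uniformly by passing through symbol splitting (cf.\ Fig.~\ref{F:StateSplit}): the $0-1$ matrix sequence $(B_k A_{k+1})$ on the edge alphabets presents the edge shift $\Sigma_\M^{0,+}$ as a vertex shift, under which the edge cylinder $[.e_0\dots e_{n-1}]$ corresponds to a vertex cylinder with terminal vertex $e_{n-1}^+\in \A_n$; the formula \eqref{eq:centralmeasure} is then exactly \eqref{eq:basictheorem} transported across this identification, and \eqref{eq:8} is exactly \eqref{eq:7}. Alternatively one runs the argument directly, replacing the sum over allowed $x_{n+1}$ by the sum over edges $e_n$ with $e_n^-= e_{n-1}^+$, which gives $(\w_n)_{e_{n-1}^+} = \sum_{s\in \A_{n+1}}(M_n)_{e_{n-1}^+,s}\,(\w_{n+1})_s$. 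Finally, regularity and the Radon property follow from the classical theorem that every finite Borel measure on a compact metric space is inner and outer regular, so no further argument is needed.
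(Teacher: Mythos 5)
Your proposal is correct and follows essentially the same route as the paper: both directions of the bijection come from the cylinder formula, the extension to a Borel measure is handled by a compactness-based extension theorem (the paper cites Alexandroff's theorem where you use clopen-ness of cylinders plus Carath\'eodory, which amounts to the same thing), part $(ii)$ follows from affinity together with Lemma \ref{lem:ergodicmeasures}, part $(iii)$ from part $(iv)$ of Lemma \ref{l:inductive eigenvectors}, and the edge case is reduced to the vertex case by symbol splitting. The one detail to adjust is that under the $(B_kA_{k+1})$-coding the terminal symbol of $[.e_0\dots e_{n-1}]$ is the edge $e_{n-1}$ rather than the vertex $e_{n-1}^+$, so a small extra step (or the paper's choice of the dispersed sequence $(A_0,B_0,A_1,B_1,\dots)$, whose terminal symbol is literally $e_{n-1}^+$) is needed there; your direct alternative computation, summing over edges $e_n$ with $e_n^-=e_{n-1}^+$ and grouping by $e_n^+$, accomplishes exactly this and matches the paper's verification.
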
 

\begin{proof}
    We address $(i)$.

  First we consider the inverse of the map $\Phi$. Thus, assume we are given
  an eigenvector sequence $\w$ of eigenvalue one, with $\w_0$ a
  probability vector; that is, 
$\w\in\V_{\L}^\Delta$. Considering  a  thin cylinder set $[.x_0\dots x_n]$,
we define its measure   to be
\begin{equation}
  \label{eq:12}
  \nu([.x_0\dots x_n=s]) =({\bf w}_n)_s.
\end{equation}
 We don't yet know this will give us a measure on the
 $\sigma$-algebra; at this point $\nu$  is a nonnegative real-valued function defined on the collection of
thin cylinders. We shall show this
has a unique extension to a Borel probability measure on 
$\Sigma_{\L}^{0,+}$. We write $\Cal B$ for the Borel $\sigma$\,--\,algebra and 
$\Cal B_0$ for the algebra generated by the thin cylinders. One checks
that elements
of 
$\Cal B_0$ are exactly the  finite unions of 
 thin cylinders. We extend the definition to this algebra, defining  $\nu(A)$ for $A\in \Cal
 B_0$  simply to be the sum of the measures of
these sets. However there are many ways to decompose $A$, so to show
this is well-defined we need 
to check these all give the same number.

Suppose first that $A$ itself is a thin cylinder set $[.x_0\dots x_n] $. 
We then rewrite it as a union of thin cylinders of length $k$ for some 
$k>n$. If $k= n+1$, then we have for any $a\in \A_n$,
$[.x_0\dots x_{n-1} a] =\cup_{b\in \A_{n+1}}\{ [.x_0\dots x_{n-1} a b]:\,  (L_n)_{ab}= 1\}$.
We have
 \begin{align}
&\sum_{\{b:(L_n)_{ab}= 1\}}\nu([.x_0\dots x_{n-1}  ab])
   \nonumber\\
 & = \sum_{\{b:(L_n)_{ab}= 1\}}({\bf w}_{n+1})_b \label{eq:5c}
 \end{align}

 Now by the definition of matrix multiplication,

  \begin{align}
    &{\bf w}_n= L_n{\bf w}_{n+1}\label{eq:5d}\\
    &{\text{iff}}\nonumber\\
    &({\bf w}_n)_i = \sum_{j\in \A_{n+1}} (L_n)_{ij} (\w_{n+1})(j) \label{eq:5f}\\
  \end{align}
   so since $\w$   is an eigenvector sequence of eigenvalue one, which is \eqref{eq:5d},
   \eqref{eq:5c} is 
 \begin{align*}
 &  \sum_{\{b:(L_n)_{ab}= 1\}} ({\bf w}_{n+1})_b = (L_n {\bf
   w}_{n+1})_a= ({\bf w}_n)_a= \nu([.x_0\dots x_{n-1} a])
 \end{align*}

  which shows that
 \begin{align*}
&\sum_{\{b:(L_n)_{ab}= 1\}}\nu([.x_0\dots x_{n-1}  ab])
  =\nu([.x_0\dots x_{n-1} a]).
 \end{align*}

 That is to say, adding up those thin cylinder measures in the two decompositions of the thin cylinder set
 $[.x_0\dots x_n] $ as itself,  $[.x_0\dots x_{n-1} a]$
and as $\cup_{b\in \A_{n+1}}\{ [.x_0\dots x_{n-1} a b]:\,
 (L_n)_{ab}= 1\}$, 
 gives the same number.
Inductively this is true for any $k>n$,
 so we are done in the case where 
$A$ is a thin cylinder and the cylinders in its decomposition have equal length.

Now suppose $A\in \B_0$ is written in two different ways as a
union of collections of thin cylinders $\Cal S_1$ and $\Cal S_2$, so 
$A=  (\cup\Cal S_1)=  (\cup\Cal S_2)$. We claim these give the same
number: that 
$\sum _{B\in \Cal S_1} \nu B= \sum _{B\in \Cal S_2} \nu B$.
To show  this we let $k$ be the maximum length of the elements of 
$\Cal S_1 \cup\Cal S_2$; then we decompose each $B\in \Cal S_i$ as a
union of thin cylinders of length $k$. By the previous step the sums of their measures agrees 
with $\sum _{B\in \Cal S_i} \nu B$, and as this decomposition is now unique, 
the sums for $i=1,2$ are equal. Hence $\nu(A)$ 
does not depend on the way in which $A$ is decomposed into thin
cylinder sets, i.e.~$\nu$ is well-defined on $\B_0$.

It follows that $\nu$ is additive on $\Cal B_0$: taking $A, B$ disjoint in 
$\Cal B_0$, then $A\cup B$ is a union of thin cylinders composing $A$ and $B$,
so what we have just shown demonstrates that $\nu(A\cup B)=\nu(A)+
\nu(B)$.
Note that $\nu$ is regular (Def.~ \ref{d:Radon}) on the algebra  $\Cal B_0$ since, as we
noted above, elements of $\Cal B_0$ are unions of thin cylinders.

Now since the space $\Sigma_{\L}^{0,+}$ is compact and $\nu$ is finite, then by Alexandroff's Theorem 
\cite{DunfordSchwartz57} p.~138, Theorem 13, it has a unique regular extension from the
algebra $\B_0$ to a $\sigma$\,--\,additive measure on all of $\Cal B$;
by Theorem 14 there, this extension is regular. Furthermore, since ${\bf
  w}_0$ is in the unit simplex, the total mass is
$\nu(\Sigma_{\L}^{0,+})=\sum_{s\in \A_0} \nu([.s])=\sum_{s\in \A_0}
(\w_0)_s=1$.

 Now we return to the statement of $(i)$, proving the other direction.
We are given $\nu\in \CCL$; in particular $\nu$
    is defined for each thin
  cylinder set $[.x_0\dots x_n]$. Recall that for this to be a thin cylinder
   (see above Definition \ref{d:reduced})
 the string $x_0\dots
  x_n$ is allowed, and there exists an infinite continuation of this string to the
  right (equivalently $[.x_0\dots x_n]$ is nonempty). This follows from the hypothesis that $L$ is row--reduced.

Turning around \eqref{eq:12}, for each $n\geq 0$ and for any for any $s\in
\A_n$, we define a vector $\w_n$ by
\begin{equation}
  \label{eq:13}
   ({\bf w}_n)_s= \nu([.x_0\dots x_n=s])
 \end{equation}
 This is well-defined:   $L$ is column-- reduced, so  by $(i)$
of Lemma \ref{l:reduced}. there exists  $x_0\dots
 x_n$ such that $[.x_0\dots x_n=s]$ is an allowed string.  
 This is indeed a cylinder set (by definition, with an allowed string and nonempty)
 since 
 the fact that 
$L$ is row--reduced implies it is nonempty. 
Furthermore, this number only depends on $n$ and $s$
as $\nu$ is a central measure. Also, $\w_0\in \Delta_0$ since
$\sum_{s\in \A_0} (\w_0)_s= \sum_{s\in \A_0} \nu([.s])=\nu(\Sigma_{\L}^{0,+})
=1$. Thus we have a vector sequence $\w$; we are to show that 
${\bf w}_n= L_n{\bf w}_{n+1}$.
By \eqref{eq:5d}-
\eqref{eq:5f}, it is equivalent to show that
$$({\bf w}_n)_i = \sum_{j\in \A_{n+1}} (L_n)_{ij} (\w_{n+1})(j) $$

  We have
 $$
   ({\bf w}_n)_i = [.x_0\dots x_{n-1} i] =\cup_{j\in \A_{n+1}}\{ [.x_0\dots x_{n-1} i j]\}
 $$
  Since $\nu$ is a measure, by additivity
 \begin{equation}
    \label{eq:5}
  \nu[.x_0\dots x_{n-1} i] =\sum_{j\in\A_{n+1}}\nu[.x_0\dots x_{n-1}i j]
 \end{equation}

Now as before, by the definition of matrix multiplication, 
  
  \begin{align}
    &{\bf w}_n= L_n{\bf w}_{n+1}\nonumber\\
    &{\text{iff}}\nonumber\\
    &({\bf w}_n)_i = \sum_{j\in \A_{n+1}} (L_n)_{ij} (\w_{n+1})(j) \label{eq:5a}\\
  \end{align}

We have from \eqref{eq:13} and \eqref{eq:5}
\begin{align}
  \label{eq:10}
    & ({\bf w}_n)_i= \nu[.x_0\dots x_{n-1} i] =
      \sum_{j\in\A_{n+1}}\nu[.x_0\dots x_{n-1}i j]\\
     & = \sum_{j\in \A_{n+1}} (L_n)_{ij} (\w_{n+1})(j)\\
\end{align}
  
 because the $i^{\text{th}}$ row sum of $L$ counts how many cylinders
   of that type there are.

 This proves \eqref{eq:5a}.

\

We have shown that $\Phi$ maps $\CCL$ to
$\V_{\L}^\Delta$, and that this map
is invertible.

  To finish the proof of $(i)$, from the definitions,  $\CCL$, $\V_{L}^\Delta$ are compact and convex, 
and  $\Phi$ is an affine map. Since weak* convergence of measures
is equivalent to convergence of the measures of each thin cylinder
set, and since this correspondence is clearly bijective, this is a homeomorphism, completing the proof of $(i)$.

Next, 
 since from
part $(i)$ $\Phi$ is affine, and using Lemma \ref{lem:ergodicmeasures}, we have $(ii)$. 
Part $(iii)$ follows from $(iv)$ of Lemma \ref{l:inductive
  eigenvectors} via part $(i)$.

We next show how to derive the same results for edge spaces. Taking the state-splitting factorization 
$(A_iB_i)_{i\geq 0}$ of 
$ (M_i)_{i\geq 0}$, so 
 $AB=(A_iB_i)_{i\geq 0}= (A_0, B_0, A_1, B_1,\dots)$ are $0-1$
 matrices,
and with $
\wt l_i=\#\E_i$, 
 then
denoting by $C_i^\E$ the cone of nonnegative  column  
vectors in $\wt l_i$\,--\,dimensional Euclidean space, the diagram of
\eqref{CD:cones}
 extends to

\[
\begin{CD}\label{CD:conesAB}
     C_0    @<A_0<<   C_0^\E    @<B_0<< C_1 @<A_1<< C_1^\E @<B_1<< C_2 
 \cdots
\\
\end{CD}
\]

A nonnegative eigenvector sequence $(\w_i)_{i\geq 0}$  with eigenvalue one for the original sequence
$ (M_i)_{i\geq 0}$ 
extends uniquely to a sequence 
$
( {\bf w}_0, {\bf w}_0^\E, {\bf w}_1, {\bf w}_1^\E\dots)$ for the
dispersed matrix sequence $(AB)$, where
${\bf w}_i^\E\equiv B_i{\bf w}_{i+1}$ and so ${\bf w}_i= A_i{\bf w}_{i}^\E$.
For the $0-1$ matrices $L_i\equiv B_i A_{i+1}$, then
$(\w_i^\E)_{i\geq 0}$ is a nonnegative eigenvector sequence with eigenvalue one for 
$(L_i)_{i\geq 0}$. 

There is a natural bijective correspondence between these finite
allowed strings: $(.e_0\dots e_{n-1})$
 and  $(.x_0e_0x_1\dots x_{n-1}e_{n-1}x_n)$ for $\Sigma_{\M}^{0,+}$ 
and  $\Sigma_{AB}^{0,+}$, where $x_0= e_0^-$ and for $i>0$, $x_i=
e_i^-= e_{i-1}^+$.
This induces a bijection 
 from a cylinder set 
 $[.e_0\dots e_{n-1}]$  of  the edge space $\Sigma_{\M}^{0,+}$ to the cylinder set
 $[.x_0e_0x_1\dots x_{n-1}e_{n-1}x_n]$ of the vertex space $\Sigma_{AB}^{0,+}$.

Given $\nu_M$ on $\Sigma_{\M}^{0,+}$ we define $\nu_{AB} $ on
$\Sigma_{AB}^{0,+}$ via
 this correspondence between cylinder sets, setting
$\nu_{AB}([.x_0e_0x_1\dots x_{n-1}e_{n-1}x_n])\equiv\nu_M([.e_0\dots e_{n-1}]). $
This gives  a bijection from $\CCM$ to $\CCAB$.
From part $(i)$ we have the bijection $\Phi_{\AB}:\, \CCAB\to \V_{\AB}^\Delta$.
Thus $\nu_{AB}$ determines a nonnegative eigenvector  sequence $( {\bf w}_i {\bf w}_i^\E)_{i\geq 0}\in \V_{\AB}^\Delta$,
which in turn specifies  $(\w_i)_{i\geq 0}$, a nonnegative eigenvector sequence
with eigenvalue one for $(M_i)_{i\geq 0}$, this correspondence also
being a bijection.

We claim that
  ${\bf w}_0^\E\in \Delta_0^\E$. This will show that the composition
  of the three correspondences defines  
a bijective map $\Phi_M:\, \CCM\to \V_{\M}^\Delta$.

We know from $(i)$ that  $( {\bf w}_i {\bf w}_i^\E)_{i\geq 0}\in
\V_{\AB}^\Delta$. So
${\bf w}_0\in \Delta_0$. 
Each column of the matrix $A_0$ has a
single $1$ entry with the rest $0$ (the edge $\e_0$ begins at a
well-defined symbol $\e_0^-$). Thus $1=
||\w_0||=||A_0\w_0^\E|| =||\w_0^\E|| $ 
and so 
${\bf w^\E}_0\in \Delta_0^\E$. This completes the proof.
\end{proof}

\begin{defi}\label{d:measure}
Given an alphabet sequence $\A$, a $0-1$ matrix sequence $L$ as above,
then for the vertex shift space $\Sigma_{L}^{0,+}  $, given 
$\w\in \V_{L}$, we write $\nu_\w$ for the
  measure on $\Sigma_{L}^{0,+}  $ defined by \eqref{eq:basictheorem}.
  (If we wish to emphasize  the matrix sequence involved, we write  e.g.~
  $\nu^L_\w$).
 We make the similar definitions for the edge
space  $\Sigma_{M}^{0,+}  $.

\end{defi}

We examine some effects of the  operation 
of gathering:

\begin{prop}\label{p:gathermeasure}
Let $(\A_i)_{i\geq 0}$ be an alphabet   sequence with $l_i= \#\A_i\geq
1$,  
and
let $M= (M_i)_{i\geq 0}$ be an $(\A_i\times \A_{i+1})$ 
 nonnegative integer  matrix sequence.
 For $0=n_0< n_1<\dots$, let
 $\wt M=(\wt M_i)_{i\geq 0}$ 
denote the gathered matrix sequence, with $(\wt \A_i) _{i\geq
  0}$, $(\wt \E_i) _{i\geq
  0}$ 
the
gathered alphabet and edge alphabet sequences. Write $\mathfrak B_M$,
 $\mathfrak B_{\wt M}$
for the corresponding Bratteli diagrams.

Then if $M$ is reduced  so is
$\wt M$.

The map $\Phi$ taking an edge path to the corresponding gathered edge path is
a homeomorphism 
from $\Sigma_{\M}^{0,+} $ to $\Sigma_{\wt
  M}^{0,+} $. This conjugates the actions of $\FC_M$ and $\FC_{\wt
  M}$ and preserves the stable equivalence
relations. If $\O$ is an order on  $\mathfrak B_M$ then writing $\wt \O$
for the order  induced on $\mathfrak B_{\wt M}$ by this map, the
Vershik maps and adic transformations
$T_\O, T_{\wt\O}$
are conjugate.
In this way, the orders on $\mathfrak B_M$ embed (in general, not surjectively) in 
those on  $\mathfrak B_{\wt M}$.

$\Phi$ defines an affine map 
$\Psi:\, \V_{\M}^\Delta \to
\V_{\wt M}^\Delta $. 

Writing $\Phi_*$ for the map  induced by the homeomorphism $\Phi$ on the collection of
all Borel measures, this is an affine homeomorphism  of 
measures, which preserves the $\FC$\,--\, invariant, ergodic, and conservative measures.

All the above holds for vertex in place of edge spaces. 
\end{prop}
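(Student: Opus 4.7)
The plan is to build everything from a single map $\Phi : \Sigma_M^{0,+} \to \Sigma_{\wt M}^{0,+}$ sending an edge path $(.e_0 e_1 e_2 \dots)$ to its gathered version $(.\wt e_0 \wt e_1 \dots)$, where $\wt e_k$ is the finite string $(e_{n_k} e_{n_k+1} \dots e_{n_{k+1}-1})$ viewed as a symbol of $\wt \E_k$ via the labeling of Definition \ref{d:gathering}. The identity $\wt M_k = M_{n_k}\dots M_{n_{k+1}-1}$ shows that allowed paths go to allowed paths, and the inverse is by concatenation; continuity of $\Phi$ and $\Phi^{-1}$ in the product topologies is immediate since each target coordinate depends on only finitely many source coordinates, and compactness upgrades this to a homeomorphism. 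Reducedness transfers: if $M$ is row-reduced, any $a \in \A_{n_k}$ admits an outgoing edge, and by iteration an allowed path of length $n_{k+1} - n_k$, so the $a$-th row of $\wt M_k$ is nonzero; the column case is symmetric.

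For the dynamical structures, the core observation is that two edge paths agree from some time onward in $\Sigma_M^{0,+}$ iff their gathered images agree from some time onward in $\Sigma_{\wt M}^{0,+}$, because gathering partitions $\N$ into finite blocks. Thus $\Phi$ carries each stable set $W^s_M(e)$ bijectively onto $W^s_{\wt M}(\Phi(e))$, and by Lemma \ref{l:orbitequivrel} the orbit equivalence relations of $\FC_M$, $\FC_{\wt M}$ and both Vershik partial transformations coincide under $\Phi$. Any generator of one $\FC$ group is then a finite composition of generators of the other, delivering the claimed conjugation in the orbit-equivalence sense.

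Given a stable order $\O$ on $\mathfrak B_M$, define $\wt \O$ on $\mathfrak B_{\wt M}$ by the anti-lexicographic rule on blocks: for $\wt e, \wt f \in \wt \E_k^+(b)$, let $j \in [n_k, n_{k+1})$ be the largest index where the constituent edges differ, and declare $\wt e <_{\wt \O_k} \wt f$ iff $e_j <_{\O_j} f_j$. By construction the induced anti-lexicographic order on any stable set of $\mathfrak B_{\wt M}$ pulls back under $\Phi$ to the corresponding order on $\mathfrak B_M$, so $\Phi \circ T_\O = T_{\wt \O} \circ \Phi$ wherever defined, and $\NS$, $\NP$, $\CN$ match under $\Phi$. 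The map $\O \mapsto \wt \O$ is injective (distinct choices on some $\E_j^+(c)$ produce distinct induced orders on every $\wt \E_k^+(b)$ whose block contains $j$ and is reachable from $c$) but generally not surjective, since an arbitrary total order on $\wt \E_k^+(b)$ need not be anti-lexicographic in its block. I expect this bookkeeping to be the main technical obstacle, as everything else is essentially automatic.

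For eigenvector sequences and measures, set $\Psi(\w)_k = \w_{n_k}$; iterating the eigen equation $\w_i = M_i \w_{i+1}$ gives $\wt M_k \w_{n_{k+1}} = \w_{n_k}$, so $\Psi(\w) \in \V_{\wt M}$, and normalization persists since $\wt \w_0 = \w_0$. The inverse sends $\wt \w$ to $\w_i = M_i \dots M_{n_{k+1}-1}\, \wt \w_{k+1}$ for $n_k \le i < n_{k+1}$, which satisfies the eigen equation by telescoping and agrees with $\wt \w_k$ at $i = n_k$. Both maps are affine and continuous, delivering the affine map $\Psi : \V_M^\Delta \to \V_{\wt M}^\Delta$. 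The induced $\Phi_*$ on Borel measures is then an affine homeomorphism for the weak-$*$ topology; because $\Phi$ respects stable equivalence, it sends $\CCM$ bijectively to $\CCM_{\wt M}$ by Proposition \ref{p:conserg}, and the identity $\Phi_* \nu_\w = \nu_{\Psi(\w)}$ is verified on thin cylinders at the gathering times via equation \eqref{eq:centralmeasure}. Ergodicity and conservativity transfer as properties of the orbit equivalence relation via Proposition \ref{p:conservative_equivreln}. The vertex-space version then follows either by rerunning the argument with $L$ in place of $M$, or by passing through the state-splitting factorization as in the proof of Theorem \ref{t:basic_thm}.
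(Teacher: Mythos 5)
Your proposal is correct and follows essentially the same route as the paper: the gathering map $\Phi$ as a homeomorphism matching cylinders, stable sets and $\FC$-orbit relations, the induced anti-lexicographic order giving conjugacy of the Vershik maps with the embedding of orders non-surjective, the subsequence map $\Psi$ on eigenvector sequences, and transfer of invariance, ergodicity and conservativity through the orbit equivalence relation. The only difference is that you spell out a few details the paper leaves implicit (the explicit inverse of $\Psi$ by interpolation, the block-anti-lexicographic description of $\wt\O$); note only that your injectivity remark for $\O\mapsto\wt\O$ tacitly uses that the relevant edges occur in allowed gathered blocks, which is guaranteed when $M$ is reduced.
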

\begin{proof}First, being reduced clearly passes from $M$ to its
  gathering (though not vice-versa, by easy counterexamples). 
We note that the number of elements of the gathered edge alphabet $\wt \E_i$ which
begin at a vertex $a\in \A_{n_i}$ and end at $b\in \A_{n_{i+1}}$ is
$(M_{n_i}^{n_{i+1}-1})_{ab}= (\wt M_i) _{ab}$.

The map $\Phi$ is defined to take an edge path 
$\e= (.e_0e_1\dots
)$ to the corresponding gathered edge path $\wt e=  (.\wt e_0\wt e_1\dots
) $; thus $\wt e_0= (e_0\dots e_{n_1-1})$,  
$\wt e_1=(e_{n_1}\dots e_{n_2-1})$ and so on. This is clearly a
homeomorphism: it is a bijection; the inverse image of a thin cylinder
set is a thin cylinder set and the image of a cylinder set is a union
of thin cylinders. Thus a generator of $\FC_{\wt M}$ is taken by $\Phi^{-1}$ to
a generator of $\FC_{ M}$, so  $\FC_{\wt M}$ embeds in $\FC_{ M}$, while
a generator of $\FC_{M}$ is taken by $\Phi$ to
an element of  $\FC_{\wt M}$ which is a finite product of generators (one for each
cylinder of that union). Thus the actions are conjugate. That the
Vershik maps are congugate is clear from the orders. 
It follows that the map induced on measures takes invariant Borel measures
to invariant Borel measures, and preserves ergodicity and conservativity. 

We define $\Psi:  \V_{\M}^\Delta \to
\V_{\wt M}^\Delta $  sending $\w= (\w_0\w_1\dots)$ to the subsequence at
times $n_i$; this is an affine bijection. Since a central measure
$\nu\in \CCM$ is
defined on a thin cylinder set by the final vertex, $\nu([.e_0\dots e_j: e_j^+=s]= \w_i(s)$,
considering $j= n_i$ defines a natural map 
from $\CCM$ to
$\CCMtil$. This affine bijection  is the restriction to probability measures 
of the map $\Phi_*$.

While it is true that an  order $\O$ on the Bratteli diagram of $\Sigma_{\M}^{0,+} $
determines an order on the edge paths of $\Sigma_{\wt M}^{0,+} $ which
enter a given vertex, there are more possibilities for 
the gathered diagram: as easy examples show, not every order on $\wt \E_i$ 
is induced from a anti-lexicographic order on the  edge paths from time
$n_i$ to $n_{i+1}$.
\end{proof}

\medskip

\subsection{Counting the ergodic central measures}\label{ss:Count}
According to
\cite{BezuglyiKwiatkowskiMedynetsSolomyak13}, the upper bound proved
here is a ``folklore
theorem''; a proof making use of
Theorem 2.1 
of Pullman~\cite{Pullman71}  is given in Proposition 2.13 of~\cite{BezuglyiKwiatkowskiMedynetsSolomyak13}
 (though Pullman's result is essentially non-stationary  he only
 applies his argument to the
case of a single matrix). The proof we present is self-contained.

Recall that we are using the $L^1$\,--\,norm on  $\r^d$, with
$||{\bf w}||=\sum_{i=1}^d |w_i|,$ so that 
$ {\bf w} \mapsto {{\bf w}}/{||{\bf w}||}
$
projects  $\r^{d+}\setminus \{{\bf 0}\}\to \Delta$. Given a sequence $(M_i)_{i\geq 0}$
of  
$(\A_i\times \A_{i+1})$ 
 nonnegative integer matrices with $l_i= \#\A_i$,
we  define 
$f^M_i:\Delta_{i+1}\to \Delta_i$
by 
$$
f^M_i({\bf v})=\frac{M_i {\bf v}}{||M_i{\bf v}||},$$
and write $f^M_{(k,n)} =f^M_{k} \circ f^M_{{k+1}} \circ\cdots \circ f^M_{{n-1}} $.
For  $0\leq k\leq n$ we define:
$$\Delta_M^{(k,n)}=
f^M_{(k,n)} \Delta_n,$$
and so
$$\Delta_M^{(k,n)}= C_M^{(k,n)}\cap \Delta_M^k.
$$
Taking the  intersection of these  nested simplices, we define
$$\Delta_M^{(k,+\infty)}=\cap_{n\geq k} \Delta_M^{(k,n)}=  C_M^{(k,\infty)}\cap \Delta_k.$$
 
\begin{lem}\label{l:simplex to simplex}
For any finite set of points 
$\{{\bf v}_1,\dots {\bf v}_j\}\subseteq \Delta_{k+1}
$,  the image of the 
convex hull is the convex hull of the image: writing $[ {\bf v}_1,\dots {\bf v}_j]$ 
for the   convex hull of these points, then 
  $[ \{f^M_{k}({\bf v}_i)\}_{1\leq i\leq j}]= f^M_{k}([{\bf v}_{1\leq i\leq j}])$.
\end{lem}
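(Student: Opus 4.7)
The plan is to exploit the fact that $f^M_k$ is the linear map $M_k$ followed by $L^1$-normalization, combined with the key algebraic observation that on the nonnegative cone the $L^1$ norm is \emph{linear} (it equals the sum of coordinates), so no cancellation can occur in the norm of a nonnegative sum. This is what makes normalization compatible with convex combinations in the present setting, even though normalization is generally a highly nonlinear operation.

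For the forward inclusion $f^M_k([{\bf v}_1,\dots,{\bf v}_j]) \subseteq [\{f^M_k({\bf v}_i)\}]$, I would write $s_i = ||M_k {\bf v}_i||$, take ${\bf v} = \sum_i \lambda_i {\bf v}_i$ with $\lambda_i \geq 0$ and $\sum_i \lambda_i = 1$, and compute $M_k {\bf v} = \sum_i \lambda_i M_k {\bf v}_i$ by linearity of $M_k$. Since every $M_k {\bf v}_i$ lies in the nonnegative cone, the linearity-on-the-cone property of the $L^1$ norm gives $||M_k {\bf v}|| = \sum_i \lambda_i s_i$; dividing exhibits $f^M_k({\bf v})$ as the convex combination $\sum_i \mu_i f^M_k({\bf v}_i)$ with reweighted coefficients
$$\mu_i = \frac{\lambda_i s_i}{\sum_j \lambda_j s_j}.$$

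For the reverse inclusion I would invert this weight transformation. Given an arbitrary ${\bf u} = \sum_i \mu_i f^M_k({\bf v}_i)$ in the target convex hull, I would set $\lambda_i = (\mu_i / s_i)/\sum_j (\mu_j / s_j)$, verify $\lambda_i \geq 0$ and $\sum_i \lambda_i = 1$, and run the previous calculation backwards to obtain $f^M_k(\sum_i \lambda_i {\bf v}_i) = {\bf u}$. Together the two inclusions give the equality claimed.

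The only delicate point is ensuring each $s_i > 0$, so that $f^M_k({\bf v}_i)$ is defined and the weight denominators are nonzero. This is implicit in the hypothesis that the ${\bf v}_i$ belong to $\Delta_{k+1}$, i.e.~lie in the domain on which $f^M_k$ has been declared (guaranteed, for instance, when the matrix sequence is row-reduced so that no row of $M_k$ is identically zero on $\Delta_{k+1}$). I do not anticipate a serious obstacle; the argument is essentially a bookkeeping exercise with barycentric coordinates, and the whole content of the lemma lies in the observation above about $L^1$ linearity on the cone.
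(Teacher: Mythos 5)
Your proof is correct, and it takes a genuinely different route from the paper's. The paper argues geometrically: first it shows that the image of a segment $[{\bf v},{\bf w}]$ is the segment (or point) $[f^M_k({\bf v}),f^M_k({\bf w})]$, using linearity of $M_k$ on the cone plus the fact that radial normalization of a segment in the positive cone is again a segment with the projected endpoints; it then runs an induction on the number $j$ of points, writing an arbitrary point of $[{\bf v}_1,\dots,{\bf v}_j]$ as a point on a segment joining one vertex to a point of the hull of the others. Your argument replaces both steps by a single explicit computation: since the $L^1$ norm is additive on the nonnegative cone, $\|M_k\sum_i\lambda_i{\bf v}_i\|=\sum_i\lambda_i s_i$ with $s_i=\|M_k{\bf v}_i\|$, so $f^M_k$ transforms barycentric coordinates by the explicit bijective reweighting $\lambda_i\mapsto \mu_i=\lambda_i s_i/\sum_j\lambda_j s_j$, and inverting this map gives the reverse inclusion. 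What your approach buys is an exact, formula-level description of how $f^M_k$ acts on convex combinations (in effect exhibiting $f^M_k$ as a projective chart map, for which preservation of finite convex hulls is automatic), with no induction and both inclusions handled symmetrically; what the paper's approach buys is independence from the particular normalizing functional, since it only uses that normalization sends cone segments to simplex segments.

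One small point of bookkeeping relative to the paper's conventions: well-definedness of $f^M_k$ on $\Delta_{k+1}$ (equivalently $s_i>0$, and then $M_k{\bf v}\neq\0$ for every ${\bf v}$ in the hull, by nonnegativity) requires that $M_k$ have no all-zero \emph{column}, i.e.\ that the sequence be column--reduced in the terminology of Definition \ref{d:reduced}, not row--reduced as your parenthetical states; a zero row of $M_k$ is harmless here, whereas a ${\bf v}_i$ supported on zero columns would make $M_k{\bf v}_i=\0$. This is an implicit hypothesis shared with the paper's proof and does not affect the validity of your argument.
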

\begin{proof}
For two points, the statement is that 
the image of the segment $[{\bf v},{\bf w}]$ with endpoints ${\bf v},{\bf w}$
is the segment (possibly a point)
 $[f^M_{k}{\bf v}), f^M_{k}({\bf w})]$.
Indeed,
since $M_k$
is linear, the image of a line segment in the positive cone 
$C_{k+1}=\r^{l_{k+1},+}$ 
is a line segment in $C_k$,
and when normalized to 
$\Delta_k$ this gives either a line segment or a point, with those
extreme points. 

It follows from this statement  that the image by $f_k^M$ of a convex
set is convex,
but that is not yet enough as we need to show that it is generated by
the image of the extreme points. We prove this by induction on the
number $j$ of extreme points, illustrating the inductive step
$(j\implies j+1)$ for the case $j=2$. Thus we
consider ${\bf v}, {\bf w}, {\bf z}\in\Delta_{k+1}$;
given a point 
 ${\bf x}=a {\bf v}+b {\bf w}+ c {\bf z}$ 
 where $a+b+c =1$ and $a,b,c\geq 0$,
there is a point $\tilde {\bf x} $ on the segment 
$[{\bf v},{\bf w}]$ such that ${\bf x}$ lies on the segment $[\tilde {\bf x} , {\bf z}]$.
Indeed, take 
$\tilde {\bf x}=({a}/{a+b}){\bf v}+({b}/{a+b}){\bf w}$. 
 By the previous argument each of these segments is 
mapped to a segment, the result follows.
The general induction step is similar.
\ \ \end{proof}

\begin{lem}\label{l:ConeNumberofextremepts}
\item{(i)} 
The sets
$\Delta_M^{(k,n)}$
and
$\Delta_M^{(k,\infty)}$ are   compact, convex and nonempty.
$\Delta_M^{(k,n)}$ has  at most $l_n$  extreme points.
\item{(ii)}  The map $f^M_{k}$ sends
$\Delta_M^{(k+1,\infty)}$ onto $\Delta_M^{(k,\infty)}$, and maps
the  set of
extreme points $\Ext (\Delta_M^{(k+1,n)})$ onto $\Ext(\Delta_M^{(k,n)})$, 
and similarly for $n=\infty$.  $\Delta_M^{(k,\infty)}$ has at most
$\inf_{n\geq k} l_n$ extreme points.
\item{(iii)} 
Via the projection 
$ {\bf w} \mapsto {{\bf w}}/{||{\bf w}||}
$
from $C_k\setminus \{\bf 0\}$ onto  
$\Delta_k$, the collection of extreme rays of the convex cone
$\Ext(C_M^{(k,\infty)})$ corresponds bijectively to $\Ext(\Delta_M^{(k,\infty)})$.
\end{lem}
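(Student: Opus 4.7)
Part (i) is dispatched via general principles: each $\Delta_M^{(k,n)} = f^M_{(k,n)}(\Delta_n)$ is the continuous image of the nonempty compact convex simplex $\Delta_n$, hence compact and nonempty. Both convexity and the bound $\#\Ext(\Delta_M^{(k,n)}) \leq l_n$ follow from Lemma \ref{l:simplex to simplex} applied to $\Delta_n$ written as the convex hull of its $l_n$ standard basis vectors: the image is the convex hull of these $l_n$ image points, hence has at most $l_n$ extreme points. Then $\Delta_M^{(k,\infty)}$ is a nested intersection of nonempty compacts, hence itself compact, nonempty, and convex.

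For (ii), the surjectivity $f^M_k(\Delta_M^{(k+1,\infty)}) = \Delta_M^{(k,\infty)}$ has an easy inclusion from the identity $f^M_k \circ f^M_{(k+1,n)} = f^M_{(k,n+1)}$; for the reverse, given $x \in \Delta_M^{(k,\infty)}$, the plan is to pick preimages $z_n \in \Delta_M^{(k+1,n)}$ with $f^M_k(z_n) = x$, extract a convergent subsequence $z_{n_j} \to z$ by compactness of $\Delta_{k+1}$, observe that $z$ lies in every $\Delta_M^{(k+1,m)}$ by nestedness and closedness, and conclude $f^M_k(z) = x$ by continuity. For the extreme-point surjection at finite $n$, a Milman-type argument applies: by Lemma \ref{l:simplex to simplex} together with finite-dimensional Krein--Milman and Carath\'eodory, every $x \in \Delta_M^{(k,n)}$ is a convex combination of finitely many images $f^M_k(y_i)$ with $y_i \in \Ext(\Delta_M^{(k+1,n)})$, so extremality of $x$ in $\Delta_M^{(k,n)}$ forces $x = f^M_k(y_i)$ for some $i$.

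The main obstacle is the bound $\#\Ext(\Delta_M^{(k,\infty)}) \leq \inf_{n \geq k} l_n$. The plan is to establish a finite-dimensional stability lemma: if $(K_n)$ is a decreasing sequence of polytopes in a fixed Euclidean space, each with at most $N$ vertices, then $K_\infty := \cap_n K_n$ is a polytope with at most $N$ vertices. Writing $K_n = [v_n^1,\ldots,v_n^N]$ (padding with repetitions as necessary), a diagonal compactness argument yields a subsequence along which $v_{n_j}^i \to v^i$ for each $i$; then one establishes the two inclusions $[v^1,\ldots,v^N] \subseteq K_\infty$ (each $v^i$ is a limit of points eventually lying in each closed $K_m$, so lies in $\cap_m K_m$) and $K_\infty \subseteq [v^1,\ldots,v^N]$ (any $x \in K_\infty$ equals $\sum t_{n_j}^i v_{n_j}^i$ with barycentric weights $t_{n_j}^i$ themselves subsequentially convergent to some $t^i$, giving $x = \sum t^i v^i$). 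Applying this along a subsequence $(n_j)$ achieving $\inf_{n \geq k} l_n$, with $N$ equal to this infimum, then yields the stated bound.

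For (iii), the map $w \mapsto w/||w||$ bijects the rays of $C_M^{(k,\infty)} \setminus \{\0\}$ with the points of $\Delta_M^{(k,\infty)} = C_M^{(k,\infty)} \cap \Delta_k$, each open ray meeting the simplex in a unique point. A ray $r = \{tw: t \geq 0\}$ is extreme in the cone iff its unit point is extreme in the base: a nontrivial convex decomposition of the unit point lifts, by scaling, to a decomposition of the ray as a sum of two distinct rays in $C_M^{(k,\infty)}$, and conversely any such ray decomposition normalizes back to a convex decomposition of the unit point.
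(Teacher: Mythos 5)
Parts (i) and (iii), the surjectivity $f^M_k(\Delta_M^{(k+1,\infty)})=\Delta_M^{(k,\infty)}$, and your Milman-type argument for extreme points are fine and are essentially the paper's own argument (the paper proves the surjectivity as a general claim about nested compact sets and maps, and runs your ``stability'' compactness argument directly on the extreme points of the $\Delta_M^{(k,n)}$ rather than isolating it as a lemma about nested polytopes; also note the statement asks for the extreme-point surjection at $n=\infty$ as well, which your Minkowski--Carath\'eodory argument gives verbatim once surjectivity onto the intersection is known). The genuine gap is the very last step of your counting argument: ``applying this along a subsequence $(n_j)$ achieving $\inf_{n\geq k} l_n$.'' The infimum of the integer sequence $(l_n)_{n\geq k}$ is attained, but possibly only at finitely many indices, so there need not exist any infinite subsequence with $l_{n_j}$ equal to $\inf_{n\geq k}l_n$; and a single index $n_0$ with $l_{n_0}$ small does not help, since a compact convex subset of a polytope with $N$ vertices can have more than $N$ extreme points. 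What your stability lemma actually yields, by passing to a subsequence realizing the liminf (which always exists for integer sequences), is $\#\Ext(\Delta_M^{(k,\infty)})\leq\liminf_n\#\Ext(\Delta_M^{(k,n)})\leq\liminf_n l_n$, not the stated bound $\inf_{n\geq k}l_n$.

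Moreover the $\inf$ bound is not merely out of reach of this method; as stated it fails. Take $l_0=l_1=3$ and $l_n=4$ for $n\geq 2$, with $M_0=I_3$, with $M_1$ the $3\times 4$ matrix whose columns are $(3,1,1)^t$, $(2,2,1)^t$, $(1,2,2)^t$, $(2,1,2)^t$ (their normalizations are four points of $\Delta_1$ in convex position, the corners of a small square), and with $M_n=I_4$ for $n\geq 2$. By Lemma \ref{l:simplex to simplex}, $\Delta_M^{(0,n)}$ is this fixed quadrilateral for every $n\geq 2$, so $\Delta_M^{(0,\infty)}$ has four extreme points, while $\inf_{n\geq 0}l_n=3$. (The paper's own proof has the same soft spot: from $m=\min_n\#\Ext(\Delta_M^{(k,n)})$ it asserts that $\#\Ext(\Delta_M^{(k,t)})=m$ for all large $t$, which does not follow, and its argument too only delivers the liminf bound.) So you should state and prove the $\liminf$ version; this is in any case all that is used downstream, since Proposition \ref{p:extremepts} only needs $\#\Ext(\Delta_M^{(k,\infty)})\leq \liminf_n l_n$, and that your argument does establish.
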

\begin{proof}
For $(i)$ 
 we have a sequence of onto, continuous maps 

\[
\begin{CD}
     \Delta_M^{(k,n)}   @<f^M_k<<  \Delta_M^{(k+1,n)}  @<f^M_{k+1}<<\Delta_M^{(k+2,n)}  \cdots
      \Delta_M^{(n-1,n)} @<f^M_{n-1}<<   \Delta_n
\\
\end{CD}
\]
so from Lemma \ref{l:simplex to simplex}, 
$\Delta_M^{(k,n)}$ is a compact convex nonempty set, hence so is the nested intersection
$$\Delta_M^{(k,\infty)}=\cap_{n=0}^{\infty}\Delta_M^{(k,n)}$$.
Also from Lemma \ref{l:simplex to simplex}, 
$$\Ext(\Delta_M^{(k,n)})\subseteq f^M_{(k,n)}(\Ext\Delta_n)$$
whence 
$\#\Ext(\Delta_M^{(k,n)})\leq \#\Ext(\Delta_n)= l_n$.

For $(ii)$, to show that
$f^M_k(\Delta_M^{(k+1, \infty)})=\Delta_M^{(k, \infty)}$, we prove
 the following more general 

\noindent
{\bf CLAIM:}
Let $(K_i)_{i\geq 0}$ be compact sets with  continuous maps $f_i: K_{i+1}\to K_i$.
Write $f_{(k,k)}= f_k$; $f_{(k,n)}= f_k\circ f_{k+1}\circ \dots \circ
f_n$ for $n> k$.
Then for $K_{(k,n)}\equiv f_{(k,n)}(K_{n+1})$ 
and $K_{(k,\infty)}\equiv \cap_{n=k}^\infty K_{(k,n)}$, we have: 
$$K_{(k,n+1)}  \subseteq K_{(k,n)} \text{ and }
f_k(K_{(k+1,\infty)})=K_{(k,\infty)}.$$

\noindent
{\bf Proof of Claim:} Since 
for each $n$, 
$f_{n+1}(K_{n+2})\subseteq K_{n+1},$ 
applying $f_{(k,n)} $ gives 
$$K_{(k,n+1)} =f_{(k,n+1)}(K_{n+2}) \subseteq K_{(k,n)}.$$

Next,
$f_k(K_{(k+1,n)})= f_{(k,n)} (K_{n+1})= K_{(k,n)}$,
so 
$$f_k( K_{(k+1,\infty)})=  f_k(\cap_{n=k+1}^\infty K_{(k+1,n)})\subseteq
\cap_{n=k+1}^\infty f_k( K_{(k+1,n)})=  \cap_{n=k}^\infty
K_{(k,n)}=K_{(k,\infty)}.
$$

To show this is onto,
let $x\in K_{(k,\infty)}$; we shall find 
$w\in  K_{(k+1,\infty)}$ such that
$f_k(w)=x$.
Since $x\in  K_{(k,n)}$ for each $n$,
there exists 
$y_{n+1}\in K_{n+1}$ with  $f_{(k,n)}(y_{n+1})=x$.
Define 
$w_n= f_{(k+1,n)}(y_{n+1})$; thus 
$ f_k(w_n)= x$.

Since $w_m\in K_{(k+1,n)}$ for all $m\geq n$ (by the first part of the
Claim) and this set is compact,
there exists a subsequence 
$w_{m_l}$ and point $w$ with $w_{m_l}\to w$. Therefore 
$w\in  K_{(k+1,m_l)}$  for each $l$ and  hence $w\in  K_{(k+1,\infty)}$. 
 By continuity of $f_k$,  $f_k(w)=x$ as well.
\ \ \qed

\

Next we examine the extreme points of  $\Delta_M^{(k,\infty)}$.  
For $x \in \Delta_M^{(k,\infty)}$,
for each $n>k$, since $\Delta_M^{(k,\infty)}\subseteq \Delta_M^{(k,n)}$,
there are real numbers
$\lambda_i^{(n)}$, $1\leq i\leq j(k,n)$,
such that 
$$
x=\sum_{i=1}^{j(k,n)}\lambda_i^{(n)}{\bf z_i}^{(n)}
$$   
where $\{ {\bf z}_1^{(n)},\dots,{\bf z}_{j(k,n)}^{(n)}\}=
\Ext(\Delta_M^{(k,n)})$.
Let us write  
$m=\min_{n\geq k} \{j(k,n)=\#\Ext(\Delta_M^{(k,n)})\}.$
Thus  there exists $J$ such that for every $t\geq J$, 
$\#\Ext(\Delta_M^{(k,t)})=m.$
We claim  that
$\#\Ext(\Delta_M^{(k,\infty)})\leq m.$

Now by compactness of 
$\Delta_M^{(k,n)}$
and $\Delta_M^{(k,\infty)}$,
there exists for each $i$
a subsequence
of $ ({\bf z}_i^{(j(k,n))})_{n=J}^\infty$
which converges to some point ${\bf z}_i\in\Delta_M^{(k,\infty)}$.
We claim that 
$\Ext(\Delta_M^{(k,\infty)})\subseteq \{ {\bf z}_i\}_{i=1}^m$.
(Here the order on each set 
$\Ext(\Delta_M^{(k,n)})$ is fixed but otherwise 
is of no importance.)
Indeed, any point
$x\in\Delta_M^{(k,\infty)}$ can be written as a convex combination 
$$
x=\sum_{i=1}^{m}\lambda_i^{(j(k,n))}{\bf z}_i^{(j(k,n))}
$$   
for each  $n\geq J$;
by compactness of $[0,1]$, for each $i$
there exists a 
subsequence
of $\lambda_i^{(j(k,n))}$
converging to $\lambda_i$ such that
$
x=\sum_{i=1}^{m}\lambda_i{\bf z_i}
$;
hence  $\Ext(\Delta_M^{(k,\infty)})\subseteq \{ {\bf z}_i\}_{i=1}^m$,
so indeed $\#\Ext(\Delta_M^{(k,\infty)})\leq m.$

Finally we show 
 $f^M_{k}(\Ext(\Delta_M^{(k+1,\infty)}))= \Ext(\Delta_M^{(k,\infty)})$.

We claim
 that given ${\bf b}\in \Ext(\Delta_M^{(k,\infty)})$, there exists some
${\bf a}\in \Ext(\Delta_M^{(k+1,\infty)})$ which maps to ${\bf b}$.
Indeed, since the map $f^M_k$ is onto, there exists some preimage ${\bf c}
\in \Delta_M^{(k+1,\infty)};$
if ${\bf c}$ is not extreme, it is a nontrivial 
convex combination of the extreme points; 
but by the previous lemma, 
its image is a (generally different, since $f^M_k$ may not be linear)
 convex combination 
of the images of these points. Now we use 
Lemma \ref{l:simplex to simplex}:   ${\bf b}= f^M_k({\bf c})$
is extreme, hence  this convex combination is trivial, 
either because the coefficients are all zero except for one or because 
the points coincide, and in either case one of the extreme  points
must map to ${\bf b}$.

The proof of $(iii)$  is obvious. 
\ \ \end{proof}

\begin{prop}\label{p:extremepts}
The number of  ergodic $\FC_M-$invariant
 probability  measures is $\#\Ext\V_{M}^\Delta=\liminf \# \Ext \Delta_M^{(k,\infty)}$
 and this is at most $\liminf l_n$.
\end{prop}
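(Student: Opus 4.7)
The plan is to chain together the results already proved: Theorem \ref{t:basic_thm}(ii), Lemma \ref{l:inductive eigenvectors}(iii), and Lemma \ref{l:ConeNumberofextremepts}. By Theorem \ref{t:basic_thm}(ii), the ergodic $\FC_M$-invariant probability measures correspond bijectively to $\Ext\V_M^\Delta$, so the content of the statement is the identity $\#\Ext\V_M^\Delta=\liminf_k n_k$ where $n_k\equiv\#\Ext\Delta_M^{(k,\infty)}$, together with the bound $\liminf_k n_k\leq\liminf_n l_n$.

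First I would observe that $(n_k)$ is non-decreasing in $k$: by Lemma \ref{l:ConeNumberofextremepts}(ii) the map $f_k^M$ sends $\Ext\Delta_M^{(k+1,\infty)}$ \emph{onto} $\Ext\Delta_M^{(k,\infty)}$, giving $n_{k+1}\geq n_k$; and by Lemma \ref{l:ConeNumberofextremepts}(i)-(ii) the bound $n_k\leq \inf_{n\geq k}l_n$ holds. Thus the non-decreasing integer sequence $n_k$ is bounded above by the non-decreasing sequence $\inf_{n\geq k}l_n$, whose limit is $\liminf_n l_n$, so $n_k$ stabilizes at some finite value $N\leq\liminf_n l_n$, and $\liminf_k n_k=\lim_k n_k=N$.

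Next I would set up the bijection between $\Ext\V_M^\Delta$ and compatible sequences of extreme points. Given $\w\in\Ext\V_M^\Delta$, normalize by $\widehat{\w}_k\equiv \w_k/\|\w_k\|$; then by Lemma \ref{l:inductive eigenvectors}(iii) and Lemma \ref{l:ConeNumberofextremepts}(iii), $\widehat{\w}_k\in\Ext\Delta_M^{(k,\infty)}$, and the relation $M_k\w_{k+1}=\w_k$ gives $f_k^M(\widehat{\w}_{k+1})=\widehat{\w}_k$. Conversely, any such compatible sequence $(\widehat{\w}_k)_{k\geq 0}$ uniquely reconstructs $\w\in \V_M^\Delta$ by $\w_0=\widehat{\w}_0$ and inductively $\w_{k+1}=\widehat{\w}_{k+1}/\|M_k\widehat{\w}_{k+1}\|\cdot\|\w_k\|$, with extremality of the reconstructed sequence following from Lemma \ref{l:inductive eigenvectors}(iii). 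So $\#\Ext\V_M^\Delta$ equals the number of such infinite compatible sequences of extreme points.

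Finally I would count these compatible sequences. Choose $K$ so that $n_k=N$ for all $k\geq K$. For $k\geq K$, the surjective map $f_k^M:\Ext\Delta_M^{(k+1,\infty)}\to\Ext\Delta_M^{(k,\infty)}$ between finite sets of equal cardinality is a bijection, so each $\widehat{\w}_K\in\Ext\Delta_M^{(K,\infty)}$ extends uniquely to a compatible tail $(\widehat{\w}_k)_{k\geq K}$; and its initial segment $(\widehat{\w}_k)_{k<K}$ is then forced by $\widehat{\w}_k=f_k^M(\widehat{\w}_{k+1})$. Hence the compatible infinite sequences are in bijection with $\Ext\Delta_M^{(K,\infty)}$, giving $\#\Ext\V_M^\Delta=N=\liminf_k n_k\leq\liminf_n l_n$, as desired. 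The only mildly delicate point is verifying that the reconstruction in the previous paragraph genuinely produces an extreme point of $\V_M^\Delta$ (rather than merely a point of $\V_M^\Delta$), but this follows directly from the characterization in Lemma \ref{l:inductive eigenvectors}(iii): each normalized coordinate lies on an extreme ray by construction.
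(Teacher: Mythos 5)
Your proof is correct and follows essentially the same route as the paper's: reduce to counting $\Ext\V_{M}^\Delta$ via $(ii)$ of Theorem \ref{t:basic_thm}, characterize extremality coordinatewise via $(iii)$ of Lemma \ref{l:inductive eigenvectors}, and use the surjectivity and cardinality bounds of Lemma \ref{l:ConeNumberofextremepts}, with your stabilization/bijection-for-large-$k$ step simply making explicit the inverse-limit count that the paper compresses into ``$\#\Ext\V_{M}^\Delta=\sup_{k\geq 0}\#\Ext(C_M^{(k,\infty)})$''. The only caveat is that your stabilization argument presupposes $\liminf l_n<\infty$ (the paper's standing finite-rank setting), so in the degenerate unbounded case the equality would need a one-line separate remark, a point the paper's own proof glosses over as well.
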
  
\begin{proof}  
As explained above (from $(ii)$ of Theorem \ref{t:basic_thm}) we wish to count $\Ext\V_{M}^\Delta$. 
 Now $\w= (\w_0, \w_1,\dots) \in \V_{M}^\Delta$ iff
$\w_0\in \Delta_0$ and for all $i$,
$\w_i= M_i \w_{i+1}$;   this is an extreme point iff, by $(iii)$ of
Lemma \ref{l:inductive eigenvectors}, $\w_i\in \Ext  C_M^{(i,+\infty)}$. 
The number of finite sequences 
$(\w_0\dots \w_k )$ satisfying this for all $i\leq k$ is nondecreasing
in $k$ and so is equal to
$\#\Ext(C_M^{(k,\infty)})$, whence $  \#\Ext\V_{M}^\Delta=\sup_{k\geq 0} \#\Ext(C_M^{(k,\infty)})$.

From
$(iii)$ then $(ii)$ of Lemma 
\ref{l:ConeNumberofextremepts}, 
$\#\Ext(C_M^{(k,\infty)})= \#\Ext(\Delta_M^{(k,\infty)})\leq
\inf_{n\geq k} l_n$.
Concluding, $
\#\Ext\CCM=\sup\inf \#\Ext(\Delta_M^{(k,\infty)})= \liminf \#\Ext(\Delta_M^{(k,\infty)})
\leq\liminf l_n$.
\end{proof}

\medskip

\

\subsection{Nested diagrams and adic towers}\label{ss:towers}
We begin by recalling from  Definition
\ref{d:generalizedmatrices} the partial  order on individual generalized
matrices. We then extend this to matrix  sequences and equivalently to 
Bratteli diagrams.
\begin{defi}\label{d:subdiagram}
  
  Given alphabets $\A, \B, \wh \A, \wh \B$, then
for $M$, $\wh M$   generalized nonnegative integer matrices of size $(\A\times\B)$ and 
$(\wh \A\times \wh\B)$ respectively, then as in  Definition
\ref{d:generalizedmatrices}, we say that 
$M\leq \wh M$ iff $\A\subseteq \wh \A$, $\B\subseteq \wh \B$  and for all $(a,b)\in  \A\times  \B$ we have
$M_{ab}\leq
\wh M_{ab}$.

Given  Bratteli diagrams
 $\mathfrak B_{\A,\E}, $$ \mathfrak
 B_{\wh \A,\wh\E}$ with alphabet and edge sequences $(\A, \E)$
and $(\wh \A, \wh \E) $, suppose that  $\A\leq \wh
\A, $ $\E\leq \wh \E$, by which we mean that $\A_k\subseteq \wh \A_k$,
 $\E_k\subseteq \wh \E_k$ for all $k\geq 0$. 
 Equivalently, $M\leq \wh
M$, i.e.~ $M_k\leq
\wh M_k$ for all $k$.
We say in this situation that the matrix sequences, and diagrams, are {\em nested} with
 the first a {\em subdiagram} of the second, written  $\mathfrak
 B_{\A,\E, M} \leq \mathfrak
 B_{\wh \A,\wh\E, \wh M}$.

Given orders $\O, \wh\O$ on the nested diagrams, we say $\O\leq \wh \O$ iff
   ($e<f$ in $\O_k$)$\implies$ ($e<f$ in $\wh \O_k$). 
That is,  for each $k\geq 0$, $\O_k\subseteq \wh \O_k$  as  relations (i.e.~as subsets of
$\wh \E_k\times \wh \E_k$). We then write $\mathfrak B_{\A,\E,M, \O}\leq \mathfrak
 B_{\wh \A,\wh\E, \wh M, \wh \O}$; this is a 
 partial order on the class of all ordered Bratteli
diagrams.
\end{defi}

\begin{rem}\label{r:gen_matrices}
\item{(i)} In Remark \ref{r:reduced} we encountered a special case
  of this: given a
nonnegative integer matrix $\wh M$,
 let $M$ denote the reduced matrix sequence guaranteed by Lemma
 2.2 of~\cite{Fisher09a}. In producing
the reduced diagram, one has removed symbols from the alphabets in the
alphabet sequence, resulting in a new  sequence $
 \A_k\subseteq \wh\A_k$, whence $ M\leq \wh M$, where we have  eliminated the all-zero rows and columns.

\item{(ii)} One can assume here that $\A= \wh \A$.
That is,  when producing a subdiagram of $ B_{\wh \A,\wh\E, \wh
  \O}$, instead of
erasing both edges and symbols,  one can retain the alphabet
sequence $\wh A$ and erase only
edges.  The resulting sequence $M$  will have some
all-zero
rows and  columns, corresponding to the erased symbols (and so may not
be reduced) but 
 $M_k$ will be  the same size as $\wh M_k$ for all
$k$, with  $( M_k)_{ab}\leq( \wh M_k)_{ab}$ for all $(a,b)\in \wh\A_k\times
\wh\A_{k+1}$.

\end{rem}

 

\begin{prop}\label{p:nested_diagrams}

If $\mathfrak B_{\A,\E}\leq \mathfrak
 B_{\wh \A,\wh\E}$ then the collection of allowed edge paths for 
the first is a subset of that for the second. That is, 
$\Sigma_{\M}^{0,+}\subseteq \Sigma_{\wh M}^{0,+}$ where 
$\M, \wh M$ are the corresponding matrix sequences. 
The stable equivalence
classes are  nested: for all $\e\in \Sigma_{\M}^{0,+}$, 
 $W^s_M(\e)\subseteq W^s_{\wh M}(\e).$
$\FC_M$ naturally embeds as a subgroup of $\FC_{\wh M}$.
Let now $\O\leq  
\wh \O$. Then
 $\e<\f$ in $(\Sigma_{\M}^{0,+},\O)$ implies that $\e<\f$  in 
$(\Sigma_{\wh M}^{0,+},\wh\O)$, and  $ \Sigma_{\M}^{0,+}\setminus\Cal N_M$
is contained in $ \Sigma_{\wh M}^{0,+}\setminus{\Cal N_{\wh M}}$.
\end{prop}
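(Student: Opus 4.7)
The plan is to handle the five assertions in order, with the first four being more or less immediate from the definitions and only the last requiring a small additional argument. I would begin by noting that if $\e=(.e_0e_1\dots)$ is an allowed edge path for $M$, then every $e_k\in\E_k\subseteq\wh\E_k$ and the compatibility $e_k^+=e_{k+1}^-$ is exactly the condition for $\e$ to be allowed in the larger diagram; this gives $\Sigma_M^{0,+}\subseteq\Sigma_{\wh M}^{0,+}$. The nesting of stable classes is then automatic: stable equivalence ``eventually agree coordinatewise'' depends only on the edges of $\e$ and a candidate $\wt\e$, not on the ambient diagram. For the embedding of $\FC_M$ into $\FC_{\wh M}$, a generator $\gamma$ of $\FC_M$ interchanging two thin $M$-cylinders $[.e_0\dots e_n]$ and $[.f_0\dots f_n]$ (with $e_n^+=f_n^+$) extends uniquely to the generator $\wh\gamma$ of $\FC_{\wh M}$ permuting the corresponding thin $\wh M$-cylinders and fixing the tails; since the $M$-cylinders sit inside the $\wh M$-cylinders, $\wh\gamma$ restricts to $\gamma$ on $\Sigma_M^{0,+}$, producing a group monomorphism.

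For the order comparison, if $\e<_\O\f$ then by the anti-lexicographic definition there is a minimal $k$ with $e_k\neq f_k$ (forced to satisfy $e_k^+=f_k^+$), with $e_k<_{\O_k}f_k$. The hypothesis $\O_k\subseteq\wh\O_k$ (as relations on $\wh\E_k$) gives $e_k<_{\wh\O_k}f_k$, and $\e_i=\f_i$ for $i>k$, so $\e<_{\wh\O}\f$ as well.

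The last assertion is the one real point to verify. Take $\e\in\Sigma_M^{0,+}\setminus\Cal N_M$, so that $T_\O^k(\e)$ is defined for every $k\in\Z$. By the fourth assertion, applied to each consecutive pair, the sequence
\[
\dots <_{\wh\O} T_\O^{-1}(\e) <_{\wh\O} \e <_{\wh\O} T_\O(\e) <_{\wh\O} T_\O^2(\e) <_{\wh\O} \dots
\]
is a biinfinite strictly $\wh\O$-increasing chain inside $W_{\wh M}^s(\e)$. The anti-lexicographic order on any stable class $W_{\wh M}^s(\tilde\e)$ is \emph{locally discrete}, because two stably equivalent paths that coincide from time $n$ onward have only finitely many paths strictly between them (these are indexed by finite edge strings with fixed initial symbol and fixed final vertex, hence by a finite set). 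Therefore in $W_{\wh M}^s(\e)$, immediate predecessors and successors always exist as soon as an element has some predecessor or successor, so $T_{\wh\O}(\e)$ and $T_{\wh\O}^{-1}(\e)$ are defined; iterating and applying the same argument at every $T_{\wh\O}^k(\e)$ (which by the chain above has strictly larger and smaller elements in its stable class) shows that $T_{\wh\O}^k(\e)$ is defined for all $k\in\Z$, i.e.~$\e\notin\Cal N_{\wh M}$.

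The only subtlety, and the part I would write out most carefully, is the local discreteness of the $\wh\O$-order on each stable class: this is the ingredient that turns the mere existence of strictly larger/smaller elements (which follows trivially from the fourth assertion) into the existence of immediate $T_{\wh\O}$-neighbors needed to conclude that the two-sided $T_{\wh\O}$-orbit of $\e$ is infinite and hence that $\e\notin\Cal N_{\wh M}$.
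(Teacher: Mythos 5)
Your proposal is correct and follows essentially the same route as the paper, whose proof consists of extending a generator of $\FC_M$ to a generator of $\FC_{\wh M}$ (exactly as you do) and declaring the remaining assertions clear. Your local-discreteness argument for the last containment $\Sigma_{\M}^{0,+}\setminus\Cal N_M\subseteq \Sigma_{\wh M}^{0,+}\setminus\Cal N_{\wh M}$ is a correct filling-in of what the paper leaves unstated (only note that the comparison index $k$ in the anti-lexicographic order is the \emph{largest} index of disagreement, as your subsequent use of it in fact assumes, not the minimal one).
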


\begin{proof}
 Given a generator $\gamma$ of
  $\FC_M$,
this extends to a map of  $\Sigma_{\wh M}^{0,+}$, as a generator of
$\FC_{\wh M}$. Therefore $\FC_M$ embeds in $\FC_{\wh M}$.  The
other statements are clear.
\end{proof}

\begin{defi} 
In the above situation, 
we say that
 $T_\O$ on $ \Sigma_{\M}^{0,+}\setminus\Cal N_M$
is a {\em sub-adic transformation } of
$T_{\wh \O}$ on $ \Sigma_{\wh M}^{0,+}\setminus{\Cal N_{\wh M}}$.
\end{defi}

Given  nested diagrams $\mathfrak B_{\A,\E,\O}\leq \mathfrak
 B_{\wh \A,\wh\E, \wh \O}$,
we define for each fixed $m$  sequences 
\begin{align*}
 \E^{(m)}_i&=\wh \E_i , \text{ for } i\leq m & 
\A^{(m)}_i&=\wh \A_i , \text{ for } i\leq m+1& 
\O^{(m)}_i&=\wh \O_i , \text{ for } i\leq m & 
\\
&=\E_i, \text{ for } i> m,
 & &=\A_i, \text{ for } i> m+1,
&  &=\O_i, \text{ for } i> m.
\end{align*}
We write $
M^{(m)}
$ for the corresponding generalized matrix sequence (which may not be
reduced, even if $\wh M$ and $M$ are).
Then $\Sigma _{M^{(m)}}^{0,+}$ denotes
the edge path space defined from the matrix,
alphabet, edge alphabet and order sequences
$\bigl(M^{(m)},\A^{(m)},  \E^{(m)} ,  \O^{(m)}\bigr)$, with
$\FC_{M^{(m)}}$ the corresponding  group of finite coordinate
changes.

We
recall that, in the category of sets, taking the connecting morphisms which
define the directed index set to be inclusion of sets, then the {\em direct limit} 
of a nested increasing sequence of sets is 
simply their union; indeed, if the connecting morphisms are all
injections, then any direct limit can be thought of in this way,
by identifying a set with its image in the limiting space. In the
category 
of topological spaces 
(where the connecting morphisms are
continuous maps) then the direct limit topology is defined to be 
the {\em final topology} on the direct limit set, i.e.~the smallest
topology to make the maps continuous; thus the image of each space is
open, and on an increasing union of open sets the direct limit
topology  is simply the union of
the topologies.

\begin{defi}\label{d:adictower} 
The {\em adic tower  space} of $\wh M$ over $\M$
is
the direct limit set
$\Sigma_{\wh M/M}^{0,+}\equiv \varinjlim \Sigma _{M^{(m)}}^{ 0,+} $, together with the direct
limit topology. 
 The  {\em base } of the tower is $\Sigma_{M}^{0,+}$.

Note: at one point below (in the proof of Theorem
\ref{t:towerring}) we will need to include the ambient space in the
notation, and then we will write
${}^{\wh M} \!  M^{(m)} $ 
 for  $M^{(m)}$, and so $\Sigma _{{}^{\wh M} \!  M^{(m)} }^{0,+}$ for $\Sigma _{M^{(m)}}^{ 0,+} $.
\end{defi}

 The terminology comes from a connection
 with Kakutani towers, see Proposition \ref{p:towermap}.

\begin{prop}\label{p:towerspace}
Given   nested Bratteli diagrams 
$\mathfrak B_{\A, \E}\leq \mathfrak B_{\wh A,\wh\E}
,$ and assuming that $M$ is column--reduced, the set $\Sigma_{\wh M/M}^{0,+}$ is the union of all the stable
sets of  elements of $\Sigma_{\M}^{0,+}\subseteq \Sigma_{\wh
  M}^{0,+}$, and  
 the tower space   $\Sigma_{\wh M/M}^{0,+}$ is  the smallest 
$\FC_{\wh  M}$\,--\,invariant set containing $\Sigma_{\M}^{0,+}$:
$$\Sigma_{\wh M/M}^{0,+}=
W^s_{\wh M}(\Sigma_{\M}^{0,+})\equiv \cup\{ W^s_{\wh M}(\e):\; \e\in
\Sigma_{\M}^{0,+}\}=\FC_{\wh M}(\Sigma_{\M}^{0,+})
.$$ The direct limit topology on $\Sigma_{\wh M/M}^{0,+}$ is equal to
its relative topology as a subset of 
$\Sigma_{\wh M}^{0,+}$.

\end{prop}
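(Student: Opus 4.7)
The proposition asserts three successive identifications, which I would establish in order. \emph{Step 1 (set equality $\Sigma_{\wh M/M}^{0,+} = W^s_{\wh M}(\Sigma_M^{0,+})$).} For $\e \in \Sigma_{M^{(m)}}^{0,+}$, the tail $(e_{m+1}, e_{m+2}, \ldots)$ is built from subdiagram edges, so in particular $e_{m+1}^- \in \A_{m+1}$. Column-reducedness of $M$ together with Lemma \ref{l:reduced}(i) supplies a backward extension to a full path $\tilde\e \in \Sigma_M^{0,+}$ with $\tilde e_i = e_i$ for all $i \geq m+1$, placing $\e$ in $W^s_{\wh M}(\tilde\e)$. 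Conversely, if $\e \in W^s_{\wh M}(\tilde\e)$ for some $\tilde\e \in \Sigma_M^{0,+}$, there is some $j \geq 0$ such that $e_i = \tilde e_i \in \E_i$ for all $i \geq j$, which yields $\e \in \Sigma_{M^{(j-1)}}^{0,+}$ when $j \geq 1$ (and $\e \in \Sigma_M^{0,+}$ when $j = 0$), so in either case $\e \in \Sigma_{\wh M/M}^{0,+}$.

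\emph{Step 2 (group description and minimality).} By Lemma \ref{l:orbitequivrel} the $\FC_{\wh M}$-orbit of any point in $\Sigma_{\wh M}^{0,+}$ coincides with its stable set, so $W^s_{\wh M}(\Sigma_M^{0,+}) = \bigcup_{\e \in \Sigma_M^{0,+}} W^s_{\wh M}(\e) = \FC_{\wh M}(\Sigma_M^{0,+})$. Minimality is then immediate: any $\FC_{\wh M}$-invariant set containing $\Sigma_M^{0,+}$ must contain the $\FC_{\wh M}$-orbit of each of its points, hence contains $\FC_{\wh M}(\Sigma_M^{0,+})$.

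\emph{Step 3 (topology).} On each $\Sigma_{M^{(m)}}^{0,+}$ the product topology from $\Pi_i \E^{(m)}_i$ agrees with the subspace topology induced from $\Sigma_{\wh M}^{0,+} = \Pi_i \wh \E_i$, since each $\E^{(m)}_i \subseteq \wh \E_i$ is discrete. Consequently the subspace topology on $\Sigma_{\wh M/M}^{0,+}$ makes every inclusion $\iota_m : \Sigma_{M^{(m)}}^{0,+} \hookrightarrow \Sigma_{\wh M/M}^{0,+}$ continuous, and by the universal property of the colimit the direct limit topology is identified with this subspace topology. The main obstacle lies in the nontrivial direction of this topological comparison: one must verify that any subset whose intersection with every $\Sigma_{M^{(m)}}^{0,+}$ is relatively open is already open in the subspace topology on $\Sigma_{\wh M/M}^{0,+}$. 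My approach would be to show that a neighborhood base at each $\e \in \Sigma_{\wh M/M}^{0,+}$ is given by cylinders $[.e_0 \ldots e_k]$ in $\Sigma_{\wh M}^{0,+}$ for $k$ chosen so that $e_i \in \E_i$ for all $i > k$, and to check that these cylinders restrict compatibly across the filtration, so that openness witnessed at each level lifts to a cylinder in the ambient space.
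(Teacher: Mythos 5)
Your Steps 1 and 2 are correct and are essentially the paper's own argument: you identify the direct limit set with the nested union $\cup_{m\geq 0}\Sigma_{M^{(m)}}^{0,+}$, use column--reducedness (via Lemma \ref{l:reduced}) to extend a tail of subdiagram edges backwards to a path of $\Sigma_{M}^{0,+}$, and invoke Lemma \ref{l:orbitequivrel} to pass from stable sets to $\FC_{\wh M}$--orbits; in fact you are more explicit than the paper about the inclusion $\Sigma_{\wh M/M}^{0,+}\subseteq W^s_{\wh M}(\Sigma_{M}^{0,+})$, which is precisely where the column--reduced hypothesis enters.

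The genuine gap is in Step 3, and it is not merely a postponed verification: the route you sketch fails. You correctly note that the universal property only yields that the direct limit topology is at least as fine as the relative topology, so the substance is the reverse inclusion, and you propose to get it by showing that the traces $[.e_0\dots e_k]\cap\Sigma_{\wh M/M}^{0,+}$, with $k$ so large that $e_i\in\E_i$ for $i>k$, form a neighborhood base at a point for the direct limit topology. They do not. Consider the set $\Sigma_{M}^{0,+}$ itself (or any level $\Sigma_{M^{(m)}}^{0,+}$): it meets every $\Sigma_{M^{(n)}}^{0,+}$ in a set determined by finitely many coordinates, hence clopen there, so it is open in the direct limit topology; yet as soon as $\wh\E_i\setminus\E_i\neq\emptyset$ for infinitely many $i$ (for instance $M_i=[2]\leq[3]=\wh M_i$, Example \ref{exam:Cantor}), no trace of an ambient cylinder around a point of $\Sigma_{M}^{0,+}$ is contained in $\Sigma_{M}^{0,+}$, since that trace contains a tower path using an edge of $\wh\E_j\setminus\E_j$ at some $j>k$ and then returning to $\E$. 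So openness witnessed levelwise cannot be lifted pointwise to an ambient cylinder, and your argument stalls exactly at the point you flagged as the main obstacle. The paper handles this direction globally rather than pointwise: it writes a direct-limit open set as $\cup_{m}(\Cal U_m\cap\Sigma_{M^{(m)}}^{0,+})$ with $\Cal U_m$ open in $\Sigma_{\wh M}^{0,+}$, decomposes each $\Cal U_m$ into countably many thin cylinders $\Cal V_{k_i}$, and asserts the set equals $(\cup_i\Cal V_{k_i})\cap\Sigma_{\wh M/M}^{0,+}$. You should test that rewriting against the same example: a priori the collected union only contains the given set, and indeed in the example $\Sigma_{M}^{0,+}$ is direct-limit open but is not the trace of any open subset of $\Sigma_{\wh M}^{0,+}$ (consistent with Theorem \ref{t:towerring}, where it is only in the canonical cover $\Sigma_{\wt M}^{0,+}$ that base and tower become open). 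So this topological identification is genuinely delicate, and a complete argument must confront exactly the phenomenon your sketch glosses over, rather than reducing it to a cylinder neighborhood base.
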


\begin{proof}
As noted above the direct limit space of the nested spaces
$\Sigma _{\wh M^{(0)}}^{0,+} \subseteq \Sigma _{\wh M^{(1)}}^{
  0,+}\subseteq \dots$  is 
the union  $\varinjlim
\Sigma _{M^{(m)}}^{ 0,+} =
\cup_{m\geq 0}\Sigma _{M^{(m)}}^{ 0,+} .$
Given $\e\in \Sigma_{\M}^{0,+}$, then 
$\f\in 
W^s_{\wh M}(\e)$ iff there exists $m$ such that 
  $f_k= e_k\; $ for all $k> m$. In this case $f\in \Sigma
  _{M^{(m)}}^{0,+}$ and hence is in the nested union. 
Thus 
$ W^s_{\wh M}(\Sigma_{\M}^{0,+})\subseteq \Sigma_{\wh M/M}^{0,+}$.

The  direct limit topology is 
the union of the  relative topologies on
 each  of the nested spaces $\Sigma
  _{M^{(m)}}^{0,+}$ as a subset of  $\Sigma_{\wh M}^{0,+}$; we show this is
equal to the relative topology on the tower space. A relatively open set in $\Sigma_{\wh
  M/M}^{0,+}$ is of the form $\Cal U\cap \Sigma_{\wh
  M/M}^{0,+}$ for $\Cal U$ open in $ \Sigma_{\wh M}^{0,+}$. But
$\Cal U\cap \Sigma_{\wh
  M/M}^{0,+}= \Cal U\cap (\cup_{m=0}^\infty \Sigma
  _{M^{(m)}}^{0,+})=\cup_{m=0}^\infty  (\Cal U\cap \Sigma
  _{M^{(m)}}^{0,+})$ which is open in the direct limit topology.
Conversely, any open set in $ \Sigma_{\wh M}^{0,+}$ is a countable
disjoint union of thin cylinder sets, since the collection $\{\Cal
V_i\}_{i\geq 0}$ of thin cylinders is countable and they generate the topology. An open set
for the direct limit has the form   $\cup_{m=0}^\infty(\Cal U_m\cap \Sigma
  _{M^{(m)}}^{0,+})$ where $\Cal U_m$  is open in $ \Sigma_{\wh
    M}^{0,+}$. Since the  $\Sigma
  _{M^{(m)}}^{0,+} $ are nested, this can be rewritten as  $\cup_{i=0}^\infty\cup_{m=0}^\infty (\Cal V_{k_i}\cap \Sigma
  _{M^{(m)}}^{0,+})= (\cup_{i=0}^\infty \Cal V_{k_i})\cap \Sigma
  _{\wh M/M}^{0,+}$
which is open in the relative topology.
\end{proof}

Now since the tower is an $\FC_{\wh M}$-invariant subset, both  $\FC_{\wh M}$ and the Versik
map $ T_{\wh \O}$  act on  $\Sigma_{\wh M/M}^{0,+}$
by restriction. We next examine invariant Borel measures, recalling Def.~\ref{d:wandering}.

\begin{theo}\label{t:towermeasure}
  Given   nested Bratteli diagrams 
$\mathfrak B_{\A, \E}\leq \mathfrak B_{\wh A,\wh\E}
,$ with $M$  column--reduced, and assuming that for $l_i=\#\A_i$ we
have $\liminf
l_i>1$, then:
 \item{(i)}$\FC_{\wh M}$\,--\and  $\FC_M$\,--\,invariant Borel
   measures on the tower $\Sigma_{\wh M/M}^{0,+} $ and base $B\equiv \Sigma_{M}^{0,+}$ correspond bijectively,
via invariant extension and restriction; the same holds for invariant subsets.
Furthermore, writing  $\wh \nu$, $\nu$ for corresponding measures:
\item{(ii)}Wandering sets in the base 
 and tower correspond, as
  follows. If $E\subseteq B$ is wandering for $\FC_M$ then it is
  wandering for $\FC_{\wh M}$. If $\wh E$ is wandering for $\FC_{\wh
    M}$ then $E\equiv B\cap \FC_{\wh
    M}(\wh E)$  is
  wandering for  $\FC_M$, and $\wh\nu(\wh E)=\nu(E)$.

The $\FC_M$ action on the base 
$(\Sigma_{M}^{0,+},\nu)$
is conservative,  respectively
   ergodic, iff the $\FC_{\wh M}$\,--\,action on the tower
$(\Sigma_{\wh M/M}^{0,+},\wh \nu)$
 is. If $\nu$ is finite, $\wh\nu$ is conservative. 
\item{(iii)}Given an order $\wh \O$ on  $\Sigma_{\wh M}^{0,+}$,  the corresponding
statements  hold for the action of
 $T_{\wh \O}$ on the invariant parts of these spaces.
\end{theo}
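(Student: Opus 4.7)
The plan is to prove the three parts in order, with part (iii) largely reducing to (i) and (ii) via Proposition \ref{p:conserg}. The pivotal observation for part (i) is that on the base $B = \Sigma_M^{0,+}$, the tail equivalence relations for $M$ and for $\wh M$ coincide: if $\e, \f \in B$ are $\sim_{\wh M}$-equivalent then $e_i = f_i$ for all sufficiently large $i$, each such $e_i$ lying in $\E_i \subseteq \wh\E_i$, so $\e \sim_M \f$. By Lemma \ref{l:orbitequivrel} this translates into the statement that on $B$ the $\FC_{\wh M}$-orbits and the $\FC_M$-orbits agree. The restriction map $\wh \nu \mapsto \wh \nu|_B$ therefore sends $\FC_{\wh M}$-invariant Borel measures on the tower to $\FC_M$-invariant Borel measures on $B$, since $\FC_M \hookrightarrow \FC_{\wh M}$ by Proposition \ref{p:nested_diagrams} and $B$ is itself $\FC_M$-invariant.

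For the extension direction I would work level-by-level on $\Sigma_{\wh M/M}^{0,+} = \varinjlim_m \Sigma_{M^{(m)}}^{0,+}$. Each level is a finite disjoint union of thin cylinders $\wh C = [.\wh e_0 \cdots \wh e_m]$ with $\wh e_i \in \wh \E_i$ and $\wh e_m^+ \in \A_{m+1}$, so (after extending $m$ if needed, using row-reducedness of $M$) every such cylinder is $\FC_{\wh M}$-equivalent by a finite product of generators to some cylinder $C = [.e_0 \cdots e_m] \subseteq B$; define $\wh \nu(\wh C) := \nu(C)$. Independence of the $\FC_{\wh M}$-translate uses exactly the coincidence of equivalence relations on $B$ together with $\FC_M$-invariance of $\nu$, since any two base cylinders obtained this way differ by an element of $\FC_M$. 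Finite additivity and consistency across the levels follow by repeating the cylinder-decomposition calculations of Theorem \ref{t:basic_thm}, and Alexandroff's extension theorem yields a regular Borel measure on each compact level; these assemble by monotone continuity into an $\FC_{\wh M}$-invariant Borel measure $\wh \nu$ on the tower. The two constructions are mutually inverse on the generating cylinders, and $\FC$-invariant Borel subsets correspond via $E \leftrightarrow \FC_{\wh M}(E)$ by the same orbit-coincidence argument.

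Part (ii) is built on (i). The equality of equivalence relations on $B$, together with each stable class being countably infinite under the hypothesis $\liminf l_i > 1$ (Proposition \ref{p:numberminimal}), shows that a Borel subset $E \subseteq B$ is $\FC_M$-wandering iff it is $\FC_{\wh M}$-wandering. For a wandering $\wh E$ in the tower, the base slice $E = B \cap \FC_{\wh M}(\wh E)$ selects, from each $\sim_{\wh M}$-class meeting $\wh E$, its full intersection with $B$; the wandering property transfers and the measure equality $\wh \nu(\wh E) = \nu(E)$ follows from the cylinder correspondence of (i) combined with $\FC_{\wh M}$-invariance. Conservativity and ergodicity then transfer in both directions by Proposition \ref{p:conservative_equivreln}(i) applied to the two orbit equivalence relations, and finite $\nu$ forces conservative $\wh \nu$ via Proposition \ref{p:conservative_equivreln}(iii) applied to a $\sigma$-finite exhaustion. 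Finally for part (iii), Proposition \ref{p:conserg} identifies the nonatomic invariant Borel measures and their conservativity/ergodicity for $T_{\wh \O}$ on $\Sigma_{\wh M}^{0,+} \setminus \CN_{\wh M}$ with those for $\FC_{\wh M}$, and symmetrically on the base, so (iii) follows from (i) and (ii) by restriction to the respective invariant parts.

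The main obstacle will be verifying the consistency of the extension in (i): proving that $\wh \nu(\wh C)$ does not depend on the particular $\FC_{\wh M}$-transport of $\wh C$ into a base cylinder requires an induction over the length of the generator product, using at each step that when both cylinders involved already lie in $B$ the generator itself belongs to $\FC_M$. A secondary delicacy is the wandering-set correspondence in (ii), since $\wh E$ and $E$ can sit at very different depths in their respective diagrams and the measure equality $\wh \nu(\wh E) = \nu(E)$ must be checked to be compatible with the countable additivity produced by the level-by-level construction.
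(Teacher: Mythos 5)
There is a genuine gap in your extension construction in part (i). You define $\wh \nu$ on thin cylinders by $\wh\nu(\wh C):=\nu(C)$ and then appeal to the cylinder-consistency computations of Theorem \ref{t:basic_thm} and Alexandroff's extension theorem. That route only works when $\nu$ is finite on cylinders: Alexandroff's theorem requires a finite (regular) set function on the algebra, and more fundamentally an infinite measure is not determined by its values on cylinders at all (they may all equal $+\infty$). But the theorem is stated for arbitrary $\FC_M$-invariant Borel measures, and the infinite, even locally infinite, case is precisely the one of interest here -- the paper's Remark \ref{r:towers infinite} warns exactly about this. The paper's proof avoids the problem by never extending from cylinder values: for each level $m$ it transports \emph{Borel subsets} $E\subseteq [.f_0\dots f_m]\cap \Sigma_{M^{(m)}}^{0,+}$ into the base by an explicit $\gamma\in\FC_{\wh M}$ (found using column-reducedness of $M$, not row-reducedness as you wrote), sets $\wh\nu_m(E)=\nu(\gamma(E))$, checks independence of $\gamma$ using $\FC_M$-invariance, and takes the increasing limit $\wh\nu=\lim_m\wh\nu_m$; invariance and the mutual-inverse property with restriction are then also verified at the level of Borel subsets, not cylinders. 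Your ``main obstacle'' paragraph addresses well-definedness of the cylinder values, but not this finiteness obstruction, which is the real issue. (Your restriction direction, via the embedding $\FC_M\hookrightarrow\FC_{\wh M}$ and invariance of $B$, is fine and in fact simpler than the paper's argument.)

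Two secondary points in part (ii). First, deducing conservativity of $\wh\nu$ from finiteness of $\nu$ ``via Proposition \ref{p:conservative_equivreln}(iii) applied to a $\sigma$-finite exhaustion'' is not a valid step: part (iii) of that proposition needs finite total mass, and $\sigma$-finiteness alone never yields conservativity. The correct route (the paper's) is: finite $\nu$ is conservative, and a wandering set $\wh E$ in the tower produces a wandering set in the base of the same measure, hence null, hence $\wh\nu$ is conservative. Second, the transfer of the wandering property from $\wh E$ to $E\equiv B\cap\FC_{\wh M}(\wh E)$, together with the equality $\wh\nu(\wh E)=\nu(E)$, is exactly the delicate point of part (ii); you assert it in one clause (``the wandering property transfers and the measure equality follows from the cylinder correspondence of (i)''), but the cylinder correspondence gives you nothing here when cylinders have infinite measure, and an actual argument must compare the two orbit equivalence relations pointwise (uniqueness of the representative in $\wh E$ within each class, bijective equivalence of the sets, and Proposition \ref{p:conservative_equivreln}) as the paper does.
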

\begin{proof}For the first part of $(i)$, more precisely, 
we show an $\FC_M$\,--\,invariant Borel measure  $\nu$ on the base $\Sigma_{\M}^{0,+}$
has a unique $\FC_{\wh M}$\,--\, invariant extension $\wh \nu$ to the 
adic tower
$\Sigma_{\wh M/M}^{0,+}\subseteq \Sigma_{\wh M}^{0,+}$,  that the 
restriction $\nu$ of an invariant Borel measure $\wh \nu$ to the base is
$\FC_M$\,--\,invariant and that the extension
of $\nu$
is again $\wh \nu$. 

To get started, we extend the given   $\FC_M$\,--\,invariant Borel measure $\nu$ from
$\Sigma_{\M}^{0,+}$
 to $\Sigma_{\wh
  M}^{0,+}$ by assigning mass zero to the complement.
Then, since 
$\Sigma_{\wh M/M}^{0,+}= \cup_{m\geq 0}  \Sigma _{\wh
  M^{(m)}}^{ 0,+} $ is a nested union, we  shall
define for each $m\geq 0$ a measure $\wh \nu_m$ on $\Sigma_{\wh M}^{0,+}$, with support on 
$\Sigma _{M^{(m)}}^{ 0,+}$, and  shall then take
 $\wh \nu$ to be the limit of $\wh \nu_m$. (Here one can think of $\wh \nu_{-1}=\nu$.)

Let $[.f_0\dots f_m]$ be a thin  cylinder of $\Sigma_{\wh
  M}^{0,+}$. Suppose that this cylinder
meets $\Sigma _{M^{(m)}}^{ 0,+}$, so
there exists 
$f=(.f_0f_1\dots)\in [.f_0\dots f_m]\cap \Sigma _{M^{(m)}}^{ 0,+}$.
Then 
 $f_k\in \E_k$ for all $k>m$. Since $M$ is column--reduced, we can 
extend to the left to a path $\e=(.\e_0 \e_1\dots e_m f_{m+1}\dots)
\in \Sigma_{\M}^{0,+}$  with $e_m^+= f_m^+$.
Hence for the cylinder $[.e_0\dots e_m]$ of
$\Sigma_{\wh \M}^{0,+}$, there exists 
$\gamma\in \FC_{\wh M}$ with $\gamma([.f_0\dots f_m])= [.e_0\dots
e_m]$.

Given now a Borel subset $E\subseteq [.f_0\dots f_m]\cap \Sigma _{\wh
  M^{(m)}}^{ 0,+}$, then since $\gamma$ preserves the stable
equivalence relation it preserves $\Sigma _{\wh  M^{(m)}}^{ 0,+} $, so
$\gamma(E)\subseteq  [.e_0\dots e_m]\cap
\Sigma _{\wh  M^{(m)}}^{ 0,+} =  [.e_0\dots e_m]\cap
\Sigma _M^{ 0,+}\equiv  [.e_0\dots e_m]_M$
 and we define 
$\wh \nu_m(E)
=
\nu(\gamma(E)) $. This number
does not depend on choice of the cylinder
$[.e_0\dots e_m]$: suppose there is another path
$\wt \e=(.\wt \e_0 \wt \e_1\dots \wt e_m f_{m+1}\dots)
\in \Sigma_{\M}^{0,+}$; then
 there exists
$\eta\in \FC_{ M}$ with $\eta([.\wt e_0\dots \wt e_m]_M)= [.e_0\dots
e_m]_M$. Therefore
indeed, $\nu(\eta^{-1}\circ \gamma(E))= \nu(\gamma(E))$ by the $\FC_M$-invariance of $\nu$.

This defines $\wh \nu_m$ on the Borel subsets of a cylinder
$[.f_0\dots f_m]$; we extend to the Borel  $\sigma$\,--\,algebra of 
$\Sigma_{\wh M}^{0,+}$ by additivity.

We then define  $\wh \nu= \lim_{m\to\infty} \wh
\nu_m$; that is, its value on a
Borel subset of  $\Sigma_{\wh M/M}^{0,+}$ or $\Sigma_{\wh M}^{0,+}$ is the limit of the increasing sequence of numbers given
by $\wh \nu_m$ as $m\to\infty$; $\sigma$\,-- additivity is
preserved in the limit.

We claim that $\wh \nu$ defined in this way is $\FC_{\wh M}$\,--\,invariant. Consider $\gamma$ a  generator of $\FC_{\wh M}$ which
interchanges two cylinders
$[.f_0\dots f_k]$, $ [.g_0\dots g_k]$, so $f_k^+=  g_k^+$.
 It will be enough to show
that for any $m>k$, and for any Borel subset $E\subseteq [.f_0\dots f_k]$, $\wh \nu_m(E)=
\wh \nu_m(\gamma(E) )$. We decompose $[.f_0\dots f_k]$, $[.g_0\dots g_k]$ into thin  cylinders of length
$m$, for example $[.f_0\dots f_kh_{k+1}\dots h_m] $ and $[.g_0\dots
g_kh_{k+1}\dots h_m]$.  Then since $M$ is column--reduced there
exists  a cylinder $[.e_0\dots e_ke_{k+1}\dots
e_m]$ of   $\Sigma _{M^{(m)}}^{ 0,+}$ such that $e_m^+=
h_m^+$, and so there exist 
 $\eta, \xi\in \FC_{\wh M}$ such that
$\eta([.f_0\dots f_kh_{k+1}\dots h_m])= [.e_0\dots e_ke_{k+1}\dots
e_m]= \xi([.g_0\dots g_kh_{k+1}\dots h_m]).$ Note that $\xi^{-1}\circ \eta=\gamma$.
Now by definition,  $\wh \nu_m( E\cap [.f_0\dots f_kh_{k+1}\dots
h_m])= \nu( \eta (E)\cap [.e_0\dots e_ke_{k+1}\dots
e_m])= \wh \nu_m( \xi^{-1}\circ \eta(E)\cap [.g_0\dots
g_kh_{k+1}\dots h_m])= \wh \nu_m(\gamma(E)\cap [.g_0\dots
g_kh_{k+1}\dots h_m])$ whence indeed $E=E\cap [.f_0\dots f_k]$ and 
$\gamma(E)=\gamma(E)\cap [.g_0\dots g_k] $ have the same $\wh \nu_m $-measure.

Now suppose that $\wh \nu$ on $\Sigma_{\wh M/M}^{0,+} $ is $\FC_{\wh
  M}$\,--\,invariant. We define $\nu$ to be the restriction of $\wh \nu$ to 
$\Sigma_{M}^{0,+}$, and claim that this is $\FC_{ M}$\,--\,invariant. 
Let $\gamma\in \FC_{ M}$ be such that 
for two cylinders $[.e_0\dots e_k]_M, [.\wt e_0\dots \wt e_k]_M$ of
$\Sigma_{M}^{0,+} $  we have 
$\gamma([.e_0\dots e_k]_M)=[.\wt e_0\dots \wt e_k]_M.$
To show invariance, it will be sufficient to show that for a Borel
subset $E$ of $[.e_0\dots e_k]_M$, then $\nu(E)=\nu(\gamma(E))$.
For $[.e_0\dots e_k] $   the corresponding cylinder set of $\Sigma_{\wh
  M}^{0,+} $ we note that $[.e_0\dots e_k]_M=[.e_0\dots e_k] \cap \Sigma_{\wh M/M}^{0,+}$ 
can be written as a nested intersection of finite unions of  $\wh
M$\,--\,cylinders, 
$\cap_{m>k} (\cup_{ \{e_{k+1}'\dots
  e_{m}'\}}[.e_0\dots e_k e_{k+1}'\dots  e_{m}'])$,
where the union is taken over all possible extensions within the
allowed strings of 
$\Sigma_{M}^{0,+}$.
The same holds for $[.\wt e_0\dots \wt e_k].$ Now by 
 $\FC_{\wh  M}$\,--\,invariance of $\wh\nu$, for each such
extension the $\wh M$\,--\,cylinders $[.e_0\dots e_k e_{k+1}'\dots  e_{m}']$ and
$[.\wt e_0\dots
\wt e_k e_{k+1}'\dots  e_{m}']$ have the
same $\wh \nu$-measure, whence, taking the limit,  $\nu([.e_0\dots e_k]_M)= \nu( [.\wt
e_0\dots \wt e_k]_M)$.
And moreover, since  $\gamma\in \FC_{ M}$ extends naturally to
$\gamma\in \FC_{\wh M}$, for any Borel subset $E$ of $[.e_0\dots
e_k]_M$ we have $\wh \nu(E\cap [.e_0\dots e_k e_{k+1}'\dots
e_{m}'])=\wh \nu(\gamma(E)\cap [.\wt e_0\dots
\wt e_k e_{k+1}'\dots  e_{m}'] )$, whence indeed $\nu(E)=\nu(\gamma(E))$.

Next
we show  that  invariant sets in the tower
$\Cal T\equiv \Sigma_{\wh M/M}^{0,+} $ 
and in the base 
$B\equiv \Sigma_{M}^{0,+}$ 
correspond.  Writing $\sim_{M} $ 
and $\sim_{\wh M} $ for the orbit equivalence relations
of the actions of $\FC_{M}$ and  $\FC_{\wh M}$, then $\sim_{M} $ is  the
restriction of $\sim_{\wh M} $ to $B\times B\subseteq \Cal T\times
\Cal T$, since for $e, e'\in B$ then $e\sim_{M} e'$ iff  the tails of $e$ and $e'$ are
equal after some $k$, iff $e\sim_{\wh M} e'$.

 Hence if $\wh E\subseteq \Cal T $ is invariant for $\FC_{\wh M}$,
 then  a fortiori its
restriction
to the base, $E= \wh E\cap B $, is  invariant for $\FC_{M}$.

We note that moreover, $\wh E= \FC_{\wh M}(E)$. This is
because, by
Proposition \ref{p:towerspace}, every  orbit of the tower meets the
base; that is, $\Cal T=
\FC_{\wh M} (B)$, and so, $\wh E= \FC_{\wh M}(B\cap \wh E)$.

Conversely, we claim that if  $E\subseteq
B$ is $\FC_{M}$-invariant, then its invariant extension to the tower
meets the base in $E$; that is, $\Sigma_{M}^{0,+}\cap \FC_{\wh M} (E)=E$.
Let us take $E\subseteq
B$ such that $\FC_{M}(E)= E$, and define $\wh E=\FC_{\wh M}(E)$. We
claim that $B\cap\FC_{\wh M}(E)= E$. For $e'\in  B\cap \FC_{\wh
  M}(E)$,  we shall show that $e'\in E$. Now there exists $e\in E$
with
$e\sim_{\wh M} e'$. But since $e'\in B$, also $e\sim_{M} e'$
and so by $\FC_{M}$--invariance  of $E$, $e'\in
E$.

This proves  that invariant sets in the base and in the tower
correspond bijectively, via the dual operations of restriction and of extension by
the action of $\FC_{\wh M}$, completing part $(i)$.

For $(ii)$,  let $\wh \nu$  be an ergodic $\FC_{\wh M}$--invariant
measure on $\Cal T$. 
 Then by $(i)$, its restriction $\nu$ to $B$ is
$\FC_{M}$--invariant. Let $E$ be an $\FC_{M}$--invariant
subset of $B$ of positive measure. Then $\wh E=\FC_{\wh
  M}(E)$ has positive measure and is  invariant hence by ergodicity of $\wh
\nu$ it has measure zero complement in $\Cal T$. But then $E$ must have 
measure zero complement in $B$ since otherwise its complement would 
generate a disjoint positive measure invariant subset of $\Cal T\setminus \wh E$.
Conversely, let   $\nu$  be an ergodic $\FC_{M}$--invariant
measure on $B$, with $\wh \nu $ denote its invariant extension to $\Cal T$ as in
$(i)$, and suppose 
 $\wh
E\subseteq \Cal T$ is an invariant positive measure subset of $\Cal T$. 
Then from $(i)$, the restriction  $E\equiv \wh E\cap B$ is
$\FC_{M}$--invariant,  and $\wh E= \FC_{\wh M}(E)$, whence 
$E$ has positive measure and so by ergodicity of $\nu$ is all of
$B$. Thus $\wh E= \Cal T$ up to a null set, and  
ergodicity of the $\FC$\,--\,actions 
corresponds. 

For the first part of $(ii)$ it seems easiest to work with the orbit
equivalence relation, which by  Proposition
\ref{p:conservative_equivreln} will be equivalent. Now 
$E\subseteq B$ is a wandering set for $\sim_M$
iff the orbit of each 
$x\in E$ is
infinite and meets $E$ in at most one point; thus, $(x\sim_M y)\then
x=y.$ 
Let $E$ be wandering for $\sim_M$ and suppose that for $x,y\in E$ we
have $x\sim_{\wh M} y$. But   $\sim_M$ is the restriction of
$\sim_{\wh M} $ to $B$, so $x\sim_M y$ whence $x=y$.
Thus  $E$ is wandering for $\sim_{\wh M} $ as well.

Now suppose $\wh E\subseteq \Cal T$ is wandering for $\sim_{\wh M} $,
and set $E\equiv B\cap \FC_{\wh
    M}(\wh E)$. Given $x,y\in E$ with $x\sim_M y$, we claim that
  $x=y$. Now since by definition $\Cal T= \FC_{\wh M}(B)$, then 
$\FC_{\wh M}(E)= \wh E$ so there exist
  $\wh x, \wh y\in \wh E$ with $\wh x\sim_{\wh M} x$ and $\wh
  y\sim_{\wh M} y$;
since $\wh E$ is wandering these are the unique such points. Then by
transitivity
of $\sim_{\wh M} $, $\wh x\sim_{\wh M} \wh y$ whence $\wh x=\wh y$,
since  $\wh E$ is wandering. But then  $x=y$, as claimed. 
Now by Proposition \ref{p:numberminimal}, since by assumption $\liminf
l_i>1$,
the orbit equivalence classes of $\sim_M$ are countably infinite, so
$E$ is indeed a wandering set.

We note also
that $ \nu (E)=\wh \nu (E)= \wh \nu (\wh E)$  by Proposition
\ref{p:conservative_equivreln}, since these sets are bijectively
equivalent.

It follows that ergodicity and conservativity correspond, for the base
and the tower. 

For part $(iii)$, that invariant sets and wandering sets correspond
for the transformations $(\Sigma_{\wh M/M}^{0,+}\setminus \CN_{\wh M},
T_{\wh \O})$ and $(\Sigma_{M}^{0,+}\setminus \CN_{M}, T_{\O})$
follows  by restriction from the corresponding facts proved above, since 
$ \CN_{\wh M}$ is $\sim_{\wh M} $\,--\,invariant, and therefore 
ergodicity and conservativity correspond. 
\end{proof}

\begin{rem}\label{r:towers infinite}
  We note that in the   proof of $(i)$, although the cylinder sets generate the
 $\sigma$\,--\,algebra, it has been necessary throughout to  consider  Borel
subsets of 
cylinders,   as it is quite possible that the measure of every cylinder
set
is infinite! Indeed we encounter explicit such examples below.

\end{rem}

We recall these basic ergodic theory notions introduced by Kakutani,
rewritten for partial transformations:
\begin{defi}\label{d:inducedmap}
  Given a partial transformation $T$ of a set $X$ and a subset $A\subseteq X$, the 
{\em  first return-time function} $r: A\to \N\cup\{\infty\}$ is 
\[
r(x)= \inf\{n\geq 1: T^n(x)\in A\}\]
(since by definition the inf of the empty set is
$+\infty$, 
 $r(\e)= \infty$ if the point never returns). Let us write 
 $\NS_A$ for this collection of points, and $\NP_A$ for the points in
 $A$ which never return for $T^{-1}$. The {\em first-return map} is the partial transformation
$T_A: A\setminus \NS_A\to A\setminus \NP_A$ defined by
$T_A(x)= T^{r(x)}(x)$; setting 
$\CN_A=  (\cup_{i\leq
  0} T_A^i \NS_A)\cup (\cup_{i\geq 
  0} T_A^i \NP_A)$ and $B= A\setminus \CN_A$,  then 
the map {\em induced } by $T$ on $A$ is the restriction of the first
return map $T_A$ to
the bijection $T_B: B\to B$.
The {\em (external)
  tower} or  {\em  Kakutani skyscraper} of height $r(x)-1$ is the space
$\Cal T_B\equiv \{ (x, k):\, x\in B, 0\leq k< r(x)\}$, acted on by the 
transformation $T_r$ defined by
$T_r(x,k)= (x, k+1)$ for $k<r(x)-1$,  $T_r(x,r(x)-1)= (T_B(x),
0)$. We call $B\times \{0\}\subseteq \Cal T_B$  the {\em base} of the tower.

We write $X_B\equiv \cup_{n\in\Z} T^n(B)$ for the $T$-orbit of $B$; we
call this the  {\em
  internal tower} with base $B$. Then  $T$ acts as a
bijection on this space by
restriction.
\end{defi}

\begin{prop}\label{p:inducedmaptower}
 Given $X, T, A$ as above, the 
internal and external tower transformations  
$(X_B,  T)$ and $(\Cal T, T_r)$ are  (set-theoretically) isomorphic: begin with the natural 
identification  $B\to B\times \{0\} $, and extend via the
dynamics. If we are given a  $\sigma$\,--\,algebra $\Cal B$ on $X$ and $A$
is a
measurable subset, then taking the restricted  $\sigma$\,--\,algebra $\Cal B_B$, the
return-time function $r$ is measurable (where $\N\cup\{\infty\}$ is given
the discrete  $\sigma$\,--\,algebra), and  the
induced map $(B, T_B)$ is measurable.  
Given an  invariant Borel measure $\nu$ on $X$,  and if  $\nu(B)>0$, then
this is a measure-theoretic isomorphism. \ \ \qed
\end{prop}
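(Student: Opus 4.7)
The plan is to introduce the natural candidate map
\[
\Phi: \Cal T_B \to X_B, \qquad \Phi(x,k) = T^k(x),
\]
and to verify in three successive layers: that $\Phi$ is a bijective conjugacy, that it is bimeasurable, and that it is measure-preserving. I expect the argument to be largely routine once the domains of the partial transformation are handled with care; the main subtlety is keeping track of when iterates $T^j$ are defined.

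First I would check well-definedness and bijectivity. For $x \in B$, the iterate $T^{r(x)}(x) = T_B(x)$ is defined, hence so is every $T^k(x)$ with $0 \leq k \leq r(x)$, so $\Phi$ makes sense. For injectivity, suppose $\Phi(x_1,k_1)=\Phi(x_2,k_2)$ with $k_1 \leq k_2$. If $k_1=k_2$, injectivity of $T$ on its two-sided orbit (Def.~\ref{d:partialtf}) gives $x_1=x_2$; if $k_1<k_2$, then $T^{k_2-k_1}(x_1)=x_2 \in B \subseteq A$ while $k_2-k_1 < r(x_1)$, contradicting the definition of the return time. For surjectivity, any $y \in X_B$ lies in a $T$-orbit meeting $B$; since $T_B$ is a bijection on $B$, this intersection is biinfinite along the orbit, so there is a largest $k \geq 0$ such that $T^{-k}(y) \in B$, and setting $x = T^{-k}(y)$ gives $\Phi(x,k)=y$ with automatically $k<r(x)$. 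The conjugacy $\Phi \circ T_r = T \circ \Phi$ is immediate within a single stack, and at the top level $k=r(x)-1$ it reduces to the identity $T_B(x)=T^{r(x)}(x)$.

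Next I would establish measurability. When $A$ is Borel and $T$ is Borel, the level sets
\[
r^{-1}(\{n\}) = \{x \in A : T(x),\dots,T^{n-1}(x) \notin A,\ T^n(x) \in A\}
\]
are Borel, so $r$ is measurable, and $\NS_A$, $\NP_A$ (the latter by the analogous argument for $T^{-1}$) are measurable, as is the countable union $\CN_A$; hence $B$ is measurable, and $T_B$, being $T^n$ on each $r^{-1}(\{n\}) \cap B$, is a measurable bijection. Equipping $\Cal T_B$ with the Borel structure from the decomposition $\coprod_{n \geq 1} (B \cap r^{-1}(\{n\})) \times \{0,\dots,n-1\}$, for any Borel $U \subseteq X_B$ the preimage
\[
\Phi^{-1}(U) = \coprod_{n \geq 1} \coprod_{k=0}^{n-1} \bigl\{(x,k) : x \in B \cap r^{-1}(\{n\}),\ T^k(x) \in U\bigr\}
\]
is Borel; the analogous slicing shows $\Phi^{-1}$ is measurable.

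Finally, assume $\nu$ is $T$-invariant with $\nu(B)>0$, and define $\tilde\nu$ on $\Cal T_B$ by $\tilde\nu(E) = \sum_{k \geq 0} \nu(E_k)$, where $E_k = \{x : (x,k) \in E\}\subseteq B$. For distinct $k \neq j$ the pieces $T^k(E_k)$ and $T^j(E_j)$ are disjoint by the same return-time argument used for injectivity of $\Phi$, and by $T$-invariance $\nu(T^k(E_k)) = \nu(E_k)$. Hence
\[
\nu(\Phi(E)) = \sum_{k \geq 0} \nu(T^k(E_k)) = \sum_{k \geq 0} \nu(E_k) = \tilde\nu(E),
\]
so $\Phi$ is measure-preserving. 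Since $B \subseteq X_B$ and $\nu(B)>0$, the image measure is nontrivial. The only real obstacle is ensuring at each step that the iterates of $T$ in play truly lie in the appropriate domains; this is precisely what the removal of $\CN_A$ in passing from $A$ to $B$, and the restriction from $X$ to $X_B$, is designed to ensure, and the argument works verbatim for $\sigma$-finite or even locally infinite $\nu$.
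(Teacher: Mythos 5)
The paper records this proposition without proof (it is stated with only a closing tombstone, being the classical Kakutani skyscraper identification), so there is no in-paper argument to compare against; your proof via $\Phi(x,k)=T^k(x)$ is the standard route, and its overall structure — bijective conjugacy, measurability by slicing over the level sets of $r$, and the computation $\nu(\Phi(E))=\sum_{k}\nu(T^k(E_k))=\sum_k\nu(E_k)$ using disjointness of the slices and invariance on bijectively equivalent sets — is sound and at the right level of detail.

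Two local slips should be repaired, and one caveat added. (a) In the injectivity step the labels are exchanged: from $T^{k_1}(x_1)=T^{k_2}(x_2)$ with $k_1<k_2$, cancelling $T^{k_1}$ gives $x_1=T^{k_2-k_1}(x_2)$, so it is $x_2$ that revisits $A\supseteq B$ at time $k_2-k_1<r(x_2)$, not $x_1$; the contradiction is the same. (b) In the surjectivity step there is no \emph{largest} $k\ge 0$ with $T^{-k}(y)\in B$ — precisely because, as you yourself observe, the orbit meets $B$ biinfinitely often — so that step fails as written. You want the \emph{smallest} such $k$: it exists because $y\in X_B$ and the backward visit times of the orbit to $B$ tend to $-\infty$ (each return time is $\ge 1$), and then $k<r(x)$ holds since otherwise $T^{r(x)}(x)=T_B(x)\in B$ would equal $T^{-(k-r(x))}(y)$ with $0\le k-r(x)<k$, contradicting minimality; here one also uses that the first return of a point of $B$ to $A$ lands again in $B$ (the set $B$ is $T_A$-invariant by construction), so consecutive visits of such an orbit to $B$ are spaced exactly by $r$. (c) On measurability, your level-set formula handles $r$ and $\NS_A$, but measurability of $B$ itself requires $\CN_A$ measurable, and $\CN_A$ involves forward images $T_A^i(\NP_A)$; so either assume $T^{-1}$ measurable (true in the paper's applications, where $T$ is bimeasurable), or invoke the Lusin--Souslin theorem in the Polish setting, or sidestep the issue by working with the trace $\sigma$-algebra $\Cal B_B$ exactly as the proposition's statement does.
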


The next proposition explains the choice of the term  ``adic tower''.

\begin{prop}\label{p:towermap} Given nested ordered Bratteli diagrams
  $\mathfrak B_{\A,\E,M, \O}\leq \mathfrak
 B_{\wh \A,\wh\E, \wh M, \wh \O}$, 
assume the generalized matrix sequence 
$M=(M_i) _{i\geq  0}$ is column--reduced. For $A\equiv \Sigma_{\M}^{0,+}$, let   $r$ denote the 
 first return-time function of the map $T_{\wh O}$ on $\Sigma_{\wh \M}^{0,+}$ to  $A$.
We define subsets $B_n\subseteq A$ by
\[B_n=
\{\e\in A:\, \text{ for some }
  1\leq k\leq n, e_k \text{ is not maximal in } \O_k\}.
\]

\noindent
Then:
\item{(i)}the sets $B_n$ increase to $\Sigma_{\M}^{0,+}\setminus\NS_M$;

\item{(ii)} 
$r$ is finite and continuous on $\Sigma_{\M}^{0,+}\setminus\NS_M$, and
the  induced transformation on the subset
$\Sigma_{\M}^{0,+}$ for the partial transformation $T_{\wh \O}$ is
$T_{\O}$ on $ B=\Sigma_{\M}^{0,+}\setminus \Cal N_M$.

\item{(iii)}the restriction of the adic transformation $T_{\wh \O}$ to the
invariant subset
$\Sigma_{\wh M/M}^{0,+}\setminus{\Cal N_{\wh M}}$
is the internal tower over the adic transformation
$T_{\O}$ on the base $B$,
with return-time function
$r$.
\item{(iv)} If $M$ is primitive, 
then $\Sigma_{\wh M/M}^{0,+}=  \Sigma _{\wh M}^{0,+}$
if and only if at most finitely many changes have been made, 
that is, when for
some $m$ we have
$\Sigma _{M^{(m)}}^{0,+}= \Sigma_{\wh M}^{0,+}$. 
There exist nonprimitive examples where this is not true.
\end{prop}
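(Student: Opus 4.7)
For part (i), the plan is to observe directly from the definitions that the sets $B_n$ increase with $n$ and that $\e \in \NS_M$ iff every $e_k$ is maximal in $\O_k$ among the edges entering $e_k^+$; hence $\e \notin \NS_M$ iff some $e_k$ is non-maximal, iff $\e \in B_n$ for some $n$.

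For part (ii), the key step is to identify the trace of the $T_{\wh\O}$-orbit on $A = \Sigma_M^{0,+}$ with the $T_\O$-orbit in $W^s_M(\e)$. Since $\O \leq \wh\O$ and both orders restrict to total orders on the relevant edge sets, $\O_k$ and $\wh\O_k$ coincide on $\E_k^+(b) \subseteq \wh\E_k^+(b)$, so the anti-lexicographic $\O$-order on $W^s_M(\e)$ is the restriction of the $\wh\O$-order on $W^s_{\wh M}(\e)$; combining with the easy equality $W^s_{\wh M}(\e) \cap A = W^s_M(\e)$, the first point of the $\wh\O$-orbit of $\e$ returning to $A$ is exactly $T_\O(\e)$. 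For finiteness and continuity of $r$, suppose $\f = T_\O(\e)$ agrees with $\e$ past level $n$, and let $\eta \in W^s_{\wh M}(\e)$ satisfy $\e < \eta < \f$ in $\wh\O$; then $\eta$ must also agree with $\e$ past $n$, for otherwise the largest index $m > n$ with $\eta_m \neq e_m$ yields $f_m = e_m$, so the comparisons of $\eta$ with both $\e$ and $\f$ reduce to $\eta_m$ versus $e_m$ in $\wh\O_m$, placing $\eta$ on the same side of $\e$ and $\f$, a contradiction. Only finitely many paths agree with $\e$ past $n$, giving $r(\e) < \infty$ with $r$ determined by a finite prefix of $\e$, hence continuous. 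Part (iii) then follows by packaging (i) and (ii) through Proposition \ref{p:inducedmaptower}: Proposition \ref{p:towerspace} ensures every $T_{\wh\O}$-orbit in the tower meets $B$, and points of $\CN_{\wh M}$ in the tower are precisely those with no two-sided extension, identifying the restricted tower map as the internal Kakutani tower over $(B, T_\O)$ with return time $r$.

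For part (iv), the ``if'' direction is immediate since $\Sigma_{M^{(m)}}^{0,+} \subseteq \Sigma_{\wh M/M}^{0,+}$. For ``only if'' with $M$ primitive, I argue by contrapositive: if infinitely many levels $k_1 < k_2 < \dots$ admit edges $f_{k_j} \in \wh\E_{k_j} \setminus \E_{k_j}$, then primitivity of $M$ (hence of $\wh M$, since $M \leq \wh M$) lets me inductively build an infinite $\wh M$-path threading each $f_{k_j}$: given a finite path ending at level $n_j$, choose $k_j$ large enough that $\wh M_{n_j}^{k_j - 1}$ is everywhere positive, connect into $f_{k_j}^-$, and append $f_{k_j}$. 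The resulting path has $e_k \in \wh\E_k \setminus \E_k$ infinitely often, so lies outside $\Sigma_{\wh M/M}^{0,+}$ by the characterization of the tower space as paths eventually inside $\E$. For the nonprimitive counterexample, take $\wh M$ stationary equal to $\begin{pmatrix} 3 & 1 \\ 0 & 2 \end{pmatrix}$ and let $M$ remove the unique $(1,2)$-edge at every odd level, yielding infinitely many changes; any $\wh M$-path uses this removed edge at most once before being absorbed into the vertex-$2$ component (row $2$ of $\wh M$ has a $0$ in column $1$), so every path is eventually in $\E$ and $\Sigma_{\wh M/M}^{0,+} = \Sigma_{\wh M}^{0,+}$. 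The main obstacle is the combinatorial verification in part (ii) that no extraneous $\wh\O$-elements creep in between $\e$ and $T_\O(\e)$; the rest is essentially unpacking of definitions together with the direct construction in part (iv).
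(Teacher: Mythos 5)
Your proof is correct and follows essentially the same route as the paper's: the same characterization of $\NS_M$ for (i), the same count of the finitely many $\wh\O$-intermediate paths (which must share the tail of $\e$ beyond the branching level) giving finiteness and local constancy of $r$ for (ii), the same reduction of (iii) to Proposition \ref{p:inducedmaptower}, and for (iv) the same chaining of new edges via primitivity together with an upper-triangular $2\times2$ counterexample of exactly the paper's type. Two points are worth making explicit: in (i) the implication (some $e_k$ non-maximal $\then$ $\e\notin\NS_M$) is precisely where column-reducedness of $M$ is used, to extend the larger edge leftwards inside $\mathfrak B_M$; and in (iv) your claim that $\wh M$ inherits primitivity from $M\leq \wh M$ is clean only when the new edges run between vertices reachable from $\A$ (e.g.\ when $\A=\wh\A$), which is the same implicit assumption the paper's own proof makes when it connects successive new edges through strictly positive matrix products.
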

\begin{proof}
Let $\e$ be in $\Sigma_{\M}^{0,+}\setminus\NS_M$. Then there exists  
$\f\in \Sigma_{\M}^{0,+}$ such that $\e<\f$; hence for some $m>0$, 
$f_k=
e_k\; \forall k> m$ while 
$e_m< f_m$ in
   $ \O_m$, whence
$\e\in B_m$. Conversely, if  $\e\in B_m$ for some $m$, then
there exists for some $k$ with $1\leq k\leq m$ an edge $f_k\in \E_k$  with $e_k< f_k$; because the
matrix sequence is column--reduced, we can continue this in some way to
$f_i$ 
for $0\leq i<k$;  defining  $f_i= e_i$ for all $i>k$  produces 
   an infinite string $\f\in \Sigma_{\M}^{0,+}$ with
$\e<\f$. This proves $(i)$. 

For $(ii)$, with $\e\in \Sigma_{\M}^{0,+}\setminus \NS_M$, let $m$ be
the least integer such that $\e\in B_m$. Taking $\f=
T_\O (\e)$,  then $e_m<f_m$ in $\O_m$ while $f_k= e_k$ for all
$k>m$.
 Consider all 
the edge paths $g$ in $\Sigma_{\wh M}^{0,+}$ with
$\e<g<\f$. Then  $r(\e)$ is equal to the number of such paths plus one, 
and since $g_m^+= e^+_m= f^+_m$ this is finite. This also shows that
$\NS_M$ is the set $\NS_A$ of Definition \ref{d:inducedmap} for $A=\Sigma_{\M}^{0,+}$
(the points where $r(\e)=\infty$) and that 
the induced map on $A$ is indeed $(\Sigma_{\M}^{0,+}\setminus \Cal
N_M, T_\O)$.

To show continuity, suppose that for $\e^{(n)}, \e\in
\Sigma_{\M}^{0,+}\setminus \NS_M$ we
have that
$d(\e^{(n)}, \e)\to 0$. Let $m$ be
the least integer such that $\e\in B_m$.
The function $r$ is constant on the cylinder set 
$[.e_0\dots e_m]$; for sufficiently large $n, \e^{(n)}$ is in this
cylinder,
so in fact $r( \e^{(n)})=r(\e)$. 

By Proposition \ref{p:inducedmaptower}, part $(iii)$  follows from $(ii)$. To prove $(iv)$, $\Sigma_{\wh
  M/M}^{0,+}=  \Sigma _{\wh M}^{0,+}$ if and only if there are no edge
paths in $\Sigma _{\wh M}^{0,+}$ which contain infinitely many ``new''
edges (elements from 
$\wh \E\setminus \E$).
Now if
 $\wh M$ is primitive and $\# \{n:\, \wh \E_n\setminus \E_n\neq\emptyset\}$
is infinite, then there exists an edge path $\e$ with infinitely many
new edges:  beginning with an edge $e_k$ in 
$\wh \E_k\setminus \E_k$ for some $k$, we  wait until time $n$ when $M_k^n$ is strictly positive;
there is a path connecting $e_k$ to $e_m\in \wh \E_m\setminus \E_m$ for some $m>n$. 
Continuing in this way produces $\e$.

For a nonprimitive example where this is no longer true, consider 
$M_i=\left[ \begin{matrix}
1& 1 \\
0 & 1
\end{matrix}  \right] $ 
and
$\wh M_i=\left[ \begin{matrix}
1& 2 \\
0 & 1
\end{matrix}  \right] $: each path in $\Sigma_{\wh M}^{0,+}$
contains at most one new edge, and so is in $\Sigma _{\wh
  M^{(m)}}^{0,+}$ for some $m$. Thus  the tower space $\Sigma_{\wh
  M/M}^{0,+}$ equals the Markov compactum $\Sigma _{\wh M}^{0,+}$
despite the fact that for no $m$ do we have $\Sigma _{M^{(m)}}^{0,+}= \Sigma_{\wh M/M}^{0,+}$. 
\end{proof}

\

\begin{rem}\label{r:towers}
  Summarizing, we always have 
\[\Sigma_{\M}^{0,+}\subseteq \Sigma _{M^{(m)}}^{0,+}\subseteq \Sigma_{\wh M/M}^{0,+}\subseteq \cl (\Sigma_{\wh M/M}^{0,+})\subseteq \Sigma_{\wh M}^{0,+},\] 
with (if we are given an order $\wh \O$) the map $T_{\wh \O}$ 
acting on the last 
three 
 of these and its induced maps  
$T_{ \O}$, $T_{ \O}^{(m)}$ acting on the first two (as  sub-adic
transformations).
 In
general, however, the adic tower space is not itself a Markov
compactum and the tower map is not an adic transformation. 
The tower closures are important for identifying the other  compact invariant
subsets of the $\FC$-action; see  Theorem
\ref{t:invariantsubsets}. 
  \end{rem}

Here is a special way adic towers can come about:

\begin{prop}\label{p:componentastower} 
Let $M=(M_i)_{i\geq 0}$ be a column--reduced 
$(\A_i\times \A_{i+1})$ sequence of 
 nonnegative integer matrices.
For all $k\geq 1$, 
$(\Sigma_{\M}^{0,+},T_\O) $ is isomorphic to 
a
bounded height adic tower over
$(\Sigma_{\M}^{k,+},T_\O^{(k)}) $. Indeed, there exists $\wt
 M\leq M$ such that $(\Sigma_{\M}^{0,+}, T_\O)$ is isomorphic to 
$(\Sigma_{M/\wt M}^{0,+}, T_\O) $, which is a bounded height adic tower over
$(\Sigma_{\M}^{k,+},T_\O^{(k)}) $.
\end{prop}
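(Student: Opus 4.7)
The plan is to construct $\wt M \leq M$ explicitly by keeping at each level $i<k$ only enough edges to give each intermediate vertex a unique incoming edge, while leaving later levels untouched. Using column--reducedness of $M$, for each $i\in\{0,\dots,k-1\}$ and each $w\in\A_{i+1}$ choose one edge $g_i(w)\in\E_i$ with $g_i(w)^+=w$, and set $\wt\E_i=\{g_i(w):w\in\A_{i+1}\}$, which determines $\wt M_i\leq M_i$. For $i\geq k$ put $\wt M_i=M_i$. Then $\wt M\leq M$ in the sense of Def.~\ref{d:subdiagram}, and $\wt M$ is column--reduced, so the adic tower construction of \S\ref{ss:towers} applies.

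Next I would identify $(\Sigma_{\wt M}^{0,+},T_{\O'})$ with $(\Sigma_\M^{k,+},T_\O^{(k)})$, where $\O'$ is the restriction of $\O$ to $\wt M$ in the sense of Def.~\ref{d:subdiagram}. By construction each vertex at each time $1,\dots,k$ has a single incoming edge in $\wt M$, so an allowed $\wt M$-path is uniquely determined by its tail from time $k$ onward; the forgetful map $\wt e=(\wt e_0\dots \wt e_{k-1}e_ke_{k+1}\dots)\mapsto(e_ke_{k+1}\dots)$ is therefore a homeomorphism $\Sigma_{\wt M}^{0,+}\to\Sigma_\M^{k,+}$. Because the incoming edge set at every intermediate vertex is a singleton, the linear orders $\O'_i(b)$ are trivial for $i<k$, so the Vershik successor of $\wt e$ must change some edge at a time $\geq k$, and this change is governed by $\O^{(k)}$. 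Hence the Vershik partial transformations match under the identification.

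I would then verify that the tower exhausts the space: given any $e\in\Sigma_\M^{0,+}$, the tail $e_ke_{k+1}\dots$ extends uniquely (by singleton-incoming-edges) to a $\wt M$-path $\wt e\in\Sigma_{\wt M}^{0,+}$ with the same tail from time $k$. Thus $e\in W^s_M(\wt e)\subseteq W^s_M(\Sigma_{\wt M}^{0,+})=\Sigma_{M/\wt M}^{0,+}$ by Proposition \ref{p:towerspace}, so $\Sigma_{M/\wt M}^{0,+}=\Sigma_\M^{0,+}$. Proposition \ref{p:towermap}(ii)--(iii) then realizes $(\Sigma_\M^{0,+},T_\O)$ as the internal tower over its base $(\Sigma_{\wt M}^{0,+},T_{\O'})$ with return-time function $r$; boundedness of $r$ follows because the return time depends only on the prefix $e_0\dots e_{k-1}$ and the relative order of those prefixes, so $r\leq |\E_0|\cdots|\E_{k-1}|$, which is finite since the edge alphabets $\E_i$ are finite.

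The main obstacle is not computational but structural: one must arrange the subdiagram so that the restricted order $\O'$ actually induces $T_\O^{(k)}$ under the identification, and not some less canonical rearrangement. Any choice of $\wt M \leq M$ whose intermediate edge counts are larger than one would leave nontrivial order data at times $<k$, and the induced Vershik map on the base would no longer be a conjugate copy of $T_\O^{(k)}$; the singleton-incoming-edge prescription above is precisely what removes that ambiguity.
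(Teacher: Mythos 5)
Your proposal is correct and follows essentially the same route as the paper: using column--reducedness to select a single incoming edge for each vertex at levels $0,\dots,k-1$, so that prefixes in $\wt M$ are forced, identifying the base $(\Sigma_{\wt M}^{0,+},T_{\O'})$ with $(\Sigma_{\M}^{k,+},T_\O^{(k)})$, and then invoking Propositions \ref{p:towerspace} and \ref{p:towermap} to realize $(\Sigma_{\M}^{0,+},T_\O)$ as the (bounded height) tower. The only differences are cosmetic: you make all $k$ choices at once (with an explicit, if slightly crude, bound on the return time) where the paper proceeds inductively level by level and appeals to the finitely-many-edges-removed case of Proposition \ref{p:towermap}.
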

\begin{proof}The proof will be inductive; 
let us first suppose that $k=1$. 
For each element $a\in \A_1$ we choose an edge $e\in \E_{0}$
such that $e^+=a$; we can do this since the matrix sequence is
column--reduced. We call this collection  of  edges $\wt\E_{0}\subseteq
\E_{0}$, and denote by $\wt \A_{0}$  the collection
of symbols in $\A_0$ which are the initial symbols of these edges.
We write 
  $\wt M_{0}$ for the associated matrix, and let
$\wt\A, \wt \E, \wt M$ be the alphabet, edge and
matrix sequences 
equal to  $\wt \A_{0}, \wt \E_{0}, \wt M_{0}$ at level $0$ 
and to $\A_i, \E_i, M_i$ for all $i\geq 1$. 
Then  each edge path $\e= (.e_1e_2\dots)\in \Sigma_{\M}^{1,+}$ has a unique extension
to a path $(.e_0e_1e_2\dots) $ in $\Sigma_{\wt M}^{0,+}$, and the Vershik map $T_\O$ on $\Sigma_{\wt M}^{0,+}$
is isomorphic to $T_\O^{(1)}$ on  $\Sigma_{\M}^{1,+}$ via the conjugacy
given by this extension. 

Now 
$\mathfrak B_{\wt M}\leq \mathfrak B_{M}
$ with only finitely many edges removed, so by $(iv)$ of Proposition \ref{p:towermap}, 
$(\Sigma_{\M}^{0,+} ,T_\O)$ is isomorphic to the tower map
$(\Sigma_{M/\wt M}^{0,+},T_\O)$, but the 
base map $(\Sigma_{\wt M}^{0,+},T_\O)$ is  isomorphic to 
$(\Sigma_{\M}^{1,+},T_\O^{(1)})$, so we are done for this case.

For the general case we can either use the fact that a tower over a tower is a
tower, or proceed as follows: the above proof takes us from level $k$
to level $k-1$, and we then begin with the new sequences
 $\wt\A, \wt \E, \wt M$ starting at level $k-1$ and  apply
the same proof to pass to level $k-2$, choosing a unique extension at
each stage from the remaining part of the alphabet and continuing
until we reach level $0$; these choices determine  
$\wt M$ and  the base we take for the tower.
\end{proof}

Combining Theorem \ref{t:towermeasure}  with Proposition \ref{p:componentastower} we have in particular:
\begin{cor}\label{c:zeroand_k}
  The collections of finite and infinite, conservative, and ergodic,
  $\FC$\,--\,invariant Borel measures
  of $\Sigma_{\M}^{0,+} $  and of $\Sigma_{\M}^{k,+}$ correspond bijectively.
\end{cor}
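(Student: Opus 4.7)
The plan is to combine Proposition \ref{p:componentastower} with Theorem \ref{t:towermeasure}, using the crucial extra feature that the tower obtained has \emph{bounded} height.

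First, I would apply Proposition \ref{p:componentastower} to produce a subsequence $\wt M \leq M$ such that $(\Sigma_\M^{0,+}, T_\O)$ is isomorphic to the bounded-height adic tower $(\Sigma_{M/\wt M}^{0,+}, T_\O)$ over $(\Sigma_\M^{k,+}, T_\O^{(k)})$, where the base is identified with $\Sigma_{\wt M}^{0,+}$ via the canonical extension of each path $(.e_k e_{k+1}\dots)$ by the unique initial segment prescribed in the construction. The assumption that $M$ is column-reduced guarantees this construction goes through, and the hypothesis $k\geq 1$ (with finite alphabet size) forces only finitely many edges to be removed at levels $0,1,\dots,k-1$, so the tower height $r$ is uniformly bounded by some constant $R<\infty$.

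Next I would invoke Theorem \ref{t:towermeasure}, applied to the nested diagrams $\mathfrak B_{\wt M}\leq \mathfrak B_M$. Part $(i)$ yields a bijection, via restriction and invariant extension, between $\FC_M$-invariant Borel measures $\wh\nu$ on $\Sigma_{M/\wt M}^{0,+}$ and $\FC_{\wt M}$-invariant Borel measures $\nu$ on $\Sigma_{\wt M}^{0,+}$; part $(ii)$ states that conservativity and ergodicity correspond under this bijection; and part $(iii)$ transfers these correspondences from the $\FC$-actions to the Vershik maps. Composing with the isomorphisms $(\Sigma_\M^{0,+}, T_\O) \cong (\Sigma_{M/\wt M}^{0,+}, T_\O)$ and $(\Sigma_{\wt M}^{0,+}, T_\O)\cong (\Sigma_\M^{k,+}, T_\O^{(k)})$ from Proposition \ref{p:componentastower} gives the claimed bijection between $\FC$-invariant Borel measures on $\Sigma_\M^{0,+}$ and on $\Sigma_\M^{k,+}$, preserving conservativity and ergodicity.

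The only ingredient not explicitly packaged in Theorem \ref{t:towermeasure} is the preservation of finiteness in both directions (in general, a tower over a finite measure can have infinite total mass). Here the boundedness of the return-time function $r\leq R$ is what saves the argument: using the Kakutani skyscraper picture of Proposition \ref{p:inducedmaptower} one has
\[
\nu(\Sigma_{\wt M}^{0,+}) \;\leq\; \wh\nu(\Sigma_{M/\wt M}^{0,+}) \;=\; \int_{\Sigma_{\wt M}^{0,+}} r\,d\nu \;\leq\; R\cdot \nu(\Sigma_{\wt M}^{0,+}),
\]
so the total masses are simultaneously finite or simultaneously infinite. This completes the correspondence for the finite/infinite dichotomy.

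The main (mild) obstacle is in fact the bookkeeping in the first step: verifying that the isomorphism between $(\Sigma_\M^{0,+}, T_\O)$ and the tower $(\Sigma_{M/\wt M}^{0,+}, T_\O)$ is genuinely a conjugacy of $\FC$-actions (not merely of the Vershik maps) so that Theorem \ref{t:towermeasure}$(i)$--$(ii)$ can be applied directly to the $\FC$-invariant measures rather than just to $T_\O$-invariant ones; but this follows from Proposition \ref{p:conserg}$(ii)$ together with the fact that the conjugacy is a homeomorphism respecting stable equivalence classes.
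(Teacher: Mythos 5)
Your proposal is correct and follows essentially the same route as the paper, whose proof of this corollary is precisely the combination of Proposition \ref{p:componentastower} (the zeroth component is a bounded-height adic tower over the $k^{\text{th}}$ component) with Theorem \ref{t:towermeasure}. Your explicit bounded-return-time estimate $\nu \leq \wh\nu = \int r\, d\nu \leq R\,\nu$ is a useful spelling-out of why the finite/infinite dichotomy is preserved, which the paper leaves implicit.
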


Thus only the tail of the matrix sequence matters for classifying
invariant Borel measures. This complements Proposition \ref{p:gathermeasure}
regarding gatherings. 

\medskip

\subsection{The canonical cover}\label{ss:canonicalcover}
Given nonnegative integer matrix sequences
 $M\leq \wh M$,  
the adic tower $\Sigma_{\wh M/M}^{0,+} $ may not sit topologically so
nicely inside of 
$\Sigma_{\wh M}^{0,+}$: it may be dense with empty interior, see
$(i)$ of Theorem \ref{t:towerring}, 
and may have  an invariant Borel measure  which is locally finite for the tower 
but  infinite on every nonempty open
subset of $\Sigma_{\wh M}^{0,+}$,  see
$(iii)$ of
Theorem
\ref{t:covermeasure}, and see the example of the  Integer Cantor Set:
Example \ref{exam:Cantor}, Fig.~ \ref{F:ICSConstructingCover}, 
 and the discussion in the Introduction.

Here we describe the construction of $\Sigma_{\wt M}^{0,+}$,  a  topological cover of $\Sigma_{\wh M}^{0,+}$
which remedies this situation, as the tower 
sits 
inside of the cover as an open set. The two towers (over the same base
$\Sigma_{M}^{0,+} $, but inside different spaces) are topologically and
measure-theoretically isomorphic. This construction will later
prove useful for the
classification of
invariant Borel measures on the original space $\Sigma_{\wh M}^{0,+}$. 

We have $\E\leq \wh \E$ and for convenience we now
assume that $\wh \A= \A$ (if not, we would simply extend $\A$ to $\wh \A$).
For each $i\geq 0$, set   $ \overline \E_i=  \wh \E_i\setminus \E_i$.
Let $\A'_i$ be a disjoint copy of $\A_i$ and let 
$\E_i', \overline\E'_i$ and $\wh\E'_i,$ be    disjoint copies  of $\E_i,
\overline\E_i$, and $ \wh \E_i$ for that new vertex alphabet.

If $e_i\in  \E_i$
with $(e_i^-, e_i^+)= (a,b)\in \A_i\times \A_{i+1}$ then 
$ e_i'$ will denote the
corresponding edge  in $\E_i'$
with initial and final symbols $(a',b')\in \A_i'\times \A_{i+1}'$. Similarly
given $ \overline e_i\in  \overline  \E_i$,  then $\overline e'_i$
denotes the corresponding 
element of
$\overline  \E_i'$.

We then define  some new edges as follows. 
Given 
 $\overline e_i\in \overline\E_i$ with $(\overline e_i^-, \overline e_i^+)= (a,b)$,  we define
 $\overline e_i^\circ$ to have 
$
(\overline e_i^\circ)^-= a'$ and $ (\overline e_i^\circ)^+=b$. Thus $
(\overline e_i^\circ)^-= (\overline e_i')^-$ and $ (\overline
e_i^\circ)^+=\overline e_i^+$. 
This defines a new edge alphabet $\overline\E^\circ_i$.

We then  form the disjoint unions
  \begin{equation}
    \label{eq:canonicalcover}
    \wt\A_i= \A_i'\cup\A_i,\;\;\text{and}
\end{equation}
\[
\wt\E_i= \E_i\cup \overline\E^\circ_i\cup \wh\E_i', \]
 and write 
$\wt M_i$ for the corresponding $(\wt \A_i\times \wt \A_{i+1})$ matrix
sequence; we have doubled both index sets.
Given an order $\wh \O$ on $\mathfrak B_{\wh M}$,   we define an order
$\wt\O$ on 
 $\mathfrak B_{\wt M}$ 
as follows:
$\wh \E'_i$ is given the order inherited from $\wh
\O_i$ on $\wh \E_i$, while  $\E_i
\cup \overline\E^\circ_i$ is also given the order  inherited from $\wh
\O_i$ , since  these are the edges in $\wt \E_i$ with final symbol in
$\A_{i+1}$.  $\wt\O$ denotes the union of these two disjoint
orders.

There is a natural {\em cover map}  $\phi$ from the graph $\mathfrak B_{\wt M}$ to $\mathfrak
B_{\wh M}$; thus,  $\phi:\wt \A\cup\wt \E\to \A\cup\E$  sends $\wt
\A_i$  to $\A_i$ and  $\wt
\E_i$  to $\E_i$; it is two-to-one on both edges and vertices. The vertices $a'_i$,
$a_i$ are mapped to
 $a_i$; the edges $e_i$ and $e_i'$
are sent to $e_i\in \wh \E_i$, while $\overline e'_i$ and $\overline e^0_i$ are
sent to $\overline e_i$. See
Figs. \ref{F:ICSNew} and \ref{F:ICSConstructingCover}.
Note that the order $\wt \O$ projects to $\wh \O$ via this map; that is, if
$\wt\e_i\leq \wt f_i$ then $\phi(\wt e_i)\leq \phi( \wt f_i)$.
We call $\Sigma_{\wt M}^{0,+}$  the {\em canonical cover } of $\wh M$
over $M$. 
The cover map $\phi$ induces a map on edge paths, $\Phi:\Sigma_{\wt M}^{0,+}
\to  \Sigma_{\wh M}^{0,+}$, with $\Phi(.\wt e_0 \wt e_1\dots)=
(.\phi(\wt e_0)
\phi(\wt e_1)\dots )$, called the {\em covering map}, and the lexicographic order on 
$\Sigma_{\wt M}^{0,+}$  projects via $\Phi$ 
to  the lexicographic order on
 $\Sigma_{\wh M}^{0,+}$; that is,  if
$\wt\e\leq \wt f$ then $\Phi(\wt e)\leq \Phi( \wt f)$.

\begin{figure}
\centering
\includegraphics[width=5.5in]{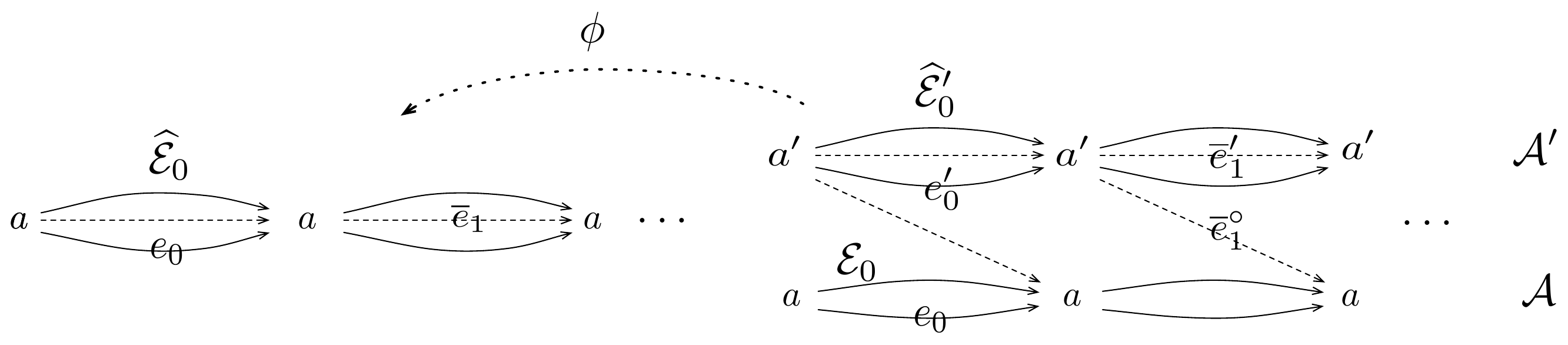}
\hspace{1in}
\caption{Constructing the canonical cover for the Integer Cantor Set example} 
\label{F:ICSConstructingCover}
\end{figure}

\begin{rem}\label{r:permutationsequence}
  In part $(ii)$ of the next theorem we switch from generalized matrices
  to usual matrices, so as to more conveniently visualize the
  canonical cover matrix. This does not affect the topology
  on the edge path space $\Sigma_M^{0,+}$, or the $\FC$-action, since those only depend on the
  (edge) cylinder sets. Thus,
  suppose  we have a Bratteli diagram with alphabet and
  generalized matrix sequence $\A$, and choose an order
  on each alphabet. With $ l_i=\# \A_i$, we have an $(l_i\times
   l_{i+1})$ matrix
  sequence $M=(M_i)_{i\geq 0}$. Now if we choose a second order on
  each alphabet, giving a matrix sequence $M'$, then these are
  conjugate via an $(l_i\times l_i)$ sequence $P= (P_i)_{i\geq 0}$ of
 permutation matrices (that is, the entries are $0,1$ with exactly one
 $1$ in each row and column).  The permutation sequence
 induces a map of the vertex cylinder sets, which can be extended to the
 edge cylinders and  hence to the edge path spaces
$\Sigma_M^{0,+}$ and 
 $\Sigma_{M'}^{0,+}$.   So these path spaces are topologically conjugate, as are the $\FC$-
 actions, as claimed.

  \end{rem}

\begin{theo}\label{t:towerring}
 Given $\A=\wh \A  $ and
 $\E\leq \wh \E$, so
$M\leq \wh M$, then:
\item{(i)} 
If $\wh M$ is primitive and $\Sigma_{\wh
    M/M}^{0,+}\neq \Sigma_{\wh M}^{0,+}$  then 
  $\Sigma_{\wh M/M}^{0,+}$  is dense with empty
  interior
 in $\Sigma_{\wh
    M}^{0,+}$.
\item{(ii)} For any $M\leq \wh M$, ordering the alphabets so that we
  can use usual (not generalized) matrices, and ordering
  $\wt \A$ so that the elements of $\A'$ come before
those of   $\A$, 
 the {\em canonical cover matrix sequence} $\wt M$ has (rectangular) block form
$
\left[ \begin{matrix}
\wh M& C \\
0 & M
\end{matrix}  \right] 
$
where 
$C_i\equiv \wh M_i- M_i$ for each $i\geq 0$.

\item{(iii)} If  $M\leq \wh M$ and  $\Sigma_{M}^{0,+}$ is an open subset
  of  $\Sigma_{\wh M}^{0,+}$, then the  tower $\Sigma_{\wh M/M}^{0,+}$ 
is an open subset of the cover space $\Sigma_{\wh M}^{0,+}$,  indeed is a countable
union of disjoint clopen sets. In particular, the cover tower $\Sigma_{\wt M/M}^{0,+}$ 
is an open subset of the cover space $\Sigma_{\wt M}^{0,+}$.

\item{(iv)}  The 
covering map
$\Phi:\Sigma_{\wt M}^{0,+} \to  \Sigma_{\wh M}^{0,+}$ is continuous
and surjective; its restriction 
to $\Sigma_{\wt M/M}^{0,+}$ is a  homeomorphism to 
$\Sigma_{\wh M/M}^{0,+}$.

Given an order $\wh \O$ on $\mathfrak B_{\wh M}$, this map conjugates the transformations
$T_{\wt \O}$ and $T_{\wh \O}$ on the two towers.  The action of $\FC_{\wh M}$ on $\Sigma_{\wh M/M}^{0,+}$
lifts to an action of a subgroup of $\FC_{\wt M}$ on $\Sigma_{\wt M/M}^{0,+}$,
which has the same orbit equivalence relation as $\FC_{\wt M}$.
\end{theo}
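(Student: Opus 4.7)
For part (i), both density and empty interior follow from primitivity of $\wh M$. Density: given any thin cylinder $[.f_0\dots f_n]\subseteq\Sigma_{\wh M}^{0,+}$, pick $N>n$ with $\wh M_n^N$ strictly positive and a vertex $a\in\A_{N+1}$ admitting a continuation in $\Sigma_M^{0,+}$ (possible by column-reducedness of $M$), then concatenate an allowed prefix from $f_n^+$ to $a$ followed by an $M$-tail to land inside the cylinder. Empty interior: the hypothesis $\Sigma_{\wh M/M}^{0,+}\neq\Sigma_{\wh M}^{0,+}$ forces $\overline\E_k\neq\emptyset$ for infinitely many $k$ (otherwise the tower would exhaust $\Sigma_{\wh M}^{0,+}$), and primitivity of $\wh M$ then lets one build, inside any given thin cylinder, a path that meets $\overline\E$ infinitely often and so lies outside the tower. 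Part (ii) is a direct reading of the construction: with the stated ordering of $\wt\A$, edges of $\wt\E$ with tail in $\A$ are exactly those in $\E$ (lower-right block $M$); edges with tail and head both in $\A'$ are $\wh\E'$ (upper-left block $\wh M$); edges from $\A'$ to $\A$ are $\overline\E^\circ$, enumerating $\wh\E\setminus\E$ and yielding the upper-right block $C=\wh M-M$; and no edges go from $\A$ back to $\A'$, giving the zero lower-left block.

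For part (iii), observe first that in the cover no edge joins a vertex of $\A$ to one of $\A'$, so $\Sigma_M^{0,+}\subseteq\Sigma_{\wt M}^{0,+}$ coincides with the clopen set of paths starting at an $\A$-vertex; this gives openness of the base inside the cover directly. For the general statement, writing $\Sigma_{\wh M/M}^{0,+}=\bigcup_{m\geq 0}\Sigma_{M^{(m)}}^{0,+}$ from Def.~\ref{d:adictower}, each level $\Sigma_{M^{(m)}}^{0,+}$ is a finite union of thin cylinders of length $m+1$ whose terminal vertex admits an $M$-continuation; since $\Sigma_M^{0,+}$ is assumed open, each such cylinder-plus-tail is a clopen set. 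A countable disjoint decomposition is obtained by refining according to the minimal $m$ at which a given path enters and thereafter remains in the base.

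Part (iv) rests on constructing the inverse of $\Phi$ on the tower. Continuity and surjectivity of $\Phi$ are immediate from its coordinate-wise definition, since each edge of $\wh\E$ has two $\phi$-preimages. Given $\wh\e\in\Sigma_{\wh M/M}^{0,+}$, let $m\geq 0$ be minimal such that $\wh e_k\in\E_k$ for every $k\geq m$; define the lift $\wt\e$ by setting $\wt e_k=\wh e_k\in\E_k\subseteq\wt\E_k$ for $k\geq m$, and, if $m\geq 1$, taking $\wt e_{m-1}$ to be the unique edge of $\overline\E^\circ_{m-1}$ lifting $\wh e_{m-1}\in\overline\E_{m-1}$ and $\wt e_k=\wh e_k'\in\wh\E'_k$ for $k<m-1$. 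The lift is forced: since no edge of $\wt\E$ returns from $\A$ to $\A'$, the unique transition to $\A$ must occur at the last time the base path uses an $\overline\E$-edge. Cylinders correspond under the lift, yielding the claimed homeomorphism; and since $\wt\O$ projects to $\wh\O$ by construction, $\Phi$ preserves the anti-lexicographic order and thus conjugates $T_{\wt\O}$ with $T_{\wh\O}$ on the invariant parts of the two towers.

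For the group-action statement, each generator of $\FC_{\wh M}$ interchanging two thin $\wh M$-cylinders with common terminal vertex lifts, via the explicit formula above, to an element of $\FC_{\wt M}$ which swaps the uniquely determined lifted cylinders in $\Sigma_{\wt M/M}^{0,+}$ and fixes tails; the collection of such lifts is the asserted subgroup. To identify the two orbit equivalence relations on the cover tower, note that the tail of any cover-tower point lies entirely in $\A$ and uses only $\E$-edges, so every $\FC_{\wt M}$-equivalence between two such points can be realized by matching their tails in $\A$, equivalently by an $\FC_{\wh M}$-move on their $\Phi$-images pulled back through the subgroup. I expect this last step to be the main obstacle: one must argue that ``upper'' $\FC_{\wt M}$-moves involving $\A'$-vertices and $\wh\E'$-edges, when restricted to cover-tower points, reduce to moves coming from the lifted subgroup.
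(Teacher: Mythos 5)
Parts (i) and (ii) are correct: (ii) is the same block-reading as the paper's proof, and in (i) you replace the paper's shorter argument (primitivity gives minimality of the $\FC_{\wh M}$-action, and a point outside the invariant tower has dense orbit, so the tower can contain no open set) by a direct construction, which also works -- except that the infinite right-continuation inside $M$ comes from nonemptiness of the base (take the tail of any path of $\Sigma_{M}^{0,+}$), not from column-reducedness, which only provides left-extensions. The genuine gap is in (iii), in the general case $M\leq \wh M$ with $\Sigma_{M}^{0,+}$ open. There the level sets $\Sigma_{M^{(m)}}^{0,+}$ are not finite unions of thin cylinders: they are sets of the form $[.f_0\dots f_m]\cap\{\text{all edges after time } m \text{ in } \E\}$, and your assertion that each such ``cylinder-plus-tail'' is clopen ``since $\Sigma_{M}^{0,+}$ is assumed open'' does not follow. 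The hypothesis concerns paths from time $0$ all of whose edges lie in $\E$, whereas the set you need constrains only the tail from time $m+1$, preceded by a segment of $\wh\E$-edges; an $M$-tail starting at time $m+1$ need not extend leftward to a point of $\Sigma_{M}^{0,+}$ (that requires column-reducedness of $M$), so the openness hypothesis gives no neighbourhood information at such points. The paper takes a different route: since $[.e_0\dots e_m]\cap\Sigma_M^{0,+}$ is open it is a union of $\wh M$-cylinders entirely contained in $\Sigma_M^{0,+}$; for those cylinders the tail is unconstrained, so their $\FC_{\wh M}$-saturations are countable unions of sets determined by finitely many coordinates, hence open, and the tower is the union of these saturations. (Your argument is literally correct only for the cover tower, where ending at a vertex of $\A$ forces the entire tail into $\E$, so the level sets really are finite unions of cylinders -- which is exactly how the paper handles that case.)

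In (iv) the homeomorphism, the conjugacy of $T_{\wt\O}$ and $T_{\wh\O}$, and the idea of lifting generators follow the paper, but two points are left hanging. First, the $\Phi$-preimage of a thin $\wh M$-cylinder consists of two disjoint $\wt M$-cylinders, a primed one ending in $\A'$ and a switched one ending in $\A$, and both meet the cover tower (a tower point may switch into $\A$ only after time $m$); so the lift of a generator must interchange both pairs simultaneously -- the paper writes it as a product $\eta\circ\zeta$ of two commuting generators of $\FC_{\wt M}$ -- and a lift that swaps only ``the uniquely determined lifted cylinders'' fails to implement $\Phi^{-1}\gamma\Phi$ on those tower points whose switch occurs beyond the cylinder length. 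Second, the closing claim, that the lifted subgroup has the same orbit equivalence relation as $\FC_{\wt M}$ on the tower, is precisely the step you flag as ``the main obstacle'' and do not carry out. Once the lifts are set up as products as above, it is short: both relations are the tail (stable) relation, because a tail-equivalence between two tower points pushes down via $\Phi$ to a tail-equivalence of their images, hence to some $\gamma\in\FC_{\wh M}$, whose lift restricted to the tower equals $\Phi^{-1}\gamma\Phi$ and carries one point to the other; as written, your proposal asserts this rather than proves it.
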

\begin{proof}
For   $(i)$,   primitivity of $\wh M$ implies
minimality of the $\FC_{\wh M}\, $--action (see Theorem 2.13 of \cite{Fisher09a}), so the orbit of any point 
is a dense subset, whence the
tower is. 
Now suppose that there exists a nonempty open subset 
$\wh{\Cal U}$ of 
$\Sigma_{\wh
  M}^{0,+}$ such that $\wh{\Cal U}\subseteq \Sigma_{\wh M/M}^{0,+}$.
We have assumed that
there exists 
$e\in \Sigma_{\wh M}^{0,+}\setminus \Sigma_{\wh M/M}^{0,+}$; this
 has dense orbit, and 
so meets 
$\wh {\Cal U}\subseteq \Sigma_{\wh M/M}^{0,+}$, 
but that gives a contradiction since the tower is $\FC_{\wh
  M}-$invariant yet does not contain $e$.

For $(ii)$, the $a'b^{\text {th}}$ entry of the  matrix $\wt M_i$
is the number of edges from $a'\in \A_i' $ to
$b\in\A_{i+1}$, which is the number of edges in 
$\overline \E_i $ from $a$ to $b$, and this is 
$(\wh M_i)_{ab}- (M_i) _{ab}= (C_i) _{ab}$, verifying 
$(ii)$. Note that from this equation, the sum of entries in $\wt M_i$ is twice that
of $\wh M_i$, agreeing with the  observation before the Proposition
that the cover map is two-to-one on edges.

For $(iii)$, 
to show $\Sigma_{\wt M/M}^{0,+} $ is open in $\Sigma_{\wt M}^{0,+}$,
note that 
a thin cylinder set $[.e_0\dots e_n]$ of $\Sigma_{M}^{0,+}$ is also a
thin cylinder for $\Sigma_{\wt M}^{0,+}$, since all extensions of this
word must
remain within $\E$;
 thus it is a clopen subset of $\Sigma_{\wt
  M}^{0,+}$ and so $\Sigma_{M}^{0,+} $ is an open subset of
$\Sigma_{\wt M}^{0,+} $. Now 
for each $m$, $\Sigma_{\wt M^{(m)}}^{0,+}$ is a finite union of
sets of the form  $[.f_0\dots
f_m]$ where $f_m^+= e_m^+$, which is 
homeomorphic to  such a cylinder $[.e_0\dots e_m]$.
Therefore the tower $\Sigma_{\wt M/M}^{0,+} $ is a countable union of 
clopen sets of $\Sigma_{\wt M}^{0,+}$ and hence is an open subset of
$\Sigma_{\wt M}^{0,+}$.

Supposing next for $M\leq \wh M$  that  $\Sigma_{M}^{0,+}$ is an open subet
  of  $\Sigma_{\wh M}^{0,+}$, then
given a cylinder set
$[.e_0\dots e_m]_M$ of $\Sigma_{M}^{0,+}$ let us write $[.e_0\dots e_m]_{\wh
  M}$ for the corresponding cylinder set of $\Sigma_{\wh M}^{0,+}$. 
Since  $\Sigma_{M}^{0,+}$ is  open in  $\Sigma_{\wh M}^{0,+}$ then 
$[.e_0\dots e_m]_M= [.e_0\dots e_m]_{\wh
  M}\cap \Sigma_{M}^{0,+}$ is also open. The simplest case is when
$[.e_0\dots e_m]_M=  [.e_0\dots e_m]_{\wh
  M}$, as then   the proof is like that just given, for $\wt M$:
the $\FC$-orbit of $[.e_0\dots e_m]_M$ is a
subset of $\Sigma_{\wt M^{(m)}}^{0,+}$ and is a finite union of open
sets in $\Sigma_{\wh M}^{0,+}$ hence is open. In general, since 
$[.e_0\dots e_m]_M$ is open it is a a union (perhaps
countably infinite) of cylinders of  $\Sigma_{\wh M}^{0,+}$, of the
form $[.e_0\dots e_m\dots e_n]_{\wh M}$, so $[.e_0\dots e_k]_M=  [.e_0\dots e_k]_{\wh
  M}$, and the 
 $\FC$-orbit of each of these is open, whence so is the  $\FC$-orbit
 of $[.e_0\dots e_m]_M$. It follows that the tower $\Sigma_{\wh
   M/M}^{0,+} $ is an open set, indeed it is a countable union of
 clopen subsets of $\Sigma_{\wt M}^{0,+}$. See also Props.~
 \ref{p:opensets}, \ref{p:opengeneral}.

For $(iv)$, the map $
\Phi: \Sigma_{\wt M}^{0,+}\to \Sigma_{\wh M}^{0,+}$ defined above 
 is clearly onto and continuous. From the structure of the edge alphabet $\wt\E_i$, there are
  three possibilities for an allowed edge path $\e$: either it is
  entirely within $\wh \E'$, so e.g.~$\e=(. e_0' \overline e_1'\dots\e_k'\dots)$,  entirely within $\E$, so $\e=(.e_0 e_1\dots)$,
or the first part is within $\wh \E'$ and the second within $\E$, that
is, for some $m>0$, e.g.~$\e=(. e_0' \overline e_1'\dots  e_{m-1}' \overline e^\circ_m
e_{m+1}\dots )$. 
A path in $\Sigma_{\wh M}^{0,+}$ with infinitely many  edges 
in $ 
\overline \E_i $ has a single preimage
in $\Sigma_{\wt M}^{0,+}$, which stays forever in $\wh \E'$. All
others have two preimages: if the path has all edges in $\E$ then it
has one preimage always in $\E'\subseteq \wh
\E'\subseteq \wt \E$    and one always in $\E\subseteq \wt \E$, and if it has a positive finite number of edges
in $\overline \E_i $ then 
it has one preimage which  stays forever in $\wh \E'$ and a second of
the form e.g.~$\e=(. e_0' \overline e_1'\dots  e_{m-1}' \overline e^\circ_m
e_{m+1}\dots )$
as above. Note that in both cases the second is in the tower
$\Sigma_{\wt M/M}^{0,+}$, while the first is not. Therefore the restriction of $\Phi$ from
$\Sigma_{\wt M/M}^{0,+}$ to
$\Sigma_{\wh M/M}^{0,+}$ is  bijective. Recalling Definition
\ref{d:adictower}, since we now have two ambient spaces, 
we  write  $
{}^{\wh M} \!  M^{(m)} 
$ and $
{}^{\wt M} \!  M^{(m)} 
$ rather than simply $M^{(m)}$;
the image of the subset 
$\Sigma _{{}^{\wt M} \!  M^{(m)} }^{0,+}$ is $\Sigma _{{}^{\wh M} \!  M^{(m)} }^{0,+}$, for
each $m$. A
cylinder $[.f_0\dots f_k]$ of $\Sigma _{{}^{\wt M} \!  M^{(m)}
}^{0,+}$
for $k\geq m$
corresponds via $\Phi$ to a unique cylinder $[.g_0\dots g_k]$ of
$\Sigma _{{}^{\wh M} \!  M^{(m)} }^{0,+}$
(warning: this is not true for $k<m$). It follows that $\Phi^{-1}$ is
continuous from
$\Sigma _{{}^{\wt M} \!  M^{(m)} }^{0,+}$
to
$\Sigma _{{}^{\wh M} \!  M^{(m)} }^{0,+}$
, and so $\Phi$ is a homeomorphism there.
Hence taking the direct limits, 
$
\Phi: \Sigma_{\wt M/M}^{0,+}\to \Sigma_{\wh M/M}^{0,+}$ is a
homeomorphism. 

Next, since the orders are preserved by $\Phi$, the tower maps are conjugate.

Since $\Phi$ is a homeomorphism from $\Sigma_{\wt M/M}^{0,+}$ to
$\Sigma_{\wh M/M}^{0,+}$,
the group $\FC_{\wh M}$ acting on $\Sigma_{\wh M/M}^{0,+}$ lifts via $\Phi$ to a subgroup of the group of
homeomorphisms of $\Sigma_{\wt M/M}^{0,+}$. We wish to show that this
is a subgroup of $ \FC_{\wt M}$ as it acts on $\Sigma_{\wt
  M/M}^{0,+}$, and to show that we have to look more closely.

Each thin cylinder $[e_0\dots e_m]$ of  $\Sigma_{\wh M}^{0,+}$ has as its
$\Phi$-inverse image two disjoint thin cylinders of 
$\Sigma_{\wt M}^{0,+}$. Indeed there are two possibilities: either
each $e_i\in \E_i$, in which case the two preimages are $[e_0\dots
e_m]$ and $[e_0'\dots e_m']$, or some edges are in 
$\overline \E_i $. Let $\overline e_k$ with $0\leq k\leq m$ be the
last such edge; then the cylinder is, say, $[e_0\dots \overline
e_k\dots  e_m]$
and its two preimages are $[e_0'\dots e_{k-1}'\overline e_k'e_{k+1}\dots  e_m']$ and 
$[e_0'\dots  e_{k-1}'\overline e_k^0e_{k+1}\dots  e_m]$. Now we claim
that
a generator $\gamma$  of $ \FC_{\wh M}$ in fact lifts to an element
(which is {\em not} a generator!) of
$ \FC_{\wt M}$. Suppose $\gamma$ interchanges $[e_0\dots e_m]$ and
$[f_0\dots f_m]$. 
Now note that the two cylinders in the preimage by $\Phi$ of $[e_0\dots e_m]$
end in different vertices, one in $\A_{m+1}'$ and one in
$\A_{m+1}$, whence they are disjoint. Therefore $\gamma\in \FC_{\wh M}$ lifts to $\eta\circ
\zeta=
\zeta\circ \eta$ where $\eta, \zeta$ are generators of $\FC_{\wt M}$
defined in the only way possible given the ending vertices: in the first case above, 
$\eta$ interchanges $[e_0\dots
e_m]$ and  $[f_0\dots
f_m]$, while $\zeta $ interchanges $[e_0'\dots e_m']$ and $[f_0'\dots
f_m']$;
in the second case, $\eta$ interchanges
$[e_0'\dots e_{k-1}'\overline e_k'e_{k+1}\dots  e_m']$ and 
$[f_0'\dots f_{k-1}'\overline f_k'f_{k+1}\dots  f_m']$, while $\zeta $ interchanges 
$[e_0'\dots  e_{k-1}'\overline e_k^0e_{k+1}\dots  e_m]$
and
$[f_0'\dots  f_{k-1}'\overline f_k^0f_{k+1}\dots  f_m]$.

What is actually going on is that $\FC_{\wt M}$ is a product of two
normal (commuting) subgroups, one generated by the interchanges of
thin cylinders which end on a vertex of $\A_m$ for some $m\geq 0$,
the other by those which end on a vertex of $\A_m'$; we have found
that (despite the fact that $\Phi$ is not bijective) $\gamma$ lifts to a homeomorphism of $\Sigma_{\wt
  M}^{0,+}$ which is  product of these. When restricted to the towers
$\Phi$ is a homeomorphism, which conjugates these actions.

Lastly, this subgroup of $\FC_{\wt M}$ has the same orbit equivalence
relation (the stable equivalence relation) as $\FC_{\wt M}$  for its
action on all of  $\Sigma_{\wt M}^{0,+}$, so that is true on the tower
as well.
\end{proof}

Given nested diagrams $\mathfrak B_{\A,\E}\leq \mathfrak
 B_{\wh \A,\wh\E}$, with matrix sequences $M\leq \wh M$, 
for the purpose of forming the adic tower
some of the vertices and edges may be irrelevant. That is, there may be
$\wh M'< \wh M$  such that $\Sigma_{\wh
  M'/M}^{0,+} = \Sigma_{\wh M/M}^{0,+} $. There is a canonical way to
find the least such diagram:

\begin{prop}
  Given $M\leq \wh M$, there exists a least $\wh M'\leq \wh M$  such
  that $\Sigma_{\wh
  M'/M}^{0,+} = \Sigma_{\wh M/M}^{0,+} $. 
\end{prop}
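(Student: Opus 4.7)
The plan is to construct $\wh M'$ explicitly as the subdiagram consisting of exactly those edges and vertices of $\wh M$ that appear somewhere on a path of the tower, then verify minimality directly from this definition.

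More precisely, I would define generalized matrices $\wh M' = (\wh M'_i)_{i\geq 0}$ with corresponding edge alphabet
\[
\wt\E'_i \;=\; \E_i \;\cup\; \bigl\{\, e \in \wh \E_i :\; \exists\, \f \in \Sigma_{\wh M/M}^{0,+} \text{ with } f_i = e \,\bigr\},
\]
and vertex alphabet $\wt\A'_i$ consisting of $\A_i$ together with the tails and heads of all edges in $\wt\E'_{i-1}\cup \wt\E'_i$, with $(\wh M'_i)_{ab}$ counting the edges in $\wt\E'_i$ from $a$ to $b$. By construction $M \leq \wh M' \leq \wh M$, so the tower $\Sigma_{\wh M'/M}^{0,+}$ is well-defined in the sense of Definition \ref{d:adictower}.

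Next I would verify $\Sigma_{\wh M'/M}^{0,+} = \Sigma_{\wh M/M}^{0,+}$. The inclusion $\subseteq$ is immediate from $\wh M' \leq \wh M$ and Proposition \ref{p:nested_diagrams}. For the reverse inclusion, any $\f \in \Sigma_{\wh M/M}^{0,+}$ has every coordinate $f_i$ belonging to $\wt\E'_i$ by construction, and $\f$ is eventually in $\Sigma_M^{0,+}$, so $\f$ lies in the tower of $\wh M'$ over $M$.

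For minimality, suppose $\wh M''\leq \wh M$ is another subdiagram with $M\leq \wh M''$ (required for the tower to be defined) and $\Sigma_{\wh M''/M}^{0,+} = \Sigma_{\wh M/M}^{0,+}$. To show $\wh M' \leq \wh M''$, it suffices to check that every edge $e \in \wt\E'_i$ is an edge of $\wh M''$, since the vertex alphabets then follow automatically. If $e \in \E_i$, this holds because $M \leq \wh M''$. Otherwise $e$ lies on some $\f \in \Sigma_{\wh M/M}^{0,+} = \Sigma_{\wh M''/M}^{0,+} \subseteq \Sigma_{\wh M''}^{0,+}$, so $e$ is an edge of $\wh M''$.

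The argument is essentially formal: the main (mild) subtlety is the bookkeeping of requiring $M\leq \wh M''$ as a standing hypothesis for the tower notation to make sense, which explains why the definition of $\wt\E'_i$ must explicitly include $\E_i$ rather than only the edges used by tower paths (the latter can omit edges of $M$ that happen not to extend to any infinite path, in case $M$ fails to be reduced). No difficult step arises.
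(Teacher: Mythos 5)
Your proof is correct, and it follows the same basic strategy as the paper: construct the least subdiagram explicitly as the part of $\wh M$ actually needed for the tower, then check equality of towers and minimality. The one genuine difference is the criterion used. The paper's proof (a single sentence) keeps exactly those vertices and edges of $\wh{\mathfrak B}$ at level $k$ which connect, via some finite edge path, to an element of $\E_m$ for some $m>k$; you instead keep $\E_i$ together with the edges that occur on an actual infinite tower path. Under the reducedness hypotheses in force elsewhere in the paper (so that finite paths into $M$, and paths within $M$, can always be prolonged) the two criteria select the same edges, so the constructions agree. In degenerate non-reduced situations your version is the more careful one: the paper's criterion could retain an edge of $\wh\E_k$ that reaches only a dead-end edge of $\E_m$ and hence lies on no tower path (costing minimality), and could even discard dead-end edges of $\E_k$ itself, so that $M\leq \wh M'$ would fail and the tower notation would no longer apply; your explicit inclusion of $\E_i$ and your restriction of the competitors $\wh M''$ to those with $M\leq \wh M''$ deal with exactly this point. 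Your verification of minimality (every edge on a tower path must already be an edge of any competitor $\wh M''$, since the common tower sits inside $\Sigma_{\wh M''}^{0,+}$) is also spelled out, whereas the paper leaves it implicit. So: same idea, slightly different and somewhat more robust implementation.
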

\begin{proof}
 Starting at $k=0$ and proceeding inductively to $(k+1)$, remove all
 elements of  $\wh \A_k$ and $\wh \E_k$ which do not connect to some
 element of
 $\E_m$ for some $m>k$, via some edge path. 
\end{proof}

\begin{defi}
  We call the resulting
  least $\wh M'$ the {\em distillation} of $\wh M$ with respect to $M$.
\end{defi}

The next result helps justify the name ``canonical cover'', as it
shows
you can't keep going:
\begin{prop} The operation of taking the canonical cover (after
  distillation) is
  idempotent. That is, given $M\leq \wh M$, with $\wt M$ the canonical
  cover of $\wh M$ over $M$, then 
  the canonical cover of $\wt M$ over $M$ is equal to $\wt M$ after both are
  distilled.

\end{prop}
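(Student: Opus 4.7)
The strategy is to compute the second canonical cover $\wt{\wt M}$ of $M \leq \wt M$ explicitly via Theorem~\ref{t:towerring}(ii), place it in block form on the doubled alphabet, and verify that distillation collapses two of the four resulting vertex blocks, leaving a diagram canonically identified with $\wt M$.

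First I would set up the construction, applying the canonical cover recipe to $M \leq \wt M$. Following the convention at the start of \S\ref{ss:canonicalcover}, extend $M$ to the $\wt \A\times\wt \A$ matrix $M'$ (zero off the $\A\times\A$-block), introduce a second-iteration primed copy $\wt\A^{\dagger}=(\A')^{\dagger}\cup\A^{\dagger}$ of $\wt\A=\A'\cup\A$, and assemble the new edge alphabet into three parts: the base $\E$, the ``$\circ$''-ed new edges $(\wt\E\setminus\E)^{\circ}=(\overline\E^{\circ})^{\circ}\cup(\wh\E')^{\circ}$ (each of which has source in $(\A')^{\dagger}$ because the new edges of $\wt\E$ all originate in $\A'$), and the primed copy $\wt\E^{\dagger}=\E^{\dagger}\cup(\overline\E^{\circ})^{\dagger}\cup(\wh\E')^{\dagger}$ of $\wt\E$.

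Second I would compute the block matrix. With $\wt C \equiv \wt M-M'=\begin{pmatrix}\wh M & C\\ 0 & 0\end{pmatrix}$, Theorem~\ref{t:towerring}(ii) yields, on the ordered alphabet $((\A')^{\dagger},\A^{\dagger},\A',\A)$,
\[
\wt{\wt M} \;=\; \begin{pmatrix} \wh M & C & \wh M & C \\ 0 & M & 0 & 0 \\ 0 & 0 & 0 & 0 \\ 0 & 0 & 0 & M \end{pmatrix}.
\]
The two structural observations to record are: \emph{(a)} the $\A'$ row vanishes identically, since every edge of $\wt\E$ with source in $\A'$ lies in $\wt\E\setminus\E$ and is therefore ``$\circ$''-ed so that its source is relabeled to $(\A')^{\dagger}$ rather than $\A'$; \emph{(b)} the $\A^{\dagger}$ row has only the self-loop block $M$, since no new edges originate in $\A^{\dagger}$.

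Third I would apply distillation. The block $\A'$ is a dead end—with no outgoing edges it cannot connect forward to any element of $\E$—and so it is removed, together with the edges targeting it from $(\A')^{\dagger}$. The block $\A^{\dagger}$ is a forward trap: all its outgoing edges stay inside $\A^{\dagger}$ via $\E^{\dagger}$, so no path from $\A^{\dagger}$ ever meets an element of $\E\subseteq\A\times\A$, and $\A^{\dagger}$ is removed as well. The surviving alphabet $(\A')^{\dagger}\cup\A$ and submatrix $\begin{pmatrix}\wh M & C\\ 0 & M\end{pmatrix}$ coincide, under the natural relabeling $(\A')^{\dagger}\leftrightarrow\A'$, with $\wt M$; order compatibility is automatic because the ``$\circ$'' and ``$\dagger$'' operations inherit the order $\wh\O$ by construction. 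The main obstacle will be the bookkeeping of two layers of primed and ``$\circ$''-ed copies; the real content of the argument is the vanishing of the $\A'$ row, which is the structural reason idempotency holds—the second cover merely relabels the already-primed vertices of $\wt M$ and generates a redundant ``duplicate base'' $\A^{\dagger}$, both of which are swept away by distillation.
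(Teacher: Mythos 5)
Your proposal is correct and follows essentially the same route as the paper: both compute the second cover via Theorem~\ref{t:towerring}(ii) as the $4\times 4$ block matrix $\left[\begin{smallmatrix}\wh M& C &\wh M& C\\ 0&M&0&0\\ 0&0&0&0\\ 0 &0& 0&M\end{smallmatrix}\right]$ on the doubled alphabet and then observe that distillation with respect to the base $M$ deletes the two middle blocks (the zero-row copy of $\A'$ and the self-trapped duplicate of $\A$), leaving $\left[\begin{smallmatrix}\wh M& C\\ 0&M\end{smallmatrix}\right]=\wt M$. Your extra bookkeeping with the $\circ$-ed and daggered edge alphabets is just a more explicit derivation of the same block matrix the paper obtains directly from $C'=\wt M-M'$.
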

\begin{proof} To begin the proof we extend $\A$ to $\wh \A$ so as to have equal
  alphabets; this 
 replaces $M$ by a possibly nonprimitive sequence. 
  By $(ii)$ of Theorem \ref{t:towerring} the matrix for the cover
  of $\wh M$ over $M$ is then
$$\wt M=
\left[ \begin{matrix}
\wh M& C \\
0 & M
\end{matrix}  \right] 
$$ with $C= \wh M-M$. To construct  the canonical cover of $\wt M$ over $ M$ we first extend the
alphabet for $M$ to $\wt \A$.  This replaces the matrix sequence $M$ by
$M'=
\left[ \begin{matrix}
0& 0\\
0 & M
\end{matrix}  \right] 
.$ Note that 
$\wt M- M'=
\left[ \begin{matrix}
\wh M& C \\
0 & 0
\end{matrix}  \right]\equiv C'.$
Then we form 
$$
\left[ \begin{matrix}
\wt M& C' \\
0 & M'
\end{matrix}  \right] 
= 
\left[ \begin{matrix}
\wh M& C &\wh M& C\\
0&M&0&0\\
0&0&0&0\\
0 &0& 0&M\\
\end{matrix}  \right] 
 $$
There are four subaphabet sequences, associated to this (rectangular)
block structure. We distill this matrix sequence, first removing edges which do not
eventually connect to the fourth of these, associated to 
the matrix sequence $M$ in the lower right corner. Since the third row
is all zero, we can make the third column all zero. Since the second
alphabet only connects to itself, we can make the second row all
zero, whence also the second column. 
This gives 
$$\left[ \begin{matrix}
\wh M& 0 &0 & C\\
0&0&0&0\\
0&0&0&0\\
0 &0& 0&M\\
\end{matrix}  \right] 
 .$$ So after removing unnecessary alphabets,
we end up with the distilled form
$$\left[ \begin{matrix}
\wh M& C\\
0&M\\
\end{matrix}  \right] 
 $$ which is exactly $\wt M$.
\end{proof}

Recalling Definition \ref{d:Radon}, we see next a condition we shall
encounter frequently, which will
guarantee that a locally finite measure is in fact positive locally finite.

\begin{lem}\label{l:finite_on_all}
 Let $(M_i)_{i\geq 0}$ be a primitive sequence   of  nonnegative integer
matrices. Let $\nu$ be an $\FC$\,--\, invariant Borel measure on $\Sigma^{0,+}_M$
which is positive
finite, respectively infinite,
on some open set. Then $\nu$ is positive finite, respectively infinite,
on all nonempty open sets.
\end{lem}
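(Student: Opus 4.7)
The proof plan is to reduce everything to the function $w_n: \A_n \to [0,\infty]$ defined by $w_n(s) \equiv \nu([.e_0\dots e_{n-1}])$ for any thin edge cylinder with $e_{n-1}^+ = s$; by the $\FC$\,--\,invariance of $\nu$ and the identification given in \eqref{eq:8} of Proposition~\ref{p:conserg}, this number is independent of the choice of cylinder. The $\sigma$\,--\,additivity of $\nu$ applied to the decomposition of $[.e_0\dots e_{n-1}]$ into cylinders of length $n+1$ yields the recurrence $w_n(a) = \sum_{b\in\A_{n+1}} (M_n)_{ab}\, w_{n+1}(b)$, which iterates to $w_j(a) = \sum_b (M_j^{k-1})_{ab}\, w_k(b)$ for every $j\le k$. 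Since every nonempty open set contains a thin cylinder and $\Sigma_{M}^{0,+}$ is a finite disjoint union of the time-zero cylinders $[.e_0]$ with $e_0\in\E_0$, it suffices to prove that either every $w_n(s)$ lies in $(0,\infty)$ or every $w_n(s)$ equals $+\infty$.

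For the positive finite case I will show separately that positivity and finiteness propagate. For positivity, if some $w_n(s)>0$ then for any $k \ge n$ the decomposition of $[.e_0\dots e_{n-1}]$ into thin cylinders of length $k$ forces at least one $w_k(t)>0$; choosing $k$ large enough that primitivity yields $M_j^{k-1}$ strictly positive, the inequality $w_j(a) \ge (M_j^{k-1})_{at}\, w_k(t)>0$ then establishes positivity at level $j$. For finiteness, starting from some $w_n(s)<\infty$ and again invoking primitivity to find $k$ with $M_n^{k-1}>0$ strictly, every term in the finite sum $w_n(s) = \sum_b (M_n^{k-1})_{sb}\, w_k(b)$ must itself be finite, so $w_k(b)<\infty$ for every $b\in\A_{k}$. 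Iterating this forward step produces a cofinal sequence $n<k_1<k_2<\cdots$ along which all values are finite; at any intermediate level $j$ the backward recurrence to the next $k_i$ expresses $w_j(a)$ as a finite sum of finite numbers.

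For the locally infinite case I argue by duality. If some $w_n(s) = +\infty$, then $w_n(s) = \sum_c (M_n^{l-1})_{sc}\, w_l(c)$ shows that for every $l\ge n$ at least one $w_l(c)$ must be infinite, for otherwise the sum would be finite. Given any $j$ and any $a\in\A_j$, I choose $l\ge\max(j,n)$ with $M_j^{l-1}$ strictly positive (by primitivity) and with some $w_l(c)=+\infty$; then $w_j(a) \ge (M_j^{l-1})_{ac}\, w_l(c) = +\infty$. Conversely, the finite case shows that if every $w_n(s)$ is finite then $\nu(\Sigma_{M}^{0,+}) = \sum_{e_0\in\E_0} w_1(e_0^+) < \infty$ and no open set can have infinite measure; the hypothesis $\nu(U) = +\infty$ therefore forces some $w_n(s) = +\infty$, and the preceding argument yields $w_j(a) = +\infty$ for all $j$ and $a$, so every thin cylinder and hence every nonempty open set has infinite measure.

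The main technical obstacle will be the forward propagation of finiteness, since primitivity naturally supplies strict positivity of products $M_j^{k-1}$ only when $k$ is sufficiently large relative to $j$, so the ``forward'' direction is harder to control than the backward one; this is handled by iterating the forward step to produce a cofinal sequence of levels at which all symbols have finite $w$\,--\,values, and then interpolating at intermediate levels via backward recurrences.
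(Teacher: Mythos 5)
Your proof is correct and rests on the same ingredients as the paper's: $\FC$\,--\,invariance making the measure of a thin cylinder depend only on its terminal vertex, refinement of cylinders to a later level, and primitivity supplying strictly positive products $M_j^{k-1}$. The difference is only presentational—you organize the bookkeeping through the $[0,\infty]$-valued vertex function $w_n$ and the recurrence $w_j = M_j^{k-1}w_k$, propagating finiteness forward directly, whereas the paper compares two cylinders by extending both to a common terminal vertex and handles finiteness by contraposition from the infinite case.
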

\begin{proof}
First we show
the statement for  
thin cylinder sets.
Given $[e_0\dots e_n]$ and $[f_0\dots f_m]$ such that 
$\nu ([e_0\dots e_n])>0$, we shall show the same is true for  
$[f_0\dots f_m]$. By primitivity there exists $k>n,m$ such that
$M_n^k$ and $M_m^k$ have all entries $>0$.
Since $[e_0\dots e_n]=\cup [e_0\dots e_ng_{n+1}\dots g_k]$, with the
union taken over all allowed extensions of that word, 
at least one of these has positive measure, say 
$[e_0\dots e_ne_{n+1}\dots e_k]$. Due to the primitivity there exists 
$[f_0\dots f_mf_{m+1}\dots f_k]$ with $f_k^+= e_k^+=a$; by invariance
these have equal measure. Now if $\nu ([e_0\dots e_n])=\infty$,
then by the same argument, the same  holds for $[f_0\dots f_m]$.
It follows that if $\nu ([e_0\dots e_n])<\infty$,
then $\nu ([f_0\dots f_n])<\infty$ as well, since otherwise we would have a contradiction.

Now if some open set $\Cal U$ has infinite measure then $\Sigma^{0,+}_M$ does, and
since that is a finite union of $0$\,--\, cylinders, one of those does, so
by the above every thin cylinder does; since every open set $\Cal V$
is a countable union of thin cylinders, the same holds for $\Cal
V$. If some open $\Cal U$ has positive finite measure, then since it is a countable
union of thin cylinders, one of these has positive finite measure, so
by the above all thin cylinders do. And the same must hold for  any
open set $\Cal
V$,  as if it had infinite measure, then so would $\Cal U$, as just shown.
\end{proof}

\begin{theo}\label{t:covermeasure}
Given nonnegative integer matrices $M\leq \wh M$, let $\nu$ be an $\FC_M$\,--\,invariant Borel measure on $\Sigma_{\M}^{0,+}$.
Write $\wh \nu$ for its extension to the adic tower
$\Sigma_{\wh M/M}^{0,+}\subseteq \Sigma_{\wh M}^{0,+}$ and $\wt \nu$ for its extension to
the adic cover tower $\Sigma_{\wt M/M}^{0,+}\subseteq \Sigma_{\wt
  M}^{0,+}$.
We have defined the covering map 
$\Phi:\Sigma_{\wt M}^{0,+} \to  \Sigma_{\wh M}^{0,+}$ in Theorem \ref{t:towerring}.
We have the following:
\item{(i)} Via the map $\Phi$, then given an order $\wh \O$ on  $\Sigma_{\wh M}^{0,+}$,  the transformations $(\Sigma_{\wh M/M}^{0,+},\wh \nu, T_{\wh \O})$ and  $(\Sigma_{\wt M/M}^{0,+},\wt
\nu, T_{\wt \O})$ are
measure-theoretically 
isomorphic. The action of $\FC_{\wh M}$ on 
$(\Sigma_{\wh M/M}^{0,+},\wh \nu)$ is 
isomorphic to the action of a subgroup of $\FC_{\wt M}$ on $(\Sigma_{\wt M/M}^{0,+},\wt
\nu)$. 

\noindent
\item{(ii)}
The map $\Phi$ induces a bijection $\wt \nu\mapsto \wh \nu$
between the collections of those conservative ergodic measures on
$\Sigma_{\wt M}^{0,+}$ and $\Sigma_{\wh M}^{0,+}$ which give positive
mass to $\Sigma_{M}^{0,+}$.

\item{(iii)} 
The measure $\nu$ is positive on some open set of $\Sigma_{M}^{0,+}$
iff $\wh \nu$ is positive on some open set of 
$\Sigma_{\wh
  M/M}^{0,+}$ iff $\wt \nu$ is positive on some open set of  
$\Sigma_{\wt M/M}^{0,+}$.
Moreover $\nu$ is
(positive) locally finite on $\Sigma_{\M}^{0,+}$
iff $\wh \nu$ is (positive) locally
finite on $\Sigma_{\wh M/M}^{0,+}$ iff $\wt \nu$ is (positive) locally
finite on $\Sigma_{\wt M/M}^{0,+}$, and in this case $\wt \nu$ is locally
finite on $\Sigma_{\wt M}^{0,+}$.
However it may not be locally finite on $\Sigma_{\wh M}^{0,+}$: if $\wh M$ is
primitive then an infinite invariant Borel measure on $\Sigma_{\wh
  M/M}^{0,+}$ is not locally finite on $\Sigma_{\wh M}^{0,+}$, indeed
it is  infinite
on every nonempty open subset. In this case $\wt \nu$
is a Radon measure on $\Sigma_{\wt M}^{0,+}$ while $\wh \nu$ is inner
regular but not
Radon on $\Sigma_{\wh  M}^{0,+}$.
\end{theo}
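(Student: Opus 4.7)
The plan is to prove the three parts in order, relying heavily on the topological structure established in Theorem \ref{t:towerring} and on the uniqueness of invariant extensions from Theorem \ref{t:towermeasure}(i).

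For part (i), I would first invoke Theorem \ref{t:towerring}(iv) directly: it already gives a homeomorphism $\Phi|_{\Sigma_{\wt M/M}^{0,+}}$ onto $\Sigma_{\wh M/M}^{0,+}$ conjugating $T_{\wt\O}$ to $T_{\wh\O}$, and lifts the action of $\FC_{\wh M}$ to a subgroup of $\FC_{\wt M}$ with the same orbit equivalence relation. What remains is to verify $\Phi_*\wt\nu = \wh\nu$. Both are $\FC$-invariant Borel extensions of the same base measure $\nu$ on $\Sigma_M^{0,+}$ (observe that $\Phi$ restricts to the identity on the canonical inclusion of $\Sigma_M^{0,+}$ into each cover). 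By the uniqueness of invariant extensions from Theorem \ref{t:towermeasure}(i), the pushforward and $\wh\nu$ must agree on the tower.

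For part (ii), let $\wh\nu$ be a conservative ergodic $\FC_{\wh M}$-invariant measure on $\Sigma_{\wh M}^{0,+}$ with $\wh\nu(\Sigma_M^{0,+})>0$. Since the tower $\Sigma_{\wh M/M}^{0,+}$ is the $\FC_{\wh M}$-saturation of $\Sigma_M^{0,+}$, it is invariant and of positive measure, hence by ergodicity its complement has $\wh\nu$-measure zero. Then by Theorem \ref{t:towermeasure}(i) the restriction to the base is a well-defined $\FC_M$-invariant measure $\nu$, and part (i) of the present theorem matches it with a unique $\wt\nu$. Conversely, any conservative ergodic $\wt\nu$ giving positive mass to $\Sigma_M^{0,+}$ is concentrated on $\Sigma_{\wt M/M}^{0,+}$ by the same ergodicity argument, and pushes forward via $\Phi$ to such a $\wh\nu$. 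Conservativity and ergodicity are preserved by Theorem \ref{t:towermeasure}(ii).

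For part (iii), the equivalences among ``positive on some open set'' and ``locally finite'' on the three spaces $\Sigma_M^{0,+} \subseteq \Sigma_{\wt M/M}^{0,+}$ and $\Sigma_M^{0,+} \subseteq \Sigma_{\wh M/M}^{0,+}$ follow by exploiting that, by Theorem \ref{t:towerring}(iii), $\Sigma_M^{0,+}$ is open in both towers; for the reverse direction, any open set of positive/finite measure in a tower contains a thin cylinder, and the $\FC$-action together with the tower structure $\Sigma_{\wh M/M}^{0,+}=\cup_m \Sigma_{M^{(m)}}^{0,+}$ moves its $\FC$-class back to cylinders whose extensions land in the base, with measures preserved by invariance. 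Since $\Sigma_{\wt M/M}^{0,+}$ is an open subset of $\Sigma_{\wt M}^{0,+}$ (Theorem \ref{t:towerring}(iii)) whose complement can be assigned zero mass, local finiteness of $\wt\nu$ extends to all of $\Sigma_{\wt M}^{0,+}$, and inner regularity from Remark \ref{r:innerregular} yields the Radon property.

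The main obstacle, and the most interesting point, is the locally infinite statement when $\wh M$ is primitive. Here I would argue: since the tower is $\FC_{\wh M}$-invariant, extending $\wh\nu$ by zero outside it gives an $\FC_{\wh M}$-invariant Borel measure on all of $\Sigma_{\wh M}^{0,+}$. By Theorem \ref{t:towerring}(i), the tower is dense in $\Sigma_{\wh M}^{0,+}$ when it is a proper subset, so every nonempty open set meets the tower and must, by invariance applied to a cylinder intersecting the base, have positive $\wh\nu$-measure. Lemma \ref{l:finite_on_all} then forces the dichotomy: either $\wh\nu$ is positive finite on every nonempty open set, or positive infinite on every nonempty open set. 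The first alternative would force $\wh\nu(\Sigma_{\wh M}^{0,+})<\infty$ by compactness (finite cover by 0-cylinders), contradicting the assumed infinite total mass on the tower. Thus the second alternative holds, showing that $\wh\nu$ fails to be locally finite on $\Sigma_{\wh M}^{0,+}$, hence is not Radon, while remaining inner regular as a countable union of inner regular pieces coming from the tower stratification.
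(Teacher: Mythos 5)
Your parts (i) and (ii) follow the paper's own route: the topological conjugacy of Theorem \ref{t:towerring}(iv) combined with the uniqueness of invariant extensions from Theorem \ref{t:towermeasure}(i) (noting that $\Phi$ restricts to the identity on the base), with Theorem \ref{t:towermeasure}(ii) supplying conservativity and ergodicity; your ergodicity-concentration remark only makes explicit what the paper leaves implicit. The locally infinite statement at the end of (iii) you also reach correctly, though more circuitously than the paper: since the whole space is open and carries infinite mass, the ``infinite'' case of Lemma \ref{l:finite_on_all} applies directly, so the density/positivity detour and the dichotomy-plus-compactness contradiction are not needed (and the ``dichotomy'' is slightly more than the lemma literally states).

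The genuine gap is in the first half of (iii). You claim that $\Sigma_{M}^{0,+}$ is open in both towers, citing Theorem \ref{t:towerring}(iii); but that theorem asserts the cover tower is open in $\Sigma_{\wt M}^{0,+}$ (its proof shows the base is clopen there), and it says nothing of the kind for $\Sigma_{\wh M/M}^{0,+}$. In the relative topology inherited from $\Sigma_{\wh M}^{0,+}$ the base is in general \emph{not} open in the tower: for $[2]\leq[3]$ (the Integer Cantor Set) every $\wh M$-cylinder around a base point contains tower points outside the base, and in fact the trace of such a cylinder on the tower has infinite $\wh\nu$-measure even though $\nu$ is a probability measure, so a direct argument with $\wh M$-cylinders inside $\Sigma_{\wh M/M}^{0,+}$ cannot yield the positivity/local-finiteness equivalences in either direction. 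This is precisely why the canonical cover is used: the paper proves the equivalences for $\wt\nu$ on $\Sigma_{\wt M/M}^{0,+}$, where a basic open set of the tower is a $\wt M$-cylinder which, by column-reducedness and $\FC_{\wt M}$-invariance, has the same measure as a cylinder contained in the base, and then transfers everything to $\wh\nu$ on $\Sigma_{\wh M/M}^{0,+}$ through the homeomorphism $\Phi$ of part (i). Your argument is repaired by routing it through the cover in exactly this way. (Your closing step, that giving the complement zero mass makes $\wt\nu$ locally finite on all of $\Sigma_{\wt M}^{0,+}$, is asserted with the same brevity as the corresponding line of the paper; note it does not by itself produce finite-measure neighborhoods of points of $\Sigma_{\wt M}^{0,+}\setminus\Sigma_{\wt M/M}^{0,+}$.)
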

\begin{proof}

  \item{(i)} We recall from $(iv)$ of Theorem \ref{t:towerring}
that $\Phi$ is continuous
and surjective, and that its retriction to the towers is a
topological isomorphism. 
By  part $(i)$ of Theorem \ref{t:towermeasure} an invariant Borel measure $\nu$ on the
base $\Sigma_{\M}^{0,+}$ extends to a unique invariant Borel measure 
$\wh\nu$ on $\Sigma_{\wh M/M}^{0,+}$. Now since $\Phi$ is a
topological conjugacy, $\check\nu \equiv \nu\circ \Phi$ defines an
invariant Borel measure on $\Sigma_{\wt M/M}^{0,+}$. And since the
restriction of $\Phi$ to $\Sigma_{\M}^{0,+}$ is the
identity map, the restriction of the measure $\check \nu$ to
$\Sigma_{\M}^{0,+}$  is $\nu$. Now, again by Theorem
\ref{t:towermeasure}, $\wt\nu$ is the unique invariant extension of
$\nu$ to $\Sigma_{\wt M/M}^{0,+}$, whence $\check \nu=\wt\nu$. This
proves that $\Phi$ is a measure-theoretic isomorphism, proving $(i)$,  and
simultaneously, together with part $(ii)$ of Theorem
\ref{t:towermeasure}, 
proves statement $(ii)$.

\noindent
\item{(iii)}
The  topology on $\Sigma_{\wh M/M}^{0,+}$ is generated by the
collection of thin  cylinder sets of $\Sigma_{\wh
  M}^{0,+}$ which 
meet $\Sigma _{M^{(m)}}^{ 0,+}$ for some $m$. 
Let $[.f_0\dots f_m]$ be a such a cylinder set.
Then there exists a 
cylinder $[.e_0\dots e_m]$ of
$\Sigma_{\wt \M}^{0,+}$ such that $f_m^+= e_m^+$ 
and such that $[.e_0\dots e_m]$ in fact is a subset of 
$\Sigma _M^{ 0,+}$, so  $[.e_0\dots e_m]= [.e_0\dots e_m]_M$. The two
cylinders have the same $\wt \nu$-measure, and this agrees with 
$\nu[.e_0\dots e_m]$, proving the first statement. It also follows that if  $\nu$ is locally  finite on $\Sigma_{\M}^{0,+}$,
then 
$\wt \nu$ is  locally finite 
 on $\Sigma_{\wt M/M}^{0,+}$. Since the topology on $\Sigma_{\wt
   M}^{0,+}$ is generated by the thin cylinders, $\wt \nu$ is also
 locally finite on $\Sigma_{\wt
   M}^{0,+}$. Since $\Sigma_{\wh M/M}^{0,+}$ is homeomorphic to
 $\Sigma_{\wt M/M}^{0,+}$,
with $\wh \nu $ mapped to $\wt \nu$, we have that $\wh \nu$
is then locally finite 
 on $\Sigma_{\wh M/M}^{0,+}$ iff that holds for $\wt \nu$
 on $\Sigma_{\wt M/M}^{0,+}$.

 If the measure is positive on each open set of one, this also then passes to the others.

Next, suppose  $\wh M$ is
primitive with  $\wh \nu$ an infinite invariant Borel measure on $\Sigma_{\wh
  M/M}^{0,+}$.  Then it is infinite invariant on one open set of
$\Sigma_{\wh M}^{0,+}$ (the whole space) so it is infinite on every
nonempty open set, by
Lemma \ref{l:finite_on_all}.
Here inner regularity (recall Definition \ref{d:Radon})
holds for both measures, but as we have just
seen, local finiteness fails for $\wh \nu$ on $\Sigma_{\wh
  M}^{0,+}$. 
Therefore indeed $\wt \nu$
is a Radon measure on $\Sigma_{\wt M}^{0,+}$ while $\wh \nu$ is inner
regular since it is a countable sum of inner regular measures (see
Remark \ref{r:innerregular}) but not locally finite hence not Radon on $\Sigma_{\wh  M}^{0,+}$.
\end{proof}

\begin{rem}
  Next we examine the topology of subshifts for upper
triangular block form which occurs in the canonical cover. 
In Proposition \ref{p:opengeneral}
we use the nonstationary Frobenius Decomposition
  Theorem
  to strengthen this result: in fact one does not need the assumption
  in part $(ii)$ that $A$ be primitive.
  See also 
Cor.~\ref{c:uppertriangularopen}.

\end{rem}

\begin{prop}\label{p:opensets}
 Consider a nonnegative integer matrix sequence $M=\left[
\begin{matrix}
A& C \\
0 & B
\end{matrix}  \right] $.
 \item $(i)$ Then $\Sigma_A^+$,  $\Sigma_B^+$ are closed subsets of
$\Sigma_M^+$, and $\Sigma_B^+$ is open.
\item $(ii)$
Suppose also  that $B$ is reduced.
  If $A$ is primitive then  $\Sigma_A^+$ is open iff
the sequence $C= (C_i)_{i\geq 0}$ is zero except for finitely many
$i$.
\end{prop}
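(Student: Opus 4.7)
For part $(i)$, the upper triangular structure of $M$ means that no edge has source in $\B$ and target in $\A$, so any allowed path beginning at a symbol of $\B$ remains in $\B$ forever. Hence $\Sigma_B^+=\{\e\in\Sigma_M^+:\ e_0^-\in\B_0\}$, which is a finite disjoint union of thin cylinders of length one, and so is clopen in $\Sigma_M^+$. For $\Sigma_A^+$, the condition ``$e_i$ is an $A$-edge'' is clopen for each fixed $i$, and $\Sigma_A^+$ is the countable intersection of these conditions, hence closed.

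For the easy direction of $(ii)$, suppose $C_i=0$ for all $i\geq N$. Once a path sits in $\A_i$ with $i\geq N$, the vanishing of $C_i$ forces every continuation to remain in $\A$. Thus, given any $\e\in\Sigma_A^+$, the thin cylinder $[.e_0\cdots e_{N-1}]$ (or all of $\Sigma_M^+$ if $N=0$) is entirely contained in $\Sigma_A^+$. Hence $\Sigma_A^+$ is a union of clopen cylinders and is therefore open.

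For the converse, assume $A$ is primitive, $B$ is reduced, and, for contradiction, that $C_i\neq 0$ for infinitely many $i$ while $\Sigma_A^+$ is open and nonempty. Fix $\e\in\Sigma_A^+$; openness yields a thin cylinder $[.e_0\cdots e_n]\subseteq\Sigma_A^+$. By primitivity of $A$, choose $m$ so that $A_{n+1}^{k-1}$ is strictly positive for every $k\geq m$. Pick $k\geq m$ with $C_k\neq 0$: there exist $a\in\A_k$, $b\in\B_{k+1}$, and a $C$-edge $\bar e_k$ from $a$ to $b$. Positivity of $A_{n+1}^{k-1}$ supplies $A$-edges $e_{n+1},\dots,e_{k-1}$ linking $e_n^+$ to $a$, and row-reducedness of $B$ extends $b$ to an infinite $B$-path $(e_{k+1},e_{k+2},\dots)$. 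The resulting infinite path lies in $[.e_0\cdots e_n]$ but contains the $C$-edge $\bar e_k$, so it is not in $\Sigma_A^+$---a contradiction.

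The main subtlety is invoking the correct form of nonstationary primitivity: one needs strict positivity of $A_{n+1}^{k-1}$ for \emph{every} sufficiently large $k$, not just for a single $k$. This propagation property, parenthetically noted in the definition of primitivity in the Introduction, together with the correct use of row-reducedness of $B$ to obtain an infinite $B$-tail, are the only nontrivial ingredients; the remainder is bookkeeping.
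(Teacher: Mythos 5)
Your proof is correct and follows essentially the same route as the paper's: the clopen one-cylinder argument for $\Sigma_B^+$, intersections of cylinder-based clopen sets for closedness, and for $(ii)$ the same construction — primitivity of $A$ to connect a fixed cylinder in $\Sigma_A^+$ to the source of a $C$-edge occurring late, then row-reducedness of $B$ to continue the path forever inside $\B$, yielding a point of the cylinder outside $\Sigma_A^+$. The only cosmetic difference is that you phrase the hard direction as a contradiction with a single cylinder supplied by openness, whereas the paper shows directly that every cylinder neighborhood of any point of $\Sigma_A^+$ meets the complement; the content is identical.
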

\begin{proof}
  We prove they are closed: let $K_k=\cup[e_0\e_1\dots e_k] $ where
  the union is taken over all thin cylinder sets of length $k$ of $\Sigma_B^+$. Then
  $\cap_{k\geq 0} K_k$ is an intersection of clopen sets, hence
  closed, which equals $\Sigma_B^+$. The same argument works for
  $\Sigma_A^+$.

  Next we show $\Sigma_B^+$ is open. Let $\e= (e_0 e_1\dots)\in
  \Sigma_B^+$. Set $U= [e_0]$. Then for any $\f\in U$, since $M$ is upper triangular, $\f\in \Sigma_B^+$. Thus $\e\in U\subseteq \Sigma_B^+$ so $\Sigma_B^+$ is open.

  For $(ii)$, suppose $A$ is a primitive sequence. We first take the
  hypothesis that 
 $C_j$ has some positive entry for infinitely many $j$, and wish to
 then show
  $\Sigma_A^+$ is not an open subset of $\Sigma_M^+$. For this it
  suffices to find a point $\e= (e_0 e_1\dots)\in
  \Sigma_A^+$ such that any open set containing $\e$ meets the
  complement of $\Sigma_A^+$. We claim that in fact any point in $
  \Sigma_A^+$ will serve this purpose. Now 
for $U_k= [e_0\e_1\dots e_k]$, we have that
  $\{\e\}= \cap_{k\geq 0} U_k$. Since the thin cylinder sets are a base
  for the topology,  it will suffice to show each $U_k$ meets the
  complement.

  By primitivity, for a chosen $k$ there exists $N$ such that
  $A_kA_{k+1}\dots A_n$ is strictly positive for any $n\geq N$. By the
  hypothesis, there
  exists $j>N$ such that 
 $C_j$ has some positive entry, $(C_j)_{ab}$. Now $A_kA_{k+1}\dots
 A_{j-1}$ is strictly positive, whence there exists a
  path $\f= (f_0 f_1\dots)$ with $f_0= e_0, \dots, f_k=
  e_k$ and $f_j^+=a, f_{j+1}^+=b$, with $f_i^+\in \B$ (the subalphabet
  sequence 
  for $B$) for all times $i\geq j+1$, using the fact that $B$ is
  reduced so the path $(f_0 f_1\dots f_{j+1})$  can be continued to the
  right. But $\A_i\cap\B_i=\emptyset$ so $\f\notin \Sigma_A^+$ and we are done.

  On the other hand,  if $C_i= 0$ for $i\geq k$, let
  $\e= (e_0 e_1\dots)\in
  \Sigma_A^+$ and now take $U= [e_0\dots e_k]$. Then for any $\f\in
  U$, since $C_i$ is $0$, $\f\in \Sigma_A^+$. Thus $\e\in U\subseteq \Sigma_A^+$ so $\Sigma_A^+$ is open.

\end{proof}

\begin{exam}\label{exam:Cantor}
(the Integer Cantor Set inside the triadic  odometer)
We start with the stationary Bratteli diagram determined by matrix
sequence $M= (M_i)_{i\geq 0}$ with $\wh M_i= [3]$ for all $i$, with edge alphabet $\wh \E_i= 
\{a,b,c\} $,  and with order $\O$ that of the triadic odometer, that
is, 
with $a<b<c$.  We consider the subdiagram for $M\leq \wh M$ with
$M_i=[2]$ and $\E_i= \{a,c\}$.  
The matrix for the canonical cover space is 
$\wt M=\left[
\begin{matrix}
\wh M& C \\
0 & M
\end{matrix}  \right] 
=\left[
\begin{matrix}
3& 1 \\
0 & 2
\end{matrix}  \right] 
,$ since $C= [3]-[2]= [1]$. This describes the relationship between two different adic  models for the Integer Cantor Set,
as adic towers; for the first the tower embeds as a dense set with
empty interior in $\Sigma_{\wh M}^{0,+}$ giving infinite measure to
each nonempty open set of that space, while in the second,
the tower embeds as an open dense set of $\Sigma_{\wt M}^{0,+}$ and gives
finite mass to those clopen cylinder sets  which correspond to levels
of the Kakutani tower $\Sigma_{\wt M/M}^{0,+}$ over the base
$\Sigma_{M}^{0,+}$. See Fig.~\ref{F:ICSNew}.  The base is  the dyadic odometer, with Bernoulli
$(\frac{1}{2},\frac{1}{2})$ measure and infinite
expected return time, proving infinite measure unique ergodicity for
the tower map. 

We study the invariant
measures for related examples by means of a general criterion in \S
\ref{ss:measuresfinite};
see Examples \ref{exam:Cantor_b}, \ref{exam:bruin}.

Note that from Proposition \ref{p:opensets}, $\Sigma_{M}^{0,+}$ is
a clopen subset of 
$\Sigma_{\wt M}^{0,+}$, while $\Sigma_{\wh M}^{0,+}$ is a closed but not
an open subset.

We return to this example in  Examples \ref{ex:nestedodometer} and \ref{exam:Cantor_b}.
\end{exam}

Lastly we note that given $M\leq \wh M$, the constructions of the adic
tower  and canonical cover are
respected by the operation of gathering, extending the results of
Proposition \ref{p:gathermeasure} and of Corollary \ref{c:zeroand_k}
to towers and covers; the proof is a corollary of those results:
\begin{prop}\label{p:gateringtower}
  Given $M\leq \wh M$, with adic
tower  $\Sigma_{\wh M/M}^{0,+}$ and canonical cover $\wt M$,  and
given a subsequence $0=n_0< n_1<\dots$, let 
$A, \wh A, \wt A$ denote the gatherings of these matrix sequences along
the times $(n_i)_{i\geq 0}$. Then the natural isomorphism from 
$\Sigma_{\wh M}^{0,+}$
to $\Sigma_{\wh A}^{0,+}$  (with respect to the $\FC$-actions) restricts to an isomorphism from
$\Sigma_{\wh M/M}^{0,+}$ to $\Sigma_{\wh A/A}^{0,+}$, and the
canonical cover spaces $\Sigma_{\wt M}^{0,+}$, $\Sigma_{\wt A}^{0,+}$ are
naturally isomorphic. These are topological isomorphisms; the
invariant Borel measures correspond, giving measure isomorphisms. 

The invariant Borel measures correspond, moreover, for $\Sigma_{\wh
  M/M}^{0,+}$ and $\Sigma_{\wh M/M}^{k,+}$ for any $k>0$, and hence
also for $\Sigma_{\wh A/A}^{j,+}$,  for any $j\geq 0$.
 \ \ \qed\end{prop}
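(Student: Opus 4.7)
\smallskip

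The plan is to deduce each statement of the proposition by combining the already-established results on gatherings (Proposition \ref{p:gathermeasure}) and on $k^{\text{th}}$ components (Corollary \ref{c:zeroand_k}) with a short direct matrix computation that shows gathering commutes with the canonical cover construction.

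First, I would apply Proposition \ref{p:gathermeasure} separately to the matrix sequences $M$ and $\wh M$, producing homeomorphisms $\Phi_M: \Sigma_M^{0,+}\to\Sigma_A^{0,+}$ and $\Phi_{\wh M}: \Sigma_{\wh M}^{0,+}\to\Sigma_{\wh A}^{0,+}$, each of which conjugates the $\FC$-actions and preserves the stable equivalence relation. Since $M\leq \wh M$ produces nested edge alphabets $\E_k\subseteq \wh\E_k$, the gathered edge alphabets $\wt\E_k$ (whose elements are finite edge paths from time $n_k$ to $n_{k+1}-1$) satisfy $\wt\E_k\subseteq \wt{\wh\E}_k$, so $A\leq \wh A$ and $\Phi_{\wh M}$ restricts to $\Phi_M$ on $\Sigma_M^{0,+}\subseteq \Sigma_{\wh M}^{0,+}$. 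By Proposition \ref{p:towerspace} the tower $\Sigma_{\wh M/M}^{0,+}$ is the $\FC_{\wh M}$-saturation of the base $\Sigma_M^{0,+}$, so applying $\Phi_{\wh M}$, which conjugates $\FC_{\wh M}$ to $\FC_{\wh A}$ and takes the base to the base of the gathered tower, gives immediately the isomorphism $\Sigma_{\wh M/M}^{0,+}\cong \Sigma_{\wh A/A}^{0,+}$ as topological spaces with $\FC$-action.

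Next, for the statement on canonical covers, I would verify by induction on the length of the partial product that the block form of the canonical cover matrix is preserved under gathering. Writing $\wt M_j=\bigl[\begin{smallmatrix}\wh M_j & C_j\\ 0 & M_j\end{smallmatrix}\bigr]$ with $C_j=\wh M_j-M_j$ by Theorem \ref{t:towerring}(ii), the product $\wt M_j\wt M_{j+1}$ has upper-right block $\wh M_j C_{j+1}+C_j M_{j+1}=\wh M_j\wh M_{j+1}-M_jM_{j+1}$, and an easy induction extends this to
\begin{equation*}
\wt M_{n_i}\wt M_{n_i+1}\cdots \wt M_{n_{i+1}-1}=\begin{pmatrix}\wh A_i & \wh A_i-A_i\\ 0 & A_i\end{pmatrix},
\end{equation*}
which by Theorem \ref{t:towerring}(ii) applied to the pair $A\leq \wh A$ is exactly $\wt A_i$. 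Therefore the gathering of $\wt M$ along $(n_k)_{k\geq 0}$ coincides with the canonical cover of $\wh A$ over $A$, and Proposition \ref{p:gathermeasure} applied to $\wt M$ furnishes the claimed natural isomorphism $\Sigma_{\wt M}^{0,+}\cong\Sigma_{\wt A}^{0,+}$, which, by the same restriction argument as above (and using Theorem \ref{t:covermeasure}(i) to identify the cover towers with the towers themselves), restricts to the isomorphism of cover towers.

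Finally, the measure correspondence in each of these cases follows from Proposition \ref{p:gathermeasure}, which states that the induced map $\Phi_*$ on Borel measures is an affine homeomorphism preserving $\FC$-invariance, ergodicity, and conservativity; because the tower and cover spaces are invariant, these correspondences restrict appropriately. The last sentence, concerning $\Sigma_{\wh M/M}^{k,+}$, follows by applying Corollary \ref{c:zeroand_k} to the matrix sequence $\wh M$: invariant Borel measures on $\Sigma_{\wh M}^{0,+}$ correspond bijectively to those on $\Sigma_{\wh M}^{k,+}$, and this bijection preserves the $\FC$-invariant subset $\Sigma_{\wh M/M}^{0,+}$ (whose points are characterized by eventually lying in $\E$), so it restricts to a bijection of invariant measures on the tower. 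The same reasoning combined with the previous paragraphs gives the correspondence with $\Sigma_{\wh A/A}^{j,+}$. The main (though mild) obstacle is the block matrix computation above, which must be carried out carefully to confirm that gathering respects the upper triangular block structure with the precise entry $\wh A_i-A_i$; once that is in hand, everything else is a direct application of the cited results.
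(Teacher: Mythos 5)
Your proof is correct and follows essentially the same route the paper intends: the paper states this proposition as an immediate corollary of Proposition \ref{p:gathermeasure} and Corollary \ref{c:zeroand_k} (with no further argument), and your write-up simply supplies the details, including the block computation showing the gathered cover matrix equals the cover of the gathered sequences, which is in the spirit of Lemma \ref{l:A-B}(i) and Theorem \ref{t:towerring}(ii).
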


\section{A nonstationary Frobenius theorem}\label{s:Frob}
In this section we prove a nonstationary version of the classical
Frobenius decomposition theorem; we follow this in \S \ref{s:Vict} with a nonstationary Frobenius--Victory theorem. Both of these
are results in linear algebra, and as in the
classical stationary case, we will work with matrices with nonnegative
real entries, although in our
applications to the ergodic theory of adic transformations this will restrict to integer
entries.

\subsection{The stationary case}\label{ss:stationarycase}

We begin with the stationary case. See e.g.~\cite{Gantmacher59} \S
XIII.4,
and further references in the Appendix below.
Given a $(d\times d)$ nonnegative real matrix $N$, we say state $i\in
\A= \{1,\dots,d\}$ 
{\em communicates to} state $j$ iff for some $n\geq 0$ we have
$N^n_{ij}>0$; here $N^0=I$, the identity matrix, so {\em every state
communicates to itself}. We say $i$ {\em strictly} communicates to $j$ iff
this holds for some $n>0$ and 
we say $i$ {\em immediately} communicates to $j$ iff $N_{ij}>0$. 
The matrix $N$ defines a discrete dynamical system, the map $f_N:\A\to
\A$ with $f_N(a)= b$ iff $N_{ab}>0$; then the states to which $a$
communicates is exactly the orbit $\{f_N^n(a): \, n\geq 0\}$.

A maximal collection of states all of whose elements communicate to each
other is called  a {\em communicating class} or {\em basin}. 
The  basins partition $\A$ and so define an
equivalence relation. 
We shall call a state $i$ such that
$N^n_{ii}=0$ for all $n>0$ a {\em pool state} (the idea for the name being that
non-pool basins may be linked together by passing through pool states). 
We note that there are two types of singleton equivalence classes: those
such that $i$ strictly communicates to itself and those such that it doesn't
(the pool singletons).
The matrix  is termed  {\em irreducible} iff there is a
single equivalence  class:
every state communicates
to every other state, {\em reducible} otherwise. 
A {\em primitive} matrix has the stronger
property that this happens for one time simultaneously: there exists
an $n>0$ with $N^n_{ij}>0$ for all  $i,j$. By these definitions, the $(1\times 1)$
matrix $[0]$ is irreducible but 
not primitive;  the basic nontrivial  example of
irreducible but 
not primitive 
is the
matrix of some cyclic permutation.

As above Definition \ref{d:gathering}, we associate to the nonnegative real matrix $N$ a $0-1$ matrix $L$,
 replacing each nonzero entry by
a $1$; whether or not two states communicate is not altered by this,
so $N$ is irreducible or primitive iff $L$ is. In terms of
the graph of the subshift of finite type for $L$,  for the 
basins there exists a path in each direction between any two
of its  elements.  We say an equivalence 
class $\alpha$ communicates 
to a class $\beta$ iff some (hence any) element of $\alpha$ communicates 
to some (hence any) element of   $\beta$; we say $\alpha$ immediately communicates 
to $\beta$ iff some element of $\alpha$ immediately communicates 
to some element of   $\beta$. 
We  define an order on  the basins, writing  $\alpha \leq 
\beta$ iff $\alpha$ communicates 
to $\beta$; this is transitive, reflexive ($\alpha\leq\alpha$),
and also is antisymmetric: if $\alpha \leq 
\beta$ and $
\beta \leq \alpha$, then $\alpha=\beta$, so is a
partial order.
An equivalence class $\alpha$ is termed  an {\em initial class} if
$\beta \leq \alpha\then \beta=\alpha$, a {\em final class} if
$\alpha \leq \beta \then \beta=\alpha$.
If an equivalence class  has no communications at all outside of
itself, it is by definition both initial and final.

We define a {\em class graph} whose vertices are the equivalence
classes. For this, draw a directed edge from $\alpha$ to $\beta$ iff
$\alpha$ immediately
communicates to
 $ \beta$.  The initial
and final classes are, respectively, the  repelling and
attracting  fixed points of this graph, which may have more than one component. 
 There are no directed
loops other than self-loops, so there are maximal elements. 
We write $\alpha < \beta$ iff $\alpha\leq \beta$ and $\alpha\neq
\beta$. We define $\text{level}(\alpha)=0$ iff $\alpha$
is maximal; $\text{level}(\alpha)=-n$ iff the longest path
$\alpha= \alpha_{-n}<\alpha_{-n+1}<\dots <\alpha_0$ from $\alpha$
to a maximal element $\alpha_0$ is $n$ steps. 

From now on we assume $N$ is
 reduced  (i.e.~it has no all-zero rows or columns); equivalently, the
 graph has no  {\em isolated points}, by which we mean vertices with no incoming
 or outgoing edges (a vertex with a self-edge is {\em not} isolated).

Now we 
draw the
graph (embedded in $\R^2$; edges may cross) so all the
maximal elements are on the top level, and so on, for levels $-1,
-2,\dots, -m$. Note that $\alpha\leq \beta \then
\text{level}(\alpha)\leq \text{level}(\beta)$. 

Next we linearly order the classes by $\preccurlyeq$ so as to respect levels; that is, 
so $\text{level}(\alpha)< \text{level}(\beta)\then \alpha \preccurlyeq
\beta$. For a geometical proof that this can be done, tip the embedded  class graph slightly and
 order by height; or, count in some way along levels, respecting
 levels; or formally, prove by induction. Lastly, linearly order each
 class in some (arbitrary) way, and combine this with $\preccurlyeq$. The result is 
a new linear order, also written $\preccurlyeq$, on $\A$
 (this corresponds to  conjugating $N$ by a permutation
matrix) so that the elements of equivalence  classes are grouped
together,  while respecting $\leq$; that is, for $a\in \alpha$ and
$b\in \beta$, 
 $\alpha\leq \beta\implies
 \alpha \preccurlyeq \beta\implies a  \preccurlyeq  b$.
This reordering
 puts the matrix in 
{\em  Frobenius normal form}, so that for the block
structure corresponding to the equivalence classes:

\

\noindent
$(1)$ there are square blocks along the diagonal which are
irreducible, including possibly $(1\times 1)$ zero matrices;

\noindent
$(2)$ the matrix is  upper triangular  with respect to these
blocks.

\noindent
$(3)$ the blocks corresponding to initial classes occur first, with
rows indexed
in the matrix as usual
from top to bottom,
while the final classes occur last.

\noindent
$(4)$ each pool state, which corresponds to a  zero block on the
diagonal,
occurs just before the first equivalence class to which it communicates.

\noindent
By taking a power $N^n$, one can also achieve:

\noindent
$(5)$ the nonzero diagonal blocks are primitive.

We have proved:
\begin{theo}(Stationary Frobenius Theorem)\label{t:FrobeniusTheorem}
 Given a  $(d\times d)$ nonnegative real matrix $N$ with alphabet
 $\A$, the alphabet can be permuted (i.e.~ $N$ can be conjugated with
 a permutation matrix) so as to put it in Frobenius form satisfying
 $(1)-(5)$.
 This is unique up to a further permutation: of the initial states,  of the
 pool states which occur just before before a given symbol, and of the
 states within the alphabet for a diagonal block.
\end{theo}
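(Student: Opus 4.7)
The plan is to establish the decomposition in three stages: build the equivalence structure, realize the block upper triangular form via a topological sort, and then isolate the uniqueness statement.

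First I would verify that the relation ``$a$ communicates to $b$'' is reflexive (since $N^0 = I$) and transitive (by multiplying the paths witnessing $N^m_{ab} > 0$ and $N^n_{bc} > 0$, noting the entry of $N^{m+n}_{ac}$ is a sum of nonnegative products and contains the positive term coming from the concatenation). Symmetrizing gives the equivalence relation whose classes are the basins. Next, on the quotient set of classes I would show the induced relation $\alpha \leq \beta$ (``some/any element of $\alpha$ communicates to some/any element of $\beta$'') is well-defined and a genuine partial order; the only nontrivial point is antisymmetry, which follows because if $\alpha \leq \beta$ and $\beta \leq \alpha$ then every element of $\alpha \cup \beta$ communicates mutually with every other, contradicting maximality of the classes unless $\alpha = \beta$.

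Next I would produce the desired alphabet reordering by a topological sort of the class graph. Concretely, I would define $\mathrm{level}(\alpha)$ as the negative of the length of the longest chain from $\alpha$ to a $\leq$-maximal class (finite since $\A$ is finite and there are no directed cycles between distinct classes). A linear extension $\preccurlyeq$ of $\leq$ can be obtained by ordering classes primarily by level (with ties broken arbitrarily), which automatically puts maximal (i.e.\ final) classes last and minimal (initial) classes first. I would then choose an arbitrary linear order within each class and fit pool singletons just before the earliest class they communicate to; because a pool state $p$ satisfies $N^n_{pp} = 0$ for all $n > 0$, nothing communicates back to $p$ from anywhere it communicates to, so this placement is consistent with $\preccurlyeq$. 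With this linear order on $\A$, the conjugated matrix $P^{-1} N P$ has a zero entry $(PNP^{-1})_{ij}$ whenever $i \succ j$ (otherwise $i$ would immediately communicate to $j$, forcing the class of $i$ to precede the class of $j$, contradicting $i \succ j$), giving properties $(1)$--$(4)$.

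For property $(5)$, I would invoke the classical fact that an irreducible nonnegative matrix $B$ has a period $p$ (the gcd of cycle lengths in the graph of $B$) and that $B^p$ is a block-diagonal sum of primitive matrices; taking a common multiple $n$ of the periods of the nontrivial diagonal blocks, $N^n$ then has primitive (or zero) blocks on the diagonal, in the same upper triangular pattern (note that the class structure is unchanged by taking powers since $N^n_{ij}>0$ iff there is a path of length $n$ in the graph of $N$, and primitivity on each class can be forced without altering the position of zero diagonal entries).

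The main obstacle, as I anticipate it, will be the uniqueness statement. The existence construction involves three arbitrary choices: the ordering among $\preccurlyeq$-incomparable classes at the same level (in particular among all initial classes, which are pairwise incomparable), the ordering of pool singletons inserted before a common target class, and the internal ordering of each class's alphabet. I would show conversely that any Frobenius form must reflect these and only these freedoms: the set of classes is an intrinsic invariant (it is the quotient by the communication equivalence), the block partition on the diagonal is determined by class membership, and any linear refinement of $\leq$ on classes must coincide on comparable pairs, so the freedom is precisely the choice of extension on the incomparable pairs (which for non-pool classes reduces to the initial classes, and for pool states reduces to permutation among those inserted before the same target). A careful bookkeeping argument, comparing any two Frobenius orderings and producing the permutation matrix that conjugates one to the other as a product of block-internal and incomparable-pair permutations, will finish uniqueness.
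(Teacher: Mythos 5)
Your proof follows essentially the same route as the paper: the communication relation and its basins, the induced partial order on classes, a linear extension obtained from the levels with classes grouped together and pool singletons inserted just before their first target, and the passage to a power $N^n$ to turn the irreducible diagonal blocks into primitive ones; your uniqueness bookkeeping is, if anything, more explicit than the paper's. The one caveat (shared by the paper's own sketch) is that ordering purely by level does \emph{not} automatically place all initial classes first—an initial class can sit at level $-1$ while a non-initial class sits at a lower level—so to get property $(3)$ you should choose the linear extension (or break level ties) so that the $\leq$-minimal classes come first and the maximal ones last, which is always possible since they are minimal, respectively maximal, elements of the partial order.
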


\begin{rem}
If a class is both initial and final, we arbitrarily choose one of those.
  
 We note that the usual way of indexing the matrix rows and columns agrees with our convention
 for  
drawing  Bratteli diagams horizontally and from left to right.
  
  If
one choses to instead use lower triangular form, the initial classes would
occur last. Both choices occur in the literature.

In part $(5)$ if the irreducible block is e.g.~ a $(n\times n)$ permutation
matrix, then upon taking the $n^{\text{th}}$ power this  becomes the
identity matrix, with $(1\times 1)$ diagonal
blocks.
\end{rem}

In the figure below, $0$ and $O$ indicate zero blocks, with $O$ on the
diagonal; $A,B$ and $F,G$ correspond to initial and final
equivalence classes respectively.

\begin{equation}
  \label{eq:4}
 \left[ \begin{matrix}
A &0 &*&*&\dots&*  &*&*&*\\
0 &B  & *&*&\dots&*&*&*&*\\
0 &0  & C&*&\dots&*&*&*&*\\
0 &0  & 0&D& \dots&*&*&*&*\\
\vdots& & & &\ddots   &  & & &\vdots\\
0& \dots& & &\dots  &O &*&* &*\\
0 &\dots  & & &\dots & 0 &E& *&*\\
0 &\dots  & & &\dots  &0 &0& F& 0\\
0 &\dots  & & &\dots   &0 &0& 0& G\\
\end{matrix}  \right]
\hspace{0.5cm}
\left[ \begin{matrix}
1 &0 &*&*&\dots&*  &*&*&*\\
0 &1  & *&*&\dots&*&*&*&*\\
0 &0  & 1&*&\dots&*&*&*&*\\
0 &0  & 0&1& \dots&*&*&*&*\\
\vdots& & & &\ddots   &  & & &\vdots\\
0& \dots& & &\dots  &0 &*&* &*\\
0 &\dots  & & &\dots & 0 &1& *&*\\
0 &\dots  & & &\dots  &0 &0& 1& 0\\
0 &\dots  & & &\dots   &0 &0& 0& 1\\
\end{matrix}  \right] 
\end{equation}

\

The matrix for the subshift of finite type of the corresponding class
graph is to the right of the block matrix. Note that there 
is an identity matrix on the ends of the diagonal, corresponding to the
initial and final  classes, and that there is at least one final and
one initial class (these being equal iff $N$ is irreducible).

\subsection{The nonstationary case}
Our main
result in this section,  Theorem \ref{t:FrobDecomp}, will be that for a
one-sided matrix sequence,   one can
always find a nonstationary reordering of the alphabets so as to put the
matrices in  an analogous upper triangular block form, called
(nonstationary) Frobenius normal form; moreover, after  a gathering   this can be put
in {\em fixed--size} Frobenius normal form.

Here are the precise definitions:

\begin{defi}\label{d:fixedsizeFrob}
  We recall from Definition \ref{d:generalizedmatrices}
  an empty alphabet is termed a {\em virtual} alphabet, and that
  these are permitted for the index sets of generalized matrices,
  giving {\em
  virtual matrices} (equal to the empty function).

This notion will facilitate the definition of upper triangular block form
for matrix sequences. See also Definition \ref{d:blockstructure}.

Given an alphabet sequence $\A=(\A_k)_{k\geq 0}$, assume we are given,
for each $k$ fixed, a partition $\{\A^i_k\}_{i= 1}^{\wh l_k} $ of
$\A_k$ into $\wh l_k$ possibly empty sets, called the {\em block
  alphabet partition}. We write $\wh A_k= \{1,\dots,\wh l_k\}$,
calling this 
the {\em block alphabet}.

Choice of a  block
  alphabet partition sequence defines a {\em block form} for the
  matrix sequence $(N_k)_{k\geq
    0}$ as follows. Given a label $(i,j)$ in $(\wh A_k\times \wh A_{k+1})$,
  the $(i,j)^{\text{th}}$  {\em
    block} of the matrix $N_k$ 
  is  the (possibly virtual) submatrix $(N_k)_{\A_k^i \A_{k+1}^j}$.

  We define an associated  $(\wh A_k\times \wh A_{k+1})$ {\em block matrix} $B_k$
with entries in $\{0,1\}$, such
that $(B_k)_{ij}= 1 $ iff some entry of $(N_k)_{\A_k^i \A_{k+1}^j}$ is
nonzero. Thus, we assign the entry $0$ to either an all- zero
or a virtual block.

Note that the block alphabets are ordered, with
$\wh A_k=\{1,\dots, \wh l_k\}$. 
We   call $(B_k)_{ii}$  the {\em diagonal elements } of $B_k$ and the
corresponding blocks   $(N_k)_{\A_k^i \A_{k+1}^i}$ the {\em diagonal blocks} of $N_k$.

Note that when the matrices are multiplied, so are the corresponding
diagonal blocks.
That is, $$(N_kN_{k+1})_{\A_k^i \A_{k+2}^i}= (N_k)_{\A_k^i \A_{k+1}^i}(N_{k+1})_{\A_{k+1} \A_{k+2}^i}.$$

There are special elements of $\wh \A_k$  we call {\em primitive
  elements}; the complement will be {\em pool elements}. 
We indicate these as follows: we write the $k^{\text{th}}$ block alphabet as
$\wh \A_k= (1,\dots, \wh l_k)= (1, P_2, 2, P_3, \dots , \wt l_k).$
We define a
 {\em pool index} to be a $P_j$ in this list.
The collection of pool indices is written $\wh\P_k$.
The corresponding blocks of $N_k$ are called the {\em pool blocks} and
the corresponding alphabets the {\em pool alphabets}; these may be virtual. The elements of $\P_k\equiv\cup_{i\in \wh\P_k}\A^i_k$ are termed {\em pool elements}
of $\A_k$.

\

\noindent
We say 
$(N_k)_{k\geq 0}$ with nonnegative real entries is in (upper triangular)
{\em Frobenius normal form}, with respect to the block alphabet
sequence $\wh\A$, iff:

\

\noindent

\noindent
$(1)$ the primitive diagonal blocks are reduced and primitive matrix
sequences;  the non-primitive diagonal blocks are
 pool blocks, for which  the product from time $k\geq 0$ to
$n>k$ is, for $n$ sufficently large, either zero or
the virtual matrix.

\noindent
$(2)$ for all $k\geq 0$,  for the block matrix sequence, $(B_k)_{ij}=
0$ for all $j<i$.

\

\noindent
We  say the sequence is in 
 {\em fixed--size   Frobenius normal form }
if the alphabet size is bounded, and in addition to $(1), (2)$ we have:

\

\noindent
$(3) $ those columns of $(B_k)_{k\geq 0}$ which have for all $k$ a
single $1$ in entry $(ii)$ occur first, and the rows which have a
single $1$ in entry $(ii)$ occur last.  Then $(B_k)_{ii}$ is called an
initial, respectively final, block, and the corresponding 
symbols $a\in \A_k^i $ 
are called {\em initial elements} and 
 {\em final elements}. If a block is both initial and final,
that is to say, if  there are no front-or back-connections to other
streams, then it is chosen arbitarily to be one of these.

\

\noindent
$(4)$ 
for all $k\geq 0$,  both $N_k$ and  $B_k$  are square, of
 dimensions  $(l\times l)$ 
and $(\wh
 l\times \wh l)$, and 
with  square diagonal blocks. Furthermore, all virtual blocks have been
eliminated so the block alphabet is now a subset of $(1, P_1, 2,P_2,\dots, d)$
with $P_j$ removed from this list if it is virtual. The diagonal
block sequences  are either reduced primitive or identically zero
(i.e.~ zero for all times).

\begin{rem}\label{r:uppertriangular}
 Given these properties,  we can then choose an order for  the alphabets $\A_k$, writing 
with $\A_k=\{1,\dots, l_k\}$, which is {\em compatible} with that of
the block alphabets $\wh A_k=\{1,\dots,\wh l_k\}$, by which we mean
that the block alphabet partition $\{\A^i_k\}_{i= 1}^{\wh l_k}
$  divides the
  alphabet $\A_k$
  into $\wh l_k$ consecutive  (possibly empty) subsets.

  The generalized matrices $(N_k)_{k\geq 0}$ are then matrices in the
  ususal sense, as is each block submatrix. This is an upper
  triangular matrix sequence with respect to the order and block
structure, as we have zero blocks below the diagonal.

 Note that, letting $s\geq 0$ denote the number of initial
elements of $B_k$ for the  fixed--size  form and 
$t\geq 0$ the number of final  elements for $k\geq 0$, then  $B_k$ has  
an $(s\times s)$ identity matrix  in the upper left  and a $(t\times t)$
identity matrix in the lower right corner. 
\end{rem}

\end{defi}

The main preparation for the proof of in Theorem \ref{t:FrobDecomp} is
geometrical in nature, 
given
in Theorem \ref{t:StreamDecomp}; 
 the primitive diagonal blocks will correspond to 
primitive {\em streams} of symbols in the Bratteli diagram
(replacing what we called  {\em basins} in the stationary setting), with
the connections between
streams either direct or passing through nonprimitive streams called
{\em pools}, an extension of the notion of pool states in the
stationary case. Here are the definitions:

\begin{defi}
Beginning with an $(l_i\times l_{i+1})$ nonnegative real matrix sequence $N=(N_i)_{i\geq
  0}$  with alphabets $(\A_i)_{i\geq 0}$, as above Definition
\ref{d:gathering} we 
define a $0-1$ sequence $L=(L_i)_{i\geq
  0}$  by replacing each nonzero entry with a
one; by the
Bratteli diagram associated to $N$ we mean that for $L$. 
We define
a  {\em stream} $\alpha$ to be a sequence $(\alpha_i)_{i\geq 0}$ of
 (possibly empty) sub-alphabets $\alpha_i\subseteq \A_i$. Set
operations 
(union, intersection, complement,
 difference) are taken componentwise. Thus we define  the difference
 $\alpha\setminus \beta$  of two
 streams by $(\alpha\setminus
 \beta)_i= \alpha_i\setminus \beta_i$ for all $i\geq 0$;
streams are partially ordered by inclusion: $\alpha\subseteq \beta$ iff
for each $i\geq 0$ we have $\alpha_i\subseteq \beta_i$;
two streams $\alpha,\beta$
are {\em disjoint} iff $\alpha_i$, $\beta_i$ are disjoint sets for all $i$.  

A stream is   {\em proper}  iff there is $k\geq 0$, called the 
 {\em starting time},  such that 
the $\alpha_i$  are empty for all $i<k$ and nonempty for all 
$i\geq k$. From a proper stream we define  a
 sequence  of generalized matrices $(N^\alpha_i)_{i\geq k}$ by
 $N^\alpha_i= N_{\alpha_i\alpha_{i+1}}$  for
$i\geq k$.

A {\em singlet stream} is a proper stream consisting of
a single letter  at each time greater than or equal to its starting time $k$.

A {\em vertex path} is an infinite or finite sequence $ x =
(x_k x_{k+1}\dots)$ or  $(x_k
x_{k+1}\dots x_{k+n})$ for $x_i\in \A_i$, 
which is {\em allowed} in that there  exists an edge path in
the Bratteli diagram associated to $N$ with those vertices,
that is, satisfying $(N_n)_{x_n x_{n+1}}>0$ for $n\geq k$. An infinite
vertex path $ x$ determines a singlet stream $\alpha_{ x}$ 
consisting of its entries, thus $(\alpha_{ x})_i= \{x_i\}$;
an infinite vertex path  $ x$  is  {\em disjoint} from a stream $\alpha$ iff the streams
$\alpha_{ x}$, $\alpha$ are disjoint.

We say a symbol $i\in \A_k$ {\em communicates to} $j\in \A_n$ iff
$k<n$ and there exists an edge path from $i$ to $j$, that is, iff the
$ij^{\text{th}}$ entry of $N_k N_{k+1}\cdots N_{n-1}$ is positive.

Given two streams $\alpha$, $\beta$, 
we call  a vertex path $x_k, y_{k+1},\dots y_{n-1}, x_n$, for $n\geq k+1
$,
such that
$x_k\in\alpha$, $x_n\in\beta$ and 
$y_i\notin \alpha\cup \beta$ 
a {\em
  front-connection} from $\alpha$ to $\beta$, with  {\em connection pair} $(k,n)$. 
We say 
$\alpha$ {\em front-connects} to $\beta$ at time $k$ when there 
 exists a front connection from $\alpha$ to $\beta$ for some pair $(k,
 n)$, 
and $\alpha$ {\em infinitely front-connects} connects to $\beta$
iff this happens for arbitrarily large times $k$.
We say
$\beta$ {\em back-connects}  or {\em infinitely back-connects} to $\alpha$ iff
$\alpha$ {front-connects} respectively infinitely front-connects to $\beta$.

We say an infinite vertex path $ x$ front-connects to a stream
$\beta$ iff that holds for its singlet stream $\alpha_{ x}$.
(So in particular an infinite vertex path  infinitely front- and back-connects to itself).

 We  say that a stream $\alpha $ is {\em
   primitive}  iff it is a proper stream with starting time $k$ 
such that the  sequence  $(N^\alpha_i)_{i\geq k}$ is primitive. 
A {\em reduced stream}   $\alpha$  is proper stream which is a
union of infinite
vertex paths, not necessarily
disjoint,  from its starting time $k$ on. Equivalently, $(N^\alpha_i)_{i\geq k}$ is a reduced
generalized matrix sequence. We say that $\alpha$ is {\em reduced
  beyond time } $m$ iff  $(N^\alpha_i)_{i\geq m}$ is a reduced sequence.

A {\em special stream}  is a stream which can be written as a
disjoint union of infinite
vertex paths, possibly with different starting times; it is  of {\em size } $d\geq 1$
iff it eventually consists of $d$ disjoint infinite vertex paths. (Necessarily,
 this is reduced after the maximum of the starting times.)

Regarding edge paths, we say an edge path $\e$ {\em accompanies} a stream $\alpha$ iff 
$e_i^-\in \alpha$ for all $i\geq 0$, that is, iff its vertex path
belongs to $\alpha$, and that an edge path front-connects to an edge path, a vertex path, or
a stream iff that holds for its vertex path.
\end{defi}

Beginning with a Bratteli diagram with unordered alphabets, 
we shall first prove a stream decomposition theorem (Theorem \ref{t:StreamDecomp}); this will enable us to
order the alphabets, while defining a compatible block alphabet partition which
puts the matrices in upper triangular form.

For an example of the role to be played by streams,  consider a sequence of nonnegative   matrices $(N_i)_{i\geq 0}$

\[
\left[ \begin{matrix}
1 &1
\end{matrix}\right]
\left[ \begin{array}{ccc|c}
1 &0& 0&0\\
0&1 & 1&1
\end{array}\right]
\left[ \begin{array}{c|c|ccc}
1 &0& 0 &0\\
1&0 & 1 & 0\\
1&0 & 0 & 1\\
\hline
0&1 & 1 & 0\\
\end{array}\right]
\left[ \begin{array}{c|c|c}
1 &0&0\\
\hline
0&1 &0\\
\hline
0&0&1\\
0&0&1\\
\end{array}\right]
\left[ \begin{array}{c|c|cc}
1 &0&1&0 \\
\hline
0&1&0&1 \\
\hline
0&0&1&1 \\
\end{array}\right]
\left[ \begin{array}{c|c|cc}
1 &0&1&0\\
\hline
0&1 &0&0\\
\hline
0&0&1&1\\
0&0&0&1\\
\end{array}\right]
\left[ \begin{array}{c|c|cc}
1 &1&0&0\\
\hline
0&1 &1&0\\
\hline
0&0&1&0\\
0&0&1&1\\
\end{array}\right]
\]

\noindent
with
 corresponding  block matrices $(B_i)_{i\geq 0}$:

\[
\left[ \begin{matrix}
1 \end{matrix}\right],
\left[ \begin{matrix}
1 &1
\end{matrix}\right],
\left[ \begin{matrix}
1 &0& 1\\
0&1 & 1
\end{matrix}\right],
\left[ \begin{matrix}
1 &0& 0\\
0&1 & 0\\
0&0 & 1\\
\end{matrix}\right],
\left[ \begin{matrix}
1 &0& 1\\
0&1 & 1\\
0&0 & 1\\
\end{matrix}\right],
\left[ \begin{matrix}
1 &0& 1\\
0&1 & 0\\
0&0 & 1\\
\end{matrix}\right],
\left[ \begin{matrix}
1 &1& 0\\
0&1 & 1\\
0&0 & 1\\
\end{matrix}\right]
\]

The corresponding stream descriptions are seen in 
Fig.~\ref{F:Frobenius}.

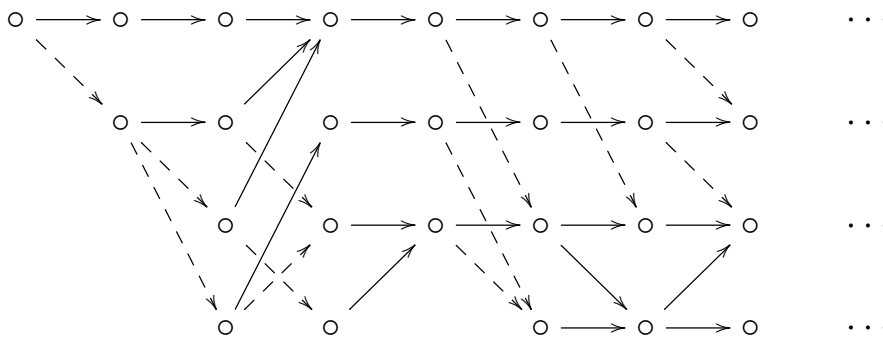
\begin{figure}
$$ \xymatrix
{
\circ \ar[r]  \ar@{-->}[rd]&
\circ \ar[r]   &
\circ \ar[r]  &
\circ \ar[r]  &
\circ \ar[r]   \ar@{-->}[rdd]&
\circ \ar[r] \ar@{-->}[rdd]&
\circ \ar[r] \ar@{-->}[rd]&\circ&\cdots &
\\
& 
\circ \ar[r]  \ar@{-->}[rd] \ar@{-->}[rdd] &
\circ \ar[ru]  \ar@{-->}[rd]  &
\circ \ar[r]  &
\circ \ar[r]  \ar@{-->}[rdd] &
\circ \ar[r]   &\circ \ar[r] \ar@{-->}[rd]&\circ&\cdots &
\\
& 
& \circ \ar@{-->}[rd] \ar[ruu] 
&\circ  \ar[r] 
&\circ  \ar[r] \ar@{-->}[rd] 
&\circ  \ar[r]\ar[rd] 
&\circ\ar[r]
&\circ&\cdots&
\\
& 
&\circ \ar[ruu] \ar@{-->}[ru] 
&\circ \ar[ru]
& 
&\circ\ar[r] 
&\circ\ar[r]\ar[ru]
&\circ &\cdots
}
$$
\caption{A Bratteli diagram for matrices $N_i$ in 
  Frobenius normal form, with three primitive streams $\wh \alpha\prec\wh\beta\prec\wh\gamma$, from top to
bottom,
 indicated by the solid arrows.}\label{F:Frobenius}
 \end{figure}

\begin{figure}
$$ \xymatrix
{
\circ \ar[r]  
&\circ \ar[r]  \ar @{-->}[rd]
&\circ \ar[r]  \ar@{-->}[rdd]
&\circ \ar[r]  
&\circ \ar[r]   \ar @{-->}[rdd]
&\circ \ar[r] \ar @{-->}[rdd]
&\circ \ar[r]  \ar @{-->}[rd]&\circ&\cdots 
&
\\
& &
\circ \ar[r]  \ar @{-->}[rd]  &
\circ \ar[r]  &
\circ \ar[r]  \ar @{-->}[rd]&
\circ \ar[r]   &\circ \ar[r]  \ar @{-->}[rd]&\circ&\cdots &
\\
& & &
\circ \ar[r]&
\circ  \ar[r] &
\circ  \ar[r] &
\circ\ar[r]&\circ&\cdots&
}
$$
\caption{Diagram for the corresponding matrices $B_i$; the streams
  have coalesced into singlet streams, and  as in Theorem
  \ref{t:StreamDecomp}, only  front connections
  remain.}\label{F:Frobeniusb}
 \end{figure}
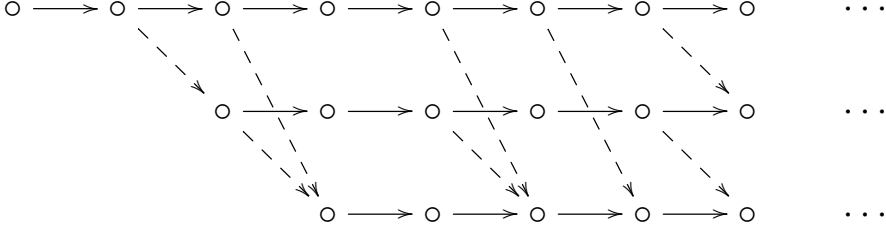

\begin{lem}\label{l:getting one more}
 Let 
$(N_i)_{i\geq 0}$ be a reduced sequence  of $(l_i\times l_{i+1})$  
nonnegative real
  matrices with alphabets $\A= (\A_i)_{i\geq 0}$. If $\A$ is not
 itself a primitive stream, then given a reduced primitive  stream $\alpha\subseteq\A$,
there exists an infinite vertex path  $ x=(x_i)_{i\geq k}$ for some $k\geq 0$ such that 
the singlet stream  $\alpha_{ x}$ with starting  time  $k$ is disjoint from
 $\alpha$. 
\end{lem}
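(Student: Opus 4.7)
The plan is to construct the path explicitly, using the failure of primitivity of $\A$ to locate a vertex whose unreachable set is eventually concentrated in $\beta=\A\setminus\alpha$, and then applying K\"onig's lemma to extract an infinite forward trajectory inside that set.

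First I would observe that, under reducedness of $N$, if $\A$ is not primitive then the set of \emph{bad} starting times $k$---those for which $N_k^{n-1}$ has at least one zero entry for every $n>k$---is closed under passing to successors. Indeed, if $k$ is bad while $k+1$ is not, then $N_{k+1}^{n-1}>0$ for some $n$, and multiplying on the left by the reduced matrix $N_k$ (whose rows are all nonzero) would give $N_k^{n-1}>0$, contradicting badness of $k$. Hence I may choose a bad index $k_0\ge k_\alpha$. By the pigeonhole principle applied to the finite alphabet $\A_{k_0}$, there exists a single vertex $a\in\A_{k_0}$ such that the set $V_n:=\A_n\setminus R(a,n)$ of vertices unreachable from $a$ at time $n$ is nonempty for infinitely many $n$.

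Next I would extract an infinite forward trajectory inside $V$. Any predecessor $c\in\A_{n-1}$ of a vertex $b\in V_n$ must itself lie in $V_{n-1}$, for otherwise the composition $a\rightsquigarrow c\to b$ would place $b$ in $R(a,n)$. Column-reducedness guarantees at least one such predecessor, so $V_{n-1}\neq\emptyset$ whenever $V_n\neq\emptyset$, and iterating gives $V_m\neq\emptyset$ for every $m$ in $[k_0,n]$. Since the relevant $n$ are arbitrarily large, the tree of finite vertex paths $(x_{k_0},\dots,x_m)$ with $x_i\in V_i$ and legal transitions has paths of every length; finite branching together with K\"onig's lemma then produces an infinite vertex path $(x_i)_{i\ge k_0}$ with $x_i\in V_i$ for all $i$.

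To finish I would show $V_i\subseteq\beta_i$ for all $i$ sufficiently large. When $a\in\alpha_{k_0}$, primitivity of the reduced stream $\alpha$ gives $N^\alpha_{k_0}N^\alpha_{k_0+1}\cdots N^\alpha_{n-1}>0$ for all $n\ge n_0$, so $\alpha_n\subseteq R(a,n)$ and hence $V_n\subseteq\beta_n$ past $n_0$. When $a\in\beta_{k_0}$, either some infinite forward continuation of $a$ stays entirely inside $\beta$---in which case the lemma follows directly---or K\"onig's lemma applied to the $\beta$-subtree rooted at $a$ gives a finite depth bound, forcing $a$ to reach $\alpha_{k'}$ for some $k'$ in a bounded window past $k_0$, after which primitivity of $\alpha$ again yields $\alpha_n\subseteq R(a,n)$ for $n$ large. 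Picking $k\ge k_0$ so that $V_i\subseteq\beta_i$ for all $i\ge k$, the tail $(x_i)_{i\ge k}$ is the required infinite vertex path disjoint from $\alpha$. The step I expect to require the most care is aligning the \emph{forward} K\"onig argument on $V$ with the \emph{forward} absorption of $\alpha$ into the reachable set: the backward-hereditary structure of $V$ by itself only propagates nonemptiness to earlier times, so it is the primitivity of $\alpha$ that must be invoked to ensure the single K\"onig path sits in $\beta$ from some time on.
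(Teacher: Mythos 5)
Your proof is correct, but it runs in the opposite direction from the paper's. The paper proves the contrapositive: assuming no infinite vertex path disjoint from $\alpha$ exists, every forward path is forced to join $\alpha$, a uniform joining time is extracted, primitivity of the reduced stream $\alpha$ spreads reachability over all of $\alpha_s$, and a backward-path argument shows every vertex outside $\alpha$ at a suitable later time is fed from $\alpha$; concatenating these paths gives strict positivity of a product $N_k\cdots N_{n-1}$ and hence primitivity of $\A$. You instead argue directly from non-primitivity: you note (correctly, via row-reducedness) that a bad starting time propagates to all later times, fix by pigeonhole a witness vertex $a$ whose unreachable sets $V_n$ are nonempty for infinitely many $n$, observe that $V$ is backward-hereditary (column-reducedness supplying predecessors), extract an infinite path inside $V$ by K\"onig's lemma, and then use primitivity together with reducedness of $\alpha$ -- after the case split on whether $a$ ever reaches $\alpha$, the case where it never does being immediate -- to conclude $V_n\subseteq\beta_n$ eventually, so a tail of the K\"onig path is the required singlet stream disjoint from $\alpha$. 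Both arguments contain a compactness step (the paper's ``otherwise there would exist an infinite disjoint vertex path'' is your K\"onig lemma in disguise), but the mechanisms genuinely differ: the paper builds explicit connections between arbitrary pairs of vertices by routing through $\alpha$, which is the form of argument reused later (e.g.\ in Lemma \ref{l:alphacupbeta} and in the proof of Theorem \ref{t:StreamDecomp}), whereas your version localizes the disjoint path inside the unreachable set of a single vertex and yields as a by-product the reusable observation that, for a row-reduced sequence, failure of primitivity at one time persists at all later times.
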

\begin{proof}
The proof will be by contradiction: we show that if there does not exist such a
vertex path $ x$
 then 
$\A$ is  primitive.
Thus, choosing $k\geq 0$,
we shall show that there exists $n>k$ such that 
for the matrix $ N_kN_{k+1}\cdots N_{n-1}$,  for every $a\in \A_k$ and  every $b\in \A_n$, the
$ab- $ entry is positive.
Supposing that $a\in \A_k$, 
we continue $a= x_k$ to the right with a   vertex path (this
exists since $(N_i)_{i\geq 0}$ is reduced) until the maximum time possible, 
when by hypothesis it is forced to join the stream $\alpha$.  Since
$\alpha$ is reduced, there is
a  time $t$ such that each $a\in \A_k\setminus \alpha_k$ 
connects to some element of $\alpha$ at that time.
By the primitivity of $\alpha$, there exists $s>t$ such that $N^\alpha_t
N^\alpha_{t+1}\cdots N^\alpha_{s-1}$ has all entries positive. Thus
 for
every  $a\in \A_k$, there exists a vertex path connecting $a$ with all
elements of $\alpha_s$.

Letting $d= \# \alpha_s$, suppose $(l_s-d)> 0$, as otherwise we are
done. Now begin $(l_s-d)$ vertex
 paths, starting at the elements of $\A_s\setminus \alpha_s$.
There is a least  time, say $r>s$, by which they all must have  joined $\alpha$, for otherwise there would exist an infinite  disjoint vertex path beginning at time $s$.
Next consider backwards vertex paths starting at the elements of 
$\A_r\setminus \alpha_r$ and remaining outside of $\alpha_k $ for $k<r$.
 They cannot continue all the way to time $k=s$, 
otherwise it would contradict the definition of    $r$. Therefore, choosing $b\in 
\A_r\setminus \alpha_r$, there exists $q>s$ and a  vertex path $w_qw_{q+1}\dots
w_r=b$ with $w_{q-1}\in \alpha_{q-1}$ and with the rest of this path segment disjoint from $\alpha$.

Hence there is a vertex path $a=x_0 x_1\dots x_t\dots  w_{q-1}\dots  w_{r-1}w_r=b$, with its  middle portion 
$x_t\dots w_{q-1}$
 in $\alpha$. We have shown that
 $\A$ is itself primitive, completing the proof.
 \end{proof}

 \begin{lem}\label{l:alphacupbeta}
Let $(N_i)_{i\geq 0}$ be a  reduced sequence of  $(l_i\times l_{i+1})$
nonnegative real matrices, with bounded alphabet size.
If two reduced primitive streams 
$\alpha$ and $\beta$ infinitely both front- and back-connect then 
$\alpha\cup\beta$ is reduced primitive.
 \end{lem}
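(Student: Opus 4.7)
The proof should address reducedness and primitivity separately.

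\emph{Reducedness.} Let $k_0$ be the maximum of the two starting times. For $i \ge k_0$ one has $(\alpha \cup \beta)_i = \alpha_i \cup \beta_i \ne \emptyset$, so $\alpha \cup \beta$ is a proper stream from $k_0$. Since each of $\alpha$, $\beta$ is, beyond its starting time, a union of infinite vertex paths, the same holds for $\alpha \cup \beta$ beyond $k_0$, hence it is reduced.

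\emph{Primitivity.} Fix $k \ge k_0$; I must find $n > k$ such that for every $a \in (\alpha \cup \beta)_k$ and every $b \in (\alpha \cup \beta)_n$ there is a vertex path $a \to b$ keeping every vertex in $\alpha \cup \beta$. The four cases (source and target each lying in $\alpha$ or $\beta$) split into in-stream cases, immediate from primitivity of $\alpha$ and of $\beta$, and cross-stream cases which require a bridge inside $\alpha \cup \beta$. For the case $a \in \alpha_k$, $b \in \beta_n$, I would use primitivity of $\alpha$ to spread $a$ through all of $\alpha_t$ at some large time $t$, then cross from $\alpha_t$ to a vertex of $\beta_s$ supplied by the front-connection hypothesis at some $s > t$, and finally use primitivity of $\beta$ to reach $b$; the symmetric case is handled via the back-connection hypothesis. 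A diagonal choice of thresholds in $t$, $s$, and $n$ would then produce a single $n$ uniformly adequate over all source-target pairs.

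\emph{Main obstacle.} A front-connection from $\alpha$ to $\beta$ is by definition a path whose intermediate vertices lie strictly \emph{outside} $\alpha \cup \beta$, so the hypothesis does not directly deliver bridging paths within the union. To overcome this, I would exploit the bounded alphabet size via a pigeonhole argument: among the infinitely many front-connections some endpoint pair $(\wt a, \wt b) \in \alpha \times \beta$ must recur at arbitrarily large times, and combining this recurrence with the internal reachability afforded by primitivity of each stream should promote the external front-connections to transitions from $\alpha$ to $\beta$ (and symmetrically from $\beta$ to $\alpha$) at infinitely many times, realized through paths staying inside $\alpha \cup \beta$. Assembling such internal bridges with internal-primitivity paths across a sufficiently long time window then yields a strictly positive product $N^{\alpha\cup\beta}_k \cdots N^{\alpha\cup\beta}_{n-1}$. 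Making rigorous the conversion from ``external'' front-connections to ``internal'' bridges is the crux of the argument.
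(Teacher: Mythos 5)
Your skeleton coincides with the paper's: reducedness of the union is immediate (a union of unions of infinite vertex paths is again one), and for primitivity the paper does exactly what you outline --- starting from $a\in\alpha_k$ it waits until $N^\alpha_k\cdots N^\alpha_t$ is strictly positive (primitivity plus reducedness keep it positive from then on), picks a front-connection from some $c\in\alpha_t$ to some $d\in\beta_s$, waits until $N^\beta_s\cdots N^\beta_r$ is strictly positive, handles the symmetric case with the back-connections, and takes $n$ to be the larger threshold. The two arguments part ways exactly at the point you call the crux. The paper does not convert the front-connection into anything: it inserts the front-connecting path itself as the crossing segment from $c$ to $d$ and asserts that the resulting path lies in $\alpha\cup\beta$; taken literally this holds only when the connection pair is $(t,t+1)$, i.e.\ a direct edge, since by definition the intermediate vertices of a front-connection lie outside $\alpha\cup\beta$. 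So the obstacle you isolate is not overcome in the paper's proof; it is bypassed, in effect by allowing the crossing to pass through the connecting vertices.

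Where your proposal genuinely breaks down is the proposed repair: recurrence of endpoint pairs cannot manufacture bridges inside the union. Take the constant alphabet $\{a,b,c,d\}$ with edges $a\to a$, $b\to b$, $a\to c$, $c\to b$, $b\to d$, $d\to a$ at every level, and let $\alpha$, $\beta$ be the singlet streams on $a$ and on $b$. The full matrix sequence is reduced, each stream is reduced and primitive, and the two streams infinitely front- and back-connect (via $a,c,b$ and $b,d,a$), with the same endpoint pair recurring at every time; yet no path from $a$ to $b$, nor from $b$ to $a$, stays inside $\{a,b\}$, so $N^{\alpha\cup\beta}_i$ is the $2\times 2$ identity for every $i$ and no pigeonhole or diagonal argument can make its products strictly positive. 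Hence, if the crossing is required to stay inside $\alpha\cup\beta$ --- as your plan, and the statement read literally against the definitions, demand --- the crux cannot be filled along the lines you suggest; the only way to complete the argument in the paper's sense is to let the crossing be the front-connection itself, that is, to interpret primitivity of $\alpha\cup\beta$ as communication through the ambient diagram rather than through the subalphabets alone. As written, then, your proof is incomplete at its admitted crux, and the mechanism you propose for closing it cannot work.
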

 \begin{proof}
   Choosing $k\geq 0$, we are to find 
$n> k$ such that for 
 $a\in  \alpha_k\cup \beta_k$ and $b\in \alpha_n\cup \beta_n$,
then the $ab$\,--\, entry of $N^{\alpha\cup\beta}_k\cdots N^{\alpha\cup\beta}_{n-1}$ is positive;
that is there exists a vertex path $x_k\dots x_n$ starting at $a= x_k$ and ending at $b=x_n$ 
with $x_i\in \alpha_i\cup\beta_i$ for all $k\leq i\leq n$.

To carry this out, supposing that $a\in \alpha_k$, 
we 
wait until time $t_0$ 
such that (using the fact that $\alpha $ is reduced)
$N^{\alpha}_k\cdots N^{\alpha}_{t}$ is strictly positive
for all $t\geq t_0$. Then we wait until such a 
time $t$ at which some
 element $c$ of $\alpha_t$  front-connects to some 
element $d$ of $\beta_{s}$, where $s>t$. Lastly we wait until time $r_0$
when the 
matrix product $N^{\beta}_s\cdots N^{\beta}_r$ is strictly positive, for
all $r\geq r_0$ (using that $\beta$ is reduced).
Beginning a vertex path $(x_i)_{i\geq k}$ at $x_k=a$ we  
continue within the stream $\alpha$ until $x_t= c$, then front-connect 
to the stream $\beta$ at 
$x_s= d$, continuing  on within $\beta$ to any chosen $x_r= b$.
This has produced a vertex path within $\alpha\cup\beta$ from 
$a\in \alpha_k$ to any $b\in \beta_r$, for any $r\geq r_0$.   Similarly since $\beta$ front-connects infinitely with $\alpha$,
we could instead have started in $\beta$. We let $n$ be the max of
these two times $r$, hence  $N^{\alpha\cup\beta}_k\cdots
N^{\alpha\cup\beta}_{n-1}>0$. Since the union of two
reduced streams is reduced, we have shown that $\alpha\cup\beta$ is
reduced primitive, as claimed.
 \end{proof}

Before moving on to the main results of this section,  we apply these
two lemmas in the restricted setting of integer entries to 
examine more closely the relationship between 
minimality and primitivity. Recall from Theorem 2.13 of
\cite{Fisher09a} that primitivity implies
minimality for adic transformations (we repeat that argument in $(ii)$
to follow), while (see
\cite{FerencziFisherTalet09})  the 
Chacon adic transformation illustrates that the reverse is false in general.
By contrast, for the action of 
$\FC$, one does
have an equivalence, as  seen 
from part $(iii)$ of the following.

 \begin{prop}\label{p:min_iff_prim}
Let $M=(M_i)_{i\geq 0}$ be a  sequence of  $(l_i\times l_{i+1})$
nonnegative integer matrices, with alphabets $\A= (\A_i)_{i\geq 0}$.
\item{(i)}
For $e, f$ edge paths,  $f$ is in the $\FC$\,--\,orbit closure of $e$
iff 
$e$ infinitely back-connects to  $f$.
\item{(ii)} If $M$ is primitive then the action of $\FC$ on the edge path space is  minimal.
\item{(iii)} 
Assume the alphabets are 
of bounded
 size. Then if the action of $\FC$ is minimal, $M$ is primitive.
  \end{prop}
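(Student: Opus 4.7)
For part (i), the plan is to unwind the definitions. First, the $\FC$-orbit of $e$ equals its stable equivalence class $W^s(e)$, since each generator of $\FC$ preserves tails and any two tail-equivalent paths differ by a finite product of such generators. Hence $f \in \overline{\FC(e)}$ iff every cylinder $[f_0 \dots f_n]$ meets $W^s(e)$, iff for each $n$ there is an allowed edge path $g$ with $g_i = f_i$ for $0 \le i \le n$ and $g_i = e_i$ for all sufficiently large $i$. Passing to vertex paths, this is the existence of an allowed vertex path from $f^-_{n+1}$ to $e^-_N$ for some $N > n$. To produce a bona fide front-connection, I would shorten: if an intermediate vertex lies on $\alpha_f$, push the starting time up to that vertex; if it lies on $\alpha_e$, pull the ending time back; the procedure terminates either with a legal front-connection or with the degenerate case $N = k{+}1$ (empty middle), which is still a front-connection. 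Letting $n \to \infty$ produces front-connections from $\alpha_f$ to $\alpha_e$ at arbitrarily large starting times, i.e.\ $e$ infinitely back-connects to $f$. Conversely, a front-connection at a sufficiently late time concatenates an initial segment of $f$ with a connecting path and a tail of $e$ to produce the required approximating element of $W^s(e)$.

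For part (ii), primitivity of $M$ yields, for every $k$, some $N > k$ with $M_k M_{k+1} \cdots M_{N-1}$ strictly positive, so between any two vertex paths one can interpolate from time $k$ to time $N$. Given $e, f \in \Sigma_M^{0,+}$ and any $n$, choose $k > n$ and such $N$; the entry $(M_k \cdots M_{N-1})_{f^-_k, e^-_N}$ is positive, producing a connection from $f^-_k$ to $e^-_N$ which, after the shortening used in (i), gives a front-connection from $\alpha_f$ to $\alpha_e$ with starting time $> n$. Part (i) then delivers $f \in \overline{\FC(e)}$, and minimality of the $\FC$-action follows.

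For part (iii), the strategy is a saturation argument. After reducing $M$ if necessary (irrelevant zero rows or columns do not affect $\Sigma_M^{0,+}$ or the $\FC$-action, but they prevent primitivity, so one reduces first and transfers the conclusion back), I may assume $M$ is reduced. Suppose for contradiction that $M$ is not primitive, equivalently $\A$ is not a primitive stream. Starting from any infinite vertex path $x^{(1)}$, the singlet $\alpha^{(1)} = \alpha_{x^{(1)}}$ is trivially reduced primitive. Inductively, given a reduced primitive stream $\beta = \alpha^{(1)} \cup \cdots \cup \alpha^{(d)} \subsetneq \A$, Lemma \ref{l:getting one more} produces an infinite vertex path $x^{(d+1)}$ whose singlet $\alpha^{(d+1)}$ is disjoint from $\beta$. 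Taking edge paths realising vertex paths in $\beta$ and in $\alpha^{(d+1)}$, minimality combined with part (i) gives mutual infinite back- and front-connection of their singlet streams; applying the shortening trick of (i) once more (an intermediate vertex wandering through some $\alpha^{(i)}$ is used to restart or to terminate the connection there, not on $\alpha^{(d+1)}$) one upgrades this to mutual infinite front- and back-connection between $\beta$ and $\alpha^{(d+1)}$ as streams. Lemma \ref{l:alphacupbeta} then gives that $\beta \cup \alpha^{(d+1)}$ is reduced primitive, completing the induction. The bounded-alphabet hypothesis caps the number of pairwise disjoint infinite vertex paths by $\liminf_i \#\A_i < \infty$, so the iteration terminates at some finite stage $D$; by the contrapositive of Lemma \ref{l:getting one more}, $\A$ is then a primitive stream, giving primitivity of $M$ from some time on, and row-reducedness extends this to every starting time, the desired contradiction.

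The main obstacle will be the inductive step in (iii): converting mutual infinite connection between individual edge paths (delivered by minimality and (i)) into genuine mutual infinite front- and back-connection between the growing union $\beta$ and the new singlet $\alpha^{(d+1)}$ as streams, since intermediate vertices of an edge-path connection may wander through other components $\alpha^{(i)} \subseteq \beta$. Managing the variable starting times of the successive $\alpha^{(i)}$ (which need not be $0$), and ensuring reducedness is preserved along the iteration, are the remaining bookkeeping tasks; both are handled by the same shortening device used in part (i).
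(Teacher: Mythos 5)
Your proof is correct, and parts (i) and (ii) essentially coincide with the paper's: the paper's (i) is the same one-line equivalence (cylinder neighborhoods of $f$ meet $W^s(e)$ iff initial segments of $f$ connect to the tail of $e$ at arbitrarily late times), except that you spell out the shortening needed to turn an arbitrary connecting vertex path into a bona fide front-connection avoiding both singlet streams, which the paper leaves implicit; in (ii) the paper constructs the switching generator $\gamma$ directly rather than routing through (i), a cosmetic difference. Part (iii), however, is a genuinely different argument. The paper proves the contrapositive: assuming $M$ reduced and not primitive, it fixes a reduced primitive \emph{special} stream $\alpha$ of \emph{maximal size} containing a given vertex path, gets a disjoint infinite vertex path from Lemma \ref{l:getting one more}, uses maximality plus Lemma \ref{l:alphacupbeta} to conclude that the two streams cannot infinitely both front- and back-connect, and then exhibits a concrete cylinder set that the orbit of a suitable edge path never meets, so minimality fails outright (part (i) is not even invoked). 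You instead use minimality positively: by (i), density of orbits forces each newly produced disjoint singlet (again from Lemma \ref{l:getting one more}) to mutually infinitely front- and back-connect with the primitive stream built so far -- your shortening device (restart at the last intermediate vertex lying in one stream, stop at the first subsequent vertex lying in the other) correctly upgrades path-to-path connections to stream-to-stream ones -- so Lemma \ref{l:alphacupbeta} merges them, and the bounded-rank hypothesis caps the number of pairwise disjoint infinite vertex paths, giving the contradiction; the statement about ``termination'' is slightly garbled, but the underlying dichotomy (the lemma keeps producing disjoint paths while non-primitivity holds, yet only finitely many can coexist) is sound. Your saturation route buys a cleaner logical structure ((i) drives both (ii) and (iii)) and dispenses with maximal special streams, at the cost of the stream-versus-singlet and starting-time bookkeeping you flag; the paper's route is shorter, since maximality does the merging in one stroke and the failure of minimality is witnessed by an explicit avoided cylinder. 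Finally, your reduction step with ``transfer back'' is exactly the paper's own ``without loss of generality assume $M$ reduced,'' so the same implicit convention (primitivity read off the reduced sequence, which shares the path space and $\FC$-action) is in force in both proofs.
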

 \begin{proof}To prove $(i)$, $f$ is in the $\FC$\,--\,orbit closure of $e$
   iff
for any  edge cylinder set $[f_0\dots f_k]$, there exists $n>k$ and
edge path $f_0\dots f_k g_{k+1}\dots g_n$
that $g_k^+= e_k^+$, but this is so iff $e$ infinitely back-connects to  $f$.

For $(ii)$, let $\e\in \Sigma_M^+$; we show its
orbit is dense. Let $[f_0\dots f_k]$ be a (nonempty) cylinder set. By
primitivity there exists an
$n>k$ such that $M_k^n\equiv M_kM_{k+1}\dots M_n$ is strictly
positive. Thus there is a finite allowed edge path $(.f_0 f_1\dots f_k
f_{k+1}\dots f_n) $ such that $f_n^+= e_n^+$. Letting $\gamma$ denote the
generator of $\FC$ which switches   $[. e_0e_{1}\dots
e_n]$ and $[.f_{0}f_{1}\dots f_n] $, then $\gamma (\e)$ is in the cylinder.

For $(iii)$ we show that if $M$ is not primitive then the
   action of $\FC$ is not minimal. Without loss of generality (by
   Lemma 2.2 of~\cite{Fisher09a}) we can
   assume $M$ is reduced. Let
$ w= (w_i)_{i\geq 0}$ be an allowed vertex path. Let $\alpha$ be a reduced primitive
special stream containing $ w$, of maximal size. (That
exists 
since the alphabet size is bounded).  By 
Lemma \ref{l:getting one more} since $\A$ is not primitive, there
exists for some $k\geq 0$ a vertex path $ x= (x_i)_{i\geq k}$ disjoint from $\alpha$. The singlet
stream $\alpha_{ x }$ is primitive, hence by 
Lemma \ref{l:alphacupbeta} it cannot infinitely both front-and
back-connect with $\alpha$, as otherwise $\alpha\cup \alpha_{ x }$ would be
strictly larger in size and still primitive reduced and special.

We know that after some $n>k$, $ x$ either no longer front-connects or no
longer back-connects to $ w$. Suppose the latter. Then
consider an edge cylinder set $[f_0\dots f_n]$ with $f_0^-= w_0,\dots,
f_n^-= w_n$. The $\FC$\,--\,
orbit of an edge path $ e= (e_i)_{i\geq 0}$ with $
e_i^-= x_i$ for all $i\geq k$ (this extension to the left exists since
$M$ is reduced)  does not meet $[f_0\dots f_n]$, since otherwise $ x$ would
back-connect to $ w$ after time $n$. Suppose the former. Then consider
the cylinder set $[e_0\dots e_n]$. In the same way, the $\FC$\,--\,orbit 
of $ f$ does not meet $[e_0\dots e_n]$. In either case, minimality
is contradicted.
 \end{proof}

\begin{theo}(Stream Decomposition Theorem) \label{t:StreamDecomp}
Let $N=(N_i)_{i\geq 0}$ be a  reduced sequence of  $(l_i\times l_{i+1})$
nonnegative real matrices with alphabet sequence $\A= (\A_i)_{i\geq
  0}$ and $\#\A_i=l_i>0$, with $l_i$ bounded.
Then there exists a finite collection 
$\alpha(i)$, $i=1,2,\dots ,d$ of reduced primitive streams, together with
a possibly nonproper stream $\P= (\P_k)_{k\geq 0}$ (called the {\em pool stream}) 
such that:
\item{$(i)$} $\{\alpha(1)\dots, \alpha(d),\P\} $ partition
$\A$;  after the maximum of the starting times  each 
$\alpha(i)$  is nonempty; 
\item{$(ii)$} there are front-connections from $\alpha(i)$  to 
$\alpha(j)$ only if $i<j$;
\item{$(iii)$} the pool stream $\P$ contains no infinite
vertex paths; for each $p\in \P$  there exists a front connection to
some stream $\alpha(j)$, with $j>1$ (with no front connection to $\alpha(1)$), and  there exist
front-connections from a given  stream $\alpha(i)$ to $p\in \P$
and from $p$ to  $\alpha(j)$ only
if $i<j$.
\item{$(iv)$} This decomposition is unique after some finite time, 
    up to a renumbering of the streams which preserves property
    $(ii)$.
    \item{$(v)$} If $\alpha$ is a reduced primitive stream which is
      maximal in the sense of containment, then there exists a stream
      decomposition satisfying $(i)$-$(iii)$ which includes $\alpha$.
    \end{theo}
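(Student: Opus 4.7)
The plan is a greedy ``absorb and advance'' construction built on the two preceding lemmas: Lemma~\ref{l:alphacupbeta} will be used to coalesce reduced primitive streams that infinitely front- and back-connect, producing maximal such streams, while Lemma~\ref{l:getting one more} will be used to find a fresh infinite vertex path outside a given stream whenever one exists. The finite-rank assumption $l_i\leq L$ will prevent this process from running forever, bounding both the size of each individual stream and the total number $d$ of streams, and will also force the residual pool $\P$ to contain no infinite vertex path.

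I will first build one maximal reduced primitive stream $\alpha(1)$. Starting from any infinite vertex path in $\A$ (which exists because $N$ is reduced) and its singlet stream, I will repeatedly enlarge by unioning in reduced primitive streams that infinitely bidirectionally front- and back-connect with the current union; each new union is again reduced primitive by Lemma~\ref{l:alphacupbeta}. Because $|\alpha^{(n)}_k|\leq L$ for every time $k$, the sets $\alpha^{(n)}_k$ must stabilize coordinatewise, and the total union $\alpha(1)=\bigcup_n\alpha^{(n)}$ is well defined and reduced. Primitivity of $\alpha(1)$ will follow from the observation that any two symbols in $\alpha(1)_k$ and $\alpha(1)_m$ already lie in a common finite-stage $\alpha^{(N)}$, combined with the fact that the alphabet bound makes ``all pairs occurring at levels $k$ and $m$'' a finite condition, so one single future time can be chosen to witness primitivity of $\alpha(1)$.

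Next I will iterate: having built pairwise-disjoint maximal reduced primitive streams $\alpha(1),\dots,\alpha(r)$, I examine whether any infinite vertex path avoids them. If so, an application of Lemma~\ref{l:getting one more} (inside the sub-diagram of vertices not already used) yields a disjoint singlet stream, and running Step~1 again produces $\alpha(r+1)$. Since the $\alpha(j)$ are pairwise disjoint and eventually nonempty, their count is bounded by $L$, so the procedure terminates at some $d\leq L$; the leftover set $\P$ cannot contain any infinite vertex path, as one more iteration would otherwise produce $\alpha(d+1)$. To secure $(ii)$ I will form the directed graph on $\{1,\dots,d\}$ whose edges record infinite front-connections between distinct streams; Lemma~\ref{l:alphacupbeta} rules out any directed cycle, since a cycle would force coalescence into a single primitive stream, contradicting maximality. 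A topological sort of this DAG then delivers the required renumbering. For $(iii)$, each pool element $p\in\P_k$ lies on no infinite vertex path within $\P$, so tracing its forward orbit forces it eventually into some $\alpha(j)$; relabelling the topological sort so that $\alpha(1)$ is a source rules out front-connections from $\P$ into $\alpha(1)$, as required.

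The main technical obstacle I anticipate is verifying primitivity of the increasing union in Step~1 cleanly: the delicacy lies in obtaining a uniform ``connecting time'' across the stabilizing finite pieces, which works precisely because the bounded alphabet forces the relevant family of pairs at each level to be finite. Uniqueness of the decomposition after a finite time will then follow from Lemma~\ref{l:alphacupbeta} applied to members of any two valid decompositions: two maximal reduced primitive streams, one from each, cannot bidirectionally infinitely connect without merging, and cannot be disjoint without both appearing on the list, so they must eventually coincide; the partial order from the directed graph of front-connections is an invariant of the tail of $N$ alone, so two orderings differ only by permutations within the initial streams, within the pool elements feeding into a common successor stream, and within the alphabets of a given diagonal block, exactly as stated in $(iv)$. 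Part $(v)$ is then immediate: set $\alpha(1)$ equal to the prescribed maximal reduced primitive stream and run the construction, possibly permuting the resulting $\alpha(2),\dots,\alpha(d)$ to respect the topological sort.
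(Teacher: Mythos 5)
Your construction has two genuine gaps. First, the primitivity of the increasing union in your Step~1 is not established, and the argument you sketch does not close it. Coordinatewise stabilization (each $\alpha(1)_k$ is an increasing subset of a set of size $\leq L$) does not give a finite stage containing the whole stream: the stage at which level $k$ stabilizes can tend to infinity with $k$. Primitivity of the union requires, for each $k$, a \emph{single} time $n$ at which every symbol of $\alpha(1)_k$ connects within $\alpha(1)$ to every symbol of $\alpha(1)_n$; but $\alpha(1)_n$ keeps acquiring symbols at later stages, and the witness time supplied by primitivity of a finite stage $\alpha^{(N)}$ only covers $\alpha^{(N)}_n$, not the larger $\alpha(1)_n$ -- chasing the newly added symbols produces an infinite regress, so ``one single future time can be chosen'' is not justified (and without primitivity of unions of chains you also cannot invoke a Zorn-type maximality). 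The paper avoids this entirely: it never forms an infinite union, but instead takes a primitive \emph{special} stream of maximal \emph{size} (a disjoint union of at most $L$ infinite vertex paths, so a maximum exists by the bounded-rank hypothesis), and uses Lemma~\ref{l:alphacupbeta} only to show that two such maximal streams cannot infinitely connect in both directions, which yields the partial order.

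Second, and more seriously, your decomposition does not deliver property $(iii)$. Taking the maximal primitive streams together with the raw complement as pool will in general leave pool elements that front-connect into $\alpha(1)$ (e.g.\ a symbol whose only outgoing edge enters $\alpha(1)$), and pool elements that back-connect from a later stream and front-connect to an earlier one, violating the order condition; your proposed remedy -- relabel the topological sort so that $\alpha(1)$ is a source -- is a non sequitur, since being a source in the graph of inter-stream connections says nothing about connections from pool symbols into $\alpha(1)$. The missing idea is the paper's absorption step: after linearly ordering the maximal special streams, one replaces the first by $\wh\alpha=\{a:\ a \text{ communicates to }\alpha\}$, and each subsequent stream $\beta$ by the set of symbols (outside earlier augmented streams) that front-connect to $\beta$ \emph{and} back-connect to $\beta$ or a later stream; the pool $\P$ is the complement only after this augmentation, which is exactly what forces $(iii)$. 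The real work of the proof is then showing that these augmented streams are still primitive (the argument with the cutoff time $s$ after which no later stream connects back, the time $t$ of positivity, and the finite excursion bound $R$ outside all streams), none of which appears in your proposal. Your treatment of $(iv)$ and $(v)$ is in the right spirit (singlet streams eventually meet a unique stream of the other decomposition), but it is stated for the un-augmented streams and so does not yet prove the theorem as formulated.
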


\begin{proof}
Since the matrix sequence $(N_i)_{i\geq 0}$ is reduced, there exists
at least one  infinite vertex path $ x = (x_0 x_1\dots)$. 
Let  $\alpha $ be a  primitive special stream containing
$\alpha_{ x}$ of maximal 
size.

Now consider the stream $\Cal S$ of all symbols  which  front-connect to 
$\alpha$ (so $\Cal S\supseteq\alpha$). Suppose that  $\Cal S$ is
not all of $\A$; then (since $\A$ is reduced) there exists an infinite vertex path $ y = (y_m y_{m+1}\dots)$ disjoint from
$\alpha$ (if some $y_i\in \alpha$, then $y_m$ is in $\Cal S$, a
contradiction). 
 Let $\beta $ be a  
be a 
primitive special stream containing this singlet stream denoted
$\beta_{ y}$
and
disjoint from $\alpha$, 
of maximal 
size.

Next consider the stream $\Cal S$ of all symbols  which  front-connect to 
$\alpha$ and  $\beta$ or both (so $\Cal S\supseteq\alpha\cup\beta$). Suppose that  $ \Cal S$ is
not all of $\A$. Then, as before, there exists an infinite vertex path $ w = (w_l w_{l+1}\dots)$ disjoint from
$\alpha\cup\beta$; let $\gamma $ 
be a 
primitive special stream containing this singlet stream $\gamma_{ w}$
and
disjoint from $\alpha\cup \beta$, 
of maximal 
size.
Continuing in this way as far as possible, we have
produced a finite collection (since alphabet size is bounded) of disjoint primitive special streams, such
that their complement in $\A$ consists (since $\A$ is reduced) of symbols which front-connect
to one of these
streams.

Next, we   partially order  this
collection.
First, for general streams, we write
$\alpha\lesssim   \beta$ iff $\alpha$  infinitely front-connects to 
 $\beta$.
This relation is transitive.   If, as in our collection, 
$\alpha, \beta$ are reduced, then
$\alpha\lesssim \alpha$ (reflexivity).  If $\alpha\lesssim   \beta$  and $\beta\lesssim   \alpha$ with
$\alpha, \beta$ reduced primitive, then $\alpha\cup \beta$ is 
also reduced primitive, from Lemma \ref{l:alphacupbeta}. 
Since the streams of our
collection are reduced primitive and also
were chosen at each stage to have 
maximal size, that is not possible, and so
 if $\alpha\lesssim   \beta$ and $\beta\lesssim   \alpha$ then 
$\alpha=\beta$, proving symmetry. Thus 
$\lesssim $  defines a partial order on our collection.

Now we define
 a linear  order $\preccurlyeq $ which
  respects the partial order, that is, so that $\alpha\lesssim \beta\implies
 \alpha \preccurlyeq \beta$.
 One can always do this: as noted above regarding the stationary case, 
one can visualize the partial order $\lesssim$ as a
 tree; tilting the graph slightly and viewing it from the side projects
 to a compatible linear order. (Disconnected parts of the tree can be
 ordered arbitrarily.) See Figs.~\ref{F:Frobenius} and  \ref{F:Frobeniusb}.

Now we modify these  streams to achieve what we want.
For notational simplicity let us assume that we have 
three  primitive special streams $\alpha\prec \beta \prec \gamma$.
(Here $\alpha\prec \beta$ means $ \alpha \preccurlyeq \beta$ but $ \alpha \neq \beta$.)
We shall define 
new proper streams $\wh \alpha, \wh \beta,  \wh \gamma$, plus a
possibly nonproper stream $\P$,  giving these the inherited linear order,
which satisfy $(i), (ii), (iii)$ of the theorem.

We begin by defining $\wh\alpha\supseteq\alpha$ by 
$$
\wh \alpha_j= \{ a\in \A_j:\, a \text{ communicates to } \alpha\}
$$ 
for all $j\geq 0$.
We claim that $\wh \alpha$ is still primitive. 
Let $a\in \wh\alpha_k\setminus \alpha_k$ for some chosen $k\geq 0$. We are to show that for 
some $p>k$, $a$ communicates to all of $\wh\alpha_p$.

Since $\alpha$ is less than or equal to all the other special streams
in the linear order $\preccurlyeq $, 
there exists $s>0 $ such that  for any  connection
pair $(i,n)$ from $\beta$ or $\gamma$  to $\alpha$ then  $i<s$.

Since $a\in \wh\alpha_k$, there exists some least $\wt m>k$ such that  $a$
communicates to some $b\in \alpha_{\wt m}$. We  take $s$ to also be greater than $\wt m$. 

 Now let $t>s$ be 
such that
$N^{\alpha}_s\cdots N^{\alpha}_{t-1}$ is strictly positive.
Thus, since $\alpha$ is reduced, 
$a$ communicates to all of $\alpha_i$ for any $i\geq t$.

Define $R$ to be the supremum of the  times $l $ such that there
exists a vertex path 
 $w_t\dots w_l$ such that for all $i$ with $t\leq i\leq l$, then $w_i\notin \alpha\cup\beta\cup\gamma$.
 This is
 finite, since otherwise (by compactness of the path space) there
 exists an infinite path with this property, i.e.~we could form a new
primitive special stream.

Now for $p>R$, we claim that $a$ communicates to 
any chosen $c\in \wh\alpha_p$. This is true for 
$\alpha_p$, so we assume that $c\in \wh \alpha_p\setminus \alpha_p$.

Since $\A$ is reduced, there is a vertex path $\wt w_t\dots \wt w_p=c$. Each
$\wt w_i\in \wh\alpha$, since it front-connects to $c$ hence to
$\alpha$. And  $\wt w_i$ cannot be in $\beta_i\cup\gamma_i$,
since $t>s$, and those streams do not have forward connections to $\alpha$
after time $s$. Thus since $p>R$, there is some $q$ with $t\leq q\leq
p$ such that
$\wt w_q\in\alpha_q$. Hence there is 
a vertex path $a=x_k\dots b=x_{\wt m} \dots x_{q}=\wt w_q\dots  \wt w_p=c$, as
claimed. And so
$\wh \alpha$ is indeed primitive.

We next define $\wt\beta= \beta\setminus\wh\alpha$ and claim this
stream is still primitive. 
But if for some $k\geq 0$, $a$ is an element of $\beta_k\setminus\wh\alpha_k$, then it can
be continued infinitely within $\beta$ since $\beta $ is reduced. And
this continuation remains within $\wt\beta$ since $a$ does not communicate with
$\alpha$. Thus in particular $\wt\beta$ is proper, but moreover  since symbols have been removed from $\beta $ only up to
some finite time, 
the primitivity of $\beta$ beyond that point  will pass over to
$\beta\setminus\wh\alpha$. 

In the same way, defining $\wt\gamma= \gamma\setminus\wh\alpha$, this
is still primitive.

We note that if $a\in\A_k\setminus(\wh
\alpha_k\cup\wt\beta_k\cup\wt\gamma_k)$, then $a$ does not
front-connect to $\wh\alpha$, hence it must front-connect to
$\wt\beta\cup\wt\gamma$,  as otherwise we could produce a
new primitive special stream.

Next we define the stream $\wh\beta$ by, for all $j\geq 0$,
\[
\wh \beta_j= \{ a\in \A_j\setminus\wh\alpha_j: \, a \text{
  front-connects to }\wt\beta, \text{ and back-connects to
} \wt \beta \text{ or } \wt\gamma\}
.\]

The reason for requiring that ``$a  \text{ back-connects to
} \wt \beta \text{ or } \wt\gamma$'' is to have the right definition
of pool stream: if, for example, $a$ back-connects both to $\wh
\alpha$ and $\wt\beta$ then it will be included in $\wh\beta$, while
if it back-connects  only to 
$\wh \alpha$ and/or $\P$ then it will be included in $\P$, as we see below.

It is clear that
$\wh\beta\supseteq \wt\beta$ and  that it  is  a proper stream, i.e.~that 
if $\wh\beta_k\neq\emptyset$ then also $\wh\beta_{k+1}\neq\emptyset$.

Now $\wh\beta$ is  primitive. The argument is similar to that 
 for $\wh\alpha$; thus, take
$a\in \wh\beta_k\setminus \wt\beta_k$ for some $k\geq 0$. This symbol connects
to $b\in \wt\beta_{\wt m}$ for some least time $\wt m$.   There is 
$m'$ such that $\wt\gamma$ does not connect to $\wt\beta$ after time $m'$;
let
$s$ be greater than $\wt m$ and $m'$. By
primitivity of $\wt\beta$ there exists 
$t>s$ 
such that
$N^{\wt\beta}_s\cdots N^{\wt\beta}_{t-1}$ is positive. Let 
$R$  be the supremum of   times $l $ such that there
exists a vertex path 
 $w_t\dots w_l$ such that for all $i$ with $t\leq i\leq l$, then
 $w_i\notin \wh\alpha\cup\wt\beta\cup\wt\gamma$. Then
 $R$ is finite (as above), since this implies
$w_i\notin \alpha\cup\beta\cup\gamma$.

For $p>R$, and $c\in \wh\beta_p\setminus \wt\beta_p$, there
exists a vertex path $\wt w_t\dots \wt w_p=c$.
We claim that for some time $q$ with $t\leq q\leq
p$ we have
$\wt w_q\in\wt\beta_q$.

Now $\wt w_i\notin \wt\gamma_i$,
since $t>s$, and $\wt w_i\notin
\wh\alpha$ since $\wt w_i\in
\wh \beta$ (as it front-connects to $c\in\wh \beta$) while
$\wh\alpha\cap\wh\beta=\emptyset$.
We cannot have $\wt w_i\notin
\wt \beta$ for all times, since then $\wt w_i\notin
\wh\alpha\cup\wt\beta\cup\wt\gamma  $, violating the definition of $R$.
This proves the claim. 

Hence there is a vertex path 
$a=x_k\dots b=x_{\wt m} \dots x_{q}=\wt w_q\dots  \wt w_p=c$,
proving primitivity of $\wh \beta$.

Redefining
$\wt\gamma=\gamma\setminus(\wh\alpha\cup\wh\beta)$,
we verify as for $\wt\beta$ that  this is still primitive, the proof
now being easier since in fact  there are no front-connections from $\wt \gamma$ to $\wh\alpha\cup\wh\beta$.
Then
we define the stream $\wh\gamma$ by, for all $j\geq 0$:
\[
\wh \gamma_j= \{ a\in \A_j\setminus(\wh\alpha_j\cup\wh\beta_j): \, a
\text{  front-connects to } \wt\gamma,   \text{ and back-connects to
} \wt\gamma\}
.\]

As for $\wh\beta$, the stream $\wh\gamma$ is proper
and   primitive.

We have shown that the streams  $\wh\alpha,\wh\beta$, and $\wh\gamma$ are proper and primitive; by
construction, they  are disjoint,
with
no front-connections from $\wh \beta$ to $\wh\alpha$. There are also
no front-connections
from $\wh \gamma$ to $\wh\alpha\cup\wh\beta$: this is true for $a\in
\wt \gamma$, and for $a\in
\wh \gamma\setminus \wt\gamma$, then $a$ back-connects to $\wt
\gamma$,
so if it front-connects to $\wh \alpha$ then it is already in
$\wh\alpha$, and if it front-connects to $\wh \beta $ then it is already in
$\wh\beta$, a contradiction.

We now consider the complement, $ \A\setminus (\wh\alpha\cup\wh\beta\cup\wh\gamma)$,
which we define to be $\P$, the {\em pool stream}.
If $a\in \P$ front-connects to $\wh\beta$ then it cannot
back-connect to $\wh\beta$ or $\wh\gamma$ (since then it would be in 
$\wh\beta$, a contradiction). Therefore it back-connects to
$\wh\alpha$
or $\P$, or to nothing if $a\in \P_0$.
If $a\in \P$ front-connects to $\wh\gamma$ then it cannot
back-connect to $\wh\gamma$ (since then it would be in 
$\wh\gamma$). Therefore it back-connects to one or more of
$\wh\alpha$, $\wh \beta$ and $\P$, or to nothing.

In summary, the  elements of $\P$ are 
those symbols which have a front-connection to $\wh\beta$ or
$\wh\gamma$ or both, and which if they have a 
back-connection from one of the streams $\wh\alpha, \wh\beta, \wh\gamma$ and a front-connection to a different
one, then the first is less than the second in the linear
order.

Next we prove $(iv)$, uniqueness.
Suppose we are given two partitions of $\A$, $\{\alpha(1)\dots, \alpha(d),\P\}
$ and $\{\alpha'(1)\dots, \alpha'(d'),\P'\} $,
satisfying $(i)$-$(iii)$. We claim they are eventually
identical. Now each non-pool stream is a union of
infinite vertex paths, i.e.~ singlet streams. Take one, $\alpha'_x\in
\alpha'(i)$.  It cannot be a subset of $\P$ since the pool stream
contains no infinite singlet streams.  So it must meet some
$\alpha(j)$ infinitely often. If it also meets another  stream
$\alpha(k)$
infinitely often then $j\leq k$, so eventually it only meets one such
stream, say $\alpha(j)$. Now for any other singlet stream $\alpha'_y\in
\alpha'(i)$ this also holds, and it has to be the same stream
$\alpha(j)$, since by primitivity of $\alpha'(i)$ the two singlet
streams $\alpha'_x $ and $\alpha'_y$ infinitely front-and
back-connect. This proves that $\alpha'(i)\subseteq 
\alpha(j)$ eventually. Reversing the argument, $\alpha(j)\subseteq 
\alpha'(j')$ for some $j'$. Thus the two collections of primitive
streams
are eventually identical, whence so are their complements, $\P$ and $\P'$.

Lastly we prove $(v)$: if $\alpha $ is a primitive reduced stream, then it
contains an infinite vertex path $ x = (x_0
x_1\dots)$.
Beginning the proof with this path proves the statement, since then
by the construction
$\alpha(1)\supseteq \alpha$.
\end{proof}

\begin{defi}
Given a stream decomposition satisfying $(i)-(iii)$ of the theorem,  
we define an {\em initial stream} to be $\alpha(i)$ such that 
 such that no other stream infinitely front-connects to it, and we define a
{\em final stream} to be $\alpha(i)$ such that  no other stream infinitely
back-connects to it,
i.e.~ such that it has no infinite front-connections to the other streams.
\end{defi}

\begin{theo}(nonstationary Frobenius Decomposition Theorem) \label{t:FrobDecomp}
Given a reduced sequence $N=(N_i)_{i\geq 0}$ of $(l_i\times l_{i+1})$ 
 nonnegative real matrices with bounded alphabet size,
then there exists a reordering of the alphabets $\A=(\A_i) _{i\geq 0}$
such that the new matrix sequence is in
Frobenius normal form.

After some finite time, this form is unique up to a (nonstationary)
permutation of the alphabets.

Moreover we can choose this order so as to place the initial streams first and
the final last. Furthermore, there exists 
a gathering such that the new matrix sequence is in
{\em fixed--size}   Frobenius normal form for times $\geq 1$. That is,

\begin{equation}\label{eq:block_form}
N=
\left[ \begin{matrix}
A_1 &C_{12} &\dots  &C_{1\wh l}\\
0_{21} &A_2 & &\vdots\\
\vdots & &\ddots  &\\
0_{\wh l 1} &\dots  &  &A_{\wh l}
\end{matrix}  \right]
\end{equation}
\end{theo}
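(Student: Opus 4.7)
The plan is to apply the Stream Decomposition Theorem (Theorem \ref{t:StreamDecomp}) to obtain reduced primitive streams $\alpha(1)\prec \alpha(2)\prec\dots\prec \alpha(d)$ and a pool stream $\P$, and then to translate the geometric ``front-connection only forward'' structure directly into an upper triangular block form by ordering each $\A_k$ compatibly.

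First I would fix the linear order $\preccurlyeq$ on the primitive streams guaranteed by Theorem \ref{t:StreamDecomp}, chosen as a linear extension of $\lesssim$ that places initial streams first and final streams last (possible because initial streams are minimal and final streams are maximal in the partial order $\lesssim$, and the two sets are disjoint after the common starting time since initial streams have no back-connections and final ones no front-connections). At each time $k$, the alphabet $\A_k$ partitions as the disjoint union of the $\alpha(i)_k$ and the pool slice $\P_k$. For each pool element $p\in\P_k$, part $(iii)$ of Theorem \ref{t:StreamDecomp} says $p$ front-connects to some $\alpha(j)$ with $j\geq 2$ and to no stream with smaller index; letting $j(p)$ be the least such $j$, I insert $p$ in the ordering just before $\alpha(j(p))_k$. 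I order the elements inside each $\alpha(i)_k$ arbitrarily and list the pool elements sharing the same $j(p)$ in any order. This defines the block partition $\{\A_k^i\}$ with block alphabet $\wh\A_k=(1,P_2,2,P_3,\dots,d)$.

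Next I would verify conditions $(1)$--$(2)$ of Definition \ref{d:fixedsizeFrob}. For $(2)$, the block entry $(B_k)_{ij}$ with $j<i$ records an edge from $\alpha(i)_k\cup\{p:j(p)=i\}$ to $\alpha(j)_{k+1}\cup\{q:j(q)=j\}$; by $(ii)$--$(iii)$ of Theorem \ref{t:StreamDecomp} any such edge would either violate the front-connection order on streams, or would mean a pool element $p$ front-connects to an index below $j(p)$, contradicting the choice of $j(p)$. For $(1)$, the diagonal block corresponding to a stream $\alpha(i)$ is exactly the generalized matrix sequence $N^{\alpha(i)}$, which is reduced and primitive by construction; the pool diagonal blocks $(N_k)_{\P_k^i\P_{k+1}^i}$ are nonprimitive and, since $\P$ contains no infinite vertex path (part $(iii)$), any sufficiently long product of pool diagonal blocks must vanish or be virtual (otherwise one could extract an infinite vertex path staying in $\P$).

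The anticipated main obstacle is the passage to fixed--size form; this is where I would focus most of the work. Since the vertex alphabets are bounded and, by part $(iv)$ of Theorem \ref{t:StreamDecomp}, the stream decomposition is eventually unique, there is a time $k_0$ beyond which the block alphabet $\wh\A_k$ has a constant size and the pool slice $\P_k$ stabilizes in terms of which streams each pool element connects to. Pool elements whose diagonal product terminates in finite time contribute only virtual or zero blocks after a sufficient gathering; the key point is to choose a sequence of gathering times $0=n_0<n_1<\dots$ with $n_1\geq k_0$, and $n_{r+1}-n_r$ larger than the uniform bound on lengths of pool traversals after time $n_r$, so that each gathered matrix $\wt N_r=N_{n_r}^{n_{r+1}-1}$ has pool diagonal blocks that are identically zero. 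Eliminating these virtual or identically--zero pool blocks yields a bounded square block structure with the same diagonal size for all $r\geq 1$, giving \eqref{eq:block_form}. Finally, eventual uniqueness up to a (nonstationary) permutation of the $\A_k$ is a direct consequence of part $(iv)$ of Theorem \ref{t:StreamDecomp}, since any two decompositions satisfying the Frobenius conditions reconstruct streams satisfying $(i)$--$(iii)$ of that theorem.
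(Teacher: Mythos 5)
Your construction of the block order from the stream decomposition (pool elements inserted just before the least stream they front-connect to), the verification of upper triangularity from $(ii)$--$(iii)$ of Theorem \ref{t:StreamDecomp}, the vanishing of long pool-diagonal products via the no-infinite-path-in-$\P$ argument, and the appeal to $(iv)$ for eventual uniqueness all match the paper's proof. The gaps are in how you handle the initial/final ordering and the fixed--size step. First, you justify placing initial streams first and final streams last by their minimality/maximality in $\lesssim$; but $\lesssim$ only records \emph{infinite} front-connections, while condition $(2)$ of Definition \ref{d:fixedsizeFrob} demands $(B_k)_{ij}=0$ for $j<i$ at \emph{every} time $k$, i.e.\ that \emph{all} front-connections go forward. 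An initial stream can perfectly well receive finitely many front-connections from a stream that your order places after it, and at those times upper triangularity fails. The paper deals with exactly this by a preliminary gathering that concentrates all finite front-connections into the single matrix $N_0$ and only then reorders; this is also why the fixed--size form in the statement (and Remark \ref{r:normalform}) is claimed only for times $\geq 1$. Simply reindexing the streams produced by Theorem \ref{t:StreamDecomp}, as you do, does not give this.

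Second, your fixed--size argument rests on the claim that boundedness of the alphabets plus eventual uniqueness of the decomposition yields a time $k_0$ after which the block sizes and pool slices are constant. That is not true: the widths $\#\alpha(i)_k$ and $\#\P(i)_k$ may oscillate forever (pool slices can even be empty at some times and not at others), so your conclusion that the gathered matrices have ``the same diagonal size for all $r\geq 1$'' is unjustified as stated. What the paper does instead is to gather along a subsequence on which each $\#\P(i)_{n_r}$ equals its limsup (then delete identically empty pool substreams), and afterwards pass to a further subsequence on which the vector of diagonal block sizes repeats (possible by pigeonhole, since the sizes are bounded), which is what actually forces square, fixed-size blocks. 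Your spacing of gathering times to kill pool-to-pool transitions is fine in substance -- the bound you need is the finite, per-starting-time bound $R(k)$ furnished by compactness, not a uniform one -- but the size-stabilization step needs the explicit subsequence selection, not an appeal to eventual uniqueness.
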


\begin{rem}\label{r:fixed--size_notunique}

 As in Remark \ref{r:permutationsequence} the choice of
  order does not affect the topology of the edge path space, as a change
  of the order
  induces  a
 topological  conjugacy (vertex and hence edge cylinder sets correspond). Further, 
 a gathering induces a topological conjugacy of the spaces and of the
 $\FC$-actions (though not quite for adic transformations, as there
 are more orders on the gathered diagram): see Proposition \ref{p:gathermeasure}.

  See Remark \ref{r:normalform} below for why in the  statement we
  begin with time one, not time zero.
  
 We note  that  the fixed--size form is not unique, because a further
 gathering will change the form. Indeed,  the natural equivalence relation on Bratteli
diagrams is defined by gathering together with dispersal--the ``full
orbit'' of this operation--not just by the
``forward dynamics'' of gathering,     as shown by the
following example.

Define a periodic alphabet sequence
  $\A_i$ with
  $\#\A_i=1$ for $i$ even,
 $\#\A_i=2$ for $i$ odd, and define a  matrix sequence  $M_i$ by
  $M_i= \left[ \begin{matrix}
1 &1\\
\end{matrix}\right]$ for  $i$ even,
$M_i= \left[ \begin{matrix}
    1\\
    1\\
\end{matrix}\right]$  for $i$ odd. We gather first along the even
times, giving the constant matrix sequence 
$M_i'= \left[ \begin{matrix}
2\\
\end{matrix}\right]$ and then along odd times, giving
$M_i''= \left[ \begin{matrix}
    1 &1\\
    1 &1\\
\end{matrix}\right]$. Thus the way we gather affects the fixed-size
form; here, 
neither of these Bratteli diagrams can be reached from the other by a further
gathering.

This observation is related to Remark 3.2 of \cite{BezuglyiKwiatkowskiMedynetsSolomyak13}.

We note that the sequence $M$ is primitive and periodic hence uniquely ergodic,
and that $M', M''$ are naturally isomorphic, being
two representations for the dyadic odometer, the first as an edge and
the second as a vertex shift.

The (nonstationary) permutations of alphabets which can be allowed in
the statement about uniqueness include a
permutation of the initial and final streams,  permutations within
the primitive streams, and a reordering of the primitive streams as
long as this preserves upper triangularity (for example, if
stream $(1)$ and $(2)$ only communicate to stream $(3)$, then $(1)$ and $(2)$
could be permuted).

\end{rem}
\begin{proof} 
From Theorem \ref{t:StreamDecomp}, we have reduced
  primitive streams
 $\alpha(1),\alpha(2), \dots, \alpha(d)$. We
now extend this list to include pool elements, by partitioning the
pool stream $\P$ into
substreams $\P(i) $ for $2\leq i\leq  d$, as follows.

We
know that a pool element $x_k\in\P_k\subseteq \A_k$ front-connects to at least one
non-pool stream. Letting $i$ be the least integer such that $x_k$
front-connects to $\alpha(i)$,
we place $x_k $ in $\P(i)$. We then linearly order the streams and sub-streams,
defining 
$\alpha(1)< \P(2)<\alpha(2)<\P(3)<\dots<\P(d)<\alpha(d)$.
 We recall that the primitive streams are proper, hence empty before their
starting times, and that
a pool sub-stream $\P(i)$ may be empty at any given time.

Note that if $x_k\in \alpha(i)$ front-connects to a pool element $x_{k+1}\in
\P(j)$,
then necessarily $i< j$, 
since the stream $\alpha(i)$
then front-connects to $\alpha(j)$.

Rows and columns
of  a sequence of matrices $B_k$ with $0-1$ entries
will be indexed by this ordered list of streams and
pool sub-streams, now numbered by $1\leq i\leq (2d-1)$. For $k\geq
0$, we define
a $(2d-1)\times
 (2d-1)$ $0-1$ matrix $B_k$ 
to have a $1$ in the $ij^{\text{th}}$ place iff
the stream numbered $i$ front-connects to stream $j$ at time $k$, with
connection pair $(k, k+1)$.
 
Since there are 
only front-connections from  stream $i$ to stream $j$ if $i\leq j$,
 the matrix $B_k$ is upper triangular.

The streams 
define the
(possibly virtual)
subalphabets $\{\A^i_k\}_{i= 1}^{\wh l_k} $ for each time $k$, where
$\wh l_k= 2d-1$ for all $k\geq 0$, with $\A^1_k\equiv \alpha(1)_k$,
$\A^2_k\equiv \P(2)_k$ and so on.
We   linearly order the elements of each of these subalphabets for each
time $k$ in a way compatible with the order on streams. This fixes the order on 
the alphabets $\A_k$ and thereby  defines the
matrices $N_k$. 
Since $B_k$ is upper triangular, as noted in Remark \ref{r:uppertriangular}.
$N_k$ satisfies our
definition of upper triangular block form (recalling that virtual
alphabets are allowed in our definition),   so we have achieved the desired
Frobenius normal form for the sequence $N$.

The stated uniqueness of the form follows directly from the uniqueness
proved in $(iv)$ of Theorem
\ref{t:StreamDecomp}.

We next describe how to perform a gathering in order to arrive at
the fixed--size form, which will also eliminate the virtual alphabets.

From the statement of the Theorem,
 the fixed--size for
 block sequence is to begin at time $1$; we achieve this by  gathering
 all the irregularities into the first matrix $N_0$.

Our first step will be to take care of the initial and final streams. 
If $i<j$, we know there may be front-connections from $\alpha(i)$ to
$\alpha(j)$,  finite or infinite in number. We perform a
first gathering, from time $0= n_0$ to time $n_1$, with $n_k= n_1+k-1$
for $k\geq 1$, so as to concentrate all finite front-connections
before time $1$. Then
we reorder the streams so that 
the initial streams are listed first, the final
 streams  last. Thus, in the new list, if
$\alpha(i)$ has any front-connections to $\alpha(j)$ then $i<j$.
This first gathering guarantees that the  matrices from time $1$
on will  be upper triangular.

In doing this we carry the pool streams along with the streams that
immediately follow them. This maintains  property $(iii)$ of Theorem
\ref{t:FrobDecomp}.

Now the 
 number of nonempty primitive streams is  constantly $d$  after time $k=t_0$, the maximum of
their starting times,
and as a second  step 
we gather the matrix sequence along the subsequence $0, t_0, t_0+1,
t_0+2,\dots$,  replacing
the first matrix
$N_0$ by $N_0 N_1\cdots N_{t_0}$. 

We next  gather along the subsequence $0= n_0, n_1,\dots$ such that 
$\#\P(2)_{n_i}= \limsup_{i\geq 1} \#\P(2)_i$ for
$i\geq 1$. Then we 
gather along the similar sub-subsequence for $\P(3)$, and so on. 
The numbers $\#\P(i)_k$ are
now constant in $k$ for each $2\leq  i \leq d$. Next we remove 
any  sub-streams $\P(i)$ from the list 
~such that $\#\P(i) =0$.

In the resulting list, all streams are nonempty for all times $\geq
1$.  We define a new 
 $0-1$ matrix sequence $(B_k)_{k\geq 0}$ 
as before.
 Letting  $\wh l$ denote the number of streams after time $1$, then $B_k$ is 
now $(\wh l\times \wh l)$ 
with   the
 matrix $B_k$  upper triangular, for all $k\geq 1$.

In the next step we want to guarantee that there are  $0$'s on the 
diagonal of the block matrices $B_i$ for the rows corresponding to the
pool elements.
For this we gather again, so
as to eliminate front-connections from pool elements to other
pool elements in
the same substream. We know that for any $2\leq  i \leq d$ there is no
infinite vertex path within $\P(i)$ (otherwise we would have a new
primitive special stream). Hence for all $i$ and for each  $k\geq 0$, 
 there is a least time $r>k$ such that there is no finite vertex
 path within $\P(i)$ from time $k$ to time $r$. Let $R(k)$ be the max of $r+1$ 
over all $2\leq  i \leq d$. Then  if $N_{ii}(k)$ denotes the 
block of $N_k$ corresponding to a null diagonal entry $B_{ii}(k)$,
then $N_{ii}(k) N_{ii}(k+1)\cdots N_{ii}(R)$ is all--zero. We
set $n_0=0, $ then inductively $n_{i+1}= R(n_i)$.
The gathering along the sequence $(n_i)_{i\geq 0}$ eliminates all  front-connections within 
$\P(i)$ for all $2\leq  i \leq d$. Hence the corresponding matrices 
$(N_k)_{ii}$ are all--zero, and for the block matrices $B_k$
the diagonal element $(B_k) _{ii}$ is zero.

Next, considering the size of the diagonal blocks $(B_k) _{ii}$ for
fixed $i$, 
some value must repeat infinitely, so if we gather along those times 
these blocks are square. We do this successively for $i=1,\dots , \wh
l$ taking  sub-subsequences. The reduced, primitive diagonal blocks were
already square, completing the construction.

\end{proof}

\begin{rem}
If we begin with a nonnegative real $(d\times d)$ matrix $N$, then the theorem
gives the usual Frobenius upper-triangular form for $N$, with one change, as noted above:
if there is an irreducible but nonprimitive 
class, then this will have been further decomposed into its
primitive cycles. 
\end{rem}

\subsection{Invariant subsets and minimal components for adic transformations}

Now we specialize to nonnegative integer matrices and so return to the
world of adic transformations. 
For a nonnegative integer  sequence $M=(M_i)_{i\geq 0}$ with bounded
alphabet size,
Proposition \ref{p:min_iff_prim}  showed the equivalence of
primitivity of $M$ and
minimality for the action of $\FC$. The Frobenius
normal form together with the notion of adic towers  leads us here to a
complete description of orbit closures of edge paths, including the 
identification of  the 
minimal invariant subsets, for   general $M$.
First we need:

\begin{defi}
  Given a stream $\alpha$ with starting time $m>0$, we extend this 
to the {\em augmented stream} $\wh \alpha$ defined 
by: $\wh \alpha_i= \alpha_i$ for $i\geq m$; for $0\leq i<m$, we define 
$\alpha_i\subseteq \A_i$ to be all vertices which front-connect to
$\alpha_m$.

Note that given a stream decomposition of the theorem,  the augmented
streams are in general no longer disjoint up until the maximum of the
starting times.
\end{defi} 

In what follows we will replace the primitive streams  from the Frobenius
decomposition as $\alpha(l)$ for $1\leq l\leq d$ by the augmented (also primitive, but at the beginning no
longer disjoint) streams denoted $\overline\alpha(l)$. The reason for using the augmented streams is so
the adic towers will make sense, as the submatrix sequences  then begin at time $0$.

\begin{theo}{\em (Invariant subsets and minimal components for nonstationary adic transformations)}\label{t:invariantsubsets}
Let $(M_i)_{i\geq 0}$ be a reduced sequence   of  nonnegative integer
matrices with bounded alphabet size. We  list the primitive streams  from the Frobenius
decomposition as $\alpha(1),\alpha(2), \dots,\alpha(d)$.
Let $m$ denote the maximum of the starting times of the
streams. We order
the alphabets so that
$(M_i)_{i\geq 0}$ is in  Frobenius normal form, with 
diagonal primitive blocks $A(1),A(2), \dots,
A(d)$ corresponding to
these streams. Each matrix sequence $A(l)$ is nonvirtual from time
$m$ onwards.

We   construct the augmented streams $\overline
\alpha(l)$. We write $\overline A(l)= (\overline A(l)_i)_{i\geq 0}$ for the corresponding
matrix sequence.

Then: 

\item{(i)} 
For  $\e\in \Sigma_{M}^{0,+} $, let $k$ be the (unique)
  integer such that $e$ eventually accompanies a primitive stream
  $\alpha(k)$.
 Then $\e$ is in  the (unique) adic tower $\Sigma_{M/\overline A(k)}^{0,+}
 $, and its $\FC$\,--\,orbit  is dense
 in this tower.

\item{(ii)} The orbit closure of $e$ is the union of the towers $\Sigma_{M/\overline A(i)}^{0,+}$ such that
$\alpha(i)$ infinitely front-connects to $\alpha(k)$.

\item{(iii)} 
The compact invariant minimal subsets are the
tower spaces $\Sigma_{M/\overline A(i)}^{0,+}$ such that $\alpha(i)$ is an
initial stream.
\end{theo}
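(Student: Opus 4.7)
The plan is to reduce each part to geometric facts about streams and front-connections given by the Frobenius Decomposition (Theorem \ref{t:StreamDecomp}), combined with the primitivity-based construction of concrete elements in the stable set used in the proof of Proposition \ref{p:min_iff_prim}.

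For (i), I would first argue that the $k$ asserted by the statement exists and is unique. The argument is that any infinite vertex path of $\e$ must eventually settle in one $\alpha(j)$: the pool stream $\P$ contains no infinite vertex paths, so the vertex path of $\e$ cannot remain in $\P$ indefinitely; each time it leaves $\P$ or some $\alpha(j)$, upper-triangularity forces it into a strictly later stream (or back into $\P$, from which it must again exit into a strictly later primitive stream); the number of streams being finite, the path stabilizes in a unique $\alpha(k)$. Since the augmented stream $\overline{\alpha}(k)$ agrees with $\alpha(k)$ beyond its starting time, the tail of $\e$ lives in $\Sigma_{\overline A(k)}^{0,+}$, and so by Proposition \ref{p:towerspace}, $\e\in\Sigma_{M/\overline A(k)}^{0,+}$. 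For density of $W^s(\e)$, given any $\f$ in the tower and any thin cylinder $[f_0\dots f_n]$ around it, choose $N>n$ large enough that both $\f$ and $\e$ accompany $\alpha(k)$ from time $N$ onward; primitivity of $\overline A(k)$ provides some $M>N$ such that the subblock $\overline A(k)_{N+1}^{M-1}$ is strictly positive, so an internal $\alpha(k)$-path $h_{N+1}\dots h_{M-1}$ connects $f_N^+$ to $e_M^-$. The concatenation $(f_0\dots f_N h_{N+1}\dots h_{M-1} e_M e_{M+1}\dots)$ lies in $W^s(\e)\cap[f_0\dots f_n]$.

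For (ii), I would prove both inclusions. For $(\supseteq)$, suppose $\alpha(j)$ infinitely front-connects to $\alpha(k)$ and $\wt\e\in\Sigma_{M/\overline A(j)}^{0,+}$; given a cylinder $[\wt e_0\dots \wt e_n]$, extend $\wt e_n^+$ by an internal path in $\alpha(j)$ (primitivity of $\overline A(j)$) up to a vertex at a very late time which is the tail of a front-connection to some vertex in $\alpha(k)$ at a still later time (existence by hypothesis), then extend that vertex by an internal path in $\alpha(k)$ (primitivity of $\overline A(k)$) to meet $\e_M^-$ for some large $M$; concatenation produces an element of $W^s(\e)\cap[\wt e_0\dots \wt e_n]$. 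For $(\subseteq)$, take $\wt\e\in\overline{W^s(\e)}$ with home stream $j$; for each large $i$, the vertex $\wt e_i^-\in\alpha(j)_i$ is the initial segment of some edge path in $W^s(\e)$, hence extends within the diagram to eventually accompany $\alpha(k)$; tracing this extension shows $\alpha(j)_i$ front-connects to $\alpha(k)$ (possibly through the pool and through $\alpha(j)$ itself), and since this can be done for arbitrarily large $i$, we obtain infinite front-connection of $\alpha(j)$ to $\alpha(k)$.

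For (iii), the direction ``$\alpha(i)$ initial $\Rightarrow$ tower minimal'' is immediate from (ii): no other stream infinitely front-connects to an initial $\alpha(i)$, so the orbit closure of any $\e$ with home stream $i$ is exactly $\Sigma_{M/\overline A(i)}^{0,+}$; this shows the tower is closed (hence compact) and that every orbit in it is dense, i.e.\ it is minimal. For the converse, let $K$ be any compact $\FC$-invariant minimal set, pick $\e\in K$ with home stream $k$; by minimality $K=\overline{W^s(\e)}$, which by (ii) equals $\bigcup_{j}\Sigma_{M/\overline A(j)}^{0,+}$ over all $j$ with $\alpha(j)$ infinitely front-connecting to $\alpha(k)$. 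If some $j\neq k$ appears, pick $\e'$ with home stream $j$ in $K$; minimality gives $\overline{W^s(\e')}=K\supseteq\Sigma_{M/\overline A(k)}^{0,+}$, which forces $\alpha(k)$ to infinitely front-connect to $\alpha(j)$. But upper triangularity of the Frobenius form forbids both $\alpha(j)$ front-connecting to $\alpha(k)$ and $\alpha(k)$ front-connecting to $\alpha(j)$ simultaneously, a contradiction. Hence $\alpha(k)$ must be initial.

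The main obstacle I anticipate is a careful bookkeeping in the approximation step of (ii), specifically the $(\subseteq)$ direction: one must argue that the approximating paths in $W^s(\e)$, whose tails escape to $\alpha(k)$, give rise to genuine front-connections from $\alpha(j)$ to $\alpha(k)$ at arbitrarily late times, rather than merely connections through a fixed chain of streams. This requires noting that any finite detour through another stream $\alpha(l)$ with $j<l<k$ already provides the required infinite front-connection (via transitivity through a chain of pool or primitive intermediaries), so the assertion reduces to existence of \emph{some} escape route from $\alpha(j)_i$ to $\alpha(k)$ for infinitely many $i$.
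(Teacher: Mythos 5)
Your proposal is correct and follows essentially the same route as the paper: unique home stream from the stream decomposition, density via primitivity of the stream (where the paper cites Proposition \ref{p:min_iff_prim} you inline the connecting-path construction), orbit closures characterized by infinite front-connections, and the impossibility of mutual infinite front-connections between distinct streams for the converse of (iii). The only notable variation is in the forward direction of (iii), where you get compactness of the tower over an initial stream directly from (ii) (the orbit closure of any point equals the tower, hence the tower is closed), while the paper instead argues it equals $\Sigma_{(\overline A(i))^{(m)}}^{0,+}$ for some finite $m$; both arguments work.
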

\begin{proof}
As in the proof of Theorem \ref{t:FrobDecomp}, we order the (possibly
virtual) primitive and pool streams as
$\alpha(1)< \P(2)<\alpha(2)<\P(3)<\dots<\P(d)<\alpha(d)$. (The initial streams
do not necessarily  come first.)

Each edge path $\e\in \Sigma_{M}^{0,+} $ eventually accompanies  some unique primitive stream
$\alpha(k)$ in the list (i.e.~its vertex path eventually belongs to
that stream); equivalently it eventually accompanies the augmented
stream $\overline
\alpha(k)$. It may at first accompany one or more  primitive streams
$\alpha(i)$ or null 
streams $\P(i)$, in which case 
$i<k$ or $i\leq k$ respectively.

Now $\overline\alpha(k)$ has starting time $0$, so
its matrix subsequence $\overline A(k)$ is nonvirtual for all times
$\geq 0$.  
The space of edge paths  $\Sigma_{\overline A(k)}^{0,+} $ is
the collection of 
paths $f$
which accompany the stream $\overline\alpha(k)$. The  $\FC_M$--orbit
of $\Sigma_{\overline A(k)}^{0,+} $
consists of all 
edge paths $\e$ which eventually equal some such $f$, equivalently those
which eventually accompany  $\alpha(k)$. From Proposition
\ref{p:towerspace}, the $\FC_M$-orbit of
$\Sigma_{\overline A(k)}^{0,+} $ is the tower space $\Sigma_{M/\overline A(k)}^{0,+} $.

Thus every edge path $\e\in \Sigma_{M}^{0,+} $  is in a unique adic tower space
$\Sigma_{M/\overline A(k)}^{0,+} $, and 
is in the orbit of some $f\in \Sigma_{\overline A(k)}^{0,+} $.
Since $\overline A (k)$ is primitive,  by
  $(ii)$ of Proposition \ref{p:min_iff_prim} the
 orbit (for $\FC_M$ or $\FC_{\overline A (k)}$) of $\f$  is dense in $\Sigma_{\overline A(k)}^{0,+} $. Hence
  $\FC_M(\f)= \FC_M(\e)$ 
  is dense in the tower
  $\Sigma_{M/\overline A(k)}^{0,+} $, by the definition of the tower
  topology, proving $(i)$.

To prove $(ii)$, by part
$(i)$,  any edge  path
$e\in \Sigma_{M}^{0,+} $ belongs to a unique tower space $\Sigma_{M/\overline A(k)}^{0,+}
$ and equivalently $e$ eventually accompanies $\alpha(k)$. 
From part $(i)$ of
Proposition \ref{p:min_iff_prim}, the orbit closure of $\e$ consists
of all $g$ such that $g$ infinitely front-connects to $\e$. This path
$g$ also belongs to a unique tower space $\Sigma_{M/\overline A(i)}^{0,+}$.
Hence, 
$\alpha(i)$ infinitely front-connects to  $\alpha(k)$. We claim that
this holds for
any  $f$ in
$\Sigma_{M/\overline A(i)}^{0,+} $. It is enough to check
this for
$f$ in  $\Sigma_{\overline A(i)}^{0,+} $.  Given 
$l>0$ we show that $f$ front-connects to  $\alpha(k)$ after time
$l$. Since $\alpha(i)$ is primitive there is $n>l$ such that $f_l$
front-connects to all of $\alpha(i)_n$, and since $\alpha(i)$
infinitely front-connects to  $\alpha(k)$ this is true, proving $(ii)$.

For $(iii)$, if  $\alpha(i)$ is initial, there are no other streams
which infinitely front-connect to it. Hence by $(ii)$  there
are also no edge paths not in  $\Sigma_{M/\overline A(i)}^{0,+}$ which
infinitely front-connect to it, and this cannot contain any
$\Sigma_{\overline A(j)}^{0,+}$ 
for $j\neq i$. After some finite time $m$, no stream $\alpha(j)$
front-connects to $\Sigma_{\overline A(i)}^{0,+}$, so
$\Sigma_{M/\overline A(i)}^{0,+}$ in fact equals  $\Sigma _{(\overline
  A(i))^{(m)}}^{ 0,+} $, which is a compact 
 $\FC-$ invariant
subset.
Since $\alpha(i)$ is primitive, the matrix sequence $\overline A(i)$ is 
primitive, hence  by 
  $(ii)$ of Proposition \ref{p:min_iff_prim} 
$\Sigma_{M/\overline A(i)}^{0,+}$ is minimal for the action of
$\FC_{\overline A (i)}$ hence for $\FC_M$.

Conversely, let $X$ be a nonempty compact invariant minimal subset,
and let $\e\in X$. Then $\e$
eventually accompanies some
primitive stream
$\alpha(i)$, so its orbit closure includes
$\Sigma_{M/\overline A(i)}^{0,+}$ whence $\Sigma_{M/\overline
  A(i)}^{0,+}\subseteq X$. We claim that  $\alpha(i)$ is  initial. If not, there is
some other stream $\alpha(j)$ which infinitely front-connects to
$\alpha(i)$, hence by $(ii)$, $\Sigma_{\overline A(j)}^{0,+}$ is contained in the
orbit closure of $\Sigma_{\overline A(i)}^{0,+}$. But then the orbit
closure of $\Sigma_{\overline A(j)}^{0,+} $ must be strictly smaller
than $X$ (a contradiction), as otherwise 
$\alpha(i)$
will infinitely front-connect to
$\alpha(j)$, giving $\alpha(i)= \alpha(j)$.
\end{proof}

\begin{rem}
We note  that if, in the above theorem, there are no front-connections to
the initial states (for instance after gathering from time $0$ to time
$m$), then $\Sigma_{\overline A(j)}^{0,+} =
\Sigma_{M/\overline A(j)}^{0,+} $ so this is the minimal component.

  We mention that, due to the choice of conventions, 
 the partial order for dynamics on the graph in terms of
  communication of states is the
  opposite for the dynamics on the path space; thus e.g.~in the  stationary case,
  the initial states are the repelling fixed points for the graph of
  the \sft, while the corresponding components of the path space are
  the attracting fixed points (and thus the minimal components) for
  the action of an adic transformation.
\end{rem}

\medskip

Next as promised we prove a stronger version of Proposition
\ref{p:opensets}, where we no longer  assume the sequence $A$ is primitive.

\begin{prop}\label{p:opengeneral}
 Consider a nonnegative integer matrix sequence $M=\left[
\begin{matrix}
A& C \\
0 & B
\end{matrix}  \right] $, and write $\A$, $\B$ for the streams
(subalphabets) 
associated to the matrix sequences $A$ and $B$. 
 \item $(i)$ Then $\Sigma_A^+$,  $\Sigma_B^+$ are closed subsets of
$\Sigma_M^+$, and $\Sigma_B^+$ is open.
 \item $(ii)$  Suppose   that $B$ is reduced. Then $\Sigma_A^+$ is
open in $\Sigma_M^+$ iff there are at
most finitely many front connections from $\A$ to $\B$, iff
the sequence $C= (C_i)_{i\geq 0}$ is zero except for finitely many
$i$.
\end{prop}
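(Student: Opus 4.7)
The plan is first to dispose of part $(i)$ exactly as in part $(i)$ of Proposition \ref{p:opensets}: that argument uses only the block upper triangularity of $M$, not any primitivity, so it carries over verbatim. Specifically $\Sigma_A^+$ and $\Sigma_B^+$ are realized as nested intersections of finite unions of clopen thin cylinders (hence closed), and for any $\f \in \Sigma_B^+$ the $0$-cylinder $[f_0]$ lies in $\Sigma_B^+$ because the lower-left block of $M$ vanishes.

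For part $(ii)$, I will observe that the equivalence of ``at most finitely many front-connections from $\A$ to $\B$'' with ``$C_i = 0$ for all but finitely many $i$'' is immediate, since the nonzero entries of $C_i$ enumerate precisely the front-connections at time $i$ from $\A_i$ to $\B_{i+1}$. The implication that $C_i = 0$ eventually forces $\Sigma_A^+$ to be open is identical to the second half of Proposition \ref{p:opensets}$(ii)$: if $C_i = 0$ for all $i \ge K$, then for any $\e \in \Sigma_A^+$ the thin cylinder $[e_0\dots e_K]$ is contained in $\Sigma_A^+$, as no allowed continuation past time $K$ can cross into $\B$.

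The new content is the converse: assuming $C_i \neq 0$ for infinitely many $i$, produce some $\e \in \Sigma_A^+$ such that every thin-cylinder neighborhood of $\e$ meets $\Sigma_M^+ \setminus \Sigma_A^+$. The plan is a compactness argument. Choose indices $i_1 < i_2 < \cdots \to \infty$ with $C_{i_n} \neq 0$; for each $n$ pick a front-connecting edge $g_n$ from some $a_n \in \A_{i_n}$ to some $b_n \in \B_{i_n + 1}$. Using that $B$ is reduced, extend $g_n$ forward to an infinite allowed edge path remaining in $\B$ from time $i_n + 1$ on; using column-reducedness of $M$, extend $g_n$ backward along an allowed vertex path lying in $\A$ all the way down to time $0$. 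Concatenation yields an infinite edge path $\f^{(n)} \in \Sigma_M^+ \setminus \Sigma_A^+$ whose first $i_n$ edges lie entirely in $\A$. By compactness of $\Sigma_M^+$ a subsequence $\f^{(n_k)}$ converges to some $\e \in \Sigma_M^+$; since $\f^{(n_k)}$ and $\e$ agree on initial segments of length $\ge i_{n_k} \to \infty$ and each such segment lies in $\A$, the limit $\e$ belongs to $\Sigma_A^+$. Yet every thin cylinder around $\e$ contains $\f^{(n_k)} \notin \Sigma_A^+$ for all sufficiently large $k$, so $\Sigma_A^+$ fails to be open at $\e$.

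The main obstacle I anticipate is justifying the leftward extension of $a_n$ inside $\A$ down to time $0$, as we do not assume that $A$ itself is reduced. This is resolved by observing that column-reducedness of $M$ transfers automatically to $A$: each column of $M_i$ indexed by a vertex of $\A_{i+1}$ has its nonzero entries confined to the $A$-block (the lower-left block being identically zero), so every vertex of $\A_{i+1}$ admits a predecessor inside $\A_i$, and iterating produces the required backward path to time $0$. A parallel route would apply the nonstationary Frobenius Decomposition Theorem \ref{t:FrobDecomp} to $A$ and isolate a single primitive stream of $A$ carrying infinitely many front-connections to $\B$, then invoke primitivity as in Proposition \ref{p:opensets}; that route would also succeed but demands extra bookkeeping for front-connections originating at pool vertices of $A$, which the compactness argument above entirely avoids.
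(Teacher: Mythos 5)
Your part $(i)$ and the easy direction of $(ii)$ do match the paper, but your proof of the hard direction is a genuinely different route. The paper applies the Frobenius Decomposition Theorem \ref{t:FrobDecomp} to $A$, isolates a primitive diagonal block of $A$ that receives infinitely many front-connections to $\B$ (with a side argument that connections originating at pool states can be traced back to earlier primitive blocks), takes an arbitrary point $\e$ of that block's path space, and uses strict positivity of the products $A_K\cdots A_{j-1}$ to connect $e_K^+$ forward, inside the primitive stream, to a front-connection source at some later time $j$, continuing into $\B$ via row-reducedness of $B$. You instead manufacture the witness point by compactness: build escaping paths $\f^{(n)}$ that stay in $\A$ up to time $i_n$ and then jump into $\B$, and take a subsequential limit $\e$. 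This avoids primitivity and the pool-state bookkeeping altogether, which is a real simplification. One small wording issue: convergence does not give agreement of $\f^{(n_k)}$ with $\e$ ``on initial segments of length $\ge i_{n_k}$''; the correct statement is that for each fixed $m$, eventually $\f^{(n_k)}$ agrees with $\e$ on coordinates $0,\dots,m$ while $i_{n_k}>m$, which still yields $\e\in\Sigma_A^+$ and the failure of openness, so the intended argument is sound.

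The genuine gap is exactly the step you flagged: the leftward extension of the front-connection source inside $\A$ down to time $0$. You ``resolve'' it by invoking column-reducedness of $M$, but that is not among the hypotheses of the proposition (only $B$ is assumed reduced). Your observation that column-reducedness of $M$ passes to $A$ (because the lower-left block vanishes) is correct, but it does not supply the missing premise; as written, the resolution of the obstacle consists of asserting a hypothesis the statement does not give you. The assumption is not cosmetic: with $A_i=[0]$, $C_i=[1]$, $B_i=[1]$ for all $i$, there are infinitely many front-connections from $\A$ to $\B$, yet $\Sigma_A^+=\emptyset$ is open, so some reducedness-type condition on $A$ (or $M$) is genuinely needed for the converse. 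In fairness, the paper's own proof carries a comparable implicit assumption, since Theorem \ref{t:FrobDecomp} is stated for reduced sequences, and in every application of this proposition $M$ is column-reduced; under that standing convention your argument goes through and is arguably cleaner. But you should either state the column-reducedness hypothesis explicitly or explain why it may be assumed, rather than presenting it as automatic.
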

\begin{proof}
Part $(i)$ is included from Proposition
\ref{p:opensets}.
For part $(ii)$,
applying the Frobenius Decomposition Theorem to 
the sequence $A$, after a gathering and nonstationary permutation of
the 
alphabets, $A$ is in upper triangular form with diagonal primitive or
zero blocks $A_1,\dots, A_{\wh l}$
\begin{equation*}
A=
\left[ \begin{matrix}
A_1 &C_{12} &\dots  &C_{1\wh l}\\
0_{21} &A_2 & &\vdots\\
\vdots & &\ddots  &\\
0_{\wh l 1} &\dots  &  &A_{\wh l}
\end{matrix}  \right]
\end{equation*}
with
associated subalphabet sequences $\A_1,\dots, \A_ {\wh l}$. Supposing that there are infinitely many front connections from $\A$ to
$\B$, we are to show that  $\Sigma_{A}^+$ is not open.
Then there exists $k$ with $1\leq k\leq \wh l$ such that there
are infinitely many front connections from $\A_k$ to
$\B$. Furthermore we can assume  $(A_k)$ is primitive (not $0$), since
pool states must connect some earlier index alphabet to $B$.
We shall find a path $\e= (e_0 e_1\dots)\in
\Sigma_{A_k}^+$ such that any neighborhood meets the complement of
$\Sigma_A^+$. Since $(A_k)$ is primitive,
for a chosen $K$ there exists $N$ such that
  $A_KA_{K+1}\dots A_n$ is strictly positive for any $n\geq N$. By the
  hypothesis, there are infinitely many front connections from $\A_k$ to
$\B$, which means that there 
  exists $j>N$ such that the $k^{\text{th}}$ row of $C$, 
 has some block  $C_{ks}$  which has some  positive entry at time $j$. Writing
 $C_{ks}^{(j)} $ for that matrix, then   $(C_{ks}^{(j)} )_{ab}>0$.
  Now $A_KA_{K+1}\dots
 A_{j-1}$ is strictly positive, whence there exists a
  path $\f= (f_0 f_1\dots)$ with $f_0= e_0, \dots, f_K=
  e_K$ and $f_j^+=a, f_{j+1}^+=b$, with $f_i^+\in \B_i$ (the subalphabet
  sequence 
  for $B$) for all times $i\geq j+1$, using the fact that $B$ is
  reduced so the path $(f_0 f_1\dots f_{j+1})$  can be continued to the
  right. But $\A_i\cap\B_i=\emptyset$ so $\f\notin \Sigma_A^+$ and we are done.

Hence  $\Sigma_{A}^+$ is not open.

The final statement  is proved as before.

\end{proof}

\begin{cor}
  (of the proof)\label{c:uppertriangularopen}
 Consider a matrix sequence $N$ in  upper triangular block form as in \eqref{eq:block_form}.

 Assume the diagonal
blocks $A_k$ are reduced and primitive. Then for all $k$,
 $\Sigma_{A_k}^+$ is a closed subset of
$\Sigma_N^+$, and  $\Sigma_{A_{k}}^+$  is open iff 
the stream $\alpha_k$ has only finitely many front-connections.
In particular this holds for $A_{\wh l}$.
\ \ \qed\end{cor}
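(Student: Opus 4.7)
The plan is to deduce the corollary from Proposition \ref{p:opengeneral} by two successive $2\times 2$ block regroupings of the given upper triangular form \eqref{eq:block_form}. Fix $k$ with $1\leq k \leq \wh l$. First I would regroup as
\[
N = \begin{bmatrix} \wt A_{<k} & * \\ 0 & \wt B_{\geq k} \end{bmatrix},
\]
where $\wt A_{<k}$ aggregates the diagonal blocks $A_1,\dots,A_{k-1}$ together with their interconnections and $\wt B_{\geq k}$ aggregates $A_k,\dots,A_{\wh l}$ together with theirs. By part $(i)$ of Proposition \ref{p:opengeneral}, $\Sigma_{\wt B_{\geq k}}^+$ is clopen in $\Sigma_N^+$. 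Next I would regroup $\wt B_{\geq k}$ itself as
\[
\wt B_{\geq k} = \begin{bmatrix} A_k & C^{(k)} \\ 0 & \wt B_{>k} \end{bmatrix},
\]
and apply part $(i)$ once more to conclude that $\Sigma_{A_k}^+$ is closed in $\Sigma_{\wt B_{\geq k}}^+$, and hence in $\Sigma_N^+$.

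For the openness criterion, $\Sigma_{A_k}^+$ is open in $\Sigma_N^+$ iff it is open in the clopen set $\Sigma_{\wt B_{\geq k}}^+$. Applying $(ii)$ of Proposition \ref{p:opengeneral} to the inner regrouping, this holds iff $C^{(k)}_i=0$ for all but finitely many $i$. By the upper triangular stream structure of \eqref{eq:block_form}, any edge leaving $\alpha_k$ in $N$ must enter some $\alpha_j$ with $j>k$, and every front-connection from $\alpha_k$ to a later stream begins with such an edge (its intermediate vertices lie in streams other than $\alpha_k$, necessarily with index $>k$ by upper triangularity). Hence finiteness of the $i$ with $C^{(k)}_i \neq 0$ is equivalent to the stream $\alpha_k$ having only finitely many front-connections in $N$, giving the stated equivalence.

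The one subtle point, and the main thing to verify, is the hypothesis of Proposition \ref{p:opengeneral}$(ii)$ that $\wt B_{>k}$ be reduced. Inspection of that proof shows only the forward extension property (row-reducedness) is actually used. Row-reducedness of $\wt B_{>k}$ follows from the upper triangular stream ordering together with the standing assumption that each $A_j$ is reduced primitive: any vertex in $\wt B_{>k}$ lies in some $\alpha_j$ with $j>k$ and can be continued forward within $\alpha_j$, hence within $\wt B_{>k}$. Column-reducedness may fail, but it is not needed for part $(ii)$.

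Finally, the ``In particular'' assertion is immediate from the criterion: when $k=\wh l$, there are no later streams, so $\alpha_{\wh l}$ has zero front-connections in $N$, and thus $\Sigma_{A_{\wh l}}^+$ is open.
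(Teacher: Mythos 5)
Your derivation is correct, but it reaches the corollary by a different route than the paper intends: the paper labels this a corollary \emph{of the proof} of Proposition \ref{p:opengeneral}, i.e.\ one simply re-runs that proof with the primitive block $A_k$ in the role of $A$ and the union of the later streams in the role of $\B$ (non-openness from the infinitely-many-exits construction using primitivity of $A_k$ and right-continuability in the later streams; openness from the absence of exit edges beyond some time; closedness as in Proposition \ref{p:opensets}$(i)$). You instead treat Proposition \ref{p:opengeneral} as a black box and apply its \emph{statement} twice after regrouping $N$ into $2\times2$ block forms, which is a clean packaging whose only price is the hypothesis check on the lower block — and your handling of that is fine, though more is true than you claim: since every diagonal block $A_j$ with $j>k$ is reduced, each vertex of $\wt B_{>k}$ has both an outgoing and an incoming edge inside its own block, so $\wt B_{>k}$ is in fact fully reduced (row \emph{and} column), and the observation that only row-reducedness is used in the proof of $(ii)$, while accurate, is not even needed. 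One small caveat on your final translation step: "finitely many front-connections of $\alpha_k$" must be read as the paper's definitions suggest, namely that $\alpha_k$ front-connects to the complement at only finitely many \emph{times} (equivalently, there are only finitely many exit edges, i.e.\ finitely many $i$ with $C^{(k)}_i\neq 0$); counted as paths to individual later streams the number can be infinite even when $\Sigma_{A_k}^+$ is open (a single early exit into an intermediate stream that feeds a later stream forever), so the equivalence you assert holds only under the time/edge count — which is exactly what your first-edge argument establishes, and is the reading under which the corollary itself is true. The degenerate cases $k=1$ and $k=\wh l$ are trivial as you note.
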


\begin{rem}\label{r:normalform}(On beginning with time one)
  
  In Theorem  \ref{t:FrobDecomp} we proved that after reordering of
  the alphabets and
  gathering we can achieve a fixed--size form beginning with time one.
  This is because,
  before the gathering, 
  the upper triangular form for the original matrix
  sequence only begins after some time $k$; 
   we then gather anomalies in the times $0,\dots, k-1$ into the first matrix $N_0$,
  with $N_i$ upper-triangular for $i\geq1$. We make two remarks about
  this:
\item{(i)} whether we begin at time $0$ or $1$ will not affect our
  principal result (the classification of $\FC$-invariant Borel measures),
  since by Remark
  \ref{r:fixedsize}, and the proof of Theorem \ref{t:invariantsubsets}, the path space for times $\geq 0$ is a
 tower
 of that for times $\geq 1$, whence the measures correspond
 bijectively;
 \item{(ii)} this indicates why there is no hope of proving a bilateral Frobenius
   theorem, because to get the upper triangular form for positive
   times, we might need to gather to $-\infty$, which doesn't make
   sense. Instead, the right- and left- sided shift apaces are treated
   separately, 
   with separate Frobenius forms and separate measures, constructed
   from right and left nonnegative eigenvector sequences respectively. The product
   measures
   give and invariant measure sequence for the two-sided \nsft,  much like the way Parry measure is constructed for an
\sft.

   For a concrete example, consider the  nonprimitive
   sequence
   \[ M_i=
\left[ \begin{matrix}
    1 &1\\
    0 &1
  \end{matrix}\right]\]
for all $i\geq 0$, and 
\[ M_i=
\left[ \begin{matrix}
    1 &1\\
    1&1
  \end{matrix}\right]\]
for all $i<0.$
Then $M$ is in upper triangular form for $i\geq 0$ and for $i<0$
separately, but this cannot be achieved for all times simultaneously.
However e.g.~ for times $\geq -4,$ we can do it, simply by  beginning with
the matrix
\[N_0=\left[ \begin{matrix}
    1 &1\\
    1&1
  \end{matrix}\right]^4.\]

\end{rem}

\section{Distinguished eigenvector sequences and a 
nonstationary Frobenius--Victory theorem}\label{s:Vict}

\subsection{Introduction to Frobenius-Victory}\label{ss:Intro2}

The key to everything in the next sections lies with a careful
development of the  $(2\times 2)$
block case:

\begin{equation}\label{eq:block_form_two}
 N=\left[ \begin{matrix}
A& C\\
0& B
\end{matrix}  \right]
\end{equation}

There the matrix sequence is $N= (N_i)_{i\geq 0}$ with alphabets
$\Cal A$ for the block $A$, $\Cal B$ for the block $B$, thus
corresponding to 
the Bratteli diagram of Fig.~\ref{F:2times2}. We need to
understand  in particular  how eigenvector
sequences with eigenvalue $1$ for $N$ are derived from those for the
subblocks  $A$
and $B$, via the notion of {distinguished} eigenvector
sequences, and how those relate to both finite and infinite
invariant measures.

To move beyond this basic case we build on three key ideas.
First, the general upper triangular block form is treated by
using the $(2\times 2)$
block case as the inductive step. Second, given a general Bratteli
diagram, by the Frobenius stream decomposition theorem, we can achieve
this upper triangular block form; by the operation of gathering, this
can be put into a fixed size square matrix form. Third, 
given two
nested Bratteli diagrams, the second derived from the first by erasing
symbols or edges, we build the canonical cover matrix and diagram,
which is then in $(2\times 2)$
block form, and apply the previous analysis. 

 We
describe the relationship to the classification of invariant Borel measures for the $(2\times 2)$
block case. All 
finite  invariant
measures for the full diagram of $N$  correspond to
eigenvector
sequences with eigenvalue $1$. These in turn come from either
eigenvector
sequences for the subblock $A$, extended trivially to the full
alphabet, or eigenvector
sequences for $B$ which are distinguished, and which
produce an eigenvector
sequence for $N$ via a limiting procedure. The {\em non--}~
distinguished sequences for $B$ also play an important  role,
as they give the infinite invariant Borel measures which are finite for that
subdiagram.

All of this extends to general diagrams and subdiagrams via the twin procedures of
the stream decomposition and the canonical cover construction.

The key to the $(2\times 2)$
block case is understanding the properties of distinguished
eigenvector sequences, which requires us to develop some
linear algebra, interesting in its own right.

\begin{figure}
$$ \xymatrix{
  \Cal A \ar[r]^{A}
\ar[rd]^C&  \Cal A  \cdots
  \\
  \Cal B  \ar[r]^{ B}
  &
 \Cal B  
 \cdots}
$$
\caption{ Bratteli diagram for the $(2\times 2)$ block case.}
\label{F:2times2}
\end{figure}
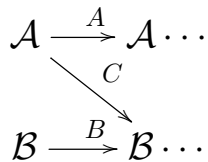

Thus the end result is to
identify the $\FC$--
invariant Borel measures, finite and infinite, which are finite on some
subdiagram, including as a special case those which are finite 
on some open subset (by $(iii)$ of Theorem
\ref{t:inv-meas-subdiag}).

\subsection{Overview of Frobenius-Victory}\label{ss:Overviewrob-Vict}

Now suppose we are given a nonstationary Bratteli diagram. 
By the Stream Decomposition, Theorem \ref{t:StreamDecomp},
 the diagram consists of  reduced primitive
streams plus pool streams; this  decomposition is eventually unique
up to reordering. As mirrored by the use  of the word ``primitive''
or ``prime''  in other parts of mathematics, the idea is to 
address each primitive stream
separately, and then put that information together to analyze
ergodic measures on the full diagram.

For a primitive diagram,
we  know by Lemma \ref{l:finite_on_all}  that 
  an invariant  measure which
is positive finite on some open set is positive finite 
on all nonempty
open subsets. Thus it is 
{positive locally finite}
(see Definition \ref{d:Radon}) and in addition
has finite total measure.

For a not necessarily primitive 
diagram, as we have seen in Theorem \ref{t:basic_thm},  see Theorem 2.9 of
 ~\cite{BezuglyiKwiatkowskiMedynetsSolomyak10}, the ergodic probability measures are in bijective
correspondence with the extreme nonnegative eigenvector sequences of
eigenvalue one.

Now there is an interesting difference here between the stationary
primitive and irreducible cases, which foreshadows the general
nonstationary case. For an irreducible nonnegative matrix $M$,
the Perron-Frobenius
theorem still 
guarantees a single  nonnegative eigenvector (up to normalization).
(This extension beyond the primitive case is due to Frobenius).
That means in our terms that 
one  has the  single  eigenvector
sequence, $\w_n=\lambda^{-n}\w$, where $\lambda $ is the eigenvalue.

Then for the general stationary case, the 
Frobenius--Victory Theorem (apparently actually already due to Frobenius) identifies the nonnegative eigenvectors
for $M$,
stating
 that they correspond to 
certain eigenvectors from the irreducible diagonal blocks.
The nonnegative eigenvectors for these blocks 
 come in two distinct types,
according to whether or not they are
 {\em distingushed} (so termed by Victory, Def.~
 \ref{d:distinguishedStationary}), those for which  the 
eigenvalue of the block is
greater than for any block to which it communicates.
The statement of the Theorem is that the distinguished eigenvectors are exactly those
which
determine an eigenvector for
the full matrix  $M$. This eigenvector is, moreover,  generated by an
algorithmic process; see Proposition 1 of \cite{Victory85}, 
see Theorem 3.7 of \cite{Schneider1986}, 
Theorem 3.3 of \cite{TamSchneider00} and the related Theorem 6 p.~77
of~\cite{Gantmacher59}. 

For the application of these ideas to stationary adic transformations,
a key insight of \cite{BezuglyiKwiatkowskiMedynetsSolomyak10} is
that while the distinguished eigenvectors will give the finite 
invariant Borel measures, the nondistingushed eigenvectors are also
important, as these will give the locally finite  infinite measures.

There is an interesting and informative subtlety which can be seen
already here in the stationary
case. Let us suppose $M$ is 
irreducible and
periodic  of period $p$; see Def.~\ref{d:period}. Then   the (unique)  Perron-Frobenius
eigenvector produces the
eigenvector sequence of eigenvalue one   $\w_n=\lambda^{-n}\w$,
defining a measure. This is however {\em not} an ergodic measure,
as this sequence is {\em not} an extreme point
for the space of eigensequences.  But as we have seen, it is these that
 correspond to the ergodic invariant Borel measures. In this 
 irreducible case, this analysis will yield 
$p$ measures,  given by the $p$ shifts of an eigenvector {\em
  sequence} of period $p$.

So the point is that the eigensequences are necessary even in the
stationary case, when studying an irreducible but not primitive matrix.

To introduce the nonirreducible case,
let us recall 
 the
 classical (stationary) situation of a single nonnegative square
 integer matrix $M$ in
upper diagonal block form, addressed  in
 ~\cite{BezuglyiKwiatkowskiMedynetsSolomyak10}. See \S
 \ref{s:stationarycase}.
If $M$ has an  irreducible, nonprimitive
diagonal block,  then the ergodic measures  will correspond to
periodic 
eigenvector sequences (as above, for the block, but also for $M$), and our 
definition of  ``distinguished'' eigenvector
sequences must include this 
case. What Bezuglyi et al do is to eliminate the periodicity by taking
a power $M^n$, where $n$ is the least comon multiple of the block periods.
The
 diagonal blocks are now primitive, 
 the
 extreme
 nonnegative eigenvector sequences for $M$ correspond to
extreme
nonnegative eigenvectors for $M^n$. So  we can consider
the distinguished eigenvectors for those diagonal blocks, and apply 
the
Frobenius--Victory Theorem to this power.

 For the study of adic transformations the
matrices have integer entries, and for the stationary case of
~\cite{BezuglyiKwiatkowskiMedynetsSolomyak10}, the distinguished eigenvectors correspond to the finite invariant
measures for the full Bratteli diagram. For our extension 
to the nonstationary
case, we need to find an appropriate  definition of distinguished
sequence. 
Our  guiding principle in this will be 
to find  a condition 
which 
distinguishes the finite and 
infinite $\FC$-invariant Borel measures. The condition
should be necessary and sufficient, and should be
``checkable'', at least in nice cases. Furthermore,  
 our definition should reduce to the usual one in the stationary
 situation; we show this in Corollary \ref{c:stationarycase}.
 For the applications we have in mind,
 this machinery should apply not just to measures which are finite on
 some open subset, but are 
 finite for any subdiagram.

The final step of the present section, then, is to prove a 
nonstationary version
of the Frobenius--Victory theorem. Making use of the upper triangular
form of  Theorem \ref{t:FrobDecomp}, this extends to  subdiagrams of general Bratteli diagrams.

\medskip

\subsection{Simplified statement of nonstationary Frobenius--Victory Theorems}\label{ss:statement of main result}

Here is the gist of what we prove in Theorems \ref{t:FrobVictGeneral}
and \ref{t:BratFrobVict},
phrased a bit more simply:
\begin{theo}\label{t:FrobVictGeneral-simple} (Frobenius--Victory Theorem for
 stream decomposition of  Bratteli diagrams)
  The extreme rays of the convex cone of nonnegative eigenvector sequences of eigenvalue one for
  $N$ are in natural bijective correspondence with the
 distinguished extreme rays  for the primitive
  components given by the stream decomposition of Theorem \ref{t:StreamDecomp}.
  That is, for $A_i$ the primitive matrix sequence corresponding to
  the $i^{\text{th}}$ stream $\A_i$, then there exists a bijection $$\iota:
\cup_{k=1}^{\wh l} \ExtD  \V_{A_k}\to \Ext\V_N .$$
 The map $\iota$ is given by the limit of an iteration, while the
 inverse map $\pi$  is given by limiting iterations combined with projections.
\end{theo}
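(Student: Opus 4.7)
The plan is to reduce the general statement to the fundamental $(2\times 2)$ block case and then iterate. First I would apply the nonstationary Frobenius Decomposition Theorem \ref{t:FrobDecomp} to reorder the alphabets and gather the matrix sequence so that $N$ is in fixed-size upper-triangular block form with primitive or identically-zero diagonal blocks $A_1,\dots,A_{\wh l}$. By Proposition \ref{p:gathermeasure} and Remark \ref{r:permutationsequence} neither operation alters the cone $\V_N$ of nonnegative eigenvector sequences of eigenvalue one (nor its extreme rays), nor the cones for the individual diagonal subblocks; so it suffices to prove the theorem in that normalized form.

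Next I would carry out the $(2\times 2)$ block case as the engine of the whole argument. Write
\[
N_i=\begin{pmatrix} A_i & C_i \\ 0 & B_i\end{pmatrix},
\]
and decompose a sequence $\w\in\V_N$ as $\w_i=(\w_i^A,\w_i^B)$. Projecting the fixed-point equation $N_i\w_{i+1}=\w_i$ onto each block gives $B_i\w_{i+1}^B=\w_i^B$ (so $\w^B$ lies in $\V_B\cup\{\0\}$) together with the inhomogeneous recurrence $A_i\w_{i+1}^A+C_i\w_{i+1}^B=\w_i^A$. Iterating this recurrence produces the identity
\[
\w_k^A=A_k^{n}\w_{n+1}^A+\sum_{j=k}^{n} A_k^{j-1}\,C_j\w_{j+1}^B ,
\]
which is exactly the $k$th component of $N^{\,n-k+1}$ applied to the sequence that agrees with $\w^B$ in the $B$-coordinate and vanishes in the $A$-coordinate. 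Passage to $n\to\infty$ is therefore well-defined precisely when $\w^B$ is $N/B$-distinguished in the sense of Definition \ref{d:dist}. This identifies two types of extreme rays for $\V_N$: (i) the trivial lifts of extreme rays of $\V_A$ (coming from $\w^B\equiv\0$) and (ii) the lifts, via the above limit, of $N/B$-distinguished extreme rays of $\V_B$. Uniqueness of the lift follows from Lemma \ref{l:closedcone} together with the fact that the two summands of a nontrivial convex decomposition in $\V_N$ project to a convex decomposition of $\w^B$ in $\V_B$, so extremality of $\w^B$ forces the decomposition on the $B$-side to be trivial and then primitivity of $A$ (through Lemma \ref{l:inductive eigenvectors}(iv) and Proposition \ref{p:extremepts}) pins down the $A$-correction uniquely.

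I would then finish by induction on $\wh l$. Writing $N$ as a $(2\times 2)$ block with $A=A_1$ in the top-left and the remaining streams grouped into a larger lower-right block $M$, the inductive hypothesis applies to $M$ and produces a bijection between $\Ext\V_M$ and the distinguished extreme rays of the primitive subblocks $A_2,\dots,A_{\wh l}$ of $M$. The $(2\times 2)$ analysis then lifts this to a bijection between $\Ext\V_N$ and the union of $\ExtD\V_{A_1}$ with those extreme rays of $\V_M$ that are in addition $N/M$-distinguished. A routine check, using that iterating $\wh N$ is compatible with iterating $M$ once the lift is made, shows that an extreme ray of $\V_{A_k}$ is $N$-distinguished if and only if it is $N/M$-distinguished and $M/A_k$-distinguished, so the ``distinguished'' property composes correctly along the tower of block extensions. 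Pool streams contribute nothing because their diagonal blocks eventually multiply to zero, so any eigenvector sequence supported on a pool subalphabet fails the never-zero condition in Definition \ref{d:eigenvectorsequence}.

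The main obstacle, and the step I expect to require the most care, is establishing that the iteration $\lim_{m\to\infty}\wh N^{\,m}(\w)$ in the $(2\times 2)$ step converges \emph{componentwise} and that the limit is never zero in the $A$-block when $\w^B$ is $N/B$-distinguished. This is a delicate monotonicity-plus-compactness argument in the nested cones $C_N^{(k,n)}$ of Lemma \ref{l:inductive eigenvectors}: one must show the partial sums $\sum_{j=k}^{n} A_k^{j-1}C_j\w_{j+1}^B$ form an increasing sequence in $C_k$ which is bounded above (this is the content of ``distinguished''), and then pair this with primitivity of $A$ so that the limit lands in the interior of the positive cone at every level, exactly as required by the never-zero condition. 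Once this is in hand, the inductive step and the identification of $\iota$ and $\pi$ are essentially bookkeeping around the bijection established in the $(2\times 2)$ case.
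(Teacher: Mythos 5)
Your overall architecture matches the paper's: reduce to fixed--size Frobenius normal form, prove a $(2\times 2)$ block lemma resting on the monotonicity-plus-compactness convergence argument (this is the paper's Lemma \ref{l:pi_v exists} and Theorem \ref{t:FrobVictFirstVersion}), then induct on the number of diagonal blocks. The genuine problem is the direction of your induction. You peel off the \emph{first} block, taking $A=A_1$ in the top-left and the aggregate $M$ of the remaining streams and pool symbols in the lower-right, and you apply both the inductive hypothesis and your $(2\times 2)$ analysis (with $B=M$) to $M$. But column-reducedness of $N$ is not inherited by a lower-right aggregate: a pool symbol all of whose incoming edges come from the peeled-off stream produces an all-zero column of $M$, so $M$ is in general neither column-reduced nor identically zero. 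That is exactly the hypothesis your $(2\times 2)$ step needs (as in Theorem \ref{t:FrobVictFirstVersion}) and that the inductive statement requires; without it $\V_M^\0$ contains partially zero fixed points, your dichotomy ``either $\w^B=\0$ or $\w^B$ is a never-zero distinguished element of $\V_M$'' no longer exhausts the possible $M$-components of elements of $\V_N$, and the classification of $\Ext\V_N$ at the inductive step is not established. The paper's induction runs the other way: it peels off the \emph{last} block, so the aggregate sits in the top-left, where column-reducedness does pass down (this is precisely the point of Remark \ref{r:reducedremark}), and the lower-right block is always a single reduced primitive or identically zero block.

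Your reversed direction also forces you to compose distinguishedness across a nested extension: you need that $\w\in\V_{A_k}$ is $N/A_k$-distinguished iff it is $M/A_k$-distinguished and its lift $\iota_M(\w)$ is $N/M$-distinguished. You call this ``a routine check'', but it is a genuine lemma requiring a double-limit, monotone-convergence argument involving the coupling block; the paper proves such a composition only under the special hypothesis $C_i=A_i-B_i$ with equal-size blocks (Lemma \ref{l:A-B}, Corollary \ref{c:TwoDisting}), which is not the situation here. With the paper's direction no composition is needed, because a vector supported on the first $k$ streams is moved identically by $N$ and by the upper-left aggregate $N_k$, so $N/A_k$- and $N_k/A_k$-distinguished coincide trivially. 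A smaller point: your appeal to primitivity of $A$ to obtain the never-zero property of the limit is unnecessary (and would not even be available at intermediate stages of the paper's induction, where the top-left block is only column-reduced): never-zero of $\iota(\w)$ follows because the lift dominates the embedding of $\w$ (Lemma \ref{l:neverzero}), and uniqueness of the $A$-correction comes from $\pi(\iota\w)=\0$, not from primitivity.
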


\begin{theo}\label{t:BratFrobVict-simple}
  (Frobenius--Victory Theorem for nested diagrams) Given a  reduced
  Bratteli diagram with
  bounded alphabet size, with matrix sequence $\wh M$, and given an extreme ray $\wh\w$  of the convex cone
  of nonnegative eigenvector sequences of eigenvalue one, then there
  exists an
  eventually unique maximal primitive subdiagram with matrix sequence $M$,
  with a unique distinguished extreme ray $\w$ which
  converges to $\wh\w$ under the iteration procedure. 
That is, $\wh \w=\lim \wh M^n(\w)$.

  Suppose we are  given nested Bratteli diagrams $M\leq \wh
  M$, with canonical cover $\wt
 M$. Then there exists a maximal primitive $A\leq M$
 with eigenvector sequence $\v$ which is $M/A$ distinguished. Furthermore, $\v$ is
 $\wh M/A$ distinguished iff it is $\wt M/A$ distinguished.
  
  \end{theo}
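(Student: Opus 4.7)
The plan is to derive Part 1 from the nonstationary Frobenius decomposition theorem (Theorem \ref{t:FrobDecomp}) combined with the Frobenius--Victory theorem for upper-triangular form (Theorem \ref{t:FrobVictGeneral-simple}), and to derive Part 2 from a direct block-matrix computation that exploits the structure of the canonical cover.

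First I would prove Part 1. After gathering and reordering the alphabets, $\wh M$ may be placed in fixed-size Frobenius normal form with primitive reduced diagonal blocks $A(1),\dots,A(d)$ corresponding to the augmented primitive streams $\overline\alpha(1),\dots,\overline\alpha(d)$. Theorem \ref{t:FrobVictGeneral-simple} supplies a bijection $\iota$ from $\bigcup_k \ExtD \V_{A(k)}$ onto $\Ext \V_{\wh M}$, realized as a termwise limit of iterates. Applying $\iota^{-1}$ to the given extreme ray $\wh \w$ singles out a unique index $k$ and a unique $\wh M/A(k)$-distinguished $\w \in \ExtD \V_{A(k)}$; take $M$ to be the matrix sequence of the augmented subdiagram $\overline\alpha(k)$. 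Maximality of $M$ among primitive subdiagrams follows because a strictly larger primitive substream would violate uniqueness of the stream decomposition (part $(iv)$ of Theorem \ref{t:StreamDecomp}); eventual uniqueness follows from the same result combined with the uniqueness built into $\iota$. The identity $\wh \w = \lim_n \wh M^n(\w)$ is precisely the formula by which $\iota(\w)$ is constructed.

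For Part 2, first invoke Part 1 applied to $M$ to produce a maximal primitive $A \leq M$ and an $M/A$-distinguished eigenvector sequence $\v$. The crux is to compare the iterations of $\v$ under $\wh M$ and under $\wt M$. Since $\wt M$ has the $2\times 2$ block form with $\wh M$ in the upper-left, $M$ in the lower-right, and $C_i = \wh M_i - M_i$ in the upper-right, I would prove by induction on $m \geq 1$ the block identity
\begin{equation*}
(\wt M^m)_i \;=\; \begin{pmatrix} (\wh M^m)_i & (\wh M^m)_i - (M^m)_i \\ 0 & (M^m)_i \end{pmatrix},
\end{equation*}
the off-diagonal block arising from a telescoping sum whose net effect is $\wh M^m - M^m$. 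Extending $\v$ by zero to the $\wt M$-alphabet, placing it in the $\A$-block (where $M$ acts), the identity specializes to
\begin{equation*}
(\wt M^m \v)_i \;=\; \bigl( (\wh M^m \v)_i - (M^m \v)_i \bigr) \;\oplus\; (M^m \v)_i.
\end{equation*}

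Since $\v$ is assumed $M/A$-distinguished, the bottom component $\lim_m M^m \v$ exists and is nowhere zero. The displayed identity then yields that $\lim_m \wt M^m \v$ exists if and only if $\lim_m \wh M^m \v$ exists. For the ``never zero'' condition: if $\lim \wh M^m \v$ is never zero then so is $\lim \wt M^m \v$, because its bottom component is already nonzero; conversely, the monotonicity $\wh M^m \v \geq M^m \v \geq 0$ (a consequence of $\wh M \geq M$ entrywise, applied inductively) forces $\lim \wh M^m \v \geq \lim M^m \v \neq 0$ whenever the limit on the left exists, so never vanishing is automatic. This establishes the equivalence $\wh M/A$-distinguished $\iff$ $\wt M/A$-distinguished. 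The main obstacle is verifying the block identity at the level of time-shifted matrix-sequence products; this is a routine induction once the base case $m=2$ is computed directly, and a conceptual cross-check is provided by Theorem \ref{t:covermeasure}, which asserts a measure-theoretic isomorphism between the towers inside $\wh M$ and inside $\wt M$ over the common base $\Sigma_M^{0,+}$.
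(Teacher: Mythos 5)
Your proposal is correct and follows essentially the same route as the paper: Part 1 is the Frobenius decomposition plus the upper-triangular Frobenius--Victory bijection $\iota$, exactly as in the paper's proof of Theorem \ref{t:BratFrobVict}(i), and your block identity $(\wt M^m)_i$ with off-diagonal block $(\wh M^m)_i-(M^m)_i$ together with the resulting equivalence is precisely the content of Lemma \ref{l:A-B}(i),(v) and Corollary \ref{c:TwoDisting}(v), which the paper cites where you inline the computation. The only difference is presentational: the paper factors the telescoping induction and the ``distinguished for $\wh M$ iff for $\wt M$'' equivalence into those auxiliary results, while you re-derive them directly for the canonical cover.
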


\subsection{Distinguished sequences: general definition}

Recalling Definition \ref{d:eigenvectorsequence}, given
an alphabet sequence $\A=(\A_i)_{i\geq 0}$ and writing
$V_i\equiv\r^{\A_i}$, we consider the topological vector space
$V_\A\equiv  \Pi_{i\geq 0}
V_i $. An 
$(\A_i\times \A_{i+1})$ 
real matrix sequence 
$N=(N_i)_{i\geq 0}$ defines a continuous  linear transformation,
sending
 $(\v_0, \v_1,\v_2,\dots)$ to $(N_0\v_1,
 N_1\v_2,\dots)$.  Note that  $N:V_\A\to V_\A$ is a product of the
 maps $N_i$, that is,
 $N= \Pi_0^\infty
N_i$.

We wish to find the nonnegative eigenvector sequences of eigenvalue one, and so
the fixed points of $N$. For this it is natural to
iterate, as:
\begin{lem} \label{l:generalfixedpoint} Let $f$ be a continuous map of a topological space $X$, and suppose
  that for $x_0\in X$ and $x_n=f^n(x_0)$, we have $x_n\to x$. Then $f(x)=x$.
\end{lem}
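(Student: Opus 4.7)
The plan is short because the statement is a standard fact about fixed points arising from iteration. I would simply apply continuity of $f$ to the convergent sequence $(x_n)$, and then use that $f(x_n)$ coincides with the tail $(x_{n+1})$, which converges to the same limit $x$.

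First I would invoke continuity in its sequential form: for any open neighborhood $U$ of $f(x)$, the preimage $f^{-1}(U)$ is an open neighborhood of $x$; since $x_n\to x$, one has $x_n\in f^{-1}(U)$ eventually, and hence $f(x_n)\in U$ eventually. Thus $f(x_n)\to f(x)$. Next I would observe the trivial identity $f(x_n)=f(f^n(x_0))=f^{n+1}(x_0)=x_{n+1}$, and that the reindexed sequence $(x_{n+1})_{n\ge 0}$ is a tail of $(x_n)$, hence also converges to $x$. Comparing the two limits of the sequence $(f(x_n))=(x_{n+1})$ and invoking uniqueness of limits yields $f(x)=x$.

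The only point requiring a word of comment is uniqueness of limits, which needs the Hausdorff property of $X$; this is harmless in the intended applications, where $X$ will be a subset of a product of finite-dimensional real vector spaces (or of some other Hausdorff space in which the map $N$ acts), so Hausdorffness is automatic. There is no genuine obstacle here—the lemma is a one-line consequence of continuity and uniqueness of limits, recorded separately because it will be invoked repeatedly in the iterative constructions of distinguished eigenvector sequences that follow.
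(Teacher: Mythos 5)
Your proof is correct and is exactly the argument the paper has in mind (the paper simply states that the lemma is "immediate from continuity"): $f(x_n)\to f(x)$ by continuity, $f(x_n)=x_{n+1}\to x$, and uniqueness of limits gives $f(x)=x$. Your remark about Hausdorffness is well taken — as literally stated the lemma needs it (or some substitute) for uniqueness of limits, and it holds automatically in the paper's applications, where $X$ is the product $V_\A=\Pi_i\,\R^{\A_i}$ or a closed cone therein.
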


The proof is immediate from  continuity. Of course in the special case
of a contraction mapping of a complete metric space the fixed
point exists and is unique; this is the case for  the projective metric  proof of
the Perron-Frobenius theorem \cite{Samelson56}, \cite{Birkhoff57}, \cite{Birkhoff67}. But even for the weak contractions of the
sequence situation, which also
occur in \cite{Fisher09a}, this point of view can be useful,
as noted in the next lemma.

Recalling the notation of Definition \ref{d:partialproduct}, the product of the matrices from $i$ to $n$
is denoted $N_i^n= N_iN_{i+1}\cdots N_n$, so  $N_i^n: V_{n+1}\to V_i$. 
For $m\geq1$, the $m^{\text{th}}$ iterate $N^m$ of the product map $N$
on $V_\A$ is also a product:  $N^m= \Pi_0^\infty (N^m)_i$ where 
$(N^m)_i: V_{i+m}\to V_i$ is the map  $(N^m)_i(\v_{i+m})=N^{m+i-1}_i(\v_{i+m})= N_iN_{i+1}\dots
N_{i+m-1}(\v_{i+m})$. We have:

\begin{lem}\label{l:forall i}
Let 
$N=(N_i)_{i\geq 0}$ be  a   real $(l_i\times  l_{i+1})$ matrix
sequence. 
\item{(i)}  Given
and $\v= (\v_i)_{i\geq 0} \in V_\A$,
then if 
\begin{equation}\label{eq:generallimit}
 \lim_{n\to +\infty} N_j^n \v_{n+1}
\end{equation}
exists for some $j=i+1\geq 1$  it exists for  $j=i$. 
\item{(ii)} 
If this limit exists for all $i$, then defining a vector sequence 
$\v'$  by 
$(\v')_{i}= \lim_{n\to +\infty} N_i^n \v_{n+1}$, we have $N(\v')= \v'$.

 \end{lem}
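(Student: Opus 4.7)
The plan is to derive both parts from a single observation: for $n \geq i+1$, the associative law for matrix multiplication gives
\[
N_i^n \;=\; N_i \cdot N_{i+1}^n,
\]
and each $N_i$, being a linear map between finite-dimensional real vector spaces, is continuous. Thus applying $N_i$ commutes with taking limits in $V_{i+1}$.

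For part $(i)$, I would assume the limit $\lim_{n \to +\infty} N_{i+1}^n \v_{n+1}$ exists in $V_{i+1}$, call it $\w$, and then apply the continuous map $N_i$ to obtain
\[
\lim_{n \to +\infty} N_i^n \v_{n+1} \;=\; \lim_{n \to +\infty} N_i\bigl(N_{i+1}^n \v_{n+1}\bigr) \;=\; N_i \w,
\]
which exists in $V_i$. This is the inductive step that propagates convergence from index $i+1$ down to index $i$.

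For part $(ii)$, assuming the limits exist for every $i$, I would set $\v'_i \equiv \lim_{n \to +\infty} N_i^n \v_{n+1}$ and verify $N(\v') = \v'$ coordinate-wise. Since $N$ acts on $V_\A$ by $\bigl(N(\v')\bigr)_i = N_i \v'_{i+1}$, the required identity $N_i \v'_{i+1} = \v'_i$ follows from the same continuity argument used in $(i)$:
\[
N_i \v'_{i+1} \;=\; N_i \lim_{n \to +\infty} N_{i+1}^n \v_{n+1} \;=\; \lim_{n \to +\infty} N_i^n \v_{n+1} \;=\; \v'_i.
\]
Equivalently, one can view this as an instance of Lemma \ref{l:generalfixedpoint} applied to the product map $N = \Pi_{i \geq 0} N_i$ acting continuously on the product space $V_\A$ equipped with its product topology.

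There is no substantial obstacle here: the only ingredients are the semigroup property of matrix products along the sequence, the continuity of the individual linear maps $N_i$, and the fact that convergence in the product topology on $V_\A$ is coordinate-wise convergence. The value of the lemma lies not in its proof but in its role later on, as it legitimizes defining eigenvector sequences of eigenvalue one by iteration, which is the foundation of the distinguished sequence construction.
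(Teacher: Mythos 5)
Your proposal is correct and follows essentially the same route as the paper: part $(i)$ via $N_i^n = N_i\,N_{i+1}^n$ and continuity of the linear map $N_i$, and part $(ii)$ either by taking limits coordinate-wise in that identity or by invoking Lemma \ref{l:generalfixedpoint} for the continuous product map $N$ on $V_\A$, both of which are exactly the two arguments the paper gives.
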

\begin{proof}

\noindent
\item{(i)}: 
If $\lim_{n\to +\infty} N_{i+1}^n \v_{n+1}$ exists, then 
\begin{equation}
  \label{eq:limit}
  \lim_{n\to +\infty} N_i^n \v_{n+1}=N_i\lim_{n\to +\infty} N_{i+1}^n \v_{n+1}
\end{equation}
by continuity of matrix multiplication.

\noindent
\item{(ii)}
From the hypothesis
$\v'= \lim_{n\to +\infty} N^n \v$ exists, so from
 Lemma \ref{l:generalfixedpoint}, $N(\v')= \v'$. Or, directly,
taking the limits in \eqref{eq:limit} gives $\v'_i =N_i
\v'_{i+1}$ and the same conclusion.

\end{proof}

\begin{rem}\label{r:limitsequence}
 Summarizing Lemma \ref{l:forall i}, we proved
 first that convergence at one time implies
convergence for all earlier times, and second, if this limit exists, it is a fixed
point.

Specializing to a nonnegative sequence $N$, we recall from Definition
\ref{d:eigenvectorsequence} the following:
$V_i= \R^{\A_i}$,
$V_\A\equiv  \Pi_{i\geq 0}
V_i$; the zero element in  $V_\A$ is $\0= (\0_0, \0_1,\dots)$, 
$\V_N^\0$ denotes  the convex cone of  nonnegative fixed points for
the map $N$, while $\V_N$ denotes $\w\in \V_N^\0$ which
 are nonnegative eigenvector sequences hence such that $\w$ is {\em never zero},
 i.e.~ each projection is nonzero: $\w_i\neq\0_i$ for all $i$.
 
As
  we proved in $(i)$  of Lemma \ref{l:closedcone}, 
  $\V_N^\0$ is  a closed convex cone.
  From part $(ii)$ of that Lemma, when
$N$ is column--reduced then $\V_N^\0= \V_N\cup\{\0\}$, where 
$\0=(\0_k)_{k\geq 0}$ is the
{ identically zero} sequence. This is because when $N$ is
column--reduced, a nonnegative  fixed 
point which is zero at one time is zero for all times. If it is not column-reduced
there may exist a nonnegative  fixed
point which is {\em partially zero}, i.e.~there exists $k\geq 0$ such
that 
  $\w_i=\0_i$ for all $i\leq k$ and $\neq \0_i$ for all $i>k$.

For the case of $N=M$ with integer entries, we recall the twofold
importance
of the never zero fixed
points. First, from Lemma \ref{l:inductive eigenvectors},
$\V_M \neq \emptyset $ iff
$\Sigma_{M}^{0,+}\neq \emptyset $; second,
from Theorem \ref{t:basic_thm}, when $M$ is reduced, these fixed
points correspond to the 
$\FC-$
invariant probability measures on $\Sigma_{M}^{0,+}$.

In Theorem
  \ref{t:basic_thm}
  the matrix sequence $M$  is required to be reduced, which means that
  partially zero sequences do not occur, and an allowed finite
  sequence of symbols defines a (by definition nonempty) cylinder
  set.

  However
 we also shall want to allow for the nonreduced
case, see Theorems \ref{t:FrobVictFirstVersion}, \ref{t:FrobVictGeneral} and
Corollary \ref{c:TwoDisting}.

In fact, for  a  partially zero sequence, zero for $m\leq k$, the formula for measures in
Theorem
  \ref{t:basic_thm}
   still makes
sense, giving zero measure on the
  $0^{\text{th}}$ component
  $\Sigma_{\M}^{0,+}$,  while giving a positive measure on components
  $\Sigma_{\M}^{m,+}$ for $m>k$.
\end{rem}

\begin{exam}\label{exam:nonreduced} 
For an example, consider alphabets $\A_i=\{0,1\}$ for all $i\geq 0$, and matrices
$M_0= \left[ \begin{matrix}
1 &0\\
0&0\\
\end{matrix}\right],
M_i= \left[ \begin{matrix}
1 &0\\
0&1\\
\end{matrix}\right] $ for $i>0$.
Define sequences $\v^a$,  $\v^b$ by  for $i=0$, 
$\v^a_i
=\left[ \begin{matrix}
1 \\
0\\
\end{matrix}\right]$, 
$\v^b_i=\left[ \begin{matrix}
0 \\
0\\
\end{matrix}\right]$,
and for $i\geq 1$, 
$\v_i^a=\left[ \begin{matrix}
1 \\
0\\
\end{matrix}\right]$,
$\v_i^b=\left[ \begin{matrix}
0\\
1\\
\end{matrix}\right]$.
Then $\v^a$,  $\v^b$ are nonnegative fixed points for $N$.
The first gives an $\FC$-invariant 
probability measure on $\Sigma_{M}^{0,+}$ (point
mass on $(.00000\dots)$);
the second, partially zero,  gives the zero measure on
$\Sigma_{M}^{0,+}$, 
but on 
$\Sigma_{M}^{1,+}$ gives  point
mass on $(.11111\dots)$.

The word $0$ is allowed (the single letters of $\A$ are always allowed
in a vertex shift) but $[.0]$ is empty hence not a cylinder set. By
adding on one identity matrix  before $M_0$ one has a similar edge
shift example, where for $e$  with $e^-=1, e^+=1$, then  $e_0=e$ is
allowed, but  $[.e_0]$ is empty.

The cone $\V_N^\0$ has only  one nonzero extreme element up to multiplication by a
constant: $\v^a_0=\left[ \begin{matrix}
1 \\
0\\
\end{matrix}\right]$. For times $\geq 1$, there are two elements.
Note that $\v_b$ is a partially zero fixed point so
$\wt \0\equiv \V_{N}^\0\setminus \V_{N}=\{\0,  c\v_b:\,  c>0\}.$

 For an example with no point masses, replace the matrices for $i\geq 1$ by
$ \left[ \begin{matrix}
2 &0\\
0&2\\
\end{matrix}\right] $; this  gives a pair of dyadic  odometers 
and hence two nonatomic invariant ergodic probability measures on $\Sigma_{M}^{1,+}$.  
\end{exam}

\

Here is our general abstract definition for subdiagrams:

\begin{defi}\label{d:dist}(Distinguished eigenvector sequence)
 Given nonnegative real
 generalized matrix sequences 
 $N\leq \wh N$, with $\A\subseteq  \wh \A$ (see Definition \ref{d:generalizedmatrices}), for each $i$ we denote by $\mathtt{e}_i: \r^{\A_i} \to
 \r^{\wh \A_{i}}$ the natural
 embedding, with $\mathtt{e}: V_\A \to
 V_{\wh \A}$ the product map $\mathtt{e} = (\mathtt{e}_0,
 \mathtt{e}_1,\dots)$.
 We define $\V_{\wh N/N
    dist}^\0$ to be the following collection of vector sequences: 
 \item{(i)} $\w\in \V_N^\0$ and
\item{(ii)} $\iota(\w)\equiv \lim_{m\to\infty} \wh N^m (\mathtt{e}(\w))$ exists.

We  write
$\V_{\wh N/N
    dist}$ for those elements $\w$  of $\V_{\wh N/N
    dist}^\0$ which are never zero: $\w_k\neq \0_k$ for all
  $k\geq 0$. We say  $\w\in \V_{\wh N/N
    dist}$ is $\wh N/N$\,--\,{\em
  distinguished}.

\end{defi}

\begin{rem}
  Note that $(ii)$ is equivalent to:

  \noindent
  $(ii')$
  $\lim_{n\to +\infty} \wh N_i^n (\mathtt{e}(\w_{n+1}))$
  exists for $i$ arbitrarily  large.
These are equivalent since  (by Lemma \ref{l:forall i}) it is then true for all $i\geq 0$.
\end{rem}

\begin{lem}\label{l:neverzero}
  If $\w$ is a  distinguished eigenvector sequence then $\wh \w\equiv \iota(\w)$ is
  nonnegative and never zero.
\end{lem}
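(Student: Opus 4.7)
The plan is to show that the iteration sequence $\wh N^m(\mathtt{e}(\w))$ is pointwise monotone increasing in $m$, so that its limit (which exists by hypothesis) dominates $\mathtt{e}(\w)$ coordinate-wise. Since $\w$ is never zero and $\mathtt{e}$ preserves nonzeroness at each time, this will force $\iota(\w)$ to be never zero.

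The first step is to verify that $\wh N(\mathtt{e}(\w)) \geq \mathtt{e}(\w)$ in $V_{\wh\A}$, using the definition of $\leq$ on generalized matrices given in Definition \ref{d:generalizedmatrices}. For $i \geq 0$ and $a \in \wh\A_i$, we compute
\[
\bigl(\wh N_i \,\mathtt{e}(\w_{i+1})\bigr)_a \;=\; \sum_{b \in \wh\A_{i+1}} (\wh N_i)_{ab}\, \mathtt{e}(\w_{i+1})_b \;=\; \sum_{b \in \A_{i+1}} (\wh N_i)_{ab}\, (\w_{i+1})_b,
\]
since $\mathtt{e}(\w_{i+1})_b = 0$ for $b \in \wh\A_{i+1} \setminus \A_{i+1}$. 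For $a \in \A_i$, using $(\wh N_i)_{ab} \geq (N_i)_{ab}$ on $\A_i \times \A_{i+1}$ together with the fixed point equation $\w_i = N_i \w_{i+1}$, this sum is $\geq (N_i \w_{i+1})_a = (\w_i)_a = \mathtt{e}(\w_i)_a$. For $a \in \wh\A_i \setminus \A_i$, we have $\mathtt{e}(\w_i)_a = 0$ while the sum is $\geq 0$. This gives $\wh N(\mathtt{e}(\w)) \geq \mathtt{e}(\w)$.

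The second step is to propagate this inequality via iteration. Since $\wh N$ has nonnegative entries, the map $\wh N : V_{\wh\A} \to V_{\wh\A}$ is order-preserving with respect to the coordinate-wise partial order. Applying $\wh N^{m-1}$ to both sides of $\wh N(\mathtt{e}(\w)) \geq \mathtt{e}(\w)$ yields $\wh N^{m+1}(\mathtt{e}(\w)) \geq \wh N^m(\mathtt{e}(\w))$ for each $m \geq 0$; so the sequence $\bigl(\wh N^m(\mathtt{e}(\w))\bigr)_{m \geq 0}$ is monotone increasing and bounded below by $\mathtt{e}(\w)$.

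Finally, by hypothesis $\iota(\w) = \lim_{m \to \infty} \wh N^m(\mathtt{e}(\w))$ exists in $V_{\wh\A}$. As the pointwise limit of a monotone increasing sequence of nonnegative vectors, $\iota(\w)$ is nonnegative and satisfies $\iota(\w) \geq \mathtt{e}(\w)$. It remains only to observe that $\w$ never zero means $\w_i \neq \0_i$ in $V_i = \R^{\A_i}$ for every $i$, so there is some coordinate $a \in \A_i$ with $(\w_i)_a > 0$; then $\mathtt{e}(\w_i)_a > 0$, and hence $\iota(\w)_{i,a} \geq \mathtt{e}(\w_i)_a > 0$, which forces $\iota(\w)_i \neq \0_i$ in $\R^{\wh\A_i}$ for every $i$. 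There is no real obstacle here; the only step requiring care is keeping the embedding $\mathtt{e}$ and the inequality $N \leq \wh N$ straight in the coordinate computation of the first step.
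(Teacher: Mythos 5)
Your proof is correct, and it rests on the same key comparison as the paper's own argument: the inequality $N \leq \wh N$ forces $\mathtt{e}(\w) \leq \wh N(\mathtt{e}(\w))$ (the paper phrases this as $\w = N\w \leq \wh N\w$ and compares the products $N_i^n\w_{n+1} = \w_i \leq \wh N_i^n\mathtt{e}(\w_{n+1})$ directly), after which the limit dominates $\mathtt{e}(\w)$ coordinate-wise, so positivity of some entry of each $\w_i$ transfers to $\wh\w_i$. Your extra observation that the iterates are monotone increasing is a mild repackaging of the same estimate (and is in fact used elsewhere in the paper, e.g.\ in Lemma \ref{l:pi_v exists}), not a different route.
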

\begin{proof}Given $N\leq \wh N$ as above, let  $\w\in  \V_N$ be
  $\wh N/N$\,--\,{
    distinguished}. Thus
 $\iota(\w)\equiv \lim_{m\to\infty} \wh N^m (\mathtt{e}(\w))$ exists.
 Now $N\leq \wh N$ so
 $ \w= N \w \leq \wh N \w$. Thus 
since  $\lim_{n\to +\infty} \wh N_i^n (\mathtt{e}(\w_{n+1}))$
exists for all $i\geq 0$,   $\0_i\neq\w_i= \lim_{n\to +\infty}  N_i^n
(\mathtt{e}(\w_{n+1}))\leq \lim_{n\to +\infty} \wh N_i^n
(\mathtt{e}(\w_{n+1}))=\wh \w_i$ for all $i$. That is, while the vector
$\w_i\in V_i$ may have some zero entries, it also has some strictly
positive entries, and these same  entries are also strictly
positive for $\wh \w_i\in \wh V_i$.
\end{proof}

\begin{defi}(General definition, short version)
  In summary, a nonnegative vector sequence $\w$ is $\wh N/N$- distinguished iff it
is a nonnegative, never zero
fixed point
for $N$, which under iteration by $\wh N$ converges, to
a sequence $\wh \w$, which is necessarily  a nonnegative and  never
zero fixed point for $\wh N$.
\end{defi}

The content of the nonstationary Frobenius--Victory
theorem (matrix form, Theorem \ref{t:FrobVictGeneral}; diagram form, \ref{t:BratFrobVict}) will be that each  nonnegative,  never
zero fixed point $\wh \w$ 
for $\wh N$ 
determines, and
is determined by, a $\wh N/N$- distinguished
sequence $\w$ for some primitive
submatrix sequence $N$.

\begin{rem}
  What is not so clear is whether or not this cone is {\em closed}.
  That would be important as then it is generated by its extreme
  points. However, as we see below, there are counterexamples.

  Let us consider what could go wrong.
  
 Let $\v^{(k)}\in \V_{\wh N/N dist}\subseteq V_\A$, 
and suppose that $\v^{(k)}\to \v \in V_\A$.
By Lemma \ref{l:closedcone}, 
$\V_N^\0$ is a closed cone, so $\v\in \V_N^\0$. Suppose that $\v\neq \0$.

For each $k$, we know that $\lim_{n\to\infty} \wh
N^n(\v^{(k)})$  exists; call it $\wh \v^{(k)}$.
By Lemma \ref{l:forall i} each $\wh \v^{(k)}$ is a fixed point for
$\wh N$. By Lemma
\ref{l:neverzero},  each $\wh \v^{(k)}$
is never zero. 

Now if $\wh \v^{(k)}$ converges, say to $\wh \v$, then
from continuity of the linear transformation $\wh N$, $\wh \v$ is also a fixed point,
and again by Lemma
\ref{l:neverzero} is never zero.

The problem is that perhaps it blows up in the limit, and indeed this
can in fact happen. In that case, $\v$ is not distinguished.

\end{rem}

\subsection{Distinguished eigenvector sequences in the $(2\times 2)$ block case}
This case  will serve to both
illustrate the main ideas and  to provide the
inductive step for the proof of the general upper triangular block case of Theorem \ref{t:FrobVictGeneral}. We are 
given a
sequence $\A_i= \{1,\dots, l_i\}$ of  nonempty alphabets
and an $(l_i\times l_{i+1})$ sequence  
$(N_i)_{i\geq 0}$ of real matrices.
 The alphabet is partitioned
into nonempty streams $\alpha, \beta$ with 
$\alpha_i=\{1,\dots, l_i^\alpha\}$ and
$\beta_i= \{ l_i^\alpha +1,\dots,  l_i\}$, where $l_i=  l_i^\alpha+ l_i^\beta$.

Let us assume that
we have the
upper triangular  block decomposition of \eqref{eq:block_form_two}:
\begin{equation}
 N=\left[ \begin{matrix}
A& C\\
0& B
\end{matrix}  \right]; {\text{\, that is, for all $i$}},
 N_i=\left[ \begin{matrix}
A& C\\
0 & B
\end{matrix}  \right]_i =\left[ \begin{matrix}
A_i & C_i \\
0_i & B_i
\end{matrix}  \right] 
\end{equation}
(so
$A_i$ is $(\alpha_i\times \alpha_{i+1}), $
$B_i$ is $(\beta_i\times \beta_{i+1}), $
$C_i$ is $(\alpha_i\times \beta_{i+1}), $ and 
$0_i$  is the $(\beta_i\times \alpha_{i+1}) $
all-zero matrix.)

We have $V_\A\equiv  \Pi_{i\geq 0}
V_i $
where $V_i\equiv\r^{\A_i}=\r^{l_i}$. We write
$(V_\alpha)_i\equiv\r^{\alpha_i}$ and
$(V_\beta)_i=\r^{\beta_i}$, and define
$V_\alpha\equiv  \Pi_{i\geq 0}
(V_\alpha)_i$, $V_\beta\equiv  \Pi_{i\geq 0}
(V_\beta)_i$, so
$V_\A= V_\alpha\times V_\beta$; we
decompose a  vector 
$\v_i\in V_i$ as
$
\v_i=\left[ \begin{matrix}
\wh \v_i\\
 \w_i
\end{matrix}  \right] $
with 
$\wh \v_i\in 
(V_\alpha)_i$ 
and $\w_i\in 
(V_\beta)_i.$
 The canonical
embeddings are    both indicated now by the single map $\mathtt{e}$,  which
sends 
$ \wh\v$ to $ \left[ \begin{matrix}
\wh \v\\
 \0
\end{matrix}  \right] $ and
$\w$ to $ \left[ \begin{matrix}
\0\\
 \w
\end{matrix}  \right] $; thus $\mathtt{e}: V_\alpha\cup
V_\beta\hookrightarrow V_\A$. Setting

$$\mathring N=\left[ \begin{matrix}
A & 0\\
0 & 0
\end{matrix}  \right],$$

\noindent
we define $\pi \v,$   for $\v\in V_\A,$ whenever the following
limit exists:
$$\pi \v=\pi(\v)=\lim_{m\to +\infty}  \mathring N^m \v.$$

\noindent
For  
$\u\in V_\alpha\cup V_\beta$,
we define,  whenever the following
limit exists: 
$$\iota\u=\iota(\u)= \lim_{m\to +\infty}  N^m (\mathtt{e}(\u)).$$

For example, with $N$ nonnegative, if  $\u\in \V_A\subseteq V_\alpha$ then
the above limit  exists, since
$\mathtt{e}(\u)= \left[ \begin{matrix}
\u\\
\0
\end{matrix}  \right]\in \V_N $ whence
$ \mathtt{e}(\u)=\iota \u$ is just the embedding.

Note that when  $N$ is nonnegative, then by Definition \ref{d:subdiagram},
$A\leq N$ and also $B\leq
N$.

Our main goal in this section is to  show that nonnegative eigenvector
sequences of eigenvalue one for nonnegative $N$  come from those from either $A$ or
$B$, in a bidirectional  algorithmic way. These latter are 
the eigenvector
sequences which are 
$N/B$\,--\,distinguished.
This will be  a version of the Frobenius--Victory theorem (Theorem
\ref{t:FrobVictFirstVersion});  see the summary at the end
of this subsection.

We have $V_\A\equiv  \Pi_{i\geq 0}
V_i $
where $V_i\equiv\r^{\A_i}=\r^{l_i}$. We write
$(V_\alpha)_i\equiv\r^{\alpha_i}$ and
$(V_\beta)_i=\r^{\beta_i}$, and define
$V_\alpha\equiv  \Pi_{i\geq 0}
(V_\alpha)_i$, $V_\beta\equiv  \Pi_{i\geq 0}
(V_\beta)_i$, so
$V_\A= V_\alpha\times V_\beta$; we
decompose a  vector 
$\v_i\in V_i$ as
$
\v_i=\left[ \begin{matrix}
\wh \v_i\\
 \w_i
\end{matrix}  \right] $
with 
$\wh \v_i\in 
(V_\alpha)_i$ 
and $\w_i\in 
(V_\beta)_i.$
 The canonical
embeddings are    both indicated now by the single map $\mathtt{e}$,  which
sends 
$ \wh\v$ to $ \left[ \begin{matrix}
\wh \v\\
 \0
\end{matrix}  \right] $ and
$\w$ to $ \left[ \begin{matrix}
\0\\
 \w
\end{matrix}  \right] $; thus $\mathtt{e}: V_\alpha\cup
V_\beta\hookrightarrow V_\A$. Setting

$$\mathring N=\left[ \begin{matrix}
A & 0\\
0 & 0
\end{matrix}  \right],$$

\noindent
we define $\pi \v,$   for $\v\in V_\A,$ whenever the following
limit exists:
$$\pi \v=\pi(\v)=\lim_{m\to +\infty}  \mathring N^m \v.$$

\noindent
For  
$\u\in V_\alpha\cup V_\beta$,
we define,  whenever the following
limit exists: 
$$\iota\u=\iota(\u)= \lim_{m\to +\infty}  N^m (\mathtt{e}(\u)).$$

For example, with $N$ nonnegative, if  $\u\in \V_A\subseteq V_\alpha$ then
the above limit  exists, since
$\mathtt{e}(\u)= \left[ \begin{matrix}
\u\\
\0
\end{matrix}  \right]\in \V_N $ whence
$ \mathtt{e}(\u)=\iota \u$ is just the embedding.

Note that when  $N$ is nonnegative, then by Definition \ref{d:subdiagram},
$A\leq N$ and also $B\leq
N$.

\begin{rem}
  Although for the theory we are developing  $N$ will always be
nonnegative,  we state propositions here in more
generality, so as to highlight just where that assumption is really
needed.
\end{rem}

\

\begin{lem}\label{l:pi_iff_iota}
For $N$  a real matrix sequence in the block form of \eqref{eq:block_form_two},
then given
$\v\in V_\A$, let us write as above
$
\v=\left[ \begin{matrix}
\wh \v\\
\w
\end{matrix}  \right]$.
Then if $N\v= \v$ (whence $B\w=\w$), 
\item{(i)}
$\pi\v $ exists iff $\iota \w$ exists.
\item{(ii)} In this case,
$\v= \pi \v + \iota \w$.
\end{lem}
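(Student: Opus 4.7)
The plan is to unwind everything in block form and exploit the fixed-point equation $N\v=\v$ to relate the two candidate limits to each other via the identity $\wh\v_i$, which is a constant in $m$.

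First I would expand the finite product $N_i^{i+m-1} = N_i N_{i+1}\cdots N_{i+m-1}$ in upper-triangular block form. By induction on $m$ one gets
\[
N_i^{i+m-1} = \begin{bmatrix} A_i^{i+m-1} & D_i^{(m)} \\ 0 & B_i^{i+m-1} \end{bmatrix},
\qquad
D_i^{(m)} = \sum_{k=0}^{m-1} A_i^{i+k-1}\, C_{i+k}\, B_{i+k+1}^{i+m-1},
\]
with the convention that empty matrix products are the appropriate identity. The analogous computation for $\mathring N$ is trivial: $(\mathring N^m)_i = \operatorname{diag}(A_i^{i+m-1},\,0)$.

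Next I would evaluate the three relevant iterates at time $i$. Applying $(N^m)_i$ to $\mathtt{e}(\w)_{i+m}=\begin{bmatrix}\0\\ \w_{i+m}\end{bmatrix}$ and using $B\w=\w$ (so $B_i^{i+m-1}\w_{i+m}=\w_i$) gives
\[
(N^m)_i\,\mathtt{e}(\w)_{i+m} \;=\; \begin{bmatrix} D_i^{(m)}\w_{i+m} \\ \w_i \end{bmatrix},
\qquad
(\mathring N^m)_i\,\v_{i+m} \;=\; \begin{bmatrix} A_i^{i+m-1}\wh\v_{i+m} \\ \0 \end{bmatrix}.
\]
The key step is now to use the hypothesis $N\v=\v$, which gives $(N^m)_i\v_{i+m}=\v_i$ for every $m$, and hence the fundamental identity
\[
A_i^{i+m-1}\wh\v_{i+m} + D_i^{(m)}\w_{i+m} \;=\; \wh\v_i \qquad\text{for all } m\geq 0.
\]
Since $\wh\v_i$ is independent of $m$, the limit $\lim_m A_i^{i+m-1}\wh\v_{i+m}$ exists if and only if $\lim_m D_i^{(m)}\w_{i+m}$ exists; this is the equivalence (i), because the former controls $\pi\v$ at time $i$ and the latter controls the top block of $\iota\w$ at time $i$ (the bottom block being the constant $\w_i$). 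By Lemma~\ref{l:forall i} it suffices to verify the existence of either limit for arbitrarily large $i$.

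Finally, assuming the common limit exists, I would add the two expressions coordinate-wise:
\[
\pi\v_i + \iota\w_i \;=\; \begin{bmatrix} \lim_m A_i^{i+m-1}\wh\v_{i+m} + \lim_m D_i^{(m)}\w_{i+m} \\ \w_i \end{bmatrix} \;=\; \begin{bmatrix} \wh\v_i \\ \w_i \end{bmatrix} \;=\; \v_i,
\]
which is (ii). The only real obstacle is bookkeeping: writing $D_i^{(m)}$ correctly and confirming that the upper-triangular block structure is preserved under finite products along the nonstationary sequence; once the identity $A_i^{i+m-1}\wh\v_{i+m} + D_i^{(m)}\w_{i+m} = \wh\v_i$ is in hand, both statements are immediate.
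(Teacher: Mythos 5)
Your proof is correct and is essentially the paper's argument: both rest on the observation that the fixed-point relation makes $N_i^n\v_{n+1}=\v_i$ constant in $n$, and that this constant splits as the sum of the $\mathring N$-iterate of $\v$ and the $N$-iterate of $\mathtt{e}(\w)$, so one limit exists iff the other does, with (ii) following by passing to the limit. The only difference is presentational: you compute the off-diagonal block of the product explicitly (your $D_i^{(m)}$ is exactly the paper's $\wh C_i^n$, proved later in Lemma \ref{l:conditions}), whereas the paper's proof sidesteps this bookkeeping via the identity $N_i^n\,\mathtt{e}(\wh\v)_{n+1}=\mathring N_i^n\,\v_{n+1}$.
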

\begin{proof}

\noindent
\item{(i)}
Fixing $i\geq 0$,

\begin{align*}
\left[ \begin{matrix}
A& C\\
0 & B
\end{matrix}  \right]_i^n \left[ \begin{matrix}
\wh \v\\
\w
\end{matrix}  \right]_{n+1}
 &=
\left[ \begin{matrix}
A& C\\
0 & B
\end{matrix}  \right]_i^n 
\left[ \begin{matrix}
\wh \v\\
 0
\end{matrix}  \right]_{n+1}+ \left[ \begin{matrix}
A& C\\
0 & B
\end{matrix}  \right]_i^n
 \left[ \begin{matrix}
\0\\
\w
\end{matrix}  \right] _{n+1}\\
&=
\left[ \begin{matrix}
A& 0\\
0 & 0
\end{matrix}  \right]_i^n 
\left[ \begin{matrix}
\wh \v\\
 \w
\end{matrix}  \right]_{n+1}+ \left[ \begin{matrix}
A& C\\
0 & B
\end{matrix}  \right]_i^n
 \left[ \begin{matrix}
\0\\
\w
\end{matrix}  \right] _{n+1}
.\end{align*}

Thus 
\begin{equation}\label{eq:balance}
N_i^n \v_{n+1}=\mathring N_i^n \v_{n+1}+ N_i^n (\mathtt{e} \w)_{n+1}
\end{equation}

Since 
$N\v= \v$,
this is
\begin{equation}\label{eq:balance2}
\v_i=\mathring N_i^n \v_{n+1}+ N_i^n (\mathtt{e} \w)_{n+1}
\end{equation}
so one converges iff the other does.

\noindent
\item{(ii)} This  follows by taking the limit as $n\to\infty$ of equation
\eqref{eq:balance2}.
\end{proof}

See Lemma \ref{l:exampleconvergence}  regarding a 
condition for 
convergence, in the nonnegative $(2\times 2)$ case.

\begin{lem}\label{l: pi z + iota} Let $N$  be in the block
  form of
  \eqref{eq:block_form_two}. 
For $\z\in V_\A$ and $\w\in V_\beta$, 
if $\mathring N(\z)= \z$, $B\w= \w$ and $\iota\w$
exists, then
$N(\z+\iota\w)= \z+\iota\w$ and $\pi(\z+\iota\w)= \z.$
\end{lem}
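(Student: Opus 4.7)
The plan is to verify the two claims separately, leveraging the block structure together with Lemma \ref{l:pi_iff_iota}.

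First I would unpack the hypothesis $\mathring N(\z) = \z$. Writing $\z = \left[ \begin{matrix} \wh\z \\ \z' \end{matrix}\right]$, the identity $\mathring N \z = \z$ forces $A\wh\z = \wh\z$ and $\z' = \0$; thus $\z = \mathtt e(\wh\z)$. With this, a direct block computation gives
$$
N\z = \left[ \begin{matrix} A & C \\ 0 & B \end{matrix}\right]\left[ \begin{matrix} \wh\z \\ \0 \end{matrix}\right]
= \left[ \begin{matrix} A\wh\z \\ \0 \end{matrix}\right] = \z.
$$
Meanwhile, by Lemma \ref{l:forall i} applied to the defining limit $\iota\w = \lim_{m\to\infty} N^m(\mathtt e\w)$, the sequence $\iota\w$ is a fixed point of $N$. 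Combining, $N(\z + \iota\w) = \z + \iota\w$, giving the first claim.

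For the second claim, the strategy is to apply Lemma \ref{l:pi_iff_iota} to $\v \equiv \z + \iota\w$. Since $N\v = \v$ and $\iota\w$ exists, once we identify the $\beta$-component of $\v$ we get $\v = \pi\v + \iota(\text{bottom part of }\v)$, and the conclusion $\pi\v = \z$ will follow as long as the bottom part of $\v$ equals $\w$. Since the bottom part of $\z$ is $\0$, it suffices to show the bottom part of $\iota\w$ equals $\w$. For this I would argue inductively: letting $\u^{(m)} = N^m(\mathtt e\w)$, one checks at level $i$ that
$$
\u^{(1)}_i = N_i\left[ \begin{matrix} \0 \\ \w_{i+1} \end{matrix}\right]
= \left[ \begin{matrix} C_i \w_{i+1} \\ B_i \w_{i+1} \end{matrix}\right]
= \left[ \begin{matrix} C_i \w_{i+1} \\ \w_i \end{matrix}\right],
$$
using $B\w = \w$. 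More generally, since the lower-left block of $N$ is zero, the $\beta$-component of $N_i \u^{(m)}_{i+1}$ equals $B_i (\u^{(m)}_{i+1})_\beta$, so by induction on $m$ the $\beta$-component of $\u^{(m)}_i$ is $\w_i$ for every $m \geq 1$. Passing to the limit, the $\beta$-component of $\iota\w$ is $\w$, hence so is that of $\v$.

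Applying Lemma \ref{l:pi_iff_iota} to $\v$ with bottom part $\w$ gives $\v = \pi\v + \iota\w$, and rearranging yields $\pi(\z+\iota\w) = \z$, as required. The only non-routine point is the invariance of the $\beta$-component under iteration, which is really just the observation that upper triangularity prevents the $\alpha$-coordinates from ever feeding back into the $\beta$-coordinates, so iterating $N$ on $\mathtt e\w$ keeps the $\beta$-part anchored at $\w$ for all time.
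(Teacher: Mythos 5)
Your proof is correct and follows essentially the same route as the paper: both arguments rest on Lemma \ref{l:forall i} (to get $N(\iota\w)=\iota\w$) and on the decomposition of Lemma \ref{l:pi_iff_iota}, the only difference being that you apply that lemma to $\v=\z+\iota\w$ and subtract, while the paper applies it to $\iota\w$ alone to get $\pi(\iota\w)=\0$ and then adds $\pi\z=\z$. Your explicit verifications that $\mathring N\z=\z$ forces $N\z=\z$ and that upper triangularity keeps the $\beta$-component of $N^m(\mathtt e\w)$ equal to $\w$ are points the paper leaves implicit, and they are sound.
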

\begin{proof}
Since $\iota\w$
exists, from $(ii)$ of  Lemma \ref{l:forall i}, 
$\left[ \begin{matrix}
\u\\
 \w
\end{matrix}  \right]\equiv \iota \w
$ is a fixed point for $N$. 
From $(i), (ii)$ of  Lemma  \ref{l:pi_iff_iota}, 
$\pi\left[ \begin{matrix}
\u\\
 \w
\end{matrix}  \right]$ exists and 
$$
\left[ \begin{matrix}
\u\\
 \w
\end{matrix}  \right]= \pi \left[ \begin{matrix}
\u\\
 \w
\end{matrix}  \right]+ \iota \w= 
\pi \left[ \begin{matrix}
\u\\
 \w
\end{matrix}  \right]+ 
\left[ \begin{matrix}
\u\\
 \w
\end{matrix}  \right].
$$
Hence $\pi(\iota \w)= \0.$
Since $\mathring N(\z)= \z$, then $N(\z)= \z$ so
$N(\z+\iota\w)= \z+\iota\w$, and 
 from the definition of the map $\pi$, $\pi\z= \z$, so $\pi(\z+\iota\w)= \z.$
\end{proof}

\noindent
It follows:
\begin{cor}\label{l: pi z + iota 2}
  For $N$ as in \eqref{eq:block_form_two} and nonnegative,
if $\z\in \V_{\mathring N}^\0$, $\w\in \V_B^\0$ and $\iota\w$
exists, then
$\z+\iota\w\in \V_N^\0$ and $\pi(\z+\iota\w)= \z.$
\end{cor}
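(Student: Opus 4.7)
The plan is to apply Lemma \ref{l: pi z + iota} directly, once we verify that the sum $\z+\iota\w$ actually lies in the cone $\V_N^\0$ and not merely in the fixed-point set of $N$ as a linear map.

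First I would unpack the hypotheses in light of the general definitions. Membership $\z\in \V_{\mathring N}^\0$ yields $\mathring N(\z)=\z$ together with $\z\geq \0$ componentwise; membership $\w\in \V_B^\0$ yields $B\w=\w$ together with $\w\geq \0$; and $\iota\w$ is assumed to exist. These are exactly the three hypotheses required to invoke Lemma \ref{l: pi z + iota}, which then immediately gives both
\[ N(\z+\iota\w)=\z+\iota\w \qquad \text{and} \qquad \pi(\z+\iota\w)=\z. \]

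Second, I would verify nonnegativity of $\z+\iota\w$, which is the only ingredient beyond what Lemma \ref{l: pi z + iota} already delivers. Since $\w\geq \0$, the embedded sequence $\mathtt{e}(\w)\in V_\A$ is nonnegative in every coordinate. Because $N$ has nonnegative entries, each iterate $N^m(\mathtt{e}(\w))$ is nonnegative componentwise, and by closedness of the cone $C_\A$ under limits (Lemma \ref{l:closedcone}, part $(i)$) the limit $\iota\w=\lim_{m\to\infty} N^m(\mathtt{e}(\w))$ is again nonnegative. Adding the nonnegative sequence $\z$ keeps us inside $C_\A$. Combining this with the fixed-point identity $N(\z+\iota\w)=\z+\iota\w$ places $\z+\iota\w$ in $\V_N^\0$.

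The hard part is essentially nonexistent: the corollary is a direct specialization of Lemma \ref{l: pi z + iota} to the nonnegative setting, with the only extra verification being that the cone $C_\A$ is preserved under the limit defining $\iota\w$ and under addition. Both are immediate from the definitions, so the proof should be very short.
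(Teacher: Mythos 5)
Your proposal is correct and takes essentially the same route as the paper, which presents the corollary as an immediate consequence of Lemma \ref{l: pi z + iota}; the only extra content in your write-up is the (straightforward) verification that nonnegativity is preserved under the iterates and the limit defining $\iota\w$, which the paper leaves implicit.
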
\ \ \qed

\begin{lem}\label{l:pi_v exists}
   For $N$  as in \eqref{eq:block_form_two} and nonnegative, if $\v\in \V_N$, then $\pi \v$ exists.
\end{lem}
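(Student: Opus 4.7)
My plan is to deduce the existence of $\pi\v$ from the existence of $\iota\w$ via the balance identity already proved in Lemma \ref{l:pi_iff_iota}, and to establish the existence of $\iota\w$ by a monotone convergence argument that crucially uses nonnegativity of $N$ together with the fact that $\v$ is itself a fixed point.

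Concretely, writing $\v_i=\left[\begin{matrix}\wh\v_i\\ \w_i\end{matrix}\right]$ with $\wh\v_i\in(V_\alpha)_i$ and $\w_i\in(V_\beta)_i$, the assumption $N\v=\v$ gives $B\w=\w$, so $\w\in\V_B^\0$, and in fact $\w\in\V_B$ since $\v$ is never zero and one checks that $\w$ cannot be identically zero (otherwise the balance identity \eqref{eq:balance2} would force $\v_i=\mathring N_i^n\v_{n+1}$, which is irrelevant to the argument but worth recording). The key identity, taken straight from the proof of Lemma \ref{l:pi_iff_iota}, is
\begin{equation*}
\v_i \;=\; \mathring N_i^n\,\v_{n+1} \;+\; N_i^n\,(\mathtt{e}\w)_{n+1},
\end{equation*}
and both summands are nonnegative. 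Thus to prove $\pi\v$ exists it suffices to prove that the second summand converges as $n\to\infty$.

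The main step is to show that the sequence $\bigl(N_i^n(\mathtt{e}\w)_{n+1}\bigr)_{n\geq i}$ is nondecreasing in $n$. For this I compute, at the outer end,
\begin{equation*}
N_n\,(\mathtt{e}\w)_{n+1} \;=\; \left[\begin{matrix} C_n\w_{n+1}\\ B_n\w_{n+1}\end{matrix}\right] \;=\; \left[\begin{matrix} C_n\w_{n+1}\\ \w_n\end{matrix}\right] \;\geq\; \left[\begin{matrix} \0\\ \w_n\end{matrix}\right] \;=\; (\mathtt{e}\w)_n,
\end{equation*}
using $B_n\w_{n+1}=\w_n$ and $C_n\w_{n+1}\geq\0$. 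Since $N_i^{n-1}$ is nonnegative, it preserves the inequality, yielding $N_i^n(\mathtt{e}\w)_{n+1}\geq N_i^{n-1}(\mathtt{e}\w)_n$, which is the desired monotonicity. Combined with the upper bound $N_i^n(\mathtt{e}\w)_{n+1}\leq \v_i$ coming from the balance identity, each coordinate is a bounded monotone sequence in $\R_{\geq 0}$ and hence converges. Therefore $\iota\w$ exists, and by Lemma \ref{l:pi_iff_iota} so does $\pi\v$.

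The only potential obstacle I anticipate is verifying the monotonicity cleanly; once one notices that $B_n\w_{n+1}=\w_n$ makes the bottom block of $N_n(\mathtt{e}\w)_{n+1}$ equal to the bottom block of $(\mathtt{e}\w)_n$ (while the top block jumps from $\0$ to the nonnegative vector $C_n\w_{n+1}$), the inequality $N_n(\mathtt{e}\w)_{n+1}\geq(\mathtt{e}\w)_n$ is immediate, and the rest of the argument is just propagation by nonnegative matrix multiplication plus a uniform upper bound. No delicate limiting step is needed beyond one-dimensional monotone convergence applied coordinatewise.
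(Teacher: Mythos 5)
Your proof is correct and follows essentially the paper's own argument: the paper likewise uses the balance identity $\v_i=\mathring N_i^n\v_{n+1}+N_i^n(\mathtt{e}\w)_{n+1}$ together with the monotonicity of $N_i^n(\mathtt{e}\w)_{n+1}$ (the paper first extracts a convergent subsequence of $\mathring N_i^n\v_{n+1}$ by compactness and then upgrades via monotonicity, whereas you apply bounded monotone convergence directly, which is a mild streamlining of the same idea). One small caveat: your parenthetical claim that $\w$ cannot be identically zero is false in general (take $\v=\mathtt{e}(\u)$ for $\u\in\V_A$), but as you note it plays no role, since the argument works verbatim when $\w$ is zero or partially zero.
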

\begin{proof}
We first claim that for $i$ fixed and $n\geq i$, 
$$
 \mathring N_i^n  \v_{n+1} \leq \v_i,$$
 in the partial order (i.e.~coordinate--by--coordinate) on $\r^{l_i}$:
 writing $
\v=\left[ \begin{matrix}
\wh \v\\
\w
\end{matrix}  \right] $, since $N$ and $\v$ are  nonnegative, 
$$
 \mathring N_i^n  \v_{n+1} =\mathring N_i^n  \left[ \begin{matrix}
\wh \v\\
\w
\end{matrix}  \right]_{n+1}=
\mathring N_i^n  \left[ \begin{matrix}
\wh \v\\
\0
\end{matrix}  \right]_{n+1}\leq 
\mathring N_i^n  \left[ \begin{matrix}
\wh \v\\
\0
\end{matrix}  \right]_{n+1}+
 N_i^n  \left[ \begin{matrix}
 \0\\
\w
\end{matrix}  \right]_{n+1}
=
 N_i^n  \left[ \begin{matrix}
 \wh \v\\
\w
\end{matrix}  \right]_{n+1}= \v_i,
$$
proving the claim.

Hence the 
sequence of vectors $ \mathring N_i^n  \v_{n+1} $
lies in a compact region of the positive cone $\r^{l_i+}$, so
 there exists an increasing subsequence $n_j$ such that 
$ \mathring N_i^{n_j }\v_{n_j+1}$ converges.
Using equation \eqref{eq:balance}, 
we see that therefore 
$N_i^{n_j} (\mathtt{e} \w)_{n_j+1}$ converges.
We claim that 
we can  deduce from this the convergence of 
$N_i^{n} (\mathtt{e}\w)_{n+1}$.

\noindent
Proof:
\begin{align*}
N_i^{n}  \left[ \begin{matrix}
 \0\\
\w
\end{matrix}  \right]_{n+1} 
&=
N_i^{n-1}  N_n\left[ \begin{matrix}
 \0\\
\w
\end{matrix}  \right]_{n+1} 
=
N_i^{n-1}  
\left[ \begin{matrix}
0& C_n\\
0 & B_n
\end{matrix}  \right]
\left[ \begin{matrix}
 \0\\
\w _{n+1} 
\end{matrix}  \right]
\\
&
=
N_i^{n-1}  
\biggl (\left[ \begin{matrix}
0& 0\\
0 & B_n
\end{matrix}  \right]+ 
\left[ \begin{matrix}
0& C_n\\
0 & 0
\end{matrix}  \right]\biggr)
\left[ \begin{matrix}
 \0\\
\w _{n+1} 
\end{matrix}  \right]
\\
&
=
N_i^{n-1}  
\left[ \begin{matrix}
 \0\\
\w _{n} 
\end{matrix}  \right]+
N_i^{n-1}  
\left[ \begin{matrix}
 C_n
\w _{n+1} \\
\0
\end{matrix}  \right]
\end{align*}
so 
$N_i^{n}  \left[ \begin{matrix}
 \0\\
\w
\end{matrix}  \right]_{n+1} $
is nondecreasing in $n$, and
therefore convergence along 
the subsequence implies convergence. 
Finally since this holds for all $i$,
then again by equation \eqref{eq:balance}, $ \lim_{n\to\infty}\mathring N_i^n
\v_{n+1}$ 
exists for all $i$, converging by definition to $\pi \v$.
\end{proof}

\begin{defi}\label{d:dist1}
  Given a convex cone $C$ in a vector space $V$, we call $\v$
  an {\em extreme vector} iff it is a nonzero element of an extreme ray
  of $C$. Equivalently,  $\v\neq \0$  and  if $\v= a\u+ b\w$ for
  $\u, \w\in C$ with $\u,\w$ linearly independent then either $a$ or
  $b$ is zero.
We say
two  extreme vectors are {\em distinct} iff they are linearly independent.

Given nonvirtual nonnegative real matrix
    sequences
 $B\leq N$, then 
 $\Ext \V_{B}$  denotes the
collection of extreme vectors
 of
the closed
convex cone $\V_{B}^\0$. 
$\V_{B \text{dist}}$ denotes the points in the convex cone $\V_{B }^\0$ that are
$N/B$\,--\,distinguished, and $\ExtD
\V_{B}$  the  collection of extreme vectors of
the convex cone $\V_{B \text{dist}}^\0$.

Recall that  a matrix sequence $B$ is {\em identically zero}  if it is all--zero for all times.
\end{defi}

We note that 
  $\V_{B \text{dist}}^\0$ is indeed a  convex cone; if there are no
  distinguished points it is $\{\0\}$.
  Recall from Remark \ref{r:limitsequence} that if $B$
  is column--reduced then $\V_{B \text{dist}}^\0= \V_{B
    \text{dist}}\cup\{\0\}$,
  as it contains no partially zero
sequences.

  \begin{theo}\label{t:FrobVictFirstVersion}
    (Frobenius--Victory
    Theorem, $(2\times 2)$ block case)
    
    Assume that we are given an $(l_i\times l_{i+1})$ nonnegative real matrix
    sequence   $N=(N_i)_{i\geq 0}$ in $(2\times 2)$ upper triangular
    block form as in \eqref{eq:block_form_two}, so
$N=\left[ \begin{matrix}
A& C\\
0& B
\end{matrix}  \right] $, with 
$A$ column--reduced while $B$ is either  column--reduced
 or identically zero. Then:
 \item{(i) } Consider the map
$\pi:\V_{N}^\0\to \V_{\mathring N}^\0\cong \V_A^\0$. The inverse image of $\z\in \V_{\mathring N}^\0$ is
the collection of 
points of the form $\v=\z+\iota(\w)$ for $\w\in \V_{B
  \text{dist}}^{\0}$.

\item{(ii)}
$\V_{N}^\0= \iota (\V_A^\0)+ \iota(\V_{B \text{dist}}^\0)$,  and in
fact,
defining $\Cal I(\v,\w)= \iota\v+\iota\w$, 
$$\Cal I:
\V_A^\0\times \V_{B \text{dist}}^{\0}\to \V_N^\0
\;\;\;\;\text{is\,a\,bijection.}$$

\item{(iii)}
$$\iota:
\Ext \V_{A}\cup \ExtD\V_{B}\to 
\Ext\V_{N}\;\;\;\;\text{is\,a\,bijection.}$$
\end{theo}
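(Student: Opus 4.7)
The plan is to leverage the three technical lemmas already proved (\ref{l:pi_iff_iota}, \ref{l: pi z + iota} via its Corollary \ref{l: pi z + iota 2}, and \ref{l:pi_v exists}) to establish $(i)$ first, and then deduce $(ii)$ and $(iii)$ from it. Throughout, I will use the canonical identification $\V_{\mathring N}^\0 \cong \V_A^\0$ sending $\z = \bigl[\begin{smallmatrix} \wh z \\ \0\end{smallmatrix}\bigr]$ to $\wh z$, which is valid since $\mathring N \z = \z$ forces the bottom block to vanish and the top block to satisfy $A\wh z = \wh z$.

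For $(i)$, given $\v \in \V_N^\0$, I would write $\v = \bigl[\begin{smallmatrix} \wh v \\ \w \end{smallmatrix}\bigr]$ and note that $B\w = \w$ because $N$ is block upper triangular. Lemma \ref{l:pi_v exists} gives existence of $\pi \v$; Lemma \ref{l:pi_iff_iota} then yields existence of $\iota \w$ and the identity $\v = \pi \v + \iota \w$. Thus $\w \in \V_{B\text{dist}}^\0$ and $\pi \v = \v - \iota \w \in \V_{\mathring N}^\0$. The converse direction — that any $\z + \iota \w$ with $\z \in \V_{\mathring N}^\0$ and $\w \in \V_{B\text{dist}}^\0$ lies in $\pi^{-1}(\z)$ — is exactly Corollary \ref{l: pi z + iota 2}. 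Uniqueness of the pair $(\z,\w)$ producing $\v$ follows by projecting onto $V_\beta$: the bottom block of $\v$ is $\w$.

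For $(ii)$, the map $\Cal I(\u,\w) = \iota \u + \iota \w$ is well-defined into $\V_N^\0$ since $\iota \u$ is just $\mathtt{e}(\u) \in \V_{\mathring N}^\0 \subseteq \V_N^\0$ and $\iota \w \in \V_N^\0$ by Lemma \ref{l: pi z + iota}. Injectivity: if $\Cal I(\u,\w) = \Cal I(\u',\w')$, projecting onto $V_\beta$ gives $\w = \w'$ (using $\pi(\iota \u) = \iota \u$ lives in $V_\alpha$-part and $\pi(\iota \w) = \0$), hence $\iota \u = \iota \u'$, so $\u = \u'$ since $\mathtt{e}$ is injective. Surjectivity is precisely $(i)$.

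For $(iii)$, the key is that both $\iota\colon V_\beta \to V_\A$ and $\pi\colon V_\A \to V_\A$ are linear (where defined), and on $\V_N^\0$ they preserve nonnegativity, so extremality interacts cleanly. I would argue in two directions. First, if $\w \in \ExtD \V_B$ and $\iota \w = \v_1 + \v_2$ with $\v_j \in \V_N^\0$, apply $\pi$ to get $\0 = \pi\v_1 + \pi\v_2$, forcing $\pi \v_j = \0$ by nonnegativity; then $(ii)$ gives $\v_j = \iota \w_j$ for distinguished $\w_j$, and projecting onto $V_\beta$ yields $\w = \w_1 + \w_2$, so extremality of $\w$ in $\V_{B\text{dist}}^\0$ gives proportionality, hence $\iota \w$ is extreme in $\V_N^\0$. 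The argument for $\u \in \Ext \V_A$ is similar but easier. Conversely, if $\v \in \Ext\V_N$ is decomposed via $(ii)$ as $\iota\u + \iota\w$, then because $\iota \u$ and $\iota \w$ differ in their $V_\beta$-projections (one is zero, the other is $\w$), they cannot lie on the same ray unless one is zero; extremality of $\v$ then forces the situation, and a further decomposition argument in the surviving cone (again using linearity of $\iota$) shows the preimage is itself extreme. Finally, $\iota$ is injective on the disjoint union $\Ext\V_A \cup \ExtD\V_B$: elements of $\iota(\Ext\V_A)$ have zero $V_\beta$-part while elements of $\iota(\ExtD\V_B)$ have nonzero $V_\beta$-part (distinguished sequences are never zero by Lemma \ref{l:neverzero}).

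The main obstacle I expect is the converse half of $(iii)$: showing that any extreme $\v \in \V_N^\0$ forces one of the two summands in its canonical decomposition to vanish. This requires carefully exploiting that $\iota\u$ lies in the ``$V_\beta$-zero'' face of $\V_N^\0$, while $\iota \w$ sits off this face (since distinguished sequences are never zero on the $\beta$-coordinate). A subtle technical point is ensuring that when $B$ is identically zero, the cone $\V_{B\text{dist}}^\0$ reduces to $\{\0\}$ (so the statement collapses correctly), and when $B$ is only column-reduced but nonzero, $\V_{B\text{dist}}^\0$ contains no partially-zero sequences other than $\0$, which is precisely what allows the projection arguments to behave cleanly.
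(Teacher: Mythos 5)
Your proposal is correct and follows essentially the same route as the paper: existence of $\pi\v$ (Lemma \ref{l:pi_v exists}), the identity $\v=\pi\v+\iota\w$ (Lemma \ref{l:pi_iff_iota}) together with Corollary \ref{l: pi z + iota 2} give $(i)$ and $(ii)$, and $(iii)$ is obtained by the same projection arguments (applying $\pi$ and the $V_\beta$-projection, with nonnegativity forcing summands to vanish), including the correct treatment of the cases $B$ identically zero and $B$ column--reduced. The minor points you leave compressed (additivity of $\pi$ on fixed points, linearity of $\iota$ on distinguished sequences, injectivity within each piece in $(iii)$) are exactly the steps the paper spells out, so there is no gap.
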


\begin{proof}

\noindent
\item{(i)}:
Let  $\z\in \V_{\mathring
  N}$;  thus $\z= 
\left[ \begin{matrix}
\u\\
 \0
\end{matrix}  \right] $, with $\u\in \V_A$. This is the bijection $\mathtt{e}:\,\V_A \to  \V_{\mathring N}$. 
Now given  $\v\in \V_N^\0$, by Lemma \ref{l:pi_v exists}, $\pi \v$ exists,
thus by $(ii)$ of Lemma \ref{l:forall i}, $\mathring
  N(\pi \v)= \pi \v$, 
whence  (since $A$ is column--reduced) $\pi\v\in \V_{\mathring
  N}^\0$.

Given  $\z\in \V_{\mathring
  N}$, then
as above, $\z= 
\left[ \begin{matrix}
\u\\
 \0
\end{matrix}  \right] $ is in $\V_N$ and $\pi \z= \z$. From Corollary \ref{l: pi z + iota 2}
we know that
for any $\w\in \V_{B \text{dist}}^{\0}$, $\pi(\z+\iota(\w))=\z$ as
well. This is the general solution: if $\pi(\wt \z)= \z$, then
defining $\x=\wt \z-\z$, we have $\pi\x=\0$; writing $
\x=\left[ \begin{matrix}
\wh \x\\
\w
\end{matrix}  \right]$, then by Lemma
\ref{l:pi_iff_iota}, $\x=\iota(\w)$, whence 
$\w$ is either $\0$ or distinguished, assuming that
$B$ is column--reduced. If $B=\0$, then $\w=\0$ as is $\iota(\w)$.

\medskip

\noindent
\item{(ii)}:
If $\w\in \V_{B \text{dist}}$, then $B$ must be column--reduced (since
it cannot be zero).
 By part $(ii)$ of Lemma \ref{l:forall
   i} $\iota\w=\left[ \begin{matrix}
\u\\
 \w
\end{matrix}  \right] $  is a fixed point for $N$; it is nonzero since
$\w$ is. Thus $\iota \w \in \V_N$. If $B_n$ is zero for all $n$, then
$\V_{B \text{dist}}=\emptyset$ and for $\w\in \V_{B \text{dist}}$,
$\iota\w\in\emptyset\subseteq \V_N$, trivially. 

For $\x\in \V_A$,
$\iota\x= \mathtt{e}\x\in \V_{\mathring N}\subseteq \V_N$.
This shows that for $\x\in \V_A$ and $\w\in \V_{B \text{dist}}$,
$\iota \x +\iota \w\in \V_N$.

We claim the map $\Cal I$  is onto.
Let $\v \in \V_N^\0$. 
 By Lemma \ref{l:pi_v exists}, $\pi \v$ exists,
whence by $(ii)$ of Lemma \ref{l:forall i}, $\mathring
  N(\pi \v)= \pi \v$, 
and writing 
$
\v=\left[ \begin{matrix}
\wh \v\\
\w
\end{matrix}  \right] $
and $\pi \v =\left[ \begin{matrix}
\x\\
\0
\end{matrix}  \right] $
,
by $(i)$ of Lemma \ref{l:pi_iff_iota} $\iota \w$ exists, 
while by part $(ii)$ of Lemma \ref{l:pi_iff_iota},
$\v= \pi \v + \iota \w= \iota\x+ \iota \w$. Note that 
$A\x=\x$ and $B\w=\w$, whence, by the assumptions on $A$ and $B$, 
 $\x\in \V_A^\0$ and 
$\w\in \V_{B \text{dist}}^\0$.

Next we check that this map is injective.
If $$\iota\x + \iota \w=
\left[ \begin{matrix}
\x\\
\0
\end{matrix}  \right] +
\left[ \begin{matrix}
\u\\
\w
\end{matrix}  \right] 
=
\iota\x'+ \iota \w'=
\left[ \begin{matrix}
\x'\\
\0
\end{matrix}  \right] +
\left[ \begin{matrix}
\u'\\
\w'
\end{matrix}  \right] 
,$$
then
$\w=\w'$, so $\u=\u'$, hence $\x=\x'$.

\noindent
\item{(iii)}
Let $\x\in \Ext\V_{A}$, so in particular $\iota \x
\in \V_{N};
$
we claim that $\iota \x
\in \Ext\V_{N}.
$
Now 
$\iota \x= \left[ \begin{matrix}
\x\\
\0
\end{matrix}  \right] 
;$
if it is not  an extreme vector, there exist non-$\0$
$\v_a, \v_b$ in distinct rays of $\V_N$, and 
$p,q>0$ with $p+q=1$ such that
$\left[ \begin{matrix}
\x\\
\0
\end{matrix}  \right] 
=
p\v_a+ q\v_b.$
Then 
$\v_a= \left[ \begin{matrix}
\x_a\\
\0
\end{matrix}  \right] , \v_b= \left[ \begin{matrix}
\x_b\\
\0
\end{matrix}  \right] 
$; note that $\x_a,\x_b$ are distinct extreme vectors of
$\V_A$,
with
$
\x= p\x_a+ q\x_b$, contradicting that $\x$ is extreme.

Let $\w\in \ExtD\V_{B}$, so from $(ii)$, $\iota \w
\in \V_{N}.$ 
If $\iota \w$ is not extreme for
$\V_N$, 
there exist non-$\0$ 
$\v_a, \v_b$ in distinct rays of $\V_N$, and 
$p,q>0$ with $p+q=1$ such that
$\iota \w
=
p\v_a+ q\v_b, $ with 
$\iota \w= \left[ \begin{matrix}
\u\\
\w
\end{matrix}  \right] 
,$
$\v_a= \left[ \begin{matrix}
\s_a\\
\w_a
\end{matrix}  \right] 
,$
$\v_b= \left[ \begin{matrix}
\s_b\\
\w_b
\end{matrix}  \right] 
.$ Then  $\w=p\w_a+ q\w_b$.
Since
$\v_a, \v_b\in\V_N$, we know $\w_a, \w_b\in
\V_B$.
Furthermore from Corollary \ref{l: pi z + iota 2}, $\pi(\iota\w)=\0$ whence
$\pi(\v_a), \pi(\v_b)=\0$. Thus, as in the proof of $(ii)$ above, $\v_a= \left[ \begin{matrix}
\s_a\\
\w_a
\end{matrix}  \right] = \iota(\w_a)$ and similarly for $\w_b$, and so
$\w_a, \w_b$ are  non-$\0$, distinguished points in distinct rays.
Since
$\w$ is extreme in $\V_{B \text{dist}}^\0$, 
this gives a contradiction.
Thus $\iota\w$ is extreme.

Next we show this map is onto $\Ext\V_{N}$.
Let $\v\in \Ext\V_{N}$.
Writing $
\v=\left[ \begin{matrix}
\wh \v\\
\w
\end{matrix}  \right] $, by Lemma  \ref{l:pi_v exists} and Lemma \ref{l:pi_iff_iota},  
$\v= 
\pi \v + \iota \w=
\left[ \begin{matrix}
\x\\
\0
\end{matrix}  \right] + 
\left[ \begin{matrix}
\u\\
\w
\end{matrix}  \right]
=
\iota \x + \iota \w
$
with $\x\in \V_A^\0$, 
$\w\in \V_B^\0$. Since $\v$ is extreme, either $\iota \x$ or  $\iota
\w$ is $\0$ . 
Suppose $\w=\0$. If $\x$ is not extreme for $\V_A$, then there exist
points
$\x_a,\x_b$ in distinct rays of  $\V_A$ and $p,q>0$,
$p+q=1$ such that
$\x= p\x_a+ q\x_b$, but then 
$\v= p\cdot\iota\x_a+ q\cdot\iota\x_b$, contradicting that $\v$ is extreme.
If on the other hand $\x=\0$ and $\w$ is not 
extreme in the convex
cone $\V_{B \text{dist}} $, then there exist
  points $\w_a,\w_b$ in distinct rays of $ \V_{B \text{dist}}$, and $p,q>0$,
$p+q=1$ such that
$\w= p\w_a+ q\w_b$. But then
$\v=  p\iota \w_a+ q\iota \w_b$ so is not extreme, a contradiction.
Thus $\v$
is indeed the image of a point in $\Ext \V_{A}\cup \ExtD\V_{B}$.

Injectivity of $\Cal I$ was proved in part $(ii)$, and this proves
injectivity here, as points $(\x,\0)$ and $(\0, \w)$ in $V_A^\0\times
\V_{B \text{dist}}^{\0}$ correspond to
$\x,\w$ in $\Ext \V_{A}\cup \ExtD\V_{B}$ respectively.
\end{proof}

To summarize, the convex cone $\V_N^\0$ exhibits a product structure: any
$\v$ in  $\V_N$ is decomposed uniquely as follows:
$
\v=\left[ \begin{matrix}
\wh \v\\
\w
\end{matrix}  \right] = \left[ \begin{matrix}
\z\\
\0
\end{matrix}  \right] + \left[ \begin{matrix}
\u\\
\w
\end{matrix}  \right] $ where $\left[ \begin{matrix}
\u\\
\w
\end{matrix}  \right] = \iota(\w)$ and $\z= \pi(\v)$, giving the
bidirectional correspondence detailed in the theorem; the extremity of rays
is preserved, and the
correspondence is algorithmic, in that the maps $\pi, \iota$ give
fixed points which are  limits of iterations.

\subsection{Distinguished eigenvector sequences for the upper triangular
  case}\label{ss:generalcase}
Next we extend from the $(2\times 2)$  to the general upper triangular
block case.
Let $N=(N_i) _{i\geq 0}$ be a
column--
reduced sequence   of  $(l_i\times
l_{i+1})$ nonnegative real
matrices with bounded alphabet size.
We write $V$ for the collection of vector sequences $\v= (\v_i)_{i\geq
  0}$ with $\v_i\in V_i=\r^{l_i}$; thus $V=\Pi_{i\geq 0} V_i$. Without loss of generality, by
taking a gathering, we
can achieve that $N$ is 
in fixed--size  Frobenius
normal form for times $\geq 1$, as in the conclusion of Theorem
\ref{t:FrobDecomp}.
Furthermore, by Remarks \ref{r:normalform}, and
\ref{r:fixedsize}, without loss of generality
we can assume this holds for times $\geq 0$.
Thus from now on, 
we assume that, for all times
$i\geq 0$,
the matrices $N_i$ are
$(l\times l)$ with $(\wh l\times \wh l)$ block
form of \eqref{eq:block_form},
with block sizes given by $l= \wt l_1+\dots + \wt l_{\wh l}$ :
\begin{equation*}
 N=
\left[ \begin{matrix}
A_1 &C_{12} &\dots  &C_{1\wh l}\\
0_{21} &A_2 & &\vdots\\
\vdots & &\ddots  &\\
0_{\wh l 1} &\dots  &  &A_{\wh l}
\end{matrix}  \right]
\end{equation*}
Furthermore, for each block index $1\leq j\leq
\wh l$, for  each time $i\geq 0$, the matrix sequence $A_j=
(A_j)_i$ is either
column--
reduced and primitive or identically zero. Moreover after a further
gathering we can assume,  for each time $i\geq 0$, that  $N_i$ has zero
blocks $C_*= \0_*$ in exactly the same above-diagonal locations.
Then since $A_j\leq N$ for each $j$,  it makes sense to examine the $N/A_j$\,--\,distinguished 
eigenvector sequences. We do this with the help of the $(2\times 2)$\,--\,
block inductive step just treated.

Defining for each time 
$i\geq 0$
 and for $1< j\leq \wh l$, 
$(V_j)_i= \Pi_{\wt l_1+\dots + \wt l_{j-1}+1}^{\wt l_1+\dots + \wt
  l_j} \r$, we then suppress this time index and use the same notation
for vector sequences, 
so $V= V_1\times \cdots \times V_{\wh l}$. We write the decomposition
of 
$\v\in V$ as $\v= (\v_1,\dots, \v_{\wh l})$ where $\v_j\in V_j$.
We have the natural embeddings of $V_j $ into $V$; we  unite these
into a single map,
$\mathtt{e}: V_1\cup\dots \cup V_{\wh l}\to V$; thus  
$\mathtt{e} (\w_j)= (\0_1,\dots, \0_{j-1},\w_j, \0_{j+1},\dots, \0_{\wh l})$, with $\0_j$  the zero
vector (sequence) in $V_j$.

It will be useful  to factor this map: we write for the natural
embeddings 
$$ V_j\xhookrightarrow{\varphi_j}  V_1\times \dots\times V_j
\xhookrightarrow{\psi_j}  V_1\times \dots\times V_{\wh l}= V,$$ so 
the restriction of 
$\mathtt{e}$ to  ${V_j}$ is
$\psi_j\circ \varphi_j$ for each $j$.

We adopt the notation
$V_{1,k}\equiv V_1\times \dots\times V_k$, 
 $\wh V_{1,k}\equiv \psi_k( V_{1,k})=V_1\times \dots\times V_k\times
 \{\0\}\times\dots \times\{\0\}\subseteq V$,
 so the above diagram becomes
 
$$
\xymatrix{
 V_k  \ar@{^{(}->}[r]^{\varphi_k} \ar@/_2pc/[rr]_{\mathtt{e}}&V_{1,k} \ar[r]^<<<<{\psi_k}&\wh V_{1,k}
 \subseteq V.
 }
$$

For $2\leq k\leq \wh l$, we define $N_k$ and $\mathring N_k$,  both 
$(\wt l_1+\dots+ \wt l_k)\times (\wt l_1+\dots+ \wt l_k)$, by
$$
N_k=
\left[ \begin{matrix}
A_1 &C_{12} &\dots  &C_{1 k}\\
0_{21} &A_2 & &\vdots\\
\vdots & &\ddots  &\\
0_{k1} &\dots  &  &A_{k}
\end{matrix}  \right], 
\mathring N_k=
\left[ \begin{matrix}
A_1 &C_{12} &\dots  &C_*&0_{1 k}\\
0_{21} &A_2 & & &\\
\vdots & &\ddots  & &\vdots\\
&  &  &A_{k-1} & 0_*\\
0_{k1} &\dots  &  & 0_*&0_{k k}\\
\end{matrix}  \right].
$$
We extend these to 
$(l\times l) $ matrices $\wh N_k,  {\mathring {\wh N_k}}$ by
filling   in with zero blocks; note that $ {\mathring {\wh N_k}} =\wh N_{k-1}$:
$$
\wh N_k =
\left[ \begin{matrix}
A_1 &C_{12} &\dots  &C_{1 k}&0_*&\dots & 0_*\\
0_{21} &A_2 & &C_{2k} &0_*& & \vdots\\
\vdots & &\ddots  & & & & \\
0_*& 0_* & \dots &A_{k}&0_*& & \\
0_* & 0_*& \dots & 0_*&0_*&\dots & \\
\vdots &  &  & & &\ddots & \vdots\\
0_{\wh l 1} &\dots  &  & & &\dots & 0_{\wh l\wh l}
\end{matrix}  \right], 
 {\mathring {\wh N_k}} =
\left[ \begin{matrix}
A_1 &C_{12} &\dots  &C_*&0_*&\dots & 0_*\\
0_{21} &A_2 & &C_* &0_*& & \vdots\\
\vdots & &\ddots  & & & & \\
0_*& 0_* & \dots &A_{k-1}&0_*& & \\
0_* & 0_*& \dots & 0_*&0_*&\dots & \\
\vdots &  &  & & &\ddots & \vdots\\
0_{\wh l 1} &\dots  &  & & &\dots & 0_{\wh l\wh l}
\end{matrix}  \right]
.$$

Since $A_k\leq N$, then
from Definition \ref{d:dist}, $\V_{A_k dist}\equiv \V_{N/A_k dist}$ denotes 
those $\w\in\V_{A_k}$ such that
$\lim_{n\to\infty} N^n \mathtt{e}\w$ exists. Note that $\w$ is $N/A_k$--distinguished iff it is $N_k/A_k$--distinguished.

\noindent
We define maps
$\iota :  \V_{A_1 dist}\cup\dots \cup\V_{{A_{\wh l} dist}}\to \V_N$
and
$\iota_k :  \V_{A_k dist}\to \V_{N_k}$ by 
$$\iota \w=\iota(\w)=\lim_{n\to\infty} N^n \mathtt{e}\w \;\;\;\text{and}$$ 

$$\iota_k\w=\lim_{n\to\infty} (N_k)^n \varphi_k\w.$$

\noindent
For  $\v\in \V_{N_k}$, we define  if the limit exists
$$\pi_k\v\equiv \lim_{n\to +\infty} ( {\mathring N_k} )^n \v$$

\noindent
Similarly, for $k\geq 2$, given $\v\in \V_{\wh N_k}$, we define, if the limit exists, 
$$\wh\pi_k\v\equiv \lim_{n\to +\infty} ( {\mathring {\wh N_k}} )^n
\v\in \V_ {\mathring {\wh N_k}}^\0$$

The main result of this section is:

\begin{theo}(a nonstationary Frobenius--Victory theorem for matrices)
  \label{t:FrobVictGeneral}
Assume that we are given a nonnegative column--reduced real matrix sequence $N=(N_i)_{i\geq 0}$ in fixed--size Frobenius
form of \eqref{eq:block_form} and with notations as above. Then:
\item{(i) } For each $2\leq k\leq \wh l$,
$\wh \pi_k$ defines a surjective map from $
\V_{\wh N_k}^\0$ to 
$\V_{\wh N_{k-1}}^\0$;
\item{(ii)}
$\V_{N}^\0= \iota(\V_{A_1 dist}^\0)+\dots+
\iota(\V_{{A_{\wh l}} dist}^\0)
;$ and in fact, defining $$\Cal I(\w_1,\dots, \w_{\wh l})= \iota\w_1+\dots+\iota
\w_{\wh l},$$ then 
$$\Cal I: \Pi_{k=1}^{\wh l}\V_{A_k dist}^\0
  \to \V_N^\0 \;\;\;\;\text{is\,a\,bijection.}$$
\item{(iii)}The map
$$\iota:
\cup_{k=1}^{\wh l} \ExtD  \V_{A_k}\to \Ext\V_N \;\;\;\;\text{is\,a\,bijection.}$$
\end{theo}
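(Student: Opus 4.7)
\textbf{Proof plan for Theorem \ref{t:FrobVictGeneral}.} My strategy is induction on the number of diagonal blocks, using the $(2\times 2)$ block case (Theorem \ref{t:FrobVictFirstVersion}) as the inductive step. The base case $k=1$ is trivial since $N_1 = A_1$, and the identifications $\V_{N_1}^\0 = \V_{A_1}^\0 = \iota(\V_{A_1\,dist}^\0)$ hold tautologically (every fixed point for a primitive sequence is $N_1/A_1$-distinguished). Before starting the induction, I would establish a compatibility lemma: for any $1 \le j < k \le \wh l$ and $\w \in \V_{A_j}^\0$, iteration $N^n \mathtt{e}(\w)$ has zero components in block indices strictly greater than $j$ for all $n$, by upper triangularity of $N$; therefore the limit defining distinguishedness is the same whether taken with respect to $N$, $N_k$ or any intermediate $N_{k'}$ with $k' \ge j$. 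This makes the notation $\V_{A_j\,dist}^\0$ unambiguous, and ensures that the map $\iota$ computed at level $k$ agrees (after embedding into $V$) with the one computed at level $\wh l$.

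For the inductive step from $k-1$ to $k$, I would view $N_k$ as a $(2\times 2)$ block matrix
\[
N_k = \left[\begin{matrix} N_{k-1} & C'_k \\ 0 & A_k \end{matrix}\right]
\]
where $C'_k$ is the column of connecting blocks $C_{1k}, \dots, C_{k-1,k}$. The hypotheses of Theorem \ref{t:FrobVictFirstVersion} are satisfied: $N_{k-1}$ is column-reduced by the induction hypothesis (it inherits this property from $N$), while $A_k$ is either column-reduced primitive or identically zero by the Frobenius normal form assumption. Applying the $(2\times 2)$ theorem yields a bijection
$\Cal I_k : \V_{N_{k-1}}^\0 \times \V_{N_k/A_k\,dist}^\0 \to \V_{N_k}^\0$
and a bijection $\iota_k : \Ext\V_{N_{k-1}} \cup \ExtD\V_{A_k} \to \Ext\V_{N_k}$. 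Combining with the induction hypothesis for $\V_{N_{k-1}}^\0 \cong \prod_{j<k}\V_{A_j\,dist}^\0$ and $\Ext\V_{N_{k-1}} \cong \bigsqcup_{j<k}\ExtD\V_{A_j}$, and using the compatibility lemma to identify the distinguished cones, gives the product structure at level $k$. Setting $k = \wh l$ yields parts (ii) and (iii). For part (i), $\wh\pi_k$ is precisely the projection $\pi$ from the $(2\times 2)$ decomposition of $\V_{\wh N_k}^\0 \cong \V_{\wh N_{k-1}}^\0 \times \V_{N_k/A_k\,dist}^\0$, and its surjectivity follows directly, since any $\z \in \V_{\wh N_{k-1}}^\0$, viewed as a vector in $V$ with zero blocks beyond index $k-1$, is already a fixed point of $\wh N_k$ and satisfies $\wh\pi_k(\z) = \z$.

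The main technical obstacle will be verifying that the inductively constructed maps $\iota_k$ and $\iota_{k-1}$ compose correctly with the embeddings $\psi_k \circ \varphi_k = \mathtt{e}$, so that the final map $\iota$ of the theorem is well-defined and agrees with the iterated-limit definition $\iota(\w) = \lim_n N^n \mathtt{e}(\w)$. Concretely, given $\w \in \V_{A_j\,dist}^\0$ with $j < k-1$, the inductive construction produces $\iota_{k-1}(\w) \in \V_{N_{k-1}}^\0$, which must then be embedded into $\V_{N_k}^\0$ via $\Cal I_k(\cdot, \0)$. I would verify by passage to the limit (using continuity of matrix multiplication and the compatibility lemma above, much as in the proof of Lemma \ref{l:pi_iff_iota}) that this two-step procedure agrees with the direct limit $\lim_n N_k^n \varphi_k \mathtt{e}(\w)$; the key identity is that $N_k^n$ applied to a vector supported in block indices $\le k-1$ produces the same answer as $N_{k-1}^n$ applied to its projection. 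Uniqueness of the decomposition and extremality of the $\iota$-images propagate through each inductive step essentially by the same product-structure argument used in the $(2\times 2)$ case: a convex decomposition in $\V_{N_k}^\0$ restricts to convex decompositions in each factor $\V_{A_j\,dist}^\0$, so extremality in a single factor implies extremality of its image.
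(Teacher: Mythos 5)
Your proposal is correct and follows essentially the same route as the paper: induction on the number of diagonal blocks with the $(2\times 2)$ block case (Theorem \ref{t:FrobVictFirstVersion}) as the inductive step, applied to the decomposition $N_k=\left[\begin{smallmatrix} N_{k-1}& C\\ 0 & A_k\end{smallmatrix}\right]$, with column-reducedness of $N_{k-1}$ inherited from $N$ and the embeddings tracked via the identification of $N/A_j$- with $N_j/A_j$-distinguished sequences (your ``compatibility lemma'' is exactly the paper's remark and commutative diagrams with $\psi_k,\varphi_k$). The only cosmetic difference is in part (i), where you argue surjectivity of $\wh\pi_k$ directly from the observation that fixed points of $\wh N_{k-1}$ are $\wh\pi_k$-fixed points of $\wh N_k$, while the paper cites (i) of the $(2\times 2)$ theorem and conjugates by $\psi_k$; both are fine.
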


We note that the initial streams  by
construction  come before the pool collections in the  linear  order
$\preccurlyeq $, and that there is at least one, $\alpha_1$; they can be listed as
$\alpha_1,\dots,\alpha_s$, 
and
for the corresponding matrix sequences $A_k$ 
we have 
$\V_{A_k}= \V_{A_k dist}$.
So 
in part $(ii)$, the statement 
 can also be written as:
$$\Cal I: \V_{A_1 }^\0\times\dots\times \V_{A_s }^\0\times
\V_{A_{(s+1)} dist}^\0\times\dots\times \V_{A_{\wh l} dist}^\0\to
\V_N^\0\;\;\;\;\text\em {is\,a\,bijection.}$$

\begin{proof}

For the rest of this  subsection, we use the notation
$\V_k\equiv \V_{A_k}$,
$\V_{1,k}\equiv \V_{N_k}$,
$\wh\V_{1,k}\equiv \V_{\wh N_k}$, $ {\mathring\V}_{{1,k}}\equiv
\V_{\mathring N_k}$ and $ {\mathring{\wh \V}}_{{1,k}}\equiv
\V_ {\mathring {\wh N_k}} =\wh \V_{\wh N_{k-1}}=\wh \V_{1,k-1}$.

We have the conjugacies
indicated in the first two commutative diagrams;  taking limits giving the
third diagram:

$$
\xymatrix
{
(\wh V_{1,k})_{i+1}\ar[r]^{({\mathring {\wh N}}_k)_i} &(\wh V_{1,k})_{i}\\
(V_{1,k})_{i+1}\ar[u]_{(\psi_k)_i}  \ar[r]^{{(\mathring N_k)_i}} &( V_{1,k})_{i}\ar[u]_{(\psi_k)_{i+1}}\\
}
\hspace{1in}
\xymatrix
{
\wh V_{1,k} \ar[r]^{{\mathring {\wh N}}_k} &\wh V_{1,k}\\
V_{1,k}\ar[u]^{\psi_k} \ar[r]^{{\mathring N_k}}  &V_{1,k}\ar[u]_{\psi_k}\\
}
\hspace{1in}
\xymatrix
{
\wh\V_{1,k}^\0\ar[r]^{\wh\pi_k} & \wh\V_{1,k-1}^\0\\
\V_{1,k}^\0\ar[r]^{\pi_k}\ar[u]^{\psi_k}& {\mathring\V}_{{1,k}}^\0 \ar[u]_{\psi_k}\\
}
$$

To prove $(i)$, 
we write $N_k$ in block form as 
$
N_k= 
\left[ \begin{matrix}
N_{k-1}&  C\\
 0 & A_k
\end{matrix}  \right]$
which equals
$
\left[ \begin{matrix}
A& C\\
0 & B
\end{matrix}  \right]
$ in the notation of Theorem \ref{t:FrobVictFirstVersion}.
By assumption $N$ is column--reduced, whence so is $A=N_{k-1}$.
Also,
$B=A_k$ is either a primitive
or identically zero (Def. \ref{d:dist1}) matrix sequence, fitting the
hypothesis of Theorem \ref{t:FrobVictFirstVersion}.  Applying  $(i)$
of that theorem, we have that the map
$\pi_{k}: \V_{1,k}^\0 \to
\mathring\V_{1,k}^\0= \V_{1,k-1}^\0$
is surjective, and hence so is 
$\wh\pi_{k}$, by conjugation with the bijection
$\psi_k$ as in the diagram.

To prove $(ii)$, we note  first that from the definitions, for each $k$ with $1\leq
k\leq \wh l$ we have these commutative diagrams:
$$\xymatrix
{
&\wh V_{1,k}\ar[r]^{\wh{N}_k} & \wh V_{1,k}\\
V_k\ar[ru]^{{\mathtt{e}}}\ar[r]_{{\varphi_k}} &V_{1,k}\ar[r]^{{N_k}}
\ar[u]_
{\psi_k}&  V_{1,k}\ar[u]_{\psi_k}\\
}
\hspace{1in}
\xymatrix
{
&\wh\V_{1,k}\subseteq \V_N\\
\V_{A_k dist}\ar[ru]^{{\iota}}\ar[r]_{{\iota_k}} & \V_{1, k} \ar[u]_{\psi_k}\\
}
$$
The proof will be by induction on the number $\wh l$ of blocks on
the diagonal. 
Supposing the statement holds for $k-1$ blocks, we show it for $k$. Writing
$\Cal I_j$ for the map at level $j$, the induction hypothesis provides a bijection
$$\Cal I_{k-1}: \V_{A_1 dist}^\0\times\dots\times \V_{A_{k-1} dist}^\0\to
\V_{1,{k-1}}^\0$$
Using the 
block decomposition written above,
we have, from $(ii)$ of Theorem \ref{t:FrobVictFirstVersion}, a
bijection 
$\Cal I:\V_{1,k-1}^\0\times \V_{A_k dist}^\0\to
\V_{1,k}^\0$. Writing $\text{id}$ for the identity map on
$\V_{A_k dist}$,
we then have the composition of bijections
$$\Cal I_{k}= \Cal I\circ (\Cal I_{k-1}, \text{id}): 
(\V_{A_1 dist}^\0\times\dots\times \V_{A_{k-1} dist}^\0)\times \V_{A_k
  dist}^\0\to \V_{1,k}=
\V_{N_k}^\0.$$

This proves $(ii)$. The proof of $(iii)$ follows by induction in just the same way.
\end{proof}

To summarize,  the 
nonnegative eigenvector sequences with eigenvalue one  for $N$ have been classified in
terms of the much simpler primitive case  (for which, see \cite{Fisher09a}), as the extreme points for $N$
correspond bijectively to the extreme points for the 
distinguished eigenvector sequences of the primitive blocks
$A_k$. Specifically, such an eigenvector sequence
$\v=(\v_i)_{i\geq 0}$ has a
unique decomposition: there are unique $\w_k\in \V_{A_k
  dist}^\0=\V_{A_k dist}\cup{\0}$ (see Remark  \ref{r:limitsequence}) at
least one nonzero, such that 
$$\v= \iota(\w_1)+\iota(\w_2)+\dots +\iota(\w_{\wh l})=
\Cal I(\w_1, \dots, \w_{\wh l})=
 \left[ \begin{matrix}
\v_1 \\
\v_2\\
\v_3\\
\vdots\\
\v_{\wh l}
\end{matrix}  \right]= 
\left[ \begin{matrix}
\w_1 \\
\0_2\\
\0_3\\
\vdots\\
\0_{\wh l}
\end{matrix}  \right]+ 
\left[ \begin{matrix}
\u_{1,2} \\
\w_2\\
\0_3\\
\vdots\\
\0_{\wh l}
\end{matrix}  \right]+\, \cdots\, +
\left[ \begin{matrix}
\u_{1,\wh l}\\
\u_{2,\wh l}\\
\vdots\\
\u_{\wh l-1, \wh l} \\
\w_{\wh l}
\end{matrix}  \right]
$$

\noindent
 The  maps $\wh\pi_{k}$ project  along the filtration of cones
$
\V_N^\0=\wh\V_{1,\wh l}^\0\supseteq \cdots\supseteq 
\wh\V_{1,2}^\0\supseteq
\wh\V_{1,1}^\0
$, as follows:
\[
\xymatrix
{
&\wh\V_{1,\wh l}^\0 
\ar[r]^{\wh\pi_{\wh l}} & \wh\V_{1,\wh l-1}^\0 
\ar[r]^{\wh\pi_{{\wh l}-1}}&\cdots \ar[r]^{\wh\pi_{{3}}}&
\wh\V_{1,2}^\0 \ar[r]^{\wh\pi_2} &
\wh\V_{1,1}^\0 
}
\]
\noindent
with
 \begin{equation}
  \label{eq:ergodicdecomp}
 \wh\pi_{{\wh l}}\v= \iota(\w_1)+\iota(\w_2)+\dots \iota(\w_{\wh l-1}).
\end{equation}
So iterations determine successively $\w_{\wh l}, \w_{\wh l-1},\dots, \w_1$, 
with $\w_{\wh l}$ the last coordinate of $\v-\wh\pi_{\wh l}\v$, 
$\w_{\wh l-1}$ the last coordinate of $(\v-\wh\pi_{\wh
  l-1}\v)-\wh\pi_{\wh l}\v$,  
and so on. 
Some of the $\w_k$ may be  (identically) zero: this is
always the case for a
zero block at index $k$ on the diagonal, and 
furthermore if $\v$ is an extreme point then all but one of the $\w_k$
is zero.

We remark  that  \eqref{eq:ergodicdecomp} corresponds to
the ergodic decomposition of a central measure.

\subsection{Distinguished eigenvector sequences, general case  (subdiagrams)}

Now we are ready to consider the general case, with nested sequences
of nonnegative integer matrices $M\leq \wh M$ and corresponding 
nested Bratteli diagrams $\mathfrak B_{\A, \E}\leq \mathfrak B_{\wh \A,\wh\E}
.$  
 
As noted in $(ii)$ of Remark \ref{r:gen_matrices},
without loss of generality we can take $\A= \wh \A$.

We study this via the  the {\em canonical cover matrix}, see 
  Theorem \ref{t:towerring}
setting
$\wt M=
\left[ \begin{matrix}
\wh M& C \\
0 & M
\end{matrix}  \right] 
,$
with the sequence $C$ satisfying 
$C_i=\wh M_i- M_i$ for each $i\geq 0$.
Thus we shall need to compare two notions of distinguished: with respect to $\wh M$ and
$\wt M$; see Corollary \ref{c:TwoDisting}. 

For this, we begin with two lemmas. To show that the linear algebra in
these proofs remains valid in more
generality, the statements here are  for real entries. Below we
specialize to our main case of Bratteli diagrams, i.e.~for integer
entries, where the ``canonical cover'' is really a cover.

Suppose we are given an $(l_i\times l_{i+1})$ real nonnegative matrix
sequence $N=(N_i)_{i\geq 0}$ with upper triangular  block
decomposition 
as in \eqref{eq:block_form_two}. Thus, 
\begin{equation}
  \label{eq:twoblock}
N=\left[ \begin{matrix}
A& C\\
0 & B
\end{matrix}  \right]  
\end{equation} with subalphabet sequences denoted $\alpha, \beta$.
Recalling the partial product notation $N_i^n= N_i\cdots N_n$ of Definition
\ref{d:partialproduct}, we have:

\begin{lem}\label{l:conditions}
 For $N$ real nonnegative and upper triangular as in \eqref{eq:twoblock},  defining, for $i\geq 0$ and $n\geq i$,
 blocks 
$ \wh C_i^n$ so as to satisfy the equation
\begin{equation}
  \label{eq:C-hat}
 \left[ \begin{matrix}
A& C \\
0 & B
\end{matrix}  \right] _i^n=
\left[ \begin{matrix}
A_i^n & \wh C_i^n\\
0 & B_i^n
\end{matrix}  \right] , 
\end{equation}
\item{(i)}
then equivalently, 
\begin{equation}
  \label{eq:C-hat2}
 \wh C_i^n= \sum_{k=i}^n A_i^{k-1} C_k B_{k+1}^n \text{ where }
 A_j^{j-1}= I_{(\alpha_j\times \alpha_j)}, B_j^{j-1}= I_{(\beta_j\times \beta_j)}
\end{equation}
\item{(ii)}
For a vector sequence $\w\in V_\beta$, 
$\lim_{n\to\infty} N_i^n \left[ \begin{matrix}
\0\\
\w_{n+1}
\end{matrix}  \right]  $ exists iff both
$\lim_{n\to\infty} \wh C_i^n \w_{n+1}$ and  $\lim_{n\to\infty}  B_i^n \w_{n+1}$ 
exist.
\item{(iii)}
 In particular, 
$\w$ is $N/B$-distinguished 
iff 
$\w\in \V_B$ and for all $i\geq 0$,
$\lim_{n\to\infty} \wh C_i^n \w_{n+1}$
exists.
\end{lem}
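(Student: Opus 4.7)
The plan is to handle parts (i), (ii), (iii) in sequence, with (i) being pure block matrix algebra, (ii) a direct consequence of (i), and (iii) combining (ii) with the definition of distinguished sequence plus Lemma \ref{l:forall i}.

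For (i), I would proceed by induction on $n \geq i$. The base case $n = i$ reduces to $\wh C_i^i = C_i$, which matches the asserted sum since only the $k=i$ term survives and equals $A_i^{i-1} C_i B_{i+1}^i = I \cdot C_i \cdot I$ by the stated conventions. For the inductive step, I would write
$$N_i^n = N_i^{n-1} \cdot N_n = \left[\begin{matrix} A_i^{n-1} & \wh C_i^{n-1} \\ 0 & B_i^{n-1} \end{matrix}\right]\left[\begin{matrix} A_n & C_n \\ 0 & B_n \end{matrix}\right];$$
carrying out this block product yields the recurrence $\wh C_i^n = A_i^{n-1} C_n + \wh C_i^{n-1} B_n$. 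Substituting the inductive hypothesis for $\wh C_i^{n-1}$ and observing that $A_i^{n-1} C_n = A_i^{n-1} C_n B_{n+1}^n$ is precisely the $k = n$ summand then gives the stated formula.

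For (ii), using the block form from (i), a direct computation gives
$$N_i^n \left[\begin{matrix} \0 \\ \w_{n+1} \end{matrix}\right] = \left[\begin{matrix} \wh C_i^n \w_{n+1} \\ B_i^n \w_{n+1} \end{matrix}\right].$$
Since $V_i$ decomposes as the direct sum of the finite-dimensional subspaces $(V_\alpha)_i$ and $(V_\beta)_i$, convergence of the left-hand side is equivalent to convergence in each summand on the right, which is exactly the claim.

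For (iii), by Definition \ref{d:dist} the sequence $\w$ is $N/B$-distinguished iff $\w \in \V_B$ and $\lim_{m \to \infty} N^m(\mathtt{e}(\w))$ exists. Unpacking the product-map iteration, the $i$-th component of this limit is $\lim_{n \to \infty} N_i^n\left[\begin{matrix} \0 \\ \w_{n+1} \end{matrix}\right]$, and by Lemma \ref{l:forall i} it suffices that this exist for arbitrarily large $i$. By part (ii) this convergence at time $i$ is equivalent to convergence of \emph{both} $\wh C_i^n \w_{n+1}$ and $B_i^n \w_{n+1}$. However, $\w \in \V_B$ means $B_j \w_{j+1} = \w_j$ for all $j$, so $B_i^n \w_{n+1} = \w_i$ identically in $n$ and that limit is automatic. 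The distinguished condition therefore collapses to existence of $\lim_{n\to\infty} \wh C_i^n \w_{n+1}$ for all $i \geq 0$, proving (iii). No substantive obstacle arises: the content of the lemma is the identification of $\wh C_i^n$ as the natural object that packages the cross-talk between the $B$- and $A$-blocks, so that distinguishedness reduces to a single off-diagonal limit condition—the form in which it will be applied in later sections to the canonical cover.
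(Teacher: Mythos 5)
Your proposal is correct and follows essentially the same route as the paper: the block multiplication $N_i^n = N_i^{n-1}N_n$ yields the recurrence $\wh C_i^n = A_i^{n-1}C_n + \wh C_i^{n-1}B_n$, induction gives the sum formula, the block form gives the two-component expression for $N_i^n\bigl[\begin{smallmatrix}\0\\ \w_{n+1}\end{smallmatrix}\bigr]$, and the fixed-point identity $B_i^n\w_{n+1}=\w_i$ reduces distinguishedness to convergence of $\wh C_i^n\w_{n+1}$. No gaps.
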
 
\begin{proof}
The definition of $ \wh C_i^n$ in \eqref{eq:C-hat2} is equivalent to:
$\wh C_i^i= C_i$, and, for $n\geq i$, 
\begin{equation}
  \label{eq:C-hat3}
 \wh C_i^{n+1}= A_i^n C_{n+1} + \wh C_i^n B_{n+1}
\end{equation}
We prove \eqref{eq:C-hat2}  from this: 
we verify that $\wh C_i^i=
C_i$ in \eqref{eq:C-hat2} as well, then assume as an induction
hypothesis that \eqref{eq:C-hat2} holds  for  $\wh C_i^n$, from which,
by way of  \eqref{eq:C-hat3}, this follows
for  $(n+1)$.

Now from \eqref{eq:C-hat},
$$
N_i^n 
\left[ \begin{matrix}
0\\
\w_{n+1}
\end{matrix}  \right] =
\left[ \begin{matrix}
 \wh C_i^n 
\w_{n+1}\\
 B_i^n 
\w_{n+1}
\end{matrix}  \right] 
,$$
proving the first claim.

From Definition \ref{d:eigenvectorsequence},
$\w\in\V_B$ means $\w$ is a nonnegative and never zero fixed point for
$B$. So  $ B_i^n 
\w_{n+1}=\w_i$, and thus
$\lim_{n\to\infty} N_i^n \left[ \begin{matrix}
\0\\
\w_{n+1}
\end{matrix}  \right]  $ exists iff 
$\lim_{n\to\infty} \wh C_i^n \w_{n+1}$ exists.
Therefore $\w$ is  $N/B$-distinguished iff $\w\in\V_B$ and this converges  for all $i\geq 0$.
\end{proof}

In an important   special case this condition
simplifies considerably: 
\begin{lem}\label{l:A-B}
For $N$ as in the previous lemma, let now $B_i\leq A_i$ be the same size, $(\wt l_i\times \wt l_{i+1})$ for $i\geq 0$.
Let $C_i= A_i-B_i$ for all $i$. Let $\wh C_i^n$ be defined by 
\eqref{eq:C-hat2}
above.
Then:
\item{(i)}
$\wh C_i^n= A_i^n-B_i^n.$
  \item{(ii)}
A vector sequence $\w$ is $N/B$-distinguished 
iff  $\w$ is $A/B$-distinguished.

For the next three statements we assume that  $D\leq B$, with both of the same size. 
 \item{(iii)} 
A vector sequence $\w$ is $N/D$-distinguished 
iff 
$\w$ is both $B/D$ and
$A/D$-distinguished.
\item{(iv)} If 
$\w$ is both $N/D$ and
$B/D$-distinguished, it  is $A/D$-distinguished.
\item{(v)} If 
$\w$ is 
$B/D$-distinguished, then it is $A/D$-distinguished  iff
it is $N/D$-distinguished.
\end{lem}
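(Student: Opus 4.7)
The plan is to deduce (v) as a direct corollary of (iii), which is stated just above in the same lemma. Under the standing hypothesis that $\w$ is $B/D$-distinguished, the three-way equivalence in (iii)---``$N/D$-distinguished $\Leftrightarrow$ both $B/D$- and $A/D$-distinguished''---collapses to the single biconditional asserted in (v). First I would dispatch the forward direction: assuming $\w$ is both $B/D$- and $A/D$-distinguished, the ``if'' half of (iii) yields that $\w$ is $N/D$-distinguished. For the reverse direction, assuming $\w$ is $B/D$- and $N/D$-distinguished, the ``only if'' half of (iii)---equivalently, (iv)---yields that $\w$ is $A/D$-distinguished.

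For concreteness, and so the argument does not merely shuffle (iii) around, I would alternately write out a direct proof from (i) together with Lemma~\ref{l:conditions}. Being $N/D$-distinguished means $\w\in\V_D$ and $\lim_{n\to\infty} N_i^n\,\mathtt{e}_N(\w_{n+1})$ exists for every $i\geq 0$, where $\mathtt{e}_N(\w_{n+1})=\left[\begin{smallmatrix}\0\\ \w_{n+1}\end{smallmatrix}\right]$. By Lemma~\ref{l:conditions}(ii) this is equivalent to simultaneous existence of $\lim_{n\to\infty} B_i^n\w_{n+1}$ and $\lim_{n\to\infty} \wh C_i^n\w_{n+1}$. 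By part~(i) we have $\wh C_i^n=A_i^n-B_i^n$. The hypothesis that $\w$ is $B/D$-distinguished already supplies existence of $\lim_{n\to\infty} B_i^n\w_{n+1}$ for each $i$, so existence of $\lim_{n\to\infty} \wh C_i^n\w_{n+1}$ is equivalent to existence of $\lim_{n\to\infty} A_i^n\w_{n+1}$, which is exactly the condition that $\w$ be $A/D$-distinguished.

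I do not anticipate any real obstacle. All three notions of distinguished share the common domain condition $\w\in\V_D$, and the three embeddings $\mathtt{e}_N$, $\mathtt{e}_A$, $\mathtt{e}_B$ match up under the identifications forced by the standing assumption that $D$, $B$, $A$ are all of the same size. Once the substitution $\wh C_i^n = A_i^n - B_i^n$ from (i) is made, the bookkeeping between the three limit conditions is routine linear algebra; the only minor point to watch is that one cannot drop the $B/D$-distinguished hypothesis, since in general existence of $\lim(A_i^n-B_i^n)\w_{n+1}$ need not imply existence of the individual limits $\lim A_i^n\w_{n+1}$ and $\lim B_i^n\w_{n+1}$---precisely the role played by the hypothesis of (v).
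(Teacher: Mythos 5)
There is a gap of coverage: the statement you were asked to prove is the whole of Lemma~\ref{l:A-B}, parts (i)--(v), but your proposal only establishes part (v). Your first route, deducing (v) from (iii) and (iv), is circular in this context, since (iii) and (iv) are themselves claims of the lemma and are nowhere proved in your write-up; and your second, direct route for (v) explicitly invokes part (i), which you also do not prove. Part (i) is the one genuinely computational ingredient: it is obtained by induction from the recursion $\wh C_i^{n+1}= A_i^n C_{n+1} + \wh C_i^n B_{n+1}$ of \eqref{eq:C-hat3}, substituting $C_{n+1}=A_{n+1}-B_{n+1}$ to get $\wh C_i^{n+1}=A_i^{n+1}-B_i^{n+1}$. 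Without that induction, the identity $\wh C_i^n=A_i^n-B_i^n$ on which your limit comparison rests is unsupported. Parts (ii), (iii) and (iv) also need (short) arguments: (ii) is part (iii) of Lemma~\ref{l:conditions} combined with (i), and (iii)/(iv) are exactly the three-limit comparison you sketch for (v), run once without the $B/D$ hypothesis.

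That said, the direct argument you give for (v) is correct and is essentially the paper's own mechanism: the paper uses Lemma~\ref{l:conditions}(ii) to replace convergence of $N_i^n\left[\begin{smallmatrix}\0\\ \w_{n+1}\end{smallmatrix}\right]$ by simultaneous convergence of $B_i^n\w_{n+1}$ and $\wh C_i^n\w_{n+1}$, then uses (i) to trade $\wh C_i^n$ for $A_i^n-B_i^n$; it proves (iii) and (iv) this way and notes that (v) ``follows logically'' from them, which is the collapse you describe. Your closing caution -- that the $B/D$-distinguished hypothesis cannot be dropped because convergence of $(A_i^n-B_i^n)\w_{n+1}$ does not by itself give the individual limits -- is exactly the right point. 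So the method is sound; to make this a proof of the lemma you must add the induction for (i) and spell out (ii)--(iv) rather than citing them.
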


\begin{proof}
Assuming the statement is true for $n$, then 
from   \eqref{eq:C-hat3} we have that 
$$\wh C_i^{n+1}= A_i^nC_{n+1}+ \wh C_i^n B_{n+1}=
A_i^n(A_{n+1}-B_{n+1})+ (A_i^n-B_i^n) B_{n+1}=
A_i^{n+1} -B_i^{n+1},$$
so we are done by induction with $(i)$.
Then applying part $(iii)$ of Lemma \ref{l:conditions} proves $(ii)$.

To prove $(iii)$, we are to show that for $\w\in \V_D$, then
convergence of $\lim_{n\to\infty} N_i^n \w_{n+1}$ is equivalent to 
convergence of $\lim_{n\to\infty} B_i^{n}\w_{n+1}$ and $\lim_{n\to\infty} A_i^{n} \w_{n+1}$, for all 
$i\geq 0$. 

But  by
part $(ii)$ of  Lemma \ref{l:conditions}, convergence of 
$\lim_{n\to\infty} N_i^n \w_{n+1}$  is
equivalent to 
that  of $\lim_{n\to\infty} \wh C_i^n \w_{n+1}$ and $\lim_{n\to\infty} B_i^{n}\w_{n+1}$. By
$(i)$ above, convergence of $\lim_{n\to\infty} \wh C_i^n \w_{n+1}$ is
equivalent to that of 
$\lim_{n\to\infty} (A_i^{n} -B_i^{n})\w_{n+1}$. Thus
convergence of 
$\lim_{n\to\infty} N_i^n \w_{n+1}$  is
equivalent to 
that of $\lim_{n\to\infty} (A_i^{n} -B_i^{n})\w_{n+1}$ and
$\lim_{n\to\infty} B_i^{n}\w_{n+1}$,
and hence to 
$\w$ being both $B/D$ and  $A/D$-distinguished.

For $(iv)$,
we are to show that for $\w\in \V_D$, then
convergence of $\lim_{n\to\infty} A_i^{n} \w_{n+1}$
is equivalent to 
convergence of $\lim_{n\to\infty} N_i^n \w_{n+1}$ and $\lim_{n\to\infty} B_i^{n}\w_{n+1}$  for all $i\geq 0$. 
Now convergence of 
$\lim_{n\to\infty} N_i^n \w_{n+1}$ is
equivalent to 
that of $\lim_{n\to\infty} (A_i^{n} -B_i^{n})\w_{n+1}$ and
$\lim_{n\to\infty} B_i^{n}\w_{n+1}$, which in turn implies convergence of 
$\lim_{n\to\infty} A_i^{n} \w_{n+1}$.
Part $(v)$ follows logically from $(iii)$ and $(iv)$.
\end{proof}

\noindent

\begin{cor}\label{c:TwoDisting} (We number these so as to match the
  corresponding parts of the previous lemma).
  Given nested nonnegative real matrix
  sequences 
$A\leq N\leq \wh N$, with $\wt N$ the canonical cover matrix (defined
by the formula of  Theorem \ref{t:towerring}), then
\item {(ii)}a vector sequence $\w$ is $\wh N/N$\,--\,{
  distinguished} 
iff it is $\wt N/N$\,--\,{
  distinguished}.
\item{(iii)} 
A vector sequence $\w$ is $\wt N/A$\,--\,{
  distinguished} iff it is both $ N/A$
and $\wh  N/A$\,--\,{
  distinguished}.
  
\item{(iv)} If $\w$ is both $\wt N/A$\,--\,{
  distinguished} and 
$N/A$\,--\,{ distinguished}, it is $\wh N/A$\,--\,{
  distinguished}.
\item{(v)}
   If 
$\w$ is 
$N/A$-distinguished, then it is $\wh N/A$-distinguished  iff
it is $\wt N/A$-distinguished.
  
\end{cor}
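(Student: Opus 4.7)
The plan is to realize this corollary as a direct translation of Lemma~\ref{l:A-B} under an appropriate dictionary between its symbols and those of the corollary. First I will reduce to the equal-alphabet case by invoking part~(ii) of Remark~\ref{r:gen_matrices}: without loss of generality we may extend $\A$ so that $\A = \wh \A$, which makes $N$ and $\wh N$ generalized matrix sequences of the same size (with possibly some zero rows and columns in $N$). Next, by part~(ii) of Theorem~\ref{t:towerring}, the canonical cover matrix has the block form
\[
\wt N \;=\; \left[\begin{matrix} \wh N & C \\ 0 & N \end{matrix}\right], \qquad C_i = \wh N_i - N_i.
\]
This is precisely the hypothesis of Lemma~\ref{l:A-B}: take the lemma's ``$A$'' to be our $\wh N$, its ``$B$'' to be our $N$, its ``$C$'' to be $\wh N - N$, its ``$N$'' (the big upper-triangular matrix) to be our $\wt N$, and its ``$D$'' to be our $A$. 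The nesting $A \leq N \leq \wh N$ of the corollary then matches the lemma's $D \leq B \leq A$, so all its hypotheses are satisfied.

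Once the dictionary is in place, statements (ii)--(v) of the corollary are the verbatim translations of the same-numbered parts of Lemma~\ref{l:A-B}. Explicitly, (ii) says $\w$ is $N/B$-distinguished iff $\w$ is $A/B$-distinguished, which reads as $\w$ is $\wt N/N$-distinguished iff $\w$ is $\wh N/N$-distinguished; (iii) says $\w$ is $N/D$-distinguished iff both $B/D$- and $A/D$-distinguished, which reads as $\w$ is $\wt N/A$-distinguished iff both $N/A$- and $\wh N/A$-distinguished; (iv) and (v) translate similarly. So no further analytic work is needed beyond invoking the lemma.

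The only real step to carry out carefully is the verification that the generalized-matrix setup is legitimate after the alphabet extension. In particular, one should check that if $N$ is not column-reduced in the extended alphabet (because we have padded with zero columns/rows), the products $N_i^n$ and $B_i^n$ in the proof of Lemma~\ref{l:A-B} still behave as claimed, and that the definition of distinguished sequence from Def.~\ref{d:dist} is unaffected by this padding: a sequence $\w$ of the original size corresponds, via the embedding $\mathtt{e}$, to a sequence in the extended space whose iterates under $\wh N$ coincide with the iterates computed in the original space. This is immediate from the block structure but should be noted explicitly.

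I expect the main obstacle, or rather the only subtle point, to be phrasing this reduction cleanly: one must be clear that $\wt N$ is simultaneously the canonical cover (as produced in \S\ref{ss:canonicalcover}) and an instance of the triangular construction in Lemma~\ref{l:A-B}. Once that identification is stated, the four implications follow with no additional computation, and the proof reduces to one sentence per part.
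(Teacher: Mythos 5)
Your proposal is correct and follows essentially the same route as the paper: the paper's proof likewise reduces to equal alphabets via part~(ii) of Remark~\ref{r:gen_matrices} and then applies the corresponding parts of Lemma~\ref{l:A-B} under exactly the dictionary you describe (lemma's $A,B,D$, big matrix $\leftrightarrow$ $\wh N, N, A, \wt N$). Your extra remarks on the padded-alphabet bookkeeping are harmless elaboration that the paper leaves implicit.
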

\begin{proof}
To prove  $(ii)$: As noted in $(ii)$ of Remark \ref{r:gen_matrices}, we can without loss of generality assume $\wh
  \A=\A$,
so $\wh N$ and $N$ have the same size; we then directly apply part 
$(ii)$ of Lemma \ref{l:A-B}. To prove  parts $(iii),(iv),(v)$ we apply 
the corresponding parts of the
Lemma. 

\end{proof}

\begin{rem}
 Part $(v)$  of the Corollary is used in the proof of $(iii)$ of Theorem \ref{t:BratFrobVict}.
Parts $(ii)$ and $(iii)$ are used in the proof of Theorem
  \ref{t:inv-meas-subdiag} $(iii)$ below.
\end{rem}

\medbreak

Next we see that the definition of distinguished has 
a weaker formulation, which  will prove useful
in applications.

\begin{prop}\label{p:distsubsequence}
  For   $N\leq \wh N$, as above with $\wh
  \A=\A$, then a vector sequence $\w$ is $\wh N/N$ distinguished iff 
 $\w\in \V_N$ and $\liminf_{n} ||\wh N_i^n \w_{n+1}|| <\infty$, for
  infinitely many $i$. 
\end{prop}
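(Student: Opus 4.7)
The forward implication is immediate: if $\w$ is $\wh N/N$-distinguished then by definition $\lim_{n\to\infty} \wh N_i^n \w_{n+1}$ exists (in $V_i$) for every $i\geq 0$, so the norm sequence converges and in particular has finite $\liminf$ for every (hence infinitely many) $i$.

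For the converse, the plan is to exploit nonnegativity and the hypothesis $N\leq \wh N$ to upgrade the $\liminf$ hypothesis into genuine convergence, via a monotonicity argument. The key observation is that the vector sequence
\[
\s_n^{(i)} \equiv \wh N_i^n \w_{n+1}, \qquad n\geq i,
\]
is \emph{coordinate-wise nondecreasing} in $n$. Indeed, since $N\w=\w$, we have $N_n\w_{n+1}=\w_n$; since $\wh N_n\geq N_n$ and $\w_{n+1}\geq \0$ we get $\wh N_n\w_{n+1}\geq \w_n = N_n \w_{n+1}$, so applying the nonnegative operator $\wh N_i^{n-1}$ yields
\[
\s_n^{(i)} = \wh N_i^{n-1}\bigl(\wh N_n\w_{n+1}\bigr) \geq \wh N_i^{n-1}\w_n = \s_{n-1}^{(i)}.
\]

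The first main step is then to argue that for each fixed $i$, the monotone sequence $(\s_n^{(i)})_n$ in the positive cone $C_i\subseteq \r^{\A_i}$ converges in $V_i$ if and only if the scalar sequence $\|\s_n^{(i)}\|$ is bounded, and indeed if and only if $\liminf_n \|\s_n^{(i)}\| < \infty$. This follows because with the $L^1$-norm, $\|\s_n^{(i)}\|$ equals the sum of the coordinates of $\s_n^{(i)}$, each of which is a nondecreasing sequence of nonnegative reals; boundedness of the sum forces each coordinate to converge, and boundedness of $\liminf$ for a nondecreasing sequence of reals already forces boundedness of the sequence itself.

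The second main step is to propagate convergence in $i$. By hypothesis, the set $I\subseteq \N$ of indices $i$ for which $\liminf_n\|\s_n^{(i)}\|<\infty$ is infinite, hence unbounded. For each such $i$, the previous step gives that $\lim_{n\to\infty}\wh N_i^n\w_{n+1}$ exists in $V_i$. By part $(i)$ of Lemma~\ref{l:forall i}, existence of this limit at level $j=i+1$ implies its existence at level $j=i$; iterating backward from arbitrarily large $i\in I$ shows that the limit exists for \emph{every} $i\geq 0$. This is precisely condition $(ii')$ after Definition~\ref{d:dist}, and together with the assumption $\w\in \V_N$ (so in particular $\w$ is never zero and $N\w=\w$), it gives that $\w$ is $\wh N/N$-distinguished.

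The only subtle point is checking that the coordinate-wise monotonicity of $\s_n^{(i)}$ is compatible with our use of the $L^1$-norm; this is where working with the standard positive cone pays off, since monotonicity in the partial order translates directly into monotonicity of the norm. I do not expect any further obstacle; the nontrivial ingredients are all contained in Lemma~\ref{l:forall i} and in the monotone convergence of nondecreasing sequences in $\r^{d,+}$.
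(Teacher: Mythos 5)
Your argument is correct, and it reaches the conclusion by a genuinely more direct route than the paper. The paper's proof passes through the canonical cover matrix $\wt N=\left[\begin{smallmatrix}\wh N& C\\ 0 & N\end{smallmatrix}\right]$ with $C=\wh N-N$: it reuses the computation from the proof of Lemma~\ref{l:pi_v exists} to see that $\wt N_i^{n}\bigl(\begin{smallmatrix}\0\\ \w\end{smallmatrix}\bigr)_{n+1}$ is nondecreasing in $n$, extracts a convergent subsequence by compactness of the intersection of the positive cone with a ball, upgrades to convergence of the whole sequence by monotonicity, concludes that $\w$ is $\wt N/N$-distinguished, and only then transfers back to $\wh N/N$-distinguished via part (ii) of Corollary~\ref{c:TwoDisting}. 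You instead work directly with $\wh N$: your one-line observation that $\wh N_n\w_{n+1}\geq N_n\w_{n+1}=\w_n$, propagated by the nonnegative operator $\wh N_i^{n-1}$, gives coordinate-wise monotonicity of $\wh N_i^n\w_{n+1}$, and then monotone convergence of bounded nondecreasing real coordinates (with the $L^1$-norm equal to the coordinate sum) replaces the paper's compactness-plus-subsequence step; this makes the cover matrix and Corollary~\ref{c:TwoDisting} unnecessary for this proposition. Your version is also more careful on one point the paper treats tersely: the hypothesis is only that the liminf is finite for infinitely many $i$, and you explicitly invoke part (i) of Lemma~\ref{l:forall i} to push convergence down from arbitrarily large $i$ to all $i$, which is exactly condition $(ii')$ of the definition. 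What the paper's route buys is coherence with the surrounding machinery (the cover matrix and the distinguished-transfer corollary are needed elsewhere anyway, e.g.\ in Theorem~\ref{t:inv-meas-subdiag}); what yours buys is a self-contained, shorter proof of this particular statement. Both rest on the same underlying fact that the increments $\wh N-N\geq 0$ act on a nonnegative fixed point of $N$.
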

\begin{proof}
For the canonical cover matrix
$$\wt N=
\left[ \begin{matrix}
\wh N& C\\
0 & N
\end{matrix}  \right]
$$
with $C= \wh N- N$, and $\w\in \V_N$, then as in the proof of Lemma \ref{l:pi_v
  exists}, 

  $$
\wt N_i^{n}  \left[ \begin{matrix}
 \0\\
\w
\end{matrix}  \right]_{n+1} 
=
\wt N_i^{n-1}  
\left[ \begin{matrix}
 \0\\
\w _{n} 
\end{matrix}  \right]+
\wt N_i^{n-1}  
\left[ \begin{matrix}
 C_n
\w _{n+1} \\
\0
\end{matrix}  \right]
$$
so 
$$\wt N_i^{n-1}  
\left[ \begin{matrix}
 \0\\
\w _{n} 
\end{matrix}  \right]
\leq \wt N_i^{n}  \left[ \begin{matrix}
 \0\\
\w
\end{matrix}  \right]_{n+1} .$$
Since by hypothesis the lim inf of the norms is bounded, we have that by  compactness of the
intersection of the positive cone with a ball, there exists
a subsequence $n_j$ such that $
\wt N_i^{n_j}  \left[ \begin{matrix}
 \0\\
\w
\end{matrix}  \right]_{n_j+1} $ converges. We have just seen that
$
\wt N_i^{n}  \left[ \begin{matrix}
\0\\
\w
\end{matrix}  \right]_{n+1} $
is nondecreasing in $n$, giving 
convergence for the sequence itself.
This is true for each $i\geq
0$, so $\w$ is $\wt N$\,--\, distinguished. Finally from Corollary
\ref{c:TwoDisting}, 
$\w$ is $\wh N/N$ distinguished as well.
\end{proof}

As a consequence we have:
\begin{cor}\label{c:gatheringdispersal}
 The notion of being $\wh N/N$\,--\,distinguished 
is preserved under the taking of gatherings and dispersals. Moreover
this notion only depends on the tail of the matrix sequence.
  \end{cor}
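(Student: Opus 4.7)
The plan is to reduce everything to the liminf characterization of Proposition \ref{p:distsubsequence}, since that criterion is visibly robust under both of the operations in question. That proposition says $\w$ is $\wh N/N$--distinguished iff $\w\in\V_N$ and $\liminf_{n}\|\wh N_i^n \w_{n+1}\|<\infty$ for infinitely many $i$. Two features of this criterion are central: (a) the finite--liminf condition is only required for infinitely many indices $i$, not all of them; and (b) the quantities $\wh N_i^n \w_{n+1}$ are just partial products along the sequence, which behave well under the operations of Definition \ref{d:gathering}.

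For gatherings, I would set up the following correspondence. Given $0=n_0<n_1<\cdots$, write $\wt N_k=N_{n_k}^{n_{k+1}-1}$, $\wt{\wh N}_k=\wh N_{n_k}^{n_{k+1}-1}$, and let $\wt\w_k=\w_{n_k}$. First I would check by direct telescoping that $\w\in\V_N$ iff $\wt\w\in\V_{\wt N}$ (and similarly with $\wh{\phantom N}$), using that
\[
\wt{\wh N}_k^l \wt\w_{l+1}=\wh N_{n_k}^{n_{l+1}-1}\w_{n_{l+1}},
\]
which follows from associativity of matrix multiplication together with $\w_{n_{l+1}}=N_{n_{l+1}}^{n_{l+2}-1}\w_{n_{l+2}}$ telescoped back to $\w_{n_{l+1}}$ itself. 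With this identity in hand, if $\w$ is $\wh N/N$--distinguished, then $\lim_n \wh N_i^n\w_{n+1}$ exists for every $i$, so in particular it exists along the subsequence $n=n_{l+1}-1$ for each $i=n_k$, giving existence of $\lim_l \wt{\wh N}_k^l\wt\w_{l+1}$ for every $k$; so $\wt\w$ is $\wt{\wh N}/\wt N$--distinguished. Conversely, if $\wt\w$ is distinguished, then $\liminf_l\|\wt{\wh N}_k^l\wt\w_{l+1}\|<\infty$ for every $k$ (indeed, the limit itself is finite), which via the displayed identity yields $\liminf_n\|\wh N_{n_k}^n\w_{n+1}\|<\infty$ for infinitely many values of $n_k$, hence for infinitely many $i$; then Proposition \ref{p:distsubsequence} returns the distinguished property for $\w$. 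Dispersals are handled by the same identity read in the other direction, with $\w$ filling the intermediate slots through $\w_j=N_j^{n_{k+1}-1}\wt\w_{k+1}$ for $n_k\le j<n_{k+1}$; this yields a bona fide fixed point for $N$, and the partial--product identity makes the two distinguished conditions equivalent as before.

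For tail dependence, I would argue as follows. Suppose $N'$ and $N$ agree for $i\ge k_0$, and similarly $\wh N'$, $\wh N$. Then for $i\ge k_0$, $\wh{N'}_i^n$ and $\wh N_i^n$ coincide, so $\liminf_n\|\wh{N'}_i^n\w_{n+1}\|<\infty$ for infinitely many $i\ge k_0$ iff $\liminf_n\|\wh N_i^n\w_{n+1}\|<\infty$ for infinitely many such $i$. Since both conditions involve only indices $i$ past $k_0$, once Proposition \ref{p:distsubsequence} is invoked the distinguished property on one side matches the other. The main (minor) obstacle is bookkeeping in the dispersal direction, namely verifying that the intermediate vectors $\w_j$ filled in by partial products indeed give a nonnegative never--zero fixed--point sequence lying in $\V_N$; this follows from $N\ge 0$ and from $\wt\w_{k+1}$ being nonzero, so the whole argument is really a matter of reading the Proposition \ref{p:distsubsequence} characterization carefully under the relabelings.
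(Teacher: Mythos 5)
Your proposal is correct and follows essentially the same route as the paper: the same correspondence $\wt\w_k=\w_{n_k}$ between eigenvector sequences of the original and gathered diagrams, the same partial-product identity $\wt{\wh N}_k^l\wt\w_{l+1}=\wh N_{n_k}^{n_{l+1}-1}\w_{n_{l+1}}$, and the same appeal to the liminf criterion of Proposition \ref{p:distsubsequence} to pass from convergence along the gathering times back to convergence of the full sequence, with the tail statement read off from the fact that the criterion only involves large indices. The extra details you supply (the trivial forward direction and the interpolation for dispersals) are consistent with, and at the same level of rigor as, the paper's own argument.
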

\begin{proof}
For $N= (N_i)_{i\geq 0}$, we recall that given     a
   subsequence $0=n_0<n_1<\dots$, the gathered 
 matrix  sequence $K$ is  
$K_i=
 N_{n_i}^{n_{i+1}-1}$. If $\w \in \V_N$, then $\wt \w_i=\w_{n_i}\in \V_K$,
since 
$K_i \wt \w_{i+1}= \wt \w_i$. The reverse holds, i.e.~a sequence in
$\V_K$ can be uniquely interpolated to one in $\V_N$, so this
correspondence is bijective.

 Now given  $N\leq \wh N$,  we claim that then for  $\wh K$, $K$ the
 gathered 
sequences, 
if $\w\in \V_N$, and if the
  corresponding sequence $\wt \w$ is 
  $\wh K/K$\,--\,distinguished, then $\w$ is  $\wh N/N$\,--\,distinguished.
But in that case
$\wh K_i^m \wt \w_{m+1}= \wh N_{n_i}^{n_{m+1}-1} \w_{n_{m+1}}$
converges, and hence by the Proposition 
$\lim_{j\to\infty}\wh N_{i}^{j} \w_{j+1}$ exists, for each $i\geq 0$.

It follows that the notion of distinguished is unchanged by erasing 
a finite initial part of the matrix sequence.
\end{proof}

\begin{rem}\label{r:fixedsize}
  In particular, passage from a fixed--size Frobenius block form
  $(\wh N_i)_{i\geq 0}$  to $(\wh N_i)_{i\geq 1}$, where $\wh N_0$ may not have the
  fixed size, does not affect the notion of distinguished. See  also Remark \ref{r:normalform}.
\end{rem}

Now we return to nested Bratteli diagrams and  hence nonnegative
integer entries for our matrix sequences $M, \wh M, \wt M$.

\begin{theo}\label{t:BratFrobVict}
  (Frobenius--Victory Theorem for nested Bratteli diagrams) Let $\mathfrak B_{\wh \A,\wh\E,\wh M}$ be a  reduced Bratteli diagram with
  bounded alphabet size.
\item{(i)}
  Let
  $\wh \w\in \Ext\V_{\wh M}. $ Then 
there exists  an eventually unique maximal primitive $M\leq \wh M$,
such that there exists $\w\in \V_{M}$ which
is $\wh M/M$\,--\,distinguished, with $\wh \w=\iota(\w)$.
\item{(ii)} Let $\wh \w\in \V_{\wh M}. $ Then there exist unique
  primitive submatrices $\A_k$ with disjoint streams and  a unique
  decomposition of $\wh \w$ as a sum $\wh \w=\iota\w_1+\dots+\iota
\w_{\wh l}$ where $\w_k\in \V_{A_k dist}$.

  \item{(iii)} 
  Suppose we are  given nested Bratteli diagrams, that is, $M\leq \wh
  M$, and
let $\wt
 M$ denote the canonical cover matrix.
  Suppose
 $\w\in \V_{M}$. Then there exists a maximal primitive $A\leq M$
 and $\v\in \V_A$ such that $\v $ is $M/A$ distinguished. Furthermore, $\v$ is
 $\wh M/A$ distinguished iff it is $\wt M/A$ distinguished.
  
  \end{theo}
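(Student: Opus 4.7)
My plan is to deduce all three parts of Theorem \ref{t:BratFrobVict} from the matrix form of the nonstationary Frobenius--Victory theorem (Theorem \ref{t:FrobVictGeneral}), combined with the Nonstationary Frobenius Decomposition Theorem (Theorem \ref{t:FrobDecomp}) and Corollary \ref{c:TwoDisting}. The first step common to all three parts is to apply Theorem \ref{t:FrobDecomp} to $\wh M$: after a gathering and nonstationary reordering of the alphabets (which by Corollary \ref{c:gatheringdispersal} and Remark \ref{r:fixedsize} does not affect the notion of distinguished sequence or the cone $\V_{\wh M}$), the matrix sequence is in fixed-size Frobenius normal form with diagonal blocks $A_1,\dots,A_{\wh l}$ corresponding to the primitive streams $\alpha_1,\dots,\alpha_{\wh l}$ of the stream decomposition.

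For part $(ii)$, I would invoke Theorem \ref{t:FrobVictGeneral} part $(ii)$ directly: the bijection $\Cal I:\prod_k \V_{A_k \text{dist}}^\0 \to \V_{\wh M}^\0$ supplies the unique decomposition $\wh \w = \iota \w_1 + \dots + \iota \w_{\wh l}$ with $\w_k\in \V_{A_k \text{dist}}^\0$, and the streams $\alpha_k$ are disjoint by the construction in Theorem \ref{t:StreamDecomp}. For part $(i)$, I would specialize to $\wh \w \in \Ext \V_{\wh M}$; by part $(iii)$ of Theorem \ref{t:FrobVictGeneral}, exactly one $\w_k$ is nonzero and $\w \equiv \w_k \in \ExtD \V_{A_k}$ is a distinguished extreme ray with $\wh \w = \iota(\w)$. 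The maximal primitive subdiagram $M$ is the one whose matrix sequence is $A_k$ (augmenting the stream $\alpha_k$ to start at time $0$ as in Theorem \ref{t:invariantsubsets} if necessary). The ``eventual uniqueness'' of $M$ follows immediately from the eventual uniqueness (up to renumbering of streams) in part $(iv)$ of Theorem \ref{t:StreamDecomp}, while maximality of $M$ as a primitive subdiagram follows from part $(v)$ of that theorem.

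For part $(iii)$, I would apply part $(ii)$, already established, to $M$ viewed as its own Bratteli diagram: the given $\w \in \V_M$ decomposes uniquely as $\w = \iota(\v_1)+\cdots+\iota(\v_{\wt l})$ where each $\v_k\in \V_{A_k\text{dist}}^\0$ for $A_k$ the primitive diagonal blocks of $M$. Since $\w$ is never zero, at least one $\v_k \neq \0$; choose any such index, and set $A \equiv A_k$ (a maximal primitive subdiagram of $M$) and $\v \equiv \v_k$. Then $\v$ is $M/A$-distinguished by construction. For the ``furthermore'' clause, $A\le M\le \wh M$ and $\wt M$ is the canonical cover of $\wh M$ over $M$, so Corollary \ref{c:TwoDisting} part $(v)$ applies verbatim: since $\v$ is $M/A$-distinguished, $\v$ is $\wh M/A$-distinguished if and only if $\v$ is $\wt M/A$-distinguished.

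The main obstacle I expect is not analytic but organizational: reconciling the diagram-level statements (``maximal primitive subdiagram,'' ``eventually unique'') with the matrix-level hypotheses of Theorem \ref{t:FrobVictGeneral} (which requires the full fixed-size Frobenius form and identical alphabets $\wh \A = \A$ as in Remark \ref{r:gen_matrices}). This requires invoking the invariance of $\V_{\wh M}$ and the distinguished condition under gathering, dispersal, reordering and trivial extension of alphabets, all of which are supplied by Prop.~\ref{p:gathermeasure}, Corollary \ref{c:gatheringdispersal}, and Remark \ref{r:fixedsize}. Once this translation is in place, parts $(i)$ and $(ii)$ are essentially rewordings of Theorem \ref{t:FrobVictGeneral}, and the equivalence in $(iii)$ reduces cleanly to the linear-algebra identity of Lemma \ref{l:A-B}$(v)$ packaged in Corollary \ref{c:TwoDisting}$(v)$.
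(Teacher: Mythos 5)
Your proposal is correct and follows essentially the same route as the paper: reduce to fixed-size Frobenius normal form via Theorem \ref{t:FrobDecomp} (using the invariance of the distinguished property under gathering/reordering), then read off parts $(i)$ and $(ii)$ from Theorem \ref{t:FrobVictGeneral}, and settle the equivalence in $(iii)$ by Corollary \ref{c:TwoDisting}$(v)$. Your slight variant in $(iii)$ — applying part $(ii)$ to $M$ and selecting a nonzero component rather than citing part $(i)$ directly — is a harmless (indeed slightly more careful, since $\w$ need not be extreme) rephrasing of the paper's argument.
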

  \begin{proof}
  \item{(i)}
    By the  nonstationary Frobenius Decompostion Theorem
\ref{t:FrobDecomp}, there exists a reordering of the alphabets and a
gathering such that the diagram has fixed-size Frobenius normal form
for times $\geq 1$. From Remark \ref{r:fixedsize} the change of
starting time from $0$
to $1$  does not affect the property of being distinguished.

Now, without loss of generality
we assume $\wh M$ is in this form.
Applying Theorem \ref{t:FrobVictGeneral}, since $\wh \w\in
\Ext\V_{\wh M}$,
there exists a unique $k$ and $\w\in \ExtD  \V_{A_k}$ such that
$\iota(\w)=\wh \w$. $M\equiv A_k\leq \wh M$ is the unique maximal primitive
submatrix sequence.

\item{(ii)} Next we apply part $(ii)$ of
  Theorem \ref{t:FrobVictGeneral}. 

\item{(iii)} We apply part $(v)$ of Corollary
  \ref{c:TwoDisting}. (Note that this works because, as in the Corollary, we can assume the alphabets
  for $M$, $A$ have been extended to that of $\wh M$.) Then by $(i)$ applied to $M$,  
  we are done.
\end{proof}

\medbreak

\medskip

\section{The classification of invariant Borel measures}\label{s:Measures}

Here we come to our main goal: to identify the
ergodic invariant
measures for $\FC$ (or equivalently 
from $(ii), (iii)$ of Proposition \ref{p:conserg} for adic
transformations) under the assumption that
the measure is 
finite on 
some subdiagram. The notion of distinguished  eigenvector sequence 
(Def.~ \ref{d:dist})
provides a
  necessary and sufficient condition for the measure to be finite. We
  shall begin with the stronger assumption that the measure is finite
  on some nonempty open subset; this will  lead us to the general
  case. After treating
  adic transformations, we carry the analysis over to adic towers.

We harvest the work of the previous sections, first in
\S \ref{t:inv-meas} assuming the upper
triangular form of  \S \ref{ss:generalcase} (i.e.~ fixed--size  Frobenius
normal form).
Our approach is inspired by~\cite{BezuglyiKwiatkowskiMedynetsSolomyak10}; in particular, 
the proof of $(i)$ follows the line of reasoning  for the stationary case in  Lemma 4.2 in
that paper. Then, in \S \ref{ss:measuresfinite} we
address the general case of invariant Borel measures 
which are positive finite on some sub-Bratteli diagram. Lastly we
apply these results to the study of some simple
examples.

\subsection{The upper triangular block case }

In this and the following subsection, $\nu_\w$ denotes the measure  on
$\Sigma_{\M}^{0,+}$ which is  defined from $\w\in
\V_M$  in
\eqref{eq:centralmeasure} of Theorem \ref{t:basic_thm}. For
$A\leq M$,  we denote by $\nu^A_\w$ the measure on $\Sigma_{A}^{0,+}$ defined
from $\w\in \V_A$, as in  Definition
\ref{d:measure}. 
 
With $A_k\leq M$ the diagonal blocks of $M$ from the fixed--size  Frobenius
normal form of Definition  \ref{d:fixedsizeFrob}, see Fig.~
\eqref{eq:block_form},
then $\A^k=
(\A^k_i)_{i\in \N}$ denotes  the subalphabet sequence associated
to the matrix sequence $A_k$.

In the next theorem we show there is a  bijective correspondence of the 
  $\FC$\,--\,invariant conservative ergodic Borel measures
  on $\Sigma_M^{0,+}$ with those on ${\Sigma_{A_k}^{0,+}}$, given by restriction.
  Via this correspondence we identify 
  the measures which are  finite on some open subset, and those which have  finite total
  mass.

\begin{theo}\label{t:inv-meas}
  Assume we are given a 
  sequence   of  nonnegative integer
matrices $(M_i)_{i\geq 0}$  in fixed--size  Frobenius
normal form with diagonal
blocks $A_k$ for $1\leq
k\leq \wh l$ (so in particular these  blocks are either
identically zero or
reduced primitive), and 
 a nonzero  ergodic $\FC_M$\,--\,invariant Borel measure $\nu$  on
$\Sigma_M^{0,+}$. Then:
\item{(i)}  
 There exists 
a unique
$k$, $1\leq k\leq \wh l$, such that
the restriction 
$\nu_{{A_k}}$ of $\nu $ to ${\Sigma_{A_k}^{0,+}}$ is
positive.  
 This measure  is $\FC_{A_k}$\,--\,invariant  and
ergodic, with  $\nu$ its invariant extension  to
$\Sigma_M^{0,+}$. It  
is  either positive infinite, or
positive finite,  on all nonempty open
subsets of $\Sigma_{A_k}^{0,+}$. This correspondence, between ergodic
Borel measures on $\Sigma_M^{0,+}$ and on  ${\Sigma_{A_k}^{0,+}}$ for some
$k$, is bijective. Also, one is conservative iff the other is.
Note that here ${\Sigma_{A_k}^{0,+}}$ is in general not open
in $\Sigma_M^{0,+}$.  
\item{$(ii)$}
    $\nu$ is positive
finite on some open set of $\Sigma_M^{0,+}$  if and only if 
$\nu_{{A_k}}$ from $(i)$ is positive finite, if and only if 
 $\nu_{{A_k}}$  equals $ \nu^{A_k}_\w$ for some 
point $\w$ in an extreme ray of 
$\V_{A_k}$.

\noindent
The measure $\nu$ is   itself finite iff  $\w$ is
$M/A_k$--distinguished, i.e.~
 $\nu= \nu_{\iota(\w)}$.
\end{theo}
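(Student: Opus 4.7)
\medskip

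\noindent
\emph{Plan for Theorem \ref{t:inv-meas}.}

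For part $(i)$, the plan is to apply Theorem \ref{t:invariantsubsets}, which shows that every $\FC_M$-orbit closure is a union of adic towers $\Sigma_{M/\overline A(i)}^{0,+}$ over the primitive diagonal sub-sequences, with the minimal components being those towers whose base corresponds to an initial stream. Given an ergodic $\nu$, since the towers $\Sigma_{M/A_k}^{0,+}$ are $\FC_M$-invariant Borel sets, ergodicity of $\nu$ forces $\nu$ to be concentrated (up to null sets) in a single such tower, which singles out $k$ uniquely. The restriction $\nu_{A_k}$ is $\FC_{A_k}$-invariant because $\FC_{A_k}$ embeds in $\FC_M$ and preserves $\Sigma_{A_k}^{0,+}$. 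The correspondence between $\FC_{A_k}$-invariant Borel measures on $\Sigma_{A_k}^{0,+}$ and $\FC_M$-invariant Borel measures on $\Sigma_{M/A_k}^{0,+}$, together with the equivalence of ergodicity and conservativity on base and tower, is exactly Theorem \ref{t:towermeasure}. Since $A_k$ is reduced primitive, Lemma \ref{l:finite_on_all} applied to $\Sigma_{A_k}^{0,+}$ yields the open-set dichotomy: $\nu_{A_k}$ is either positive finite or positive infinite on every nonempty open subset.

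For the first statement in part $(ii)$, the plan is to combine part $(i)$ with Theorem \ref{t:basic_thm}. If $\nu$ is positive finite on some open subset of $\Sigma_M^{0,+}$, that open set is a countable disjoint union of thin cylinders, and since $\nu$ gives positive mass only to the tower $\Sigma_{M/A_k}^{0,+}$, a cylinder of positive finite measure must meet the base $\Sigma_{A_k}^{0,+}$ nontrivially; invoking the conjugation by $\FC_M$ that underlies the extension/restriction construction of Theorem \ref{t:towermeasure}, this forces $\nu_{A_k}$ to be finite on some cylinder of $\Sigma_{A_k}^{0,+}$, hence positive finite by Lemma \ref{l:finite_on_all}. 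Then Theorem \ref{t:basic_thm} applied to the reduced primitive sequence $A_k$ shows $\nu_{A_k}$, once normalized, equals $\nu_{\w}^{A_k}$ for a unique $\w \in \V_{A_k}^\Delta$, with ergodicity corresponding to $\w$ being an extreme point.

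For the last statement, the plan is to identify the invariant extension $\wh\nu$ of $\nu_\w^{A_k}$ to the tower $\Sigma_{M/A_k}^{0,+}$ with the measure $\nu_{\iota(\w)}$ arising from an eigenvector sequence of $M$. By Theorem \ref{t:basic_thm} (applied to $M$), $\nu$ is a finite $\FC_M$-invariant measure iff it has the form $\nu_{\v}$ for some $\v \in \V_M$, and by the nonstationary Frobenius--Victory theorem (Theorem \ref{t:FrobVictGeneral}) any such $\v$ which is extreme has the form $\v = \iota(\w)$ for $\w$ in an extreme ray of $\V_{A_k \mathrm{dist}}$. The main obstacle, and the core calculation, will be verifying that the tower measure $\wh\nu$ of Theorem \ref{t:towermeasure}, computed on a thin cylinder $[.e_0\dots e_{n-1}]$ with $e_{n-1}^+ = s$, equals the $s$-entry of the vector $(M_n\cdots M_{m-1})\mathtt{e}(\w_m)$ for large $m$ — so that letting $m \to \infty$ gives $\wh\nu([.e_0\dots e_{n-1}]) = (\iota(\w))_n(s)$ precisely when the limit exists, i.e.\ precisely when $\w$ is $M/A_k$-distinguished in the sense of Definition \ref{d:dist}. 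The ``positive finite on an open subset'' direction and the $\iota(\w)$-identification together yield the equivalences; when $\w$ is not distinguished, the partial sums diverge, giving the infinite invariant extension.
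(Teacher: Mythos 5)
Your proposal is essentially correct, and for two of the three steps it follows the same skeleton as the paper (Theorem \ref{t:towermeasure} for the base/tower correspondence, Lemma \ref{l:finite_on_all} for the open-set dichotomy, Theorem \ref{t:basic_thm} for $\nu_{A_k}=\nu^{A_k}_\w$), but your route differs at two points. For locating the unique $k$ you invoke the global decomposition of $\Sigma_M^{0,+}$ into the disjoint invariant towers $\Sigma_{M/A_k}^{0,+}$ (Theorem \ref{t:invariantsubsets}) and let ergodicity pick one tower; the paper instead works locally, choosing a cylinder of positive measure and taking $k$ to be the \emph{maximal} block index reachable by positive-measure extensions, then using upper triangularity plus maximality to see the mass sits over the base. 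Both work; yours is cleaner conceptually, the paper's avoids any global hypothesis on $M$ (see below). For the finiteness criterion your ``core calculation'' is genuinely different from and in some ways preferable to the paper's argument: you compute the extension directly, $\wh\nu_m([.e_0\dots e_{n-1}]) = \bigl(M_n^m\,\mathtt{e}(\w_{m+1})\bigr)_s$ with $s=e_{n-1}^+$, note this is nondecreasing in $m$, and conclude that finiteness of $\nu$ on cylinders is \emph{equivalent} to convergence, i.e.\ to $\w$ being $M/A_k$--distinguished, with $\nu=\nu_{\iota(\w)}$ in the convergent case. The paper instead deduces ``finite $\Rightarrow$ distinguished'' from the two bijections $\Ext\CCM\leftrightarrow\Ext\V_M^\Delta$ (Theorem \ref{t:basic_thm}) and $\iota:\cup_k\ExtD\V_{A_k}\to\Ext\V_M$ (Theorem \ref{t:FrobVictGeneral}), plus uniqueness of $k$ from part $(i)$. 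Your computation handles both directions at once and also directly exhibits the infinite extension in the nondistinguished case.

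Two caveats you should repair when writing this up. First, Theorem \ref{t:invariantsubsets} (and Theorem \ref{t:basic_thm} applied to $M$) assumes the sequence $M$ is reduced, whereas Theorem \ref{t:inv-meas} only assumes the diagonal blocks are zero or reduced primitive; this flexibility is deliberately exploited later (Remark \ref{r:reducedremark}, proof of Theorem \ref{t:inv-meas-subdiag}$(iii)$, where the theorem is applied to a non-reduced cover matrix). The tower decomposition you need does hold under the weaker hypothesis -- by upper triangularity the block index along any infinite path is nondecreasing, and since the zero diagonal blocks support no infinite paths every path eventually accompanies a unique primitive block -- but you must say this rather than cite the theorem verbatim; your direct cylinder computation conveniently makes the appeal to Theorem \ref{t:basic_thm} for $M$ itself unnecessary. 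Second, your sentence ``a cylinder of positive finite measure must meet the base $\Sigma_{A_k}^{0,+}$ nontrivially'' is false as stated (the cylinder may sit entirely at a higher tower level); the correct step, which your subsequent conjugation remark points to, is to pass to a deep enough positive-measure subcylinder whose final edges lie in the stream $\alpha_k$, conjugate it by $\FC_M$ to a cylinder lying over the stream, and use triangularity plus the nullity of the other towers to see its mass is carried by the base.
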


\begin{proof}\item{(i)}:
From the hypothesis, there exists an edge cylinder set $[f_0\dots f_m]$  of $\Sigma_M^{0,+}$ with 
$\nu ([f_0\dots f_m])$ positive finite or infinite. Define $k$ to be
the maximal index 
from $1$ to $\wh l$  such that there exists an allowed extension $f_0\dots f_m
f_{m+1}\dots f_q$ of
this word  satisfying $\nu([f_0\dots f_m
f_{m+1}\dots f_q])>0$, with $f_q^+\in \A^k$. 
Then since 
$(A_k)_0^\infty$ is reduced, there exists an allowed word $e_0\dots e_q$ within the stream
$\alpha_k$ such that $e_q^+ =f_q^+$.
By invariance  
$\nu([e_0\dots e_q])= \nu([f_0\dots f_q])>0$. 
Now $\nu([e_0\dots e_q])=\nu([e_0\dots e_q]\cap \Sigma_{A_k}^{0,+})$,
since for any subcylinder $[e_0\dots
e_qg_{q+1}\dots g_t]$
of positive measure then $g_{q+1}, \dots, g_t$ must be in
$\alpha_k$, for if $g_{i}\in \alpha_j$ with $j<k$ this string would
not be allowed by the upper triangular form, while if $j>k$ the measure is zero since $ k$ is
maximal.
Since $\nu$ is $\FC_M$-invariant,  it follows that, a fortiori, the  restricted measure
$\nu_{{A_k}}$ is $\FC_{A_k}$\,--\,invariant.
We
denote by $\wt \nu_{{A_k}}$ the tower
measure on  $\Sigma_{M/A_k}^{0,+}\subseteq \Sigma_M^{0,+}$. This is
the $\FC_M$-invariant extension of  $\nu_{{A_k}}$ from Theorem
\ref{t:towermeasure}. By $(i)$, $(ii)$ of that theorem the invariant ergodic Borel measures on base
$\Sigma_{A_k}^{0,+}$ and adic tower $\Sigma_{M/A_k}^{0,+}$
correspond bijectively. Now
$\nu_{{A_k}}<<\nu$
whence 
$\wt\nu_{{A_k}}<<\nu$, and both are $\FC_M$\,--\,invariant, thus
$\wt\nu_{{A_k}}=\nu$:
they are equal on the base $ \Sigma_{A_k}^{0,+}$ of the
tower, hence on the tower,
which is invariant and thus must be a.s.~ the whole space $\Sigma_M^{0,+}$ by ergodicity of $\nu$.

By Lemma \ref{l:finite_on_all},  because of the
primitivity of $A_k$,  the measure $\nu_{{A_k}}$
 is either positive finite or infinite on all open subsets of $\Sigma_{A_k}^{0,+}$.

By $(i)$ of Theorem
\ref{t:towermeasure} the invariant Borel measures on base
$\Sigma_{A_k}^{0,+}$ and adic tower $\Sigma_{M/A_k}^{0,+}$
correspond bijectively, and by 
$(ii)$ of that theorem the  ergodicity of 
$\nu=\wt\nu_{{A_k}}$ and $\nu_{{A_k}}$  correspond,  as well.

To complete the proof of part $(i)$, we show the base and tower have been uniquely identified in the
above construction, 
by verifying the uniqueness of this index $k$. Suppose that for some
$j\neq k$, the restricted measure
$\nu_{{A_j}}$ is positive; without loss of generality $j>k$. Let $j$
be the maximal such index. Then there exists an edge cylinder set $[f_0\dots f_m]$  with 
$\nu_{{A_j}} ([f_0\dots f_m])$ positive. Define $l$ to be
the largest index 
 such that there exists an allowed extension $f_0\dots f_m
f_{m+1}\dots f_q$ of
this word  satisfying $\nu([f_0\dots f_m
f_{m+1}\dots f_q])>0$, with $f_q^+\in (\A_l)_{q+1}$. Then as above there is a 
cylinder $[e_0\dots e_q]$ in $\Sigma_{A_l}^{0,+}$ with 
$e_q^+ =f_q^+$ and with positive $\nu$-measure. But then $l=j$ since
$j$ was maximal. This implies in particular that the
stream $\alpha_j$ can have no front-connections to $\alpha_k$. But then by
the same reasoning as given above for $k$,  the tower
measure $\wt \nu_{{A_j}}$ is equal to $\nu$ and hence to
$\wt\nu_{{A_k}}$, which implies that there are front-connections to
$\alpha_k$, since then $\Sigma_{A_j}^{0,+}$ belongs to the tower
$\Sigma_{M/A_k}^{0,+}$, a contradiction.

 From $(ii)$ of Theorem \ref{t:towermeasure}, a  measure on the tower
  is conservative iff that holds for the restriction to the base.

We have noted in the statement of $(i)$ that $\Sigma_{A_k}^{0,+}$ itself is in general not an open
subset of $\Sigma_M^{0,+}$;  see Proposition \ref{p:opengeneral} and
Cor.~\ref{c:uppertriangularopen}.

\noindent
\item{(ii)}:  
If $\nu$ is finite positive on some open subset of
$\Sigma_M^{0,+}$, equivalently we have $0<\nu
([f_0\dots f_m])<\infty$ for
some thin edge cylinder of $\Sigma_M^{0,+}$. Then by the above reasoning there
exists an extension $f_0\dots f_m
f_{m+1}\dots f_q$  and $\e$ in the stream $\alpha_k$ with $e_q^+ =f_q^+$ so that $0<\nu_{{A_k}}([e_0\dots e_q])=\nu ([f_0\dots f_q])
<\infty$. Therefore by part $(i)$,  $\nu_{{A_k}}$ is
finite on all
open sets of $\Sigma_{A_k}^{0,+}$ and hence in particular $\nu_{{A_k}}$ has finite
total mass.

Equivalently 
by Theorem \ref{t:basic_thm},   
$\nu_{{A_k}}=  \nu^{A_k}_\w$ for some
$\w$  in an extreme ray of 
$\V_{A_k}$.

\

Lastly we consider the case where $\nu$ itself is finite.
Now by Theorem \ref{t:basic_thm},   there is a bijection $\Phi$
from the collection  $\Ext \CCM$ of 
finite ergodic central measures $\nu$  to
$\Ext \V_{\M}^\Delta$. We recall the definition of this map. For $  \Phi(\nu)=
\w'=
(\w'_0,\w'_1,\dots, \w'_i,\dots )$, then 
for any $s\in
\A_i$,  where $
e_{i-1}^+=s$, we have that  $\nu([.e_0\dots e_{i-1}]) = (\w'_i)_s.$

On the other hand, from part $(iii)$ of Theorem
\ref{t:FrobVictGeneral} there is a bijection $\iota$ from
$
\cup_{k=1}^{\wh l} \ExtD  \V_{A_k}$ to $\Ext\V_M $.
Thus there is a unique 
$j$ and $\w^j\in \ExtD  \V_{A_j}$ with $\w'= \iota(\w^j)=
(\u^{1} , \u^{2} , \dots, \u^{j-1} , 
\w^j, 
\0^{j+1}, \dots, 
\0^{\wh l})$, where the upper index is used for for the stream
decomposition and the  lower index will be reserved for time.

Now since $\w^j\in \V_{A_j}, $ it determines a measure 
$\nu^{A_j}_{\w^j}$ on $\Sigma_{A_j}^{0,+}$ by the similar formula: for each  cylinder set 
$[.e_0\dots e_{i-1}]$ of  $\Sigma_{A_j}^{0,+}$ with $e_{i-1}^+=s\in
\A_i^j$, $\nu^{A_j}_{\w^j}([.e_0\dots e_{i-1}])= (\w^j_i)_s$. But
this is exactly equal to the previous value, for these $s\in
\A_i^j\subseteq \A_i$.
 Hence 
$\nu^{A_j}_{\w^j}= \nu_{{A_j}}$, i.e.~ it is  the restriction of $\nu$
to
$\Sigma_{A_j}^{0,+}$. Since $\w^j\in \Ext \V_{A_j}$, this measure is
ergodic for the action of $\FC$ on $\Sigma_{A_j}^{0,+}$. 

From part $(i)$, the index $j$ is unique, so $j=k$ from parts $(i)$
and  the first part of $(ii)$ above. That is, $\w$ as above equals $\w^j$, and so
since $\iota(\w^j)= \w'$, $\w$ is $M/A_k$--distinguished and
 $\nu= \nu_{\w'}=\nu_{\iota(\w)}$.

\end{proof}
\begin{rem}\label{r:reducedremark}
 
 We note that in case $(ii)$ above both measures are conservative, since
 the base measure  is finite hence  this holds by the Poincar\'e
 Recurrence Theorem, see Remark \ref{r:ergodicmeasures}.

  We comment on an important but subtle technical point.
  At some points in this paper we have used
  reduced matrix sequences and at others, only
  column--reduced. In the above theorem, we only needed the primitive diagonal
  blocks reduced, but did not assume this for $M$; this flexibility
  proves useful in the proof of part $(iii)$ of Theorem \ref{t:inv-meas-subdiag}.

  For other examples, in the Frobenius decomposition
  theorem,
  starting with a reduced matrix sequence 
  we showed we could achieve diagonal blocks which are either zero or
  reduced
  primitive.
 However in the  
Frobenius--Victory  Theorem, both the  inductive $(2\times
2)$ block case of Theorem \ref{t:FrobVictFirstVersion} and the general case
\ref{t:FrobVictGeneral},we were careful to prove everything under the
weaker requirement of the matrices and diagonal block submatrices
being column--reduced. The reason for this
is that otherwise we could have a problem in the inductive step of the
general case, as the property of 
$N$ being reduced may not be inherited by  the
submatrices $N_k$ to which we apply the $(2\times 2)$ block step. By
contrast, being 
column--reduced {\em is} passed on from $N$ to $N_k$.

Of course our ultimate focus is on the measure theory, where the basic result of
Theorem
\ref{t:basic_thm} relates the invariant Borel measures to the nonnegative eigenvector
sequences of eigenvalue one. And for that theorem we again need the
condition of being reduced: both for the full matrix sequence and for
the primitive diagonal blocks. Now fortunately that much is guaranteed
by the Frobenius Decomposition Theorem.

In summary, we use reduced at the beginning (for the Decomposition
Theorem), and at the end (to conclude about the measure theory), while
in the middle, to prove the Frobenius--Victory Theorem, we need to
to work with the weaker condition of being column-reduced.
\end{rem}

\medskip

\subsection{The general case}\label{ss:measuresfinite}
In this section we bring together  the ingredients developed
throughout the paper -- the
stream and 
Frobenius decompositions, the definition of distinguished eigenvector
sequence for a subdiagram, the notion of the
canonical cover, and the nonstationary Frobenius-Victory theorem -- to
prove  in Theorem
\ref{t:inv-meas-subdiag} our main result: a characterization of  the invariant measures for   a
Bratteli diagram
in terms of measures on a subdiagram or sub-subdiagram and related
distinguished eigenvector sequences. See Example \ref{ex:nestedodometer}
and the further examples in \S \ref{ss:examples}.

Beginning with an 
ergodic
$\FC$--invariant Borel measure $\nu$ on the path space $\Sigma_{\wh M}^{0,+}$
of a Bratteli diagram $\wh{\mathfrak
B}$ with matrix sequence $\wh M$, we  show 
in part $(i)$ that $\nu$ determines an eventually unique maximal
primitive reduced subdiagram $\mathfrak B$ with matrix sequence
$M$.  Its path space $\Sigma_{M}^{0,+}$ is the base of an adic tower
on which the restriction $\nu_M$  of $\nu$ to $\Sigma_{M}^{0,+}$ can
be finite or infinite. In part $(ii)$ we consider
 measures which are positive finite on some open set of $\Sigma_{\wh M}^{0,+}$,
with finiteness of $\nu$ characterized
in terms of $\wh M/M$ distinguished eigenvector sequences.
This extends to general diagrams the case of fixed-size (upper triangular) Frobenius
form from Theorem \ref{t:inv-meas}.

For the proof we first 
     use  the Frobenius decomposition to find an eventually unique maximal (ordered by
     containment)  primitive subdiagram. Then we use the
    fixed--size Frobenius normal
form proved in  Theorem
\ref{t:FrobDecomp} together with 
Theorem \ref{t:inv-meas} to
describe the invariant Borel measures. 

Now to achieve the fixed-size form from the streams we needed to permute
the alphabets and gather the matrix sequence; so now we must transfer
 this analysis
back to the streams and hence to 
the orginal diagram. For this we have
from Remarks \ref{r:fixed--size_notunique},
that a change of order on the alphabets, or a gathering, induces a
topological conjugacy of the shift spaces and $\FC$-actions.
Furthermore,  by Remarks \ref{r:normalform},  \ref{r:fixedsize}
Proposition
\ref{p:gathermeasure},
   and 
Corollary \ref{c:gatheringdispersal} these operations and also a
change of the starting time  does not affect 
 the  collections of invariant Borel measures or 
distinguished eigenvector sequences. We also make use of the eventual
uniqueness of the stream collection and Frobenius decomposition from
Theorem
\ref{t:StreamDecomp} and Theorem
\ref{t:FrobDecomp}.  That does not affect the
analysis of invariant measures, also because these edge spaces are
identical after some fixed time, using  Corollary \ref{c:zeroand_k}.

The main subtlety  of the theorem comes in the proof of
 part $(iii)$ where we consider the case much more general than $(ii)$
 of a measure perhaps not finite on any open subset but finite on  a
 subdiagram.  For such a subdiagram   defined by $M'\leq
\wh M$, and further for a sub-subdiagram defined by $M\leq M'\leq \wh
M$, we specify which of  these measures are finite or infinite in
terms of distinguished eigenvector
sequences. 

For the proof we apply part $(ii)$ of Theorem
\ref{t:inv-meas} twice, once for
$M'\leq \wh M$
and once for a further subdiagram $M\leq M'$. To carry this out  we first
achieve an upper triangular form,  but this requires several steps. The first step, 
in the case of a general subdiagram, is to build the canonical cover. That
allows us to
place the submatrix $M'\leq \wh M$ as a
diagonal block in the cover matrix $\wt M$. If $M'$ happens to be primitive we are then
back in the situation of $(ii)$, and are done. 
If not, we find an upper triangular form for $M'$,  reordering the subalphabet for $M'$ by
the Frobenius Decomposition Theorem, and finding a primitive diagonal
block $M$ with positive measure.  Here we make use of Proposition
\ref{p:gateringtower} that the canonical cover is measure isomorphic
to the original path space.
We  keep track of this reordering by 
a permutation
matrix,  so that we can carefully follow the changed eigenvector sequence.
That the resulting eigenvector sequence is indeed
distinguished then makes full use of our analysis of the $(2\times
2)$-block case, specifically via two applications of Corollary  \ref{c:TwoDisting}.

This leads in Corollary \ref{c:secondcount} to an improved measure count.
In  Theorem \ref{t:invariant-measures-towers} we describe, similarly,  measures for 
adic towers.

\begin{theo}\label{t:inv-meas-subdiag}
  (Measure classification for Bratteli diagrams  of bounded rank)
Let $\wh{\mathfrak B}\equiv \wh{\mathfrak B}_{\wh \A,\wh\E,\wh M}$ be a  reduced Bratteli diagram with
bounded alphabet size, with alphabet sequence $\wh \A$, and let $ \nu
$ be an ergodic $\FC_{\wh M}$\,--\,invariant Borel
measure.
Then:

\item{(i)}
There exists  an eventually unique maximal primitive reduced $M\leq \wh M$,
with alphabet sequence $\A \subseteq\wh \A$,  such that the
restriction $\nu_M$ of $\nu$ to the path space ${\Sigma_{M}^{0,+}}$ is positive.
This  measure  is 
$\FC_M$\,--\,invariant  and ergodic, and is positive infinite, or
positive finite,  on all open
subsets of ${\Sigma_{M}^{0,+}}$, and $\nu=  \wh\nu_M$ is the invariant
extension of $\nu_M$ on the base $\Sigma_{M}^{0,+}$ to its tower
 $\Sigma_{\wh
  M/M}^{0,+}$.

This correspondence, between ergodic Borel
measures on $\Sigma_{\wh M}^{0,+}$ and   on ${\Sigma_{M}^{0,+}}$ for some
maximal primitive reduced $M\leq \wh M$,  equivalently on a tower $\Sigma_{\wh
  M/M}^{0,+}$,
is bijective,  up to  eventual uniqueness of the sequence $M$.
Also, one is conservative iff the other is.

\item{(ii)}
    $\nu$ is positive
finite on some open set of $\Sigma_{\wh M}^{0,+}$  if and only if
$\nu$ is positive finite on the maximal primitive subdiagram
 $\Sigma_{M}^{0,+}$ from $(i)$,
if and only if
 $\nu_M$  equals $ \nu^M_\w$ for some 
point $\w$ in an extreme ray of 
$\V_{M}$.

\noindent
The total measure $\nu(\Sigma_{\wh M}^{0,+})$ is   finite iff   $\w$ is
$\wh M/M$--distinguished, iff
$\nu= \nu_{\overline \w} $ for $\overline \w={\iota_{\wh M}(\w)}$.

\item{(iii)} 
Suppose that $\nu$ is  positive finite
for some subdiagram with matrix sequence $M'\leq
  \wh M$, thus 
 $0<\nu(\Sigma_{M'}^{0,+})<\infty$. (This includes the case in $(ii)$
 of being finite on some open subset).
Then $\nu$ is the tower measure over $(\Sigma_{M'}^{0,+}, \nu_{M'})$,
which is   ergodic. 
 We know there exists a unique $\w'\in \Ext\V_{M'}$ such that  $\nu_{M'}= \nu_{\w'}^{M'}$.
There exists an eventually unique maximal primitive reduced
sub-subdiagram with matrix sequence $M\leq M'$ and $\w\in 
\Ext\V_M$ such that $\w'= \iota_{M'}(\w)$, and  
 $\nu$ also is the tower
measure over 
$(\Sigma_{M}^{0,+},\nu_M)$. The measure $\nu$
is finite iff $\w'$ is $\wh {M}/{M'}$\,--\,distinguished,
iff $\w$ is $\wh M/M$\,--\,distinguished.

\item{(iv)}
Lastly, suppose that $\nu$ is ergodic and infinite on a subdiagram defined by $ M'\leq
  \wh M$. Such  measures are analyzed further as in
 $(i)-(iii)$. That is, there exists an eventually unique maximal primitive
 reduced $\wt M\leq  M'$,
such that  $\nu_{\wt M}$ is positive. If $\nu_{\wt M}$ is finite, we proceed as
in $(ii)$. If it is finite on some further subdiagram,  for $\wt M$, we proceed as in $(iii)$.
\end{theo}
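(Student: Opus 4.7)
The plan is to reduce every claim to the fixed-size upper triangular case treated in Theorem \ref{t:inv-meas}, using the Frobenius Decomposition Theorem \ref{t:FrobDecomp} to put matrices in that form, and the canonical cover construction of Theorem \ref{t:towerring} to handle subdiagrams that are not a priori aligned with any such decomposition. All transfers preserve both the $\FC$-action (by Proposition \ref{p:gathermeasure} and Corollary \ref{c:zeroand_k}) and the notion of distinguished eigenvector sequence (by Corollaries \ref{c:gatheringdispersal} and \ref{c:TwoDisting}).

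For parts (i) and (ii), I would first reorder and gather $\wh M$ into fixed-size Frobenius normal form with primitive or identically zero diagonal blocks $A_1,\dots,A_{\wh l}$; Theorem \ref{t:inv-meas} then directly supplies a unique index $k$ with $\nu_{A_k}$ positive and ergodic, and the corresponding finiteness criterion $\w\in\ExtD\V_{A_k}$. Pulling this back through the gathering and reordering identifies $A_k$ with a primitive reduced submatrix $M \leq \wh M$; its eventual uniqueness comes from the eventual uniqueness of the stream decomposition (Theorem \ref{t:StreamDecomp}(iv)) together with Corollary \ref{c:zeroand_k}, which ensures only the tail of the matrix sequence matters. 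The dichotomy \emph{positive finite or positive infinite on every nonempty open subset} of $\Sigma_M^{0,+}$ follows from Lemma \ref{l:finite_on_all} by primitivity of $M$; Theorem \ref{t:towermeasure} then produces $\nu$ as the invariant tower extension of $\nu_M$ and preserves ergodicity and conservativity in both directions.

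The main obstacle is part (iii), where $M' \leq \wh M$ is an arbitrary subdiagram not aligned with any Frobenius form of $\wh M$. My approach is to build the canonical cover $\wt M = \bigl[\begin{smallmatrix} \wh M & C \\ 0 & M' \end{smallmatrix}\bigr]$ of Theorem \ref{t:towerring}: by Theorem \ref{t:covermeasure} the measure $\nu$ lifts to a measure $\wt \nu$ on $\Sigma_{\wt M}^{0,+}$ that agrees with $\nu_{M'}$ on the common base $\Sigma_{M'}^{0,+}$ and is measure-theoretically isomorphic to $\nu$ on the tower $\Sigma_{\wh M/M'}^{0,+}$. Since $M'$ now sits as a diagonal block of the already upper triangular matrix $\wt M$, parts (i) and (ii) applied to the pair $M' \leq \wt M$ produce an eventually unique maximal primitive reduced $M \leq M'$ and an extreme $\w \in \V_M$ with $\nu_M = \nu_\w^M$, while Theorem \ref{t:BratFrobVict}(ii) (or Theorem \ref{t:FrobVictGeneral} applied to the Frobenius form of $M'$) yields $\w' = \iota_{M'}(\w)$ with $\w$ an $M'/M$-distinguished extreme point. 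The equivalence \emph{$\nu$ finite iff $\w'$ is $\wh M/M'$-distinguished} then follows by applying part (ii), already established, to $M' \leq \wt M$ (giving the $\wt M/M'$-distinguished criterion) and then invoking Corollary \ref{c:TwoDisting}(ii) to pass from $\wt M$ to $\wh M$. The second equivalence \emph{$\w$ is $\wh M/M$-distinguished} follows from Corollary \ref{c:TwoDisting}(v): because $\w$ has already been shown to be $M'/M$-distinguished, the $\wh M/M$- and $\wt M/M$-distinguished conditions coincide on $\w$, and via the compositional identity encoded in the tower-of-towers structure $\Sigma_M^{0,+} \subseteq \Sigma_{M'}^{0,+} \subseteq \Sigma_{\wh M}^{0,+}$, these are in turn equivalent to $\w'$ being $\wh M/M'$-distinguished.

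Part (iv) is then a finite induction: if $\nu_{M'}$ is positive infinite, apply part (i) to the ergodic $\FC_{M'}$-invariant restriction $\nu_{M'}$ to locate the eventually unique maximal primitive reduced $\wt M \leq M'$ with $\nu_{\wt M}$ positive, and proceed according to whether $\nu_{\wt M}$ is finite (part (ii)) or finite only on some further subdiagram (part (iii), with $\wt M$ in the role of $\wh M$). The finite rank assumption bounds the alphabet size throughout and prevents the nesting of primitive subdiagrams from continuing indefinitely, so the procedure terminates.
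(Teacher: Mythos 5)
Your treatment of parts (i), (ii) and (iv) follows the paper's own route (stream decomposition and gathering to fixed--size Frobenius form, Theorem \ref{t:inv-meas}, Lemma \ref{l:finite_on_all}, Theorem \ref{t:towermeasure}, with eventual uniqueness from Theorem \ref{t:StreamDecomp}) and is fine. The gap is in part (iii), at exactly the point the paper works hardest. The cover $\wt M=\bigl[\begin{smallmatrix}\wh M& C\\ 0& M'\end{smallmatrix}\bigr]$ is only block upper triangular with two \emph{arbitrary} diagonal blocks: neither $\wh M$ nor $M'$ is primitive or zero, so $\wt M$ is not in fixed--size Frobenius normal form and Theorem \ref{t:inv-meas} (hence part (ii)) cannot be applied to it as it stands. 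The paper first puts $M'$ into fixed--size Frobenius form (locating the primitive block $M=A_k$), then forms the cover with a disjoint copy $\wh\A'$ of the alphabet for the lower block, and only then Frobenius--decomposes the upper $\wh M$-block by a conjugation $Q=\bigl[\begin{smallmatrix}P&0\\ 0&I\end{smallmatrix}\bigr]$ that leaves the $M'$-copy untouched; it is precisely this bookkeeping that allows one to identify the extreme sequence $\w''$ of $\wh M$ with $\iota_{\wt M}(\w)$ (via $Q\w''=\iota_{\wt M'}(\w)$ and uniqueness in Theorem \ref{t:inv-meas}(i) applied to the fully decomposed cover). Your proposal skips the decomposition of the $\wh M$-block entirely and treats the upper triangularity of $\wt M$ as sufficient licence.

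Second, your claim that applying part (ii) "to the pair $M'\leq\wt M$" yields the criterion "$\nu$ finite iff $\w'$ is $\wt M/M'$-distinguished" is not a licensed use of part (ii): that statement only characterizes finiteness in terms of the \emph{maximal primitive} subdiagram, and $M'$ is in general not primitive. The needed fact is $\w''=\iota_{\wt M}(\w')$, which the paper proves by combining the uniqueness of the primitive block with positive measure, the observation that the inverse of $\iota$ is a projection, and the fact that the restriction chain $\nu\mapsto\nu_{M'}\mapsto\nu_M$ corresponds to projecting $\w''\mapsto\w'\mapsto\w$; only then does Corollary \ref{c:TwoDisting}(ii) transfer $\wt M/M'$-distinguished to $\wh M/M'$-distinguished, and Corollary \ref{c:TwoDisting}(iii) (or (v), as you use) handle the $\wh M/M$ statement. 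Your appeal to "the compositional identity encoded in the tower-of-towers structure" asserts exactly this missing link without proof. You also only argue the forward direction explicitly; the converse (distinguished implies $\nu$ finite) needs the identification $\nu=\nu_{\iota_{\wh M}(\w)}$ via the bijection of Theorem \ref{t:basic_thm}, which the above correspondence supplies. So the architecture is right, but the core of part (iii) — making Theorem \ref{t:inv-meas} legitimately applicable to the cover and tracking the eigenvector sequences through the permutation — is missing.
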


\begin{proof} of Theorem \ref{t:inv-meas-subdiag}:
 
{\em {(i), (ii)}}:
If $\wh M$ happens to be in fixed--size Frobenius normal form, then
  parts $(i)$, $(ii)$ are just those parts from Theorem
  \ref{t:inv-meas}. That is to say, from that theorem there is a unique $k$ such that
  ${\Sigma_{A_k}^{0,+}}$ has positive measure;  and then
 $M=A_k\leq \wh M$ is the eventually unique maximal primitive reduced
 submatrix sequence claimed in $(i)$. Part $(ii)$ follows.

  Now we consider general $\wh M$.
 To prove $(i)$, first,
  if $\wh M$ is primitive, then $M= \wh M$. If not, then
let $\{\alpha(1)\dots, \alpha(d),\P\} $
be the  stream decomposition guaranteed by
Theorem \ref{t:StreamDecomp}. By permuting each alphabet we can
place the matrix sequence in Frobenius normal form. This stream
decomposition is eventually unique. By performing a (non-unique)
gathering, we can achieve fixed--size Frobenius normal form, with primitive reduced
diagonal blocks $A_l$.

Writing for simplicity $\nu$ also for the measure on
this new path space, then from part
$(i)$ of Theorem \ref{t:inv-meas}, there is a unique
$k$ such that the restriction of $\nu$ to ${\Sigma_{A_k}^{0,+}}$ is
positive, the restricted measure $\nu_{{A_k}}$  is invariant and 
ergodic,  and the tower measure $\wh \nu_{{A_k}}$ over $\nu_{{A_k}}$
equals $\nu$. Further,
$\nu_{{A_k}}$ is positive infinite, or
positive finite,  on all open
subsets of ${\Sigma_{A_k}^{0,+}}$.

Let $\{\alpha(1)\dots, \alpha(d),\P\} $
be the  list of ungathered streams associated to the diagonal blocks
$A_1,\dots A_d$ and pool stream for the Frobenius normal form, so
$\alpha(k)$ denotes the stream associated to the (gathered)
matrix sequence $A_k$. We define  $M$ to be the ungathered matrix
sequence for this stream.

By Remarks \ref{r:normalform},  \ref{r:fixedsize}
Proposition
\ref{p:gathermeasure},
  Corollary \ref{c:zeroand_k},   and 
Corollary \ref{c:gatheringdispersal} the operations of gathering,
alphabet permutation, and changing of the starting time  do not affect 
 the  collections of invariant Borel measures or 
distinguished eigenvector sequences. 

We claim that $M$ 
is  the eventually unique such  matrix
sequence (that is maximal, primitive, reduced and with positive
measure). 
Let 
$\alpha'(j)$ be a maximal primitive stream not in the list; then by 
Theorem \ref{t:StreamDecomp} there exists a stream
decomposition $\{\alpha'(1)\dots, \alpha'(d),\P\} $ including
$\alpha'(j)$ for some $1\leq j\leq d$,  and by Theorem
\ref{t:FrobDecomp}, the two stream collections $\alpha$ and $\alpha'$
for the two Frobenius normal  forms are eventually
equal. Hence if a stream $\alpha'(j)$ has positive measure for its
path space, then 
the streams $\alpha'(j)$ and $ \alpha(k)$ are eventually equal. 

The last part of $(i)$ is the bijection between measures. But due to
the uniqueness just proved, this now follows from  $(i)$ of Theorem
\ref{t:inv-meas}.

Part $(ii)$ is proved  using now part $(ii)$ of Theorem
\ref{t:inv-meas}, as everything remains valid for our choice $M\equiv A_k$.

Proof of $(iii)$:
 By taking
smaller alphabets $\A=(\A_i)_{i\geq 0}$,  we can assure that the matrix sequence ${M'}$ is reduced, see part $(i)$ of Remark
\ref{r:gen_matrices}.
We apply Theorem
\ref{t:FrobDecomp} to this
reduced sequence: we 
reorder $\A= (\A_i) _{i\geq 0}$ and
gather so that ${M'}$
is in 
fixed--size Frobenius normal form
with diagonal blocks $A_{1},\dots A_{\check l}$.

Since by hypothesis $\nu_{M'}$ is positive finite, by $(ii)$ of Theorem
\ref{t:inv-meas} there exists a unique primitive reduced  subblock
$A_k$ on the diagonal of ${M'}$
and a unique 
$\w\in\Ext \V_{A_k}$, with
$\nu_{M'}= \nu_{\w'}^{M'}$, where $\w'= \iota_{M'}(\w)$. 

Now we would like to be able to choose $M=A_k$ as the matrix $M\leq
M'$ with   $M'\leq \wh M.$ However, $M'$  does not itself appear as a diagonal block of $\wh M$, so
neither do its subblocks $A_l$.

This is where the use of the canonical cover matrix $\wt M$ will be
crucial.
We note that by Proposition \ref{p:gateringtower} the canonical cover
introduces a measure isomorphism.
A version of $M'$ will 
appear as a diagonal block of the cover matrix $\wt M$. By doing this
carefully we can make use of the fixed-size normal form already achieved for
$M'$ with $A_k$ as one of the diagonal blocks.

To do this, 
we   augment the alphabet sequence for ${M'}$ from $\A$ 
to $\wh \A$, as in part $(ii)$ of Remark
\ref{r:gen_matrices}. We then order $\wh \A$  by placing the symbols of $\wh
\A\setminus \A$ last. Now this augmented version, also called ${M'}$,
may no longer be reduced, but it is still in fixed-size
normal (upper triangular) form,  as these last symbols only add zero matrix elements,
including a zero block on
the diagonal.

Then we construct the
canonical cover matrix $\wt M=
\left[ \begin{matrix}
\wh M& C \\
0 & {M'}
\end{matrix}  \right] 
$ where $C= \wh M-{M'}$,
as in Theorem \ref{t:towerring}. The alphabet $\wh\A$ remains ordered
as above. We use this alphabet for the 
block $\wh M$ (that is, the original $\wh M$ has been conjugated by a sequence of
permutation matrices), but now we use a copy $\wh \A'$ of $\wh \A$ for
$M'$, so the alphabet for the cover matrix is 
$\wh
A\cup\wh \A'$. Thus the subblock 
${M'}$  is still in 
fixed--size Frobenius normal form.

We define $M$ to be the diagonal subblock  of $M'$, now  with augmented
alphabet $\wh A'$,  which corresponds to $A_k$; this  will be the claimed eventually unique
primitive reduced subsequence. As above, we have the unique
$\w\in\Ext \V_{M}$, with
$\nu_{M'}= \nu_{\w'}^{M'}$, where $\w'= \iota_{M'}(\w)$.
Of course this implies that $\w$ is $M'/M$--distinguished.

We claim that  $\nu$ itself is finite iff $\w'$ is $\wh M/{M'}$\,--\,
distinguished, iff $\w$ is $\wh M/M$\,--\, distinguished.
Now we know from $(ii)$ of Theorem \ref{t:basic_thm} that  $\nu$  being
finite ergodic is
equivalent to
$\nu= \nu_{\w''}$ for some $\w''\in \Ext \V_{\wh M}$. The proof will
be complete if we show that $\w''= \iota_{\wh M}(\w')= \iota_{\wh M}(\w)$.

At this point, we have   reordered the alphabets
$\wh \A$ and its copy $\wh \A'$ in the same way, so as to put the
subblock ${M'}$ of $\wt M$ in
fixed--size Frobenius normal form.
We next  apply Theorem
\ref{t:FrobDecomp}  to the subblock $\wh M$ so as to  put it in fixed--size Frobenius
normal form as well,  however this time 
reordering
$\wh \A$ {\em without} changing the order of $\wh \A'$,  so as to keep the work
already done there.

The result is that (after a second gathering) this second reordering
has   conjugated $\wt M$  via $Q$ 
of the form 
$Q= \left[ \begin{matrix}
P& 0 \\
0 & I
\end{matrix}  \right] $, where $P$ is a permutation matrix sequence, 
to
 $$\wt M'= Q\wt MQ^{-1}=\left[ \begin{matrix}
P\wh MP^{-1}& PC\\
0 & {M'}
\end{matrix}  \right] =\left[ \begin{matrix}
\wh M'& C' \\
0 & {M'}
\end{matrix}  \right] 
$$ in  fixed--size Frobenius normal form with
diagonal blocks $\wh A_0,\dots, \wh A_{\wh
  l}, A_{1},\dots A_{\check l}$. (These are all sequences, so this
means that for all $i\geq 0$,
$\wt M'_i= Q_i\wt M_iQ^{-1}_{i+1}$.)

Since we have not reordered $\wh \A'$, we no longer have that
$C'$ equals $\wh M'- {M'}$. Nonetheless, the nonnegative eigenvector sequence $\w''$ for
$\wt M$ determines that for $\wt M'$:  it 
is $Q \w''$. Since $\wt M'$ is in
fixed--size Frobenius normal form, we can apply $(ii)$ of Theorem
\ref{t:inv-meas}
to conclude that $Q\w''=  \iota_{\wt M'}(\w)$, where $\w$ is a
nonnegative eigenvector sequence for one of the diagonal subblocks  of  $\wh A_0,\dots, \wh A_{\wh
  l}, A_{1},\dots A_{\check l}$.  But this must be the subblock
$M=A_k$ of $M'$ already found above, by the uniqueness in part $(i)$ of Theorem
\ref{t:inv-meas}, applied now to $\wt M'$. (This only
required the diagonal primitive blocks being reduced, not the full
matrix; see Remark \ref{r:reducedremark}).

We write $\wt\A, \wt \A', \A_M$ for the alphabet sequences of $\wt M, \wt
M', M=A_k$ respectively.
Recalling from Definition \ref{d:dist} that $\mathtt{e}$ denotes  the
embedding of vector sequences, so  $\mathtt{e}_{\wt M'}: V_{\A_M} \to
 V_{\wt \A'}$ and $\mathtt{e}_{\wt M}: V_{\A_M} \to
 V_{\wt \A}$, 
we note that $\mathtt{e}_{\wt M'}= Q\mathtt{e}_{\wt M}$ since we are just permuting
the $\wh \A$-- coordinates of the embedding. Therefore
$$Q\w''=\iota_{\wt M'}(\w)\equiv \lim_{n\to\infty} (\wt M')^n \; \wt
\iota_{\wt M'} (\w)= \lim_{n\to\infty}  Q\wt M^n Q^{-1}Q
\mathtt{e}_{\wt M}(\w)=  Q\iota_{\wt M}(\w)$$ whence
$\w''=  \iota_{\wt M}(\w)$. So in particular, $\w$ is $\wt M/M$\,--\, distinguished.

We claim that also 
$\w''=  \iota_{\wt M}(\w')$. 
But the inverse of
$\iota$ is a projection, and (as in the proof of $(ii)$ of Theorem
\ref{t:inv-meas}), 
since $\nu$ restricts to $\nu_{M'}$ which in turn restricts to $\nu_{M}$,
also $\w''$ projects to $\w'$ on $\Sigma_{{M'}}^{0,+}$, which projects to
$\w$ on $\Sigma_{M}^{0,+}$, verifying the
claim. 

This shows  that 
 $\w'$ is $\wt M/{M'}$\,--\,
distinguished. But according to part $(ii)$ of Corollary  \ref{c:TwoDisting}, $\w'$ is
$\wt M/{M'}$\,--\,distinguished iff it is $\wh M/{M'}$\,--\,distinguished. 

Now we know that $\w$ is
$ {M'}/M$\,--\,distinguished and $\wt M/M$\,--\,distinguished. 
Therefore, by part $(iii)$ of Corollary  \ref{c:TwoDisting}, $\w$ is
$\wh M/M$\,--\,distinguished. 

The  gatherings we have employed do not affect these conclusions: from Corollary
\ref{c:gatheringdispersal}, $\w'$ and $\w$ are
distinguished for the original (non-gathered) sequence.

This completes the proof of $(iii)$. Part $(iv)$ follows the previous parts.

\end{proof}

\begin{exam}Integer Cantor sets (Nested odometers)\label{ex:nestedodometer}

  Let us consider our two models for the  Integer Cantor Set 
(see  Example \ref{exam:Cantor}) in the light of this theorem. The first model is the infinite measure $\nu$ on the
  triadic odometer, with 
  the constant matrix sequence $\wh M_k=
\left[ \begin{matrix}
3\\
\end{matrix}  \right] 
$ for all $k\geq 0, $ and its subset of the embedded dyadic
odometer, with matrix sequence $M\leq \wh M$ where
$M_k=
\left[ \begin{matrix}
2\\
\end{matrix}  \right] 
$. The second is the canonical cover of this, with matrix
sequence
$\wt M=
\left[ \begin{matrix}
\wh M& C \\
0 & M
\end{matrix}  \right] 
$
with $C$ satisfying 
$C_i=\wh M_i- M_i=[2],$ so $\wt M=\left[ \begin{matrix}
3 & 1\\
0 & 2
\end{matrix}  \right]$; 
see 
  Fig. ~\ref{F:ICSNew}.
For the first model, both $\wh M$ and $M$ are primitive matrix
sequences and hence give primitive path spaces. But the embeddings of the path space
$\Sigma_M ^{0,+}$ are completely different in the two larger spaces
$\Sigma_{\wh M}^{0,+}$ and $\Sigma_{\wt
  M}^{0,+}$.
In the cover space $\Sigma_{\wt
  M}^{0,+}$ it is open, indeed the tower $\Sigma_{\wt M/M}^{0,+}$ is
open dense in  $\Sigma_{\wt
  M}^{0,+}$, and $\Sigma_M ^{0,+}$ itself is the maximal primitive subset containing
$\Sigma_M ^{0,+}$,  while in $\Sigma_{\wh M}^{0,+}$ it is neither open
(its tower $\Sigma_{\wh M/M}^{0,+}$ is  a dense set with empty
interior in $\Sigma_{\wh M}^{0,+}$, see Examples \ref{exam:Cantor} and \ref{exam:Cantor_b}.)
nor the maximal primitive path space containing $\Sigma_M ^{0,+}$ (as
that is $\Sigma_{\wh M}^{0,+}$).

For part $(ii)$, considering the cover $\wt M$, then the subdiagram for $M$ is a maximal reduced
primitive subdiagram with {finite} measure (Bernoulli measure of the dyadic
odometer), and the eigenvector sequence for this is {\em not}
$\wt M/M$- distinguished, whence the tower measure is infinite.

Regarding the first model (with $M\leq \wh M$), part $(i)$ tells us that since the measure
is infinite on the open set $\Sigma_{\wh M}^{0,+}$, it is 
locally infinite. For the first model, part $(ii)$ does not say much
as $\wh M$ is
the maximal primitive subdiagram and the measure is infinite
there.

Now for this model part $(iii)$
is more appropriate and interesting: 
there is a subdiagram (that given by
the embedding of $M$) on which the measure is finite.
Thus part $(iii)$ for the first model corresponds to part $(ii)$ for
the second.

For an example of nested odometers which exhibits finite total measure, let $0<b_k$ such that $\sum_0^\infty b_k=
c<\infty$. Then for $a_k= e^{-b_k}$
define $n_k= \lceil 1/a_k \rceil$,  where this denotes the least integer greater than or
equal to $1/a_k$.

Then $1/n_k\leq a_k$, so 
we  have 
$0< \ep \equiv  e^{-c}=\Pi_0^\infty a_k\leq \Pi_0^\infty 1/n_k <1.$ 

Let $\wh M$ be 
 the $(1\times 1)$ matrix sequence defined for  $k\geq 0$ by $\wh M_k=
\left[ \begin{matrix}
 n_k\\
\end{matrix}  \right] 
$. The unique $\FC-$ invariant Borel measure on
$\Sigma_{\wh M}^{0,+}$ is nonstationary Bernoulli measure. That is,
take $\nu_k(\{e\})= 1/n_k$ for each edge $e\in \E_k= \{1,\dots n_k\}$ and set
$\nu= \otimes_0^\infty  \nu_k$, the product measure. Define $M\leq \wh M$ by removing one edge from
each matrix; thus, $ M_k=
\left[ \begin{matrix}
 n_k-1\\
\end{matrix}  \right] 
$.
We have removed the set with edge $n_k$ for each $k$, which by independence  has measure 
$\Pi_0^\infty1/n_k>\ep$.
Thus $0<\nu(\Sigma_M ^{0,+})<1-\ep $.

The unique $\FC_M$-invariant  measure on $\nu(\Sigma_M^{0,+})$ is a constant times  the restriction
$\nu_M$, and is strictly positive and $<1$. This gives a nonmaximal primitive stream
different from that of the unique maximal  stream $\wh M$.

We note that a similar construction produces a nested sequence
$\dots M^{(2)}\leq \dots M^{(1)}\leq M^{(0)}\leq \wh M$ such that
$\nu(\Sigma_{ M^{(k)}}^{0,+})$ decreases to any desired $c\geq
0$. These are
nested odometers on
 nested closed subsets whose intersection is given by a
subdiagram
which may have measure $0$.

\end{exam}

\begin{rem}\label{r:classification}

We note that statement $(ii)$ applies  immediately to the following more general situation: that there exists a gathering of $\wh M'$
of $ \wh M$, 
such that there exists $\M'\leq \wh M'$ as in  $(ii)$. This is more
general because e.g.~erasing a single edge in the diagram for $\wh M'$
gathered along the subsequence $(n_k)_{k\geq 0}$ 
at time $k$ 
corresponds to removing an edge cylinder set $[e_{n_k}\dots
e_{n_{k+1}-1}]$, not  a single edge (which is a larger set). A related point was made in the proof of Proposition
\ref{p:gathermeasure}. A concrete example is given by the nested rotations of
Example \ref{exam:nestedrot}, where the subshift defined by 
removing  edges from the diagram for the 
multiplicative continued fraction cannot be realized by removing edges
for the additive continued fraction.

By removing edges from nested subsequences which define gatherings
(i.e.~ by nested telescoping of the Bratteli diagrams) one can
produce  a sequence of gathered subdiagrams
such that their measure 
decreases to some $c\geq 0$, but
such that the intersection is a closed set which is {\em not} itself
given by a subdiagram. We mention that it could be interesting to further investigate
such examples, and to consider measures which are finite on some
closed subset.

Summarizing, we have completely analyzed the measures which are finite on some
subdiagram. Then,  
 if it is infinite, we can, by part $(iv)$ of the theorem,  look for a further subdiagram with 
$M'\leq
M$ such that $0<\nu(\Sigma_{M'}^{0,+})<\infty$.
\end{rem}

\

\medskip

We have as a corollary of Theorem \ref{t:inv-meas-subdiag}:

\begin{theo}\label{t:invariant-measures-towers}(Measure classification
  for adic towers)
Given nested  Bratteli diagrams $\mathfrak B_{ \A,\E,M}\leq \mathfrak B_{\wh \A,\wh\E,\wh M}$ with
bounded alphabet size, let
 $ \nu $ be an ergodic $\FC$\,--\,invariant
Borel measure on the adic tower ${\Sigma_{\wh M/M}^{0,+}}$. Then:

\item{(i)}The restriction $\nu_M$ of $\nu$ to ${\Sigma_{M}^{0,+}}$ is a 
positive ergodic invariant Borel measure, whose tower extension $\wh \nu_M$ equals $\nu$.
The measure $\nu$ is
 positive finite on some open set of the tower iff 
 $\nu_M$ is 
positive finite on some open set of $\Sigma_{M}^{0,+}$.
In this case there exists $ A $ primitive with $A\leq M\leq \wh M$
such that $\nu_A$ is positive finite, with $\nu$  the tower extension $\wh \nu_A$.

\item{(ii)} If $\nu $ (ergodic invariant on the tower) is positive finite on ${\Sigma_{A}^{0,+}}$ for some $A\leq
  \wh M$, then $A\leq M$ and 
 there exists a unique $\w\in \Ext\V_A$ such that $\nu_A= \nu_{\w}$ with
$\nu $  the tower measure over $\nu_{\w}$ on ${\Sigma_{A}^{0,+}}$. 
Furthermore $\nu$ has finite total mass iff $\w$ is $\wh M/A$\,--\,distinguished.

There exists $A_0\leq A$ primitive and $\w\in 
\Ext\V_{A_0}$ such that $\nu$ is the tower measure over $\nu_{\w}$ on 
$\Sigma_{A_0}^{0,+}$, with $\nu$ finite iff $\w$ is $\wh M/A_0$\,--\,distinguished.
\end{theo}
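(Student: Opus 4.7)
The plan is to reduce everything to Theorem \ref{t:inv-meas-subdiag} by extending $\nu$ from the tower to the ambient path space. Since by Proposition \ref{p:towerspace} the tower $\Sigma_{\wh M/M}^{0,+}$ is an $\FC_{\wh M}$-invariant Borel subset of $\Sigma_{\wh M}^{0,+}$, extending $\nu$ by assigning measure zero to its complement yields an ergodic $\FC_{\wh M}$-invariant Borel measure $\nu^*$ on $\Sigma_{\wh M}^{0,+}$. All the structural statements of Theorem \ref{t:inv-meas-subdiag} then apply to $\nu^*$, and the task is to translate them back to the tower.

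For part $(i)$, the bijective correspondence between $\FC$-invariant Borel measures on the base $\Sigma_M^{0,+}$ and on the tower $\Sigma_{\wh M/M}^{0,+}$ is given by parts $(i),(ii)$ of Theorem \ref{t:towermeasure}, providing $\nu_M$ positive ergodic invariant with tower extension equal to $\nu$. The equivalence of positive finiteness on some open set of the tower and of the base follows since open sets of the tower are countable unions of sets of the form $[.f_0\dots f_m]\cap\Sigma_{M^{(m)}}^{0,+}$, each bijectively equivalent via an element of $\FC_{\wh M}$ to a cylinder $[.e_0\dots e_m]_M\subseteq\Sigma_M^{0,+}$ of equal measure, as in the proof of Theorem \ref{t:towermeasure}. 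Applying $(ii)$ of Theorem \ref{t:inv-meas-subdiag} to the measure $\nu_M$ on the reduced diagram $\mathfrak B_M$ produces the maximal primitive reduced $A\leq M$ with $\nu_A$ positive finite, and the relation $\nu_M=\wh\nu_A$ extends to $\nu=\wh\nu_A$ on the whole tower by composing tower extensions.

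For part $(ii)$, suppose $0<\nu(\Sigma_A^{0,+})<\infty$ for some $A\leq \wh M$. The critical new step is to show that, after some finite time, $A\leq M$. Since $\nu^*$ is supported on $\Sigma_{\wh M/M}^{0,+}=\FC_{\wh M}(\Sigma_M^{0,+})$ while $\nu^*(\Sigma_A^{0,+})>0$, there must exist a path $\e\in\Sigma_A^{0,+}$ which is stable-equivalent to some path in $\Sigma_M^{0,+}$, hence whose vertex path eventually lies in the alphabet of $M$. Theorem \ref{t:invariantsubsets} applied inside $\Sigma_A^{0,+}$ shows $\e$ eventually accompanies a primitive stream of $A$, and by primitivity the whole stream is forced to coincide eventually with a stream of $M$; gathering makes this rigorous as $A\leq M$. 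Once $A\leq M$ is established, $\nu_A$ is ergodic $\FC_A$-invariant and positive finite, so Theorem \ref{t:basic_thm} yields a unique $\w\in\Ext\V_A$ with $\nu_A=\nu_\w^A$; that $\nu$ is the tower extension of $\nu_\w^A$ follows by composing the towers $A\leq M\leq \wh M$, and the distinguished dichotomy for finiteness of $\nu$ is exactly $(ii)$ of Theorem \ref{t:inv-meas-subdiag} applied to $\nu^*$ with the pair $A\leq \wh M$. For the sharper statement with primitive $A_0\leq A$, apply $(i)$ of Theorem \ref{t:inv-meas-subdiag} once more to $\nu_A$ on $\Sigma_A^{0,+}$: this yields an eventually unique maximal primitive reduced $A_0\leq A$ with $\nu_{A_0}$ ergodic positive finite and $\nu_A=\wh\nu_{A_0}$ inside $\Sigma_A^{0,+}$; the equivalence of the two distinguished conditions follows from Corollary \ref{c:TwoDisting}$(iii)$ applied to the chain $A_0\leq A\leq \wh M$, identifying $\w$ as $\iota_A(\w_0)$ for the unique $\w_0\in\Ext\V_{A_0}$.

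The main obstacle will be the argument that $A\leq M$ eventually. It intertwines three ingredients: the support of $\nu^*$ being concentrated on the tower, the primitive stream structure of $A$, and the description of the tower as the full $\FC_{\wh M}$-orbit of $\Sigma_M^{0,+}$. Making this precise requires care with the ``eventually'' qualifier and with the implicit gatherings needed to realize stream inclusions as matrix inclusions, but once it is in place the remaining translation from $\nu^*$ on the ambient diagram down to $\nu$ on the tower, and the propagation of the distinguished property along $A_0\leq A\leq\wh M$, is essentially bookkeeping built from Theorems \ref{t:inv-meas-subdiag} and \ref{t:towermeasure} and Corollary \ref{c:TwoDisting}.
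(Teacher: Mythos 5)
Your overall architecture is essentially the paper's: the paper proves this theorem by citing parts $(ii)$ and $(iii)$ of Theorem \ref{t:covermeasure} (which rest on Theorem \ref{t:towermeasure}) for the base--tower reduction, and then states that the remaining claims follow from Theorem \ref{t:inv-meas-subdiag}. Your zero-extension $\nu^*$ to $\Sigma_{\wh M}^{0,+}$ and your cylinder argument for the open-set finiteness equivalence just re-derive the relevant pieces of Theorems \ref{t:towermeasure} and \ref{t:covermeasure}, and part $(i)$ goes through as you describe (modulo passing to the reduced form of $M$ and citing $(i)$ together with $(ii)$ of Theorem \ref{t:inv-meas-subdiag}).

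The genuine gap is exactly where you locate the ``critical new step'': the derivation of ``$A\leq M$ after some finite time'' in part $(ii)$. Knowing that one path of $\Sigma_A^{0,+}$ -- or, by ergodicity of $\nu_A$, almost every path -- eventually uses only edges of $\E$ does not constrain the other edges of $A$: a path accompanying a primitive stream of $A$ need not visit all of that stream, and at each fixed time a positive-measure set of $A$-paths may use edges outside $\E$ while almost every path does so only finitely often (a Borel--Cantelli situation). So the step ``by primitivity the whole stream is forced to coincide eventually with a stream of $M$'' does not follow, and in fact no argument of this shape can close the gap: in the nested odometer setting of Example \ref{ex:nestedodometer}, with $\wh M_k=[n_k]$, $M_k=[n_k-1]$ and $\sum 1/n_k<\infty$, the tower measure $\nu=\wh\nu_M$ is finite, so taking $A=\wh M$ the hypothesis of $(ii)$ holds although $A_k\not\leq M_k$ for every $k$; the hypothesis only yields that $\nu_A$ is concentrated on $\Sigma_A^{0,+}\cap\Sigma_{\wh M/M}^{0,+}$. (The paper's own terse proof never derives this clause either, so it has to be read with that caveat.) The remaining conclusions of $(ii)$ do not require any comparison of $A$ with $M$: apply Theorem \ref{t:inv-meas-subdiag}$(iii)$ to $\nu^*$ with $M'=A$, which directly gives the unique $\w\in\Ext\V_A$ with $\nu_A=\nu^{A}_{\w}$, the eventually unique maximal primitive $A_0\leq A$ with $\w=\iota_A(\w_0)$, and the equivalences ``$\nu$ finite iff $\w$ is $\wh M/A$-distinguished iff $\w_0$ is $\wh M/A_0$-distinguished''; this is what the paper means by ``the remaining statements follow from Theorem \ref{t:inv-meas-subdiag}'', and it replaces both your stream argument and your separate appeal to Corollary \ref{c:TwoDisting}$(iii)$, which is already absorbed into the proof of \ref{t:inv-meas-subdiag}$(iii)$.
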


\begin{proof}
  
From $(ii)$ of Theorem \ref{t:covermeasure}, since any invariant
measure $\nu$ on the tower gives positive mass to ${\Sigma_{M}^{0,+}}$, $\nu$
is the tower measure over  $\nu_M$.
From $(iii)$ of Theorem \ref{t:covermeasure}, $\nu$ is
 positive finite on some open set of ${\Sigma_{\wh M/M}^{0,+}}$ iff 
 $\nu_M$ is 
positive finite on some open set of ${\Sigma_{M}^{0,+}}$. The remaining
statements follow from 
Theorem \ref{t:inv-meas-subdiag}.
\end{proof}

Now we return to the problem of estimating the number of ergodic
measures. Making use of the Frobenius decomposition, we get a new
proof of Proposition \ref{p:extremepts} with a generally better upper
bound, and now can also include the infinite measures.

We then have from Theorems \ref{t:inv-meas} and \ref{t:inv-meas-subdiag}:
\begin{cor}\label{c:secondcount}
(Counting the finite and infinite ergodic central measures) Let $(M_i)_{i\geq 0}$
  be a sequence   of  nonnegative integer
matrices with bounded alphabet size. Without loss of generality, 
assume these are in fixed--size  Frobenius
normal form, with (zero or primitive) diagonal blocks $A_k$ of size
$\wt l_k$, for $1\leq
k\leq \wh l$.

Then the number of  finite or infinite central measures 
(i.e.~ the conservative ergodic $\FC_M-$invariant 
   measures which are positive on some open subset),  determined up to
   multiplication by a positive constant, is equal to $\sum_{A_k\neq \0}\#\Ext\V_{A_k}^\Delta\leq
 \sum_{k=1}^{\wh l} \wt l_k\leq \liminf l_n$. The number of (finite)
 central
measures  equals the 
number of distinguished extreme points $\#\Ext\V_M=\sum_{A_k\neq \0}\#\Ext\V_{A_k
  dist}^\Delta$. The number of infinite central measures equals the 
number of nondistinguished extreme points.
\ \ \qed \end{cor}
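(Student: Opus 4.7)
The plan is to apply Theorem \ref{t:inv-meas} directly, reducing the count on $\Sigma_M^{0,+}$ to a disjoint union of counts on the primitive diagonal-block path spaces $\Sigma_{A_k}^{0,+}$, and then to invoke Theorem \ref{t:basic_thm} (with Lemma \ref{l:finite_on_all}) to replace each such count by $\#\Ext\V_{A_k}^\Delta$. First, let $\nu$ be any ergodic $\FC_M$-invariant Borel measure which is positive finite on some open subset of $\Sigma_M^{0,+}$. By part $(i)$ of Theorem \ref{t:inv-meas} there is a unique index $k$ (necessarily with $A_k\neq 0$) such that the restriction $\nu_{A_k}$ to $\Sigma_{A_k}^{0,+}$ is positive and ergodic for $\FC_{A_k}$, and $\nu$ is recovered as its tower extension $\wh\nu_{A_k}$. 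By part $(ii)$ of Theorem \ref{t:inv-meas}, the hypothesis that $\nu$ be positive finite on some open subset of $\Sigma_M^{0,+}$ is equivalent to $\nu_{A_k}$ being positive finite on $\Sigma_{A_k}^{0,+}$, and by Lemma \ref{l:finite_on_all} this automatically forces $\nu_{A_k}$ to be positive finite on \emph{all} nonempty open subsets. Therefore $\nu_{A_k}$ (suitably normalized) is a central probability measure on $\Sigma_{A_k}^{0,+}$.

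Next, by Theorem \ref{t:basic_thm} applied to the reduced primitive sequence $A_k$, the ergodic $\FC_{A_k}$-invariant probability measures on $\Sigma_{A_k}^{0,+}$ are in bijection with $\Ext\V_{A_k}^\Delta$; since we are counting up to multiplication by a positive constant, the correspondence identifies the ergodic central measures on $\Sigma_M^{0,+}$ with
\[
\bigsqcup_{\,A_k\neq 0} \Ext\V_{A_k}^\Delta,
\]
yielding the first equality. For the stated upper bounds, I would apply Proposition \ref{p:extremepts} to each primitive block $A_k$ separately: this gives $\#\Ext\V_{A_k}^\Delta \leq \wt l_k$ (since in fixed-size Frobenius form the block $A_k$ has constant alphabet of size $\wt l_k$), and then $\sum_{k=1}^{\wh l}\wt l_k$ equals the common row/column length of $M$, which is bounded above by $\liminf l_n$ for the original sequence (gathering never increases $\liminf$ of alphabet sizes).

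Finally, the finite/infinite dichotomy is read off from part $(ii)$ of Theorem \ref{t:inv-meas}: the total mass $\nu(\Sigma_M^{0,+})$ is finite precisely when the corresponding $\w\in\Ext\V_{A_k}^\Delta$ is $M/A_k$-distinguished, i.e.\ when $\w\in\Ext\V_{A_k\,\mathrm{dist}}^\Delta$. Summing over $k$ gives the count $\sum_{A_k\neq 0}\#\Ext\V_{A_k\,\mathrm{dist}}^\Delta$ for the finite central measures, and the remaining nondistinguished extreme points yield exactly the (locally finite but globally) infinite ergodic central measures.

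The only substantive point requiring care is the assertion that $\Ext\V_{A_k\,\mathrm{dist}}^\Delta$ sits inside $\Ext\V_{A_k}^\Delta$ so that the subtraction $\#\Ext\V_{A_k}^\Delta - \#\Ext\V_{A_k\,\mathrm{dist}}^\Delta$ genuinely counts the infinite ergodic measures; this I would verify by noting that for primitive $A_k$ each element $\w\in\V_{A_k}^\Delta$ is strictly positive (by part $(iv)$ of Lemma \ref{l:inductive eigenvectors}), so distinguishedness is an open-ended condition stable under convex combinations in the opposite direction, and a distinguished extreme ray of $\V_{A_k\,\mathrm{dist}}^\0$ remains extreme in $\V_{A_k}^\0$. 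The rest is bookkeeping from the already-established bijections.
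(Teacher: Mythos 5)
Your proof is correct and takes essentially the same route as the paper, which obtains the corollary directly from Theorem \ref{t:inv-meas} (unique block $A_k$ with positive restriction, tower extension, finite iff the extreme sequence is $M/A_k$--distinguished) combined with Theorem \ref{t:basic_thm}, Lemma \ref{l:finite_on_all}, Proposition \ref{p:extremepts} and the Frobenius--Victory bijection, exactly the ingredients you assemble. One small caveat: your parenthetical that ``gathering never increases $\liminf$ of alphabet sizes'' is backwards (gathering passes to a subsequence of the vertex alphabets, which can only raise the $\liminf$), but this does no harm here because under the corollary's standing assumption that $(M_i)$ is already in fixed--size Frobenius form the sizes $l_n$ are constant and equal to $\sum_{k}\wt l_k$, so the inequality $\sum_{k}\wt l_k\leq\liminf l_n$ is immediate without that remark.
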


\subsection{Examples}\label{ss:examples}
 
Statement \eqref{eq:1b} below (that the associated measure is finite
iff this series converges) is independently presented in equation (6.10) of
\cite{BezuglyiKwiatkowskiMedynetsSolomyak13}. (Their series is
equivalent to ours after a change of starting time; transposed matrices
are used throughout that paper, so the order of multiplication is
reversed, and one has lower rather than  upper
triangular Frobenius form). We give two
proofs, the first an application of our definition of distinguished
eigenvector sequences and  the general result Theorem
\ref{t:inv-meas}, the second geometric, in Remark
\ref{r:geometric}.  The geometry is simple because we are in the 
$(2\times 2)$ case. In fact this geometric
argument, once reformulated abstractly,  led to the general approach
including the above notion of distinguished.

 \begin{lem}\label{l:exampleconvergence}
Let $
N_i=
\left[ \begin{matrix}
a_i& c_i \\
0 & b_i
\end{matrix}  \right] $ for $i\geq 0$ with real entries $a_i,
b_i>0$ and $c_i\geq 0$.
Writing $a_0^k= a_0 a_1 \cdots a_k $ and
$b_0^k= b_0 b_1 \cdots b_k $, we
define $\w_0=1$, $\w_{k+1}= (b_0^k)^{-1}$; this is the unique normalized nonnegative eigenvector sequence with eigenvalue $1$ for 
$[b_i]_{i\geq0}$. 
Then $\iota(\w)$ exists (i.e.~ for
$
 N=\left[ \begin{matrix}
A& C\\
0& B
\end{matrix}  \right]
$,
$\w$ is an $N/B$- distinguished
eigenvector sequence) iff
\begin{equation}
  \label{eq:1a}
  \sum_{k\geq 0}\frac{a_0^k}{b_0^k}\frac{c_k}{a_k}<\infty.
\end{equation}

In the special case where $a_i\geq b_i$ and $c_i= a_i-b_i$, then
\eqref{eq:1a}
converges iff \[\limsup \frac{a_0^n}{b_0^n}<\infty.\]
 \end{lem}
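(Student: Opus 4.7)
The plan is to apply part $(iii)$ of Lemma \ref{l:conditions}, which reduces the question to the existence of $\lim_{n\to\infty} \wh C_i^n \w_{n+1}$ for $i \geq 0$, once we have checked that $\w \in \V_B$ (this is immediate since $b_k \w_{k+1} = b_k \cdot (b_0^k)^{-1} = (b_0^{k-1})^{-1} = \w_k$). The bulk of the work will then be an explicit computation of $\wh C_i^n \w_{n+1}$ using the closed-form expression \eqref{eq:C-hat2}. Since all matrices involved are $1\times 1$, the sum simplifies to
\[
\wh C_i^n = \sum_{k=i}^n a_i^{k-1} c_k b_{k+1}^n,
\]
where $a_i^{k-1} = a_i a_{i+1}\cdots a_{k-1}$ (with the convention that $a_i^{i-1} = 1$) and similarly for the $b$'s.

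Multiplying by $\w_{n+1} = 1/b_0^n$, I will use the identity $b_{k+1}^n/b_0^n = 1/b_0^k$ together with $a_i^{k-1} = a_0^{k-1}/a_0^{i-1}$ to rewrite, for $i=0$,
\[
\wh C_0^n \w_{n+1} = \sum_{k=0}^n \frac{a_0^{k-1} c_k}{b_0^k} = \sum_{k=0}^n \frac{a_0^k}{b_0^k}\cdot\frac{c_k}{a_k}.
\]
This is a series of nonnegative terms, hence monotone in $n$, so the limit at $i = 0$ exists (as a finite number) precisely when \eqref{eq:1a} holds. By Proposition \ref{p:distsubsequence}, convergence for one $i$ (in fact infinitely many) suffices to conclude that $\w$ is $N/B$-distinguished; moreover, the tail-equivalence of the series for different $i$ gives this directly, since $\wh C_i^n \w_{n+1} = (a_0^{i-1})^{-1} \sum_{k=i}^n a_0^{k-1} c_k / b_0^k$ is just a tail of the $i=0$ series up to a positive constant.

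For the special case $a_i \geq b_i$ and $c_i = a_i - b_i$, I will invoke part $(i)$ of Lemma \ref{l:A-B}, which gives $\wh C_i^n = a_i^n - b_i^n$, so that
\[
\wh C_0^n \w_{n+1} = \frac{a_0^n - b_0^n}{b_0^n} = \frac{a_0^n}{b_0^n} - 1.
\]
The assumption $a_i \geq b_i$ makes the ratio $a_0^n/b_0^n$ nondecreasing in $n$, since $a_0^{n+1}/b_0^{n+1} = (a_0^n/b_0^n)(a_{n+1}/b_{n+1})$. Hence the monotone limit $\lim_n a_0^n/b_0^n$ exists in $(0,\infty]$, and its finiteness is equivalent to $\limsup_n a_0^n/b_0^n < \infty$, yielding the second assertion. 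The main obstacle, such as it is, is simply the bookkeeping with the partial products $a_i^{k-1}$, $b_{k+1}^n$, and $b_0^n$: one must carefully track the empty-product conventions to arrive at the asserted form of the series and to confirm that the convergence criterion is time-index independent.
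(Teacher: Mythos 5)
Your proposal is correct and follows essentially the same route as the paper: reduce via Lemma \ref{l:conditions} to the convergence of $\wh C_i^n \w_{n+1}$, compute that this equals (up to a positive constant depending on $i$) the partial sums of $\sum \frac{a_0^k}{b_0^k}\frac{c_k}{a_k}$, and handle the special case through Lemma \ref{l:A-B} and the monotonicity of $a_0^n/b_0^n$. The only cosmetic differences are that you invoke the closed form \eqref{eq:C-hat2} directly where the paper re-derives the recursion \eqref{eq:C-hat3} inside the proof, and you use part $(i)$ of Lemma \ref{l:A-B} where the paper cites part $(ii)$.
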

  \begin{proof}

From Definition \ref{d:dist},
$\w= (\w_i)_{i\geq 0}$ is a
distinguished
eigenvector sequence iff
 for all ${i\geq 0}$, 
$\iota(\w)_i= \lim_{n\to +\infty}  N_i^n (\mathtt{e}_k \w)_{n+1}$ exists.
 Defining 
$\wh c_i^n$ by
$$N_i^n=
\left[ \begin{matrix}
a_i^n& \wh c_i^n \\
0 & b_i^n
\end{matrix}  \right],
$$
then equivalently $\wh c_i^i= c_i$, and for $n>i$
\begin{equation}
  \label{eq:induction}
  \wh c_i^{n+1} =a_i^n c_{n+1} + \wh c_i^n b_{n+1}
\end{equation}
 (compare \eqref{eq:C-hat3}). For $i=0$ we have
$$
N_0^n 
\left[ \begin{matrix}
0\\
\w_{n+1}
\end{matrix}  \right]=
\left[ \begin{matrix}
\wh c_0^n ( b_0^n)^{-1}\\
1
\end{matrix}  \right]
$$
and by induction, using \eqref{eq:induction}, we get 
\begin{equation}
  \label{eq:partialsum}
 \wh c_0^n ( b_0^n)^{-1}= \sum_{k=
  0}^n\frac{a_0^k}{b_0^k}\frac{c_k}{a_k}.
\end{equation}

Similarly, for $i\geq 0$
\begin{equation*}
    N_i^n \left[ \begin{matrix}
0\\
\w_{n+1}
\end{matrix}  \right]=
\left[ \begin{matrix}
\wh c_i^n ( b_i^n)^{-1}\\
(b_0^{i-1})^{-1}
\end{matrix}  \right]
\end{equation*}
and now
\begin{equation}
  \label{eq:3}
  \wh c_i^n ( b_i^n)^{-1}=
\sum_{k=i}
 ^n\frac{a_i^k}{b_i^k}\frac{c_k}{a_k},
\end{equation}
and since this converges as $n\to \infty$ iff
the sum for $i=0$ does, we indeed need only check \eqref{eq:1a}.

For the special case, by $(ii)$ of Lemma \ref{l:A-B}, $\w$ is distinguished iff 
$\lim {a_0^n}/{b_0^n}$ exists (and is finite), but since
$a_i/b_i\geq 1$ this sequence is nondecreasing so the statement for
the $\limsup$ is equivalent to this.

\end{proof}
 
That was the statement for real matrices; we next draw the consequence
for integer matrices and hence for adic transformations:

\begin{prop}\label{p:invariantmeasure2x2}
  Let $
M_i=
\left[ \begin{matrix}
a_i& c_i \\
0 & b_i
\end{matrix}  \right] $ for $i\geq 0$ with integer entries $a_i,
b_i>0$ and $c_i\geq 0$. We consider the 
$\FC$\,--\,invariant
Borel 
measures which are finite positive on some open subset of $\Sigma^{0,+}_M$.
Writing $a_0^k= a_0 a_1 \cdots a_k $ and
$b_0^k= b_0 b_1 \cdots b_k $,
then if 
\begin{equation}
  \label{eq:1b}
  \sum_{k\geq 0}\frac{a_0^k}{b_0^k}\frac{c_k}{a_k}<\infty,
\end{equation}
there are  exactly two such ergodic
invariant probability measures;
if the sum is infinite, there is  
one such ergodic invariant probability measure and one such (up to
multiplication by a constant)
$\sigma$\,--\,finite  infinite conservative ergodic invariant
measure.

In the special case where $a_i\geq b_i$ and $c_i= a_i-b_i$, then
\eqref{eq:1b}
converges iff \[\limsup \frac{a_0^n}{b_0^n}<\infty.\]
 \end{prop}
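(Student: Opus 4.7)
The plan is to apply Theorem \ref{t:inv-meas} together with Lemma \ref{l:exampleconvergence}; the matrix sequence $M$ is already in fixed--size $(2\times 2)$ Frobenius normal form. Since $a_i, b_i > 0$, both diagonal blocks $A = ([a_i])_{i\geq 0}$ and $B = ([b_i])_{i\geq 0}$ are $1\times 1$, reduced, and trivially primitive, and each admits a unique normalized nonnegative eigenvector sequence of eigenvalue one (namely $\w^A_k = 1/a_0^{k-1}$ and $\w^B_k = 1/b_0^{k-1}$, with $\w^A_0 = \w^B_0 = 1$). By Theorem \ref{t:basic_thm} these correspond to the unique ergodic $\FC$-invariant probability measures $\nu^A_{\w^A}$ on $\Sigma_A^{0,+}$ and $\nu^B_{\w^B}$ on $\Sigma_B^{0,+}$. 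By Theorem \ref{t:inv-meas}(i) every ergodic $\FC_M$-invariant Borel measure on $\Sigma_M^{0,+}$ positive on some open set restricts to a positive ergodic measure on exactly one of the two block subshifts, which must therefore be one of these two; so there are precisely two candidate ergodic central measures, and it remains only to decide which are finite.

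For $\w^A$, the embedding $\mathtt{e}(\w^A) = (\w^A, \0)$ is already a fixed point of $N$, so $\iota(\w^A) = \mathtt{e}(\w^A)$ exists trivially, $\w^A$ is automatically $M/A$-distinguished, and by Theorem \ref{t:inv-meas}(ii) the associated measure on $\Sigma_M^{0,+}$ is a probability measure. For $\w^B$, Theorem \ref{t:inv-meas}(ii) says the associated measure is finite iff $\w^B$ is $M/B$-distinguished, which by Lemma \ref{l:exampleconvergence} is equivalent to convergence of $\sum_{k\geq 0}(a_0^k/b_0^k)(c_k/a_k)$.

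When the series converges, this yields two ergodic invariant probability measures as claimed. When it diverges, $\w^B$ is not distinguished and the invariant extension of $\nu^B_{\w^B}$ to the tower $\Sigma_{M/B}^{0,+}$, guaranteed by Theorem \ref{t:towermeasure}(i), has infinite total mass; it is ergodic and conservative by Theorem \ref{t:towermeasure}(ii) (the base being a finite invariant measure, automatically conservative), and $\sigma$-finite because the tower is exhausted by the increasing clopen subsets $\Sigma_{M^{(m)}}^{0,+}$, each of finite measure. The special case follows from the second half of Lemma \ref{l:exampleconvergence}: when $a_i \geq b_i$ and $c_i = a_i - b_i$ the sequence $(a_0^n/b_0^n)$ is monotone nondecreasing so $\limsup$ coincides with $\lim$, and the convergence criterion reduces to $\limsup a_0^n/b_0^n < \infty$. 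There is no substantive obstacle beyond bookkeeping; all of the substance has been packaged into Theorem \ref{t:inv-meas} and Lemma \ref{l:exampleconvergence}.
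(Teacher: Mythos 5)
Your proposal is correct and follows essentially the same route as the paper's proof: identify the two $1\times1$ primitive diagonal blocks, take their unique (Bernoulli) eigenvector sequences, invoke Theorem \ref{t:inv-meas} to reduce finiteness of the tower measure over the $[b]$-block to the distinguished property, and then apply Lemma \ref{l:exampleconvergence} for the series criterion and the special case. The only difference is cosmetic (you spell out the $\sigma$-finiteness and conservativity of the tower measure slightly more explicitly); the paper additionally records an alternative geometric nested-cone argument in Remark \ref{r:geometric}, which your write-up does not need.
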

  \begin{proof}
     Considering first the $(1\times 1)$ matrix sequence $[a]=[a_i]_{i\geq
      0}$, with associated nonstationary edge shift space 
$\Sigma_{[a]}^{0,+}= \Pi_0^\infty \{1,\dots,a_i\}$,
then there is a unique ergodic $\FC_{[a]}-$ invariant
probability measure 
$\nu_{[a]}$.

This is 
nonstationary Bernoulli (product)  measure
with equal weights $1/a_i$.
There are two cases here: if $a_i=1$ except for finitely many $1$,
then $\Sigma_{[a]}^{0,+}$ is a finite set, with $\Pi_0^\infty a_i$ point masses
permuted by $\FC_{[a]}$; 
otherwise $\Sigma_{[a]}^{0,+}$ is a Cantor set (a nonstationary
odometer, see Example 1 of~\cite{Fisher09a}).

Considering  $M_i=\left[ \begin{matrix}
a_i& c_i \\
0 & b_i
\end{matrix}  \right] $,  since $\nu_{[a]}$ extended to $\Sigma_M^{0,+}$ is also $\FC_M$\,--\, invariant, and is ergodic, we have our
first measure. 

By the same reasoning, for the $(1\times 1)$ matrix sequence $[b]=[b_i]_{i\geq0}$
  there is a unique ergodic 
$\FC_{[b]}-$ invariant
probability measure 
$\nu_{[b]}$ on $\Sigma_{[b]}^{0,+}$. By part $(ii)$ of Theorem  \ref{t:inv-meas},  the second conservative ergodic $\FC_M$\,--\, invariant 
measure on $\Sigma_M^{0,+}$ which is positive on some open subset is
the tower measure $\wh\nu_{[b]}$ over the clopen set and tower base
base $(\Sigma_{[b]}^{0,+}, \nu_{[b]})$.

The unique normalized nonnegative eigenvector sequence with eigenvalue $1$ for
$[b]$ 
is $\w= (\w_i) _{i\geq0}$ with $\w_0=1$, $\w_{k+1}= (b_0^k)^{-1}$. 
From Theorem \ref{t:inv-meas}, 
$\wh\nu_{[b]}$ is a finite measure iff $\w= (\w_i)_{i\geq 0}$ is a
distinguished
eigenvector sequence.
Applying the Lemma concludes the first proof.
\end{proof}
 
  \begin{rem}\label{r:geometric}
   Here is the promised geometric proof of  Proposition \ref{p:invariantmeasure2x2}. Recalling that $C_n$ denotes
   the positive cone of $\R^2$ at time $n$, the extreme rays of the nested cones 
$M_0^n C_n$ are generated by the vectors 
$
M_0^n
\left[ \begin{matrix}
1\\
0
\end{matrix}  \right] 
$ and
$
M_0^n
\left[ \begin{matrix}
0\\
1
\end{matrix}  \right] 
;$
the first gives, projectively, the vector
$\left[ \begin{matrix}
1\\
0
\end{matrix}  \right] 
$ while the second is projectively the same as
$
M_0^n
\left[ \begin{matrix}
0\\
\w_{n+1}
\end{matrix}  \right] 
.$
So the nested cones have as their intersection a single ray
iff
the slope of this last vector approaches
zero. This slope is the inverse of the sum in \eqref{eq:partialsum},
so goes to $0$ iff the sum is infinite.
We have to check this also starting at time $i$, in which case the sum
is that in \eqref{eq:3}, and as above this converges iff the sum
starting at $0$ does.
Hence, by Proposition \ref{p:extremepts}, we have a single nonnegative eigenvector
sequence of eigenvalue one, and so a single probability measure,
leading to the same conclusion as for the previous proof.
  \end{rem}
Here are some simple examples:
  \begin{cor} If
in  Proposition \ref{p:invariantmeasure2x2} for all $k$ we have 
$a_k=a, b_k=b$ and $0<\kappa_1<c_k<\kappa_2<\infty$,
then if $a<b $ there exist  exactly two 
   ergodic invariant probability measures which are finite positive on
   some open subset of $\Sigma^{0,+}_M$; if $a\geq b$
  there exists  exactly one such probability measure 
 and  (up to normalization) one such $\sigma$\,--\,finite infinite conservative ergodic
  invariant Borel measure on  $\Sigma^{0,+}_{M}$.

  If $a_k=a=b_k=b$ (but not assuming the bounds on $c_k$), 
  then this second  measure is finite iff $\sum c_k<\infty$.
  \end{cor}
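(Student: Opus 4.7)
The plan is to reduce this corollary directly to Proposition \ref{p:invariantmeasure2x2}: it suffices to analyze the convergence of the series
$$\sum_{k\geq 0}\frac{a_0^k}{b_0^k}\frac{c_k}{a_k}$$
under the stationary hypotheses on $a_k$ and $b_k$, since that proposition already establishes the dichotomy between finite and infinite measure in terms of this criterion.

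First I would observe that with $a_k = a$ and $b_k = b$ constant, the partial products simplify: $a_0^k = a^{k+1}$ and $b_0^k = b^{k+1}$, so $a_0^k/b_0^k = (a/b)^{k+1}$, and the series becomes
$$\sum_{k \geq 0}(a/b)^{k+1}\,\frac{c_k}{a}.$$
The hypothesis $0 < \kappa_1 < c_k < \kappa_2 < \infty$ bounds $c_k/a$ between two positive constants, so by the comparison test, convergence of this series is equivalent to convergence of the geometric series $\sum_{k\geq 0}(a/b)^{k+1}$, which converges if and only if $a/b < 1$, i.e., $a < b$. Applying Proposition \ref{p:invariantmeasure2x2}: in the convergent case ($a < b$) there are exactly two ergodic invariant probability measures positive finite on some open subset, while in the divergent case ($a \geq b$) there is exactly one such probability measure and, up to normalization, one $\sigma$-finite infinite conservative ergodic invariant Borel measure.

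For the special case $a_k = a = b_k = b$, the factor $(a/b)^{k+1}$ equals $1$ for every $k$, so the series reduces to $\sum_{k \geq 0} c_k/a$, whose convergence is equivalent to $\sum_{k\geq 0} c_k < \infty$, as claimed. Note that the bounds on $c_k$ play no role here; only $c_k \geq 0$ is needed. I do not anticipate any substantive obstacle: the argument is essentially the combination of Proposition \ref{p:invariantmeasure2x2} with the comparison test and the geometric series formula.
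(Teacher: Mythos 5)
Your proof is correct and is exactly the intended argument: the paper states this corollary without a separate proof, leaving it as the immediate specialization of Proposition \ref{p:invariantmeasure2x2}, where $a_0^k/b_0^k=(a/b)^{k+1}$ and the bounds on $c_k$ reduce the criterion \eqref{eq:1b} to the geometric series (respectively to $\sum c_k<\infty$ when $a=b$).
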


  \begin{rem}
     The second possibility above can be visualized as two
  systems 
  (take say $a=b=2$, giving two odometers) hooked together by an arrow
  with
  nonstationary probabilities
  $c_k$ of the mass ``leaking'' over from the second to the first
  system.

One can imagine generalizing this example to model two stationary dynamical
systems with nonstationary  communication from the first to the second.
 \end{rem}

  As a next example we reprove a result of ~\cite{Fisher92}: the   Integer Cantor Set
transformation satisfies the following:

\begin{defi} We say a homeomorphism of a Polish space is  {\em infinite measure
uniquely ergodic} if
  there is  up to normalization a unique infinite  invariant Borel measure
  which is positive on
some open set.
\end{defi}

\begin{exam}(the Integer Cantor Set inside the  3-adic odometer)\label{exam:Cantor_b}
As in Example \ref{exam:Cantor}, see Fig.~\ref{F:ICSNew}, we take 
 $\wh M_i= [3]$ for all $i\geq 0$, with edge alphabet $\wh \E_i= 
\{a,b,c\} $ and $M_i=[2]$ with $\E_i= \{a,c\}$.  The canonical cover
matrix is 
$\wt M=\left[
\begin{matrix}
\wh M& C \\
0 & M
\end{matrix}  \right] 
=\left[
\begin{matrix}
3& 1 \\
0 & 2
\end{matrix}  \right] 
.$ 
Now $\V_M^\Delta$ has the single element, $\w= (\w_0\w_1\dots)$ with $\w_n=
2^{-n}$; then by Lemma \ref{l:exampleconvergence}, or directly by checking the criterion of Lemma \ref{l:A-B} part $(ii)$, $\w$ is not $\wh M/M$-
distinguished; indeed, 
$\lim_{n\to\infty} \wh M_i^n \w_{n+1}=
3^{-i}\lim_{n\to\infty} 3^n
2^{-n}= +\infty$.

The $\FC_{\wh M}$-
orbit of $\Sigma_M^{0,+}$ is the tower $\Sigma_{\wh  M/M}^{0,+}$,
a dense subset of $\Sigma_{\wh  M}^{0,+}$. 

Since $\w$ is not distinguished, by  Theorem \ref{t:inv-meas}, and as in Proposition \ref{p:invariantmeasure2x2},   the tower  measure $\nu= \nu_{\iota(\w)}$ on $\Sigma_{\wh  M/M}^{0,+}$
is infinite on every open subset of the triadic odometer, $\Sigma_{\wh  M}^{0,+}$, and is finite on the  compact 
subset $\Sigma_M^{0,+}$ (the dyadic odometer).

By part $(iii)$ of Theorem \ref{t:inv-meas-subdiag}, $\nu$  is up to normalization the unique
invariant Borel measure which is positive finite on an open set for the adic
cover tower $\Sigma_{\wt  M/M}^{0,+}$,
and by Theorem \ref{t:covermeasure} also for the adic tower
$\Sigma_{\wh  M/M}^{0,+}$. These towers are infinite measure uniquely ergodic.

\end{exam}

By the special case in the Proposition, the ICS example  generalizes to:

\begin{cor}(Nested nonstationary odometers)
Consider two nested odometers, the first the $(1\times 1)$
nonnegative integer 
matrix sequence $\wh M$ with $\wh M_i= [a_i]$ and the second $M\leq \wh M$  with 
  $M_i= [b_i]$ (so $b_i\leq a_i)$.
Then the tower  $\Sigma_{\wh M/M}^{0,+}$ has up to normalization a single
invariant ergodic measure which is positive on some (in fact all) open
sets; this is locally finite and inner regular, and is infinite $\sigma$\,--\,finite
iff  \[\limsup \frac{a_0^n}{b_0^n}=\infty.\]
On the space $\Sigma_{\wh M}^{0,+}$ the corresponding  measure is
infinite  on every open subset.
\end{cor}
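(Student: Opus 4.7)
The plan is to reduce this directly to Proposition \ref{p:invariantmeasure2x2} by building the canonical cover of $M \leq \wh M$, since the hypotheses of that proposition are precisely the ``special case'' identified in its statement. First I would form the canonical cover matrix sequence
\[
\wt M_i = \left[ \begin{matrix} a_i & c_i \\ 0 & b_i \end{matrix} \right], \qquad c_i = a_i - b_i \geq 0,
\]
as in $(ii)$ of Theorem \ref{t:towerring}. This puts us exactly in the $(2\times 2)$ upper triangular setting of Proposition \ref{p:invariantmeasure2x2}, with diagonal blocks the sequences $[a_i]$ and $[b_i]$. Since both are strictly positive integers, both $\wh M$ and $\wt M$ are primitive sequences (primitivity here is automatic for $(1\times 1)$ positive integer sequences, and for $\wt M$ one checks strict positivity of $\wt M_0\cdots \wt M_n$ for $n$ large using $c_i\geq 0$ and $a_i,b_i\geq 1$).

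Next I would invoke Proposition \ref{p:invariantmeasure2x2}, in its special case where $a_i\geq b_i$ and $c_i = a_i - b_i$: it gives that $\Sigma_{\wt M}^{0,+}$ carries exactly one ergodic invariant probability measure (nonstationary Bernoulli for $\wh M$, supported off the tower) and, up to scaling, one further conservative ergodic $\FC$-invariant Borel measure positive on some open set; this latter measure has finite total mass iff $\limsup a_0^n/b_0^n < \infty$, otherwise it is $\sigma$-finite and infinite. Via Theorem \ref{t:inv-meas}, this second measure is precisely the tower measure $\wt\nu$ over the Bernoulli measure on the base $\Sigma_M^{0,+}$, sitting on the open invariant set $\Sigma_{\wt M/M}^{0,+}$ of the cover.

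Now I would transfer this back to $\Sigma_{\wh M/M}^{0,+}$ using Theorem \ref{t:covermeasure}: the covering map $\Phi$ restricts to a homeomorphism $\Sigma_{\wt M/M}^{0,+}\to\Sigma_{\wh M/M}^{0,+}$ conjugating the $\FC$-actions and sending $\wt\nu$ to the invariant extension $\wh\nu$ of Bernoulli measure on $\Sigma_M^{0,+}$. So $\wh\nu$ is, up to normalization, the unique $\FC_{\wh M}$-invariant ergodic measure on the tower which is positive on an open set; it is locally finite and inner regular by $(iii)$ of Theorem \ref{t:covermeasure}, and the finite/infinite dichotomy is governed by the same $\limsup$ criterion. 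Uniqueness on the tower follows from Theorem \ref{t:invariant-measures-towers}: any ergodic $\FC$-invariant measure on $\Sigma_{\wh M/M}^{0,+}$ positive on an open set restricts to such a measure on the (uniquely ergodic) base $\Sigma_M^{0,+}$.

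Finally, for the last assertion, since $\wh M$ is primitive, Lemma \ref{l:finite_on_all} applied to the ambient space $\Sigma_{\wh M}^{0,+}$ shows that $\wh \nu$, viewed as a Borel measure on $\Sigma_{\wh M}^{0,+}$ by extension by zero off the tower, is either finite or infinite on every nonempty open set; since the tower is $\FC$-dense (by minimality from primitivity, cf.~$(i)$ of Theorem \ref{t:towerring}) and of full $\wh\nu$-measure, the total mass on each nonempty open set is at least $\wh\nu$ of a single thin cylinder intersected with the tower, which one checks to be positive. When $\limsup a_0^n/b_0^n = \infty$ the measure is $\sigma$-finite infinite on the tower, so it is infinite on every open subset of $\Sigma_{\wh M}^{0,+}$; when it is finite, it is still supported on the tower which has empty interior by $(i)$ of Theorem \ref{t:towerring} (when the tower is proper), so the ``corresponding measure'' on the full odometer is finite but singular there. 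The main subtle point, and where I would be most careful, is the precise identification of the unique-up-to-scaling invariant measure on the tower with the extension of Bernoulli measure, since nothing forces an a priori candidate to give positive mass to $\Sigma_M^{0,+}$; here the fact that $\Sigma_M^{0,+}$ is clopen in the cover $\Sigma_{\wt M}^{0,+}$ (Proposition \ref{p:opensets}) ensures that any locally finite measure on the tower restricts to a finite $\FC_M$-invariant measure on $\Sigma_M^{0,+}$, which by unique ergodicity of the $[b_i]$-odometer must be a multiple of Bernoulli measure.
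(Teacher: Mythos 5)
Your proposal follows essentially the same route as the paper, which presents this corollary as an immediate consequence of the special case of Proposition \ref{p:invariantmeasure2x2} via the canonical cover $\wt M_i=\left[\begin{matrix} a_i & a_i-b_i\\ 0 & b_i\end{matrix}\right]$, transferred back to the tower by Theorem \ref{t:covermeasure} and with the last assertion coming from Lemma \ref{l:finite_on_all}; your treatment of uniqueness via Theorem \ref{t:invariant-measures-towers} and of the finite-mass case is, if anything, more explicit than the paper's. One side remark is false, though harmless: $\wt M$ is \emph{not} primitive, since the products $\wt M_k^n$ stay upper triangular and so are never strictly positive; nothing in your argument actually uses this, as Proposition \ref{p:invariantmeasure2x2} and Theorem \ref{t:inv-meas} only require the diagonal blocks to be primitive, and the statements about the ambient space only need primitivity of $\wh M=[a_i]$.
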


\begin{rem}\label{rem:Bruin}
  Henk Bruin asked us in a conversation (in 2007, after we gave a talk
about the ICS example at  CIRM) about the existence of 
other infinite invariant Borel measures. Afterwards, we realized there are many, as shown by the
next construction; by 
Lemma \ref{l:finite_on_all}, these are infinite on 
every open subset of the space. 
\end{rem}

\begin{exam}\label{exam:bruin}
  We take $\wh M_i= [a_i]$ with $a_i=2$ for all $i\geq 0$ (so $\Sigma_{\wh M}^{0,+}$ is the
  dyadic odometer space) and $b_i= 2$ for $i$ even$, = 1$ for $i$
  odd. 
  Then  ${a_0^{2n}}/{b_0^{2n}}=2^n\to\infty$, so the tower measure is
  infinite.

Next, let us consider $ a_i, b_i$ as just defined, and $\wh a_i= 3$. Then for
$\wh M= [\wh a]$, $M=[a]$ and $N= [b]$,  the adic tower map on  $\Sigma_{\wh M/M}^{0,+}$
gives  the Integer Cantor Set transformation of the previous example, with measure $\nu$ the  unique (up to scaling) infinite
invariant Borel measure which is positive on open sets. Now  $\Sigma_{\wh
  M/N}^{0,+}$ has the infinite measure $\wt\nu$ which is
positive on open sets of that tower $\Sigma_{\wh M/M}^{0,+}$, but
which is infinite on every open subset of $\Sigma_{\wh M}^{0,+}$ (since $\wh M=[2]$ is
primitive, by
Lemma \ref{l:finite_on_all}).

\end{exam}

Geometrically interesting examples of subdiagrams can be constructed
within 
circle rotations, as follows:

\begin{exam}\label{exam:nestedrot}
  (Nested circle rotations) This example produces
  interesting conservative ergodic measures for irrational circle
  rotations, which are infinite on every nonempty open subset.

From 
Example 3 of \cite{Fisher09a}, also see \cite{ArnouxFisher00}, we can
  code an irrational circle rotation by an adic transformation, as follows.

Taking as alphabet sequence $\A_i= \{A, B\}$ for all $i\geq 0$, 
we define 
a pair  of substitutions $\rho_+,\rho_-$ by
\begin{align*}
&\rho_+ (A)= AB, &&\rho_-(A)= A,\\
& \rho_+(B)= B, &&\rho_-(B)=A B
\end{align*}  
The associated matrices (the abelianizations of the substitutions) are
$$
P 
= \left[ \begin{matrix}
1 & 0 \\
1 & 1
\end{matrix}  \right],
\;
Q= \left[ \begin{matrix}
1 & 1 \\
0 & 1
\end{matrix}  \right]$$
Given now  a sequence $(n_i)_{i\geq 0}$ of positive integers, we 
define 
substitution and matrix sequences $(\rho_i)_{i\geq 0}$, $(N_i)_{i\geq 0}$ by
$N_i=P$ or $ Q$, with 
the first occuring $n_0$ times, followed by the other $n_1$ times and
so on and similarly for $\rho_i$. We call this a one-sided {\em additive family} as it is related
to the additive mapping family on the torus of \cite{ArnouxFisher05} 
and to the additive continued fraction.  

Now we gather the sequence $N$ to form the {\em multiplicative} family
$M$ with

 $$
 M_i =
\left[ \begin{matrix}
1 & 0 \\
 n_i& 1
\end{matrix}  \right] \text{for $i$ even,} 
 \left[ \begin{matrix}
1 &  n_i \\
0 & 1
\end{matrix}  \right]\text{ for $i$ odd. } 
$$

The resulting adic
transformation (on the path space of the Bratteli diagram  order given by the substitution sequence)
is isomorphic to an irrational circle rotation $R_{\theta}$ of angle
$\theta$ 
defined  by the exchange of the two intervals of lengths $1$ and $\alpha$
with 
$$\alpha= [n_0\dots n_k \dots] \equiv \cfrac{1}{n_0+
 \cfrac{1}{n_{1}+\dotsb 
}}$$ Here $\theta= \alpha/(1+\alpha)$ (and would be
$1/(1+\alpha)$ if we had started with parity $(-)$ instead of $(+)$).

Now suppose we are given  $(\wh n_i)_{i\geq 0}$ with $1\leq
  n_i\leq \wh n_i$. Defining  the sequence $ \wh M$, note that indeed $M\leq \wh M$;
we call the resulting pair of nested adic transformations  {\em nested circle
  rotations}. This has geometrical meaning: indeed,
if $n_i<\wh n_i$ infinitely often, then the rotation for $n$ naturally
embeds
in an order-preserving way as a rotation on a Cantor subset of the circle
rotation for $\wh n$. And regarding the measures, building on a result of
\cite{ArnouxFisher05} together with this paper, we have:
\begin{theo}\label{t:rotationsmeasures}
Given two nested circle rotations with 
multiplicative 
matrix sequences  $M\leq \wh M$, that is, 
we have
$n_i\leq \wh n_i$ for all ${i\geq 0}$,
let $\nu$ denote the unique
central measure for $\Sigma^{0,+}_M$, writing  $\wh \nu$ for its  extension to the
adic tower
 $\Sigma_{\wh M/M}^{0,+}$ and for the corresponding $R_{\wh \theta}$
 -invariant Borel measure on the circle.

Writing $\lambda_i= ([n_in_{i+1}\dots])^{-1}$ and similarly for $\wh
\lambda$, 
then 
the tower measure $\wh \nu$ is inner regular; it is finite iff
$\liminf ||\wh M_0^n \w_{n+1}|| <\infty$, and that holds iff
 $\liminf({\wh \lambda_0^n})/({\lambda_0^n})<\infty.$

\

When the measure  is infinite, it  is
infinite  on every open subset of $\Sigma_{\wh M}^{0,+}$ (and of the
circle),  hence is not (recall Definition \ref{d:Radon}) Radon.
\end{theo}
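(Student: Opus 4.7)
The plan is to combine the abstract finiteness criterion of Theorem \ref{t:inv-meas-subdiag}$(iii)$ with an explicit asymptotic for the norm $\|\wh M_0^n \w_{n+1}\|_1$ coming from the continued-fraction structure of $\wh M$ and $M$. First I would apply Theorem \ref{t:inv-meas-subdiag}$(iii)$ to the primitive reduced subdiagram $M \leq \wh M$, whose unique normalised central measure is $\nu = \nu_\w$: the tower measure $\wh\nu$ has finite total mass iff $\w$ is $\wh M/M$\,--\,distinguished, and by Proposition \ref{p:distsubsequence} this is equivalent to $\liminf_n \|\wh M_0^n \w_{n+1}\|_1 < \infty$. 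The monotonicity built into the proof of Proposition \ref{p:distsubsequence} shows that the ``for infinitely many $i$'' formulation may in fact be evaluated at $i = 0$, giving the first equivalence in the theorem.

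Next, I would establish the quantitative comparison $\|\wh M_0^n \w_{n+1}\|_1 \asymp \wh\lambda_0^n/\lambda_0^n$ with constants uniform in $n$. The eigenvector recursion $\w_i = M_i \w_{i+1}$ combined with $\lambda_i = n_i + 1/\lambda_{i+1}$ shows by induction on parity that $\w_i$ is a positive scalar multiple of $(1,\lambda_i)^t$ for $i$ even and of $(\lambda_i,1)^t$ for $i$ odd, so $\|\w_n\|_1 \asymp 1/\lambda_0^{n-1}$ and the projective coordinate of $\w_{n+1}$ lies (modulo parity-swapping) in a fixed compact subinterval of $(0,\infty)$. A direct computation with the shear form of $\wh M_i$ then gives $\|\wh M_i \w_{i+1}\|_1 / \|\w_{i+1}\|_1 = ((\wh n_i + 1) + x)/(1 + x)$ for some $x = x(\w_{i+1}) \in (0,1)$, a quantity comparable to $\wh n_i + 1 \asymp \wh\lambda_i$ with constants independent of $i$; moreover the image $\wh M_i \w_{i+1}$ lies in the expanding direction for $\wh M_{i-1}$. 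Iterating from $i = n$ down to $i = 0$ and combining with the estimate on $\|\w_{n+1}\|_1$ yields the claimed comparison, and hence the second equivalence.

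The main obstacle will be controlling the uniformity of these constants along the iteration: one must check that the slopes $(\wh M_i^n \w_{n+1})_A / (\wh M_i^n \w_{n+1})_B$ and their reciprocals (according to parity) remain in a fixed compact subinterval of $(0,\infty)$. This is the dynamical fact that the shear matrices $\wh M_i$ projectively contract toward the expanding Perron direction; the verification is elementary but requires the parity alternation to be tracked carefully, and uses the uniform continued-fraction bounds $\lambda_i, \wh\lambda_i > 1$ and $\alpha_i, \wh\alpha_i \in (0,1)$.

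Inner regularity finally follows from the construction of $\wh\nu$ in Theorem \ref{t:towermeasure} as a monotone increasing limit of finite inner-regular measures $\wh\nu_m$ supported on the compact sets $\Sigma_{\wh M^{(m)}}^{0,+}$; see Remark \ref{r:innerregular}. For the last assertion, a single gathering of $\wh M$ along pairs of consecutive times makes every gathered matrix strictly positive, so $\wh M$ is primitive; when $\wh\nu$ has infinite total mass it is \emph{a fortiori} infinite on the open set $\Sigma_{\wh M}^{0,+}$ itself, and Lemma \ref{l:finite_on_all} then forces it to be infinite on every nonempty open subset, whence $\wh\nu$ is neither locally finite nor Radon.
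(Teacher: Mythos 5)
The paper does not actually write out a proof of Theorem \ref{t:rotationsmeasures}: it is stated as a direct application of the general machinery (Theorem \ref{t:inv-meas-subdiag} and Proposition \ref{p:distsubsequence} for the finiteness criterion, Theorem \ref{t:towermeasure} and Remark \ref{r:innerregular} for inner regularity, primitivity of $\wh M$ plus Lemma \ref{l:finite_on_all} for local infiniteness) together with the adic coding of rotations from the earlier papers. Your soft steps follow exactly this intended route and are essentially fine; note only that reducing the ``for infinitely many $i$'' condition of Proposition \ref{p:distsubsequence} to $i=0$ is not a consequence of the monotonicity in that proof but of the strict positivity of $\wh M_0^i$ for $i\geq 1$ (a lower shear times an upper shear is strictly positive), which dominates the norms at later starting times; the same observation gives primitivity of $\wh M$ for the last assertion.

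The genuine gap is in your quantitative step $\|\wh M_0^n\w_{n+1}\|_1\asymp \wh\lambda_0^n/\lambda_0^n$. First, the claim that the projective coordinate of $\w_{n+1}$ (and of the intermediate iterates $\wh M_{i+1}^n\w_{n+1}$) stays in a fixed compact subinterval of $(0,\infty)$ is false when the partial quotients are unbounded: the relevant slopes are $\approx n_{n+1}$, resp.\ $\approx\wh n_i$. Second, and more seriously, your plan is to bound each one-step ratio $\|\wh M_i v\|_1/\|v\|_1$ within a constant factor of $\wh\lambda_i$ ``with constants independent of $i$'' and then iterate from $i=n$ down to $i=0$; but a factor-$2$ slack per step compounds to $2^{\pm n}$ over the product, which is exactly what you cannot afford when the criterion is a $\liminf$ of the product. (You also apply the one-step formula to $\w_{i+1}$, whereas the iteration applies $\wh M_i$ to $\wh M_{i+1}^n\w_{n+1}$, whose slope is a mixed continued fraction $\mu_{i+1}^{(n)}$ with quotients $\wh n_{i+1},\dots,\wh n_n$ and tail $\lambda_{n+1}$, not $\lambda_{i+1}$.) The fix is to keep the one-step ratio exact: with $x=1/\mu_{i+1}^{(n)}$ it equals $(\mu_i^{(n)}+1)\mu_{i+1}^{(n)}/(\mu_{i+1}^{(n)}+1)$, so the product over $i$ telescopes, giving $\|\wh M_0^n\w_{n+1}\|_1$ equal (up to factors depending only on the time-$0$ data) to $\bigl(\mu_1^{(n)}\cdots\mu_n^{(n)}\bigr)/\bigl(\lambda_1\cdots\lambda_n\bigr)$; one then uses the standard continued-fraction fact that such products are insensitive to the tail up to an absolute factor (each is comparable, within a factor $2$, to the convergent denominator built from $\wh n_1,\dots,\wh n_n$, resp.\ $n_1,\dots,n_n$), so that $\mu_1^{(n)}\cdots\mu_n^{(n)}\asymp\wh\lambda_1\cdots\wh\lambda_n$ with constants uniform in $n$. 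With that replacement your argument closes; as written, the second equivalence of the theorem is not established.
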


 We mention regarding the infinite measure ergodic theory that at least
in the case of periodic combinatorics (nested quadratic irrational
rotations) one can go farther: following methods of
\cite{MedynetsSolomyak14}, one can prove an
order-two ergodic theorem as shown in \cite{Fisher92} for the Integer Cantor Set.
We thank
Solomyak for conversations regarding this, and the case of
nonperiodic combinatorics, which remains an intriguing question.
Note that \cite{MedynetsSolomyak14} extends the discussion to
graph-directed sets and self-similar
tilings of $\R^d$. 

\end{exam}

\begin{rem}
  Similar examples can be constructed for minimal 
interval exchange transformations, by considering an induced map on a
subset of measure zero defined by a subdiagram, and the adic tower over that.
Explicitly, take a
path in the Rauzy graph which has iterated loops, gather along the
returns to a node of that loop, and define a
subdiagram with fewer iterates. The above is the simplest case, a minimal exchange
of two intervals, i.e.~an irrational circle rotation.
\end{rem}
\medskip

\section{Appendix: Comparison with the classical (stationary) 
theorems}\label{s:stationarycase}
Here we compare our nonstationary Perron-Frobenius, Frobenius Decomposition
and Frobenius--Victory theorems to the stationary case. In particular
we discuss Victory's definition of distinguished
eigenvector sequence, relating that to
the construction of Parry measures.

We recall:
  for a real or complex $(d\times d)$ matrix $M$, having a right {\em eigenvector} $\bfw$
  with {\em eigenvalue } $\lambda$ means by definition that for
  $\bfw\in \C^d\setminus \{\0\}$ and $\lambda\in \C$ we have
  $M\bfw= \lambda \bfw$. (The reason the zero vector is excluded is
  that otherwise {\em every} scalar would be an eigenvalue; this
  became  a
  subtle point when we defined eigenvector sequence in Def.~
  \ref{d:eigenvectorsequence}, see Remark
  \ref{r:limitsequence}). For an example of a real matrix where
  complex $\lambda$ and $\bfw$ come up naturally, see the permutation example
  below.

  Recall that a $(d\times d)$ nonnegative real matrix  $M$ is
  {\em irreducible} iff for all $a,b\in \A$, there
exists $n= n(a,b)\geq 1$ such that $(M^n)_{ab}>0$. It is 
{\em  primitive} iff $\exists n\geq 1$ which does not depend on the
entries, i.e.~ such that for all $a,b\in \A$,
$ (M^n)_{ab}>0$.
A basic example of a
matrix which is irreducible but not primitive is a permutation matrix
with a single cycle, i.e.~ with one communicating class.

  Now the
 Perron-Frobenius Theorem as generally stated addresses the primitive
 case, saying that there exists a 
nonnegative right eigenvector $\w$, called the {\em Perron-Frobenius
eigenvector} and unique  up to multiples; $\w$ is
strictly positive; its
eigenvalue $\lambda $ is (strictly) positive, and for any other 
eigenvalue $\alpha\in \C$, we have $|\alpha|<\lambda.$

Historically however, this part of the theorem is due to Perron, while
the full Perron-Frobenius theorem includes Frobenius' contribution to
the wider irreducible case, which we next describe.

\begin{defi}\label{d:period}
  For  irreducible  $M$ one defines 
  the {\em period} of a state $a\in \A$ to be  the greatest common
  divisor (\gcd) \,  of $\{n:
  M^n_{aa}>0\}$; the period of $M$ is the \gcd\, of the periods of its
  states.
\end{defi}
  
One shows that 
  the period is the same for all states in a communicating class, see
  the clear proof in \cite{LindMarcus95} and also below.

The statement of Frobenius' theorem is the same as Perron's except that now $|\alpha|\leq
\lambda$ (it may not be  {\em strictly } less in modulus); and moreover 
if $p$ is the period of $M$, then collection of eigenvalues is invariant by rotation in the complex plane by angle
$2\pi /p$, whence there are 
$p$ nonnegative eigenvectors equal to $\lambda$ times the $p^{\text{th}}$ roots of
unity, with the rest being smaller in modulus.
See   \cite{Gantmacher59},
Theorem XIII, 2.1 and 2.2.

  \begin{theo}(Stationary Perron-Frobenius Theorem)\label{t:stationaryirreducible}
 Let $M$ be a $(d\times d)$ nonnegative
    real matrix.

    \noindent
${(I)}$ {\em Primitive case} (Perron): Let $M$ be a $(d\times d)$ nonnegative
  real matrix. Assume that $M$ is primitive. Then:
  \item{(i)} there exists (up to multiplication by a constant) a
    unique nonnegative right 
eigenvector; it is strictly positive. Its eigenvalue $\lambda$ is strictly
positive, and is equal to that for the 
 left nonnegative eigenvector.
\item{(ii)}
 Any other eigenvalue $\alpha $ has strictly smaller modulus:
$|\alpha|<\lambda$.

\noindent
${(II)}$ {\em Irreducible case, eigenvectors.} (Frobenius)
    Suppose that $M$ is irreducible, with period $p$. Then:
     \item{(i)} 
    there exists a nonnegative eigenvector, unique up to multiplication by
    a positive constant. This is strictly
    positive, and has a positive eigenvalue $\lambda^+$.
     \item{(ii)}
    There are another 
$p-1$ eigenvalues,  equal to $\lambda^+$ times the $p^{\text{th}}$ roots of
unity. All other eigenvalues have smaller modulus.

\noindent
${(III)}$ {\em Irreducible case, eigenvector sequences.} 
 There are 
    $p$ nonnegative eigenvector sequences of eigenvalue one. These
    correspond to the $p$ eigenvectors for $M^p$, one for each of the
    diagonal primitive blocks.
  \end{theo}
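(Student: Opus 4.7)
The plan is to prove the three parts in sequence, using (I) as the analytic foundation, bootstrapping (II) from it via the cyclic-block reduction forced by the period $p$, and finally deducing (III) by applying (I) to each primitive diagonal block of $M^p$.

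For part (I), I would combine Lemma \ref{l:inductive eigenvectors} with a projective-contraction argument. By parts $(ii)$ and $(iv)$ of that lemma applied to $M$ viewed as a stationary sequence, the cone $C_M^{(0,+\infty)} = \cap_n M^n C$ is nonempty and consists of strictly positive vectors; Birkhoff's Hilbert projective metric then makes $M$ a strict contraction on the interior of the positive cone, so $C_M^{(0,+\infty)}$ reduces to a single ray, whose generator $\w$ satisfies $M\w = \lambda \w$ for some $\lambda > 0$ (since $M$ preserves this cone) and is strictly positive. Uniqueness of the ray and simplicity of $\lambda$ follow from the same contraction. For the spectral gap $|\alpha| < \lambda$, I would use the phase-lift inequality $M|\x| \geq |\alpha| |\x|$ coordinatewise (valid for any eigenvector $\x$ of eigenvalue $\alpha$), combined with strict positivity of $M^n$ for large $n$, to force $|\alpha| \leq \lambda$ with equality only when $\x$ is a complex phase multiple of $\w$.

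For part (II), the key is the cyclic decomposition of $\A$. Fixing $a \in \A$ and setting $C_i = \{b \in \A : M^n_{ab} > 0 \text{ for some } n \equiv i \pmod p\}$, irreducibility and the definition of period give $(C_0, \ldots, C_{p-1})$ as a well-defined partition of $\A$ with $M: C_i \to C_{i+1 \pmod p}$. Permuting the alphabet puts $M$ in block-cyclic form with rectangular blocks $M_i : \r^{C_i} \to \r^{C_{i+1}}$, so $M^p$ is block-diagonal with primitive square blocks $A_j = M_{j+p-1} \cdots M_{j+1} M_j$. The intertwining $A_{j+1} M_j = M_j A_j$ forces a common Perron eigenvalue $\mu$; by (I) each block admits a unique Perron vector $\v^{(j)}$ with $M_j \v^{(j)} = c_j \v^{(j+1)}$ for some $c_j > 0$, and applying $A_0 = M_{p-1} \cdots M_0$ to $\v^{(0)}$ yields $\prod_j c_j = \mu$. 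After a simultaneous rescaling $\v^{(j)} \leftarrow s_j \v^{(j)}$ with $s_{j+1} = s_j c_j / \lambda^+$, where $\lambda^+ := \mu^{1/p}$, all $c_j$ become equal to $\lambda^+$, and gluing the rescaled $\v^{(j)}$ gives the strictly positive Frobenius eigenvector. The $p$ dominant eigenvalues $\lambda^+ \omega^k$ with $\omega = e^{2\pi i/p}$ arise by weighting $\v^{(j)}$ on $C_j$ with $\omega^{jk}$, and Perron's gap applied to each $A_j$ rules out any other eigenvalue of modulus $\geq \lambda^+$.

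For part (III), I would define for each $j \in \{0, \ldots, p-1\}$ the candidate sequence $\w^{(j)}_n = (\lambda^+)^{-n} \v^{(j-n \bmod p)}$; the normalization $M_{j-n-1} \v^{(j-n-1)} = \lambda^+ \v^{(j-n)}$ from (II) verifies $M \w^{(j)}_{n+1} = \w^{(j)}_n$, so each is a nonnegative eigenvector sequence of eigenvalue one. These lie on distinct extreme rays because $\w^{(j)}_0$ is supported on the cyclic class $C_j$. Extremality and exhaustion then follow from Lemma \ref{l:inductive eigenvectors}(iii): extreme rays of $\V_M^\Delta$ correspond to nested extreme rays of the cones $C_M^{(n, +\infty)}$, and by the block-diagonal structure of $M^p$ together with Perron applied to each $A_j$, these extreme rays at each time are exactly the (rescaled) Perron vectors of the $A_j$'s, giving exactly $p$ of them. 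The hard part will be the simultaneous normalization step in (II): the rescaling recursion $s_{j+1} = s_j c_j / \lambda^+$ closes up consistently after $p$ steps precisely because $\prod c_j = \mu = (\lambda^+)^p$, but tracking the cyclic indexing carefully is the main bookkeeping challenge; once this is set up, part (III) follows almost mechanically from (I) applied blockwise to $M^p$.
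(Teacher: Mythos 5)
Your proposal is correct, but it should be said up front that the paper itself does not prove Theorem \ref{t:stationaryirreducible}: parts $(I)$ and $(II)$ are recalled from the classical literature (Gantmacher, Thm.\ XIII.2.1--2.2; the projective-metric route you take for $(I)$ is only alluded to elsewhere via the references to Samelson and Birkhoff), and part $(III)$ is justified only informally, by the worked $3\times 3$ example and the remark that one may conjugate the stationary sequence by a periodic sequence of permutation matrices so that $M^p$ becomes block-diagonal with primitive blocks, the $p$ extreme eigenvector sequences being the $p$ shifts of one periodic sequence. Your argument for $(III)$ is in substance exactly this: the cyclic (period-class) decomposition as in Lind--Marcus Prop.\ 4.5.6, primitivity of the diagonal blocks of $M^p$, and then the identification of $C_M^{(k,+\infty)}$ as the simplicial cone on the $p$ Perron vectors so that Lemma \ref{l:inductive eigenvectors}$(iii)$ pins down exactly $p$ extreme rays, namely the shifts $\w^{(j)}$. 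What your write-up adds beyond the paper is a genuine proof of $(I)$ (Birkhoff contraction plus the standard modulus argument for the spectral gap) and of $(II)$ (the intertwining $A_{j+1}M_j=M_jA_j$ forcing a common Perron value $\mu$, the relation $\prod_j c_j=\mu$, and the rescaling recursion closing up because $(\lambda^+)^p=\mu$); this is a legitimate, self-contained route and the bookkeeping does close consistently as you claim.

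Two loose ends you should tighten rather than assert. First, primitivity of the diagonal blocks $A_j$ of $M^p$ is used crucially in both $(II)$ and $(III)$ but is not proved: you need that within a cyclic class the return times of $M^p$ have gcd $1$ (because the return times of $M$ to a state are all multiples of $p$ with gcd exactly $p$) together with irreducibility of each $A_j$ on its class; this is standard but is precisely the step that makes the blockwise application of Perron legitimate. Second, watch the orientation: with the paper's convention the nonzero block of $M$ with rows $C_j$ sits in columns $C_{j+1}$, so $M$ acting on column vectors sends vectors supported on $C_{j+1}$ to vectors supported on $C_j$, and the eigenvector-one sequence starting from $\v^{(j)}$ is $\w_n=(\lambda^+)^{-n}\v^{(j+n \bmod p)}$, the support advancing rather than receding as in your formula $\v^{(j-n\bmod p)}$; your indexing is internally consistent with your own block convention, but it is the transpose of the paper's, and since the whole point of part $(III)$ is the interplay between eigenvector sequences and the diagram drawn left-to-right, you should fix one convention explicitly. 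Finally, state part $(III)$ as "exactly $p$ extreme rays of $\V_M^\0$ (equivalently $p$ extreme points of $\V_M^\Delta$)," which is what you prove and what the paper intends, since arbitrary convex combinations also yield nonnegative eigenvector sequences of eigenvalue one.
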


For our purpose of studying $\FC$-invariant Borel measures for
adic transformations, as we know from Theorem 2.9 of
~\cite{BezuglyiKwiatkowskiMedynetsSolomyak10}, see
 Theorem \ref{t:basic_thm} above, what is important are the nonnegative eigenvector
 {\em sequences} of eigenvalue one. Thus
in the above theorem, part $II$ which treats the eigen{\em vectors}
plays no role: all we care about is parts $I$ and  $III$.

We can see this clearly with
 a simple
example.  For $a,b,c$ positive real numbers, let
$M=\left[ \begin{matrix}
0& a & 0\\
0&0&b\\
c&0&0
\end{matrix}  \right] $. The characteristic polynomial is
$p(\lambda)=
\left|\begin{matrix}
-\lambda& a & 0\\
0&-\lambda&b\\
c&0&-\lambda
\end{matrix}  \right| =-\lambda^3+ a b c$ so $M$ has
three eigenvalues equal to the three
complex solutions of $\lambda^3=abc$.  Only for one of these,  $\lambda= \lambda^+$, the {\em
  Perron-Frobenius eigenvalue} is positive real. 
The   right eigenvector corresponding to each $\lambda$ is
a multiple of $\v=(1, \lambda/a,
\lambda^2/ab)$.The eigenvector  $\v_{\lambda^+}$ corresponding  to
$\lambda^+$ is 
called the {\em Perron-Frobenius
eigenvector}.  Note that the other eigenvalues and eigenvectors are
given by multiplication of $\lambda^+, \v_{\lambda^+}$ by the three complex roots of $1$.

Now we can conjugate the stationary sequence $M, M, M,\dots$ via a periodic sequence of  permutation
matrices to the
periodic sequence 
$$\left[ \begin{matrix}
a& 0 & 0\\
0&b&0 \\
0&0&c
\end{matrix}  \right], 
\left[ \begin{matrix}
b& 0 & 0\\
0&c&0 \\
0&0&a
\end{matrix}  \right], 
\left[ \begin{matrix}
c& 0 & 0\\
0&b&0 \\
0&0&a
\end{matrix}  \right]\dots.$$ 
Note that $M^3= 
\left[ \begin{matrix}
abc& 0 & 0\\
0&bca&0 \\
0&0&cba
\end{matrix}  \right]= 
abc \cdot I.$

Choosing  $a,b,c\in \N^*$,  we draw the Bratteli diagram for $M$, and
examine the $\FC-$invariant Borel measures. Let us consider for
example
$M=\left[ \begin{matrix}
0& 1 & 0\\
0&0&2 \\
3&0&0
\end{matrix}  \right] $.
We know that the central measures are
preserved by taking a gathering, e.g.~in the stationary case by taking
powers of the matrix. For the Bratteli diagram of $M^3= 6 \cdot I$ 
we see there are three embedded $6$-adic odometers, each giving an ergodic
measure. But how  do we see this for the original matrix $M$? The
answer is given by the insight of Theorem 2.9 of
~\cite{BezuglyiKwiatkowskiMedynetsSolomyak10}, see
 Theorem \ref{t:basic_thm} above:  invariant Borel measures correspond (even in
the stationary case) not to the eigen{\em vectors} but to the nonnegative eigenvector
{\em sequences} of eigenvalue one;
there is one associated to each symbol, and so there are three
extreme sequences, and three ergodic measures, one for each odometer. 

Further, as above we can conjugate the sequence $M, M, M,\dots$ to the
periodic sequence 

$\left[ \begin{matrix}
1& 0 & 0\\
0&2&0 \\
0&0&3
\end{matrix}  \right], 
\left[ \begin{matrix}
2& 0 & 0\\
0&3&0 \\
0&0&1
\end{matrix}  \right], 
\left[ \begin{matrix}
3& 0 & 0\\
0&2&0 \\
0&0&1
\end{matrix}  \right]\dots$ and now in the conjugate Bratteli diagram we clearly
see the three periodic nonstationary odometers, each a shift of the next.

Now the invariant Borel measures correspond, from Theorem \ref{t:basic_thm} (even in a stationary
case, as here!) to the eigenvector {\em sequences} of eigenvalue one, and the
ergodic measures to the extreme points of this collection of
sequences. Defining $(\w_0, \w_1\dots)$ to be $(a_0^{-1}(1,0,0), a_1^{-1}((0,1,0),
a_2^{-1}(0,0,1),\dots)$ where $\underline a=(a_0, a_1, a_2\dots ) =( 1,2, 6;
6\cdot(1,2,6); 6^2\cdot (1,2,6); \dots)$
then  in this example the three extreme eigensequences of eigenvalue
one are (up to multiples) the three shifts of the sequence $\w$. For
the untwisted matrix $\wt M$, the three eigenvector sequences are 
the three shifts of $\underline a$ (up to a constant multiple) times
the standard basis vectors.
Note that although for $M$ there are three eigenvectors, as above, $(1, \lambda,
\lambda^2/2)$ where $\lambda$ are the three solutions of
$\lambda^3=6$, these do {\em not} play a role in finding the 
three ergodic invariant Borel measures. Indeed, for the simplest case
$a,b,c=1$, then the Perron-Frobenius eigenvector is $\v=(1,1,1)$, but as
an eigenvector sequence this is not an extreme point- it is in the
middle of the unit simplex; in this case since $3$ is odd it is also the
unique real eigenvector. The three extreme points of the simplex are
permuted by $M$, and these give the three eigenvector sequences: 
$(\e_1, \e_2, \e_3, \e_1, \e_2, \e_3,\dots)$ and its shifts, each
defining an invariant Borel measure (each a point mass, in this case). The
Perron-Frobenius eigenvector $\v$ does give an eigenvector sequence, 
the constant sequence $(\v,\v,\dots)$ and hence a measure, but this is
nonergodic being a convex combination of the point masses. So, once
again, the Perron-Frobenius eigenvector plays no special role in the
measure theory, and the right way to understand things is in terms of the
eigenvector sequences.

Something very similar to this  example occurs  for a general
irreducible 
nonnegative $(d\times d)$ matrix $M$. The Frobenius theory then says
the following. The alphabet can be partitioned into $p$ subsets called
{\em period classes} (see Proposition 4.5.6 of 
\cite{LindMarcus95}) with $l_i\geq 1 $ elements for $1\leq i\leq p$,
so $l_1 + l_2 + \dots + l_p= d$;
reordering $\A$ accordingly, $M$ has the $(p\times p)$ block form of a
permutation matrix 
with nonzero blocks $B_i$ which are  $(l_{i}\times l_{i+1})$ for $i<p$ and $(l_{i}\times
l_1)$ for $i=p$; taking for example $p= 3$,
$M=\left[ \begin{matrix}
0 & B_1&0 \\
0 & 0& B_2\\
B_3& 0& 0
\end{matrix}  \right]$.

Frobenius' theorem then identifies the eigenvalues and eigenvectors, saying that the spectrum of $M$ is invariant 
with respect to multiplication by the 
$p^{\text{th}}$ roots of unity. In particular, $M$ has exactly $p$
eigenvalues of modulus $\lambda$, multiples 
by the $p^{\text{th}}$ roots of unity.
For more on this theory see especially \cite{LindMarcus95}, also  \cite{Gantmacher59} and
\cite{BezuglyiKwiatkowskiMedynetsSolomyak10}.

We can understand this as follows. But moreover, we can find the nonnegative
eigenvector sequences of eigenvalue one, which are what we need for
the invariant Borel measures; as above, the eigenvectors themselves are of
no help there.

Thus, 
for $A_1= B_1B_2B_3,  A_2= B_2 B_3B_1, A_3=B_3 B_1 B_2$ we have 
$M^3=\left[ \begin{matrix}
A_1 & 0&0 \\
0 & A_2& 0\\
0& 0& A_3
\end{matrix}  \right]$. Since the diagonal blocks are
primitive, there is a further power  with diagonal blocks all strictly
positive.  Once again, we can untwist the Bratteli diagram by
conjugation with a 
periodic sequence of permutation matrices, to an untwisted matrix $\wt
M$, with three diagonal primitive subblocks $B_i$. As before, the three 
eigenvectors for these blocks correspond, in the
original diagram, to three eigenvector sequences 
which differ by a time shift.

For the conclusion of Frobenius' theorem, 
note that each of the
$(l_i\times l_i)$ matrices $A_i$ 
has the same collection of eigenvalues. (This is easy to prove directly
by seeing how the eigenvectors correspond.)

Again, the ergodic measures are given not by these
$p$ eigenvectors for $M$, but rather by the $p$ nonnegative eigenvectors for the
untwisted matrix $\wt M$.

Note that from the cone point of view, as in the projective metric
proof of the Perron-Frobenius Theorem for the primitive case, there
are three subcones of the positive cone which are permuted and mapped
into each other, nesting down to the extreme sequences.

\subsection{Comparison with Parry measures}\label{ss:ParryMeasures}
Given a $(d\times d)$ primitive matrix $A$, for simplicity with
entries $0,1$,  the
adic-invariant 
central measure $\nu$ and the shift-invariant Borel measure of maximal entropy (the
Parry measure) $\mu$ are both unique and have closely related formulas. For
the irreducible case this situation changes dramatically, as we
explain. 

It will be convenient to use the following matrix formalism in
describing these measures. Let $\v^t,\w $ be the left and  right 
Perron-Frobenius eigenvectors of $A$, with eigenvalue $\lambda.$ We
normalize the vectors so that $\v\in\Delta$
and $\v\cdot \w=1$. We write $\1$ for the $(d\times 1)$ column
vector of all $1's$, and $\bpi^t $ for the probability row vector with
entries $v_iw_i$.  We define $W$ to be the diagonal matrix with entries
$W_{ii}= w_i$. 
Then $W \1= \w$ and so 
\begin{equation}
  \label{eq:Parry1}
 P\equiv \frac{1}{\lambda} W^{-1} A W
\end{equation}
satisfies $P\1= \1$, i.e.~ $P$ is row--stochastic; moreover, 
$\bpi^t P= \bpi^t$. 

The formula \eqref{eq:Parry1} is just a matrix version of the familiar
Parry-Shannon  formula

\begin{equation}
  \label{eq:Parry2}
P_{ij}= \frac{1}{\lambda}\frac{w_j}{w_i} A_{ij}.
 \end{equation}

We define the {\em Parry measure} $\mu$ on the vertex shift space $\Sigma_A^{0,+}$ by
$$\mu[x_0\dots x_{n}]= \pi_{x_0} P_{x_0 x_1}\cdots P_{x_{n-1}x_n}= 
\lambda^{-n}v_{x_0}w_{x_n}$$
where we use \eqref{eq:Parry2} to calculate the collapsing product.  
It follows from $\bpi^t P= \bpi^t$ that this definition is
shift-invariant, so $\mu$ extends to an invariant probability measure on the bilateral
shift space  
$\Sigma_A$. 

By contrast, the central measure only depends on the right nonnegative eigenvector: 
$$\nu[x_0\dots x_{n}]= 
\lambda^{-n}w_{x_n}.$$ This agrees with our definition in 
\eqref{eq:basictheorem} since $(\w)=(\w_k)_{k\geq 0} $ with $\w_k\equiv \lambda^{-k} \w$ is an
nonnegative eigenvector sequence of eigenvalue $1$.

Now consider the irreducible case.
There the Perron-Frobenius eigenvectors $\v^t, \w$ are still unique,
and the same formula gives the Parry measure $\mu$, again the unique
measure of maximal entropy (equal to $\log\lambda$).

However as we have seen, for ergodic central measures the Perron-Frobenius eigenvector itself plays
no role, and there are now $p$  such measures, each governed by 
one of the $p$ extreme nonnegative eigenvector sequences.

We offer two  explanations for this striking contrast to the primitive case.
First, there are in fact $p$  {\em nonstationary}
Parry measure {\em sequences}, see \cite{Fisher09a}, periodic of
period $p$, 
and each of entropy 
$\log\lambda$. In fact, in the above example,
each lives on a {\em sub}-subshift, given by the $p$
periodic matrix sequences such as $N_0, N_1, N_2,\dots$ where 
$N_0=\left[ \begin{matrix}
0 & B_1&0 \\
0 & 0& 0\\
0& 0& 0
\end{matrix}  \right], N_1=\left[ \begin{matrix}
0 & 0&0 \\
0 & 0& B_2\\
0& 0& 0
\end{matrix}  \right], N_2=\left[ \begin{matrix}
0 & 0&0 \\
0 & 0& 0\\
B_3& 0& 0
\end{matrix}  \right]$.  Again, conjugation by a periodic sequence of
matrices straightens out $(N_i)$ so we are actually studying the periodic
\sft\,  given by the 
primitive periodic
sequence $B_1, B_2, B_3,\dots$.

A second explanation 
 comes via Lemma
2.4 of Bowen and Marcus in 
\cite{BowenMarcus77}, where  the uniqueness of the central
measure for a
primitive \sft\,  was proved via the mixing of the Parry measure,
which could be summarized as {\em ``mixing of the hyperbolic dynamics implies unique
ergodicity of the transverse dynamics''}. 
But it  is exactly in  the irreducible nonprimitive case that the Parry measure 
is {\em not} mixing, and indeed, as we have seen, unique ergodicity
fails as there are $p$
central measures where $p$ is the period of the matrix, see Def.~\ref{d:period}.

\subsection{Comparison with the classical Frobenius--Victory Theorem}

Frobenius  went on from his study of the  irreducible case to analyze
the stationary reducible case.
This second case, as we now explain, is much more involved.
Here the tools are the Frobenius decomposition and
the Frobenius--Victory theorem. 

The Frobenius decomposition theorem, see  \S 4.4 of
\cite{LindMarcus95}, equation (69) \S XIII.4 of \cite{Gantmacher59}, and
equation (4) of \cite{BezuglyiKwiatkowskiMedynetsSolomyak10}, 
 states that a nonnegative square matrix
$N$ can be put
in upper triangular block form 
\begin{equation}\label{eq:block_form_stationary}
N=
\left[ \begin{matrix}
A_1 &C_{12} &\dots  &C_{1\wh l}\\
0_{21} &A_2 & &\vdots\\
\vdots & &\ddots  &\\
0_{\wh l 1} &\dots  &  &A_{\wh l}
\end{matrix}  \right]
\end{equation}
with $A_j$ square matrices that are zero or irreducible. Using what we have just seen, 
by taking a power, we can achieve that the diagonal blocks 
(now possibly larger in number) are  zero or primitive.

We recall the statement of 
the Frobenius--Victory theorem, so named e.g.~ in
\cite{TamSchneider00}.
See Proposition 1 of \cite{Victory85}, 
 Theorem 6 of \S XIII.4 of \cite{Gantmacher59}, and Theorem 3.1 of
\cite{BezuglyiKwiatkowskiMedynetsSolomyak10}; apparently the theorem
is actually due to Frobenius.   We shall explain how this
agrees with  the
 nonstationary version above in Theorem \ref{t:FrobVictGeneral}.

Here is the standard definition (following  Victory) of distinguished
eigenvalue, eigenvector and communicating class:
  
\begin{defi}\label{defi:distinguishedvector} Given a $(\wh l\times\wh l)$ nonnegative real matrix
 $N$, then an eigenvector $\w$, with eigenvalue $\lambda$, is 
 {\em distinguished} iff it is 
nonnegative, and in this case $\lambda$ is  a  {\em
  distinguished eigenvalue}. If $\beta$ is the class of collection of
communicating states
corresponding to $\w$, then  $\beta$ is a {\em distinguished class} iff its
eigenvalue 
$\lambda_\beta$ is strictly greater than $\lambda_\alpha$ for any
collection of communicating states $\alpha$ 
such that $\alpha $ communicates to $\beta$.
\end{defi}

\

Note that the eigenvector for $\beta$ is a Perron-Frobenius
eigenvector for an irreducible matrix corresponding to $\beta$, since
this is a communicating class. 

In terms of the upper triangular form of the single matrix $N$
in upper triangular block form of \eqref{eq:block_form_stationary},
 this
means the following: letting $C_{kj}^{(n)}$ denote the $kj$-block of
$A^n$, then 
an eigenvector $\w$ for $A_j$ is distinguished 
iff
$||A_j||>||A_k||$
for all $k$ such that for some $n\geq 0$ (hence for all larger $n$)
the block $C_{kj}^{(n)}$ is nonzero. 

Note that since for an irreducible matrix the Perron-Frobenius
eigenvalue
 is the same for 
all 

Thus for example if all the upper blocks $C_{kj} $ are nonzero, then
the Perron-Frobenius
eigenvector for 
each  diagonal block is distinguished iff
$||A_1||<||A_2||<\dots ||A_{\wh l}||.$

\begin{prop}\label{c:stationarycase}
Given an $(\wh l\times\wh l)$ nonnegative real matrix $N_0$ 
then a nonnegative  eigenvector $\w_0$ for a communicating class is distinguished in
the sense of Definition \ref{defi:distinguishedvector} iff, defining
$\w= (\w_n)_{n\in \N}$ for $n\geq 0$ where 
$\w_n= \lambda^{-n}\w_0$, then  this nonnegative eigenvector sequence of eigenvalue one  
is distinguished in the sense of
Definition \ref{d:dist}, for the stationary sequence  sequence $N=
(N_i)_{i\geq 0}$ with $N_i\equiv N_0$.  
\end{prop}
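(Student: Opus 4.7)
First I will unwind Definition~\ref{d:dist} in the stationary setting. Since $\wh N_i\equiv N_0$ and $\w_n=\lambda^{-n}\w_0$, for every $i\geq 0$ and $m\geq 1$ one has
\[
\bigl(\wh N^{m}(\mathtt{e}(\w))\bigr)_{i}=\wh N_{i}^{m+i-1}\,\mathtt{e}(\w_{m+i})=\lambda^{-(m+i)}\,N_{0}^{m}\,\mathtt{e}(\w_{0}),
\]
so $\iota(\w)=\lim_{m\to\infty}\wh N^{m}(\mathtt{e}(\w))$ exists in $V_{\A}$ if and only if $\lim_{m\to\infty}\lambda^{-m}N_{0}^{m}\,\mathtt{e}(\w_{0})$ exists in $\r^{\wh l}$. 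Writing $M:=\lambda^{-1}N_{0}$ and examining the three cases in the block form~\eqref{eq:block_form_stationary}, I will check $M\mathtt{e}(\w_{0})\geq\mathtt{e}(\w_{0})$: at block $j$ there is equality by $A_{j}\w_{0}=\lambda\w_{0}$; at block rows $k>j$ both sides vanish by upper triangularity; at block rows $k<j$ the right side is zero while the left side equals the nonnegative vector $\lambda^{-1}C_{kj}\w_{0}$. Since $M$ is monotone on the positive cone, iterating yields $M^{m+1}\mathtt{e}(\w_{0})\geq M^{m}\mathtt{e}(\w_{0})$ for all $m\geq 0$.

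Thus the nondecreasing sequence $\bigl(M^{m}\mathtt{e}(\w_{0})\bigr)_{m\geq 0}$ converges in $\r^{\wh l}$ if and only if it is bounded. If it converges to $\v^{*}$, then by continuity of $M$ we have $M\v^{*}=\v^{*}$, so $\v^{*}$ is a nonnegative eigenvector of $N_{0}$ of eigenvalue $\lambda$ whose $j$-th block coordinate equals $\w_{0}$ (this coordinate is the constant sequence $\w_{0}$ along the iteration). Conversely, if some nonnegative eigenvector $\v^{*}$ of $N_{0}$ with eigenvalue $\lambda$ and $\v^{*}_{j}=\w_{0}$ is known to exist, then $\mathtt{e}(\w_{0})\leq\v^{*}$ coordinatewise and monotonicity of $M$ gives $M^{m}\mathtt{e}(\w_{0})\leq M^{m}\v^{*}=\v^{*}$ for every $m$, so the sequence is bounded and the limit exists.

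To finish, I will invoke the classical Frobenius--Victory theorem (Proposition~1 of~\cite{Victory85}, or Theorem~6 of \S~XIII.4 of~\cite{Gantmacher59}), which says: for $N_{0}$ in the block form~\eqref{eq:block_form_stationary} with irreducible diagonal blocks, a nonnegative eigenvector of $N_{0}$ with eigenvalue $\lambda_{\beta}$ whose $j$-th block coordinate is a positive multiple of the Perron eigenvector of $A_{j}$ exists if and only if $\beta$ is distinguished in the sense of Definition~\ref{defi:distinguishedvector}. Combining this with the previous paragraph gives the desired equivalence. The never-zero requirement in Definition~\ref{d:dist} is automatic, since the $j$-th block coordinate of $(\iota(\w))_{i}$ equals $\lambda^{-i}\w_{0}\neq\0$, $\w_{0}$ being the strictly positive Perron eigenvector of the primitive block $A_{j}$.

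The main hurdle is the implication ``the limit $\v^{*}$ exists $\implies\beta$ is distinguished,'' which rests on the classical theorem's identification of the $j$-th block coordinates of Victory-basic eigenvectors. A self-contained alternative expands $(N_{0}^{m})_{kj}$ as a sum over block paths $k=k_{0}<k_{1}<\cdots<k_{s}=j$; after using $A_{j}^{p_{s}}\w_{0}=\lambda^{p_{s}}\w_{0}$, each path contributes a partial Neumann sum in the operators $\lambda^{-1}A_{k_{i}}$, which is finite in the limit precisely when $\lambda_{k_{i}}<\lambda$ for every class $k_{i}$ appearing on such a path---that is, for every $\alpha$ communicating to $\beta$. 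This route is more computational but avoids the classical citation.
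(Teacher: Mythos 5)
Your proof is correct, but it takes a different route from the paper. The paper cuts down to the upper-left $(j\times j)$ block $\left[\begin{smallmatrix} A & C\\ 0 & B\end{smallmatrix}\right]$ with $B=A_j$, applies Lemma \ref{l:conditions} to reduce the question to convergence of the explicit series $\sum_{k}\lambda^{-k}A^{k}C\w_0$, and then decides convergence by comparing growth rates of the blocks (the estimate $\|A^kC\w_0\|\,\|B\|^{-k}\leq \|C\|(\|A\|/\|B\|)^k$ for sufficiency, and a primitivity/growth-rate argument over the communicating classes $J$ for necessity). You instead observe that the stationary iteration collapses to $M^m\mathtt{e}(\w_0)$ with $M=\lambda^{-1}N_0$, that this sequence is coordinatewise nondecreasing, and hence that existence of $\iota(\w)$ is equivalent to the existence of a nonnegative eigenvector of $N_0$ with eigenvalue $\lambda$ dominating $\mathtt{e}(\w_0)$ (equivalently, with $j$-th block a positive multiple of $\w_0$); the classical Frobenius--Victory theorem then converts this into the class being distinguished. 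What your route buys: the monotone-convergence step (in the spirit of the paper's own Lemma \ref{l:pi_v exists} and Proposition \ref{p:distsubsequence}) replaces the growth-rate estimates, which are the sketchiest part of the paper's necessity argument, and your argument never uses primitivity of $A_j$, only irreducibility. What it costs: you lean on the classical theorem in the precise bidirectional form ``a nonnegative eigenvector of $N_0$ with nonzero $j$-block exists iff the class is distinguished''; the ``only if'' half requires the support/structure part of the classical theory (extremal nonnegative eigenvectors are supported exactly on the states with access to a distinguished class), which is indeed in the cited sources (Victory, Tam--Schneider) and in the paper's own summary of the theorem, but you should flag that dependence explicitly or fall back on your sketched path-expansion argument, which supplies it self-containedly. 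Two cosmetic points: the never-zero condition of Definition \ref{d:dist} is a condition on $\w$ itself (automatic since $\w_k=\lambda^{-k}\w_0$ with $\w_0\neq\0$), not on $\iota(\w)$ as your last sentence suggests; and it is worth noting, as you implicitly do, that the blocks with index $k>j$ stay identically zero under the iteration, which is why only the classes communicating to $\beta$ matter.
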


\begin{proof}We cut the matrix down to the upper-left $(j\times
  j)$\,--\,block submatrix, written as
\begin{equation}\label{eq:block_form_a}
\left[ \begin{matrix}
A&C\\
0 &B\\
\end{matrix}  \right]
\end{equation}
where $B= A_j$. Now $B$ is a primitive matrix, so by the
Perron-Frobenius theorem it has a unique nonnegative eigenvector $\w_0$
with eigenvalue $\lambda>0$; moreover, $\lambda= ||B||$. 
We define from this a nonnegative eigenvector sequence $\w$ of eigenvalue one,
with $\w= (\w_n)_{n\in \N}$ where $\w_n= \lambda^{-n}\w_0$ and $B^n
\w_n= \w_0$.
From 
Lemma \ref{l:conditions}, $\w$ is distinguished iff 
$\lim_{n\to\infty} \wh C_0^{n} \w_{n+1}$
exists, 
and we have
$$
 \wh C_0^{n} \w_{n+1}= \sum_{k=0}^{n} A^kC B^n_{k+1}\w_{n+1}=
\sum_{k=0}^{n} A^kC \w_{k}=\sum_{k=0}^{n} A^kC \w_0
\lambda^{-k}.$$
Now
 $||A^kC \lambda^{-k}\w_0||= ||A^kC \w_0||\cdot ||B||^{-k}\leq
 ||C||\bigl(\frac{||A||}{||B||}\bigr)^k $
so the sum will converge if $||A||<||B||$. 
We want a necessary and sufficient condition however, 
for this note first that since each $A_j$ is primitive, all
 vectors in the corresponding  nonnegative cone  grow asymptotically at the same rate
 as the Perron-Frobenius eigenvector $\wt \w_j$ with eigenvalue $\lambda_j$; in other words, given $A_j$
 $(l\times l)$ with $\v\in \R^{l,+}$ then there exists $a>0$ and
 $0<c_1<c_2$ such that $c_1\lambda_j^n<||A_j^n(a\v)||<c_2\lambda_j^n$
 for $n$ large; 
see e.g.~ Theorem 8 of \S XVI of
 \cite{Birkhoff67}. 
We consider 
 $J=\{ k:\,
A_k\text{ communicates to } A_j\}$. Set $K=  \max_{1\leq k\in J\leq
  j-1} ||A_k||^n$. 
Then 
$
 \wh C_0^{n} \w_{n+1}= \sum_{k=0}^{n} A^kC B^n_{k+1}\w_{n+1}=
\sum_{k=0}^{n} A^kC \w_{k}=\sum_{k=0}^{n} A^kC \w_0
\lambda^{-k}.$
so if for some $k\in J$, $||A_k||= ||A_j||$ then the sum
will explode.
\end{proof}

\begin{rem}\label{r:distinguished}
 With this, we can at last explain the choice of the term ``distinguished'' in
  Definition \ref{d:dist1}. 
For the nonstationary
case, the communicating classes 
get replaced by streams $\beta,\alpha$ and the 
condition is that $\alpha\leq \beta.$ The term ``distinguished'' is
being used in the sense of {\em distinguished from} i.e.~{\em
  separated} from;
if this holds then the nonnegative eigenvector sequence from $\beta$ generates an
nonnegative eigenvector sequence for the larger matrix $N$ which remains distinct 
from that of $\alpha$ alone, whereas if it is not distinguished, the
generated eigenvector gets attracted to and swallowed up by that for
$\alpha$, under the iteration defined here by $(\w)$.
\end{rem}

\

\begin{rem}
We mention that,
regarding Parry measures for the reducible case, the analysis is
identical to that above; thus the relevant ``Parry measures'' are now
built from the central measures, whether finite or infinite, by
multiplying by a left nonnegative eigenvector sequence. 
The same relationship between Parry and central measures holds in the
nonstationary setting, see \S 4 of 
\cite{Fisher09a}.
\end{rem}

\end{document}